\numberwithin{equation}{section}
\theoremstyle{plain}
 \newtheorem{theorem}{Theorem}[section]
 \newtheorem{lemma}[theorem]{Lemma}
 \newtheorem{prop}[theorem]{Proposition}
 \newtheorem{cor}[theorem]{Corollary}
 \newtheorem*{theorem*}{Theorem}
 \newtheorem{claim}[theorem]{Claim}
\theoremstyle{remark}
 \newtheorem{remark}[theorem]{Remark}
 \newtheorem{example}[theorem]{Example}
\newenvironment{enumeratea}{\begin{enumerate}[\upshape (a)]}{\end{enumerate}}
\newcommand\nc\newcommand
\DeclareMathOperator
\nc{\al}{\alpha} \nc{\Al}{\Alpha}
\nc{\be}{\beta} \nc{\Be}{\Beta}
\nc{\gam}{\gamma} \nc{\Gam}{\Gamma}
\nc{\ep}{\epsilon} \nc{\Ep}{\Epsilon}
\nc{\vep}{\varepsilon}
\nc{\de}{\delta} \nc{\De}{\Delta}
\nc{\lm}{\lambda} \nc{\Lm}{\Lambda}
\nc{\ka}{\kappa} \nc{\Ka}{\Kappa}
\nc{\vpi}{\varpi} \nc{\vph}{\varphi}
\nc{\om}{\omega} \nc{\Om}{\Omega}
\nc{\si}{\sigma} \nc{\Si}{\Sigma}
\nc{\A}{\mathbb{A}}
\nc{\B}{\mathbb{B}}
\nc{\C}{\mathbb{C}}
\nc{\D}{\mathbb{D}}
\nc{\E}{\mathbb{E}}
\nc{\F}{\mathbb{F}}
\nc{\G}{\mathbb{G}}
\nc{\bI}{\mathbb{I}}
\nc{\J}{\mathbb{J}}
\nc{\K}{\mathbb{K}}
\nc{\M}{\mathbb{M}}
\nc{\N}{\mathbb{N}}
\nc{\bO}{\mathbb{O}}
	\renewcommand{\P}{\mathbb{P}}
\nc{\Q}{\mathbb{Q}}
\nc{\R}{\mathbb{R}}
\nc{\bS}{\mathbb{S}}
\nc{\T}{\mathbb{T}}
\nc{\U}{\mathbb{U}}
\nc{\V}{\mathbb{V}}
\nc{\W}{\mathbb{W}}
\nc{\X}{\mathbb{X}}
\nc{\Y}{\mathbb{Y}}
\nc{\Z}{\mathbb{Z}}
\nc{\cA}{\mathcal{A}}
\nc{\cB}{\mathcal{B}}
\nc{\cC}{\mathcal{C}}
\nc{\cD}{\mathcal{D}}
\nc{\cE}{\mathcal{E}}
\nc{\cF}{\mathcal{F}}
\nc{\cG}{\mathcal{G}}
\nc{\cH}{\mathcal{H}}
\nc{\cI}{\mathcal{I}}
\nc{\cJ}{\mathcal{J}}
\nc{\cK}{\mathcal{K}}
\nc{\cL}{\mathcal{L}}
\nc{\cM}{\mathcal{M}}
\nc{\cN}{\mathcal{N}}
\nc{\cO}{\mathcal{O}}
\nc{\cP}{\mathcal{P}}
\nc{\cQ}{\mathcal{Q}}
\nc{\cR}{\mathcal{R}}
\nc{\cS}{\mathcal{S}}
\nc{\cT}{\mathcal{T}}
\nc{\cU}{\mathcal{U}}
\nc{\cV}{\mathcal{V}}
\nc{\cW}{\mathcal{W}}
\nc{\cX}{\mathcal{X}}
\nc{\cY}{\mathcal{Y}}
\nc{\cZ}{\mathcal{Z}}
\nc{\eps}{\varepsilon}
\nc{\epp}{\epsilon}
\nc{\ii}{\mathrm{i}}
\renewcommand{\d}{\,d}
\nc{\ls}{\lesssim}
\nc{\gs}{\gtrsim}
\def \lf {\lfloor}
\def \rf {\rfloor}
\dmo{\I}{{I}}
\dmo{\II}{{II}}
\DeclareFontFamily{U}{mathx}{\hyphenchar\font45}
\DeclareFontShape{U}{mathx}{m}{n}{
      <5> <6> <7> <8> <9> <10>
      <10.95> <12> <14.4> <17.28> <20.74> <24.88>
      mathx10
      }{}
\DeclareSymbolFont{mathx}{U}{mathx}{m}{n}
\DeclareMathAccent{\widecheck}{0}{mathx}{"71}
\nc{\ol}{\overline}
\renewcommand{\t}{\tilde}
\nc{\ul}{\underline}
\nc{\wt}{\widetilde}
\nc{\wh}{\widehat}
\nc{\wch}{\widecheck}
\dmo{\adj}{adj}
\dmo{\Hom}{Hom}
\dmo{\proj}{proj}
\dmo{\tr}{tr}
\dmo{\Tr}{Tr}
\nc{\Span}{\operatorname{span}}
\def \tran {\mathsf{T}}
\def \HS {\mathrm{HS}}
\dmo{\rank}{rank}
\dmo{\real}{Re}
\dmo{\argmin}{arg\,min}
\dmo{\argmax}{arg\,max}
\dmo{\area}{area}
\dmo{\diag}{diag}
\dmo{\df}{df}
\dmo{\diam}{diam}
\dmo{\esssup}{ess\,sup}
\dmo{\im}{im} 
\dmo{\loc}{loc}
\dmo{\logit}{logit}
\dmo{\osc}{osc}
\dmo{\rad}{rad}
\dmo{\sgn}{sgn}
\dmo{\supp}{supp}
\dmo{\vol}{vol}
\dmo{\br}{br}
\nc{\lp}{\left(}
\nc{\rp}{\right)}
\nc{\lb}{\left[}
\nc{\rb}{\right]}
\nc{\lset}{\left\{}
\nc{\rset}{\right\}}
\dmo{\Var}{Var}
\dmo{\Cov}{Cov}
\dmo{\ind}{\mathbf{1}}
\nc{\eqd}{\stackrel{\text{\tiny $d$}}{=}}
\dmo{\Ber}{Ber}
\dmo{\Bin}{Bin}
\dmo{\Poi}{Poi}
\nc{\ER}{Erd\H{o}s--R\'enyi }
\nc{\asto}{\stackrel{a.s.}{\to}}
\nc{\dto}{\stackrel{d}{\to}}
\nc{\Lpto}[1]{\stackrel{L^{#1}}{\to}}
\nc{\pto}{\stackrel{p}{\to}}
\nc{\wkto}{\Ra}
\nc{\aslra}{\stackrel{a.s.}{\lra}}
\nc{\dlra}{\stackrel{d}{\lra}}
\nc{\Lplra}[1]{\stackrel{L^{#1}}{\lra}}
\nc{\plra}{\stackrel{p}{\lra}}
\nc{\wklra}{\stackrel{wk}{\lra}}
\dmo{\Sparse}{Sparse}
\dmo{\Comp}{Comp}
\dmo{\Incomp}{Incomp}
\nc{\sphere}{\mathbb{S}}
\nc{\sphereN}{{\sphere^{N-1}}}
\nc{\ball}{\mathbb{B}}
\nc{\Sym}{\cH}
\nc{\Good}{\cG}
\nc{\Evalx}{\cE}
\nc{\psimax}{{\psi_\mu^{\sup}}}
\nc{\psiinfty}{{\psi_\mu^{\lim}}}
\nc{\psimin}{{\psi_\mu^{\inf}}}
\nc{\psiright}{{\psi_\mu^{+\infty}}}
\nc{\psileft}{{\psi_\mu^{-\infty}}}
\nc{\nn}{{n_0}}
\nc{\Uloc}{{\mathsf{U}}}
\nc{\Deloc}{{\mathsf{D}}}
\nc{\Bdeloc}{{\Deloc}}
\nc{\Aset}{{\mathsf{A}}}
\nc{\Bset}{{\mathsf{B}}}
\nc{\Lpot}{{\mathsf{V}}}
\nc{\approxd}{\stackrel{\text{\tiny $d$}}{\approx}}
\dmo{\error}{{Err.}}
\dmo{\DKL}{{H}}
\nc{\gsig}{{G_\sigma}}
\nc{\gsigx}{{\gsig(x)}}
\nc{\overlap}{{q}}
\dmo{\goe}{{GOE}}
\nc{\ham}{{h}}
\nc{\cushion}{{\tau}}
\dmo{\Lip}{{Lip}}
\nc{\vloc}{{z}}
\nc{\rate}{{\cI}}
\nc{\local}{{loc}}
\nc{\free}{{\varphi}}
\nc{\freeL}{{f_N}}		
\nc{\freeD}{{\free^{del}}}
\dmo{\LLa}{{\Lambda}}
\nc{\LLamu}{{\LLa_\mu}}
\nc{\psimu}{{\psi_\mu}}
\nc{\thetam}{{\theta_x^-}}
\nc{\thetap}{{\theta_x^+}}
\nc{\thetapm}{{\theta_x^\pm}}
\nc{\thetamy}{{\theta_y^-}}
\nc{\VP}{{\Phi}}
\nc{\tLL}{{\wt\Lambda}_\mu}
\nc{\ssq}{{\mathsf{sq}}}
\nc{\vcont}{{\mathrm{v}}}
\nc{\tfree}{\wt\free}
\nc{\hfree}{{\wh\free}}
\nc{\hcJ}{{\wh\cJ}}
\nc{\thresh}{{\xi}}
\nc{\heta}{{\eta_0}}
\nc{\event}{{\cE}}
\nc{\good}{{\Good}}
\nc{\sfA}{\cA}
\nc{\sfB}{\cB}
\nc{\sfD}{\cD}
\nc{\xcush}{{\kappa}}
\nc{\zcush}{{\rho}}
\nc{\tcush}{{\tau}}
\nc{\wcush}{{\zcush_0}}
\nc{\AS}{{S}}
\nc{\srad}{{r}}
\nc{\chw}{{\wch w}}
\nc{\tw}{{\wt w}}
\nc{\chz}{{\wch z}}
\nc{\tvloc}{{\wt\vloc}}
\nc{\cha}{{\wch\al}}
\nc{\tal}{{\wt\al}}
\nc{\red}[1]{{\color{red} #1}}
\definecolor{purple}{rgb}{0.6,0,0.8}
\nc{\purple}[1]{{\color{purple} #1}}
\nc{\revision}[1]{\purple{#1}}
\begin{document}

\begin{frontmatter}
\title{Full large deviation principles for the largest eigenvalue of sub-Gaussian Wigner matrices} 
\runtitle{Large deviations for the largest eigenvalue of Wigner matrices}

\begin{aug}
\author[A]{\fnms{Nicholas A.}~\snm{Cook}\ead[label=e1]{nicholas.cook@duke.edu}},
\author[B]{\fnms{Rapha\"el}~\snm{Ducatez}\ead[label=e2]{ducatez@math.univ-lyon1.fr}}
\and
\author[C]{\fnms{Alice}~\snm{Guionnet}\ead[label=e3]{alice.guionnet@ens-lyon.fr}}
\address[A]{Department of Mathematics, Duke University, 
120 Science Dr, Durham, NC 27710, USA\printead[presep={,\ }]{e1}}

\address[B]{Institut Camille Jordan, Universit\'e Claude
Bernard Lyon 1, UMR 5208\printead[presep={,\ }]{e2}}

\address[C]{CNRS, ENS de Lyon, 46 all\'ee d'Italie, 69007, Lyon, France\printead[presep={,\ }]{e3}}
\end{aug}

\begin{abstract}

We establish precise upper-tail asymptotics and large deviation principles for the rightmost eigenvalue $\lambda_1$ of Wigner matrices with sub-Gaussian entries. In contrast to the case of  heavier tails, where deviations of $\lambda_1$ are due to the appearance of a few large entries, and the sharp sub-Gaussian case that is governed by the collective deviation of entries in a delocalized rank-one pattern, we show that the general sub-Gaussian case is determined by a mixture of localized and delocalized effects.

Our key result is a finite-$N$ approximation for the upper tail of $\lambda_1$ by an optimization problem involving \emph{restricted annealed free energies} for a spherical spin glass model. This new type of argument  allows us to derive full large deviation principles when the log-Laplace transform of the entries' distribution $\mu$ has bounded second derivative, whereas previous results required much more restrictive assumptions, namely sharp sub-Gaussianity and symmetry, or only covered certain ranges of deviations.

We show that the sharp sub-Gaussian condition characterizes measures $\mu$ for which the rate function coincides with that of the Gaussian Orthogonal Ensemble (GOE). When $\mu$ is not sharp sub-Gaussian, at a certain distance from the bulk of the spectrum there is a transition from the GOE rate function to a non-universal rate function depending on $\mu$, and this transition coincides with the onset of a localization phenomenon for the associated eigenvector.

\end{abstract}

\begin{keyword}[class=MSC]
\kwd[Primary ]{60B20}
\kwd{60F10}
\end{keyword}

\begin{keyword}
\kwd{Spherical integral}
\kwd{eigenvector localization}
\kwd{universality}
\end{keyword}

\end{frontmatter}
\setcounter{tocdepth}{1}
\tableofcontents


\section{Introduction}

\subsection{Background}
\label{sec:background}

Large random matrices appear in a wide variety of domains. They were first introduced in statistics in the work of Wishart \cite{wishart} to analyze large arrays of noisy data, and their relevance for principal component analysis and statistical learning persists to the present day.  In numerical analysis, Goldstine and Von Neumann considered random matrices to model the propagation of rounding errors in numerical algorithms \cite{GoVN}; more recently, they played an important role in Spielman and Teng's theory of smoothed analysis of algorithms \cite{ST02}. 
Wigner \cite{wigner} and Dyson \cite{dyson} conjectured that random matrix eigenvalue statistics model those of high energy levels in heavy nuclei. Even more surprisingly, Montgomery \cite{Montgo}  showed a connection with statistics of zeros of the Riemann zeta function, leading to far-reaching conjectures which nowadays provide important heuristics for the distribution of the primes, see e.g \cite{KeatingSnaith,ABB}. 
Random matrices have also played a central role in operator algebra theory  since  Voiculescu  proved that they are asymptotically free \cite{voicstflour,voi91}. 
They have been applied to model the stability of large dynamical systems such as food webs \cite{May72} and neural networks \cite{RaAb:neural},
and have recently played a central role in the study of the complexity of random energy landscapes \cite{ABA,BAMMN,BBMcK}.

The computation of  the joint law of the eigenvalues of the Gaussian ensembles  goes back to Weyl \cite{weyl} and Cartan \cite{cartan}, who showed that this distribution is characterized by a density proportional to a power of the Vandermonde determinant of the eigenvalues.
 As a consequence, the eigenvalues of random matrices furnish an example of strongly interacting particles system, in connection with  many other models such as Coulomb gases or random tilings. 
 
For all these reasons, the study of Large Random Matrices   has grown into a diverse and mature field during the last forty years, yielding answers to increasingly sophisticated questions. In this article we are concerned with large deviations for the largest eigenvalue of large random matrices. Such a question appeared in various contexts such as statistics \cite{BDMN11}, the complexity of random functions \cite{ABAC,ABA,BAMMN}, their relation with fluctuations \cite{Majum2,MS14}  or in statistical mechanics where similar questions were attacked in the more general context of spin glasses \cite{PR08,FD14,DZ15,LFD}.
 
We consider the real Wigner random matrix model: with $N$ large or going to infinity, let $(X_{ij})_{1\le i\le j\le N}$ be iid real random variables having distribution $\mu$ with mean zero and variance one, and let $H$ denote the real symmetric $N\times N$ matrix with entries
\begin{equation}	\label{def:H}
H_{ij} = \sqrt{\frac{2^{1_{i=j}}}{N}} X_{ij} \,.
\end{equation}
We assume $\mu$ has sub-Gaussian tails (see \eqref{subG-LLa} below).
If $\mu$ is the standard Gaussian measure then $H$ is matrix from the Gaussian Orthogonal Ensemble (GOE).
 We label the eigenvalues in non-increasing order $\lam_1(H)\ge\cdots\ge\lam_N(H)$ (we will usually drop the argument $H$ for brevity). 
 We restrict to the real case to keep the paper of a reasonable length, but note that complex Wigner matrices can be treated by the same arguments.
 
In order to provide some context for the large deviations behavior of the spectrum of $H$, which is the main concern of this article, we briefly recall what is known about the typical behavior of the spectrum 
for $H$ as above with sub-Gaussian entries.

\subsubsection*{Laws of Large Numbers}
The most basic problem is to determine the asymptotic locations of the eigenvalues to leading order. For the ``bulk'' of the eigenvalues this is addressed by Wigner's semicircle law \cite{wigner}:
writing $\sigma$ for the \emph{semicircle measure} with continuous compactly supported density $\sigma(dx) :=\frac1{2\pi}(4-x^2)_+^{1/2}dx$, and $\hat\mu_H:=\frac1N\sum_{i=1}^N \delta_{\lam_i(H)}$ for the empirical spectral distribution (ESD) of $H$, we have
\begin{equation}	\label{Wigner}
\int f d\hat\mu_H \to \int fd\sigma\qquad \text{ in probability}
\end{equation}
for any bounded continuous function $f:\R\to\R$.
In particular, with probability tending to 1, all but at most $o(N)$ of the eigenvalues are contained in the limiting support $[-2,2]$ (see Section \ref{sec:notation} for our conventions on asymptotic notation). 
By taking $f$ to approximate step functions $1_{(-\infty,x]}$ for fixed $x\in(-2,2)$ we deduce a law of large numbers for bulk eigenvalues: for $i=i_N\in [N]$,
\begin{equation}	\label{LLN.bulk}
i_N/N\to a\in(0,1) \quad\Longrightarrow\quad \lambda_{i_N}(H)\to x_a\quad\text{ in probability}
\end{equation}
where $x_a\in(-2,2)$ is the quantile such that $a=\sigma([x_a,2])$.
The law of large numbers behavior of ``edge'' eigenvalues was established much later by F\"uredi and Koml\'os \cite{furedi}, who showed
\begin{equation}	\label{FuKo}
\lambda_1(H)\to 2\qquad \text{ in probability}
\end{equation}
(and hence $\lam_N(H)\to -2$ in probability by considering $-H$ in place of $H$). We deduce that \eqref{LLN.bulk} holds for any sequence $i_N\in[N]$ such that $i_N/N$ tends to a limit (possibly 0 or 1).

\subsubsection*{Concentration}
Under further assumptions on $\mu$, general concentration of measure inequalities can be used to show that the left hand sides in \eqref{Wigner} (taking $f$ Lipschitz)  and \eqref{FuKo} concentrate around their limiting values with exponential rates of order $N^2$ and $N$, respectively \cite{GuZe}.
In a related direction, the celebrated \emph{local semicircle law} of Erd\" os--Schlein--Yau \cite{ESY09a} shows that $\int fd\hat\mu_H$ concentrates around $\int fd\sigma$ for $f$ supported on an interval of length  as small as $N^{-1+\eps}$, and related \emph{eigenvalue rigidity} results establish \eqref{LLN.bulk}--\eqref{FuKo} with near-optimal precision; we refer to the survey \cite{BeKn} for more on the local law and its consequences.

\subsubsection*{Fluctuations}
The next natural questions concern the size and law of the fluctuations of linear statistics $\int fd\hat\mu_H$ and the largest eigenvalue $\lam_1(H)$ around their limiting values of $\int fd\sigma$ and $2$, respectively.
The former have been shown to be Gaussian, with variance depending on the regularity of $f$; we refer to the recent work \cite{LaSo} for an overview of the large body of work on CLTs for linear statistics.
The fluctuations of individual eigenvalues, as well as eigenvalue gaps $\lam_i(H)-\lam_{i+1}(H)$, were originally understood for GOE matrices (where $\mu$ is the standard Gaussian measure) \cite{johansson, TW,ME}, and these were shown to be \emph{universal} in a series of remarkable breakthroughs
\cite{sosh,ESY, ESY11, EPRY, johanssonHS,TVun}.
In particular, the fluctuations of $\lam_1(H)$ are asymptotically described by the ($\beta=1$) Tracy--Widom distribution \cite{For93,TW,sosh}; notably, the scale $N^{-2/3}$ of fluctuations is smaller than the upper bound $O(N^{-1/2})$ implied by general concentration of measure estimates.

\subsubsection*{Large Deviations}
In contrast to the above results on typical behavior, the understanding of large deviations for the spectrum of Wigner matrices remains far less complete.
Here the problem is to estimate the probabilities of rare events that linear statistics or individual eigenvalues deviate significantly from their limiting values in \eqref{Wigner}--\eqref{FuKo}.

Recall that a function $\rate:\R\to[0,+\infty]$ is a \emph{good rate function} if it is lower semicontinuous, not identically $+\infty$, and its sub-level sets $\{\rate\le a\}$ are compact for all $a<\infty$.
A sequence of random variables $Y_N$ taking values in a Polish space $\cY$ satisfies a \emph{large deviation principle (LDP) with speed $r_N\to\infty$ and good rate function $\cI$} if 
\[
-\inf_{y\in E^\circ} \cI(y)\le \liminf_{N\to\infty}\frac1{r_N}\log\P(Y_N\in E) \le \limsup_{N\to\infty}\frac1{r_N}\log\P(Y_N\in E)\le -\inf_{y\in \overline{E}}\cI(y)
\]
for all Borel sets $E\subseteq \cY$.

LDPs were established for the spectrum of the Gaussian GOE matrix, for which the joint law of the eigenvalues has an explicit form, independent of the eigenvectors, displaying  a strong Coulomb gas interaction. This formula could be used to prove an LDP for the empirical measure in \cite{BAG97}, yielding LDPs with speed $N^2$ for linear statistics $\int fd\hat\mu_N$; an LDP for the largest eigenvalue {of GOE matrices was established in} \cite{BDG} (see also \cite{Majum} for further discussions of the Wishart case, and \cite{Majum2}). 
LDPs for the spectrum of deformed Gaussian matrices were obtained in \cite{GZ3,Maida:deformed,BGGM,Giulio}.

More recently, in a breakthrough paper,  C. Bordenave and P. Caputo \cite{BordCap}  tackled the case of matrices with tails heavier than Gaussians, that is  Wigner matrices with entries with stretched exponential tails, going to zero at infinity more slowly than a Gaussian tail. The driving idea to approach this question is to show that large deviations are in this case created by a few large entries -- an instance of what we call a \emph{localization phenomenon}. As a result, the empirical measure deviates towards the free convolution of the semicircle law and the limiting spectral measure of the matrix created by these few large entries. This idea could be also used to prove  the large deviations for the law of  the largest eigenvalue and spectral moments by F. Augeri \cite{Augeri:stretched,Augeri:moments}.  

Very recently in \cite{Augeri:esd}, Augeri has shown that for sparsified Wigner matrices with entries of the form $\frac1{\sqrt{N}}B_{ij}X_{ij}$ for iid bounded random variables $X_{ij}$ and independent  Bernoulli($p$) variables $B_{ij}$ with 
$p=o(1)$ and $p=\omega(N^{-1}\log N)$ (see Section \ref{sec:notation} for our conventions on asymptotic notation),
the ESD satisfies an LDP with rate function that is only finite on solutions of the Quadratic Vector Equations studied in \cite{AEK:QVE}. 
In general, we say that large deviations for a function of $d$ independent random variables exhibit a localization phenomenon when the driving mechanism for the deviation involves a deviation of $o(d)$ variables from their typical ranges.
Apart from the results of \cite{BordCap,Augeri:stretched} for Wigner matrices with stretched exponential tails, localization phenomena have been shown in recent years to govern large deviations for the extreme eigenvalues of adjacency matrices for sparse random graphs \cite{ChaVa,ChaDe,Augeri:ER,CoDe,BhGa,HMS,BaBa,BBG,Basak} and random networks \cite{GaNa,GHN,LeNa23,AuBa}.

In fact, neither the Gaussian case nor the case of tails heavier than Gaussians can cast light on the large deviations 
of the spectrum when the entries are compactly supported, or more generally sub-Gaussian. 
Recall that a random variable $X$ is sub-Gaussian if
\begin{equation}	\label{def:subG}
\E \exp( X^2/K^2)\le 2
\end{equation}
for some $K<\infty$, and the smallest such $K$ is called the \emph{sub-Gaussian constant} of $X$. 
When $X$ has centered distribution $\mu$, 
\eqref{def:subG} is equivalent up to modification of $K$ by a constant factor to
\begin{equation}	\label{subG-LLa}
\LLa_\mu(t) \le K^2 t^2 \qquad\forall t\in \R
\end{equation}
where we denote the (two-sided) log-Laplace transform of $\mu$ by
\begin{equation}	\label{def:LLa}
\LLa_\mu(t):=\log\int_\R e^{tx}d\mu(x)\,,\qquad t\in \R\,;
\end{equation}
see for instance \cite[Chapter 2]{Vershynin:book}.
For the standard Gaussian measure $d\gamma(x)= \frac1{2\pi} e^{-x^2/2}dx$ we have $\LLa_\gamma(t) = \frac12t^2$, and in particular \eqref{subG-LLa} holds with $K^2=\frac12$.

A breakthrough came in \cite{HuGu1}, establishing the following result for a wide class of sub-Gaussian Wigner matrices.

\begin{theorem}[Sharp sub-Gaussian case {\cite{HuGu1}}]
\label{thm:HuGu}
Assume the distribution $\mu$ is either compactly supported or satisfies a log-Sobolev inequality, and has log-Laplace transform satisfying the pointwise bound 
\begin{equation}	\label{def:sharp-subG}
\LLa_\mu(t) \le \LLa_\gamma(t)=\tfrac12t^2\qquad\forall t\in \R\,.
\end{equation}
Then $\lam_1(H)$ satisfies a large deviation principle with {speed $N$ and} good rate function given by
\begin{equation}	\label{def:igamma}
\rate^\gamma(x):= \begin{cases} \frac12\int_2^x\sqrt{y^2-4}dy & x\ge2\\ +\infty & x<2\,.\end{cases}
\end{equation}
In particular, for every fixed $x\in \R$ we have 
\begin{equation}	\label{tail.igamma}
\lim_{\delta\downarrow0}\limsup_{N\to\infty} \frac1N\log\P(|\lam_1(H)-x|\le \delta) 
=\lim_{\delta\downarrow0}\liminf_{N\to\infty} \frac1N\log\P(|\lam_1(H)-x|\le \delta) 
= -\rate^\gamma(x)\,.
\end{equation}
\end{theorem}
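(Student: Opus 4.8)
The plan is to follow the spherical-integral route to largest-eigenvalue large deviations, developed for the GOE and deformed Gaussian matrices in \cite{BDG,GZ3,Maida:deformed} and carried to this level of generality in \cite{HuGu1}. The central object is the rank-one spherical integral $I_N(\theta,H):=\int_{\sphereN}e^{N\theta\langle u,Hu\rangle}\,d\omega_N(u)$ for $\theta\ge0$, with $\omega_N$ the uniform probability measure on $\sphereN$; I would use it to produce an exponential moment bound for $\lam_1$ and, through its almost-sure asymptotics, to reverse that bound. Two soft reductions come first. Sub-Gaussianity gives $\P(\lam_1>M)\le e^{-cNM^2}$ for $M$ large (an $\eps$-net of $\sphereN$ of cardinality $e^{O(N)}$ together with the scalar Cram\'er bound applied to each $\langle u,Hu\rangle$), hence exponential tightness. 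For $x<2$, the compact-support or log-Sobolev hypothesis forces $\hat\mu_H$ to concentrate around $\sigma$ at speed $N^2$, so $\P(\lam_1\le 2-\delta)\le e^{-cN^2}$ and $\rate^\gamma\equiv+\infty$ there is immediate. It then suffices to show, for each $x\ge2$, that $\limsup_N\tfrac1N\log\P(\lam_1\ge x)\le-\rate^\gamma(x)$ and $\liminf_N\tfrac1N\log\P(|\lam_1-x|<\delta)\ge-\rate^\gamma(x)$ as $\delta\downarrow0$, and then to invoke the standard upgrade from such local estimates plus exponential tightness to a full LDP.

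For the upper bound I would play two estimates against each other. First, an annealed one: by independence of the entries,
\[
\E\big[e^{N\theta\langle u,Hu\rangle}\big]=\exp\Big(\sum_i\LLamu\big(\theta\sqrt{2N}\,u_i^2\big)+\sum_{i<j}\LLamu\big(2\theta\sqrt N\,u_iu_j\big)\Big)\le\exp\Big(N\theta^2\big(\textstyle\sum_i u_i^2\big)^2\Big)=e^{N\theta^2}
\]
for every $u\in\sphereN$, using $\LLamu(t)\le\tfrac12 t^2$ and $\|u\|_2=1$; hence $\E[I_N(\theta,H)]\le e^{N\theta^2}$, and the matching lower bound $\E[I_N(\theta,H)]\ge e^{N\theta^2-o(N)}$ (from delocalized $u$) holds as well. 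Second, a quenched one: by a quantitative form of the spherical-integral asymptotics of \cite{Maida:deformed}, on the high-probability event $\cG_N=\{d(\hat\mu_H,\sigma)\le\eps,\ \lam_1\le M\}$ one has $\tfrac1N\log I_N(\theta,H)\ge J_x(\theta)-\eta(\eps)$ whenever $\lam_1(H)\ge x$, where $\eta(\eps)\to0$ and $J_x(\theta)$ is the explicit limit of $\tfrac1N\log I_N(\theta,A_N)$ for $\hat\mu_{A_N}\to\sigma$, $\lam_1(A_N)\to x$ (equal to $\theta^2$ when $2\theta\le\gsigx$, and given by an explicit outlier-dependent formula when $2\theta>\gsigx$). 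Markov then gives
\[
\P(\lam_1\ge x)\le \frac{\E[I_N(\theta,H)]}{\inf_{\cG_N\cap\{\lam_1\ge x\}}I_N(\theta,H)}+\P(\cG_N^c)\le e^{-N(J_x(\theta)-\theta^2-\eta(\eps))}+e^{-cN^2}
\]
for all $\theta\ge0$; optimizing over $\theta$ and letting $\eps\downarrow0$ yields $\limsup_N\tfrac1N\log\P(\lam_1\ge x)\le-\sup_{\theta\ge0}\big(J_x(\theta)-\theta^2\big)$, and a calculus check with the explicit $J_x$ identifies this supremum with $\rate^\gamma(x)$, attained at $\theta^*=\theta^*(x)$ with $2\theta^*=1/\gsigx$ (equivalently $2\theta^*+(2\theta^*)^{-1}=x$). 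The contribution of configurations with two or more outlying eigenvalues, which would otherwise require adding $\{\lam_2\le2+\eps\}$ to $\cG_N$, I would dispose of separately by iterating the same bound.

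For the lower bound I would tilt by the spherical integral itself: with $\theta^*$ as above, set $\wt\P(dH):=I_N(\theta^*,H)\,\E[I_N(\theta^*,H)]^{-1}\P(dH)$. Writing $Z_e:=\E[e^{N\theta^*\langle e,He\rangle}]$, this is a mixture $\wt\P=\int_{\sphereN}\P_e\,w(e)\,d\omega_N(e)$ of the rank-one tilted Wigner laws $\P_e(dH)\propto e^{N\theta^*\langle e,He\rangle}\P(dH)$ with weights $w(e)=Z_e/\E[I_N(\theta^*,H)]$. Since $Z_e\le e^{N\theta^{*2}}$ attains its near-maximal value $e^{N\theta^{*2}(1+o(1))}$ for $\omega_N$-typical $e$ (those delocalized at scale $\|e\|_\infty\le C\sqrt{(\log N)/N}$) and falls off exponentially for localized $e$, $\wt\P$ concentrates on typical, delocalized directions, where $w(e)\ge e^{-o(N)}$. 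For such $e$, under $\P_e$ the matrix $H$ is approximately Wigner with each entry's mean shifted by $\approx2\theta^* e_ie_j$ and variance essentially unchanged, i.e.\ $H\approx W+2\theta^*\,ee^{\mathsf T}$ with $W$ Wigner; the tilted marginals $e^{tx-\LLamu(t)}\,d\mu(x)$ still satisfy a log-Sobolev inequality (or remain compactly supported), so $\hat\mu_H\to\sigma$ and, by a BBP-type argument using $2\theta^*>1$, $\lam_1(H)\to 2\theta^*+(2\theta^*)^{-1}=x$ under $\P_e$. Hence $\wt\P(|\lam_1-x|<\delta,\cG_N)\ge e^{-o(N)}$, and the change-of-measure identity, combined with the upper bound $I_N(\theta^*,H)\le e^{N(J_x(\theta^*)+\eta(\eps))}$ on $\cG_N\cap\{|\lam_1-x|<\delta\}$ and $\E[I_N(\theta^*,H)]\ge e^{N\theta^{*2}-o(N)}$, gives $\P(|\lam_1-x|<\delta)\ge e^{N(\theta^{*2}-J_x(\theta^*)-\eta(\eps))-o(N)}$, so $\liminf_N\tfrac1N\log\P(|\lam_1-x|<\delta)\ge\theta^{*2}-J_x(\theta^*)-\eta(\eps)=-\rate^\gamma(x)-\eta(\eps)$; letting $\eps,\delta\downarrow0$ closes the bound. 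The endpoint $x=2$ follows from \eqref{FuKo}.

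The hard part is the lower bound, and specifically the claim that $\lam_1\to x$ under $\wt\P$---i.e.\ that the annealed mass of the spherical integral localizes on matrices with the prescribed top eigenvalue. This needs a quantitative and \emph{uniform} version of the spherical-integral asymptotics (also used in the upper bound), and a genuine analysis of the tilted ensembles $\P_e$: that their spectra still obey the semicircle law and that $\lam_1$ concentrates at the BBP location. The latter is exactly where the structural hypothesis on $\mu$ is spent: a log-Sobolev inequality or compact support---but not a bare sub-Gaussian tail---is inherited by the tilts $e^{tx-\LLamu(t)}\,d\mu(x)$ and supplies the concentration of $\lam_1$ and of the quadratic forms $\langle e,He\rangle$ needed to control the tilted ensemble. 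Removing this hypothesis---the aim of the present paper---requires replacing the direct study of tilted ensembles by an optimization over restricted annealed free energies.
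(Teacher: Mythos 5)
Your proposal is correct and follows essentially the same route the paper attributes to \cite{HuGu1} and sketches in its Section on tilting by spherical integrals: the annealed bound $\E I(H,\theta)\le e^{N\theta^2}$ from sharp sub-Gaussianity, Ma\"ida's quenched asymptotics giving $J(x,\theta)$, Markov/optimization in $\theta$ for the upper bound, and a tilt by the spherical integral with delocalization of the integrating vector plus a BBP computation for the lower bound, with the compact-support/log-Sobolev hypothesis spent on concentration for the tilted ensembles. Your packaging of the lower bound as a mixture $\wt\P=\int \P_e\,w(e)\,d\omega_N(e)$ concentrating on delocalized $e$ is just an equivalent reformulation of the paper's restriction of the spherical integral to $R$-delocalized vectors under the tilted law $Q^{(\theta)}$.
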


A measure $\mu$ satisfying \eqref{def:sharp-subG} is said to be \emph{sharp sub-Gaussian}.
Such distributions were recently studied in \cite{BCG:subgaussian}, where they were named ``strict sub-Gaussians''.
In addition to $\gamma$, this class of measures includes the important examples of the Rademacher distribution $\frac12(\delta_{+1}+\delta_{-1})$ and the uniform distribution on $[-\sqrt{3}, \sqrt{3}]$. 

\begin{remark}
The result of \cite{HuGu1} actually allows for the variables $X_{ij}$ to have varying sharp sub-Gaussian distributions $\mu_{ij}^N$, so long as the support/log-Sobolev conditions hold uniformly in $i,j$ and $N$.
In fact it is possible to remove the latter conditions using a truncation argument, leaving only the condition \eqref{def:sharp-subG}
{(see Appendix \ref{app:talagrand}; this was also recently noted in \cite{HuMc})}.
We further note that \cite{HuGu1} establishes an analogous result for complex Hermitian Wigner matrices, as well as sample covariance matrices. 
While the methods developed here could also be extended to such ensembles, we focus on the real Wigner case to keep the article of reasonable length.
\end{remark}

Perhaps the most remarkable feature of Theorem \ref{thm:HuGu} is that it establishes a \emph{universal} rate function \eqref{def:igamma} for $\lam_1(H)$ for a wide class of entry distributions $\mu$.
While universality is a pervading phenomenon in probability and random matrix theory, it is less common in large deviations theory, where results are usually sensitive to details of the tails of the input variables. (Compare for instance Cram\'er's theorem for the sample mean $\overline{X}_N=\frac1N(X_1+\cdots+X_N)$ of iid samples from $\mu$, where the rate function is given by the Legendre transform of $\LLa_\mu$, and is drastically different for the Gaussian and Rademacher cases.)

To establish Theorem \ref{thm:HuGu} a new strategy was introduced in \cite{HuGu1} based on tilting the law of $H$ by spherical integrals, given  for a symmetric $N\times N$ matrix $M$ and $\theta\ge0$ by
\begin{align}	\label{def:spherical}
I(M,\theta) & := \int_{\sphereN} e^{N\theta\langle u,Mu\rangle} dP(u)
\end{align}
where $P=P_N$ is the uniform measure on the unit sphere $\sphereN$ in $\mathbb R^{N}$. 
Spherical integrals are natural quantities to perform such a tilt as they play the role of the Laplace or Fourier transform \cite{GuMa05} in random matrix theory.  
The strategy of tilting by spherical integrals has since been applied to spectral large deviations problems in several works -- see for instance \cite{GMa20,McKenna:deformed,BHG,AGH,Giulio,HuMc,DGH24}.
We review this strategy in Sections \ref{sec:results.restricted} and \ref{sec:ideas} below.

The 
method of tilting by spherical integrals was applied to the case of symmetric sub-Gaussian $\mu$ not satisfying \eqref{def:sharp-subG} in \cite{AGH}.
Under rather general hypotheses it was shown that the probability that the largest eigenvalue is close to some value $x$ can be estimated when $x$ is large enough and the rate function is not the same as in the Gaussian case.
On the other hand, assuming the sub-Gaussian constant $K$ in \eqref{subG-LLa} is smaller than 1 (rather than $1/\sqrt{2}$ for the sharp-sub-Gaussian condition), it was shown that the large deviation rate function matches the GOE rate function $\rate^\gamma(x)$ in a neighborhood of 
$x=2$. 
For this class of distributions they hence obtained the LDP on $\R\setminus F_\mu$ for a compact set $F_\mu\subset (2, +\infty)$, with rate function $\rate^\mu(x)$ universal for small $x$ and non-universal for large $x$.
In fact, this transition from universality to non-universality can be detected by studying the limiting annealed spherical integral which  fails to be differentiable everywhere, see \cite[Proposition 7]{AGH}. The absence of differentiability of the limiting  log-Laplace transform of the variable  is a well known obstruction to derive a full LDP in Cram\'er-type proofs. 

The work \cite{AGH} hence left open the full LDP for $\lam_1$ on $\R$ and the nature of the transition to a non-universal limit somewhere in the intermediate range $F_\mu$. 
The authors of \cite{AGH}  noted there that the spherical integral method can only yield a convex rate function, and suggested that the true rate function $\rate^\mu$ may be non-convex in $F_\mu$, a prediction that we confirm in this article.  
 
In this work we greatly extend the spherical integrals method to permit the study of models with localization phenomena and non-convex rate functions. 
These innovations are especially useful for the study of models with structured distributions, and indeed the arguments developed here have already been applied in recent works on matrices with variance profiles \cite{DGH24} and deterministic shifts \cite{BoGu24}.

\subsection{Our contributions}
\label{sec:contributions}

In this article, we elucidate large deviations of  $\lam_1(H)$ for a wide class of sub-Gaussian distributions $\mu$, extending Theorem \ref{thm:HuGu} to obtain a full large deviation principle on all of $\R$ with a rate function $\rate^\mu$ that may be different from the GOE rate function $\rate^\gamma$. 
We show that under some mild technical conditions on $\mu$, the sharp sub-Gaussian assumption of Theorem \ref{thm:HuGu} in fact characterizes the universality class of distributions $\mu$ for which $\rate^\mu=\rate^\gamma$. 
Moreover, for sub-Gaussian $\mu$ that is not sharp sub-Gaussian, we still have $\rate^\mu(x)=\rate^\gamma(x)$ for $x$ in a neighborhood of $2$.
Our approach yields quantitative tail bounds, and moreover yields structural information on the associated eigenvector $v_1$ conditional on a large deviation event $\lam_1 \approx  x>2$. 
We find that
the transition to a non-universal rate function value $\rate^\mu(x)\ne \rate^\gamma(x)$ 
as $x$ exceeds a threshold value
is associated with the emergence of a 
localized component of $v_1$, i.e.\ a small set of large coordinates that carry a macroscopic fraction of the $\ell^2$-norm.

Roughly speaking, the reason for universality in the sharp sub-Gaussian case is that deviations of $\lam_1$ are due to a collective tilt of the matrix entries in the direction of a rank-one matrix $vv^\tran$ with $v$ a delocalized vector, so the rate function $\rate^\mu$ is ultimately determined by the behavior of $\LLa_\mu$ in a $o(1)$-neighborhood of 0, where the bound \eqref{def:sharp-subG} approaches an equality by Taylor expansion.
However, when the pointwise bound \eqref{def:sharp-subG} does not hold, competing \emph{localized} tilting strategies emerge, and the rate function $\rate^\mu$ depends on other details of $\mu$. 
One of our contributions is to determine
the rate function  by a \emph{mixture} of localized and delocalized strategies, which significantly complicates the analysis. This should be compared on the one hand with pure delocalization when $\mu$ has lighter tails (the sharp sub-Gaussian case) and pure localization when $\mu$ has heavier tails (the stretched exponential case or sparse random graphs/networks, as referenced previously). The general sub-Gaussian case is hence in some sense critical for the large deviations problem.

The following  highlights some of our results. 

\begin{theorem*}[Informal, see Theorems \ref{thm:smallx}, \ref{thm:fullLDP} and Corollary \ref{cor:ER}]\label{informal}
With $H$ as in \eqref{def:H}, assume $\mu$ is sub-Gaussian, centered and with unit variance. 
Under some further technical assumptions, there exists a good rate function $\rate^\mu$ on $\R$ that is infinite on $(-\infty,2)$ and continuous and non-decreasing on $[2,\infty)$ such that $\lam_1=\lam_1(H)$ satisfies a large deviation principle with speed $N$ and rate function $\rate^\mu$. In particular,
\eqref{tail.igamma} holds with $\rate^\mu$ in place of $\rate^\gamma$
for every fixed $x\in\R$.
Furthermore:
\begin{itemize}
\item $\rate^\mu\le\rate^\gamma$ pointwise on $\R$, i.e.\ large deviations are at least as likely as for the GOE case.
\item (Universality phase). There exists $x_\mu>2$ such that $\rate^\mu\equiv\rate^\gamma$ on $(-\infty,x_\mu]$.
Moreover, conditional on an event that $\lam_1$ lies in a small neighborhood of some fixed $x\in (2,x_\mu)$, with high probability the associated eigenvector $v_1$ is delocalized: 
for any fixed $\eps>0$ the $\ell^2$-mass of components larger than $N^{-1/2+\eps}$ is $o(1)$.

\item (Non-universality). If \eqref{def:sharp-subG} does not hold (i.e.\ $\mu$ is not sharp sub-Gaussian) then there exists $x_\mu'<\infty$ such that 
$\rate^\mu(x)<\rate^\gamma(x)$ for all $x>x_\mu'$.  
Moreover, conditional on an event that $\lam_1$ lies in a small neighborhood of 
some $x$ for which $\rate^\mu(x)<\rate^\gamma(x)$, with high probability the associated eigenvector $v_1$ has a localized component -- that is, $v_1$  has $\ell^2$-mass $\gs1$ on coordinates of size $\ge N^{-1/4-\eps}$. 
\end{itemize}

\end{theorem*}

See Figure \ref{fig:pgauss} for an example of the transition to non-universality for the rate function $\rate^\mu$ and the associated localization transition for the eigenvector $v_1$.

\begin{figure}
\includegraphics[width=6.7cm]{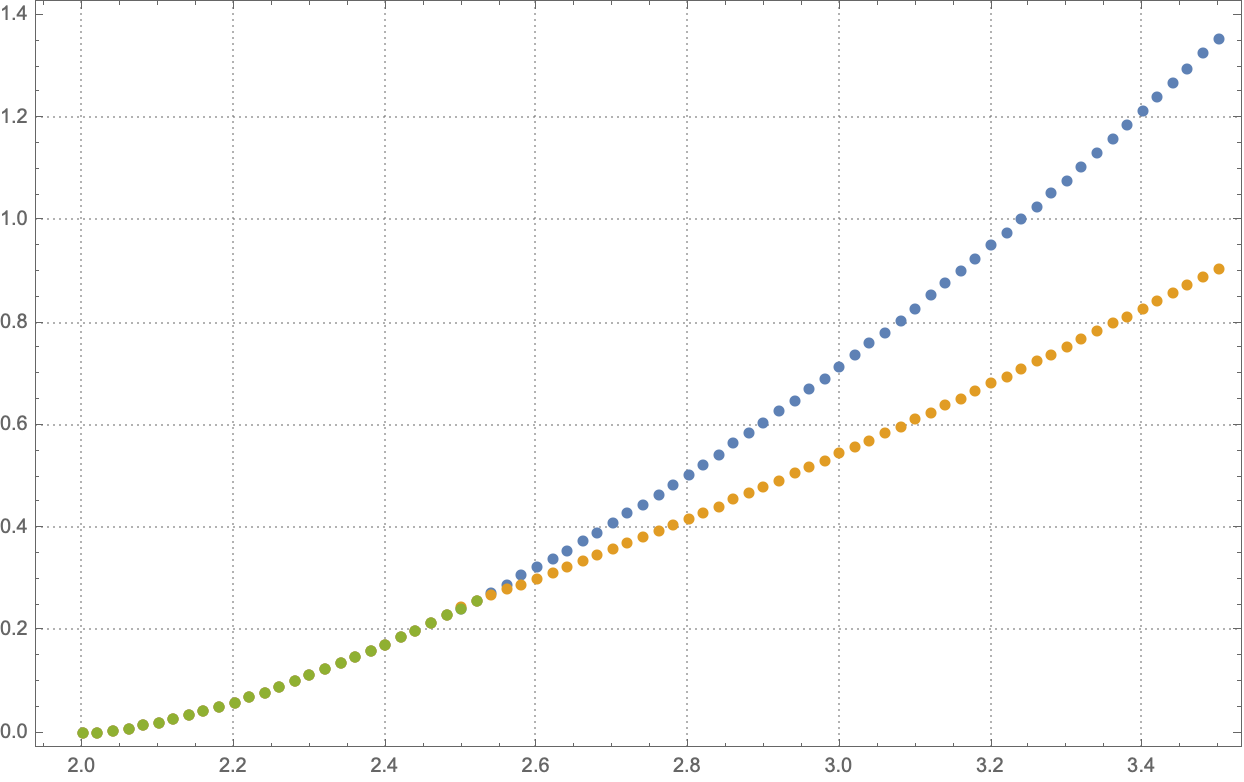}
\;
\includegraphics[width=6.7cm]{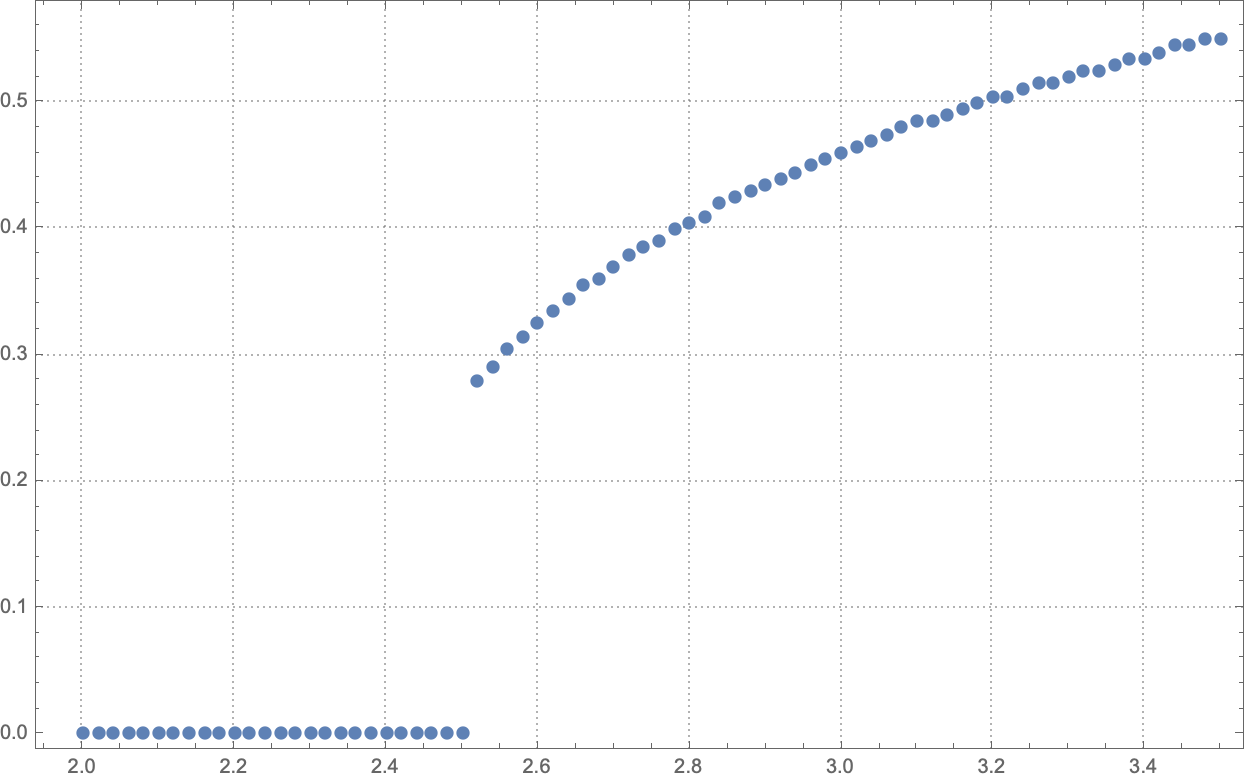}
\caption{\begin{small}
Large deviations for $\lam_1(H)$ for the case that $\mu$ is an equal mixture of the centered Gaussian of variance $2$ and the Dirac mass $\delta_0$ (the case $p=\frac12$ of Example \ref{ex:Gp}). Plots were made with Mathematica based on a variational formula for the rate function established in Theorem \ref{thm:increasing}; see \eqref{def:hfree}--\eqref{def:rate.increasing}. (The asymptotic restricted annealed free energy \eqref{def:hfree} in turn is computed using the non-variational formula noted in Remark \ref{rmk:tfree.alt}.) \\
{\bf Left:} Rate functions $\rate^\gamma(x)$ from \eqref{def:igamma} (green/blue) and $\rate^\mu(x)$ from \eqref{def:rate.increasing} (green/yellow), plotted for $x\in[2,3.5]$ (on a mesh of spacing $.02$). The rate functions match up to $x_\mu\approx 2.52$. 
Note that $\rate^\mu$ is not convex. \\
{\bf Right:} Optimizers $\al_x^*$ of $\al$ in \eqref{def:rate.increasing} are plotted for $x\in[2,3.5]$. From Theorem \ref{thm:increasing}(b), conditional on $\lam_1\approx x$ we have $\|v_1\|_\infty^2 \approx\al_x^*$ with high probability. The point $x_\mu$ above which $\rate^\mu(x)$ is non-universal coincides with a discontinuous jump in $\al_x^*$ from $0$ to $\approx 0.280$. 
\end{small}}
\label{fig:pgauss}
\end{figure}

We note that from standard estimates (see Lemma \ref{lem:tightness}) it follows that $\rate^\mu(x)$ grows at least quadratically as $x\to+\infty$.\\

Some further results we obtain:
\begin{itemize}
\item In Theorem \ref{thm:increasing} we provide a more detailed description of the localization transition for $v_1$ under the assumption that $\mu$ is symmetric and the ratio of log-Laplace transforms $\LLa_\mu(t)/\LLa_\gamma(t)$ is strictly increasing on $\R^+$. In this case the localized part of $v_1$ in the non-universality phase is supported on a single entry. (See Figure \ref{fig:pgauss}.)  
In the case where $\mu$ is compactly supported, conditional on a sufficiently large deviation of $\lambda_1$ the localized part of $v_1$ has roughly ${N}^{1/2}$ entries of order $N^{1/4}$, see  \cite[Proposition 15]{AGH}.  Moreover, our proof shows that if all the entries of $v_{1}$ are of order at most $N^{-1/4-\eps}$, we are in the Gaussian universality regime. Indeed, a key quantity is given by 
$\freeL(\theta,w)$ defined in \eqref{def:freeL}, where $w$ approximates the localized part of $v_1$. The Laplace transform $\LLa_\mu$ in \eqref{def:freeL} can be approximated by Taylor expansion to second order
(hence giving the same function as when $\mu$ is Gaussian)  only when all the entries  $\sqrt Nw_iw_j $ go to zero. 
This heuristically explain why the scale $N^{-1/4}$ is critical.

\item Corollary \ref{cor:ER} obtains the LDP for the largest eigenvalue of centered adjacency matrices for dense \ER graphs $G(N,p)$ at the scale $O(\sqrt{Np})$ of the bulk of the spectrum, complementing recent works covering larger deviations at scale $\gg \sqrt{Np}$, where the LDP is given by a simpler na\"ive mean-field approximation \cite{ChVa2,LuZh12,CoDe, BhGa}. 
For deviations at the scale of the bulk spectrum the na\"ive mean-field approximation is invalid and the LDP is more complicated, being related to the free energy of a disordered spin glass model.

\item Theorem \ref{thm:rateN} provides an asymptotic equivalent $\rate^\mu_N$ for the large deviations rate under quite general assumptions, as a consequence of quantitative estimates relating large deviation probabilities to a variational problem for \emph{restricted annealed free energies} of a spherical spin glass model -- see Theorem \ref{theomain} and Propositions \ref{prop:upper} and \ref{prop:lower0}.
\end{itemize}

We want to emphasize here that our technical assumptions are very mild, see \eqref{assu:usg}, \eqref{assu:LRlim} and 
\eqref{assu:maxR+}. In particular, we do not assume that $\mu$ is symmetric as in \cite{AGH,HuGu1}.
Further open problems are described in Section \ref{sec:open}.

Outside the hypotheses of Theorem \ref{thm:increasing}, the localized part of $v_1$ may be supported on a growing number of coordinates.
For instance, when $\mu$ is compactly supported localization happens on sets of size $\sqrt{N}$ and is more difficult to quantify. 
This is the reason why we proceed in two steps: we first get dimension-dependent estimates for localization scenarios, and then show that these estimates  converge as the dimension goes to infinity.
In fact, in the non-universal range where $\rate^\mu(x)<\rate^\gamma(x)$, the conditional structure of $v_1$ depends strongly on $\mu$ and our work is the first to describe it precisely.

In a notable recent work \cite{GHN}, a transition from a universal rate function for light-tailed entries to non-universal rate functions for heavy-tailed entries was also shown to occur for large deviations for the largest eigenvalue of diluted random matrices, where entries above the diagonal are independent and of the form $B_{ij}Y_{ij}$ for $B_{ij}\sim$Ber($\frac dn$) for a fixed $d>0$, and $Y_{ij}$ independent of $B_{ij}$ with stretched-exponential (Weibull) tails. In that setting, all large deviation mechanisms are of a localized nature, occurring on stars and cliques in the associated sparse graph.  
Both localization and delocalization phenomena were  shown to appear in the simpler setting of large deviations for quadratic forms in Gaussian random variables \cite{BGR};
 this is to  our knowledge the  only precursor to what we shall see happens for matrices with sub-Gaussian entries. However, localization for  quadratic forms in Gaussian random variables happens on a single site which is not always the case here.

\subsection{Innovations of the proof} 
Beyond
the generality and novelty of our results, we introduce several new ideas and techniques that should prove useful in other contexts. 
To highlight a few:
\begin{enumerate}

\item\label{new-restr} 
We greatly extend the spherical integral method from \cite{HuGu1} to permit the study of models with localization phenomena and non-convex rate functions (such as in Figure \ref{fig:pgauss}). 
The key to this is the use of \emph{restricted spherical integrals} where the integration in \eqref{def:spherical} is only taken over a part of the sphere  determined by the localized part of the leading eigenvector $v_1$.

As a general strategy, the use of restricted spherical integrals based on low-entropy data about top eigenvectors can be useful for the study of structured models. Indeed, following an earlier preprint version of this article the strategy has been applied to matrices with a variance profile in \cite{DGH24}, removing the assumptions on the variance profile required in \cite{Hu}. 

\item\label{new-joint} 
The restricted spherical integral method gives more than LDPs for $\lam_1$: we get joint large deviation estimates for $\lam_1$ and the localized part of $v_1$ (roughly defined as the coordinates of size $\gg N^{-1/4}$). The sharp LDP upper bounds are then deduced as contractions of the joint tail estimates by optimizing the joint rate function to identify the least unlikely structure of $v_1$. A stability analysis of this optimization then yields the typical structure of $v_1$ conditional on the large deviation event $\{\lam_1 \approx x\}$. 

\item\label{new-cont} 

Proving that the upper bound obtained from optimizing restricted spherical integrals is sharp requires substantial new arguments.
Whereas in the sharp sub-Gaussian case the optimal tilt could be located by a BBP phase transition computation, this is no longer possible when tilting by spherical integrals restricted to vectors $u$ with a localized component. 
To deal with this we develop a robust continuity argument, showing that the 
restricted annealed free energy (defined in Section \ref{sec:results.restricted}) localizes to a smaller portion of the sphere that varies continuously with the parameter $\theta$. Together with concentration and coupling arguments, we can show the mean of $\lam_1$ under the tilted law varies continuously with $\theta$, allowing us to locate the optimal tilting parameter $\theta_*$ using the intermediate value theorem.

\item 

We highlight a novel Markov chain argument to prove the rate function $\rate^\mu(x)$ for $x$ is monotone on 
$[2,\infty)$ (see Lemma \ref{lem:monotone}). The argument should also apply in more general situations, and has recently been applied in \cite{BoGu24} to shifted Wigner matrices.

The idea is to design a chain on the space of $N\times N$ symmetric matrices that has the law of $H$ as its stationary distribution, and for which $\lam_1$ can only change a small amount at each step. Thus, when the chain is initialized in the event $\{\lam_1 \approx   x\}$ it  must pass through $\{\lam_1 \approx   y\}$ on the way to the typical event $\{\lam_1 \approx   2\}$ for any $y\in (2,x)$, allowing us to compare the probabilities.

\item 

In general the optimization of the localized part of $v_1$ takes place over a space of growing dimension, and it is initially not clear how to establish an $N$-independent rate function for $\lam_1$. 
For this we develop a pigeonholing argument to separate scales in the entries of $v_1$ and transform to an optimization problem over an increasing sequence of compact subsets of the ball in $\ell^2(\N)$, allowing us to deduce existence of the limit from monotonicity. 
See the proof of Proposition \ref{prop:weakLDP}.

\end{enumerate}

We further discuss the proof ideas in Section \ref{sec:ideas}.

\subsection{Structure of the paper}

In Section \ref{sec:results} we state our model assumptions and main results.
Section \ref{sec:open} lists some open questions and directions for future work.
In Section \ref{sec:ideas} we outline the main ideas for the proof of our core results relating the upper tail for $\lam_1$ to a minimax problem for spherical integrals (Theorems \ref{thm:rateN} and \ref{theomain}). 
Section \ref{sec:notation} summarizes our notational conventions. 
In Section \ref{sec:high} we prove most of our main results after stating our main technical lemmas, which we prove in Sections \ref{sec:upper}--\ref{sec:rateprops}.
Corollary \ref{cor:nonuniv} on the transition from the GOE rate function to a nonuniversal rate function is proved in Section \ref{sec:nonuniv}, and Theorem \ref{thm:increasing} for the case that the tails of $\mu$ have a certain monotonicity property is proved in Section \ref{sec:increasing}.
Various technical lemmas of a more standard nature are proved in the appendices.

\section{Main results}
\label{sec:results}

\subsection{Model assumptions}
\label{sec:assu}

Recall the Wigner random matrix model $H$ introduced in \eqref{def:H}.
Throughout the article we make the following assumptions on the distribution $\mu$ of the rescaled entries $\sqrt{N/2^{1_{i=j}}}H_{ij}$. 
We assume throughout that the probability measure $\mu$ is centered with unit second moment (that is, $\int_\R xd\mu(x)=0$ and $\int_\R x^2d\mu(x) = 1$) 
and sub-Gaussian.
Recalling the log-Laplace transform $\LLa_\mu(t)$ from \eqref{def:LLa}, we additionally define
\begin{equation}
\label{def:psimu}
\psimu(t):= \frac1{t^2}\LLa_\mu(t) = \frac12\frac{\LLa_\mu(t)}{\LLa_\gamma(t)}\,.
\end{equation}
Since $\mu$ is standardized, 
$\LLa_\mu(t)=\frac12t^2+O(t^3)$ as $t\to0$, so we  take $\psimu(0):=\frac12$ to extend $\psimu$ continuously to 0.
 For the Gaussian measure $\gamma$ we have $\LLa_\gamma(t) = \frac12t^2$ and $\psi_\gamma(t)\equiv \frac12$.
The sub-Gaussian condition \eqref{subG-LLa} can be restated as
\begin{equation}	\label{assu:sg}
\psimax:= \sup_{t\in\R} \psimu(t) <\infty\,.	\tag{SG}
\end{equation}
The sharp sub-Gaussian condition \eqref{def:sharp-subG} is thus that $\psimax=\frac12$. 
Our large deviation results also depend on the tails of $\mu$ through the parameter
\begin{equation}
\label{def:psiinfty}
\psiinfty:= \limsup_{|t|\to\infty} \psimu(t)\,.
\end{equation}
Note that whenever $\mu$ is compactly supported we have $\LLa_\mu(t) \ls |t|$ and hence $\psiinfty=0$ in this case. 

We note that  $\psimax$, $\psiinfty$ correspond respectively to parameters $A/2$, $B/2$ from \cite{AGH}. 

For technical reasons, in our main results we impose the following strengthening of \eqref{assu:sg}, that $\mu$ has \emph{uniformly sub-Gaussian tilts:}
\begin{equation}	\label{assu:usg}
\sup_{t\in\R}\LLa_\mu''(t)<\infty\,. 	\tag{USG}
\end{equation}
Since $\LLa_\mu(0)=\LLa_\mu'(0) =0$, \eqref{assu:usg} implies \eqref{assu:sg}.
The former is equivalent to the assumption that the exponentially tilted measures  $d \mu^{t}(x) =e^{tx-\LLa_\mu(t)} d\mu(x)$, after recentering, are uniformly sub-Gaussian for $t\in\R$; see Lemma \ref{lem:subG-tilts}.

While \eqref{assu:usg} is stronger than \eqref{assu:sg}, it includes for instance all compactly supported measures, as well as finite mixtures of Gaussian measures (see for instance Example \ref{ex:Gp} below). 
Measures satisfying \eqref{assu:sg} but not \eqref{assu:usg} are somewhat pathological. Roughly speaking, such measures must have unbounded support, with an infinite sequence of ``gaps'' in the support -- a sequence intervals of unbounded length having measure much smaller than what is implied by the sub-Gaussian tail condition \eqref{assu:sg}. That is, for such $\mu$, there must be an infinite sequence of disjoint intervals $I_k=[a_k,b_k]$ with $a_k\to\infty$ and $b_k-a_k\to\infty$ such that $-\log\mu((b_k,\infty))=O(b_k^2)\ll -\log\mu(I_k)$ (or the analogous condition holds for the left tail of $\mu$). 
For example, one verifies that measures of the form
\begin{equation}
\label{ex:not-usg}
\mu = \frac1Z\sum_{k=1}^\infty e^{-a_k^2}(\delta_{a_k} + \delta_{-a_k})
\end{equation}
for a normalizing constant $Z$ and a sufficiently rapidly growing sequence $a_k\in \R^+$ (such as $\sqrt{2\log k}$) satisfy \eqref{assu:sg} but not \eqref{assu:usg}. 

\begin{remark}[Regularity properties] \label{rmk:assu.reg}
From sub-Gaussianity of $\mu$ \eqref{assu:sg} it follows that $\LLa_\mu$ and $\psimu$ are real-analytic functions on $\R$.
It will also be useful to note that under \eqref{assu:sg}, $\tLL:=\LLa_\mu\circ\,\ssq^{-1}$ is globally Lipschitz on $\R$, where we denote the signed square function $\ssq(x)=x^2(1_{x\ge0}-1_{x<0})$, with inverse $\ssq^{-1}(x):=\sgn(x)\sqrt{|x|}$ on $\R$.
Indeed, $\tLL$ is continuous at 0, while for $t\ne0$, $\tLL'(t)=\frac12 |t|^{-1/2}\LLa_\mu'(\ssq^{-1}(t))$, and it is routine to show that $\LLa_\mu'(s)=O(|s|)$ when $\mu$ is sub-Gaussian, 
so $|\tLL'(t)|=O(1)$.
\end{remark}

We note the following basic families of distributions satisfying \eqref{assu:usg} that are not always sharp sub-Gaussian, and hence are not covered by Theorem \ref{thm:HuGu}. For further examples we refer to \cite{AGH}.

\begin{figure}
\includegraphics[width=6cm]{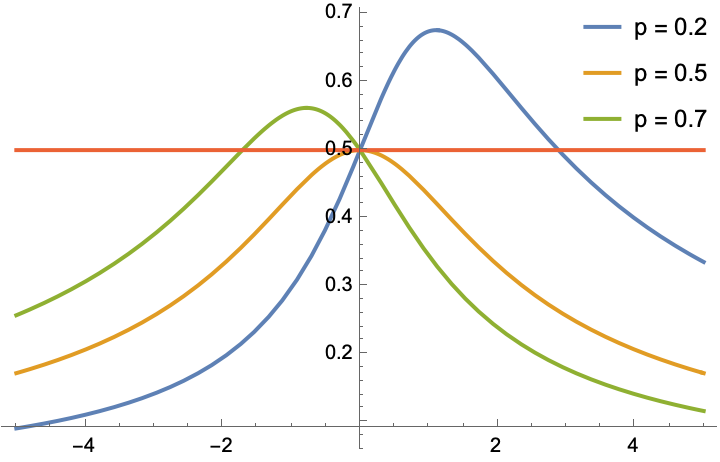}\\
\quad\\
\includegraphics[width=6.5cm]{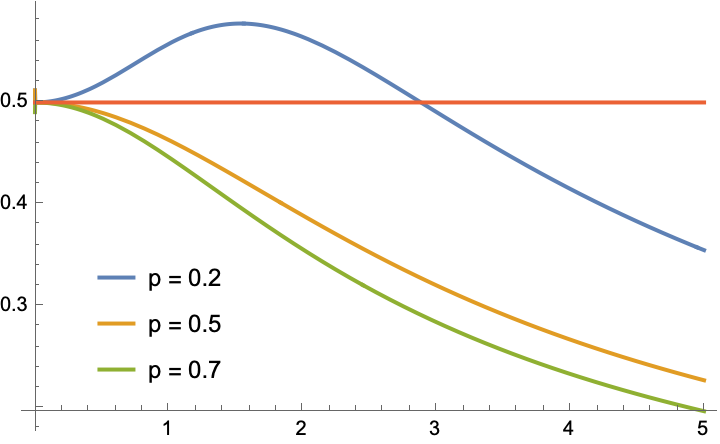}
\quad
\includegraphics[width=6cm]{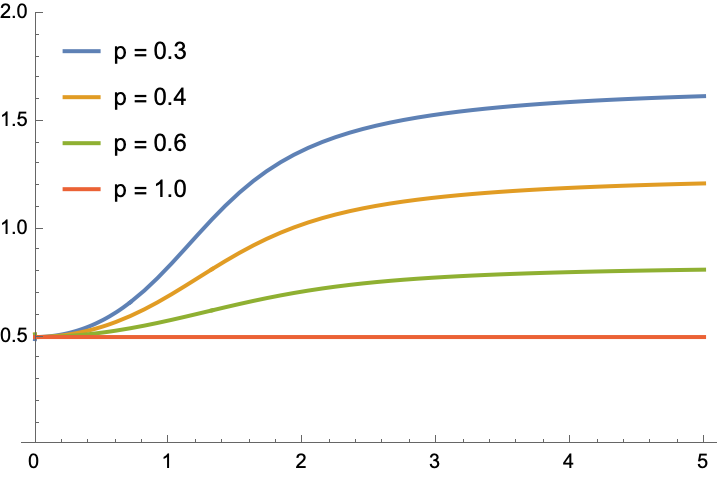}
\caption{
Plots of $\psimu(t)=\LLa_\mu(t)/t^2$ for the standardized Bernoulli($p$) measure (top), $p$-sparse Rademacher distribution (lower-left) and $p$-sparse Gaussian (lower-right). In the latter two cases $\psimu$ is symmetric. See Examples \ref{ex:Bp}, \ref{ex:Rp}, \ref{ex:Gp}. The line $\psi_\gamma(t)\equiv\frac12$ for the Gaussian measure is plotted in red for reference. The standardized Bernoulli($p$) measure is only sharp sub-Gaussian for $p=\frac12$ (the Rademacher case) and the $p$-sparse Gaussian is only sharp sub-Gaussian for $p=1$ (the Gaussian case). 
The $p$-sparse Rademacher is not sharp sub-Gaussian when $p<\frac13$.}
\label{fig:psimu}
\end{figure}

\begin{example}[Bernoulli]
\label{ex:Bp}
For $B_p$ a Bernoulli($p$) variable, the standarized variable $X=(B_p-p)/\sqrt{p(1-p)}$ has distribution
\[
\mu=(1-p)\delta_{-\sqrt{\frac{p}{1-p}}} + p \delta_{\sqrt{\frac{1-p}p}}
\]
with
\begin{equation}
\LLa_\mu(t) = \log \bigg[ (1-p) \exp\bigg( -t\sqrt{\frac{p}{1-p}}\bigg) + p \exp\bigg( t\sqrt{\frac{1-p}p}\bigg)\bigg]\,.
\end{equation}
Since $X$ is standardized, $\psimu(0)=\frac12$, but $\psi_\mu'(0)=0$ only if $p=\frac12$, i.e. when $X$ is Rademacher.
Hence, $\mu$ is only sharp sub-Gaussian when $p=\frac12$. Since $\mu$ is compactly supported we have $\psiinfty=0$. 
When $p<\frac12$ then $\psi_\mu'(0)>0$ and the maximum $\psimax>\frac12$ is achieved at a unique $t_p^*\in(0,\infty)$.
See Figure \ref{fig:psimu}.
\end{example}

\begin{example}[Sparse Rademacher]
\label{ex:Rp}
For fixed $p\in(0,1]$ suppose $\mu$ is the distribution of $\frac1{\sqrt{p}}B_p Y$ for $Y\in\{+1,-1\}$ uniform and $B_p$ is an independent Bernoulli($p$) variable. 
Then
\[
\LLa_\mu(t) = \log ( 1-p + p \cosh(t/\sqrt{p})).
\]
Since $\mu$ is symmetric, $\psimu$ is a symmetric function, and since $\mu$ is compactly supported we have $\psiinfty=0$. One verifies that $\psimu$ achieves its maximum on $\R^+$ at a unique point $t_*(p)\ge0$. Moreover, we have $t_*(p)=0$ when $p\ge \frac13$, in which case $\psimax=\psimu(0)=\frac12$ and $\mu$ is sharp sub-Gaussian, whereas for $p<\frac13$ we have $t_*(p)>0$ and $\psimax>\frac12$.
(See \cite[Example 3]{AGH}.) See Figure \ref{fig:psimu}(lower-left).
\end{example}

\begin{example}[Sparse Gaussian]
\label{ex:Gp}
For fixed $p\in(0,1]$ suppose $\mu$ is the distribution of $\frac1{\sqrt{p}}B_p G$, where $G$ is standard Gaussian and $B_p$ is an independent Bernoulli($p$) variable. 
Then
\[
\LLa_\mu(t) = \log ( 1-p + p e^{t^2/2p}).
\]
Since $\mu$ is symmetric, $\psimu$ is a symmetric function. 
One verifies that $\psimu$ is strictly increasing on $\R^+$, with $\psimax=\psiinfty=\frac1{2p}$, so $\mu$ is not sharp sub-Gaussian for any $p\in(0,1)$. See Figure \ref{fig:psimu}(lower-right).
\end{example}

\subsection{Minimax formula for the upper tail rate}
\label{sec:rate}

For $x\ge2$ and $\theta\ge0$ let
\begin{equation}	\label{def:Jsc}
J(x, \theta) := 
\begin{cases}
\theta^2 & \theta\le \thetam \\
\theta x- \frac12 \int \log(x-\lambda) d\sigma(\lambda) - \frac12\log(2\theta) -\frac12
& \theta\ge \thetam 
\end{cases}
\end{equation}
where we denote by
\begin{equation}	\label{def:thetapm}
\thetapm:= \frac14\Big(x\pm\sqrt{x^2-4}\Big)
\end{equation}
the solutions to the equation $x=2\theta + (2\theta)^{-1}$. 
We note that $J(x,\theta)$ is continuously differentiable in $\theta\in(0,\infty)$ for any fixed $x\ge2$ (see \cite[Section 4.1]{HuGu1};  using the relation $x=2\thetam +(2\thetam)^{-1}$ one sees that the function $v$ defined there is $C^1$ in $\theta$).
One verifies that $\thetam$ is related to the Stieltjes transform $G_\sigma(x)$ of the semicircular measure $\sigma$ at $x\ge2$ by
\begin{equation}	\label{def:Gsc}
G_\sigma(x) := \int\frac{d\sigma(\lambda)}{x-\lambda} = 2\thetam.
\end{equation}
For $x\ge2$ and $\theta\ge0$ we set
\begin{equation}	\label{def:overlapx}
\overlap_x(\theta) := 
\Big( 1-\frac{\thetam}{\theta}\Big)^{1/2}_+.
\end{equation}
The other root $\thetap$ in \eqref{def:thetapm} has significance for large deviations. Indeed, a computation shows (see \cite[Section 4.1]{HuGu1}) the GOE rate function 
\eqref{tail.igamma} can alternatively be expressed 
\begin{equation}	\label{igamma.var}
\rate^\gamma(x) = \sup_{\theta\ge0} \{J(x,\theta) - \theta^2\}
\end{equation}
where the supremum is attained at the unique value $\thetap$.

Our first main result gives an asymptotic minimax characterization of the upper tail for $\lam_1$, extending \eqref{igamma.var}.
The objective function bears some resemblance to the one in \eqref{igamma.var}, with $\theta^2$ replaced by a quantity $\free_{N,R}(\theta,w)$ that we now define, which gives the leading order asymptotic value of a certain \emph{restricted annealed free energy density} for a spherical spin glass model, whose definition we defer to Section \ref{sec:results.restricted}. 

Hereafter, $\B^N,\B$ denote the closed (Euclidean) unit balls in $\mathbb R^{N}$ and $\ell^2(\N)$, respectively. 
We identify $\R^N$ with the subspace $\R^{[N]}\subset\ell^2(\N)$ of sequences supported on $[N]=\{1,\dots,N\}$, and correspondingly view $\B^N$ as a subset of $\B$. We write $\cP(I)$ for the set of Borel probability measures supported on an interval $I$, and $\cP_\alpha(I)\subset\cP(I)$ for those measures with second moment $\int s^2d\nu(s)=\alpha$.
For $R,\alpha>0$ and $v\in \ell^2(\N)$ define
\begin{align}
\VP_R(v, \alpha) &:=
\sup_{\nu\in\cP_{\alpha}([-R,R])}
\bigg\{ \int \sum_{i\ge1} \LLa_\mu(2v_i s) 
 d\nu(s) - \DKL(\nu|\gamma)\bigg\} \label{def:VP}
\end{align}
where $\DKL(\nu|\gamma)$ is the relative entropy (see \eqref{def:DKL}). 
An alternative, non-variational expression for $\VP_R(v,\al)$ is given in Remark \ref{rmk:tfree.alt} below.
We also define $\VP_\infty(v,\al)$ as above but with the supremum   in \eqref{def:VP}
taken over $\cP_\al(\R)$.
From \eqref{assu:sg} and the non-negativity of $\DKL(\nu|\gamma)$ it follows that $\VP_R$ is finite on $\ell^2(\N)\times [0,R^2]$, specifically:
\[
\VP_R(v,\al)\le 4\psimax\alpha\|v\|_2^2\,.
\]
For $\theta\ge0,w\in\B$ and $R\ge1$ we set
\begin{align}
\label{def:freeN}
\free_{N,R}(\theta,w) 
&:=
\freeL(\theta,w)+
\theta^2(1-\|w\|_2^2)^2	
+ 
\VP_R(\theta w, 1-\|w\|_2^2) 
 -\frac12\|w\|_2^2
\end{align}
where 
\begin{equation}
\label{def:freeL}
\freeL(\theta,w):=
\frac1N \sum_{i\le j} \LLa_\mu(2^{\ep_{ij}} \theta\sqrt Nw_iw_j)	\,,\qquad
\ep_{ij} := \frac12(1+1_{i\ne j}). 
\end{equation}
Note that since $\LLa_\mu(0)=0$, the only nonzero summands in \eqref{def:VP} and \eqref{def:freeL} are for $i\in\supp(v)$ and $i,j\in\supp(w)$, respectively.
Note also that the quantities in \eqref{def:VP}, \eqref{def:freeN}, \eqref{def:freeL} are invariant under permutations of the coordinates of $v$ and $w$.

Define
\begin{equation}	\label{def:jayN0}
\cJ_N(x,\vloc):= \sup_{\theta\ge0} \Big\{ J(x,\theta)- \free_{N,N^{1/5}}\big(\theta, \overlap_x(\theta)z\big) \Big\}\,,\qquad x\ge2\,,\;\vloc\in\B\,.
\end{equation}
(Compare \eqref{igamma.var}.)
Note that $\free_{N,R}$ and $\cJ_N$ depend additionally on $\mu$, but we suppress this from the notation. 

\begin{theorem}\label{thm:rateN}  
Assume \eqref{assu:usg}.
There is a constant $c_\mu\in(0,1)$ depending only on $\mu$ such that
for any fixed $x\ge2$, 
\begin{equation}	\label{cormain.asymp}
\lim_{\delta\downarrow0}\limsup_{N\to\infty}
\bigg|\frac{1}{N}\log\mathbb{P}(|\lam_1-x|\leq\delta)+ \rate^\mu_N(x)\bigg|=0
\end{equation}
where, with $n=N^{3/4}$ and $\zcush_x=c_\mu/x^4$,
\begin{equation}	\label{def:rateN0}
\rate^\mu_N(x)
:=
\inf_{\vloc\in(1-\zcush_x)\B^n}
\cJ_N(x,\vloc)\,.
\end{equation}
\end{theorem}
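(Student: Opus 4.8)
The plan is to deduce \eqref{cormain.asymp} from a pair of matching exponential bounds on $\P(|\lam_1-x|\le\delta)$ — this is essentially the content of Theorem \ref{theomain} and Propositions \ref{prop:upper}, \ref{prop:lower0} — which I would establish by tilting the law of $H$ by \emph{restricted} spherical integrals, and then clean up by sending $N\to\infty$ before $\delta\downarrow0$. Fix $x\ge2$. For $\theta\ge0$, a set $S\subseteq[N]$ with $|S|\le n=N^{3/4}$, and a vector $w$ supported on $S$ with $\|w\|_2<1$, write $I_{S,w}(H,\theta)$ for the integral in \eqref{def:spherical} restricted to unit vectors $u\in\sphereN$ whose restriction $u|_S$ lies in a small neighbourhood of $w$. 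The heuristic underlying everything, going back to \cite{HuGu1}, is twofold. First, on the event $\{\lam_1\approx x\}$ with top eigenvector $v_1$ of profile $(S,w)$, the contribution to $I_{S,w}(H,\theta)$ of the cone of $u$'s close to $v_1$ already forces $\tfrac1N\log I_{S,w}(H,\theta)\ge J(x,\theta)+o(1)$: the $\theta x$ term in \eqref{def:Jsc} comes from $\langle v_1,Hv_1\rangle=\lam_1\approx x$, the rest of $J(x,\theta)$ from a Laplace estimate for the Gaussian-like transverse fluctuations of $u$ on the sphere, so that $u|_S$ for the optimal tilt vector is $\overlap_x(\theta)$ times $v_1|_S$ — which is why the profile enters \eqref{def:jayN0} as $\overlap_x(\theta)z$, with $z$ playing the role of $v_1|_S$. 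Second, because the entries of $H$ are independent, the annealed expectation factorizes and $\tfrac1N\log\E\, I_{S,w}(H,\theta)=\free_{N,R}(\theta,w)+o(1)$ with $R=N^{1/5}$: pairs of indices both in $S$ produce $\freeL(\theta,w)$; pairs with exactly one index in $S$ produce $\VP_R(\theta w,1-\|w\|_2^2)$ after recognising the empirical law $\nu$ of the rescaled delocalised coordinates of $u$ and applying a Sanov-type estimate on the sphere, the constraint $\int s^2\,d\nu=1-\|w\|_2^2$ being exactly the radius constraint on $u|_{S^c}$; pairs with both indices outside $S$ produce $\theta^2(1-\|w\|_2^2)^2$ via the expansion $\LLa_\mu(t)=\tfrac12t^2+O(t^3)$ at $0$ (the error being $o(N)$ after summing $O(N^2)$ terms of size $O(N^{-3/2})$); and the volume of the restriction cone supplies the $-\tfrac12\|w\|_2^2$. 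The role of $R=N^{1/5}$ is to cut off the delocalised coordinates of $u$, which one checks is harmless.

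For the upper bound I would partition $\{|\lam_1-x|\le\delta\}$ according to the discretised profile $(S,w)$ of $v_1$. Since $w$ lives on $n=N^{3/4}\ll N$ coordinates it may be taken on a net of cardinality $e^{o(N)}$, so it suffices to bound each $\P(|\lam_1-x|\le\delta,\ v_1\approx(S,w))$; multiplying and dividing by $I_{S,w}(H,\theta)$ and applying Markov's inequality gives $\P(\cdot)\le\exp\!\big(N[\free_{N,R}(\theta,w)-J(x,\theta)]+o(N)+N\omega(\delta)\big)$, where $\omega(\delta)\to0$ as $\delta\downarrow0$. Optimising over $\theta\ge0$ and over the profile, and recalling \eqref{def:overlapx} and \eqref{def:jayN0}, yields $\tfrac1N\log\P(|\lam_1-x|\le\delta)\le-\inf_{z\in\B}\cJ_N(x,z)+o_N(1)+\omega(\delta)$. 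To match \eqref{def:rateN0} I would additionally argue that the delocalised mass of $v_1$ is at least $\zcush_x=c_\mu/x^4$ on the relevant event — roughly because concentrating too much $\ell^2$-mass of the eigenvector on few coordinates would require an entry of $H$ larger than is compatible with \eqref{assu:usg} at exponential scale $N$ — which lets one restrict the infimum to $z\in(1-\zcush_x)\B^n$, giving the $\le$ half of \eqref{cormain.asymp}.

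For the matching lower bound I would fix a near-optimal profile $(S,w)$ and parameter $\theta$ and perform the change of measure $d\tilde\P\propto I_{S,w}(H,\theta)\,d\P$, so that $\P(|\lam_1-x|\le\delta)\ge\exp\!\big(N[J(x,\theta)-\free_{N,R}(\theta,w)]+o(N)\big)\,\tilde\P(|\lam_1-x|\le\delta)$; what remains is to show $\tilde\P(|\lam_1-x|\le\delta)=e^{o(N)}$. Under $\tilde\P$ the entries of $H$ are exponentially tilted, and by \eqref{assu:usg} together with \Cref{lem:subG-tilts} their recentred versions are uniformly sub-Gaussian, so $\lam_1$ concentrates around its $\tilde\P$-mean at speed $N$ by the usual Lipschitz-concentration argument; the genuinely new point is that this mean equals $x+o(1)$ for an appropriate choice of parameters. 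I would obtain this by a continuity/intermediate-value argument: as $\theta$ ranges over the relevant interval the $\tilde\P$-mean of $\lam_1$ varies continuously — established by coupling the tilted laws at nearby parameters and invoking the concentration estimate — and its endpoints straddle $x$, so some admissible $\theta$ realises mean $x$ up to $o(1)$. Optimising over $(S,w,\theta)$, and checking that the optimiser automatically has $\|z\|_2\le1-\zcush_x$, gives the $\ge$ half.

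The main obstacles I anticipate are (i) making the spherical-integral heuristic of the first paragraph rigorous \emph{uniformly} over the whole family of admissible profiles, the delicate regime being when the localised block of $v_1$ spreads over as many as $N^{3/4}$ coordinates — this is exactly where non-universality enters and is why one must work with the finite-$N$ object $\free_{N,R}$ rather than a limit — and (ii) the continuity of $\theta\mapsto\E_{\tilde\P}\lam_1$ in the lower bound, which requires a careful comparison of tilted laws at nearby parameters. Granting the two bounds, \eqref{cormain.asymp} follows: for each fixed $\delta$ they give $\limsup_{N\to\infty}\big|\tfrac1N\log\P(|\lam_1-x|\le\delta)+\rate^\mu_N(x)\big|\le\omega(\delta)$, where the $\delta$-dependence is absorbed using continuity of $x\mapsto J(x,\theta)$ and of $\overlap_x$, and letting $\delta\downarrow0$ concludes.
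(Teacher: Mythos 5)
Your overall architecture is the same as the paper's: an upper bound by decomposing over an $e^{o(N)}$-net of profiles for the large coordinates of $v_1$ and tilting by restricted spherical integrals (quenched value $J(x,\theta)$, annealed value $\free_{N,R}(\theta,w)$ via a Sanov/Varadhan analysis, which is \Cref{prop:upper-joint} together with \Cref{prop:FE}), and a lower bound by tilting at a near-optimal $(w,\theta)$ and locating $\theta$ by continuity and the intermediate value theorem (\Cref{prop:lower0}, \Cref{prop:tiltP}). However, two of your steps would fail as written. First, your justification for restricting the infimum to $\vloc\in(1-\zcush_x)\B^n$ — that near-complete localization of $v_1$ ``would require an entry of $H$ larger than is compatible with \eqref{assu:usg} at exponential scale $N$'' — is not correct: a sub-Gaussian entry of order one costs only $e^{-\Theta(N)}$, which is exactly the scale of the large deviation event, so such configurations cannot be discarded on tail grounds alone. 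The paper's \Cref{prop:no-comp} argues differently: if $v_1$ were essentially supported on $o(N)$ coordinates with $\lam_1\approx x$, then the off-diagonal block $Bw$ typically has norm close to $1$ (a Chernoff bound, cost $e^{-cN}$ to avoid), and a two-block test vector then forces $\lam_1\ge x+c/x$; comparing with the probability of $\{\lam_1\approx x\}$ requires the monotonicity of the large deviation rate, which is itself proved by a nontrivial Markov-chain coupling (\Cref{lem:monotone}). This chain of ideas is absent from your sketch and cannot be replaced by the tail heuristic you give.

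Second, in the lower bound you claim $\lam_1$ concentrates under $\tilde\P\propto I_{S,w}(H,\theta)\,d\P$ ``by the usual Lipschitz-concentration argument.'' But $\tilde\P$ is a mixture over $u$ of the product measures $\P^{(\theta,u)}$, so its entries are not independent and that argument does not apply; worse, if $\E^{(\theta,u)}\lam_1$ varied appreciably over the $u$'s carrying the mixture's mass, $\lam_1$ would simply not concentrate under $\tilde\P$, and the intermediate value theorem applied to the mixture mean would not yield a usable $\theta$. The paper's resolution is to keep the integral over $u$ explicit and show (i) the restricted tilted spherical law $Q^{(\theta,w,\srad,R)}$ concentrates, up to $e^{-o(N)}$ mass, in a Wasserstein ball around a deterministic vector $w+\t v^{(\theta)}$ depending continuously on $\theta$ (\Cref{prop:tiltQ}, proved through the constrained Gibbs variational analysis of \Cref{prop:gibbs}), and (ii) $u\mapsto\E^{(\theta,u)}\lam_1$ is continuous in the Wasserstein distance, via an explicit monotone coupling of tilted entry laws (\Cref{prop:tiltP}(b)). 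The intermediate value theorem is then applied to $\theta\mapsto\E^{(\theta,\,w+\t v^{(\theta)})}\lam_1$, and concentration is used only for each fixed $u$, where the entries are genuinely independent. These two ingredients — the Wasserstein localization of the tilted spherical measure and the coupling-based continuity in $u$ — are the substantive content behind the phrase ``for an appropriate choice of parameters'' in your sketch, and need to be supplied for the lower bound to close.
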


\begin{remark}
\label{rmk:dropUSG}
\quad
\begin{enumerate}
\item The proof gives quantitative bounds -- see Propositions \ref{prop:upper-joint}, \ref{prop:no-comp} and Lemma \ref{lem:lower1}.
\item The assumption \eqref{assu:usg} is mainly needed in the proof of the lower bound for $\P(|\lam_1-x|\le \delta)$, in order to perform a tilting argument.
For the upper bound it is only used to rule out the event that the associated eigenvector $v_1$ is essentially supported on $o(N)$ coordinates (see Proposition \ref{prop:no-comp}), though we expect the assumption could be removed there. 
\item The choice of $n, \rho, R$ proposed in Theorem \ref{thm:rateN}  is sufficient for our arguments. 
The conclusion still holds if the infimum 
in \eqref{def:rateN0} is taken over $(1-\zcush)\B^{n}$ for any fixed $\zcush\in(0,\zcush_x)$. 
One can also replace $n$ with $N^{1-a}$ for any fixed $a\in (0,\frac12)$, in which case we can take any  $R\in [2N^{a/2},N^{1/4}/\log N]$ in place of $N^{1/5}$ in \eqref{def:jayN0}. 
\end{enumerate}
\end{remark}

\begin{remark}	\label{rmk:tfree.alt}
We have the following alternative, non-variational expression for $\VP_R(v,\al)$ from \eqref{def:VP}, which is useful for numerical evaluation of the rate function 
$\rate^\mu_N$ as we do in Figure \ref{fig:pgauss}. The case of finite $R$ is shown in Proposition \ref{prop:gibbs} and the case $R=\infty$ was established in \cite[Lemma 12]{AGH}. With
\begin{align*}
g_{v,R}(\zeta)
&:= \log\int_{-R}^R \exp \bigg(  - \zeta s^2 + \sum_{i\ge1} \LLa_\mu(2v_is) \bigg) ds
\,,\qquad
\end{align*}
we have
\begin{equation}
\VP_R(v,\al) = g_{v,R}(\zeta^\star) + \al\zeta^\star + \frac12(1-\al) - \frac12\log(2\pi e) 
\end{equation}
where $\zeta^\star=\zeta^\star_{v,\al,R}>0$ is the unique solution to the equation
$
g_{v,R}'(\zeta) +\al=0.
$
Hence, we can alternatively express \eqref{def:freeN} as
\begin{align*}
&\free_{N,R}(\theta,w) = \\
&\freeL(\theta,w) +
\theta^2(1-\|w\|_2^2)^2 +
g_{\theta w,R}\big(\zeta^\star_{\theta w,1-\|w\|_2^2,R}\big) + (1-\|w\|_2^2)\zeta^\star_{\theta w,1-\|w\|_2^2,R}
-\frac12\log(2\pi e).
\end{align*}
\end{remark}

One should think of $\cJ_N(x,\vloc)$ in \eqref{def:jayN0} as a joint large deviation rate function for $\lam_1$ and the restriction of the associated eigenvector $v_1$ to its large coordinates. That is, for a fixed parameter $\eta\in(0,\frac14)$ set
\begin{equation}
\label{def:v1eta}
v_1^{(\eta)}:=(v_i1_{|v_i|\ge N^{-1/2+\eta}})_{i=1}^N
\end{equation}
which has support of size at most $N^{1-2\eta}$
 (for Theorem \ref{thm:rateN} we take $\eta=\frac18$, but in the proofs we need to consider general $\eta$).
 Indeed, the proof roughly shows
\[
\frac1N\log\P(\lam_1\approx x, v_1^{(\eta)}\approx \vloc) \le - 
\cJ_{N}(x,\vloc)+o(1)\, .
\]
The infimum over $\vloc$ in \eqref{def:rateN0} then reflects a union bound over all possible choices for the localized part $v_1^{(\eta)}$ (where the cardinality of a net of approximations $\vloc$ is of negligible size $\exp(o(N))$). 
The idea to restrict to an event on which the large coordinates of $v_1$ are fixed is one of the key ideas of this work, which allows us to pin down the sharp large deviations rate in cases where the previous work \cite{AGH} hit a barrier. 




The 
expression \eqref{def:rateN0} bears some resemblance to rate functions appearing in Proposition 1 and Theorem 3 of \cite{AGH} giving one-sided bounds that are tight in certain cases.
The main, but crucial, difference from the rate functions in \cite{AGH} is that the supremum over $\vloc$ is taken after the infimum over $\theta$.
That is, we first establish joint large deviation estimates for $(\lam_1,v_1^{(\eta)})$, and then contract to get large deviation estimates for $\lam_1$.
In some cases 
this also allows to establish structural properties of $v_1$ conditional on a large deviation event for $\lam_1$, by understanding the structure of the optimizers $\vloc$ in \eqref{def:rateN0}.

Theorem \ref{thm:rateN} shows that asymptotically, the upper tail for $\lam_1$ is given by the $N$-dependent rate function 
$\rate^\mu_N$. Our next three results provide a genuine limiting rate function 
$\rate^\mu$ under further assumptions.

\subsection{Universal rate function close to the bulk}

We show in Lemma \ref{lem:Jprops} that $\free_{N,R}(\theta,0)=\theta^2+O(e^{-cR^2})$, and hence from \eqref{igamma.var},
\begin{equation}
\rate^\mu_N(x) \le \cJ_N(x,0) =\sup_{\theta\ge0} \big\{J(x,\theta) - \theta^2\big\} +O(e^{-cN^{1/4}}) =\rate^\gamma(x) +o(1).
\end{equation}
Thus, for the class of sub-Gaussian measures covered by Theorem \ref{thm:rateN}, large deviations of $\lam_1$ are at least as likely as in the GOE case.
The following result shows that $\rate^\mu_N(x)$ in fact always converges to the GOE rate function $\rate^\gamma(x)$ in a some neighborhood of $2$, because the infimum in \eqref{def:rateN0} is then 
taken at $\vloc=0$; this further entails that $v_1^{(\eta)}\approx 0$ on the event that $\lam_1 \approx   x$, i.e.\ $v_1$ is delocalized.

\begin{theorem} [GOE rate function in a neighborhood of the bulk]
\label{thm:smallx}
Assume \eqref{assu:usg}. 
\begin{enumerate}[(a)]
\item
There is a universal constant $c>0$ such that the following holds.
For any fixed $x>2$ and $x+N^{-c}\le y\le L$, 
\begin{equation}	\label{igamma-LB}
\frac1N\log \P( \lam_1\in [x,y)) \ge -\rate^\gamma(x) -N^{-c}
\end{equation}
for all $N$ sufficiently large depending on $x,L$ and $\mu$. 
Moreover, there exists $x_\mu>2$ depending only on $\mu$ such that if $2<x< x+N^{-c}\le y<x_\mu$ then
\begin{equation}	\label{igamma-UB}
\frac1N\log \P( \lam_1\in [x,y)) \le -\rate^\gamma(x) + N^{-c}
\end{equation}
for all $N$ sufficiently large depending on $x$ and $\mu$. In particular, \eqref{tail.igamma} holds for any fixed $x<x_\mu$.
\item For any fixed $\heta\in(0,\frac1{10})$, $\eta\in(\heta,\frac14-\heta)$, $\xcush>0$ and interval $I\subset[2+\kappa,x_\mu)$ of length at least $N^{-c}$, with $v_1^{(\eta)}$ as in \eqref{def:v1eta}, we have
\begin{equation}	\label{igamma-v1UB}
\P\big( \|v_1^{(\eta)}\|_2^2\ge\al \,\big|\, \lam_1\in I\big) \le \exp( - c'\al \sqrt{\xcush} N) \qquad\forall \al \ge N^{-c'\heta}
\end{equation}
for  all $N$ sufficiently large depending on $\eta,\xcush$ and $\mu$, and a constant $c'>0$ depending only on $\mu$. In particular, conditional on $\lam_1\in I$ we have  $\|v_1\|_\infty \le N^{-c'/16}$ with probability $1-o(1)$.
\end{enumerate}

\end{theorem}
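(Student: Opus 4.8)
\emph{Proof strategy.} The plan is to deduce both parts from the quantitative forms of \Cref{thm:rateN} --- the joint upper-tail bound (Propositions \ref{prop:upper-joint} and \ref{prop:no-comp}, the latter ruling out the compressed-eigenvector scenario) and the matching lower bound (\Cref{lem:lower1}, Proposition \ref{prop:lower0}) --- which reduce everything to estimates on the $N$-dependent rate $\rate^\mu_N(x)=\inf_{\vloc\in(1-\zcush_x)\B^n}\cJ_N(x,\vloc)$ and, for part (b), on the \emph{gain} $\cJ_N(x,\vloc)-\rate^\gamma(x)$ as a function of $\|\vloc\|_2$. One inequality is immediate: taking $\vloc=0$ and using \Cref{lem:Jprops} gives $\rate^\mu_N(x)\le\cJ_N(x,0)=\rate^\gamma(x)+O(e^{-cN^{1/4}})$, so combined with the quantitative lower bound and the local Lipschitz continuity of $\rate^\gamma$ this yields \eqref{igamma-LB}. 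The content is the matching lower bound on $\rate^\mu_N(x)$ for $x$ close to $2$.

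The key estimate I would prove is: there is $\delta_\mu>0$, depending only on $\mu$, such that whenever $\overlap_x(\thetap)\le\delta_\mu$,
\[
\cJ_N(x,\vloc)\ \ge\ \rate^\gamma(x)+\tfrac14\overlap_x(\thetap)^2\|\vloc\|_2^2-e^{-cN^{1/4}}\qquad\text{for all }\vloc\in\B .
\]
Lower-bound the supremum in \eqref{def:jayN0} by its value at $\theta=\thetap$; since $J(x,\thetap)=\rate^\gamma(x)+\thetap^2$ by \eqref{igamma.var}, it suffices to show $\free_{N,N^{1/5}}(\thetap,w)\le\thetap^2-\tfrac14 a$ where $w:=\overlap_x(\thetap)\vloc$ and $a:=\|w\|_2^2\le\overlap_x(\thetap)^2$. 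Bound the three terms of \eqref{def:freeN} separately. First, keep $\thetap^2(1-a)^2$. Second, from $\LLa_\mu(t)\le\psimax t^2$ and the $N^{-1}$ prefactor in \eqref{def:freeL} one gets $\freeL(\thetap,w)\le 2\psimax\thetap^2 a^2$; this crude bound is harmless even though the arguments $\thetap\sqrt N w_iw_j$ may be large. Third, the delicate term: $\VP_R(\thetap w,1-a)\le 2\thetap^2 a(1-a)+\tfrac12 a+\tfrac12\log(1-a)+a\,\omega_\mu(\overlap_x(\thetap))$, where $\omega_\mu(r)\to0$ as $r\to0$. Here $2\thetap^2 a(1-a)=\int\sum_i\LLa_\gamma(2\thetap w_is)\,d\nu$ is the Gaussian contribution at any measure $\nu$ of second moment $1-a$; $\tfrac12 a+\tfrac12\log(1-a)$ equals $\max\{-\DKL(\nu|\gamma):\int s^2d\nu=1-a\}$ (attained at the centered Gaussian of variance $1-a$); and the remainder is the Taylor error $\int\sum_i(\LLa_\mu-\LLa_\gamma)(2\thetap w_is)\,d\nu^\star$ at the optimizing $\nu^\star$. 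Since $\sum_i\LLa_\mu(2\thetap w_is)\le 4\psimax\thetap^2 a\,s^2$ is a gentle perturbation of $\tfrac12 s^2$ when $\|w\|_2$ is small, $\nu^\star$ --- the explicit exponential tilt of $\gamma|_{[-R,R]}$ by that perturbation, via \Cref{rmk:tfree.alt} --- has $\DKL(\nu^\star|\gamma)=O(\overlap_x(\thetap)^2)$ and hence $O(1)$ moments uniformly in $N$ and $R\le N^{1/5}$, so using $\LLa_\mu(t)-\LLa_\gamma(t)=O(|t|^3)$ near $0$ and $=O(t^2)$ globally the error is at most $C_\mu\|\thetap w\|_3^3+(\text{tail correction})\le a\,\omega_\mu(\overlap_x(\thetap))$. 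Summing the three bounds, the $\tfrac12 a$ cancels the $-\tfrac12\|w\|_2^2$ term in \eqref{def:freeN}, and using $(1-a)^2+2a(1-a)=1-a^2$ and $\log(1-a)\le-a$ the rest collapses to $\free_{N,R}(\thetap,w)\le\thetap^2-\tfrac a2\big(1-2(2\psimax-1)_+\thetap^2 a-2\omega_\mu(\overlap_x(\thetap))\big)+O(e^{-cR^2})$; since $a\le\overlap_x(\thetap)^2$ and $\thetap\le x/2$, the parenthesis exceeds $\tfrac12$ once $\overlap_x(\thetap)\le\delta_\mu$, giving the claim ($R=N^{1/5}$, so $e^{-cR^2}\le e^{-cN^{1/4}}$). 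One then sets $x_\mu$ to be the unique solution of $\overlap_x(\thetap)=\delta_\mu$, or $x_\mu=+\infty$; since $\overlap_x(\thetap)=(1-4(\thetam)^2)^{1/2}$ increases continuously from $0$ at $x=2$ we get $x_\mu>2$, and for sharp sub-Gaussian $\mu$ one may take $\omega_\mu\equiv0$, $\delta_\mu=1$, $x_\mu=+\infty$, consistent with \Cref{thm:HuGu}.

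Given the key estimate, part (a) follows: \eqref{igamma-UB} comes from covering $[x,y)$ by an $N^{-c}$-mesh, applying the quantitative upper bound of \Cref{thm:rateN} at each mesh point, and using $\rate^\mu_N(x')\ge\rate^\gamma(x')-e^{-cN^{1/4}}\ge\rate^\gamma(x)-e^{-cN^{1/4}}$ for $x\le x'<x_\mu$ (the key estimate at $\vloc=0$, plus monotonicity of $\rate^\gamma$), the union-bound factor over the mesh being absorbed into the $N^{-c}$ error. For part (b), Proposition \ref{prop:upper-joint} gives $\P(\|v_1^{(\eta)}\|_2^2\ge\al,\ \lam_1\in I)\le\exp(-N\inf\{\cJ_N(x,\vloc):x\in I,\,\|\vloc\|_2^2\ge\al\}+o(N))$, while $\P(\lam_1\in I)\ge\exp(-N\sup_{x\in I}\rate^\gamma(x)-o(N))$; since $I\subset[2+\xcush,x_\mu)$ forces $\overlap_x(\thetap)\gs\xcush^{1/4}$ on $I$, the key estimate yields $\inf\{\cJ_N(x,\vloc):x\in I,\,\|\vloc\|_2^2\ge\al\}-\sup_{x\in I}\rate^\gamma(x)\gs\sqrt{\xcush}\,\al$ once $\al$ exceeds the stated threshold (absorbing the $O(N^{-c})$ oscillation of $\rate^\gamma$ on $I$ and the $e^{-cN^{1/4}}$ term), and dividing gives \eqref{igamma-v1UB}. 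The concluding $\|v_1\|_\infty$ bound follows by applying \eqref{igamma-v1UB} with $\al$ at the threshold and a suitable $\eta\in(\heta,\tfrac14-\heta)$: with probability $1-o(1)$ every coordinate of $v_1$ is then either too small to lie in $v_1^{(\eta)}$ or of $\ell^2$-mass at most the threshold, hence $\le N^{-c'/16}$ after relabelling the constant.

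The main obstacle is the third term of the key estimate, the bound on $\VP_R(\thetap w,1-a)$: because the integration variable in the Gibbs representation ranges over $[-N^{1/5},N^{1/5}]$, the argument $2\thetap w_is$ of $\LLa_\mu$ is not uniformly small and one cannot naively Taylor-expand; the resolution is to show that the optimizing measure is an exponential tilt of the truncated Gaussian by a perturbation that is $O(\overlap_x(\thetap)^2)$-small in relative entropy, hence sub-Gaussian with an $O(1)$ third absolute moment uniformly in $N$ and in $R\le N^{1/5}$, which requires the explicit form of $\VP_R$ from \Cref{rmk:tfree.alt}, a quantitative two-sided bound on the tilting parameter $\zeta^\star$, and a truncation estimate controlling the contribution of large $|s|$. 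A secondary, bookkeeping-heavy issue is propagating the asymptotic statement of \Cref{thm:rateN} to quantitative interval estimates with an explicit polynomial rate $N^{-c}$, and tracking the $o(N)$ errors in the joint upper-tail bound against the threshold $\al\ge N^{-c'\heta}$ in part (b).
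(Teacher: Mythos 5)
Your proposal is essentially the paper's own route: the whole theorem is reduced to the key estimate $\cJ_N(x,\vloc)\ge \rate^\gamma(x)+c\,\overlap_x(\thetap)^2\|\vloc\|_2^2$ (this is \Cref{lem:Jprops}(\ref{Jprop.smallx}) in the paper), which in turn is proved exactly as you propose by evaluating the supremum at $\theta=\thetap$ and showing $\free_{N,R}(\thetap,w)\le \thetap^2-\tfrac14\|w\|_2^2$ for $\|w\|_2$ small (\Cref{lem:freeprops}(\ref{freeprop.small})), via the explicit Gibbs form of the optimizer in $\VP_R$, a two-sided bound on the tilting parameter $\zeta^\star$ ($\tfrac1{16}\le\zeta\le C_1$ in the paper), moment bounds for the optimizer, and a Taylor expansion of $\LLa_\mu$ near $0$ on the bulk with a truncation of large $|s|$; the deduction of (a) and (b) from the joint upper bound (\Cref{prop:upper-joint}, \Cref{prop:no-comp}) and the lower bound (\Cref{lem:lower1}) mirrors the paper's \Cref{prop:Bquant}.

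One step of your part (b) deduction, as written, does not go through. You compare the joint rate against $\sup_{x\in I}\rate^\gamma(x)$ and claim the oscillation of $\rate^\gamma$ on $I$ is $O(N^{-c})$; but the theorem only requires $|I|\ge N^{-c}$, so $I$ may have length of order one, the oscillation is then of order $|I|$, and your inequality $\inf\{\cJ_N(x,\vloc):x\in I,\|\vloc\|_2^2\ge\al\}-\sup_{x\in I}\rate^\gamma(x)\gs\sqrt{\xcush}\,\al$ fails for $\al$ near the threshold $N^{-c'\heta}$. The fix is the one the paper uses: lower-bound $\P(\lam_1\in I)$ by applying \Cref{lem:lower1} in a window of width $N^{-1/20}$ at the \emph{left} endpoint $x'=\inf I$, so that $\tfrac1N\log\P(\lam_1\in I)\ge-\rate^\gamma(x')-N^{-c}$, while monotonicity of $\rate^\gamma$ gives $\inf_{y\in I}\rate^\gamma(y)=\rate^\gamma(x')$ in the joint upper bound; then no oscillation term appears and the gain $c\sqrt{\xcush}\,\al$ survives. (Alternatively, partition $I$ into $N^{-c}$-length subintervals and bound the conditional probability by the maximum over the pieces.) A smaller point: ``KL-distance $O(\overlap_x(\thetap)^2)$ to $\gamma$, hence $O(1)$ third moment'' is not a valid implication on its own (small relative entropy plus a second-moment constraint does not control third or fourth moments); what does the work, and what the paper does, is exactly your stated fallback — the explicit exponential-tilt form of the optimizer together with the uniform two-sided bound on $\zeta^\star$ and the pointwise bound $0\le h(s)\le C\al s^2$, which gives Gaussian domination of the density and hence uniform moment bounds.
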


The result of part (a) improves on \cite[Proposition 6]{AGH}, which established the GOE rate function $\rate^\gamma$ 
{for $x\in [2, (2\psimax-1)^{1/2}+ (2\psimax -1)^{-1/2}]$}
under the assumption $\psimax<1$.

\subsection{A full large deviation principle} 

For general sub-Gaussian $\mu$ and $x>x_\mu$ the GOE rate function $\rate^\gamma$ may underestimate the probability that $\lam_1 \approx   x$, due to the emergence of non-universal, localized large deviations mechanisms, coinciding with a non-vanishing mass $\|v_1^{(\eta)}\|_2^2$ in the large coordinates of the eigenvector $v_1$. 
Our next result provides a full large deviation principle on all of $\R$. 
For this it is necessary to assume that $\psimu(t)$ tends to a limit as $t\to\pm\infty$.
Our result assumes these limits are the same:
\begin{equation}
\label{assu:LRlim}
\psiinfty=\lim_{t\to+\infty}\psimu(t)= \lim_{t\to-\infty}\psimu(t)\,.
\end{equation}
We also assume that the supremum of $\psimu(t)$ is taken on $\R^+$ (possibly at $+\infty$):
\begin{equation}
\label{assu:maxR+}
\psimax = \sup_{t\ge0}\psimu(t)\,.
\end{equation}
\eqref{assu:LRlim} and \eqref{assu:maxR+} hold for instance when the $\mu$ is symmetric and $\lim_{t\to\infty}\psimu(t)$ exists, but also for some asymmetric measures such as the standardized Bernoulli measure $\mu_p=p\delta_{\sqrt{(1-p)/p}} + (1-p)\delta_{-\sqrt{p/(1-p)}}$ when $p\le \frac12$ (see Example \ref{ex:Bp}).
See Section \ref{sec:open} for further discussion of the assumptions \eqref{assu:LRlim}, \eqref{assu:maxR+}.

The rate function $\rate^\mu$ is obtained as a monotone limit of approximating rate functions defined as follows.
For  $R\ge1$ define
\begin{align}
&\tfree_{N,R}(\theta,\chw,\tal)
:=	  \label{def:tfree}\\
&\quad\theta^2\Big[ \beta^2 + 2\beta\tal + 2\psimax\tal^2+ 2\psiinfty(\|\chw\|_2^4+2\tal\|\chw\|_2^2) \Big]	\notag\\
&\qquad\qquad\qquad
+ \VP_R(\theta \chw, \beta) -\tfrac12(1-\beta)\,,\qquad\qquad\qquad	
&\theta\ge0,\chw\in\B,\tal\in[0,1]	\notag
\\
&\wt\cJ_{N,R}(x,\chz,\tal) 
:= \label{def:tjayN}\\
&\quad\sup_{\theta\ge0} \Big\{ J(x,\theta)- \tfree_{N,R}\big(\theta, \overlap_x(\theta)\chz,\overlap_x(\theta)^2\tal\big) \Big\}\,, 
\qquad
& \,x\ge2,\chz\in\B,\tal\in[0,1]	\notag
\end{align}
where in \eqref{def:tfree} we abbreviate $\beta:=1-\tal-\|\chw\|_2^2$, which is assumed to be nonnegative. 
For $\thresh>0$ denote
\begin{equation}	\label{def:Bthresh}
\B_{\ge\thresh}:= \{v\in\B: |v_j|\in \{0\}\cup[\thresh,1]\;\forall j\in\N\}
\end{equation}
and for $\zcush>0$ set
\begin{equation}
\wt\rate_{N,\thresh}(x,\zcush)
:= \inf_{\tal\in[0,1-\zcush]} \inf_{\substack{\chz\in\B_{\ge\xi} \\ \|\chz\|_2^2\le 1-\zcush-\tal}} 
\wt\cJ_{N,N^{1/5}}(x,\chz,\tal)  \,.
\label{def:trate}
\end{equation}

\begin{theorem}[Large deviation principle]\label{thm:fullLDP}
Assume \eqref{assu:usg}, \eqref{assu:LRlim} and \eqref{assu:maxR+}.  
Then with $c_\mu$ as in Theorem \ref{thm:rateN}, for any fixed $x\ge2$, $\eps\in(0,\frac1{10})$ and $\zcush\in(0,c_\mu x^{-4}]$, the limit 
\begin{equation}	\label{def:rate2}
\rate^\mu(x):=\lim_{N\to\infty} \wt\rate_{N,N^{-\eps}}(x,\zcush)
\end{equation}
exists and is independent of $\eps$ and $\zcush$, and defines a continuous non-decreasing function on $[2,\infty)$.
Moreover, $\lam_1$ satisfies a large deviation principle with speed $N$ and good rate function $\rate^\mu$ that is infinite on $(-\infty,2)$ and is otherwise given by $\eqref{def:rate2}$. 
\end{theorem}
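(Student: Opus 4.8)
The plan is to deduce \Cref{thm:fullLDP} from \Cref{thm:rateN} together with the monotonicity and pigeonhole arguments mentioned in the introduction. The crucial point is that \Cref{thm:rateN} already provides a finite-$N$ approximation $\rate^\mu_N(x)$ to $-\frac1N\log\P(|\lam_1-x|\le\delta)$, so the work lies in three places: (i) showing that $\rate^\mu_N(x)$ and $\wt\rate_{N,N^{-\eps}}(x,\zcush)$ are asymptotically equivalent; (ii) showing the limit in \eqref{def:rate2} exists, is independent of $\eps,\zcush$, and is continuous and non-decreasing; and (iii) upgrading the pointwise tail estimate \eqref{cormain.asymp} (with $\rate^\mu$ in place of $\rate^\mu_N$) to a genuine LDP on all of $\R$.

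First I would establish that $\rate^\mu_N(x)$, which involves the intractable object $\free_{N,N^{1/5}}$, can be replaced up to $o(1)$ errors by $\wt\rate_{N,N^{-\eps}}(x,\zcush)$. This is where the hypotheses \eqref{assu:LRlim} and \eqref{assu:maxR+} enter: the local free energy $\freeL(\theta,w)$ in \eqref{def:freeL} should be estimated by splitting the coordinates of $w$ into ``large'' coordinates (the localized part $\chw$, on which $\LLa_\mu$ sees its genuine behavior, contributing the $\psiinfty$ and rank-one $\theta^2$ terms) and a ``delocalized bulk'' with mass $\tal$ (where a Taylor/sub-Gaussian bound gives $\LLa_\mu(2^{\ep_{ij}}\theta\sqrt N w_iw_j)\approx \frac12(2^{\ep_{ij}}\theta\sqrt N w_iw_j)^2$ up to a $\psimax$-type correction, producing the $2\psimax\tal^2$ and cross terms). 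The discretization to $\chz\in\B_{\ge\xi}$ with threshold $\xi=N^{-\eps}$ corresponds to the cutoff \eqref{def:v1eta}; the mass not captured at that threshold is absorbed into $\tal$. Matching the combinatorial factor $\ep_{ij}$ and the diagonal terms carefully, and using real-analyticity of $\LLa_\mu$ (\Cref{rmk:assu.reg}) to control the bulk expansion, one checks $|\rate^\mu_N(x)-\wt\rate_{N,N^{-\eps}}(x,\zcush)|\to0$ for each fixed $x$; \Cref{rmk:dropUSG}(iii) grants the freedom in the choice of cushion $\zcush$ and exponent.

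Next, for the existence of the limit \eqref{def:rate2} and its regularity, the idea (as flagged in the discussion after \Cref{prop:weakLDP}) is a pigeonhole/subadditivity argument: $\wt\rate_{N,R}$ is built from a variational problem over vectors of unbounded support, and one separates scales of coordinates into $O(\log\log N)$ blocks so that on one block the contribution is negligible, allowing a comparison between dimension $N$ and a fixed truncation dimension; this yields convergence of $\wt\rate_{N,N^{-\eps}}(x,\zcush)$ as $N\to\infty$, with the limit clearly independent of $\eps$ and $\zcush$ since those only affect $o(1)$ terms by step one. Monotonicity and continuity of $x\mapsto\rate^\mu(x)$ on $[2,\infty)$ follow from the corresponding properties at finite $N$: monotonicity from a coupling/Markov-chain-with-small-jumps argument (\Cref{sec:monotone}) showing $\P(\lam_1\ge y)$ is comparable to $\P(\lam_1\ge x)$ up to speed-$o(N)$ factors for $y$ slightly larger than $x$, and continuity from equicontinuity estimates on $\wt\cJ_{N,R}(x,\cdot,\cdot)$ in $x$ (the map $\theta\mapsto J(x,\theta)$ being jointly continuous and the overlap $\overlap_x(\theta)$ being continuous in $x\ge2$), plus lower semicontinuity giving the good-rate-function property; infinity on $(-\infty,2)$ is inherited from $\rate^\gamma$ via $\rate^\mu\le\rate^\gamma$ and the F\"uredi--Koml\'os bound \eqref{FuKo}.

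Finally, to pass from the pointwise estimate \eqref{tail.igamma} (with $\rate^\mu$) to the full LDP, I would use the standard machinery: the pointwise limits give a weak LDP (upper bound on compacts, lower bound on open sets) by a covering argument, since $\rate^\mu$ is a good rate function; exponential tightness of $\lam_1$ at speed $N$ — needed to promote the weak LDP to a full one and to handle unbounded $E$ — follows from the uniform sub-Gaussian tail of the entries (a net argument on the sphere bounding $\P(\lam_1\ge L)\le e^{-cN L^2}$ for large $L$), which in turn requires only \eqref{assu:usg}. I expect the main obstacle to be step one: carefully showing that the localized/delocalized decomposition of $\freeL$ and $\VP_R$ reproduces exactly the objective \eqref{def:tfree}, including getting the right coefficients on the $\psimax$ and $\psiinfty$ terms and verifying that replacing the genuine eigenvector cutoff by the idealized threshold set $\B_{\ge\xi}$ costs only $o(1)$; this is where all three structural hypotheses \eqref{assu:usg}, \eqref{assu:LRlim}, \eqref{assu:maxR+} are genuinely used, and where the analysis in \cite{AGH} (its Lemma 12 and Proposition 7) must be adapted to the restricted setting.
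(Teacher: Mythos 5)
Your outline gets several supporting ingredients right (exponential tightness, the Markov-chain monotonicity for $x\mapsto\rate^\mu(x)$, Lipschitz control of $\wt\cJ_{N,R}$ in $x$, and a scale-separation step for the upper bound), but the central step (i) — deducing the theorem from \Cref{thm:rateN} by a purely variational/Taylor-expansion proof that $\rate^\mu_N(x)$ and $\wt\rate_{N,N^{-\eps}}(x,\zcush)$ agree up to $o(1)$ — has a genuine gap in the direction needed for the LDP \emph{lower} bound. Splitting a fixed $\vloc$ into large coordinates $\chz$ and an intermediate-scale bulk of mass $\tal$ and Taylor-expanding only yields $\free_{N,R}(\theta,\overlap_x(\theta)\vloc)\le\tfree_{N,R}(\theta,\overlap_x(\theta)\chz,\overlap_x(\theta)^2\tal)+o(1)$, i.e.\ $\rate^\mu_N(x)\ge\wt\rate_{N,N^{-\eps}}(x,\zcush)-o(1)$ (this is what the paper's pigeonhole upper bound does, via \Cref{lem:free-tfree}). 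For the reverse inequality you must exhibit a \emph{single fixed} $\vloc$ whose intermediate-scale block realizes the $2\psimax\tal^2$ term of \eqref{def:tfree} \emph{uniformly in $\theta$}, since in $\cJ_N(x,\vloc)$ the supremum over $\theta$ is taken after $\vloc$ is fixed. But the arguments of $\psimu$ in $\freeL(\theta,\cdot)$ at coordinates of size $\asymp N^{-1/4}$ scale linearly in $\theta$, so if $\psimu$ attains its maximum at a finite $t^*>0$ with $\psiinfty<\psimax$ (sparse Rademacher with $p<\tfrac13$, Bernoulli with $p<\tfrac12$), no fixed block can keep $\psimu\approx\psimax$ on the whole relevant $\theta$-range; one only gets $\freeL\le 2\theta^2\psimax\tal^2$, which is the wrong direction. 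The equivalence of the two variational problems is true a posteriori (both approximate the same log-probability), but it is not provable by the deterministic comparison you propose; this is exactly why the paper does \emph{not} route the lower bound through \Cref{thm:rateN}. Instead it re-runs the tilting lower bound with a $\theta$-dependent curve of localized vectors: \Cref{prop:lower0} permits a continuous curve $\tvloc(\theta)$, and \Cref{lem:lower2} takes $\tvloc(\theta)\approx\overlap_x(\theta)^{-1}w^*(\theta,\overlap_x(\theta)^2\tal,t)$ with entries $\sqrt{t/2\theta}\,N^{-1/4}$ tuned so that the arguments of $\LLa_\mu$ equal a fixed $t$ near $\arg\max\psimu$ for \emph{every} $\theta$; the intermediate value theorem then selects $\theta^*$ along this curve, and \Cref{lem:free-tfree} identifies the resulting restricted free energy with $\tfree^{(t)}_{N,R}$. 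This is where \eqref{assu:maxR+} and \eqref{assu:LRlim} are actually used, and it is not recoverable from the minimax formula of \Cref{thm:rateN} alone.

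Two further, smaller corrections. First, the existence of the limit \eqref{def:rate2} is not obtained by a pigeonhole/subadditivity argument: the paper observes that $\wt\rate_{N,N^{-2\eps}}(x,\zcush)$ is \emph{monotone decreasing in $N$} (because $\VP_R$, hence $\tfree_{N,R}$, is increasing in $R$, and the constraint set $\B_{\ge N^{-2\eps}}$ increases with $N$) and bounded, so it converges; independence of $\eps,\zcush$ then follows from the two-sided comparison with $\frac1N\log\P(\lam_1\in I)$ in \Cref{prop:weakLDP}, not from an a priori $o(1)$ bound. The pigeonhole (with $M=\exp(\sqrt{\log N})$) is used elsewhere, namely in the upper bound of \Cref{prop:weakLDP}, to locate a multiplicative gap in the coordinate sizes of $v_1$ so that the decomposition $\vloc=\chz+\tvloc$ satisfies the scale-separation hypotheses \eqref{assu:tfree1} of \Cref{lem:free-tfree} and the cross terms simplify. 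Second, the value $+\infty$ on $(-\infty,2)$ comes from the left-tail estimate \eqref{tight-Lside} (speed $N^{3/2}$), not from $\rate^\mu\le\rate^\gamma$ together with \eqref{FuKo}.
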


Roughly speaking, the free energy function \eqref{def:tfree} arises as a reduction of $\free_{N,R}$ from \eqref{def:freeN} under a decomposition $w=\chw+\tw$, with $\chw\in \B_{\ge N^{-\eps}}$ containing the very large entries of $w$. The appropriate threshold is actually located via a pigeonholing argument which allows to show that cross terms simplify. It can then be shown for an appropriate choice of $R$ that  $\free_{N,R}$ depends on the moderately large entries $\tw_i\in(R,N^{-\eps})$ only through the norm $\tal=\|\tw\|_2^2$, leading to the expression \eqref{def:tfree}. The key point is that the resulting sequence \eqref{def:trate} is then monotone decreasing in $N$, yielding the existence of the limit.
In \cite[Proposition 5 and Section 6]{AGH}, it is shown under some additional technical hypotheses that when $\mu$ is symmetric and $\psiinfty<\psimax$, for $x$  large enough the optimum is taken at $\chw=0$, in which case a more explicit formula for $\rate^\mu$ can be given.

We have the following consequence of Theorem \ref{thm:fullLDP} for \ER graphs. Recall that the adjacency matrix $A$ for an \ER graph $G_{N,p}$ on $N$ vertices is symmetric with independent Bernoulli($p$) entries above the diagonal, and zeros on the diagonal. 

\begin{cor}
\label{cor:ER}
Fix $p\in(0,\frac12]$ and let $A$ be the adjacency matrix for an \ER graph $G_{N,p}$. Then $N^{-1/2}\lam_1(A - \E A)$ satisfies a large deviation principle with good rate function $\cI_p:\R\to[0,\infty]$ that is infinite on $(-\infty,2\sqrt{p(1-p)})$ and is a continuous nondecreasing function on $[2\sqrt{p(1-p)},\infty)$. Moreover, $\cI_p(x)=\cI^\gamma(x/\sqrt{p(1-p)})$ on $(-\infty, x_p]$ for some $x_p>2\sqrt{p(1-p)}$ depending only on $p$. 
\end{cor}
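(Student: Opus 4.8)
The plan is to deduce \Cref{cor:ER} from \Cref{thm:fullLDP} by reduction to the Wigner model. Fix $p\in(0,\frac12]$ and write $A$ for the adjacency matrix of $G_{N,p}$, so that $A-\E A$ has independent entries $A_{ij}-p$ above the diagonal and $0$ on the diagonal. The centered Bernoulli variables $A_{ij}-p$ have variance $p(1-p)$; rescaling, set $X_{ij}:=(A_{ij}-p)/\sqrt{p(1-p)}$, which are i.i.d.\ (above the diagonal) with the standardized Bernoulli law $\mu_p$ of \Cref{ex:Bp}, centered with unit variance. Then, with $H$ as in \eqref{def:H} built from these $X_{ij}$ but with the diagonal entries set to zero, one has $A-\E A = \sqrt{N p(1-p)}\,\wt H$ where $\wt H$ agrees with $H$ off the diagonal and is $0$ on the diagonal. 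First I would check that zeroing out the diagonal is harmless: the diagonal of $H$ is $\sqrt{2/N}$ times i.i.d.\ mean-zero bounded variables, so $\|H-\wt H\|_{\mathrm{op}} \le \sqrt{2/N}\,\max_i|X_{ii}| = O(N^{-1/2})$ deterministically (the $X_{ii}$ being bounded since $\mu_p$ is compactly supported). Hence $|\lam_1(\wt H)-\lam_1(H)|=O(N^{-1/2})\to0$, which is negligible at speed $N$: an exponential-tightness-preserving perturbation of this order does not change the LDP. Consequently $N^{-1/2}\lam_1(A-\E A) = \sqrt{p(1-p)}\,\lam_1(\wt H)$ satisfies an LDP if and only if $\lam_1(H)$ does, with rate functions related by $\cI_p(x) = \cI^{\mu_p}\!\big(x/\sqrt{p(1-p)}\big)$ and speed $N$ in both, by the contraction principle applied to the continuous bijection $y\mapsto \sqrt{p(1-p)}\,y$.

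Next I would verify that $\mu=\mu_p$ satisfies the three hypotheses of \Cref{thm:fullLDP}. Since $\mu_p$ is compactly supported, $\LLa_{\mu_p}$ is real-analytic with $\LLa_{\mu_p}''$ bounded on $\R$ (a compactly supported measure has $\LLa''(t)=\mathrm{Var}_{\tilde\mu_t}(X)\le (\mathrm{diam}\,\mathrm{supp}\,\mu_p)^2<\infty$), so \eqref{assu:usg} holds. For \eqref{assu:LRlim}: $\psi_{\mu_p}$ is compactly supported so $\psi_{\mu_p}^{\lim}=0$, and one computes directly from the explicit $\LLa_{\mu_p}$ in \Cref{ex:Bp} that $\psi_{\mu_p}(t)=t^{-2}\LLa_{\mu_p}(t)\to 0$ as $t\to\pm\infty$ (the dominant term is $\LLa_{\mu_p}(t)\sim t\sqrt{(1-p)/p}$ as $t\to+\infty$ and $\sim -t\sqrt{p/(1-p)}$ as $t\to-\infty$, both linear), so both one-sided limits equal $0=\psi_{\mu_p}^{\lim}$. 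For \eqref{assu:maxR+}: as recorded in \Cref{ex:Bp}, when $p\le\frac12$ we have $\psi_{\mu_p}'(0)\ge0$ and the maximum $\psi_{\mu_p}^{\sup}$ is attained at a point $t_p^*\in[0,\infty)$ (with $t_p^*=0$ exactly when $p=\frac12$), so $\psi_{\mu_p}^{\sup}=\sup_{t\ge0}\psi_{\mu_p}(t)$. This is precisely where the restriction $p\le\frac12$ is used — for $p>\frac12$ the maximizer would sit on $\R^-$ and one would instead reduce to $1-p$ by replacing $A$ with $\mathbf{1}\mathbf{1}^\tran - I - A$, but that is outside the scope of the stated corollary.

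With the hypotheses verified, \Cref{thm:fullLDP} yields that $\lam_1(H)$ satisfies an LDP with speed $N$ and good rate function $\cI^{\mu_p}$ that is infinite on $(-\infty,2)$ and continuous non-decreasing on $[2,\infty)$. Transporting through $y\mapsto\sqrt{p(1-p)}\,y$ (and absorbing the $O(N^{-1/2})$ diagonal perturbation as above) gives the LDP for $N^{-1/2}\lam_1(A-\E A)$ with good rate function $\cI_p(x)=\cI^{\mu_p}(x/\sqrt{p(1-p)})$, which is infinite on $(-\infty, 2\sqrt{p(1-p)})$ and continuous non-decreasing on $[2\sqrt{p(1-p)},\infty)$; continuity and monotonicity are preserved under the linear rescaling. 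Finally, the universality statement follows from \Cref{thm:smallx}(a) (equivalently the $x<x_\mu$ clause of the informal theorem): there is $x_{\mu_p}>2$ with $\cI^{\mu_p}\equiv\cI^\gamma$ on $(-\infty,x_{\mu_p}]$, and setting $x_p:=\sqrt{p(1-p)}\,x_{\mu_p}>2\sqrt{p(1-p)}$ gives $\cI_p(x)=\cI^\gamma(x/\sqrt{p(1-p)})$ on $(-\infty,x_p]$. The main obstacle is not any single hard estimate but rather the careful bookkeeping of the two reductions — removing the diagonal and checking \eqref{assu:maxR+} for the asymmetric law $\mu_p$ — both of which are routine but must be done to invoke \Cref{thm:fullLDP} as a black box; I would expect the verification of \eqref{assu:maxR+} (the sign of $\psi_{\mu_p}'(0)$ and location of the maximizer) to be the most delicate of the elementary checks, though it is already supplied by \Cref{ex:Bp}.
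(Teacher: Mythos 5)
Your proposal is correct and follows essentially the same route as the paper: reduce to the Wigner matrix $H$ with standardized Bernoulli entries $\mu_p$, treat the diagonal as a deterministic $O(N^{-1/2})$ perturbation of the spectrum (you via the operator-norm/Weyl bound, the paper via an eigenvalue-perturbation inequality), verify \eqref{assu:usg}, \eqref{assu:LRlim}, \eqref{assu:maxR+} through \Cref{ex:Bp}, and conclude from Theorems \ref{thm:fullLDP} and \ref{thm:smallx} with $\cI_p(x)=\cI^{\mu_p}(x/\sqrt{p(1-p)})$. Your more detailed verification of the hypotheses and of the harmlessness of removing the diagonal is exactly what the paper leaves implicit, so there is nothing to correct.
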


\begin{proof}
Let $H$ be as in \eqref{def:H} with $\mu=\mu_p$ the standardized Bernoulli measure. Recall from Example \ref{ex:Bp} that $\psi_{\mu_p}^\infty=0$ and $\psi_{\mu_p}$ attains its supremum at a finite point $t_p\ge0$. With $D=\diag(H_{ii})$ the diagonal of $H$, we have $\sqrt{p(1-p)N}(H-D)\eqd A-\E A$. Thus, $N^{-1/2}(A-\E A)$ and $\sqrt{p(1-p)}H$ differ by a diagonal matrix with entries almost-surely bounded by $N^{-1/2}$. From the Hoffman--Wielandt inequality it follows that $|\sqrt{p(1-p)}\lam_1(H)-N^{-1/2}\lam_1(A-\E A)| \le N^{-1/2}$ a.s. The claim then follows from Theorems \ref{thm:fullLDP} and \ref{thm:smallx}, with $\cI_p(x) = \cI^{\mu_p}(x/\sqrt{p(1-p)})$. 
\end{proof}

\begin{remark}
For the uncentered adjacency matrix we typically have $\lam_1(A) \approx   pN$ and $\lam_2(A) \approx   \lam_1(A-\E A)$, but it is not clear whether the latter holds in the large deviations regime.  A lower bound $\lam_2(A)\ge (1+o(1))\lam_1(A-\E A)$ can be deduced from the interlacing property of eigenvalues under rank-1 perturbations, and hence Theorem \ref{thm:smallx} shows that for any fixed $x\ge2$,
\[
\P\big(\lam_2(A) \ge x\sqrt{p(1-p)N}\,\big) \ge \exp\big( -\rate^\gamma(x)N +o(N)\big) \,.
\]
\end{remark}
\quad

The following shows that the sharp sub-Gaussian assumption gives a sharp characterization of the universality regime for large deviations of $\lam_1(H)$.

\begin{cor}[Non-universality away from the bulk]
\label{cor:nonuniv}
With hypotheses as in Theorem \ref{thm:fullLDP}:
\begin{enumerate}[(a)]
\item 
If $\Delta:=\psimax-\frac12>0$, then  $\rate^\mu(x)<\rate^\gamma(x)$ for all $x>2\sqrt{2}(\Delta^{1/2}+\Delta^{-1/2})$. 
In fact, 
\begin{equation}	\label{bd1:nonuniv}
\rate^\mu(x) \le \rate^\gamma(x) + \frac12- \frac{\Delta x^2}{16(1+\Delta)^2}\qquad 
\forall x\ge2 \,.
\end{equation}

\item 
For any $x>2$ there exists $a_\mu(x)>0$ depending only on $\mu$ and $x$ such that for any  fixed $\delta,\delta_0\in(0,\frac1{10})$ independent of $N$ and $x'\in (x+\delta,+\infty]$ possibly depending on $N$, if  
\begin{equation}
\label{rates-neq}
\rate^\gamma(x)\ge \rate^\mu(x)+\delta_0
\end{equation}
then
\begin{equation}
\label{bd2:nonuniv}
\P\Big( \big\|v_1^{(\frac14-\delta)}\big\|_2 \ge a_\mu(x)\delta_0 \,\Big|\, \lam_1\in [x,x')\Big)
\ge1- e^{-\delta_0N/2}
\end{equation}
for all $N$ sufficiently large depending on $\mu,x, \delta$ and $\delta_0$.

\end{enumerate}

\end{cor}

\begin{proof}
See Section \ref{sec:nonuniv}.
\end{proof}

\begin{remark}
From \eqref{bd1:nonuniv} we see that the condition \eqref{rates-neq} holds whenever $x>2\sqrt{2}(\Delta^{1/2}+\Delta^{-1/2})$ for some $\delta_0>0$ depending on $\mu$ and $x$, 
but we stress that \eqref{bd2:nonuniv} says more generally that $v_1$ has a non-vanishing localized component with high probability conditional on $\lam_1\in [x,x')$ for any $x$ where $\rate^\mu(x)\ne \rate^\gamma(x)$.
\end{remark}

\begin{remark}
The corollary gives an upper bound $x_\mu'\le2\sqrt{2} (\Delta^{1/2}+\Delta^{-1/2})$ for the threshold $x_\mu'$ above which $\rate^\mu(x)<\rate^\gamma(x)$.
While this bound on the regime of non-universal deviations is not sharp in general (and we have not optimized it in the proof), it is interesting to note that it agrees up to a constant factor with the upper bound $(2\Delta)^{1/2}+(2\Delta)^{-1/2}$ for the threshold $x_\mu$ below which $\rate^\mu(x)=\rate^\gamma(x)$ under the assumption $\Delta<\frac12$, as was shown in  \cite[Proposition 6]{AGH}.
\end{remark}

\subsection{The case of $\psimu(t)$ increasing}

When $\psimax=\psiinfty$ the expression \eqref{def:tfree} simplifies, and it is not hard to see that the infimum in \eqref{def:trate} will then be taken at $\tal=0$. In terms of the unreduced form of the rate function $\rate_N^\mu$ from \eqref{def:rateN0} this means that the infimum in $\vloc$ is taken at a vector with nonzero entries of size $\ge N^{-\eps}$. Thus, conditional on $\lam_1 \approx   x$, the eigenvector $v_1$ has a small number of entries of size $\ge N^{-\eps}$, and all remaining entries of size $\le N^{-1/2+\eta}$.

Assuming further that $\mu$ is symmetric and that $\psimu$ is increasing on $\R^+$, we can show that the optimizer $\chz$ is supported on a \emph{single coordinate}, giving rise to the following result, where the infimum over a high dimensional ball in \eqref{def:rateN0} is replaced by an infimum over an interval. 
This assumption includes the case of sparse Gaussian variables (see Example \ref{ex:Gp}). 

For $\theta\ge0,\al\in[0,1]$ and $x\ge2$  let
\begin{align}
&\hfree(\theta,\al) 	\label{def:hfree}\\
&:= \theta^2\big[ (1-\al)^2 + 2\psiinfty\al^2 \big] 
+\VP_\infty(\theta\al^{1/2}e_1, 1-\al) - \frac\al2		\notag\\
&\;= \theta^2\big[ (1-\al)^2 + 2\psiinfty\al^2 \big] 
+ \sup_{\nu\in\cP_{1-\al}(\R)} \bigg\{ \int \LLa_\mu(2\theta\al^{1/2}s)d\nu(s) - \DKL(\nu|\gamma)\bigg\} 	
- \frac\al2,	\notag\\
&\hcJ(x,\al) 
:= \sup_{\theta\ge0} \big\{ J(x,\theta) - \hfree\big(\theta,\overlap_x(\theta)^2\al\big)\big\}	\label{def:hcJ}
\end{align}
where we write $e_1=(1,0,0,\dots)\in\ell^2(\N)$ in \eqref{def:hfree}.

\begin{theorem}\label{thm:increasing} 
Assume \eqref{assu:usg}, that $\mu$ is symmetric, and that $\psimu$ is nondecreasing on $\R^+$.
(In particular $\psimax=\psiinfty$.)
\begin{enumerate}[(a)]
\item
$\lam_1$ satisfies a large deviation principle with speed $N$ and good rate function
$\rate^\mu$ which is infinite on $(-\infty,2)$ and is otherwise given by 
\begin{align}
\rate^\mu(x)
&=\inf_{0\le \al\le 1-\zcush_x}\hcJ(x,\al)
\label{def:rate.increasing}
\end{align}
with $\zcush_x=c_\mu/x^4$ as in Theorem \ref{thm:rateN}.
Moreover, for $x>2$ the infimum is achieved on a closed nonempty set $A^*_x\subset[0,1-\zcush_x]$.

\item
Assume further that $\psimu$ is \emph{strictly} increasing  on $\R^+$.
For any $x>2$ and $\eta,\eps\in(0,\frac1{10})$ there exist $\delta_0,\delta_1>0$ depending only on $x,\eps$ such that for any $\delta\in(0,\delta_0)$, with $\al_x^*:=\inf A_x^*$ we have
\begin{equation}	\label{bd:increasing.b}
\P\Big( \sqrt{\al_x^*} - \eps\le  \|v_1^{(\eta)}\|_2 \le \|v_1^{(\eta)}\|_\infty+\eps \,\Big| \, |\lam_1-x|\le \delta\Big) 
\ge 1- e^{-\delta_1N}
\end{equation}
for all $N$ sufficiently large depending on $x,\eta,\eps,\delta$ and $\mu$.
Thus, conditional on $|\lam_1-x|<\delta$ we have that with probability $1-o(1)$, $v_1$ is within distance $\eps$ of a vector with one entry of magnitude at least $\sqrt{\al_x^*}$ and all other entries bounded by $N^{-\frac12+\eta}$. 
\end{enumerate}
\end{theorem}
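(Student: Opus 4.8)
The plan is to deduce part~(a) from \Cref{thm:fullLDP} by collapsing the variational formula \eqref{def:trate}--\eqref{def:rate2} to the one-dimensional problem \eqref{def:rate.increasing} using the symmetry and monotonicity of $\psimu$, and then to upgrade this using \emph{strict} monotonicity in part~(b), transferring the resulting rigidity of minimizers to the eigenvector via the conditional estimates behind \Cref{thm:rateN} (Propositions~\ref{prop:upper-joint}, \ref{prop:no-comp} and \Cref{lem:lower1}) in the manner of \Cref{sec:nonuniv}. First, symmetry of $\mu$ makes $\psimu$ even, so $\psimu$ nondecreasing on $\R^+$ forces $\psimax=\sup_{t\ge0}\psimu(t)=\lim_{t\to+\infty}\psimu(t)=\psiinfty$; hence \eqref{assu:LRlim} and \eqref{assu:maxR+} hold, \Cref{thm:fullLDP} applies, and $\rate^\mu(x)=\lim_{N\to\infty}\wt\rate_{N,N^{-\eps}}(x,\zcush_x)$.

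The simplification rests on two elementary reductions of the restricted free energy \eqref{def:tfree}. \emph{(i) One coordinate suffices.} For $v\in\ell^2(\N)$ with $\|v\|_2^2=a$ and any $s\in\R$, since $|2v_is|\le 2\sqrt a\,|s|$ and $\psimu$ is even and nondecreasing on $\R^+$,
\[
\sum_{i\ge1}\LLa_\mu(2v_is)=\sum_{i\ge1}\psimu(2v_is)(2v_is)^2\le\psimu(2\sqrt a\,|s|)\sum_{i\ge1}(2v_is)^2=\LLa_\mu(2\sqrt a\,s),
\]
so $\VP_R(\theta v,\beta)\le\VP_R(\theta\|v\|_2 e_1,\beta)$ for every $\beta$, whence $\tfree_{N,R}(\theta,\chw,\tal)\le\tfree_{N,R}(\theta,\|\chw\|_2 e_1,\tal)$ and $\wt\cJ_{N,R}(x,\chz,\tal)\ge\wt\cJ_{N,R}(x,\|\chz\|_2 e_1,\tal)$. \emph{(ii) No moderately large mass.} From $\LLa_\mu(t)=\psimu(t)t^2$ one computes
\[
\partial_r\LLa_\mu(2\theta\sqrt r\,s)=\tfrac1{2r}\big[\psimu'(2\theta\sqrt r\,s)(2\theta\sqrt r\,s)^3+2\psimu(2\theta\sqrt r\,s)(2\theta\sqrt r\,s)^2\big]\ge 2\theta^2 s^2,
\]
since $\psimu'(t)t\ge0$ (evenness plus monotonicity) and $\psimu\ge\tfrac12$; testing $\VP_R$ against the measure optimal for the smaller norm then gives $\VP_R(\theta\sqrt{a+\tal}\,e_1,\beta)-\VP_R(\theta\sqrt a\,e_1,\beta)\ge 2\theta^2\beta\tal$ for every $\beta$. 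Because $\psimax=\psiinfty$ collapses the bracket in \eqref{def:tfree} to $\theta^2\big[\beta^2+2\beta\tal+2\psiinfty(1-\beta)^2\big]$ with $\beta=1-\tal-\|\chw\|_2^2$, this shows $\tfree_{N,R}(\theta,\sqrt a\,e_1,\tal)\le\tfree_{N,R}(\theta,\sqrt{a+\tal}\,e_1,0)$, i.e.\ $\wt\cJ_{N,R}(x,\sqrt a\,e_1,\tal)\ge\wt\cJ_{N,R}(x,\sqrt{a+\tal}\,e_1,0)$. Combining (i) and (ii) reduces \eqref{def:trate} to $\inf_{0\le a\le 1-\zcush_x}\wt\cJ_{N,N^{1/5}}(x,\sqrt a\,e_1,0)$; letting $N\to\infty$ and using that $\VP_R\uparrow\VP_\infty$ monotonically as $R\to\infty$ (hence, being continuous, locally uniformly), together with continuity of $\wt\cJ_{N,R}(x,\sqrt a\,e_1,0)$ in $a$, identifies the limit with $\inf_{0\le a\le1-\zcush_x}\hcJ(x,a)$, which is \eqref{def:rate.increasing}. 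To see the infimum is attained on a closed nonempty set it suffices that $\al\mapsto\hcJ(x,\al)$ be lower semicontinuous (in fact continuous) on $[0,1-\zcush_x]$: the constraint $\al\le 1-\zcush_x<1$ keeps $\beta=1-\overlap_x(\theta)^2\al\ge\zcush_x>0$, so the entropic problem defining $\VP_\infty$ in \eqref{def:hfree} is non-degenerate with $0\le\VP_\infty(v,\beta)\le 4\psimax\beta\|v\|_2^2$ (the lower bound from $\LLa_\mu\ge0$ by Jensen), and $\theta\mapsto J(x,\theta)-\hfree(\theta,\overlap_x(\theta)^2\al)$ is coercive (the subtracted term is $\ge\theta^2\zcush_x^2$ while $J(x,\theta)=O(\theta)$), so the supremum over $\theta$ is taken on a range bounded locally uniformly in $\al$ and is continuous in $\al$. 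This proves~(a).

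For~(b), \emph{strict} monotonicity of $\psimu$ makes both inequalities in (i), (ii) strict wherever they bite: if $\chz$ has two or more nonzero coordinates then $\sum_i\LLa_\mu(2v_is)<\LLa_\mu(2\|v\|_2 s)$ for $s\ne0$, and if $\tal>0,\beta>0,\theta>0$ then $\partial_r\LLa_\mu(2\theta\sqrt r\,s)>2\theta^2 s^2$ for $s\ne0$, so the gain above is strictly positive. Since the optimal $\theta$ in \eqref{def:hcJ} is strictly positive for $x>2$ (as $J(x,0)=0$), every minimizer of the reduced problem --- and, through the pigeonholing decomposition $w=\chw+\tw$ used to prove \Cref{thm:fullLDP}, every near-minimizer $\vloc$ of $\cJ_N(x,\cdot)$ on $[N^{3/4}]$ --- is, up to $o(1)$ error, a single-coordinate vector $\sqrt a\,e_1$ with $a$ in a small neighborhood of $A_x^*\subseteq[\al_x^*,1-\zcush_x]$ and with no coordinates of intermediate size. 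The estimates behind \Cref{thm:rateN} (Propositions~\ref{prop:upper-joint}, \ref{prop:no-comp} and \Cref{lem:lower1}) identify $v_1^{(\eta)}$, conditioned on $\lam_1\approx x$, with such a near-minimizer, so --- exactly as in \Cref{sec:nonuniv} --- a uniform exponential gap $\cJ_N(x,\vloc)\ge\rate^\mu_N(x)+c(x,\eps)$ over $\vloc$ not $\eps$-close (in $\ell^2$, up to permutation) to these single-coordinate vectors yields $\P\big(v_1^{(\eta)}\ \eps\text{-far from all admissible }\sqrt a\,e_1\mid|\lam_1-x|\le\delta\big)\le e^{-\delta_1 N}$; on the complement $\|v_1^{(\eta)}\|_2\ge\sqrt{\al_x^*}-\eps$ and $\|v_1^{(\eta)}\|_\infty\ge\|v_1^{(\eta)}\|_2-\eps$ (one coordinate carries almost all the mass), which, after relabelling $\eps$, is \eqref{bd:increasing.b}.

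The main obstacle is precisely this last step: promoting the \emph{pointwise} strict suboptimality of non-single-coordinate $\vloc$ to a \emph{uniform} exponential gap over the compact but $N$-dependent, growing-dimensional set of admissible $\vloc$, compatibly with the scale-separation pigeonhole that links $\cJ_N$ with the reduced free energies $\tfree_{N,R}$. This requires $N$-uniform equicontinuity estimates for $\free_{N,R}$ and $\cJ_N$ in the eigenvector variable, of the kind already needed for \Cref{thm:fullLDP} and \Cref{cor:nonuniv}; once those are in hand, the rigidity supplied by strict monotonicity of $\psimu$ is the only genuinely new input.
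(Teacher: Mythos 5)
Your part (a) is correct in outline but takes a genuinely different route from the paper: you contract the variational formula of \Cref{thm:fullLDP} (the $\wt\cJ_{N,R}$ problem over $(\chz,\tal)$), whereas the paper deliberately bypasses \Cref{thm:fullLDP} and works directly from \Cref{thm:rateN}, showing $\rate^\mu_N(x)\to\inf_\al\hcJ(x,\al)$. The core input is the same in both: the rearrangement inequality $\VP_R(\theta v,\beta)\le\VP_R(\theta\|v\|_2e_1,\beta)$ (your step (i), the paper's \Cref{lem:FN.incr}); your step (ii), absorbing the intermediate mass $\tal$ into the single large coordinate via $\partial_r\LLa_\mu(2\theta\sqrt r\,s)\ge 2\theta^2s^2$ and testing against the optimizer for the smaller norm, is correct and substitutes for the paper's bound $\freeL(\theta,w)\le\freeL(\theta,\|w\|_2e_1)$. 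Two points are glossed but fixable: the passage $R\to\infty$ inside the $\inf_\al\sup_\theta$ structure needs a Dini-type argument on a compact $\theta$-interval (this is exactly the paper's \Cref{lem:maxmin}, used with \Cref{prop:gibbs}(\ref{gibbs.lim}) and the equicontinuity from \Cref{lem:freeprops}), and there is a small admissibility detail when replacing $(\chz,\tal)$ by $(\sqrt{\|\chz\|_2^2+\tal}\,e_1,0)$ inside $\B_{\ge N^{-\eps}}$.

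Part (b), however, has a genuine gap, and you flag it yourself without closing it: you never convert the \emph{pointwise} strictness of (i)--(ii) into an $N$-uniform exponential gap, and the route you suggest ($N$-uniform equicontinuity plus pointwise strict suboptimality) cannot do so, because the admissible localized profiles range over $(1-\zcush_x)\B^{n_0}$ with $n_0\to\infty$: there is no compactness in the eigenvector variable, so ``strict for each fixed $\vloc$'' yields no uniform constant. What is needed is a dimension-free \emph{quantitative} stability estimate, and this is exactly what the paper's \Cref{lem:JNRe1} provides: for $\psimu$ strictly increasing on $\R^+$,
\begin{equation*}
\cJ_{N,R}(x,\vloc)\;\ge\;\cJ_{N,R}(x,\|\vloc\|_2e_1)\;+\;c_0\,\|\vloc\|_2^2\big(\|\vloc\|_2-\|\vloc\|_\infty\big),
\end{equation*}
with $c_0$ independent of $N$ and of $\vloc$. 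Its proof is not an abstract continuity argument: one first shows that the optimizing measure $\hat\nu$ of the Gibbs problem defining $\VP_R$ has density bounded below on $[-2,2]$ by an explicit constant depending only on $\theta\|\vloc\|_2$ (via \Cref{prop:gibbs}), and then integrates against $\hat\nu$ on $s\in[1,2]$ the gain coming from $\psimu(\|\vloc\|_2\,\cdot)-\psimu(\vloc_i\,\cdot)$, which strict monotonicity bounds below proportionally to $\|\vloc\|_2-|\vloc_i|$; summing over $i$ produces the factor $\|\vloc\|_2-\|\vloc\|_\infty$. With this lemma in hand, \Cref{prop:upper-joint}, \Cref{prop:no-comp} and \Cref{lem:lower1} give the concentration of $\|v_1^{(\eta)}\|_\infty$ near $\|v_1^{(\eta)}\|_2$ (this is \eqref{incr1} of \Cref{prop:increasing}), and the lower bound $\|v_1^{(\eta)}\|_2\ge\sqrt{\al_x^*}-\eps$ follows from part (a), the continuity of $\hcJ(x,\cdot)$ and the definition of $\al_x^*$ (\eqref{incr2}). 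Until you prove a stability bound of this type --- in particular the uniform lower bound on the density of the Gibbs optimizer, or some substitute --- your part (b) remains a strategy statement rather than a proof.
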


\begin{remark} \quad
\begin{enumerate}[($i$)]
\item For (a), Theorem \ref{thm:fullLDP} already established the LDP; what is new here is the alternative, non-asymptotic expression for $\rate^\mu$. The proof of Theorem \ref{thm:increasing} does not go through Theorem \ref{thm:fullLDP}, instead proceeding from Theorem \ref{thm:rateN}, but it follows that the expressions \eqref{def:rate.increasing} and \eqref{def:rate2} are equal, and (from Theorem \ref{thm:smallx}) equal to the GOE rate function $\rate^\gamma$ on $(-\infty,x_\mu]$.

\item 
We note that $\al_x^*>0$ for all sufficiently large $x$, so that the lower bound on $\|v_1^{(\eta)}\|_2$ in \eqref{bd:increasing.b} is nontrivial. 
Indeed, if this does not hold, i.e. $\alpha^*_x=0$, then the rate function \eqref{def:rate.increasing} reduces to the GOE rate function $\rate^\gamma(x)$ (see \eqref{igamma.var}), whereas from 
Corollary \ref{cor:nonuniv} (or alternatively by \cite[Theorem 1]{AGH} under the assumptions of Theorem \ref{thm:increasing}) we have
$\rate^\mu(x)<\rate^\gamma(x)$ for all sufficiently large $x$.
\end{enumerate}
\end{remark}

The reduced annealed free energy \eqref{def:hfree} also appeared in \cite[Proposition 8]{AGH}.
However, unlike in  \cite{AGH} we obtain 
a full large deviation principle, valid for all $x\in\R$ -- the crucial difference is that for our rate function \eqref{def:hcJ}--\eqref{def:rate.increasing} the infimum in $\al$ is taken \emph{after} the supremum in $\theta$.
This derives from the key idea of the present work, to obtain large deviation principles for $\lam_1$ as contractions of joint large deviation estimates for $\lam_1$ and $\|v_1^{(\eta)}\|_2$. 

See Figure \ref{fig:pgauss} for plots of $\rate^\mu(x)$ and $\al_x^*$ for the case that $\mu$ a sparsified Gaussian measure as in Example \ref{ex:Gp}.

\subsection{The restricted annealed free energy}
\label{sec:results.restricted}

In  the definition 
\eqref{def:jayN0} of the joint rate function 
$\cJ_{N}$ for $(\lam_1,v_1^{(\eta)})$,
under the supremum we have the difference between the quenched and annealed free energies for a spherical spin glass model at inverse temperature $\theta$ -- that is, a Gibbs measure on $\sphereN$ with random density $\propto\exp(\theta N\langle u,Hu\rangle)$ with respect to the uniform 
measure 
$P$.
These quantities arise from the approach of \emph{tilting by spherical integrals} introduced in \cite{HuGu1}.
Recalling the spherical integral $I(M,\theta)$ defined in \eqref{def:spherical}, conditional on the large deviation event $\{\lam_1\approx x\}$ the quenched free energy $\log I(H,\theta)$ is approximately $J(x,\theta)N$ (see \eqref{def:Jsc}) with probability $1-\exp(-\omega(N))$ (for our conventions on asymptotic notation see Section \ref{sec:notation}). 
Using this fact, the first part of the proof of Theorem \ref{thm:HuGu} in \cite{HuGu1} 
 relates the asymptotic upper tail of $\lam_1(H)$ to a variational problem involving the difference of the annealed and quenched free energy densities: for fixed $x\ge2$, when $\mu$ is sharp sub-Gaussian we have
\begin{equation}	\label{HuGu:var}
\frac1N\log\P(|\lam_1(H)-x|\le \delta) = \inf_{\theta\ge0} \{ F_N(\theta)-J(x,\theta)\} + o(1)
\end{equation}
where the error $o(1)$ tends to zero after sending $N\to\infty$ and then $\delta\downarrow 0$, and 
the \emph{annealed free energy density} is given by
\begin{equation}	\label{def:FN}
F_N(\theta) := \frac1N\log\E I(H,\theta) = \frac1N\log \int_{\sphereN} \E e^{N\theta\langle u,Hu\rangle} dP(u)\,.
\end{equation}
The second step is to show
\begin{equation}	\label{HuGu:FE}
F_N(\theta)\to \theta^2
\end{equation}
as $N\to\infty$ (with error bounds uniform in $\theta$). From \eqref{igamma.var} we see that \eqref{HuGu:var} and \eqref{HuGu:FE} combine to give \eqref{tail.igamma}.

In the general sub-Gaussian case both of the asymptotics \eqref{HuGu:var} and \eqref{HuGu:FE} can fail, and tilting by a spherical integral does not  correctly capture the large deviation rate. 
A key idea of this work is to notice that the  integral defining the free energy  $F_{N}(\theta)$ concentrates near vectors $u$ with overlap $\langle u,v_{1}\rangle\approx \pm\overlap_x(\theta)$ with the leading eigenvector $v_{1}$; and hence, on the joint large deviation event $\{\lam_1\approx x, v_1^{(\eta)}\approx \vloc\}$, the  free energy  concentrates on a section $\Uloc_{\overlap_x(\theta)\vloc}$ of the sphere where the restriction of $u$ to the support of $v_1^{(\eta)}$ is approximately $\overlap_x(\theta)\vloc$. 
We are thus led to compute a \emph{restricted annealed free energy}:
for nonempty measurable $\Uloc\subseteq\sphereN$ we define \begin{equation}	
\label{def:FN.U}
F_N(\theta;\Uloc):= \frac1N\log\E\int_{\Uloc}  e^{N\theta\langle u, Hu\rangle} dP(u)\,,\qquad \theta\ge0
\end{equation}
so $F_N(\theta)=F_N(\theta;\sphereN)$.  Proposition \ref{prop:FE} shows that $F_N(\theta;\Uloc_{\overlap_x(\theta)\vloc})\approx\free_{N,R}(\theta,\overlap_x(\theta)\vloc)$, leading to the expression \eqref{def:jayN0} for the joint rate function 
$\cJ_{N}$ (taking $R=N^{1/5}$). \\

We turn to the formal statements of our extensions of \eqref{HuGu:var} and \eqref{HuGu:FE} for the general sub-Gaussian case.
For a given (generally sparse) $w\in \R^N$ we use the shorthand notation
\begin{equation}	\label{w-wc}
u_{w} := u|_{\supp (w)}=(u_{i}1_{w_{i}\neq 0})_{1\le i\le N}\,,\qquad u_{w^{c}} := u|_{[N]\setminus\supp (w)}=(u_{i}1_{w_{i}= 0})_{1\le i\le N}\,.
\end{equation}
and for parameters $\srad>0, R\ge1$ (slowly decaying and growing, respectively)
we let
\begin{equation}	\label{def:UwrR}
\Uloc_w^N=\Uloc_w^N(\srad,R):=\bigg\{ u\in\sphereN \,:\, \|u_w-w\|_2\le \srad\,,\;\|u_{w^c}\|_\infty\le \frac R{\sqrt{N}}\bigg\}
\end{equation}
denote the set of unit vectors that are well approximated by $w$ on its
support and delocalized on all other coordinates. 

As a byproduct of the proof of Theorem \ref{thm:rateN}, we obtain the following extension of \eqref{HuGu:var}.

\begin{theorem}\label{theomain}  
Assume \eqref{assu:usg}. 
Fix $\eta\in(0,\frac14)$ and let $n_0:=\lf N^{1-2\eta}\rf$, $N^{-\eta/3}\le \srad=o(1)$ and $\log N\le R\le N^{1/4}/\log N$. 
Then for any fixed $x>2$ 
there exist $\zcush_x>0,T_x\ge10$ depending only on $x$ and $\mu$ such that 
for all fixed $T\ge T_x$, 
\begin{align}	\label{main-approx}
&\lim_{\delta\downarrow0}\limsup_{N\to\infty} \bigg|\frac{1}{N}\log\mathbb{P}(|\lambda_{1}-x|\leq\delta)	\\
&\qquad\qquad\qquad-
\sup_{\vloc \in(1-\zcush_x)\B^{n_0}  }\inf_{\theta\in[\thetam+T^{-1},T]}
\Big\{\, 
{F_N(\theta; \Uloc^N_{\overlap_x(\theta)\vloc})}
-J(x,\theta)
\,\Big\}
\bigg| =0\,.	\notag
\end{align}
\end{theorem}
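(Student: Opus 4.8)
The plan is to prove matching bounds $\Theta_{N,T}-o(1)\le\tfrac1N\log\P(|\lam_1-x|\le\delta)\le\Theta_{N,T}+o(1)$, the $o(1)$ being taken as $N\to\infty$ and then $\delta\downarrow0$, where
\begin{equation*}
\Theta_{N,T}:=\sup_{\vloc\in(1-\zcush_x)\B^{n_0}}\ \inf_{\theta\in[\thetam+T^{-1},\,T]}\big\{F_N(\theta;\Uloc^N_{\overlap_x(\theta)\vloc})-J(x,\theta)\big\};
\end{equation*}
the upper bound will come from tilting by restricted spherical integrals together with a union bound over the localization profile of the top eigenvector, the lower bound from a change of measure whose tilting parameter is found by an intermediate value argument. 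This is essentially a repackaging of the quantitative estimates of Propositions~\ref{prop:upper-joint}, \ref{prop:no-comp} and \Cref{lem:lower1} that go into the proof of \Cref{thm:rateN}, with \Cref{prop:FE} used to pass between $F_N(\theta;\Uloc^N_w)$ and its explicit counterpart $\free_{N,R}(\theta,w)$. Throughout write $w=\overlap_x(\theta)\vloc$, and let $v_1^{(\eta)}$ be the restriction of the top unit eigenvector to its coordinates of modulus $\ge N^{-1/2+\eta}$, which is supported on at most $n_0=\lf N^{1-2\eta}\rf$ coordinates.

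For the upper bound, fix $\theta\ge0$ and start from $\P(|\lam_1-x|\le\delta)=\E\big[I(H,\theta)^{-1}\,I(H,\theta)\,\mathbf 1(|\lam_1-x|\le\delta)\big]$, into which we feed three facts. First, by standard concentration for sub-Gaussian Wigner matrices (cf.\ \cite{GuZe}, after truncation), the event $\mathcal G$ that $\hat\mu_H$ lies within $o(1)$ of $\sigma$ and $\|H\|=O(1)$ has probability $1-\exp(-N\omega(1))$, so it may be inserted for free. Second, on $\mathcal G\cap\{|\lam_1-x|\le\delta\}$ one has $|\tfrac1N\log I(H,\theta)-J(x,\theta)|\le\error(\delta)$ with $\error(\delta)\to0$, from the asymptotics of spherical integrals (as in \cite{HuGu1}). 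Third, and crucially, conditionally on $\mathcal G\cap\{|\lam_1-x|\le\delta,\ v_1^{(\eta)}\approx\vloc\}$ the Gibbs measure $\propto e^{N\theta\langle u,Hu\rangle}dP(u)$ puts all but a vanishing fraction of its mass on vectors $u$ with overlap $\langle u,v_1\rangle\approx\pm\overlap_x(\theta)$, hence --- since $v_1^{(\eta)}\approx\vloc$ carries the large coordinates of $v_1$ --- with $u_w\approx\pm w$ and $u_{w^c}$ delocalized at scale $R/\sqrt N$ (here $R\ge\log N$ is used), so that $I(H,\theta)\,\mathbf 1(v_1^{(\eta)}\approx\vloc)\le I(H,\theta;\Uloc^N_w)\,e^{o(N)}$, using the symmetry $\Uloc^N_w\leftrightarrow\Uloc^N_{-w}$. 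Taking expectations, using $\E\,I(H,\theta;\Uloc^N_w)=e^{NF_N(\theta;\Uloc^N_w)}$, and optimizing in $\theta$ gives the joint estimate of \Cref{prop:upper-joint}: $\tfrac1N\log\P(|\lam_1-x|\le\delta,\ v_1^{(\eta)}\approx\vloc)\le\inf_\theta\{F_N(\theta;\Uloc^N_w)-J(x,\theta)\}+o(1)$. To remove the conditioning on $\vloc$ we split according to $v_1^{(\eta)}$: the part with $\|v_1^{(\eta)}\|_2^2>1-\zcush_x$ is negligible by \Cref{prop:no-comp} (ruling out an eigenvector essentially supported on $o(N)$ coordinates; this is where \eqref{assu:usg} enters on the upper-bound side), and the rest is controlled by a union bound over an $\srad$-net of $(1-\zcush_x)\B^{n_0}$ --- of cardinality $e^{o(N)}$ since $n_0=o(N/\log N)$ and $\srad\ge N^{-\eta/3}$, and, by permutation-invariance of $F_N(\theta;\Uloc^N_w)$ and $J$ in the coordinates of $w$, it suffices to net sorted value vectors --- together with a stability estimate for $F_N(\theta;\Uloc^N_w)$ under $\srad$-perturbations of $w$ (part of \Cref{prop:FE}). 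This yields $\tfrac1N\log\P(|\lam_1-x|\le\delta)\le\Theta_{N,T}+o(1)$; the reduction of the $\theta$-infimum to $[\thetam+T^{-1},T]$, valid once $T\ge T_x$, uses that for $\|\vloc\|_2^2\le1-\zcush_x$ the function $\theta\mapsto F_N(\theta;\Uloc^N_w)-J(x,\theta)$ is $o(1)$ on $[0,\thetam]$ (there $w=0$ and $F_N(\theta;\Uloc^N_0)=\theta^2+O(e^{-cR^2})=J(x,\theta)+o(1)$ by \Cref{lem:Jprops}), strictly decreasing just above $\thetam$, and $\gs\zcush_x^2\theta^2-\theta x\to+\infty$ as $\theta\to\infty$, so its infimum is attained in a compact subinterval of $(\thetam,\infty)$ depending only on $x$ and $\mu$.

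For the lower bound, fix $\vloc\in(1-\zcush_x)\B^{n_0}$ and, for $\theta\ge\thetam$, let $Q_\theta$ be the tilt of $\P$ by the restricted spherical integral, $dQ_\theta/d\P=e^{-NF_N(\theta;\Uloc^N_w)}\int_{\Uloc^N_w}e^{N\theta\langle u,Hu\rangle}dP(u)$ with $w=\overlap_x(\theta)\vloc$. Reversing the change of measure,
\begin{equation*}
\P(|\lam_1-x|\le\delta)=e^{NF_N(\theta;\Uloc^N_w)}\ \E_{Q_\theta}\!\Big[\Big(\textstyle\int_{\Uloc^N_w}e^{N\theta\langle u,Hu\rangle}dP(u)\Big)^{-1}\mathbf 1(|\lam_1-x|\le\delta)\Big],
\end{equation*}
and on $\mathcal G\cap\{|\lam_1-x|\le\delta\}$ the inner integral is at most $I(H,\theta)\le e^{N(J(x,\theta)+\error(\delta))}$. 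Hence it suffices to exhibit a value $\theta^\star=\theta^\star(\vloc)$ in a compact subinterval of $(\thetam,\infty)$ --- which we absorb into $[\thetam+T^{-1},T]$ by taking $T\ge T_x$ --- with $Q_{\theta^\star}(\mathcal G\cap\{|\lam_1-x|\le\delta\})\ge e^{-o(N)}$, for then $\tfrac1N\log\P(|\lam_1-x|\le\delta)\ge F_N(\theta^\star;\Uloc^N_w)-J(x,\theta^\star)-\error(\delta)-o(1)\ge\inf_{\theta\in[\thetam+T^{-1},T]}\{F_N(\theta;\Uloc^N_w)-J(x,\theta)\}-o(1)$. Taking the supremum over $\vloc$ and then $N\to\infty$, $\delta\downarrow0$ gives $\tfrac1N\log\P(|\lam_1-x|\le\delta)\ge\Theta_{N,T}-o(1)$, which matches the upper bound; that both coincide with $-\rate^\mu_N(x)+o(1)$, so that the value is independent of $T\ge T_x$, follows from the approximation $F_N(\theta;\Uloc^N_{\overlap_x(\theta)\vloc})\approx\free_{N,R}(\theta,\overlap_x(\theta)\vloc)$ of \Cref{prop:FE}, consistent with \Cref{thm:rateN}.

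The construction of $\theta^\star$ is the step I expect to be the main obstacle; this is the content of \Cref{lem:lower1}. Under $Q_\theta$ one must establish three things: (1) with $Q_\theta$-probability $1-\exp(-N\omega(1))$, $H$ behaves like a localized perturbation of a delocalized-tilted Wigner matrix, so that $\hat\mu_H\approx\sigma$ and the top eigenvector has $v_1^{(\eta)}$ close to $\vloc$ after normalization (so $\mathcal G$ holds $Q_\theta$-typically); (2) $\lam_1$ concentrates under $Q_\theta$ around a deterministic value $\lam^*(\theta)$ --- here \eqref{assu:usg} is essential, since by \Cref{lem:subG-tilts} it makes the entrywise tilted laws uniformly sub-Gaussian, so that Talagrand-type concentration applies to $\lam_1$ under the non-product law $Q_\theta$; (3) $\theta\mapsto\lam^*(\theta)$ is continuous, proved by a coupling/interpolation between $Q_\theta$ and $Q_{\theta'}$ together with the concentration in (2), with $\lam^*(\thetam)=2$ --- at $\theta=\thetam$ one has $\overlap_x(\thetam)=0$, so $w=0$ and $Q_{\thetam}$ is the minimal delocalized tilt, under which $\lam_1\to2$ --- and $\lam^*(\theta)>x$ for all large $\theta$. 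The intermediate value theorem then produces $\theta^\star$ with $\lam^*(\theta^\star)=x$, and the concentration in (2) upgrades this to $Q_{\theta^\star}(|\lam_1-x|\le\delta)\ge e^{-o(N)}$ for every fixed $\delta>0$ and all large $N$. These two ingredients --- concentration of $\lam_1$ under the tilted, non-product law, and continuity of its typical value in $\theta$ --- are precisely what replaces, at the present level of generality, the convexity and differentiability arguments available to \cite{AGH} only under more restrictive hypotheses.
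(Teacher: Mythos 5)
Your overall architecture coincides with the paper's own proof of \Cref{theomain}: the upper bound via tilting by $I(H,\theta)$, the quenched asymptotics $\tfrac1N\log I(H,\theta)\approx J(x,\theta)$ on a good spectral event (\Cref{lem:approxsc}), localization of the Gibbs measure on the sphere to the slice of overlap $\approx\pm\overlap_x(\theta)$ with $v_1$ (\Cref{lem:restrict}), a union bound over an $e^{o(N)}$ net of localization profiles combined with \Cref{prop:no-comp}, and the reduction of the $\theta$-range to $[\thetam+T^{-1},T]$; and the lower bound by reversing the restricted tilt and producing $\theta^\star$ through an intermediate value argument, exactly as in \Cref{prop:lower0} and \Cref{lem:lower1}. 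The issue is in how you propose to carry out the key lower-bound step.

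There is a genuine gap in your item (2), and it propagates into (3). The law $Q_\theta$ obtained by tilting $\P$ with the restricted spherical integral is the mixture $\int\P^{(\theta,u)}\,d\wt Q^{(\theta)}(u)$ over $u\in\Uloc^N_{\overlap_x(\theta)\vloc}$, so under $Q_\theta$ the entries of $H$ are \emph{not} independent; Talagrand-type concentration (\Cref{prop:conc-subG}, \Cref{cor:lam1-tilt.conc}) requires independent entries, and the uniform sub-Gaussianity of the tilted entry laws provided by \eqref{assu:usg} and \Cref{lem:subG-tilts} only yields concentration of $\lam_1$ under the conditional product tilts $\P^{(\theta,u)}$ at fixed $u$. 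Moreover a deterministic center $\lambda^*(\theta)$ for $\lam_1$ under $Q_\theta$ is not available from the tools at hand: the sphere marginal $\wt Q^{(\theta)}$ is only shown to give mass $\ge e^{-o(N)}$ (not $1-o(1)$) to a Wasserstein ball around a deterministic delocalized profile $\tilde v^{(\theta)}$ (\Cref{prop:tiltQ}), which is all that is needed but also all that is proved. The argument must be disintegrated over $u$, as the paper does: concentration of $\lam_1$ under $\P^{(\theta,u)}$ for fixed $u$; continuity of $\E^{(\theta,u)}\lam_1$ jointly in $\theta$ and in $u$ with respect to the permutation-invariant Wasserstein metric $d_2$, proved by the monotone coupling of scalar tilted laws (\Cref{lem:coupling.scalar}, \Cref{lem:NiceConnection0}); Wasserstein localization of $u$ under $\wt Q^{(\theta)}$, which rests on the restricted annealed free energy computation and the constrained Gibbs variational principle (\Cref{prop:annealed}, \Cref{prop:gibbs}), including continuity of $\theta\mapsto\tilde v^{(\theta)}$ via stability of the optimizing measure; and only then the intermediate value theorem applied to $\theta\mapsto\E^{(\theta,\,\overlap_x(\theta)\vloc+\tilde v^{(\theta)})}\lam_1$. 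Your phrase ``coupling/interpolation between $Q_\theta$ and $Q_{\theta'}$'' conceals exactly these ingredients, which constitute the bulk of the technical work (Sections \ref{sec:tiltP}--\ref{sec:annealed}). Finally, your assertion in (1) that $v_1^{(\eta)}\approx\vloc$ holds with $Q_\theta$-probability $1-e^{-\omega(N)}$ is neither needed for the lower bound nor established in the paper; only $Q_\theta$-typicality of the spectral good event (\Cref{lem:good.tilt}) is used there.
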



The next result shows that  
 the restricted annealed free energy $F_N(\theta;\Uloc^N_w(\srad,R))$ is asymptotically given by the quantity $\free_{N,R}(\theta,w)$ from \eqref{def:freeN}, generalizing \eqref{HuGu:FE} as well as a result from \cite{AGH} to the general sub-Gaussian case.
For this result we only need the standing sub-Gaussian assumption \eqref{assu:sg}.

\begin{prop}[Restricted annealed free energy]
\label{prop:FE} 
Let $\eta\in(0,\frac14)$,
$\zcush\in (0,\frac12)$,  $w \in(1-\zcush)\B^N$ with $\|w\|_0\le N^{1-2\eta}$, and $T\ge1$.
For any $\theta\in[0,T]$, $R\in[\log N, N^{1/4}]$ and $\srad\in[N^{-4},\frac\zcush{10}]$, 
\begin{equation}	\label{FE.asymp}
F_N(\theta; \Uloc^N_w(\srad,R))=
\free_{N,R}(\theta,w) + O_{T,\zcush}( \srad+  R^2N^{-1/2}+ N^{-2\eta}\log N)
\end{equation}
for all $N$ sufficiently large depending on $T$ and $\zcush$.
\end{prop}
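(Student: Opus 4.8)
The plan is to compute the restricted annealed free energy by first integrating out the matrix $H$ (using independence of entries and the log-Laplace transform), and then evaluating the resulting deterministic integral over the restricted region $\Uloc^N_w(\srad,R)$ of the sphere by Laplace's method. Write $u=u_w+u_{w^c}$ following the notation in \eqref{w-wc}, and set $v=u_w$ (essentially equal to $w$ up to error $\srad$ by the constraint), with $u_{w^c}$ delocalized at scale $R/\sqrt N$. Expanding the quadratic form, $\langle u,Hu\rangle = \langle v,Hv\rangle + 2\langle v, Hu_{w^c}\rangle + \langle u_{w^c}, Hu_{w^c}\rangle$. The three pieces involve, respectively, the entries $H_{ij}$ with $i,j\in\supp(w)$; the entries with exactly one index in $\supp(w)$; and the entries with both indices outside $\supp(w)$. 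Since the $X_{ij}$ are independent, taking $\E$ factorizes, and each factor contributes a $\LLa_\mu$ evaluated at the appropriate linear coefficient: $\E e^{N\theta\langle u,Hu\rangle}=\exp\big(\sum_{i\le j}\LLa_\mu(2^{\ep_{ij}}\theta\sqrt N\, u_iu_j)\big)$ exactly (using $H_{ij}=\sqrt{2^{1_{i=j}}/N}X_{ij}$ and bookkeeping the diagonal factor), which is where the $\freeL$ term of \eqref{def:freeL} and the sums in \eqref{def:VP} come from.

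Next I would split the exponent $\frac1N\sum_{i\le j}\LLa_\mu(2^{\ep_{ij}}\theta\sqrt N u_iu_j)$ into three groups matching the decomposition above. The first group (both indices in $\supp(w)$) is $\freeL(\theta,u_w)=\freeL(\theta,w)+O(\srad)$ once we use Lipschitz continuity of $\tLL=\LLa_\mu\circ\ssq^{-1}$ from \Cref{rmk:assu.reg} to replace $u_w$ by $w$ (the $O(\srad)$ error, together with the total mass $\|w\|_0\le N^{1-2\eta}$ being small, gives the stated error terms). The third group (both indices outside $\supp(w)$) is a free-energy integral over the delocalized coordinates $u_{w^c}$: since there $|u_i|\le R/\sqrt N$, each argument of $\LLa_\mu$ is $O(\theta R^2/\sqrt N)=o(1)$, so by Taylor expansion $\LLa_\mu(s)=\frac12 s^2+O(s^3)$ this contributes $\theta^2\big(\sum_{i\notin\supp w}u_i^2\big)^2+O(R^2N^{-1/2})$; since the delocalized part carries mass $\approx 1-\|w\|_2^2$, after the Laplace evaluation this yields the $\theta^2(1-\|w\|_2^2)^2$ term and, via the entropy of the uniform surface measure restricted to a sphere of that radius, the $-\frac12\|w\|_2^2$ term (up to lower order). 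The cross group (exactly one index in $\supp(w)$) is the source of the $\VP_R(\theta w,1-\|w\|_2^2)$ term: for each coordinate $j\notin\supp(w)$ it contributes $\sum_{i\in\supp(w)}\LLa_\mu(2\theta\sqrt N\, w_i u_j)$, and writing $s_j=\sqrt N u_j\in[-R,R]$ (so that the empirical measure of the $s_j$ plays the role of $\nu$, with second moment $\approx 1-\|w\|_2^2$), the integral over $u_{w^c}$ becomes, by Laplace's principle / Sanov-type reasoning, $\sup_{\nu\in\cP_{1-\|w\|_2^2}([-R,R])}\{\int\sum_i\LLa_\mu(2\theta w_i s)d\nu(s)-\DKL(\nu|\gamma)\}=\VP_R(\theta w,1-\|w\|_2^2)$ — the relative entropy arising as the large-deviation rate for the empirical measure of nearly-Gaussian coordinates $s_j$ of a high-dimensional sphere (the uniform measure on a large sphere being locally Gaussian). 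The variational problem is the one solved explicitly in \Cref{rmk:tfree.alt}/\Cref{prop:gibbs}.

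The main obstacle, and the step requiring the most care, is making the Laplace-method / large-deviations computation over the restricted sphere $\Uloc^N_w(\srad,R)$ rigorous with the claimed error rate $O_{T,\zcush}(\srad+R^2N^{-1/2}+N^{-2\eta}\log N)$: one must (i) handle the interaction between the radial constraint $\|u_w-w\|_2\le\srad$ and the $\ell^\infty$ constraint $\|u_{w^c}\|_\infty\le R/\sqrt N$, showing these don't distort the free energy beyond the stated error; (ii) control the coupling between the cross term and the quadratic delocalized term (which are not independent as functions of $u_{w^c}$) — this is handled by noting the cross term is linear in $u_{w^c}$ and completing the square, or by a two-step conditioning; (iii) justify replacing the uniform measure on the relevant slice of $\sphereN$ by a product Gaussian measure with the right covariance, incurring only $O(N^{-2\eta}\log N)$-type corrections (the standard comparison between the sphere and Gaussian, with the sparsity $\|w\|_0\le N^{1-2\eta}$ ensuring the ``fixed'' coordinates are a negligible fraction); and (iv) verify the uniformity in $\theta\in[0,T]$. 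For the upper bound on $F_N$ one uses Jensen/convexity together with a net over $\nu$; for the lower bound one restricts the sphere integral to configurations whose empirical coordinate distribution is close to the optimal $\nu$ and estimates the volume of that set. The remaining estimates — Lipschitz bounds on $\tLL$, boundedness of $\LLa_\mu''$ not even being needed here (only \eqref{assu:sg}), and the elementary volume computations for balls and spheres — are routine.
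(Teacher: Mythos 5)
Your outline follows essentially the same route as the paper: integrate out $H$ by Fubini, split the exponent into localized/cross/delocalized blocks, replace $u_w$ by $w$ via the Lipschitz bound on $\tLL$ and Taylor-expand the delocalized block (this is the paper's \Cref{lem:annealed1}), then evaluate the remaining cross term by a quantitative Sanov/Varadhan argument for the empirical measure of the rescaled delocalized coordinates against a Gaussian reference (\Cref{lem:Varadhan}, proved exactly by the sphere-to-Gaussian comparison and coarse-graining you describe), with the slice-volume computation (\Cref{lem:Ploc}) and the dilation identity \eqref{VP.scaling} supplying the $-\tfrac12\|w\|_2^2$ bookkeeping. The only slightly off remark is that the integrated cross term is not linear in $u_{w^c}$ (no completing of squares is available); it is a linear functional of the empirical measure of the delocalized coordinates, and the delocalized quadratic block is essentially constant on the constraint set, which is precisely how the paper decouples the two — so your obstacle (ii) resolves more simply than you suggest.
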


The proof {of Proposition \ref{prop:FE} builds on ideas developed in \cite{AGH} to analyze the full annealed free energy density $F_N(\theta)$ and} is given in Section \ref{sec:annealed}.

We briefly indicate how the various terms in the expression \eqref{def:freeN} for $\free_{N,R}(\theta,w)$ arise from the restricted annealed free energy $F_N(\theta,\Uloc_w)$.
From Fubini's theorem,
\begin{align*}
F_N(\theta;\Uloc)
=\frac1N \log \int_\Uloc \exp\bigg( \sum_{i\le j} \LLa_\mu( 2^{\ep_{ij}}\theta\sqrt{N} u_iu_j)\bigg) dP(u)
= \frac1N \log \int_\Uloc e^{N\freeL(\theta,u)}dP(u)
\end{align*}
recalling $\freeL(\theta,\cdot)$ from \eqref{def:freeL}.
On the other hand, we have
\begin{equation}
\free_{N,R}(\theta,w) =
\freeL(\theta,w) +
\freeD(\theta,\|w\|_2^2) +
\free^{cross}_R(\theta,w)	\label{def:free-split}
\end{equation}
where 
the ``localized'' contribution is $\freeL(\theta,w)$, 
and 
the ``delocalized'' and ``cross'' contributions are given by the dimension-free formulas
\begin{align}
\freeD(\theta,\alpha)&:= \theta^2(1-\alpha)^2	
\,,	\label{def:freeD}\\ 
\free^{cross}_R(\theta,w)&:=
\VP_R(\theta w, 1-\|w\|_2^2)
 -\frac12\|w\|_2^2		\label{def:freeX}
\end{align}
(recalling $\VP_R$ from \eqref{def:VP}).
One notes the formula \eqref{def:free-split} is considerably more complicated than the limit $\theta^2$ for $F_N(\theta)$ in the sharp sub-Gaussian case -- the new ``localized'' and ``cross'' contributions $\free_N^{loc}(\theta,w)$ and $\free^{cross}_R(\theta,w)$ arise from the large coordinates $u_w$.

The three contributions $F_N(\theta;\Uloc_w)\approx \freeL(\theta,w) +\freeD(\theta,\|w\|_2^2) + \free^{cross}_R(\theta,w)$ arise from the contributions to $\freeL(\theta,u)$ of indices $(i,j)$ in $\supp(w)\times\supp(w)$, $\supp(w)^c\times \supp(w)^c$, and $\supp(w)\times\supp(w)^c$, respectively. 
For the delocalized contribution of small coordinates $(i,j)\in\supp(w)^c\times \supp(w)^c$, we can Taylor expand $\LLa_\mu(t) \approx   \frac12t^2$ since $|u_iu_j|=o(N^{-1/2})$ there, resulting in the simple expression for $\freeD$. 
The contribution of large coordinates $(i,j)\in \supp(w)^2$ gives rise to $\freeL$ by approximating $u_w\approx w$. 
For the remaining cross contribution $(i,j)\in \supp(w)\times \supp(w)^c$, the integral $dP(u)$ over delocalized coordinates $u_j\in \supp(w)^c$ concentrates on vectors with empirical measure approximately given by the optimizing measure $\nu$ in $\VP_R(\theta w,1-\|w\|_2^2)$.

Note that that upon setting $w=0$ in \eqref{def:free-split} we reduce to the 
unrestricted free energy from \eqref{HuGu:FE}:
\[
\free_{N,R}(\theta,0) = \freeD(\theta,0) 
- \inf_{\nu\in\cP_1([-R,R])}\DKL(\nu|\gamma)= \theta^2  +O(e^{-cR^2})
\]
(for the error bound see Lemma \ref{lem:freeprops}).
More generally, if $\|w\|_\infty=O(N^{-1/4})$ and $\omega(1)\le R=o(N^{1/4})$ then the arguments of $\LLa_\mu$ in \eqref{def:freeL} and \eqref{def:freeX} are of size $o(1)$, and from Taylor expanding $\LLa_\mu(t) \approx   \frac12t^2$ we get
\[
\freeL(\theta,w)  \approx   \theta^2\|w\|_2^4\,,\qquad
\free^{cross}_R(\theta,w)  \approx   2\theta^2\|w\|_2^2(1-\|w\|_2^2)
\]
and hence $\free_{N,R}(\theta,w) \approx   \theta^2$ in this case as well.
(Here we used that the infimum of $\DKL(\nu|\gamma)$ over $\cP_{1-\|w\|_2^2}([-R,R])$ is $ \approx  \frac12\|w\|_2^2$, attained by a truncated centered Gaussian.)
On the other hand,
for $w$ of norm 1 we reduce to the localized contribution
\[
\free_{N,R}(\theta,w) = \freeL(\theta,w)\,,\qquad w\in \sphereN.
\]

\section{Outlook and open questions}
\label{sec:open}

Theorem \ref{thm:rateN} gives a complete characterization of the large deviation rate for $\lam_1(H)$ in terms of the restricted annealed free energy minimax problem $\inf_\vloc\sup_\theta\cJ_N(x,\vloc)$.
Under further assumptions we have established a genuine LDP {on the full line,} and proved the onset of a localization phenomenon for $v_1$ coinciding with the transition to a non-universal rate function. 
However, analyzing the minimax problem to extract {an explicit} limiting rate function and the conditional structure of $v_1$ at a level of detail comparable to Theorem \ref{thm:increasing} remains a challenging problem in general.

\subsubsection*{Single transition to non-universality?}

For the rate function $\rate^\mu:\R\to[0,+\infty]$ provided by Theorem \ref{thm:fullLDP}, we have from Theorem \ref{thm:smallx} that $\{x:\rate^\mu(x)=\rate^\gamma(x)\}$ contains an open neighborhood of $(-\infty, 2]$, while Corollary \ref{cor:nonuniv} shows this set is bounded from above. Is this set connected? 

\subsubsection*{Relaxing distribution assumptions}
For the large deviation principle of Theorem \ref{thm:fullLDP}, the condition \eqref{assu:usg} and  the existence of the limits in 
\eqref{assu:LRlim}
are natural assumptions, and we conjecture that without them the LDP may not hold in general. In particular,  it should then be possible for the random matrix to alternate between different localization phenomena infinitely often as $N\to\infty$. 
(We conjecture  that \eqref{assu:usg} can be dropped for the upper bound on $\log\P(|\lam_1-x|\le \delta)$ in Theorem \ref{thm:rateN} -- note that it is not needed for the joint upper bound of Proposition \ref{prop:upper-joint}.)

It would be interesting to drop the assumption that the limits are equal in \eqref{assu:LRlim}, or to drop the assumption
\eqref{assu:maxR+}.
If the supremum of $\psimu(t)$ were attained on $\R^-$, or if its left limit were larger than the right limit, then we would expect ``bipartite'' localization strategies to emerge. For example, with $\psimu(t)$ monotone decreasing on $\R^-$ and increasing on $\R^+$ as in Theorem \ref{thm:increasing}, but with  $\psimax =\lim_{t\to-\infty}\psimu(t)>\lim_{t\to+\infty}\psimu(t)$, it would no longer be optimal for $v_1$ to localize to a single coordinate, coinciding with $H$ having a single large diagonal entry. Instead we might expect $H$ to have a large off-diagonal entry $X_{ij}$, with $v_1$ localized to the two sites $i,j$. 
Dropping either of the assumptions \eqref{assu:LRlim}, \eqref{assu:maxR+} would lead to more terms in \eqref{def:tfree} and significantly complicate the analysis of the variational problem for the rate function, and tilting constructions for matching large deviation lower bounds, so we leave these questions for future work.

Removing \eqref{assu:maxR+} would let us drop the constraint $p\le \frac12$ in Corollary \ref{cor:ER}, or equivalently, to establish the LDP for $\lam_N(A-\E A)$ when $p\le \frac12$. The connection between the smallest eigenvalue of adjacency matrices and bipartite structure in the graph is well known \cite{Chung:book}.

It would also be interesting to allow the diagonal entries to have different variance from the GOE scaling \eqref{def:H}, or to allow different distributions on and off the diagonal. Such a setup was considered in the context of sparsified Wigner matrices in the recent work \cite{AuBa}.

\subsubsection*{Structure of $v_1$ in the compact case}

When $\mu$ is compactly supported we have $\psiinfty=0$ and the expressions \eqref{def:tfree}, \eqref{def:tjayN} for the rate function in Theorem \ref{thm:fullLDP} simplify, though the resulting expressions are more complicated than in Theorem \ref{thm:increasing} where $\psimax=\psiinfty$.
In particular, we need to consider localized vectors $\chw$ of 
unbounded support in variational problem $\VP_R(\theta\chw,\beta)$ in \eqref{def:tfree}.
We conjecture that the optimum in \eqref{def:trate} is attained with $\chz=0$, which would allow us to deduce that the localized part of $v_1$ is spread over $\asymp \sqrt{N}$ coordinates of size $\asymp N^{-1/4}$.  This scenario is shown to be optimal for large enough deviation in the proof of \cite[Proposition 5]{AGH}.
In the setting of \ER graphs this should coincide with the appearance of a clique on $\asymp \sqrt{N}$ vertices.

\section{Proof ideas}
\label{sec:ideas}

In Sections \ref{sec:ideas-classical}--\ref{sec:ideas-new} we give an informal sketch of the main ideas behind the proof of our core result Theorem \ref{thm:rateN}, and in particular of Theorem \ref{theomain} relating the upper tail for $\lam_1$ to the quenched and annealed restricted free energies for spherical integrals. The sketch includes important ideas from the preceding works \cite{HuGu1,AGH}. 
We conclude in Section \ref{sec:notation} with a summary of notational conventions that will be used throughout the article. 

In Sections \ref{sec:ideas-classical}--\ref{sec:ideas-new}, when discussing estimates for events of the form $\P(|Y-x|\le \delta)$ for small $\delta>0$ and a random variable $Y$ depending on $N$, to lighten notation we informally
write 
$\error$
for quantities that tend to 0 after taking $N\to\infty$ and then $\delta\downarrow0$:
\begin{equation}	\label{def:Err}
\lim_{\delta\downarrow0}\limsup_{N\to\infty} |\error| = 0.
\end{equation}
We will ignore issues of uniformity of the errors with respect to auxiliary parameters $\theta,\eta$, etc. 
In the proofs we often need to allow parameters to depend on $N,\delta$, significantly complicating the arguments, and we prefer to omit such technicalities here.

\subsection{The classical tilting argument}
\label{sec:ideas-classical}

A basic method for estimating the probability of a large deviation for a scalar random variable is to consider \emph{tilted distributions}. 
To motivate the approach for large deviations of $\lambda_1(H)$, we first sketch the key steps of the proof of the classical Cram\'er LDP for
 the sample mean $\overline{X}_N = \frac1N(X_1+\cdots + X_N)$ of iid centered random variables with distribution $\mu$. (For the full proof see for instance \cite[Chapter 2]{DZ}.) 
Informally, it says 
\begin{equation}	\label{Cramer}
\frac1N\log\P( |\overline{X}_N -x|\le \delta) = - \LLa_{\mu}^*(x) + 
\error
\end{equation}
(recall the notation from \eqref{def:Err})
for any fixed $x\in \R$, where $\LLa_{\mu}^*$ is the Legendre--Fenchel transform of the log-Laplace transform $\LLa_{\mu
}$ that is, $\LLa_{\mu}^*(x):=\sup_{\theta\in \R}\{ \theta x- \LLa_{\mu}(\theta)\}$. 
 For simplicity we consider the case that $\LLa_{\mu}(\theta)$ is finite for all $\theta\in\R$.

 In order to estimate $\P( |\overline{X}_N - x|\le \delta)$ one considers the one-parameter family of measures
\[
\P^{(\theta)}(\,\cdot\,) = \frac{ \E e^{ \theta N\overline{X}_N}\ind( \,\cdot\,) }{ \E e^{ \theta N\overline{X}_N}}\,,\quad \theta \in \R\,.
\]
In terms of these measures we can re-express
\begin{align}
\P( |\overline{X}_N -x|\le \delta) 
& =\E  \frac{ e^{\theta N \overline{X}_N}}{e^{\theta N\overline{X}_N}}\cdot \ind( |\overline{X}_N-x|\le \delta) \notag
\\
& = e^{-N(\theta x+
\error 
)} \E e^{\theta N\overline{X}_N}\ind( |\overline{X}_N-x|\le \delta) \notag\\
& =\P^{(\theta)}( |\overline{X}_N - x|\le \delta)\cdot \exp\big\{-N(\theta x-\LLa_\mu(\theta)+
\error 
)\big\} 	\label{intro-tilt}
\end{align}
where in the second line we used the restriction to the large deviation event to approximate the factor of $\exp(\theta N\overline{X}_N)$ from the denominator of the preceding line, and in the third line we used the fact that $\LLa_\mu(\theta) =  \log \E e^{\theta X_1}= \frac1N \log \E e^{\theta N \overline{X}_N}$ (since the variables are iid).
To obtain the upper bound in \eqref{Cramer} one can simply bound $\P^{(\theta)}(|\overline{X}_N-x|\le \delta)$ by one  in \eqref{intro-tilt} and optimize in $\theta$.

To show the matching lower bound in \eqref{Cramer} requires a closer examination of the measures $\P^{(\theta)}$. Indeed, from \eqref{intro-tilt} we see it suffices to show that the event $|\overline{X}_N-x|\le \delta$ is likely under $\P^{(\theta_x)}$, where $\theta_x$ is the optimizer in the definition of $\LLa_\mu^*(x)$. To see this, we note that under $\P^{(\theta)}$ we have that $N\overline{X}_N$ is a sum of iid variables with mean $\LLa_\mu'(\theta)$. Hence, we have $\E^{(\theta)}\overline{X}_N=\LLa_\mu'(\theta)$, and it is straightforward to show that $\overline{X}_N$ concentrates around this value under $\P^{(\theta)}$. Thus,
if $x$ lies in the range of $\LLa_\mu'$ (which is smooth and strictly increasing on $\R$) then letting $\theta$ be the unique solution of $x=\LLa_\mu'(\theta)$, we have $\P^{(\theta)}(|\overline{X}_N-x|\le \delta)\ge \exp(o(1)N)$. Noting that this choice of $\theta$ is precisely $\theta_x$, so the right hand side of \eqref{intro-tilt} is $\exp( -N(\LLa_\mu^*(x) +
\error ))$, we thus obtain the matching lower bound in \eqref{Cramer}.
When $x$ is not in the range of $\LLa_\mu'$ one verifies that both sides in \eqref{Cramer} diverge to $-\infty$.

The intuition is that with the measures $\P^{(\theta)}$ we are re-weighting the distribution of $\overline{X}_N$ so that the large deviation event becomes likely. 

\subsection{Tilting by spherical integrals}
\label{sec:ideas.GH}

We point out that Cram\'er's argument sketched above hinges on the fact that the moment generating function $\E \exp( \theta N\overline{X}_N)$ is straightforward to compute, owing to the independence of the summands $X_i$. 
Indeed,  a na\"ive attempt to apply this argument to the largest eigenvalue $\lambda_1=\lambda_1(H)$ of a Wigner matrix immediately runs into the problem that there is no easy way to compute the moment generating function $\E \exp( \theta N\lambda_1)$. 

A way to extend the tilting approach to obtain an LDP for $\lambda_1$ was found in \cite{HuGu1}. Rather than na\"ively tilt the distribution of $H$ by $\exp( \theta N\lambda_1)$, the key is to tilt by the \emph{spherical integral}
$I(H,\theta)$ defined in \eqref{def:spherical}.
One may view $u$ as a random vector with distribution $P$, independent of the Wigner matrix $H$, but we choose to keep this integration separate from the probability space. 
Two features of the spherical integral 
\eqref{def:spherical} make it well suited for the Cram\'er tilting strategy:
\begin{enumerate}
\item From results in \cite{HuGu1} it asymptotically depends in a smooth and monotone way on $\lambda_1$. Specifically, on the event that the bulk of the spectrum of $H$ is well approximated by the semicircle law (an event which fails with negligible probability of size 
{$\exp(-\omega(N))$})
(see Section \ref{sec:notation} for our conventions on asymptotic notation)
 we have 
\begin{equation}	\label{intro-Jasymp}
\frac1N\log I(H,\theta) = J(\lambda_1,\theta) +o(1)
\end{equation}
where $J(x,\theta)$ is defined in \eqref{def:Jsc}. (See Lemma \ref{lem:approxsc} for a precise statement.)
  One may hence expect to learn about large deviations of $\lambda_1$ from reweighting the distribution of $H$ by $I(H,\theta)$.
\item Unlike the exponential moment $\E \exp(\theta N\lam_1)$, the \emph{annealed spherical integral} $\E I(H,\theta)$ 
is tractable to compute, as the quadratic form $\langle u,Hu\rangle$ separates into a sum of independent random variables. 
\end{enumerate}

We now sketch the proof of Theorem \ref{thm:HuGu}.
We show
\begin{align}
\label{intro-GH.tail1}
\frac1N\log\P(|\lam_1-x|\le \delta ) 
&= \inf_{\theta\ge0} \big\{ F_N(\theta) - J(x,\theta) \big\} +
\error \\
&= -\rate^\gamma(x)+
\error 	\label{intro-GH.tail2}
\end{align}
where we recall the annealed free energy density $F_N(\theta)= \frac1N\log\E I(H,\theta)$. 
Recalling also $\freeL$ from \eqref{def:freeL}, note that
\begin{equation}	\label{freeL0}
\freeL(\theta,u) 
:= \frac1N \sum_{i\le j} \LLa_\mu(2^{\ep_{ij}} \theta\sqrt Nu_iu_j)
= \frac1N\log \E e^{\theta N\langle u, Hu\rangle}
\end{equation}
and from Fubini's theorem,
\begin{equation}	\label{FN-Fubini}
F_N(\theta) 
=  \int_{\sphereN} \E e^{\theta N \langle u,Hu\rangle}dP(u)
= \int_{\sphereN} e^{N\freeL(\theta,u)}dP(u)\,.
\end{equation}
We introduce a
family $\P^{(\theta,u)}$ of tilted measures on the background probability space
with Radon--Nikodym derivatives
\begin{equation}	\label{def:tiltP0}
\frac{d\P^{(\theta,u)}}{d\P} := e^{\theta N \langle u, Hu\rangle - N\freeL(\theta,u)},
\qquad \theta\ge0\,,\; u\in\sphereN \,.
\end{equation}
We further define a family $Q^{(\theta)}$ of tilted measures on $\sphereN$ with densities
\begin{equation}	\label{def:tiltQ0}
\frac{dQ^{(\theta)}}{dP}(u) := e^{N\freeL(\theta,u) - NF_N(\theta)}\,,\qquad \theta\ge0\,.
\end{equation}
We express the large deviation probability $\P(|\lam_1-x|\le \delta)$ in terms of these tilted measures. 
Let $\cG$ be the event that \eqref{intro-Jasymp} holds. Thus $\P( \cG) = 1-\exp( -\omega(N))$, so it suffices to estimate $\P( \cE_x)$, where we set
\begin{equation}	\label{ideas:Ex}
\cE_x:= \{ |\lam_1-x|\le \delta\}\cap  \cG.
\end{equation}
Fix an arbitrary $\theta\ge0$.
From \eqref{intro-Jasymp} and continuity of $x\mapsto J(x,\theta)$, we have
\begin{align} \label{intro-GH1.2}
\P( \cE_x  ) 
&=   \E \frac{ I(H,\theta)}{I(H,\theta)} \ind(\cE_x  ) 	
= e^{- N(J(x,\theta)+
\error 
)} \E I(H,\theta) \ind(\cE_x  )		
\end{align}
In terms of $\P^{(\theta,u)}, Q^{(\theta)}$ we can rewrite
\begin{align}
 \E I(H,\theta) \ind(\cE_x )	
&= 
\int_{\sphereN} \E e^{\theta N\langle u,Hu\rangle} \ind( \cE_x  )dP(u)	\label{intro-GH1.3}\\
&= \int_{\sphereN} \P^{(\theta,u)}( \cE_x  ) \,
e^{N\freeL(\theta,u)}
dP(u)	\notag\\
&=
e^{NF_N(\theta)} \int_{\sphereN} \P^{(\theta,u)}(\cE_x ) dQ^{(\theta)}(u)\,.	\notag
\end{align}
Combining with \eqref{intro-GH1.2}, we have
\begin{equation}
\label{intro-GH1}
\P( \cE_x  ) 
= e^{N(F_N(\theta)-J(x,\theta)+\error)} 
\int_{\sphereN} \P^{(\theta,u)}(\cE_x ) dQ^{(\theta)}(u) \,.
\end{equation}
The reader may compare with the lines leading to \eqref{intro-tilt} -- the major difference here is the additional integration over the high-dimensional sphere. 

By trivially bounding $ \P^{(\theta,u)}( \cE_x  )\le1$ 
and $Q^{(\theta)}(\sphereN)=1$
in \eqref{intro-GH1} we get
\begin{equation}
\frac1N\log \P(\cE_x  ) \le F_N(\theta) - J(x,\theta) + 
\error 
\end{equation}
showing \eqref{intro-GH.tail1} holds as an upper bound.

To prove the matching lower bound, from \eqref{intro-GH1} we see it is enough to show that there exists $\theta\ge0$ and a set $\Aset\subset\sphereN$ 
such that
\begin{equation}
\label{GH-LB-goalQ}
Q^{(\theta)}(\Aset) \ge e^{o(N)}
\end{equation}
and
\begin{equation}
\label{GH-LB-goalP}
\P^{(\theta,u)}(\cE_x ) \ge e^{o(N)} \quad \forall u\in \Aset\,.
\end{equation}
To get \eqref{GH-LB-goalP} it will suffice that $u$ be delocalized: specifically, that $\|u\|_\infty=o(N^{-1/4})$. 
Thus, denoting the set of \emph{$R$-delocalized vectors} 
\[
\Deloc_R:=\{v\in\B^N:\|v\|_\infty\le RN^{-1/2}\}
\]
we take $\Aset=\Deloc_{N^\eta}$ for $\eta<\frac14$.
To see why this is sufficient for \eqref{GH-LB-goalP}, we note that the
tilted means of the entries are
\begin{equation}	\label{intro.tilted-means}
\E^{(\theta, u)} H_{ij} =
\sqrt{\frac{2^{1_{i=j}}}N} \LLa_\mu'(2^{\ep_{ij}} \theta\sqrt{N} u_iu_j)\,.
\end{equation}
Since
$\LLa_\mu(t) \approx   \frac12t^2$ for small $t$,
if $\|u\|_\infty=o(N^{-1/4})$ then
$\E^{(\theta,u)} H\approx 2\theta uu^\tran$, a rank-one matrix,
and in fact one can show that under $\P^{(\theta,u)}$ we have an approximation in law
\begin{equation}	\label{approxd}
H\approxd 2\theta uu^\tran + \wt H
\end{equation}
for a Wigner matrix $\wt H$.
The largest eigenvalue of $H$ can be approximated using the classic BBP computation for the largest eigenvalue of a Wigner matrix under a rank-one perturbation \cite{BBP}, which gives
\begin{equation}	\label{intro.BBP}
\lambda_1 \approx   2\theta + \frac1{2\theta}
\end{equation}
with probability $1-o(1)$ under $\P^{(\theta,u)}$, for $\theta\ge\frac12$  (See \cite[Lemma 5.2]{HuGu1}).
Then noting that the right hand side is equal to $x$ for $\theta=\thetap$, we obtain \eqref{GH-LB-goalP}.

For \eqref{GH-LB-goalQ}, again from Taylor expansion we find that 
\begin{equation}	\label{freeL-deloc}
\freeL(\theta,u)= \theta^2+o(1)
\end{equation}
uniformly for $u\in \Aset$.
Together with \eqref{FN.sharp} this implies the density $dQ^{(\theta)}/dP$ is uniformly bounded below by $e^{o(N)}$ on $\Aset$. 
Since a random unit vector drawn from the uniform measure $P$ satisfies $\|u\|_\infty=N^{-1/2+o(1)}$ with probability $1-o(1)\ge \frac12$, we conclude $Q^{(\theta)}(\Aset) \ge e^{o(N)}P(\Aset) \ge e^{o(N)}$, giving \eqref{GH-LB-goalQ} to complete the proof of \eqref{intro-GH.tail1}.

Turning to prove \eqref{intro-GH.tail2}, from \eqref{freeL0} and the sharp sub-Gaussian hypothesis,
\begin{equation}
f_N(\theta, u) \le \frac1{2N} \sum_{i\le j}  2^{1+1_{i\ne j}} \theta^2 Nu_i^2u_j^2 = \theta^2\sum_{i,j} u_i^2u_j^2 = \theta^2
\end{equation}
for all $u\in \sphereN$, and hence $F_N(\theta)\le \theta^2$.
On the other hand, from \eqref{freeL-deloc},
\begin{equation}
e^{NF_N(\theta)} \ge \int_{\Aset}e^{Nf_N(\theta,u)}dP(u) = e^{\theta^2N+o(N)} P(A) = e^{\theta^2N + o(N)} 
\end{equation}
so
\begin{equation}	\label{FN.sharp}
F_N(\theta) 
= \theta^2 + o(1).
\end{equation}
Inserting this limiting value into \eqref{intro-GH.tail1} and optimizing over $\theta$, we have 
\begin{equation}	\label{intro-GH.UB1}
\frac1N\log \P(\cE_x  ) = \inf_{\theta\ge0} \{ \theta^2 - J(x,\theta)\} + 
\error 
\end{equation}
A computation shows that the infimum is achieved at $\thetap$ (recall \eqref{def:thetapm}), and moreover that the main term on the right hand side above can be expressed
\begin{equation}	\label{intro-GH.UB2}
\thetap^2  - J(x,\thetap  ) = - \frac12\int_2^x\sqrt{y^2-4}dy =- \rate^\gamma(x) 
\end{equation}
and \eqref{intro-GH.tail2} follows.

\subsection{New ideas to capture localization phenomena}
\label{sec:ideas-new}


From \eqref{intro-GH1} we can understand that for the sharp sub-Gaussian Wigner matrices, the main mechanism underlying a deviation of $\lambda_1$ to the neighborhood of some $x>2$ is for the entries of $H$ to collectively deviate in the direction of a rank-one matrix $uu^\tran$ which is ``delocalized'' in the sense that all entries of $uu^\tran$ are of size $o(N^{-1/2})$. 
In the general sub-Gaussian case we need to account for additional ``localized'' strategies, such as the existence of a single large entry of $H$ of size order one (whereas the typical size is of order $N^{-1/2}$). In fact, the key point is that deviations of $\lambda_1$ can occur due to a \emph{mixture} of localized and delocalized perturbations. 

To capture this, we keep track of the large entries of the eigenvector $v_1$ associated with $\lambda_1$.
For fixed $\eta\in(0,\frac14)$, recall the notation $v_1^{(\eta)}$ from \eqref{def:v1eta} for the restriction of $v_1$ to its entries of size at least $N^{-1/2+\eta}$.



\subsubsection{Upper bound}

For fixed $x>2$ and a vector $z\in \B^N$ supported on at most $N^{1-2\eta}$ coordinates, we denote the event
\begin{equation}	\label{ideas:Exz}
\cE_{x,z} := \big\{ 
\lam_1\approx x\,,\; v_1^{(\eta)}\approx z
\big\}\,.
\end{equation}
Our approach to the upper bound is to prove a sharp joint large deviation upper bound for the pair $(\lam_1,v^{(\eta)})$ of the form
\begin{equation}	\label{ideas:joint-upper}
\frac1N\log\P( \cE_{x,z}) \le - \cJ_N(x,z) + \error
\end{equation}
for fixed $x>2$ and sparse vector $z$ in the ball, with $\cJ_N$ as in \eqref{def:jayN0}.
See Proposition \ref{prop:upper-joint} for a precise statement. 
A large deviation upper bound for $\lam_1$ is obtained by minimizing $\cJ_N(x,z)$ over $z$, leading to the $N$-dependent rate function in Theorem \ref{thm:rateN}.
Minimizing over $z$ amounts to selecting the least unlikely localized part of $v_1$; if the minimum is attained at $z=0$ then we are reduced to the GOE rate function, leading to Theorem \ref{thm:smallx}. 
The joint upper bound \eqref{ideas:joint-upper} also allows us to easily deduce the statements on the conditional structure of $v_1^{(\eta)}$ in Theorems \ref{thm:smallx}, \ref{thm:increasing} and Corollary \ref{cor:nonuniv}.

Furthermore, under the hypotheses of Theorem \ref{thm:fullLDP}, we can use a pigeonholing argument to locate a gap in the sizes of the entries of $z$, allowing us to reduce $\cJ_N$ to the modified joint rate function $\wt\cJ_{N,R}$ of \eqref{def:tjayN}, from which we can get a genuine limiting rate function.

Turning to describe the proof of \eqref{ideas:joint-upper}, we follow the pattern of the argument from \cite{HuGu1}, but taking advantage of the restriction on $v_1^{(\eta)}$ to localize the spherical integral. 
Indeed, we can show that outside a negligible event (including, among others, the relatively rare event that $H$ has more than one eigenvalue near $x$), the spherical integral $I(H,\theta)$ concentrates on the portion of the sphere where $|\langle u, v_1\rangle|\approx \overlap_x(\theta)$, where the overlap function $\overlap_x(\cdot)$ was defined in \eqref{def:overlapx}. (For the precise statement see Lemma \ref{lem:restrict}.)
Together with the restriction to $\cE_{x,z}$ we can show 
\begin{equation}	\label{ideas:restrict}
I(H,\theta) \approx \int_{\Uloc_{\overlap_x(\theta)z}} e^{\theta N\langle u, Hu\rangle} dP(u)
\end{equation}
with $\Uloc_{\overlap_x(\theta)z}$ as in \eqref{def:UwrR}. 
Let $\cE_{x,z}'$ denote the intersection of $\cE_{x,z}$ with the high probability event that \eqref{ideas:restrict} holds, along with the event $\cG$ that \eqref{intro-Jasymp} holds.
Then arguing similarly to \eqref{intro-GH1.2}, we have
\begin{align*}
\P( \cE_{x,z}') &= \E \frac{I(H,\theta)}{I(H,\theta)}\ind(\cE_{x,z}')\\
&= e^{-N(J(x,\theta) + \error)} \E I(H,\theta)\ind(\cE_{x,z}')\\
&=e^{-N(J(x,\theta) + \error)} \E \int_{\Uloc_{\overlap z}}e^{\theta N\langle u, Hu\rangle}\ind(\cE_{x,z}')dP(u)\\
&\le e^{-N(J(x,\theta) + \error)} \E \int_{\Uloc_{\overlap z}}e^{\theta N\langle u, Hu\rangle}dP(u)\\
&= e^{N(F_N(\theta; \Uloc_{\overlap z}) - J(x,\theta) + \error)} 
\end{align*}
where we abbreviate $\overlap=\overlap_x(\theta)$, and we recall the restricted annealed free energy $F_N(\theta;\Uloc)$ from \eqref{def:FN.U}. 
Applying Proposition \ref{prop:FE} to replace $F_N(\theta; \Uloc_{\overlap z})$ with $\free_{N,N^{1/5}}(\theta, \overlap z)$ and then optimizing $\theta$, we obtain the desired upper bound \eqref{ideas:joint-upper}.

A technical point we have skipped is that in order to use \eqref{ideas:joint-upper} with a covering argument to establish bounds of the form
\[
\P( \lam_1\in I, v_1^{(\eta)} \in A) 
\]
for larger sets $I, A$, we need $\cJ_N$ to be continuous on $\R\times \B$ (in a suitable quantitative sense). It turns out the continuity can fail if the second argument is near the boundary of $\B$, which corresponds to the event $v_1^{(\eta)}\approx v_1$, i.e. $v_1$ is \emph{fully localized}.
We hence need a separate argument showing that this event is negligible even in the large deviations regime. We do this is Section \ref{sec:no-comp}.

\subsubsection{Lower bound}

Recall the tilted measures $\P^{(\theta,u)},Q^{(\theta)}$ defined in \eqref{def:tiltP0}, \eqref{def:tiltQ0}.
We would like to follow the argument of \cite{HuGu1} summarized in \eqref{intro-GH1} and \eqref{GH-LB-goalQ}--\eqref{GH-LB-goalP}. 
There, we could select a tilting parameter $\theta=\theta_x$ by an explicit BBP computation, thanks to the fact that most vectors $u\in\sphereN$ under $Q^{(\theta)}$ were delocalized. 
Here, however, it is crucial to restrict to vectors in the sphere with a localized part $v_1^{(\eta)}\approx z$, so that both \eqref{GH-LB-goalQ} and \eqref{GH-LB-goalP} fail to hold in our setting. 

With $\cE_x$ as in \eqref{ideas:Ex}, for any fixed $\theta\ge0$ we have from \eqref{intro-GH1.2} and the first two lines of \eqref{intro-GH1.3}
\[
\P(\cE_x)
=e^{-N(J(x,\theta) + \error) } \int_{\sphereN}  \P^{(\theta,u)}(\cE_x)e^{N\freeL(\theta,u)}dP(u).
\]
Now for any sparse $z\in \B^N$ we can restrict the spherical integral to lower bound
\begin{align}
\P(\cE_x)
&\ge e^{-N(J(x,\theta) + \error) } \int_{\Uloc_{\overlap z}}  \P^{(\theta,u)}(\cE_x)e^{N\freeL(\theta,u)}dP(u)	\notag\\
&= e^{N(F_N(\theta; \Uloc_{\overlap z}) - J(x,\theta) + \error) } \int_{\sphereN}  \P^{(\theta,u)}(\cE_x) dQ^{(\theta)}(u| \Uloc_{\overlap z})	\label{ideas:LB1}
\end{align}
where in the second line we are integrating with respect to the tilted measure $Q^{(\theta)}$ \emph{conditioned to $\Uloc_{\overlap z}$}, i.e. 
\begin{equation}	\label{def:tiltQ0-cond}
Q^{(\theta)}(\Aset| \Uloc_{\overlap z}) := \frac{Q^{(\theta)}(\Aset\cap \Uloc_{\overlap z})}{Q^{(\theta)}(\Uloc_{\overlap z})}
\end{equation}
for Borel sets $\Aset\subset\sphereN$. 

Now to conclude the lower bound for Theorem \ref{thm:rateN}, we need to show that for any sparse $z$ there exists $\hat\theta=\hat\theta_{x,z}\ge0$ such that 
\begin{equation}	\label{ideas:LBgoal1}
 \int_{\sphereN}  \P^{(\hat\theta,u)}(\cE_x) dQ^{(\hat\theta)}(u| \Uloc_{w(\hat\theta)})
 \ge e^{o(N)}
\end{equation}
where we now emphasize a key challenge that $w(\theta)=\overlap_x(\theta)z$ and hence the set $\Uloc_{w(\theta)} $ depend on $\theta$. 

Similarly, for the lower bound of Theorem \ref{thm:fullLDP}, we need to show that for any fixed $\chz\in \B_{\ge\thresh}$ and $\tal\ge0$ such that $\overlap^2(\|\chz\|^2+\tal)\le 1$, there exists $\hat\theta=\hat\theta_{x,\chz,\tal}\ge0$ such that \eqref{ideas:LBgoal1} holds with $w(\theta)=\overlap_x(\theta)(\chz + \t z)$, where $\t z=\t z(\theta)$ is a vector of squared $\ell^2$-norm $\tal$ taking a certain constant value of order $N^{-1/4}$ on its support.
The value is chosen so that the localized contribution $\freeL(\theta,\t z)$ to $\free_{N,R}(\theta, z)$ is $2\theta^2\psimax\tal^2$, so that we can match the expression \eqref{def:tfree} obtained in the proof of the upper bound. 

As in the argument of \cite{HuGu1} we  split the task of proving \eqref{ideas:LBgoal1} into two steps of the form \eqref{GH-LB-goalQ}, \eqref{GH-LB-goalP}. 
However, the sets $\Aset\subset\sphereN$ must now depend on $\theta$ and $z$. 

We show that there is a family of sets $\Aset_\theta\subset\sphereN$ indexed by $\theta\ge\frac12$ and a continuous curve $[\frac12,\infty)\ni \theta\mapsto x(\theta)$ such that for all $\theta\ge\frac12$,
\begin{equation}
\label{LB-goalQ}
Q^{(\theta)}(\Aset_\theta | \Uloc_{w(\theta)}) \ge e^{o(N)}
\end{equation}
and
\begin{equation}
\label{LB-goalP}
\P^{(\theta,u)}(\cE_{x(\theta)}) \ge \frac12 \qquad \forall u\in \Aset_\theta
\end{equation}
Moreover, $x(\theta)$ will satisfy
\begin{equation}	\label{xcurve}
x(\tfrac12)\approx 2\qquad \text{ and } \qquad x(\theta)\to +\infty \quad\text{ as } \theta\to+\infty.
\end{equation}
By the intermediate value theorem it will then follow that $x(\hat \theta)=x$ for some $\hat\theta\in(\frac12,\infty)$, completing the proof.

For \eqref{LB-goalQ}, recall that on $\Uloc_{w(\theta)}$ we have $u|_{\supp(w)}\approx w$. 
We show that for $u\in\sphereN$ drawn from the conditional tilted measure $Q^{(\theta)}(\,\cdot\,|\Uloc_{w(\theta)})$, up to permutation of the coordinates the restriction of $u$ to $[N]\setminus \supp(w)$ concentrates in a small neighborhood of a delocalized vector $\t v(\theta)$ determined by $\theta$ and $w(\theta)$. Moreover, $\t v$ varies continuously in $\ell^2$ as a function of $\theta$ (in a suitable quantitative sense). We hence obtain \eqref{LB-goalQ} with $\Aset_\theta$  a small neighborhood of the set of all vectors obtained from 
\begin{equation}	\label{optimalu}
u(\theta):=w(\theta) + \t v(\theta)
\end{equation}
obtained by permuting the coordinates of $\tv$. See Proposition \ref{prop:tiltQ} for the precise statement. 

Turning to  \eqref{LB-goalP}, we take
\begin{equation}
x(\theta):= \E^{(\theta, u(\theta))}\lam_1(H).
\end{equation}
It is shown in Proposition \ref{prop:tiltP} that this choice of $x(\cdot)$ is continuous and satisfies \eqref{xcurve}. We also show that $\lam_1(H)$ concentrates around its expectation under $\P^{(\theta,v)}$ for any $v\in\sphereN$, and moreover that $\E^{(\theta,v)}\lam_1(H)$ depends continuously on $v$ (in a suitable quantitative sense). From this it follows that for any fixed $u\in\Aset_\theta$, with high probability under $\P^{(\theta,u)}$ we have 
\[
\lam_1(H)\approx \E^{(\theta,u)}\lam_1(H) \approx \E^{(\theta, u(\theta))}\lam_1(H) = x(\theta)
\]
giving \eqref{LB-goalP}. \\

We remark on some of the technical challenges for making the above sketch rigorous. 

The necessary properties of the tilted measures $\P^{(\theta,u)}$ are established in Section \ref{sec:tiltP}. We wish to highlight a coupling argument for the continuity properties of these measures under variation of $\theta$ and $u$; see Lemma \ref{lem:NiceConnection0}.

The continuity and localization properties for the conditional tilted measures $Q^{(\theta)}(\,\cdot\,|\Uloc_{w(\theta)})$ require some substantial work. They are established in Section \ref{sec:annealed}, where we also prove Proposition \ref{prop:FE} on the restricted annealed free energies (note that $e^{NF_N(\theta;\Uloc_w)}$ is the partition function for $Q^{(\theta)}(\,\cdot\,|\Uloc_{w(\theta)})$).  

Establishing continuity for the tilted measures requires an understanding of the constrained Gibbs variational problem of \eqref{def:VP}, which is the subject of Section \ref{sec:gibbs}.
Results of this section are also needed for various regularity properties, established in Section \ref{sec:rateprops}, for the asymptotic free energies $\free_{N,R}(\theta,w)$ (see Lemma \ref{lem:freeprops}) and joint rate functions $\cJ_N(x,z)$ (Lemma \ref{lem:Jprops}) that are repeatedly invoked in the proofs. (As for the upper bound argument, delicate issues of continuity of the rate functions were glossed over in the above sketch.) 

\section{Notational conventions}
\label{sec:notation}

For $n\in\N$ we write $[n]$ for the discrete interval $\{1,\dots, n\}$.
For a statement $Q$ we write $1_Q$ for the associated Boolean variable. For a set $S$ we sometimes abusively write $1_S(\cdot)$ for the function with output $1_S(x)=1_{x\in S}$. 
We write 
$\ep_{ij}:=\frac12(1+1_{i\ne j})$ (which often enters into formulas for exponential moments due to the different variances of entries on and off the diagonal -- see \eqref{def:H}).
The Lipschitz constant of a function $f:\R\to\R$ is denoted $\|f\|_{\Lip}:=\sup_{x\ne y} |f(x)-f(y)|/|x-y|$.
Recall from Remark \ref{rmk:assu.reg} the notation $\tLL:=\LLa_\mu\circ\,\ssq^{-1}$, where we denote the signed square function 
\begin{equation}	\label{def:ssq}
\ssq(x)=x^2(1_{x\ge0}-1_{x<0})\,,\qquad x\in\R
\end{equation}
with inverse $\ssq^{-1}(x) =\sgn(x)\sqrt{|x|}$. We noted in Remark \ref{rmk:assu.reg} that $\|\tLL\|_{\Lip}<\infty$.

\subsection{Asymptotic notation and parameters}
$C,C_0,c$ etc. denote positive constants that may change from line to line. Constants may depend on $\mu$; dependence on other parameters will be noted explicitly. (For our assumptions on the measure $\mu$ see Section \ref{sec:assu}.)
For a positive real number $a$ we write $O(a)$ to denote an unspecified real number $b$ satisfying $|b|\le Ca$ for a constant $C>0$. 
We further write $b\ls a$ and $a\gs b$ to mean $b=O(a)$, and $b\asymp a$ to mean $b\ls a\ls b$.  
Dependence of implicit constants on parameters $p$ (not related to the entry distribution $\mu$) is indicated with subscripts, e.g. $O_p(a)$, $b\ls_pa$, etc.

For a positive real $a$ possibly depending on $N$ we write $o(a)$ for a real $b$ depending on $N$ satisfying $b/a\to0$ as $N\to\infty$, while $\omega(a)$ denotes a positive real number $b$ such that $b/a\to \infty$ as $N\to \infty$. We indicate dependence of the rate of convergence on parameters with subscripts.

\subsection{Measures}
$\cP(S)$ stands for the set of probability measures on a set $S$. 
For an interval $I\subset \R$ and $\al>0$ we write
\begin{equation}	\label{def:cPbeta}
\cP_\al(I):= \bigg\{ \nu\in \cP(I): \int x^2 d\nu(x) = \al\,\bigg\}
\end{equation}
for the set of probability measures supported on $I$ and with second moment equal to $\al$.
The standard Gaussian measure on $\R$ is denoted by $\gamma$ and the semicircle measure (with density $\frac1{2\pi}(4-x^2)_+^{1/2}$) by $\sigma$.
The relative entropy (or Kullback--Leibler divergence) of a probability measure $\nu_1$ with respect to a reference measure $\nu_0$ (not necessarily a probability measure) is denoted
\begin{equation}
\label{def:DKL}
\DKL(\nu_1|\nu_0):=\int \log \frac{d\nu_1}{d\nu_0} d\nu_1
\end{equation}
when $\nu_1\ll\nu_0$, and is otherwise equal to $+\infty$.
In particular, when $d\nu(x) = f(x)dx$ is continuous with respect to the Lebesgue measure $dx$ on $\R$ we have that $-\DKL(\nu|dx) = -\int f(x)\log f(x)dx$ is the differential Shannon entropy of $\nu$, and
\begin{equation}	\label{DKL-gamma}
\DKL(\nu|\gamma) = \DKL(\nu|dx) + \frac12\int x^2d\nu(x) + \frac12\log (2\pi).
\end{equation}
The $L^2$-Wasserstein distance on $\cP(\R)$ is denoted
\begin{equation}	\label{def:W2.meas}
\cW_2(\nu_1,\nu_2)=\inf \{ \|X-Y\|_{L^2(\P)}\}
\end{equation}
where the infimum is taken over all couplings $(X,Y)$  such that the marginal distributions of $X,Y$ are $\nu_1$ and $\nu_2$, respectively. 
For compactness we often write 
\[
\nu(f):= \int fd\nu.
\] 

\subsection{Vectors and matrices}

{We view $\R^N$ as a subspace of the Hilbert space $\ell^2(\N)$ of square-summable sequences, consisting of those sequences supported on $[N]$. For a finite set $J\subset\N$ we write $\R^J=\{ v\in \ell^2(\N): \supp(v)\subseteq J\}$ (thus $\R^N=\R^{[N]}$). The closed unit $\ell^2$-balls in $\ell^2(\N), \R^N, \R^J$ are denoted $\B, \B^N, \B^J$, respectively. $\sphereN$ is the boundary of $\B^N$.}
For $v\in \ell^2(\N)$ of finite support we write
\begin{equation}
\|v\|_0 := |\supp(v)|\,.
\end{equation}
The $\ell^p$ norms are denoted $\|\cdot\|_p$ {(there should be no risk of confusion with the $L^p$-norms for Lebesgue spaces)}. 
For matrices, $\|\cdot\|$ denotes the $\ell^2\to\ell^2$ operator norm, and the matrix Hilbert--Schmidt norm (or Frobenius norm) is $\|M\|_\HS = (\sum_{i,j}M_{ij}^2 )^{1/2}$. 

We write $\Sym_N$ for the set of $N\times N$ real symmetric matrices.
The eigenvalues of an element $M\in \Sym_N$ are labeled $\lambda_1(M)\ge\cdots\ge \lambda_N(M)$, and we write $v_1(M),\dots, v_N(M)$ for an associated orthonormal basis of eigenvectors. 
We will tend to write $\lam_i,v_i$ for the random $\lam_i(H),v_i(H)$ when there can be no confusion.
We write $\hat{\mu}_v=\frac1N\sum_{i=1}^N \delta_{v_i}$ for the empirical distribution of the coordinates of  $v\in \R^N$, and
 $\hat{\mu}_M:= \frac1N\sum_{i=1}^N \delta_{\lambda_i(M)}$ for the empirical spectral distribution of  $M\in \Sym_N$.
 
We denote the $\ell^2$-Wasserstein distance on $\R^N$ 
\begin{equation}	\label{def:W2.vec}
d_2(u,v) :=  \min_\varrho \Big( \sum_{i=1}^N |u_i - v_{\varrho(i)}|^2\Big)^{1/2}
\end{equation}
where the minimum is taken over all permutations $\varrho:[N]\to[N]$. (This is a pseudometric, but defines a metric on equivalence classes of vectors that are equal up to permutation of the coordinates.)
We remark that the $\ell^2$-Wasserstein distance is related to the $L^2$-Wasserstein distance (see \eqref{def:W2.meas}) of empirical measures scaled by $\sqrt{N}$:
\begin{equation}	\label{d2W2}
d_2(u,v) = \cW_2(\hat{\mu}_{\sqrt N u},\hat{\mu}_{\sqrt Nv}) .
\end{equation}
We write $\Bset_2(u,\eps)=\Bset_2^N(u,\eps):=\{v\in\R^N: d_2(u,v)<\eps\}$.
For $R\ge1$ we denote the set of \emph{$R$-delocalized} vectors
\begin{equation}	\label{def:deloc}
\Deloc_R:=\Deloc_R^N:= \{ v\in \ball^N: \|v\|_\infty \le RN^{-1/2}\}.
\end{equation}
Recalling the sets $\Uloc_w(\srad,R)$ from \eqref{def:UwrR} (we will often drop the superscript $N$), we note that with $w$ the zero vector, its support is empty, and from \eqref{w-wc} we see that $u_w=0$ and $u_{w^c}=u$. Hence
\begin{equation}	\label{cU0}
\Uloc_0(\srad,R) = \Deloc_R\qquad\forall\srad>0.
\end{equation}
For given $\eta\in(0,\frac14)$ and $v\in\B$, we denote
\begin{equation}	\label{def:eta}
v^{(\eta)}:=(v_i1_{|v_i|>N^{-1/2+\eta}})_i\,,\qquad
n_0= n_0(\eta):= \lf N^{1-2\eta}\rf\,.
\end{equation}

\subsection{Probability space and tilted laws}
\label{sec:notation.tilts}

We fix a background probability space $(\Omega,\cF, \P)$ which supports the random matrix $H$ and all other random variables, and write $\E$ for expectation under $\P$.
We assume that under $\P$, $H$ has the law defined in \eqref{def:H}.
While we could regard integration over the sphere as expectation with respect to a random unit vector $u$ independent of all other variables, we prefer to keep this separate writing $P=P_N$ for the uniform measure on $\sphere^{N-1}$.
Thus, we often work on the extended probability space $(\Omega\times \sphere^{N-1}, \P\otimes P_N)$.
{(This follows the spin glass literature in keeping separate notation for integration over states $u\in\sphereN$ and the disorder $H$.)}

As shown in Section \ref{sec:ideas}, the proofs involve various tilted measures on the sphere and the background probability space -- see for instance \eqref{def:tiltP0}, \eqref{def:tiltQ0}, \eqref{def:tiltQ0-cond}. 
For the reader's aid we summarize the notation here.
First, with $\mu$ the sub-Gaussian distribution of the entries $X_{ij}$ in \eqref{def:H}, we denote the exponentially tilted measures 
\begin{equation}	\label{def:tilt-mu}
\mu^t(A):=  \frac{\int_A e^{t x} d\mu(x)}{\int_\R e^{t x}d\mu(x)}
= \int_A e^{t x - \LLa_\mu(t)} d\mu(x)\,,\qquad t\in \R\,.
\end{equation}

We define families of tilted measures on the background probability space:
\begin{equation}	\label{def:tiltP}
\P^{(\theta,u)}(\,\cdot\,) := \frac{\E e^{\theta N\langle u,Hu\rangle}\ind(\,\cdot\,)}{\E e^{\theta N\langle u, Hu\rangle}}\,,\qquad \theta\ge0\,,\; u\in\sphereN\,.
\end{equation}

For $\theta\ge0\,,M\in\cH_N$  we define measures given for Borel sets $A\subseteq \sphereN$ by
\begin{equation}	\label{def:tiltQM}
Q^{(\theta,M)}( A) := \frac{ \int_A e^{\theta N\langle u, Mu\rangle} dP(u)}{I(M,\theta)}
\end{equation}
recalling the spherical integral $I(M,\theta)$ defined in \eqref{def:spherical}.
We further define a one-parameter family of measures 
\begin{equation}	\label{def:tiltQ}
Q^{(\theta)}(A):= \frac{\int_A \E e^{\theta N\langle u, Hu\rangle} dP(u)}{\int_{\sphereN} \E e^{\theta N\langle u, Hu\rangle} dP(u)}
= \frac{ \E Q^{(\theta, H)}(A) I(H,\theta)}{\E I(H,\theta)} \,,\qquad \theta\ge0\,.
\end{equation}
More generally, for Borel sets $A,B\subseteq\sphereN$ we define 
\begin{equation}	\label{def:tiltQAB}
Q^{(\theta)}(A|B) := \frac{Q^{(\theta)}(A\cap B)}{Q^{(\theta)}(B)}.
\end{equation}
In terms of the annealed and restricted annealed free energy densities from \eqref{def:FN}, \eqref{def:FN.U}, 
\begin{align}	\label{QAB-FAB}
Q^{(\theta)}(A|B) &= e^{N (F_N(\theta;A\cap B) - F_N(\theta; B))}.
\end{align}
In particular $\frac1N\log Q^{(\theta)}(A) = F_N(\theta;A) - F_N(\theta)$. 

For the proof of a general lower bound for large deviation probabilities in Section \ref{sec:lower} we will ultimately condition on a 1-parameter family of sets $\Uloc^{(\theta)}=\Uloc_{w(\theta)}(\srad,\wt R(\theta))$, with $\Uloc_w(\srad,R)\subset\sphereN$ as in \eqref{def:UwrR}, for well-chosen continuous curves $\theta\mapsto w(\theta)$, $\theta\mapsto \wt R(\theta)$ (see \eqref{def:tQtheta}).
We will thus obtain a 1-parameter family of measures denoted
\begin{equation}	\label{def:tQtheta0}
\wt Q^{(\theta)}(A):= Q^{(\theta)}(A| \Uloc^{(\theta)}).
\end{equation}

\section{Proofs of the main results}
\label{sec:high}

In this section we gather our main lemmas that will be proved in subsequent sections and use them to prove Theorems \ref{thm:rateN}, \ref{thm:smallx},  and \ref{thm:fullLDP}.
The proof of Theorem \ref{thm:increasing} involves more tools developed in later sections and  is deferred to Section \ref{sec:increasing}. 
Recall that the assumption \eqref{assu:sg} is in force throughout the article. Further assumptions such as \eqref{assu:usg} will be stated explicitly where they are needed. 

The following is standard. (The bounds are not the sharpest available but suffice for our purposes.)

\begin{lemma}[Exponential tightness]
\label{lem:tightness}
There are constants $C,c>0$ depending only on $\mu$ such that 
\begin{equation}
\label{tight-Rside}
\P(\lam_1\ge K) \le \P( \|H\| \ge K) \le 2\exp( - cK^2N)\qquad \forall\,K\ge C
\end{equation}
and
\begin{equation}
\label{tight-Lside}
\P( \lam_1\le 2-\eps) \le 2\exp( - cN^{3/2})\qquad \forall \,\eps\ge N^{-1/10}\,.
\end{equation}
\end{lemma}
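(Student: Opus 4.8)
The plan is to prove the two bounds separately: \eqref{tight-Rside} by a routine $\eps$-net estimate, and \eqref{tight-Lside} by reducing to exponential concentration of a near-edge linear eigenvalue statistic.

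\emph{Bound \eqref{tight-Rside}.} Since $\lam_1\le|\lam_1|\le\|H\|$ it suffices to bound $\P(\|H\|\ge K)$. I would fix a $\tfrac14$-net $\cN$ of $\sphereN$ with $|\cN|\le 9^N$, use the standard covering inequality $\|H\|\le 2\max_{u\in\cN}|\langle u,Hu\rangle|$, and estimate each $|\langle u,Hu\rangle|$ by Chernoff. For fixed $u\in\sphereN$, writing $\langle u,Hu\rangle=\sqrt{2/N}\sum_iX_{ii}u_i^2+(2/\sqrt N)\sum_{i<j}X_{ij}u_iu_j$ and using independence of the entries together with the sub-Gaussian bound $\LLa_\mu(t)\le\psimax t^2$ (which is \eqref{assu:sg}), one gets for all $s\in\R$
\[
\E e^{s\langle u,Hu\rangle}=\exp\Big(\textstyle\sum_i\LLa_\mu\big(s\sqrt{2/N}\,u_i^2\big)+\sum_{i<j}\LLa_\mu\big(2su_iu_j/\sqrt N\big)\Big)\le\exp\big(2\psimax s^2/N\big),
\]
the last step using $\sum_iu_i^4+2\sum_{i<j}u_i^2u_j^2=\|u\|_2^4=1$. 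Thus $\langle u,Hu\rangle$ is centred and sub-Gaussian with variance proxy $O(1/N)$, so $\P(|\langle u,Hu\rangle|\ge t)\le 2e^{-Nt^2/(8\psimax)}$; a union bound over $\cN$ with $t=K/2$ gives $\P(\|H\|\ge K)\le 2\cdot 9^Ne^{-NK^2/(32\psimax)}$, which is $\le 2e^{-cK^2N}$ with $c=1/(64\psimax)$ once $K$ exceeds the $\mu$-dependent constant $C=8\sqrt{\psimax\log 9}$ absorbing the net entropy.

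\emph{Bound \eqref{tight-Lside}.} Since $\{\lam_1\le 2-\eps\}\subseteq\{\lam_1\le 2-N^{-1/10}\}$ for $\eps\ge N^{-1/10}$, by monotonicity it suffices to take $\eps=N^{-1/10}$. The point is that $\{\lam_1<2-\eps\}$ is the event that $\hat\mu_H$ carries no mass in $[2-\eps,\infty)$, a macroscopic deviation from the semicircle law. I would fix $f=f_\eps:\R\to[0,1]$ with $f\equiv0$ on $(-\infty,2-\eps]$, $f\equiv1$ on $[2-\eps/2,\infty)$, and total variation $\int|f'|\le1$ (hence $\|f\|_\Lip\le 2/\eps$). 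On $\{\lam_1<2-\eps\}$ we have $\text{tr}\,f(H)=\sum_if(\lam_i)=0$, whereas $\E\,\text{tr}\,f(H)=N\,\E\hat\mu_H(f)\ge N\big(\sigma([2-\eps/2,2])-O(N^{-2/3})\big)\gs N\eps^{3/2}$ for $N$ large, using $\sigma([2-\eps/2,2])\asymp\eps^{3/2}$, the standard estimate $|\E\hat\mu_H(g)-\sigma(g)|=O(N^{-2/3})$ for Lipschitz test functions supported near the edge, and $N^{-2/3}=o(\eps^{3/2})$ since $\eps\ge N^{-1/10}$. Hence
\[
\P(\lam_1\le 2-\eps)\le\P\big(|\text{tr}\,f(H)-\E\,\text{tr}\,f(H)|\ge\tfrac12\,\E\,\text{tr}\,f(H)\big),
\]
and the whole matter reduces to concentration of the linear statistic $\text{tr}\,f(H)$ about its mean with the stated rate.

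\emph{Main obstacle.} The crux is precisely this last concentration estimate, at the sharp rate. Since $\text{tr}\,f(H)$ is $\sqrt2\|f\|_\Lip$-Lipschitz in the rescaled entries $(X_{ij})$, Gaussian concentration of Lipschitz functions of the entries at speed $N^2$ gives $\P(|\text{tr}\,f(H)-\E\,\text{tr}\,f(H)|\ge s)\le 2e^{-cs^2/\|f\|_\Lip^2}$; with $s\asymp N\eps^{3/2}$ and $\|f\|_\Lip\asymp\eps^{-1}$ the exponent is $\asymp(N\eps^{3/2})^2\eps^2=N^2\eps^5$, which equals $N^{3/2}$ at $\eps=N^{-1/10}$ (and is larger for larger $\eps$), yielding the claim. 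This speed-$N^2$ concentration is immediate when $\mu$ satisfies a log-Sobolev inequality, in particular when $\mu$ is compactly supported, which is the main case of interest; for a general sub-Gaussian $\mu$ one must first truncate the entries at a scale $N^{a}$ (losing an event of probability $\le 2N^2e^{-cN^{2a}}$) and then control the log-Sobolev constant of the truncated law, or instead prove the concentration by a direct resolvent/martingale argument for the linear statistic. This bookkeeping — not any conceptual difficulty — is the delicate point, and it can also be replaced, at the cost of the weaker but frequently adequate exponent $e^{-cN\eps^3}$, by the crude bounded-differences bound that uses only that resampling one row of $H$ changes $\text{tr}\,f(H)$ by at most $2\int|f'|\le2$ by eigenvalue interlacing. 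Either way, the relevant exponential concentration of linear eigenvalue statistics is classical; see e.g.\ \cite{GuZe}.
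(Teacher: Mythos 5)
Your treatment of \eqref{tight-Rside} is correct and is the same standard sub-Gaussian $\eps$-net/Chernoff argument the paper invokes (it simply cites the textbook result, \Cref{lem:tight.upper}), so there is nothing to discuss there.

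For \eqref{tight-Lside} your reduction to concentration of a near-edge linear eigenvalue statistic is indeed the paper's strategy, but the concentration input you lean on is not actually available under the paper's hypotheses, and this is a genuine gap rather than bookkeeping. The lemma is stated for general sub-Gaussian $\mu$ (only \eqref{assu:sg}), and your test function $f_\eps$ is \emph{not convex}. This matters: the classical speed-$N^2$ concentration of $\Tr f(H)$ from \cite{GuZe} comes in two flavors, Talagrand's inequality (bounded entries, $f$ \emph{convex}) and Herbst/log-Sobolev (non-convex $f$ allowed, but LSI required), and your claim that compact support makes the LSI route ``immediate'' is false — compactly supported (e.g.\ Rademacher or sparse Rademacher) laws do not satisfy a log-Sobolev inequality in the sense needed for Lipschitz concentration, which is exactly why the paper keeps ``bounded support'' and ``log-Sobolev'' as separate hypotheses elsewhere. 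Your two fallbacks do not close the gap either: truncating at level $N^{a}$ forces $a\ge 3/4$ just to make the exceptional event $o(e^{-N^{3/2}})$, and after rescaling to apply Talagrand the Lipschitz constant inflates by $N^{a}$, degrading the exponent by $N^{2a}$ and landing far below $N^{3/2}$ — this is precisely the failure mode the paper flags as its motivation for \Cref{prop:conc-subG}, whose point is to control the truncation error in $\ell^2$ so that only a $\log N$ is lost; and the bounded-differences bound gives exponent $\asymp N\eps^3=N^{7/10}$ at $\eps=N^{-1/10}$, which is not only short of $N^{3/2}$ but not even $\omega(N)$, so it neither proves the stated bound nor suffices for its use in the paper (forcing the rate function to be $+\infty$ on $(-\infty,2)$).

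The paper avoids all of this by choosing the \emph{convex}, $1$-Lipschitz ramp $f(x)=(x-2+\eps)_+$ (Lipschitz constant independent of $\eps$), lower-bounding its median/mean via the local-law-based \Cref{lem:median}, and then applying its new sub-Gaussian extension of Talagrand's inequality (\Cref{prop:conc-subG}, via \Cref{cor:conc}) to get concentration at speed $N^2/\log N$; since $\sigma\big((x-2+\eps)_+\big)\gs\eps^{5/2}$, the resulting exponent is $\gs\eps^{5}N^2/\log N\ge N^{3/2}$ for $\eps\ge N^{-1/10}$. If you replace your step function by this convex ramp and cite \Cref{cor:conc} (or prove a comparable sub-Gaussian Talagrand-type bound), your scheme closes; as written, the key concentration step is unsupported. (Your side claim $|\E\hat\mu_H(g)-\sigma(g)|=O(N^{-2/3})$ also ignores the dependence on the Lipschitz constant of $g$, but that is a fixable blemish, e.g.\ via \Cref{lem:median}, not the main issue.)
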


\begin{proof}
See Appendix \ref{app:conc.linear}.
\end{proof}

The following shows that the large deviation rate for the event that $\lam_1 \approx   x$ is asymptotically monotone in $x$. 

\begin{lemma}[Monotonicity]
\label{lem:monotone}
Assume \eqref{assu:usg} holds, and that $N$ is sufficiently large depending on $\mu$. Let $\delta\in[N^{-1/3}, 1]$. For any $2\le y\le x\le N$, 
\begin{equation}
\frac1N\log\P( |\lam_1-x|\le \delta, \|H\|\le N) \le \frac1N\log\P(|\lam_1-y|\le \delta) + O(N^{-1}\log N).
\end{equation}
\end{lemma}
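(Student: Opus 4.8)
The plan is to construct a coupling-style interpolation argument: I want to show that if $\lambda_1(H)$ is close to $x$, then by a small random perturbation of the matrix I can produce a matrix whose largest eigenvalue is close to $y$, at the cost of a probability factor that is subexponential in $N$. The natural device is a Markov chain with small jumps on the space of matrices, as advertised in the introduction (the remark about \Cref{sec:monotone}). Concretely, I would run a discrete-time process $H = H^{(0)}, H^{(1)}, \dots, H^{(m)}$ in which each step replaces $H$ by $H' = H + \frac{s}{\sqrt N}\, e\,e^\tran$ for a suitable small scalar $s$ and a fixed delocalized unit vector $e$ (or, more in the spirit of a reversible chain, resamples a small block of entries); since $\langle e, He\rangle$ shifts by $s/\sqrt N$ and the operator-norm perturbation is $|s|/\sqrt N$, each step moves $\lambda_1$ by at most $O(|s|/\sqrt N)$, and by choosing $s \asymp \delta\sqrt N/m$ with $m \asymp (x-y)/\delta$ we can walk $\lambda_1$ from a neighborhood of $x$ down to a neighborhood of $y$ in $m = O(N)$ steps. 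The key point is that each single step of the chain changes the law of $H$ only by a bounded Radon--Nikodym factor — because \eqref{assu:usg} makes the tilted/shifted laws $\tilde\mu_t$ uniformly sub-Gaussian (\Cref{lem:subG-tilts}), a shift of an individual entry by $O(1/\sqrt N)$ costs a density ratio that is $\exp(O(1/N))$ per entry, hence $\exp(O(1))$ for a rank-one bump spread over all coordinates, and $\exp(O(N^{-1}\log N)\cdot N) $... — here one must be careful and instead localize the bump so that the total cost over all $m = O(N)$ steps is $\exp(O(\log N))$, i.e. $O(N^{-1}\log N)$ after dividing by $N$.

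More carefully, here is the order of steps I would carry out. \textbf{Step 1:} Reduce to the case $x - y \le \delta$ by iterating: it suffices to move $\lambda_1$ by one increment of size $\delta$ at the cost of an additive $O(N^{-1}\log N)$ in the normalized log-probability, and then chain $O((x-y)/\delta) = O(N)$ such moves — but since the error must stay $O(N^{-1}\log N)$ total, each single increment must cost only $O(N^{-2}\log N)$, so I would instead make all the increments in one combined perturbation rather than iterating the reduction. So in fact \textbf{Step 1 (corrected):} fix a single random perturbation $\Delta$ supported on $o(N)$ entries, of operator norm $x - y + O(\delta)$, such that $H + \Delta$ has $\lambda_1$ in a $\delta$-neighborhood of $y$ whenever $H$ has $\lambda_1$ in a $\delta$-neighborhood of $x$ and $\|H\| \le N$. \textbf{Step 2:} Show the map $H \mapsto H + \Delta$ can be realized as a change of measure with Radon--Nikodym derivative bounded by $\exp(O(N^{-1}\log N)\cdot N) = N^{O(1)}$; this uses \eqref{assu:usg} to control $\log(d\tilde\mu_t/d\mu)$ uniformly in the shift $t$, together with the fact that $\Delta$ touches few entries, each shifted by $O(1)$ in the rescaled variable (cost $O(1)$ per entry, over $O(\log N)$-many... ) — the bookkeeping here is what produces the $N^{-1}\log N$ rate. \textbf{Step 3:} Combine:
\[
\P(|\lambda_1 - x|\le\delta, \|H\|\le N) \le N^{O(1)}\,\P(|\lambda_1(H+\Delta) - y|\le \delta') \le N^{O(1)}\,\P(|\lambda_1 - y|\le \delta),
\]
where the last inequality absorbs the change from $H$ to $H+\Delta$ by noting $H + \Delta$ still has the law of a Wigner matrix with entries whose distributions are uniformly sub-Gaussian — or, more simply, by choosing $\Delta$ deterministic given a few resampled entries so that $H + \Delta$ is again distributed exactly as $H$ conditionally, and the probability only gets dominated.

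The main obstacle, and the step I'd expect to need the most care, is \textbf{Step 2}: controlling the total Radon--Nikodym cost so that it really is $\exp(o(N))$ — and in fact the sharper $N^{O(1)}$ — rather than $\exp(cN)$. A rank-one perturbation $\frac{s}{\sqrt N}e e^\tran$ with $e$ fully delocalized shifts all $N^2$ entries, and even though each shift is tiny ($O(s/N)$ in the rescaled entry), naively summing $N^2$ contributions of size $O(s^2/N^2)$ (from a second-order expansion of $\LLa_\mu$, using the boundedness of $\LLa_\mu''$ from \eqref{assu:usg}) gives $O(s^2)$, which is $O(1)$ only if $s = O(1)$ — and we need $s \asymp x - y$ which is $O(1)$, so this is actually fine and the cost is genuinely $O(1)$, not $O(N)$! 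Thus the real content is: (i) verify via \eqref{assu:usg} that shifting entry $X_{ij}$ by $\tau_{ij}$ costs a log-density of at most $\tau_{ij}\LLa_\mu'(t_{ij}) + \frac12\tau_{ij}^2\sup\LLa_\mu'' \le C(|\tau_{ij}| + \tau_{ij}^2)$ uniformly, so that summing over a rank-one bump $\tau_{ij} = \frac{s}{\sqrt N}e_i e_j$ gives $O(|s|\cdot\frac{1}{\sqrt N}\|e\|_1 + s^2\|e\|_2^2) = O(|s|\sqrt N + s^2)$ — aha, the $\ell^1$ term is the problem, it is $O(\sqrt N)$; (ii) therefore one cannot afford a delocalized bump and must instead localize: take $e$ supported on a single coordinate, or on $O(1)$ coordinates, making the bump a single diagonal entry $H_{ii} \to H_{ii} + \frac{s}{\sqrt N}\sqrt 2$; then the BBP/interlacing analysis still shifts $\lambda_1$ by the right amount (a rank-one bump of operator norm $\asymp s/\sqrt N$... which is too small). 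This tension — delocalized bumps move $\lambda_1$ enough but cost too much, localized bumps are cheap but move $\lambda_1$ too little — is exactly why one needs the \emph{many-small-steps} Markov chain rather than a single perturbation, and why the clean statement carries the $O(N^{-1}\log N)$ error: one takes $m \asymp N/\log N$ localized steps, each of operator norm $\asymp (x-y)\log N/N$ and $\ell^1$-cost $O((x-y)\log N/N \cdot \sqrt N \cdot 1)$ per step... the precise balancing of the number of steps against per-step cost, using the uniform sub-Gaussianity of tilts from \eqref{assu:usg} and the Lipschitz property of $\tLL$ from \Cref{rmk:assu.reg}, is the crux of the argument and where I would focus the detailed work.
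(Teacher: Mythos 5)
There is a genuine gap, and it sits exactly where you flagged your uncertainty: Step 2, the change of measure. Shifting the entries of $H$ is not an absolutely continuous change of measure under the paper's hypotheses: \eqref{assu:usg} allows (and the paper's key examples are) discrete laws such as Rademacher, Bernoulli and sparse Rademacher, for which the law of $X_{ij}+\tau$ is mutually singular with the law of $X_{ij}$, so no Radon--Nikodym bookkeeping of the kind you describe can even begin. What \eqref{assu:usg} gives (via \Cref{lem:subG-tilts}) is uniform sub-Gaussianity of the \emph{exponential tilts} $\tilde\mu_t$, not bounded density ratios under translations. Even waiving this, your own accounting shows the dilemma is unresolved: a delocalized rank-one bump of size $x-y\asymp 1$ costs $\exp(O(\sqrt N))$ (the $\ell^1$ term), which already exceeds the $N^{O(1)}$ needed for an $O(N^{-1}\log N)$ error, while a localized bump has operator norm $O(N^{-1/2})$ and cannot move $\lambda_1$ by order one; chaining many localized steps does not escape this if each step is paid for by a density ratio. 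Step 1 is also problematic: a deterministic $\Delta$ with $\|\Delta\|\approx x-y$ only gives the one-sided Weyl bound $|\lambda_1(H+\Delta)-\lambda_1(H)|\le\|\Delta\|$; to actually pull $\lambda_1$ down by the right amount, $\Delta$ must be aligned with $v_1(H)$, hence $H$-dependent (and delocalized), and the only deterministic alternative, $\Delta=-cI$, shifts every rescaled diagonal entry by $\asymp\sqrt N$.

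The paper's proof avoids any measure change. In \Cref{lem:goodMC} it builds a Markov chain $(H_n)$ that updates a \emph{single uniformly chosen entry} per step by a reversible (Metropolis-type) move among cells of a partition of $\R$ into intervals of length $L\le\frac12\delta\sqrt N$, constructed so that the law of $H$ is \emph{exactly stationary}: $H_n\eqd H$ for every $n$. The role of \eqref{assu:usg} is only to prove, via the tail-regularity estimate of \Cref{lem:stepdown}, that this per-entry chain drifts inward, so that from any start with $\|H_0\|\le N$ all entries reach the central cell within $N^4$ steps with probability $\ge\frac12$, whence $\lambda_1(H_{N^4})\le 2+\delta$ by concentration (\Cref{cor:lam1-tilt.conc}). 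Since each step changes $\|H_n\|$ by less than $2\delta$, a path starting with $\lambda_1\approx x\ge y$ and ending below $2+\delta$ must visit a $\delta$-neighborhood of $y$ at some time $n\le N^4$; stationarity then lets one union bound over the $N^4$ times,
\begin{equation*}
\P\big(|\lambda_1(H)-x|\le\delta,\ \|H\|\le N\big)\le 2\sum_{n=1}^{N^4}\P\big(|\lambda_1(H_n)-y|\le\delta\big)=2N^4\,\P\big(|\lambda_1(H)-y|\le\delta\big),
\end{equation*}
and the entire $O(N^{-1}\log N)$ error is just $\frac1N\log(2N^4)$, not a tilting cost. So the mechanism you would need to supply is stationarity plus a polynomial union bound over time, together with the drift-to-typical ingredient that guarantees the path crosses $y$; the Radon--Nikodym route cannot deliver the lemma as stated.
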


\begin{proof}
See Section \ref{sec:monotone}.
\end{proof}

Recall the notation \eqref{def:eta}.
Generalizing \eqref{def:jayN0}, for $R\ge1$ we define
\begin{equation}
\label{def:jayN}
\cJ_{N,R}(x,\vloc):=\sup_{\theta\ge0}\big\{ J(x,\theta) - \free_{N,R}(\theta,\overlap_x(\theta)\vloc)\big\}\,,\qquad x\ge2,\; \vloc\in\B
\end{equation}
so $\cJ_N(x,\vloc)=\cJ_{N,N^{1/5}}(x,\vloc)$. 

The following provides a joint large deviation upper bound for $\lam_1$ and $v_1^{(\eta)}$, and   is the main step toward the proofs of the large deviation upper bounds in our theorems. 

\begin{prop}[Joint eigenvalue-eigenvector upper bound]
\label{prop:upper-joint}
Let $\heta,\xcush,\zcush\in(0,\frac1{10})$. 
 For any $\eta\in(\heta,\frac14-\heta)$, $I=[x',x'')\subset[2+\xcush,\xcush^{-1}]$ and measurable set $ A\subseteq(1-\zcush)\B^N$,
\begin{equation}
\label{jointUB1}
\frac1N\log \P\big(\lam_1\in I, v_1^{(\eta)}\in  A\big) \le -\inf_{y\in I, \vloc\in  A\cap \B^{n_0} }\cJ_{N,2N^\eta}(y,\vloc) + N^{-c\heta} 
\end{equation}
for a universal constant $c>0$ and all $N$ sufficiently large depending on $\heta,\xcush,\zcush$ and $\mu$.
\end{prop}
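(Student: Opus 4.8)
The plan is to run the spherical-integral tilting scheme of \Cref{sec:ideas-new} in the form that produces the joint rate $\cJ_{N,2N^\eta}$. First I would restrict to a good event $\good=\{\cW_2(\hat\mu_H,\sigma)\le N^{-c_1}\}\cap\{\|H\|\le C_\mu\}\cap\{\lam_1-\lam_2\ge\xcush/4\}$: the first two constraints fail with probability $e^{-\omega(N)}$ by \Cref{lem:tightness} and standard concentration of spectral statistics, while the complementary near-degenerate event $\{\lam_1\in I,\ \lam_2\ge2+\xcush/4\}$ is peeled off separately, using that a second eigenvalue well above the bulk is a large deviation whose cost dominates the right side of \eqref{jointUB1} up to $o(N)$. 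On $\{v_1^{(\eta)}\in A\}$ the vector $v_1^{(\eta)}$ is $n_0$-sparse (Chebyshev, since $\|v_1\|_2=1$) and lies within $N^{-100}$ of $A\cap(1-\zcush)\B^N$; covering the $n_0$-sparse vectors of $\B^N$ by an $N^{-100}$-net $\cN$ with $\log|\cN|=O(n_0\log N)=o(N)$, using permutation invariance of the law of $H$ and of $\cJ_{N,R}$ to move the relevant supports into $[n_0]$, and Lipschitz continuity of $\cJ_{N,R}$ in $\vloc$ (with at most polynomial-in-$N$ constant) to pass from a net point to a nearby element of $A\cap\B^{n_0}$, a union bound reduces matters to bounding $\P(\lam_1\in I,\ H\in\Sym^{(\vloc)},\ \good)$ for each fixed $\vloc\in\B^{n_0}$ with $\|\vloc\|_2\le1-\zcush$, where $\Sym^{(\vloc)}$ is as in \eqref{def:Hw} (with threshold $\eta$). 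In all applications $A$ is permutation-invariant, so the permuted net point again lies near $A$.

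For such a $\vloc$ and any $\theta\ge0$, I would cover $I$ by $O(N^{c_2})$ intervals $I_k=[y_k,y_k+N^{-c_2})$ and, on $\good\cap\{\lam_1\in I_k\}$, use \Cref{lem:approxsc} (which gives $I(H,\theta)\ge e^{N(J(y_k,\theta)-O_\theta(N^{-c_2})-O(N^{-c}))}$) to write, as in \eqref{intro-GH1.1}--\eqref{intro-GH1.2},
\[
\P(\lam_1\in I_k,\ H\in\Sym^{(\vloc)},\ \good)\ \le\ e^{-N(J(y_k,\theta)-o(1))}\,\E\,I(H,\theta)\,\ind(H\in\Sym^{(\vloc)},\ \good).
\]
Since $\good$ forces a spectral gap $\gs\xcush$, \Cref{lem:restrict} restricts the integral defining $I(H,\theta)$ to the slice $\{u:|\langle u,v_1\rangle|\approx\overlap_{\lam_1}(\theta)\}$ up to a factor $e^{o(N)}$; combined with $H\in\Sym^{(\vloc)}$ and the $P$-a.s.\ delocalization of a uniform vector orthogonal to $v_1$, this slice lies in $\Uloc^N_{\overlap_{y_k}(\theta)\vloc}(\srad,2N^\eta)$ for a suitable $\srad=o(1)$ (Lipschitz continuity of $\overlap_\cdot(\theta)$ lets us replace $\lam_1$ by $y_k$). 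Dropping the indicators and using Fubini, the expectation is at most $e^{o(N)}e^{NF_N(\theta;\,\Uloc^N_{\overlap_{y_k}(\theta)\vloc}(\srad,2N^\eta))}$, and \Cref{prop:FE} (with $R=2N^\eta$, legitimate since $\eta<\tfrac14$) replaces this free energy by $\free_{N,2N^\eta}(\theta,\overlap_{y_k}(\theta)\vloc)$ up to an error $O(\srad+N^{2\eta-1/2}+N^{-2\eta}\log N)\le N^{-c\heta}$ once $\eta\in(\heta,\tfrac14-\heta)$ and $\srad\le N^{-\heta}$.

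Combining, for every $\theta\ge0$,
\[
\tfrac1N\log\P(\lam_1\in I_k,\ H\in\Sym^{(\vloc)},\ \good)\ \le\ -\big(J(y_k,\theta)-\free_{N,2N^\eta}(\theta,\overlap_{y_k}(\theta)\vloc)\big)+N^{-c\heta}.
\]
The left side does not depend on $\theta$, so I would take the infimum of the right side over $\theta$ to get $-\cJ_{N,2N^\eta}(y_k,\vloc)+N^{-c\heta}$; the supremum defining $\cJ_{N,2N^\eta}$ is attained for $\theta$ in a bounded range (because $\free_{N,R}(\theta,w)\ge\theta^2(1-\|w\|_2^2)^2-O(\log\tfrac1\zcush)\ge\theta^2\zcush^2-O(\log\tfrac1\zcush)$ dominates $J(y,\theta)=O(\theta)$ for large $\theta$, and the contribution of $\theta$ below $\theta_y^-$ is $O(e^{-cN^{2\eta}})$), and on that range all the $o(N)$ and $N^{-c\heta}$ errors above are uniform. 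Finally, maximizing over $k$ and over the relevant $\vloc\in\cN$, and absorbing $\tfrac1N\log(|\cN|\,N^{c_2})=o(1)$, yields \eqref{jointUB1}.

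The main obstacle is the core estimate of the second paragraph: getting \Cref{lem:restrict} and \Cref{prop:FE} to cooperate \emph{uniformly in $\theta$} over the relevant bounded range and with errors small enough to survive the subsequent infimum over $\theta$, and tracking the interdependent small parameters $\heta,\eta,\zcush,\xcush,\srad$, the mesh $\delta=N^{-c_2}$ and the net scale so that every error stays $\le N^{-c\heta}$ for one universal $c$. A second, more routine but unavoidable, point is the disposal of the near-degenerate regime $\lam_1\approx\lam_2$, where no spectral gap exists and \Cref{lem:restrict} cannot be applied directly.
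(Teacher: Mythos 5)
Your overall architecture matches the paper's (tilt by the spherical integral, use \Cref{lem:approxsc} on a good event to replace $\frac1N\log I(H,\theta)$ by $J(y,\theta)$, restrict the spherical integral to the overlap slice via \Cref{lem:restrict}, convert the restricted annealed free energy with \Cref{prop:FE}, optimize over $\theta$ on a bounded range via \Cref{lem:Jprops}(\ref{Jprop.T}), and finish with a union bound over a net of sparse vectors and a mesh of $I$). The genuine gap is your treatment of the spectral gap. You put $\lam_1-\lam_2\ge\xcush/4$ into the good event so that \Cref{lem:restrict} can be applied with $k=1$, and you propose to discard the complementary event $\{\lam_1\in I,\ v_1^{(\eta)}\in A,\ \lam_2\ge 2+\xcush/4\}$ on the grounds that a second eigenvalue well above the bulk costs more than the right-hand side of \eqref{jointUB1}. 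That cannot be justified with the available tools: the right-hand side is the \emph{constrained} infimum $\inf_{y\in I,\vloc\in A\cap\B^{n_0}}\cJ_{N,2N^\eta}(y,\vloc)$, which by \Cref{lem:Jprops}(\ref{Jprop.smallx}) exceeds the unconstrained rate by an amount proportional to the localized mass that $A$ forces on $v_1$, and can be of size $O_{\xcush,\zcush}(1)$ with a constant that blows up as $\xcush\downarrow0$. No estimate on $\P(\lam_2\ge2+\xcush/4)$ — even the heuristic two-eigenvalue cost $\rate^\mu(x')+\rate^\mu(2+\xcush/4)$, which is proved nowhere in the paper and would itself be a substantial result — dominates this constrained quantity in general. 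So the near-degenerate regime cannot be peeled off by an a priori bound; controlling the joint event there would force you to rerun the whole tilting argument without a gap at $\lam_2$, which your $k=1$ scheme cannot do.

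The paper avoids this entirely by applying \Cref{lem:restrict} with $k=\lfloor N^{1/2+\eta}\rfloor$: the gap needed is then $\lam_1-\lam_{k+1}\ge\delta_0$, guaranteed on $\Good_1(\xcush,\eta)$ (i.e.\ $\lam_{\lfloor N^{1/2+\eta}\rfloor}\le2+\xcush$), whose failure probability is $e^{-c\,N^{1+\eta}}$ by \Cref{lem:ktail}, and the price is only $e^{O(k\log N)}=e^{o(N)}$; moreover \eqref{approx.thetasc} identifies $\theta_H^{(k)}$ with $\thetam$ for this large $k$. Thus no control of $\lam_2$, and no two-eigenvalue large deviation estimate, is ever needed. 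Two secondary, fixable points: your good event asks for $\cW_2(\hat\mu_H,\sigma)\le N^{-c_1}$ with failure probability $e^{-\omega(N)}$, which is not standard for merely sub-Gaussian entries — the paper only controls convex Lipschitz linear statistics (Stieltjes transform and log-potential at points to the right of the bulk) via \Cref{cor:conc}, and you should do the same; and \Cref{lem:approxsc} only applies for $\theta\ge\thetam+\tcush$, so your displayed inequality cannot hold ``for every $\theta\ge0$'' — you must first restrict to the bounded window where the supremum defining $\cJ_{N,2N^\eta}$ is attained (\Cref{lem:Jprops}(\ref{Jprop.T})) and only then compare, as the paper does.
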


\begin{proof}
See Section \ref{sec:upper}.
\end{proof}

Proposition \ref{prop:upper-joint} does not address the event that $v_1^{(\eta)}$ is near the boundary of $\B^N$, i.e.\ that  $v_1\approx v_1^{(\eta)}$, so that $v_1$ is almost completely localized to $N^{1-2\eta}$ coordinates. This event is shown to be negligible by the following,
allowing us to assume the localized portion of $v_1$ is a distance $\gs_x1$ from $\sphereN$.
Recall $\|w\|_0=|\supp(w)|$. For $0<s\le N$ and $\eps>0$ we denote the set of almost-sparse (``compressible'') unit vectors
\begin{equation}
\label{def:AS}
\Comp_N(s,\eps):= \big\{ v\in\sphereN: \|v-w\|_2\le \eps \, \text{ for some }w\in\B^N \text{ with } \|w\|_0\le s \,\big\}\,.
\end{equation}

\begin{prop}[Ruling out complete localization]
\label{prop:no-comp}
Assume \eqref{assu:usg}.
There are constants $c_0,c_1\in(0,1)$ depending only on $\mu$ such that the following holds. 
For any $s\le N/\log N$, $L\ge3$, $\eps\in (0,c_1/L^2)$  and interval $I\subset[2,L]$ of length at least $N^{-1/4}$, 
\begin{equation}	\label{bd:no-comp0}
\P\big( v_1\in \Comp_N(s, \eps)\,\big| \, \lam_1\in I\big) 
\le  e^{-c_0N}
\end{equation}
for all $N$ sufficiently large depending on $L$ and $\mu$. 
In particular, for any $\eta\in(0,\frac14)$,
\begin{equation*}	
\P\big(\|v_1^{(\eta)}\|_2^2\ge1-\eps^2\,\big| \, \lam_1\in I\big) 
\le  e^{-c_0N}
\end{equation*}
for all $N$ sufficiently large depending on $L,\eta$ and $\mu$. 
\end{prop}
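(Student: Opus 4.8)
The ``in particular'' statement follows from the first one: if $\|v_1^{(\eta)}\|_2^2\ge 1-\eps^2$ then $\|v_1-v_1^{(\eta)}\|_2\le\eps$ and $\|v_1^{(\eta)}\|_0\le N^{1-2\eta}\le N/\log N$ for $N$ large, so $v_1\in\Comp_N(\lf N/\log N\rf,\eps)$; hence it suffices to prove \eqref{bd:no-comp0}. The first step is to reduce that event to a statement about \emph{sparse quadratic forms}, via the eigenvector equation. If $v_1\in\Comp_N(s,\eps)$, fix $w$ with $\|w\|_0\le s$ and $\|v_1-w\|_2\le\eps$, set $S=\supp w$, and split $v_1=a+b$ with $a=P_Sv_1$, $b=P_{S^c}v_1$, so $\|b\|_2\le\eps$. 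Expanding $\lam_1=\langle v_1,Hv_1\rangle=\langle a,H_{SS}a\rangle+2\langle a,Hb\rangle+\langle b,Hb\rangle$ and bounding the last two terms by $2\|H\|\eps$ and $\|H\|\eps^2$ gives $\lam_1\le\lam_1(H_{SS})+3\|H\|\eps$, so on $\{v_1\in\Comp_N(s,\eps),\,\lam_1\in I\}$ with $I=[x',x'')$ some $s$-sparse $u\in\sphereN$ satisfies $\langle u,Hu\rangle\ge x'-3\|H\|\eps$. Splitting $\{\|H\|\ge K\}$ dyadically and applying \Cref{lem:tightness} to $\pm H$ shows $\{\|H\|\ge K\}$ contributes at most $Ce^{-cK^2N}$; taking $K\asymp_\mu(1+\rate^\gamma(L))^{1/2}$ and using the hypothesis $\eps<c_1/L^2$ we get $3K\eps\le\tfrac12$, so it remains to bound $\P\big(\exists\,s\text{-sparse }u\in\sphereN:\langle u,Hu\rangle\ge x'-\tfrac12\big)$.

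The second step is tilting plus a union bound. For a fixed $s$-sparse $u\in\sphereN$ and $\theta\ge0$, Fubini and a Chernoff bound give $\P(\langle u,Hu\rangle\ge t)\le\exp\{N(\freeL(\theta,u)-\theta t)\}$, where $\freeL(\theta,u)=\tfrac1N\log\E e^{N\theta\langle u,Hu\rangle}=\tfrac1N\sum_{i\le j}\LLa_\mu(2^{\ep_{ij}}\theta\sqrt N\,u_iu_j)$ is precisely $\free_{N,R}(\theta,u)$ when $\|u\|_2=1$. Optimizing in $\theta$ and union–bounding over a $\tfrac14$-net of $\{u:\|u\|_0\le s\}$, which has cardinality $\sum_{k\le s}\binom Nk 9^k=e^{o(N)}$ because $s\le N/\log N$, yields
\[
\frac1N\log\P\big(\exists\,s\text{-sparse }u\in\sphereN:\langle u,Hu\rangle\ge x'-\tfrac12\big)\ \le\ -\inf_{u\in\sphereN,\ \|u\|_0\le s}\ \sup_{\theta\ge0}\big\{\theta(x'-\tfrac12)-\freeL(\theta,u)\big\}\ +\ o(1)\,.
\]
(One could instead reach an essentially equivalent bound from \Cref{prop:upper-joint} applied with $A=\{v\in\B^N:\|v\|_2^2\ge1-\eps^2\}$, which is convenient for the ``in particular'' formulation.)

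The last, and hardest, step compares this rate with the large–deviation rate of $\P(\lam_1\in I)$. Using the delocalized construction of \Cref{sec:ideas.GH} — valid for every sub-Gaussian $\mu$ since $F_N(\theta;\Deloc_{N^\eta})=\theta^2+o(1)$ — one has $\P(\lam_1\in I)\ge e^{-(\rate^\gamma(x')+o(1))N}$ (in the non-sharp case the better, mixed-tilt lower bound of \Cref{sec:ideas-new} is the relevant comparison). Absorbing the $\tfrac12$-shift and the errors, the proof is complete once one establishes, for some $c_0=c_0(\mu)>0$,
\[
\inf_{u\in\sphereN,\ \|u\|_0\le s}\ \sup_{\theta\ge0}\big\{\theta x'-\freeL(\theta,u)\big\}\ \ge\ \rate^\gamma(x')+c_0\qquad\text{for all }x'\in[2,L]\,.
\]
Morally this holds because channeling the \emph{entire} top eigenvalue through $o(N)$ coordinates forfeits the cheap delocalized contribution on which the near-optimal mechanisms rely. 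Quantitatively one relates the left side to the restricted–free–energy objective $\cJ_{N,R}(x',u)=\sup_\theta\{J(x',\theta)-\free_{N,R}(\theta,\overlap_{x'}(\theta)u)\}$ at $\|u\|_2=1$, uses the sub-Gaussian bounds $\freeL(\theta,u)\le 2\psimax\theta^2$ and $\VP_R(\theta u,\beta)\le 4\psimax\theta^2\beta$, and extracts the gain from the $-\tfrac12\|w\|_2^2$ entropy term appearing in $\free_{N,R}$ together with the explicit formulas for $\theta_{x'}^+$ and $\overlap_{x'}(\theta_{x'}^+)\in(0,1)$ on the bounded window $x'\in[2,L]$.

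\textbf{Main obstacle.} The last displayed inequality: producing a positive, $N$-independent gap $c_0$ \emph{uniformly} over $x'\in[2,L]$ and over all admissible shapes of the $\le s$ localized coordinates. In the compactly supported case the localized part of $v_1$ may be spread over $\asymp\sqrt N$ coordinates, so the gap is invisible on any single coordinate and must be read off from the variational structure of $\VP_R$ inside $\free_{N,R}$.
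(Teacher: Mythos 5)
Your reductions in the first two steps are fine (modulo a too-coarse net: with a $\tfrac14$-net the quadratic form degrades by $\approx\|H\|/2\asymp L$, so you would need mesh $\ll 1/L$, which is harmless for the entropy count but must be said). The proof, however, is not complete: everything hinges on the final displayed inequality
\[
\inf_{u\in\sphereN,\ \|u\|_0\le s}\ \sup_{\theta\ge0}\big\{\theta x'-\freeL(\theta,u)\big\}\ \ge\ \rate^\gamma(x')+c_0 ,
\]
which you yourself flag as ``the main obstacle'' and do not prove — and which, as stated, is \emph{false} in the regime this paper cares about. For a single-site $u=e_1$ one has $\sup_\theta\{\theta x'-\freeL(\theta,e_1)\}\to \sup_\theta\{\theta x'-2\psiinfty\theta^2\}=x'^2/(8\psiinfty)$, which is strictly below $\rate^\gamma(x')\sim x'^2/4$ for large $x'$ whenever $\psiinfty>\tfrac12$ (e.g.\ the sparse Gaussian of \Cref{ex:Gp}); so the fully localized rate can beat the GOE rate, and the correct comparison must be with the true (mixed) rate $\rate^\mu_N(x')$, not $\rate^\gamma(x')$. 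But a uniform gap between the boundary value $\|w\|_2=1$ of the variational problem and its infimum over $\|w\|_2\le 1-\zcush$ is essentially the content of \Cref{prop:no-comp} itself (it is exactly what licenses the cushion $\zcush_x$ in \eqref{def:rateN0}), so your route is either circular or leaves the hardest step unaddressed. In addition, the ``$\tfrac12$-shift'' is not an absorbable error: replacing $x'$ by $x'-\tfrac12$ lowers the Legendre-type rate by a macroscopic amount (even with $\psimax=\tfrac12$ the crude bound gives only $(x'-\tfrac12)^2/4$, which drops below $\rate^\gamma(x')$ for large $x'$), so the inequality would have to be proven with the shifted threshold, making it even more delicate.

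The paper avoids this entire comparison by a self-referential block argument. Writing $v_1\approx w$ with $\supp(w)=S$, the eigenvector equation forces \emph{two} things simultaneously: the sparse block satisfies $\langle w,H_{SS}w\rangle\ge x-o(1)$, and the off-diagonal image $\|H_{S^cS}w\|_2=O(K\eps)$ is anomalously small. The second event is independent of the first and costs $e^{-c_0N}$ by a Chernoff bound, since typically $\|H_{S^cS}w\|_2\approx1$; the first event alone, combined with the \emph{typical} behavior of the remaining blocks, produces (via a test vector mixing $w$ with $H_{S^cS}w/\|H_{S^cS}w\|_2$) an eigenvalue $\ge x+\tfrac1{20x}$, whose probability is bounded by $N^{O(1)}e^{\sqrt N}\,\P(|\lam_1-x|\le\delta)$ using the monotonicity \Cref{lem:monotone}. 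Multiplying gives $\P(\lam_1\approx x,\ v_1\in\Comp_N)\le e^{-c_0N+o(N)}\P(\lam_1\approx x)+e^{-N^{1.1}}$, with no need to evaluate or compare rate functions. If you want to salvage your approach you would have to supply an independent proof of a uniform gap between the fully localized rate and $\rate^\mu_N$, which is precisely the analysis your last paragraph defers.
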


\begin{proof}
See Section \ref{sec:localized}.
\end{proof}

\begin{lemma}[Large deviation lower bound]
\label{lem:lower1}
Assume \eqref{assu:usg}.
For any $\heta,\xcush,\zcush\in(0,\frac1{10})$,
$\eta\in(\heta,\frac14-\heta)$, $\log N\le R\le N^{1/4}/\log N$ and $x\in[2+\xcush,\xcush^{-1}]$,
\begin{equation}
\frac1N\log\P\big( |\lam_1-x|\le N^{-1/20}\big) \ge - \inf_{\vloc\in (1-\zcush)\B^{n_0}} \cJ_{N,R}
(x,\vloc) - O_{\xcush,\zcush}(N^{-c\heta} + R^2 N^{-1/2})
\end{equation}
for a universal constant $c>0$ and all $N$ sufficiently large depending on $\xcush,\zcush,\heta$ and $\mu$.
\end{lemma}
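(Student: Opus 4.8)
The plan is to implement the spherical-integral tilting scheme of \Cref{sec:ideas-new}, using \Cref{prop:tiltP} and an intermediate value argument in place of the explicit BBP computation available in the sharp sub-Gaussian case. It suffices to prove the bound for each fixed $\vloc\in(1-\zcush)\B^{n_0}$ (with $n_0=\lf N^{1-2\eta}\rf$) with an error constant uniform in $\vloc$, and then take the supremum over $\vloc$ on the right-hand side. Fix such a $\vloc$, set $\srad:=N^{-\eta/3}$, let $T=T_{\xcush,\zcush}\ge10$ be large enough (so that the $\theta$-range $[\thetam+T^{-1},T]$ captures the supremum defining $\cJ_{N,R}$, which is legitimate since $J(x,\theta)-\free_{N,R}(\theta,\overlap_x(\theta)\vloc)\lesssim\theta x-c\theta^2\to-\infty$), and write $\good$ for the event that $\hat\mu_H$ lies within $N^{-1/10}$ of $\sigma$ in a fixed metric. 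Then $\P(\good^c)\le e^{-\omega(N)}$, and the same bound holds under each tilted law $\P^{(\theta,u)}$ with $\theta\le T$, since the tilt changes $H$ only by a deterministic matrix of bounded Hilbert--Schmidt norm together with a sparse perturbation of the $1/N$ variance profile. On $\good\cap\{|\lam_1-x|\le\delta\}$ we have $\tfrac1N\log I(H,\theta)=J(x,\theta)+O(\delta)+o(1)$ uniformly in $\theta\in[\thetam+T^{-1},T]$, by \Cref{lem:approxsc} and the Lipschitz continuity of $J(\cdot,\theta)$. Restricting the spherical integral to $\Uloc^N_{\overlap_x(\theta)\vloc}(\srad,R)$, applying Fubini as in \eqref{intro-GH1.1}--\eqref{intro-GH1}, and absorbing the contribution of $\good^c$ via the crude bound $I(H,\theta)\le e^{\theta N\|H\|}$ and \Cref{lem:tightness} (which makes it super-exponentially negligible once $T$ is fixed), we get, for $\delta=N^{-1/20}$ and every $\theta\in[\thetam+T^{-1},T]$,
\[
\P\big(|\lam_1-x|\le\delta\big)\ge e^{N\left(F_N(\theta;\Uloc^N_{\overlap_x(\theta)\vloc})-J(x,\theta)+o(1)\right)}\int_{\sphereN}\P^{(\theta,u)}\big(|\lam_1-x|\le\delta\big)\,dQ^{(\theta,\overlap_x(\theta)\vloc)}(u)\,,
\]
where $Q^{(\theta,\overlap_x(\theta)\vloc)}$ is the probability measure on $\sphereN$ with density $\propto\mathbf 1_{\Uloc^N_{\overlap_x(\theta)\vloc}}\E e^{\theta N\langle u,Hu\rangle}$. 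By \Cref{prop:FE}, $F_N(\theta;\Uloc^N_{\overlap_x(\theta)\vloc})=\free_{N,R}(\theta,\overlap_x(\theta)\vloc)+O_{\xcush,\zcush}(\srad+R^2N^{-1/2}+N^{-2\eta}\log N)$, and by the very definition \eqref{def:jayN} of $\cJ_{N,R}$ we have $\free_{N,R}(\theta,\overlap_x(\theta)\vloc)-J(x,\theta)\ge-\cJ_{N,R}(x,\vloc)$ for \emph{every} $\theta$; since $\srad,N^{-2\eta}\log N=O(N^{-c\heta})$ under the constraints on $\eta$, the whole problem reduces to exhibiting one $\theta=\theta_{x,\vloc}\in[\thetam+T^{-1},T]$, depending only on $x$ and $\vloc$, for which the integral above is bounded below by a positive absolute constant.

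To produce $\theta_{x,\vloc}$ I would combine two inputs from later sections. First, from the analysis of the tilted sphere measure (the discussion around \eqref{optimalu}, carried out in \Cref{sec:annealed}), under $Q^{(\theta,\overlap_x(\theta)\vloc)}$ the vector $u$ concentrates, in the $\ell^2$-Wasserstein metric $d_2$, within some $\eps_N=o(1)$ of the deterministic vector $\bar u_\theta:=\overlap_x(\theta)\vloc+\tv^{\theta}$, where $\tv^{\theta}$ is a delocalized vector depending continuously on $\theta$; concretely $Q^{(\theta,\overlap_x(\theta)\vloc)}\big(\Bset_2(\bar u_\theta,\eps_N)\big)\ge 1-e^{-\omega(N)}$. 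Second, \Cref{prop:tiltP}, proved by a coupling argument, gives that $(\theta,u)\mapsto\E^{(\theta,u)}\lam_1$ has a modulus of continuity bounded uniformly in $N$, both in $\theta\in[0,T]$ and in $u$ with respect to $d_2$. Hence $g(\theta):=\E^{(\theta,\bar u_\theta)}\lam_1$ is continuous; near $\theta=\thetam$ one has $\overlap_x(\theta)\to0$ so $\bar u_\theta$ is essentially delocalized with perturbation strength $2\thetam\le1$, giving $g(\theta)=2+o(1)<x$, while $g$ exceeds $x$ before $\theta$ reaches $T$ (this is exactly why $T$ is chosen as it is). The intermediate value theorem yields $\theta_{x,\vloc}\in[\thetam+T^{-1},T]$ with $g(\theta_{x,\vloc})=x$, and by the uniform $d_2$-continuity, $|\E^{(\theta_{x,\vloc},u)}\lam_1-x|\le C\eps_N$ for all $u\in\Bset_2(\bar u_{\theta_{x,\vloc}},\eps_N)$. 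Finally, to conclude I would use concentration of $\lam_1$ under the tilted product law: \eqref{assu:usg} makes the recentered tilts uniformly sub-Gaussian (\Cref{lem:subG-tilts}) and $\lam_1$ is a $1$-Lipschitz function of the rescaled independent entries, so $\P^{(\theta,u)}(|\lam_1-\E^{(\theta,u)}\lam_1|\ge t)\le 2e^{-cNt^2}$ uniformly over $\theta\in[0,T]$, $u\in\sphereN$; taking $t=\tfrac14 N^{-1/20}$, which dominates $C\eps_N$ (and $N^{-1/2+o(1)}$) once the auxiliary scales are chosen consistently, gives $\P^{(\theta_{x,\vloc},u)}(|\lam_1-x|\le N^{-1/20})\ge\tfrac12$ for all $u\in\Bset_2(\bar u_{\theta_{x,\vloc}},\eps_N)$, so the integral is $\ge\tfrac12\,Q^{(\theta_{x,\vloc},\overlap_x(\theta_{x,\vloc})\vloc)}\big(\Bset_2(\bar u_{\theta_{x,\vloc}},\eps_N)\big)\ge\tfrac14$. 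Plugging $\theta=\theta_{x,\vloc}$ into the displayed bound and using \Cref{prop:FE} together with \eqref{def:jayN} yields $\tfrac1N\log\P(|\lam_1-x|\le N^{-1/20})\ge-\cJ_{N,R}(x,\vloc)-O_{\xcush,\zcush}(N^{-c\heta}+R^2N^{-1/2})$ for all large $N$; taking the supremum over $\vloc\in(1-\zcush)\B^{n_0}$ gives the claim.

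The main obstacle is the middle step: unlike in the sharp sub-Gaussian case, $\E^{(\theta,u)}\lam_1$ has no closed form --- the tilted mean matrix is \emph{not} a rank-one perturbation of a Wigner matrix once $u$ carries a localized component --- so one cannot read off the correct inverse temperature from a BBP-type identity. Instead one must establish the quantitative continuity of $\theta,u\mapsto\E^{(\theta,u)}\lam_1$ (the content of \Cref{prop:tiltP}, whose proof rests on a coupling of the tilted laws) and pair it with the $d_2$-concentration of $u$ under the tilted sphere measure from \Cref{sec:annealed}, so as to pin down a \emph{single} $\theta$ that makes $\{|\lam_1-x|\le\delta\}$ likely simultaneously for all $u$ carrying the bulk of $Q^{(\theta,\overlap_x(\theta)\vloc)}$. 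A secondary, purely bookkeeping difficulty is to choose $\srad,\eps_N,\eta,R$ consistently so that the Wasserstein and eigenvalue-concentration errors stay $o(N^{-1/20})$ while the restricted-free-energy error remains within the claimed $O_{\xcush,\zcush}(N^{-c\heta}+R^2N^{-1/2})$.
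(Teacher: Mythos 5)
Your proposal is correct and follows essentially the paper's own route: the lemma is deduced there from \Cref{prop:lower0}, whose proof is exactly the scheme you outline — tilting by the spherical integral restricted to $\Uloc_{\overlap_x(\theta)\vloc}$, the quenched approximation of \Cref{lem:approxsc} on a good event, \Cref{prop:FE} for the restricted annealed free energy, Wasserstein localization of the tilted sphere measure (\Cref{prop:tiltQ}), and the continuity-plus-intermediate-value argument via \Cref{prop:tiltP} to locate a single $\theta^*$, followed by tilted concentration of $\lam_1$. The only bookkeeping differences are that the paper needs (and proves) only an $e^{-o(N)}$ lower bound on the $Q$-measure of the Wasserstein ball rather than your claimed $1-e^{-\omega(N)}$, and it keeps the full good event $\Good(K,\xcush,\eta)$ inside the tilted probability, controlling it by the Cauchy--Schwarz argument of \Cref{lem:good.tilt} rather than by your perturbative remark.
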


\begin{proof}
See Section \ref{sec:lower}.
\end{proof}

Finally, we gather some basic properties of the eigenvalue-eigenvector rate function $\cJ_{N,R}$ that will be used repeatedly.
In addition to $\cJ_{N,R}(x,z)$ from \eqref{def:jayN}, for $E\subset \R$ let
\begin{equation}	\label{def:JNE}
\cJ^{E}_{N,R}(x,z):= \sup_{\theta\in E} \big\{ J(x,\theta) - \free_{N,R}(\theta, \overlap_x(\theta)z)\big\}\,
\end{equation}
so $\cJ_{N,R}(x,\vloc) = \cJ^{\R_+}_{N,R}(x,\vloc)$.

\begin{lemma}[Properties of $\cJ_{N,R}$]
\label{lem:Jprops}
\quad
\begin{enumeratea}

\item\label{Jprop.smallz} (
Zero $z$).
 For all $x\ge2$, $R\ge1$, 
\begin{equation}	\label{Jprop.smallz-UB}
\rate^\gamma(x)\le
\cJ_{N,R}(x,0) \le 
\rate^\gamma(x) + O(e^{-cR^2}).
\end{equation}

\item\label{Jprop.smallx} (Small $x$). There exists $x_\mu>2$ depending only on $\mu$ and  universal constants $C_0,c_0>0$ such that for any $x\in[2,x_\mu), \vloc\in\B$ and $R\ge C_0$, 
\begin{equation}
\cJ_{N,R}(x,\vloc) \ge \rate^\gamma(x) + c_0\sqrt{x-2}\|\vloc\|_2^2 \,.
\end{equation}

\item\label{Jprop.T} (Bounded optimizer).
For $2\le x\le K,R\ge10, \zcush>0$ and $\vloc\in(1-\zcush)\B$ the supremum in \eqref{def:jayN} is attained in $[0,T]$ for some $T=O_{K,\zcush }(1)$, i.e.
$\cJ_{N,R}(x,\vloc) = \cJ^{[0,T]}_{N,R}(x,\vloc)$. 

If we further assume $x\ge 2+\xcush$ for $\xcush>0$ and $R\ge C\sqrt{\log(K/\xcush)}$ for a sufficiently large constant $C>0$, then $\cJ_{N,R}(x,\vloc) = \cJ^{[\thetam+\tcush,T]}_{N,R}(x,\vloc)$ for some $\tcush\gs_{K,\xcush,\zcush}1$.


\item\label{Jprop.cont} (Continuity).
Let $R\ge1,T\ge2$. 
\begin{itemize}
\item[(i)] For fixed $2\le x\le L$, $z\mapsto \cJ^{[0,T]}_{N,R}(x,z)$ is $O_{L,T}(1)$-Lipschitz on $\B$.
\item[(ii)] For fixed $z\in\ball, \kappa\in(0,\frac12)$ and $L\ge3$, $x\mapsto \cJ^{[0,T]}_{N,R}(x,z)$ is $O(T^3L^2\kappa^{-1/2})$-Lipschitz on $[2+\kappa,L]$.
\end{itemize}

\end{enumeratea}
\end{lemma}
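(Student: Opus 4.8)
Throughout write $g_{x,z}(\theta):=J(x,\theta)-\free_{N,R}(\theta,\overlap_x(\theta)z)$, so $\cJ_{N,R}(x,z)=\sup_{\theta\ge0}g_{x,z}(\theta)$. For (a), note $\overlap_x(\theta)\cdot 0=0$, and from \eqref{def:freeN}, \eqref{def:freeL}, \eqref{def:VP}, using $\LLa_\mu(0)=0$, one has $\free_{N,R}(\theta,0)=\theta^2-c_R$ where $c_R:=\inf_{\nu\in\cP_1([-R,R])}\DKL(\nu|\gamma)$ is a constant independent of $\theta$ and $x$. Hence $\cJ_{N,R}(x,0)=\sup_\theta\{J(x,\theta)-\theta^2\}+c_R=\rate^\gamma(x)+c_R$ by \eqref{igamma.var}, and the claim follows from $c_R\ge0$ and the elementary bound $c_R=O(e^{-cR^2})$, obtained by testing against a centered Gaussian truncated to $[-R,R]$ and rescaled to second moment $1$ (this is part of \Cref{lem:freeprops}).

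Parts (b)--(c) rest on a second-order expansion of $\free_{N,R}(\theta,w)$ for small $\|w\|_2$. Using the splitting $\free_{N,R}=\freeD+\freeL+\free^{cross}_R$ of \eqref{def:free-split}: $\freeD(\theta,\|w\|_2^2)=\theta^2(1-\|w\|_2^2)^2$ exactly; $0\le\freeL(\theta,w)\le 2\psimax\theta^2\|w\|_2^4$ directly; and for $\free^{cross}_R(\theta,w)=\VP_R(\theta w,1-\|w\|_2^2)-\tfrac12\|w\|_2^2$ one needs more than the crude $\VP_R\le 4\psimax\alpha\|v\|_2^2$, since that loses a cancellation. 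As $\|w\|_\infty\le\|w\|_2$ is small, the (near-)optimal tilting measure in \eqref{def:VP} has fourth moment controlled by the $\DKL$ penalty and is effectively supported at scale $O(1)$; Taylor expanding $\LLa_\mu(t)=\tfrac12t^2+O(|t|^3)$ there, together with $\inf_{\cP_{1-a}(\R)}\DKL(\cdot|\gamma)=\tfrac{a^2}{4}+O(a^3)$, gives $\VP_R(\theta w,1-\|w\|_2^2)=2\theta^2\|w\|_2^2(1-\|w\|_2^2)-\inf_{\cP_{1-\|w\|_2^2}(\R)}\DKL(\cdot|\gamma)+O_\theta(\|w\|_2^{3/2})$. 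The three pieces telescope (using $\theta^2(1-\alpha)^2+2\theta^2\alpha(1-\alpha)=\theta^2-\theta^2\alpha^2$) to $\free_{N,R}(\theta,w)=\theta^2-\tfrac12\|w\|_2^2+O_\theta(\|w\|_2^{3/2})$ for $\|w\|_2\le c_0$, with the $O(e^{-cR^2})$ truncation error entering only the lower-bound direction for $\free_{N,R}$. For (b), bound $\cJ_{N,R}(x,\vloc)\ge g_{x,\vloc}(\thetap)$; by \eqref{igamma.var} the sup defining $\rate^\gamma$ is at $\thetap$, so $J(x,\thetap)=\rate^\gamma(x)+\thetap^2$, and from $\thetam\thetap=\tfrac14$ one gets $\overlap_x(\thetap)^2=1-4\thetam^2\asymp\sqrt{x-2}$ as $x\downarrow2$; taking $x_\mu>2$ small enough makes $\|w\|_2^2=\overlap_x(\thetap)^2\|\vloc\|_2^2\le\overlap_x(\thetap)^2\le c_0^2$ for all $x<x_\mu$, and the expansion yields $g_{x,\vloc}(\thetap)\ge\rate^\gamma(x)+\tfrac12\overlap_x(\thetap)^2\|\vloc\|_2^2\bigl(1-O(\overlap_x(\thetap))\bigr)\ge\rate^\gamma(x)+c_0'\sqrt{x-2}\,\|\vloc\|_2^2$ after a further shrinking of $x_\mu$.

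For (c), since $1-\|\overlap_x(\theta)\vloc\|_2^2\ge\zcush$ for $\vloc\in(1-\zcush)\B$, \eqref{def:freeN} gives $\free_{N,R}(\theta,\overlap_x(\theta)\vloc)\ge\zcush^2\theta^2-C$, while $J(x,\theta)\le K\theta+C_K$ for $2\le x\le K$ (the logarithmic terms in \eqref{def:Jsc} are bounded there); hence $g_{x,\vloc}(\theta)\le-\zcush^2\theta^2+K\theta+C_K'\to-\infty$, and as $g_{x,\vloc}(0)\ge0$ the sup is attained in $[0,T]$ with $T=O_{K,\zcush}(1)$. For the refined statement: on $[0,\thetam]$ one has $\overlap_x\equiv0$, so $g_{x,\vloc}=c_R=O(e^{-cR^2})$; on $[\thetam,\thetam+\tcush]$, using that $\theta\mapsto J(x,\theta)-\theta^2$ has a double zero at $\thetam$ (value and $\theta$-derivative vanish, from $x=2\thetam+(2\thetam)^{-1}$ and \eqref{def:Gsc}), hence is $O_K((\theta-\thetam)^2)$, together with $\|\overlap_x(\theta)\vloc\|_2^2\le\overlap_x(\theta)^2=(\theta-\thetam)/\theta\le C_K\tcush$ and the expansion, gives $g_{x,\vloc}(\theta)\le C_K''\tcush+O(e^{-cR^2})$ there. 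Finally $\cJ_{N,R}(x,\vloc)\gs_{K,\xcush,\zcush}1$ for $x\ge2+\xcush$: if $\|\vloc\|_2\le\eps_0(K,\xcush)$ small, evaluate $g_{x,\vloc}$ at $\thetap$ (so $\|\overlap_x(\thetap)\vloc\|_2\le\eps_0$) to get $\ge\rate^\gamma(x)-O(\eps_0^{3/2})\ge\tfrac12\rate^\gamma(2+\xcush)$; if $\|\vloc\|_2\ge\eps_0$, evaluate at $\thetam(1+c)$ for a small constant $c=c(K,\xcush,\zcush)$ (chosen $\le 2\sqrt{\xcush}$ so that $\thetam(1+c)\le\thetap$), where $\overlap_x^2=c/(1+c)$ is small, $J-\theta^2\ge0$, and the expansion gives $g_{x,\vloc}\ge\tfrac{c}{4}\|\vloc\|_2^2-O(c^{3/2})\ge\tfrac{c}{8}\eps_0^2$. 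Choosing $\tcush\gs_{K,\xcush,\zcush}1$ and $R\ge C\sqrt{\log(K/\xcush)}$ so that $C_K''\tcush+O(e^{-cR^2})$ lies below this positive lower bound forces the sup into $[\thetam+\tcush,T]$.

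Part (d) follows because a supremum inherits any Lipschitz modulus uniform over the index set, so it suffices to bound $g_{x,z}(\theta)$ for $\theta\in[0,T]$. For (i), $J(x,\theta)$ is independent of $z$, and $z\mapsto\free_{N,R}(\theta,\overlap_x(\theta)z)$ is $O_{L,T}(1)$-Lipschitz: differentiating the four terms of \eqref{def:freeN}, the only nontrivial ones are $\freeL$ and $\VP_R$, for which $|\LLa_\mu'(t)|=O(|t|)$ (\Cref{rmk:assu.reg}), Cauchy--Schwarz and the envelope theorem (\Cref{rmk:tfree.alt}) give gradients of size $O_T(\|\theta\overlap_x(\theta)z\|_2)=O_T(1)$ with no $R$-dependence, using also $1-\|\overlap_x(\theta)z\|_2^2\ge\thetam/T\gs_{L,T}1$ to keep the tilting measure's variance away from $0$. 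For (ii), $x\mapsto J(x,\theta)$ is $O(T)$-Lipschitz ($\partial_xJ=\theta-\tfrac12\gsigx$ with $0\le\gsigx\le1$ on the branch $\theta\ge\thetam$, and $\partial_xJ=0$ otherwise), and $x\mapsto\overlap_x(\theta)^2=1-\thetam(x)/\theta$ is $O(L\kappa^{-1/2})$-Lipschitz since $|\partial_x\thetam|=\tfrac14|1-x/\sqrt{x^2-4}|=O(L\kappa^{-1/2})$ on $[2+\kappa,L]$; the square-root singularity of $\overlap_x$ at $\thetam$ is absorbed because $\|\nabla_w\free_{N,R}(\theta,w)\|_2=O_T(\|w\|_2)$ near $w=0$, so $|\free_{N,R}(\theta,\overlap_x(\theta)z)-\free_{N,R}(\theta,\overlap_{x'}(\theta)z)|\ls_T(\overlap_x\vee\overlap_{x'})|\overlap_x-\overlap_{x'}|\ls_T|\overlap_x^2-\overlap_{x'}^2|\ls_{T,L}L^2\kappa^{-1/2}|x-x'|$ (the last step using $\theta\ge\thetam(L)\gs_L1/L$ where $\overlap>0$). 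The main difficulty, feeding parts (b)--(c), is the small-$\|w\|_2$ expansion of $\free_{N,R}$ with the favorable coefficient $-\tfrac12\|w\|_2^2$, i.e.\ controlling $\VP_R$ to second order via concentration of the optimal tilt; the other delicate point, in (c), is producing a uniform positive lower bound on $\cJ_{N,R}(x,\vloc)$ over the whole ball $(1-\zcush)\B$, which is why that argument splits on the size of $\|\vloc\|_2$.
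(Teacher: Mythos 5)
Your overall route is the same as the paper's: part (a) from the exact identity $\free_{N,R}(\theta,0)=\theta^2-\inf_{\nu\in\cP_1([-R,R])}\DKL(\nu|\gamma)$ together with a truncated-Gaussian test measure; part (b) by evaluating the objective at $\thetap$ and using a second-order expansion of $\free_{N,R}(\theta,w)$ for small $\|w\|_2$ with the crucial $-\tfrac12\|w\|_2^2$ gain (this is the paper's \eqref{bd:smallw}); part (c) from $J(x,\theta)\le x\theta+O_K(1)$ against the coercive bound $\free_{N,R}\gs\zcush^2\theta^2-C_\zcush$, and, for the refined claim, the same scheme ``the objective is $O_K(\tcush)+O(e^{-cR^2})$ on $[0,\thetam+\tcush]$ while the global supremum is $\gs_{K,\xcush,\zcush}1$,'' with the same split on the size of $\|\vloc\|_2$; part (d) by composing Lipschitz control of $\free_{N,R}$ in the squared overlap with the Lipschitzness of $x\mapsto\overlap_x(\theta)^2$, exactly as in \Cref{lem:q2Lip} and \eqref{freecont.w} (you obtain the Lipschitz-in-$q^2$ input from a gradient/envelope argument where the paper uses the Lipschitzness of $\LLa_\mu\circ\ssq^{-1}$; this is a cosmetic difference).

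One point needs fixing: the error exponent in your small-$\|w\|_2$ expansion. As written, $\free_{N,R}(\theta,w)=\theta^2-\tfrac12\|w\|_2^2+O_\theta(\|w\|_2^{3/2})$ is useless, since $\|w\|_2^{3/2}\gg\|w\|_2^2$ as $\|w\|_2\to0$: it does not give $\free_{N,R}\le\theta^2-\tfrac14\|w\|_2^2$, and with it neither the bound $g_{x,\vloc}(\thetap)\ge\rate^\gamma(x)+\tfrac12\overlap_x(\thetap)^2\|\vloc\|_2^2\bigl(1-O(\overlap_x(\thetap))\bigr)$ in (b) nor the bound $\tfrac c4\|\vloc\|_2^2-O(c^{3/2})\ge\tfrac c8\eps_0^2$ in the second case of (c) follows (there the stated exponent would force $c$ to be large rather than small). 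The computation you sketch actually yields the correct exponent: the cubic Taylor error is $\ls_\theta\sum_i|w_i|^3\le\|w\|_2^3$, i.e.\ $O(\alpha^{3/2})$ with $\alpha=\|w\|_2^2$, which is precisely the error in the paper's \eqref{bd:smallw} and is consistent with all of your subsequent uses (the factors $1-O(\overlap_x(\thetap))$ and $O(c^{3/2})$). So this reads as a transcription slip ($\alpha^{3/2}$ versus $\|w\|_2^{3/2}$) rather than a conceptual gap, but with the exponent as literally stated, (b) and the second case of (c) do not go through. A minor further caveat: in the ``small $\|\vloc\|_2$'' case of (c) you invoke the expansion at $\theta=\thetap$, which can be of size $K$; the paper instead uses the cruder Lipschitz bound \eqref{freecont.w} (error $O(T^2\|\vloc\|_2)$) there precisely to avoid the refined expansion beyond $\theta\le1$. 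Your $O_\theta$ constants make your variant legitimate, but you should note explicitly that the moment bounds on the optimal tilting measure hold uniformly for $\theta$ in the relevant compact range.
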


\begin{proof}
See Section \ref{sec:rateprops}.
\end{proof}


\subsection{Proof of Theorem \ref{thm:rateN}}
\label{sec:rateN}
Here we establish Theorem \ref{thm:rateN} giving the asymptotics of the probability that $\lambda_{1}$ is close to $x\ge 2$ in terms of the rate  $\rate^{\mu}_{N}(x)$ defined in \eqref{def:rateN0}.
For the case that $x=2$, from \eqref{FuKo} it suffices to show $\rate^\mu_N(2)\to0$. 
For all $x\ge2$, since $\overlap_x(0)=0$ we can lower bound $\cJ_{N,R}(x,\vloc) \ge J(x,0) - \free_{N,R}(0,0) = 0$, so 
\begin{equation}	\label{rateN-LB0}
\rate^\mu_N(x) \ge 0 \qquad\forall x\ge2. 
\end{equation}
On the other hand, from Lemma \ref{lem:Jprops}(\ref{Jprop.smallz}),
\begin{equation}	\label{rateN-2UB}
\rate^\mu_N(2) \le \rate^\gamma(2) + O(e^{-cN^{2/5}}) = O(e^{-cN^{2/5}})
\end{equation}
so $\rate^\mu_N(2)\to0$ as desired. 

The following establishes the upper bound from Theorem \ref{thm:rateN} for the case $x>2$ with explicit error rates, while the lower bound is immediate from Lemma \ref{lem:lower1}.

\begin{prop}
\label{AquantUB}
Assume \eqref{assu:usg}.
Let $\heta,\xcush\in(0,\frac1{10})$, $L\ge3$ and $\eta\in(\heta,\frac14-\heta)$.
For any interval $I=[x-\delta,x+\delta]\subset[2+\xcush,L]$, 
\begin{equation}	\label{bd:AquantUB}
\frac1N\log\P(\lam_1\in I) \le - \inf_{\vloc\in(1-\zcush_0)\B^{n_0}} \cJ_{N,2N^\eta}(x,\vloc) + O_L(\xcush^{-1/2}\delta) + N^{-c\heta}
\end{equation}
for all $N$ sufficiently large depending on $\xcush,L,\heta$ and $\mu$,
where $\zcush_0=\zcush_0(L)=c/L^4$.
\end{prop}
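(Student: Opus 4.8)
The goal is to upgrade the joint eigenvalue--eigenvector upper bound of \Cref{prop:upper-joint}, which controls $\P(\lam_1\in I,\ v_1^{(\eta)}\in A)$ for a \emph{fixed} measurable set $A$, into a bound on $\P(\lam_1\in I)$ alone, with the variational problem now an infimum of $\cJ_{N,2N^\eta}(x,\vloc)$ over $\vloc$ in a ball strictly inside $\sphereN$. The plan has three ingredients, assembled in the following order.

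First I would dispose of the almost-completely-localized regime. By \Cref{prop:no-comp} applied with $s=n_0\le N/\log N$, $L$, $\eps=\sqrt{\zcush_0}$ (legitimate since $\zcush_0=c/L^4<c_1/L^2$ for $c$ small), and the interval $I$ (which has length $2\delta$; we may assume $\delta\ge N^{-1/4}$, as for smaller $\delta$ we enlarge $I$ to length $N^{-1/4}$, losing only $O(N^{-1/4})$ in the bound after dividing by the probability via a union over $O(1)$ sub-intervals), we get $\P(\|v_1^{(\eta)}\|_2^2\ge 1-\zcush_0 \mid \lam_1\in I)\le e^{-c_0N}$. Hence $\P(\lam_1\in I)\le \P(\lam_1\in I,\ v_1^{(\eta)}\in (1-\zcush_0)\B^N) + e^{-c_0N}\P(\lam_1\in I)$, and since the exponential tightness estimate of \Cref{lem:tightness} together with \eqref{rateN-2UB}-type reasoning shows $\frac1N\log\P(\lam_1\in I)$ is bounded below uniformly (it is $\ge -\rate^\gamma(L)-o(1)$ by restricting to the GOE-type lower bound, or simply $\P(\lam_1\in I)\ge e^{-CN}$ for $N$ large by \Cref{thm:smallx}(a) or \eqref{FuKo}), the error term $e^{-c_0N}\P(\lam_1\in I)$ is negligible compared to $\P(\lam_1\in I,\ v_1^{(\eta)}\in (1-\zcush_0)\B^N)$. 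So up to a relative error $1+o(1)$ we may restrict attention to the event $\{v_1^{(\eta)}\in (1-\zcush_0)\B^N\}$.

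Second, I would apply \Cref{prop:upper-joint} with $A=(1-\zcush_0)\B^N$, $\zcush=\zcush_0$, and the given $\heta,\xcush,\eta$: this gives
\[
\frac1N\log\P\big(\lam_1\in I,\ v_1^{(\eta)}\in (1-\zcush_0)\B^N\big)\le -\inf_{y\in I,\ \vloc\in (1-\zcush_0)\B^{n_0}}\cJ_{N,2N^\eta}(y,\vloc) + N^{-c\heta}.
\]
The third and final ingredient replaces the infimum over $y\in I$ by the single value $x$. Here I would invoke the continuity of $\cJ_{N,R}$ in its first argument: by \Cref{lem:Jprops}(\ref{Jprop.T}), for $x\in[2+\xcush,L]$ and $\vloc\in(1-\zcush_0)\B$ the supremum defining $\cJ_{N,2N^\eta}(\cdot,\vloc)$ is attained on a fixed compact $\theta$-interval $[\thetam+\tcush,T]$ with $T=O_{L,\zcush_0}(1)$ and $\tcush\gs_{L,\xcush,\zcush_0}1$ (the lower cutoff valid since $2N^\eta\ge N^{\heta}\ge C\sqrt{\log(L/\xcush)}$ for $N$ large), so $\cJ_{N,2N^\eta}(\cdot,\vloc)=\cJ^{[\thetam+\tcush,T]}_{N,2N^\eta}(\cdot,\vloc)$, and then by \Cref{lem:Jprops}(\ref{Jprop.cont})(ii), $y\mapsto \cJ^{[0,T]}_{N,2N^\eta}(y,\vloc)$ is $O(T^3L^2\xcush^{-1/2})=O_L(\xcush^{-1/2})$-Lipschitz on $[2+\xcush,L]$, \emph{uniformly in $\vloc$}. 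Therefore $\inf_{y\in I,\vloc}\cJ_{N,2N^\eta}(y,\vloc)\ge \inf_\vloc \cJ_{N,2N^\eta}(x,\vloc) - O_L(\xcush^{-1/2})\,|I| = \inf_\vloc\cJ_{N,2N^\eta}(x,\vloc)-O_L(\xcush^{-1/2}\delta)$. Combining the three displays gives \eqref{bd:AquantUB}.

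The main obstacle is not any single deep estimate — all the hard analysis is already packaged in Propositions \ref{prop:upper-joint} and \ref{prop:no-comp} — but rather the bookkeeping needed to make the reductions clean: ensuring the relative error from discarding the compressible event is genuinely negligible requires a crude a priori lower bound on $\P(\lam_1\in I)$ (which one should state, perhaps citing \Cref{thm:smallx}(a) or just the trivial $e^{-CN}$ bound from Gaussian comparison), and handling the case of very short intervals $I$ (length below $N^{-1/4}$, where \Cref{prop:no-comp} does not directly apply) by a covering argument. One should also double-check that the constant $c$ in $\zcush_0=c/L^4$ can be chosen small enough simultaneously for the $\eps<c_1/L^2$ hypothesis of \Cref{prop:no-comp} and for the forthcoming use of $\zcush_0$ in the $\theta$-compactness lemma; this is routine since all these constants depend only on $\mu$. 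Finally, care is needed that the Lipschitz-in-$x$ estimate \Cref{lem:Jprops}(\ref{Jprop.cont})(ii) is applied with a $\theta$-range $[0,T]$ that actually contains the optimizers for all $y\in I$ and all $\vloc$ in the ball, which is exactly what \Cref{lem:Jprops}(\ref{Jprop.T}) provides with constants uniform over $I\subset[2+\xcush,L]$.
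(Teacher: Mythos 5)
Your proposal is correct and follows essentially the same route as the paper's proof: reduce to $\delta\gtrsim N^{-1/4}$ by enlarging $I$, use \Cref{prop:no-comp} to restrict to $\|v_1^{(\eta)}\|_2^2\le 1-\Theta(L^{-4})$, apply \Cref{prop:upper-joint} with that ball as $A$, and pass from $\inf_{y\in I}$ to the single point $x$ via \Cref{lem:Jprops}(\ref{Jprop.T},\ref{Jprop.cont}) at cost $O_L(\xcush^{-1/2}\delta)$. The only cosmetic differences are that the paper needs no a priori lower bound on $\P(\lam_1\in I)$ — it simply rearranges $\P(\lam_1\in I)\le \P(\lam_1\in I,\ \|v_1^{(\eta)}\|_2^2\le 1-\eps_0^2)+e^{-c_0N}\P(\lam_1\in I)$ into $\P(\lam_1\in I)\le 2\P(\lam_1\in I,\ \|v_1^{(\eta)}\|_2^2\le 1-\eps_0^2)$ — and the squared-norm versus radius bookkeeping you flag is handled there exactly as you suggest, by taking the constant $c$ in $\zcush_0=c/L^4$ sufficiently small.
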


\begin{proof}
We will assume without comment that $N$ is sufficiently large depending on fixed parameters. 
Since the right hand side in \eqref{bd:AquantUB} is unchanged under increasing $\delta\in(0,N^{-2c\heta})$,
we may assume without loss of generality that $\delta\ge N^{-2c\heta}$, and in particular that $\delta\ge N^{-1/4}$. 
From Proposition \ref{prop:no-comp} and taking $c>0$ sufficiently small, we have
\begin{equation}
\P(\lam_1\in I) 
\le \P(\lam_1\in I, \|v_1^{(\eta)}\|_2^2\le 1-\eps_0^2) + e^{-c_0N} \P(\lam_1\in I)
\end{equation}
and hence
\begin{equation}
\P(\lam_1\in I) 
\le 2\P(\lam_1\in I, \|v_1^{(\eta)}\|_2^2\le 1-\eps_0^2).
\end{equation}
Applying Proposition \ref{prop:upper-joint} with $\zcush=\eps_0^2$ and $ A=(1-\zcush)\B^N$, we have
\begin{align*}
\frac1N\log \P(\lam_1\in I) 
&\le \frac1N\log \P(\lam_1\in I, \|v_1^{(\eta)}\|_2^2\le 1-\eps_0^2) + \frac{\log 2}{N}\\
&\le -\inf_{y\in I, \vloc\in (1-\eps_0^2)\B^{n_0}} \cJ_{N,2N^\eta}(y,\vloc) + N^{-c\heta}
\end{align*}
for $c\eta_0<1$.
For any $y\in I$ and $\vloc\in\B$, from Lemma \ref{lem:Jprops}(\ref{Jprop.T}) we have $\cJ_{N,2N^\eta}(y,\vloc) = \cJ_N^{[0,T]}(y,\vloc;2N^\eta)$ for some $T=O_L(1)$, and from 
Lemma \ref{lem:Jprops}(\ref{Jprop.cont}) and another application of Lemma \ref{lem:Jprops}(\ref{Jprop.T}),
\[
\cJ_N^{[0,T]}(y,\vloc;2N^\eta) = \cJ_N^{[0,T]}(x,\vloc;2N^\eta) + O_L(\kappa^{-1/2}\delta)
= \cJ_{N,2N^\eta}(x,\vloc) + O_L(\kappa^{-1/2}\delta).
\]
The claim follows. 
\end{proof}

\subsection{Proof of Theorem \ref{thm:smallx}}
\label{sec:smallx}

The case that $x<2$ follows from \eqref{tight-Lside}, and the case $x=2$ follows from \eqref{FuKo}. 
The claims for the case $x>2$ are then a consequence of the following. 

\begin{prop}
\label{prop:Bquant}
Assume \eqref{assu:usg}.
Let $\heta,\xcush\in(0,\frac1{10})$, $L\ge3$,  $I=[x',x'')\subset[2+\kappa,L]$.
\begin{itemize}
\item If $x''\ge x'+2N^{-1/20}$, then
\begin{equation}	\label{Bquant.LB}
\frac1N\log\P(\lam_1\in I) \ge -\rate^\gamma(x') -N^{-c}
\end{equation}
for all $N$ sufficiently large depending on $\xcush,L,\mu$.

\item If $x''<x_\mu$, then for any $\al\ge0$ and $\eta\in(\heta,\frac14-\heta)$,
\begin{equation}	\label{Bquant.UB}
\frac1N\log\P(\lam_1\in I, \|v_1^{(\eta)}\|_2^2\ge \al) \le - \rate^\gamma(x') - c_0\sqrt{\xcush}\al + N^{-c\heta}
\end{equation}
for all $N$ sufficiently large depending on $\xcush,\heta$ and $\mu$, and a constant $c_0$ depending on $\mu$.
\end{itemize}
\end{prop}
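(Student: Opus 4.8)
The plan is to derive both halves of \Cref{prop:Bquant} by feeding the quantitative large deviation estimates already in hand---\Cref{lem:lower1} for the lower bound, and \Cref{prop:upper-joint} together with \Cref{prop:no-comp} for the upper bound---into the structural properties of the rate function $\cJ_{N,R}$ collected in \Cref{lem:Jprops}. The one essential input beyond bookkeeping is \Cref{lem:Jprops}(\ref{Jprop.smallx}): on the universality window $2\le x<x_\mu$ (and for $R\ge C_0$) one has the quantitative lower bound $\cJ_{N,R}(x,\vloc)\ge \rate^\gamma(x)+c_0\sqrt{x-2}\,\|\vloc\|_2^2$. In particular $\cJ_{N,R}(x,\vloc)\ge\rate^\gamma(x)$ always, which forces the optimization in the upper bound to favor $\vloc=0$, and combined with \Cref{lem:Jprops}(\ref{Jprop.smallz}) (which gives $\cJ_{N,R}(x,0)=\rate^\gamma(x)+O(e^{-cR^2})$) pins the rate down to $\rate^\gamma$ in this window.

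For the lower bound \eqref{Bquant.LB}: given $I=[x',x'')$ with $x''\ge x'+2N^{-1/20}$, I would set $x_0:=x'+N^{-1/20}$, so that $\{|\lam_1-x_0|\le N^{-1/20}\}\subseteq\{\lam_1\in I\}$, and apply \Cref{lem:lower1} at $x_0$ with $R=N^{1/5}$ and admissible constants $\heta,\eta,\zcush$. This yields $\tfrac1N\log\P(\lam_1\in I)\ge -\inf_{\vloc}\cJ_{N,R}(x_0,\vloc)-O_{\kappa,L}(N^{-c\heta}+N^{-1/10})$; bounding the infimum by $\cJ_{N,R}(x_0,0)$ and using \Cref{lem:Jprops}(\ref{Jprop.smallz}) gives $-\inf_\vloc\cJ_{N,R}(x_0,\vloc)\ge -\rate^\gamma(x_0)-O(e^{-cN^{2/5}})$. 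Finally, since $\rate^\gamma$ is $O_L(1)$-Lipschitz on $[2,L+1]$, $\rate^\gamma(x_0)\le\rate^\gamma(x')+O_L(N^{-1/20})$; collecting all errors into a single $O_{\kappa,L}(N^{-c})$ with $c>0$ universal and taking $N$ large (depending on $\kappa,L,\mu$) gives \eqref{Bquant.LB}. (A minor technical point: \Cref{lem:lower1} is phrased for $x\in[2+\kappa,\kappa^{-1}]$, so I would first shrink $\kappa$ to $\min(\kappa,(L+1)^{-1})$ to cover $x_0\le L+1$, which only weakens the constants.)

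For the upper bound \eqref{Bquant.UB}: fix a small constant $\zcush\in(0,\tfrac1{10})$ and split according to whether $v_1^{(\eta)}$ lies within $\zcush$ of $\sphereN$. On the ``almost fully localized'' part, \Cref{prop:no-comp} gives $\P(\lam_1\in I,\ \|v_1^{(\eta)}\|_2^2>1-\zcush)\le e^{-c_*N}\,\P(\lam_1\in I)$; since $\P(\lam_1\in I)\le e^{(-\rate^\gamma(x')+o(1))N}$---which follows from \Cref{AquantUB} after partitioning $I$ into $O(LN^{1/4})$ intervals of length $N^{-1/4}$ and using $\cJ_{N,R}\ge\rate^\gamma$ together with monotonicity of $\rate^\gamma$---this part contributes at most $e^{(-\rate^\gamma(x')-c_*+o(1))N}$. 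On the complementary part I would apply \Cref{prop:upper-joint} with $A=\{v\in(1-\zcush)\B^N:\|v\|_2^2\ge\alpha\}$ (so $A$ is empty once $\alpha>1-\zcush$, giving nothing), obtaining the exponent $-\inf_{y\in I,\ \vloc\in A\cap \B^{n_0}}\cJ_{N,2N^\eta}(y,\vloc)+N^{-c\heta}$; since $I\subset[2+\kappa,x_\mu)$, \Cref{lem:Jprops}(\ref{Jprop.smallx}) bounds each term below by $\rate^\gamma(y)+c_0\sqrt{y-2}\,\|\vloc\|_2^2\ge\rate^\gamma(x')+c_0\sqrt{\kappa}\,\alpha$, using that $\rate^\gamma$ is nondecreasing, $y\ge x'\ge 2+\kappa$ and $\|\vloc\|_2^2\ge\alpha$. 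Adding the two contributions and passing to $\tfrac1N\log$ gives \eqref{Bquant.UB}.

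Main obstacle: the proposition itself is essentially bookkeeping, and the only point requiring care is merging the two pieces of the upper-bound split---one must ensure the super-exponentially small factor $e^{-c_*N}$ from \Cref{prop:no-comp} dominates the main contribution $e^{-(\rate^\gamma(x')+c_0\sqrt\kappa\,\alpha)N}$. This is arranged by observing that we may freely shrink $x_\mu$ (so that $\rate^\gamma(x_\mu)\le c_*/2$, say), may assume the constant $c_0$ in \Cref{lem:Jprops}(\ref{Jprop.smallx}) is $\le c_*$, and have $\sqrt\kappa\,\alpha\le 1$ since $\kappa<\tfrac1{10}$ and $\|v_1^{(\eta)}\|_2^2\le1$, so that $c_*\ge\rate^\gamma(x')+c_0\sqrt\kappa\,\alpha$ throughout. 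The genuinely substantial work---\Cref{lem:lower1}, \Cref{prop:upper-joint}, \Cref{prop:no-comp}, and above all the quantitative bound \Cref{lem:Jprops}(\ref{Jprop.smallx}) separating the GOE rate from the onset of localization---is carried out in the later sections and is taken as given here.
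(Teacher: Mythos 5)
Your proposal is correct and follows essentially the paper's route: the lower bound is \Cref{lem:lower1} at $x'+N^{-1/20}$ plus \Cref{lem:Jprops}(\ref{Jprop.smallz}) and local Lipschitzness of $\rate^\gamma$, and the upper bound is \Cref{prop:no-comp} to strip off the near-fully-localized event, \Cref{prop:upper-joint} on the annulus $\{\al\le\|\vloc\|_2^2<1-\zcush\}$, and the quantitative separation $\cJ_{N,R}(y,\vloc)\ge\rate^\gamma(y)+c_0\sqrt{y-2}\,\|\vloc\|_2^2$ from \Cref{lem:Jprops}(\ref{Jprop.smallx}), exactly as in the paper. The one sub-step where you diverge is how $\P(\lam_1\in I)$ itself is controlled when absorbing the $e^{-c_*N}\P(\lam_1\in I)$ term: the paper does this self-containedly by applying \Cref{prop:upper-joint} with $\al=0$ and rearranging the \Cref{prop:no-comp} inequality (so it never leaves the decomposition), whereas you import \Cref{AquantUB} together with a covering of $I$ by $N^{-1/4}$-intervals and monotonicity of $\rate^\gamma$; both are valid and non-circular, your version just leaning on slightly heavier, already-proved machinery. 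Finally, your precaution of shrinking $x_\mu$ so that $\rate^\gamma(x_\mu)\le c_*/2$ is unnecessary: since you bound the localized piece by $e^{-c_*N}\P(\lam_1\in I)\le e^{-(\rate^\gamma(x')+c_*)N+o(N)}$, all you need is $c_*\ge c_0\sqrt{\xcush}\,\al$, which already follows from taking the final constant $c_0\le c_*$ together with $\sqrt{\xcush}\,\al\le1$.
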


To deduce Theorem \ref{thm:smallx}, 
the bounds \eqref{igamma-LB} and \eqref{igamma-UB} follow from \eqref{Bquant.LB} and \eqref{Bquant.UB}, respectively (taking $\al=0$ in the latter). 
For \eqref{igamma-v1UB}, combining \eqref{Bquant.LB} and \eqref{Bquant.UB}, we get that  for $\al>0$ and $2+\xcush\le x'+2N^{-1/20}\le x''<x_\mu$, 
\begin{equation}
\P(\|v_1^{(\eta)}\|_2^2\ge\al| \lam_1\in I) \le \exp( - c_0\sqrt{\kappa}\al N) \qquad\forall \al\ge N^{-c'\heta}
\end{equation}
for all $N$ sufficiently large depending on $\xcush,\eta$ and $\mu$.
We thus obtain the conditional tail bound of Theorem \ref{thm:smallx}(b). 

It only remains to prove Proposition \ref{prop:Bquant}.
From Lemma \ref{lem:lower1} with $\zcush=\frac1{20},\eta=\frac18$ and $R=N^{1/8}$, say, we have
\begin{align*}
\frac1N\log\P(\lam_1\in I)
&\ge \frac1N\log\P( |\lam_1-x'-N^{-1/20}|\le N^{-1/20})\\
&\ge - \cJ_{N,R}(x+N^{-1/20},0) - O(N^{-c}).
\end{align*}
From Lemma \ref{lem:Jprops}(\ref{Jprop.smallz}) we can further bound 
\[
\cJ_{N,R}(x+ N^{-1/20},0) \le \rate^\gamma(x+N^{-1/20}) + O(N^{-c}) \le \rate^\gamma(x) + O(N^{-c})
\]
since $\rate^\gamma$ is locally Lipschitz. This yields the first point. 

For the second point, we split
\begin{align}
\P( \lam_1\in I, \|v_1^{(\eta)}\|_2^2\ge \al)
&= \P( \lam_1\in I, \|v_1^{(\eta)}\|_2^2\ge 1-\eps_0^2) + \P( \lam_1\in I, \|v_1^{(\eta)}\|_2^2\in[\al,1-\eps_0^2))	\notag\\
&\le e^{-c_0N} \P(\lam_1\in I) + \P( \lam_1\in I, \|v_1^{(\eta)}\|_2^2\in[\al,1-\eps_0^2))		\label{B-decomp}
\end{align}
where we applied Proposition \ref{prop:no-comp}
(if $\al\ge1-\eps_0^2$ then the latter term is zero).
In particular, with $\al=1-\eps_0^2$ we get
\begin{align*}
\P( \lam_1\in I, \|v_1^{(\eta)}\|_2^2\ge 1-\eps_0^2)
&\le e^{-c_0N} \P(\lam_1\in I) \\
&= e^{-c_0N} \big[ \P( \lam_1\in I, \|v_1^{(\eta)}\|_2^2\ge 1-\eps_0^2) +  \P( \lam_1\in I, \|v_1^{(\eta)}\|_2^2< 1-\eps_0^2)\big]
\end{align*}
and rearranging yields
\begin{equation}	\label{large-al}
\P( \lam_1\in I, \|v_1^{(\eta)}\|_2^2\ge 1-\eps_0^2) 
\le2e^{-c_0N} \P( \lam_1\in I, \|v_1^{(\eta)}\|_2^2< 1-\eps_0^2).
\end{equation}
On the other hand, with $\al=0$ in \eqref{B-decomp} we similarly obtain
\begin{equation}	\label{B-al0}
\P(\lam_1\in I) \le 2\P(\lam_1\in I, \|v_1^{(\eta)}\|_2^2< 1-\eps_0^2)
\end{equation}

Now for any $\al\in[0,1-\eps_0^2)$, applying Proposition \ref{prop:upper-joint} with the annulus $ A=\{v\in\B^N: \al\le\|v\|_2^2<1-\eps_0^2\}$, we have
\begin{align*}
\frac1N\log\P( \lam_1\in I, \|v_1^{(\eta)}\|_2^2\in[\al,1-\eps_0^2))
&\le -\inf_{y\in I, \vloc\in  A\cap\B^{n_0}} \cJ_{N,2N^\eta}(y,\vloc) + N^{-c\heta}.
\end{align*}
By the assumption $x''<x_\mu$ and Lemma \ref{lem:Jprops}(\ref{Jprop.smallx}), for any $y\in I$ and $\vloc\in  A$ we have
\[
\cJ_{N,2N^\eta}(y,\vloc)\ge \rate^\gamma(y) + c_0\sqrt{y-2}\|\vloc\|_2^2 
\ge  \rate^\gamma(x') + c_0\sqrt{\xcush}\al.
\]
Combining with the previous display,
\begin{equation}	\label{B-mainal}
\frac1N\log\P( \lam_1\in I, \|v_1^{(\eta)}\|_2^2\in[\al,1-\eps_0^2))
\le - \rate^\gamma(x') - c_0\sqrt{\xcush}\al + N^{-c\heta}.
\end{equation}
Inserting this bound for the case $\al=0$ on the right hand of \eqref{large-al} yields
\[
\frac1N\log\P( \lam_1\in I ,\|v_1^{(\eta)}\|_2^2\ge 1-\eps_0^2) \le -\rate^\gamma(x') - c_0 + N^{-c\heta}
\]
giving the claim for the case $\al\in[1-\eps_0^2,1]$. 

For the case $\al<1-\eps_0^2$, inserting \eqref{B-mainal} with $\al=0$ on the right hand side of \eqref{B-al0} gives
\[
\frac1N\log\P( \lam_1\in I ) \le -\rate^\gamma(x')  + N^{-c\heta}
\]
and combining this with \eqref{B-decomp} and \eqref{B-mainal} yields
\begin{align*}
\P( \lam_1\in I ,\|v_1^{(\eta)}\|_2^2\ge\al) &\le e^{N^{1-c\heta} - \rate^\gamma(x')N-c_0N}
+ e^{N^{1-c\heta} - \rate^\gamma(x')N - c_0\sqrt{\xcush}\al N}\\
&\le \exp( - N( \rate^\gamma(x') + c_0 \sqrt{\xcush}\al) + O(N^{1-c\heta}))
\end{align*}
which completes the proof. 
\qed

\subsection{Proof of Theorem \ref{thm:fullLDP}}
\label{sec:fullLDP}

The main task is to prove the weak large deviation principle on $(2,+\infty)$, which is a consequence of the following:

\begin{prop}
\label{prop:weakLDP}
With assumptions as in Theorem \ref{thm:fullLDP},
let $\xcush,\eps\in(0,\frac1{10})$, $L\ge3$ and $\zcush\in(0,cL^{-4})$ for a sufficiently small constant $c>0$ depending only on $\mu$.
For any closed interval $I=[x-\delta,x+\delta]\subset[2+\xcush,L]$  for some $\delta>0$,
\begin{equation}	\label{lim:weakLDP}
\limsup_{N\to\infty}\bigg|\frac1N\log\P(\lam_1\in I) +
 \wt\cI_{N,N^{-2\eps}}(x,\zcush) \bigg|
\ls_L\xcush^{-1/2}\delta\,.
\end{equation}
where we recall that $ \wt\cI_{N,N^{-2\eps}}$ is defined in  \eqref{def:trate}.
\end{prop}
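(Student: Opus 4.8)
The plan is to combine the quantitative two-sided bounds from \Cref{AquantUB}, \Cref{lem:lower1} and \Cref{prop:Bquant} with a pigeonholing argument to pass from the $N$-dependent rate $\rate^\mu_N(x)$ of \Cref{thm:rateN} to the reduced rate $\wt\rate_{N,N^{-2\eps}}(x,\zcush)$. Concretely, \Cref{AquantUB} and \Cref{lem:lower1} already give
\[
\frac1N\log\P(\lam_1\in I) = -\inf_{\vloc\in(1-\zcush)\B^{n_0}} \cJ_{N,R}(x,\vloc) + O_L(\xcush^{-1/2}\delta) + o(1)
\]
for a suitable $R\asymp N^\eta$ (using \Cref{rmk:dropUSG}(iii) to free up the choices of the ball dimension and of $R$). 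So \eqref{lim:weakLDP} reduces to the purely analytic claim that the unreduced infimum $\inf_\vloc\cJ_{N,R}(x,\vloc)$ is within $o(1)$ of $\wt\rate_{N,N^{-2\eps}}(x,\zcush)$, which I would establish by comparing the free energy $\free_{N,R}(\theta,w)$ with the reduced free energy $\tfree_{N,N^{1/5}}$ from \eqref{def:tfree}.

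First I would fix a near-optimal $\vloc=w$ in the unreduced problem (using \Cref{lem:Jprops}(c) to restrict $\theta$ to a compact interval $[\thetam+\tcush,T]$ depending only on $x$). Then I would run the pigeonhole argument advertised after \Cref{thm:fullLDP}: split the coordinates of $w$ into dyadic blocks according to magnitude and, since there are only $O(\log N)$ relevant scales between $N^{-1/4}$-ish and $N^{-2\eps}$ while $\|w\|_2^2\le1$, find a ``gap'' scale $\xi\in[N^{-2\eps},N^{-\eps}]$ (say) such that the $\ell^2$-mass of coordinates with $|w_i|\in[\xi/2,2\xi]$ is $o(1)$. Decompose $w=\chw+\tw$ with $\chw$ the entries of size $\ge\xi$ (so $\chw\in\B_{\ge N^{-2\eps}}$) and $\tw$ the entries of size $<\xi$. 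The point of the gap is that in the sums defining $\freeL(\theta,w)$ and $\VP_R(\theta w,\cdot)$, the cross terms between $\chw$ and $\tw$ have arguments of order $\theta\sqrt N\xi\cdot|\tw_j|=o(1)$ uniformly, so Taylor expansion $\LLa_\mu(t)=\tfrac12t^2+O(t^3)$ turns them into the explicit quadratic cross terms $2\psimax$, $2\psiinfty$ etc. appearing in \eqref{def:tfree}; meanwhile the moderately large entries $\tw_i\in(R^{-1}\cdot,\xi)$ with $R=N^{1/5}$ have arguments $\theta\sqrt N|\tw_i\tw_j|\le\theta\sqrt N\xi^2=o(1)$ in $\freeL$ and bounded-by-$R$ arguments in $\VP_R$, so that $\free_{N,R}$ depends on $\tw$ only through $\tal:=\|\tw\|_2^2$ up to $o(1)$ errors (here one uses that the optimizing measure in $\VP_R$ for delocalized arguments is a truncated Gaussian, contributing $\sim\tfrac12\tal$, exactly the $-\tfrac12(1-\beta)$ term, and the asymptotic $\freeD(\theta,\|w\|_2^2)\sim\theta^2(1-\|w\|_2^2)^2$ giving the $\theta^2[\beta^2+2\beta\tal+\dots]$ bracket). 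This shows $\free_{N,R}(\theta,w)=\tfree_{N,N^{1/5}}(\theta,\chw,\tal)+o(1)$, hence $\cJ_{N,R}(x,w)\ge\wt\cJ_{N,N^{1/5}}(x,\overlap_x(\theta)^{-1}\chw\text{-data})-o(1)$ after re-reading the definitions \eqref{def:tjayN}--\eqref{def:trate}; combined with the trivial reverse inequality (any $(\chz,\tal)$ feasible in \eqref{def:trate} yields a feasible $\vloc$ by plugging in a truncated-Gaussian-profiled $\tw$), this gives $\inf_\vloc\cJ_{N,R}(x,\vloc)=\wt\rate_{N,N^{-2\eps}}(x,\zcush)+o(1)$, as needed. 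The constraint $\zcush\in(0,cL^{-4})$ is exactly what lets \Cref{prop:no-comp} and \Cref{lem:Jprops}(c) apply uniformly over $I\subset[2+\xcush,L]$.

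The main obstacle is the uniformity of the Taylor-expansion errors across the pigeonholed decomposition: one must ensure the gap scale $\xi$ can be chosen \emph{independently of $\theta$} over the whole compact $\theta$-range, and that the number of summands in the cross terms (which can be as large as $|\supp(\chw)|\cdot|\supp(\tw)|=O(N^{1-2\eta}\cdot N)$) does not amplify the per-term $O(t^3)$ errors beyond $o(1)$ after dividing by $N$ — this is where the $N^{-2\eps}$ upper bound on the threshold and the $R=N^{1/5}$ choice are calibrated, and it requires the uniform third-derivative control coming from \eqref{assu:usg}. A secondary technical point is handling the boundary of the feasible region (the constraint $\beta=1-\tal-\|\chw\|_2^2\ge0$ and $\tal,\|\chz\|_2^2\le1-\zcush$): one checks the near-optimal $w$ indeed has $\|w\|_2^2\le1-\zcush$ by \Cref{prop:no-comp}, and that the gap can be placed so that neither $\|\chw\|_2^2$ nor $\tal$ lands near the forbidden boundary. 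Once \eqref{lim:weakLDP} is in hand, the full LDP on $\R$ follows by standard soft arguments: exponential tightness from \Cref{lem:tightness}, the weak LDP plus tightness upgrading to a full LDP, the lower bound $\rate^\mu\le\rate^\gamma$ and finiteness/monotonicity from \Cref{lem:monotone} and \Cref{lem:Jprops}, and the existence of the monotone limit \eqref{def:rate2} from the monotonicity in $N$ of $\wt\rate_{N,N^{-2\eps}}(x,\zcush)$ built into the reduced formula.
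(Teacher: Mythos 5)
Your upper-bound half is in the same spirit as the paper (pigeonhole a gap in the coordinate sizes of the near-optimal localized vector, split $w=\chw+\tw$, and compare $\free_{N,R}$ with $\tfree_{N,R}$ as in \Cref{lem:free-tfree}), but the calibration you propose does not deliver the needed estimates. A gap annulus of constant width $[\xi/2,2\xi]$ together with a fixed $R$ (your $R\asymp N^{\eta}$ or $N^{1/5}$) and threshold $\xi\in[N^{-2\eps},N^{-\eps}]$ violates the condition $\|\tw\|_\infty\le 1/(MR)$ that is required to Taylor-expand the cross terms between the moderately large coordinates and the delocalized ones: with your choices the arguments $2\theta\sqrt{N}\,\tw_i u_j$ are of size $\theta R\,\xi\gg1$, so $\psimu$ there is not $\approx\tfrac12$, and the bound $\free_{N,R}(\theta,w)\le\tfree_{N,R}(\theta,\chw,\tal)+o(1)$ can genuinely fail when $\psimax>\tfrac12$. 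This is why the paper pigeonholes over $\asymp\sqrt{\log N}$ scales to find a gap of \emph{growing} multiplicative width $M^2=e^{2\sqrt{\log N}}$ at a scale $N^{-\eta_\ell}$, and then couples $R=2N^{\eta_\ell}$ to that scale; it is fixable, but your parameters as stated do not work.

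The genuine gap is in the lower bound, where you call the reverse inequality ``trivial'' and propose to feed a fixed vector with a ``truncated-Gaussian-profiled $\tw$'' into \Cref{lem:lower1}. This fails for two reasons. First, a delocalized/truncated-Gaussian profile for the $\tal$-mass only produces the coefficient $\theta^2\tal^2$ (Taylor at $0$, $\psimu\approx\tfrac12$), not the $2\psimax\theta^2\tal^2$ appearing in \eqref{def:tfree}; to realize that term one must place the mass on $\asymp\sqrt N$ coordinates of size $\asymp\sqrt{t/(2\theta)}\,N^{-1/4}$ with $\psimu(t)\approx\psimax$ (this is where \eqref{assu:maxR+} enters), as in the vector $w^*(\theta,\al,t)$ of \eqref{def:w*}. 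Second, and more fundamentally, that scale depends on $\theta$, while \Cref{lem:lower1} fixes $\vloc$ \emph{before} the supremum over $\theta$: you would need $\free_{N,R}(\theta,\overlap_x(\theta)\vloc)\ge\tfree_{N,R}(\theta,\overlap_x(\theta)\chz,\overlap_x(\theta)^2\tal)-o(1)$ at the unknown maximizing $\theta$, and no single fixed vector achieves this uniformly on the relevant compact $\theta$-range. The paper's proof does not reduce to \Cref{lem:lower1} at all here; it requires the separate \Cref{lem:lower2}, built on \Cref{prop:lower0}, which allows the localized part to be a \emph{continuous curve} $\theta\mapsto w(\theta)=\overlap_x(\theta)\chz+w^*(\theta,\overlap_x(\theta)^2\tal,t)$ inside the tilting and intermediate-value argument, followed by \Cref{lem:free-tfree} to identify $\free_{N,N^{1/5}}(\theta,w(\theta))$ with $\tfree^{(t)}_{N,N^{1/5}}$ and then with $\tfree_{N,N^{1/5}}$ up to $O(\delta)$ via the choice of $t$ in \eqref{tmd}. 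Without this ingredient your argument only yields the lower bound with $\psimax$ replaced by $\tfrac12$ in the $\tal^2$ term, which is strictly weaker than \eqref{lim:weakLDP} whenever $\mu$ is not sharp sub-Gaussian.
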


We also need the following lemma providing analogues for $\wt\cJ_{N,R}$ of some of the properties for $\cJ_{N,R}$ stated in Lemma \ref{lem:Jprops}.
(Note that \eqref{Jprop.smallz-UB} carries over immediately with $\wt\cJ_{N,R}(x,0,0)$ in place of $\cJ_{N,R}(x,0)$ since these quantities are equal.)

\begin{lemma}[Properties of $\wt\cJ_{N,R}$]
\label{lem:tJprops}
\quad
\begin{enumeratea}

\item\label{tJprop.smallx} (Small $x$). 
With $x_\mu,C_0,c_0$ as in Lemma \ref{lem:Jprops}(\ref{Jprop.smallx}), 
for any $x\in[2,x_\mu), \chz\in\B$, $\tal\in[0,1-\|\chz\|_2^2]$ and $R\ge C_0$, 
\begin{equation}
\wt\cJ_{N,R}(x,\chz,\tal) \ge \rate^\gamma(x) + c_0\sqrt{x-2}(\|\chz\|_2^2+\tal) \,.
\end{equation}

\item\label{tJprop.T} (Bounded optimizer).
For $x\in[2+\xcush,K]$, $R\ge C\sqrt{\log(K/\xcush)}$, $\zcush>0$, $\tal\in[0,1]$ and $\chz\in \B$ such that $\|\chz\|_2^2+\tal \le (1-\zcush)^2$, we have
$\wt\cJ_{N,R}(x,\vloc) = \wt\cJ_{N,R}^{[\thetam+\tcush,T]}(x,\vloc)$ for some $T\ls_{K,\zcush}1$ and $\tcush\gs_{K,\xcush,\zcush}1$. 

\item\label{tJprop.cont} (Continuity in $x$).
Let $R\ge1,T\ge2$. 
For fixed $\xcush\in(0,\frac12),L\ge3$, $\tal\in[0,1]$ and $\chz\in\B$ such that $\tal+\|\chz\|_2^2\le 1$,
$x\mapsto\wt\cJ^{[0,T]}_{N,R}(x,\chz,\tal)$ is $O(T^3L^2\kappa^{-1/2})$-Lipschitz on $[2+\kappa,L]$.
\end{enumeratea}
\end{lemma}

\begin{proof}
This is proved alongside Lemma \ref{lem:Jprops} in Section \ref{sec:rateprops}.
\end{proof}

We postpone the proof of Proposition \ref{prop:weakLDP} and complete the proof of Theorem \ref{thm:fullLDP}.
\begin{proof}[Proof of Theorem \ref{thm:fullLDP}]
Since $\lam_1$ is exponentially tight by Lemma \ref{lem:tightness}, it suffices (see \cite[Lemma 1.2.18]{DZ}) to show that the rate function $\rate^\mu$ is well defined (i.e.\ the limit in \eqref{def:trate} exists) and lower-semicontinuous, and that the weak large deviation principle holds, that is
\begin{equation}
\label{goal:weakLDP}
\lim_{\delta\downarrow0} \limsup_{N\to\infty} \frac1N\log\P(|\lam_1-x|\le \delta)
= \lim_{\delta\downarrow0} \liminf_{N\to\infty} \frac1N\log\P(|\lam_1-x|\le \delta) = -\rate^\mu(x)
\end{equation}
for every fixed $x\in\R$. 

For existence of the limit, note that for fixed $x\in(2,\infty),\eps\in(0,\frac1{10}),\zcush\in(0,c_\mu x^{-4})$ the sequence $\wt\rate_{N,N^{-2\eps}}(x,\zcush)$ is monotone decreasing in $N$. 
Indeed, $\VP_R$ (defined in \eqref{def:VP}) and hence $\tfree_{N,R}$ (defined in \eqref{def:tfree}) are increasing in $R$, so $\wt\cJ_{N,R}(x,\chz,\tal)$ is decreasing in $R$.
Moreover, for fixed $\tal$ the inner infimum in \eqref{def:trate} is taken over an increasing sequence of sets $\B_{\ge N^{-2\eps}}$. Furthermore, $ \wt\rate_{N,\xi}$ is bounded 
since $0\le \wt\rate_{N,\xi}(x,\zcush)\le \rate^\gamma(x)+O(e^{-cN^{2/5}})$ for all $x\ge2$ and $\xi,\zcush>0$ (for the first inequality we can bound $\wt\cJ_{N,R}(x,\vloc,\tal) \ge J(x,0) - \tfree_{N,R}(0,0,0)=0$ for any $\vloc,\tal$ by taking $\theta=0$ in \eqref{def:tjayN}, while the second bound follows from $\wt\rate_{N,\xi}(x,\zcush) \le \wt\cJ_{N,R}(x,0,0) =\cJ_{N,R}(x,0)\le \rate^\gamma(x)+O(e^{-cR^2})$ from \eqref{lem:Jprops}(\ref{Jprop.smallz})) the limit $\rate^\mu(x) =\lim_{N\to\infty}\wt\rate_{N,N^{-2\eps}}(x,\zcush)$ exists and is finite for every $x>2$. 
Since $\frac1N\log\P(\lam_1\in I)$ is independent of $\eps,\zcush$, the independence of the limit on these parameters follows from  \eqref{lim:weakLDP}.
At $x=2$, we note that for any $\thresh,\zcush>0$,
\begin{align*}
\wt\rate_{N,\thresh}(2,\zcush) \le \wt\cJ_{N,N^{1/5}}(2,0,0) = \cJ_N(2,0) \le \rate^\gamma(2) + e^{-cN^{1/4}} = e^{-cN^{1/4}}\to 0
\end{align*}
so $\rate^\mu(2)=0$. 

For the lower-semicontinuity,
since $\rate^\mu(2)=0\le\rate^\mu(x)$ for all $x\in\R$, it suffices to show $\rate^\mu$ is continuous on $(2,\infty)$.\footnote{In fact from Theorem \ref{thm:smallx} and \eqref{goal:weakLDP} we can deduce \emph{a posteriori} that $\rate^\mu=\rate^\gamma$ in a neighborhood of 2 and hence $\rate^\mu$ is continuous on all of $[2,\infty)$, but we do not need this here.}
From
Lemma \ref{lem:tJprops}(\ref{tJprop.T},\ref{tJprop.cont}) we have that for any fixed $L\ge10$, $x\mapsto \wt\cJ_{N,N^{1/5}}(x,\chz,\tal)$ is $O_L(1)$-Lipchitz on $[2+L^{-1},L]$ for all $\tal\le 1-cL^{-4}$ and $\chz\in (1-\tal^{1/2})\B$. Hence, $\wt\rate_{N,N^{-2\eps}}$ is $O_L(1)$-Lipschitz on $[2+L^{-1},L]$ for every $N$ and $L\ge10$.
It follows that $\rate^\mu$  is continuous (in fact locally Lipchitz) on $(2,\infty)$. 

For \eqref{goal:weakLDP}, the case $x<2$ follows from \eqref{tight-Lside}, and the case $x=2$ follows from \eqref{FuKo} and the fact just shown that $\rate^\mu(2)=0$. 
For fixed $x>2$, \eqref{goal:weakLDP} follows from Proposition \ref{prop:weakLDP} and the fact that $\wt\rate_{N,N^{-2\eps}}(x,\zcush)\to\rate^\mu(x)$. 

The fact that $\rate^\mu$ is non-decreasing on $[2,\infty)$ follows from Lemma \ref{lem:monotone} and \eqref{goal:weakLDP}, along with \eqref{tight-Rside} and the fact that $\rate^\mu$ is finite on $[2,\infty)$.
This completes the proof of of Theorem \ref{thm:fullLDP}.
\end{proof}

In the remainder of this subsection we establish Proposition \ref{prop:weakLDP}. We first state two lemmas.

For the upper bound, as in the proofs of Theorems \ref{thm:rateN} and \ref{thm:smallx} we will apply Proposition \ref{prop:upper-joint}, but only after applying a pigeonholing argument to locate a gap in sizes of the large coordinates of $v_1$; after fixing the gap, the rate function $\cJ_{N,R}$ 
reduces to $\wt\cJ_{N,R}$, as we show in Lemma \ref{lem:free-tfree} below. 

For the lower bound we need the following modification of Lemma \ref{lem:lower1}, where, rather than taking the supremum in $\theta$ followed by the infimum over localized vectors $\vloc$, we allow the localized vector to vary with $\theta$. This will allow us to select large values of $\psimu(t)$ for the contribution of the restricted free energy $\free_{N,R}(\theta,w)$ using coordinates of $w$ of size $\asymp N^{-1/4}$. 

To that end,
for $\theta,t>0, \al\in[0,1]$  let
\begin{equation}	\label{def:w*}
w^*=w^*(\theta,\al,t) = \sqrt{\frac{t}{2\theta}}N^{-1/4} \1_{[N-n_1+1,N]}\in\R^N
\,,\quad
n_1=n_1(\theta,\al,t):= \lf 2\theta\al t^{-1}N^{1/2}\rf \,.
\end{equation}
Thus, $w^*$ is constant on its support of size $n_1=2\theta\al t^{-1}N^{1/2} +O(1)$ with squared norm
\begin{equation}	\label{w*norm}
\al\ge \|w^*\|_2^2 = \al + O(t\theta^{-1}N^{-1/2})\,.
\end{equation}
(We take the support on the right end of the interval $[N]$ only for later notational convenience.) 

\begin{lemma}
\label{lem:lower2}
Assume \eqref{assu:usg}.
Let $\xcush,\zcush,\eps\in(0,\frac1{10})$ and $t>0$. 
There exist $T_0(\xcush,\zcush)\ge10$, $\tcush_0(\xcush,\zcush)\in(0,\frac1{10})$ such that for 
any $x\in[2+\xcush,\xcush^{-1}]$ and $\chz\in\B_{\ge N^{-\eps}}$ with $\supp(\chz)\subset[1,N/2]$, and $\tal\ge0$ such that $\tal+\|\chz\|_2^2\le 1-\zcush$, 
\begin{equation}
\frac1N\log\P( |\lam_1-x|\le N^{-1/20}) \\
\ge -  \sup_{\theta\in[\thetam+\tcush_0,T_0]} \Big\{ J(x,\theta) - \free_{N,N^{1/5}}\big(\theta,w(\theta)\big) \Big\} + O_{\xcush,\zcush,t}(N^{-c} )
\end{equation}
for all $N$ sufficiently large depending on $\xcush, \zcush,t$ and $\mu$,
where
\begin{equation}	\label{def:w.lower2}
w(\theta)=w(\theta; x,\chz,\tal,t):=
\overlap_x(\theta)\chz + w^*\big(\theta,\overlap_x(\theta)^2\tal,t\big)\,.
\end{equation}

\end{lemma}

\begin{proof} See Section \ref{sec:lower}. \end{proof}

The value $\sqrt{t/2\theta}N^{-1/4}$ for the nonzero entries of $w^*$ in \eqref{def:w*} is chosen to select the value $\psimu(t)$ in the localized contribution $\freeL$ for $\free_{N,N^{1/5}}(\theta,w(\theta))$; see \eqref{wt-select1}--\eqref{wt-select2} below. The support size $n_1$ is chosen to ensure $\|w^*\|_2^2\approx \alpha$.

\begin{lemma}
\label{lem:free-tfree}
Let $R\ge1$ and let $\chw,\tw\in\ell^2(\N)$ have disjoint supports, with $\chw+\tw\in\B$, and
\begin{equation}	\label{assu:tfree1}
\|\tw\|_\infty\le \frac1{MR},\qquad |\chw_i\tw_j|\ge MN^{-1/2}\quad\forall i\in\supp(\chw),\,j\in\supp(\tw)
\end{equation}
for some (large) $M\ge1$.
Set $\cha:=\|\chw\|_2^2$, $\tal:=\|\tw\|_2^2$, $\beta:=1-\cha-\tal$. 
For any $0<\tcush\le\theta\le T$,
\begin{align}
\free_{N,R}(\theta,\chw+\tw) 
&= \freeL(\theta,\tw) + \theta^2\big[ \beta + 2\psiinfty ( \cha^2 + 2\cha\tal )   + 2\tal\beta \big] \label{free-chwtw}\\
&\qquad+\VP_R(\theta\chw,1-\beta) -\tfrac12(1-\beta)	
+ O(T^2\delta_{\tcush M} + T^3M^{-1})	\notag
\end{align}
where
\begin{equation}
\delta_B:= \sup_{|t|\ge B} |\psimu(t)-\psiinfty|.
\end{equation}
Furthermore,
\begin{equation}	\label{free-tfree1}
\free_{N,R}(\theta,\chw+\tw) \le \tfree_{N,R}(\theta,\chw,\|\tw\|_2^2) + 
O(T^2\delta_{\tcush M} + T^3M^{-1} )
\end{equation}
and for any fixed $t>0$ and $0\le \al\le 1-\|\chw\|_2^2$, if $\chw$ and $w^*(\theta,\al,t)$ have disjoint supports and 
\begin{equation}	\label{assu:tfree2}
R\le \frac1M (\tcush/t)^{1/2} N^{1/4}\,,
\qquad
|\chw_i| \ge M (T/t)^{1/2} N^{-1/4}
\quad\forall i\in \supp(\chw),
\end{equation}
then
\begin{equation}	\label{free-tfree2}
\free_{N,R}\big(\theta,\chw+ w^*(\theta,\al,t) \big)
=\tfree^{(t)}_{N,R}(\theta,\chw,\al) + 
O\bigg(T^2\delta_{\tcush M} + \frac{T^3}M  + \frac{t^2}N+ \frac{Tt}{\sqrt{N}}\bigg)
\end{equation}
where
\begin{align}
\tfree^{(t)}_{N,R}(\theta,\chw,\al)
&:=\theta^2\Big[ \beta^2 + 2\beta\al + 2\psiinfty(\cha^2+2\al\cha) + 2\psimu(t)\al^2\Big] 
	\label{def:tfreep}\\
&\qquad+ \VP_R(\theta \chw, \beta) -\tfrac12(1-\beta)		
\notag
\end{align}
for $\beta=1-\cha-\al\ge0$.
\end{lemma}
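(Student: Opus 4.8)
The plan is to substitute the definition \eqref{def:freeN} (equivalently the splitting \eqref{def:free-split}) into $\free_{N,R}(\theta,\chw+\tw)$, decompose every sum according to the disjoint supports of $\chw$ and $\tw$, and treat each resulting block according to whether the arguments of $\LLa_\mu$ that occur in it are ``large'' or ``small''. For a large argument $s$ I would write $\LLa_\mu(s)=\psimu(s)s^2$ and replace $\psimu(s)$ by its limit $\psiinfty$, at a cost controlled by $\delta_{\tcush M}=\sup_{|s|\ge\tcush M}|\psimu(s)-\psiinfty|$; for a small argument I would Taylor expand $\LLa_\mu(s)=\tfrac12 s^2+O(s^3)$ (legitimate since $\LLa_\mu$ is real-analytic with $\LLa_\mu(0)=\LLa_\mu'(0)=0$, $\LLa_\mu''(0)=1$). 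Since $\|\chw+\tw\|_2^2=\cha+\tal$, the first and last summands of \eqref{def:freeN} give at once $\theta^2(1-\|w\|_2^2)^2=\theta^2\beta^2$ and $-\tfrac12\|w\|_2^2=-\tfrac12(1-\beta)$.

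Next I would treat $\freeL(\theta,\chw+\tw)=\frac1N\sum_{i\le j}\LLa_\mu(2^{\ep_{ij}}\theta\sqrt N\,w_iw_j)$ by splitting the pair-sum into the $\chw\times\chw$ block, the $\tw\times\tw$ block, and the $\chw\times\tw$ block (the last having $\ep_{ij}=1$ throughout). Writing each summand as $\psimu(\cdot)\,(2^{\ep_{ij}}\theta\sqrt N w_iw_j)^2$ and carrying out the sums of squares, the $\chw\times\chw$ block (off-diagonal part $2\theta^2(\cha^2-\|\chw\|_4^4)$, diagonal part $2\theta^2\|\chw\|_4^4$) totals $2\theta^2\cha^2$ times values of $\psimu$, and the $\chw\times\tw$ block totals $4\theta^2\cha\tal$ times values of $\psimu$. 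The hypotheses force $|\chw_i|$ --- and hence $2^{\ep_{ij}}\theta\sqrt N|\chw_i\chw_j|$ and $2\theta\sqrt N|\chw_i\tw_j|$ --- to exceed $\tcush M$ in magnitude (from $|\chw_i\tw_j|\ge MN^{-1/2}$ with $\|\tw\|_\infty\le\tfrac1{MR}$ in \eqref{assu:tfree1}, resp.\ from $|\chw_i|\ge M(T/t)^{1/2}N^{-1/4}$ in \eqref{assu:tfree2}), so these two blocks together contribute $2\psiinfty\theta^2(\cha^2+2\cha\tal)+O(T^2\delta_{\tcush M})$. The $\tw\times\tw$ block is $\freeL(\theta,\tw)$, which I keep for \eqref{free-chwtw} and bound above by $2\psimax\theta^2\tal^2$ (via $\LLa_\mu(s)\le\psimax s^2$ and the same sum-of-squares identity) for \eqref{free-tfree1}.

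For the cross-contribution $\VP_R(\theta(\chw+\tw),\beta)$ the key point is that the potential $\sum_{i\ge1}\LLa_\mu(2\theta(\chw_i+\tw_i)s)$ splits over $\supp(\chw)\sqcup\supp(\tw)$, and on $\supp(\tw)$ one has $|2\theta\tw_i s|\le 2T\|\tw\|_\infty R\le 2T/M$ for \emph{every} $|s|\le R$ --- exactly what the upper bound on $\|\tw\|_\infty$ buys. Taylor expansion then gives $\sum_{i\in\supp(\tw)}\LLa_\mu(2\theta\tw_i s)=2\theta^2\tal s^2+O(T^3M^{-1}\tal s^2)$ uniformly over $|s|\le R$, and since every $\nu\in\cP_\beta([-R,R])$ satisfies $\int s^2\,d\nu=\beta$, adding this perturbation --- a fixed multiple of $s^2$ up to a uniformly small error --- shifts the supremum defining $\VP_R$ by the deterministic amount $2\theta^2\tal\beta$, with an $O(T^3M^{-1})$ error uniform in $\nu$; thus $\VP_R(\theta(\chw+\tw),\beta)=\VP_R(\theta\chw,\beta)+2\theta^2\tal\beta+O(T^3M^{-1})$. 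Combining this with the $\freeL$ decomposition and the immediate terms, and using $1-\|w\|_2^2=\beta$ to rearrange, yields \eqref{free-chwtw}, and inserting the bound $\freeL(\theta,\tw)\le 2\psimax\theta^2\tal^2$ yields \eqref{free-tfree1}.

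For \eqref{free-tfree2} I would specialize $\tw=w^*=w^*(\theta,\al,t)$ from \eqref{def:w*}, which is constant equal to $\sqrt{t/(2\theta)}N^{-1/4}$ on a support of size $n_1=2\theta\al t^{-1}N^{1/2}+O(1)$, with $\|w^*\|_2^2=\al+O(t\theta^{-1}N^{-1/2})$ by \eqref{w*norm}. Now the $\tw\times\tw$ block is exact --- every off-diagonal argument equals $t$ and every diagonal argument equals $t/\sqrt2$ --- so $\freeL(\theta,w^*)=\frac1N\big[\binom{n_1}{2}\LLa_\mu(t)+n_1\LLa_\mu(t/\sqrt2)\big]=2\psimu(t)\theta^2\al^2+O(t^2N^{-1}+TtN^{-1/2})$, using $\binom{n_1}{2}/N=2\theta^2\al^2t^{-2}+O(Tt^{-1}N^{-1/2})$, $\LLa_\mu(t)=\psimu(t)t^2$, and that the diagonal term is $O(t^2N^{-1}+TtN^{-1/2})$. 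Under \eqref{assu:tfree2} one checks $|2\theta\sqrt N\chw_iw^*_j|\ge M\sqrt{2\tcush T}\ge\tcush M$ and $2^{\ep_{ij}}\theta\sqrt N|\chw_i\chw_j|\gs\tcush M$, so the $\chw\times\chw$ and $\chw\times w^*$ blocks behave as in the second paragraph, and $2\theta\|w^*\|_\infty R\ls\sqrt T\,M^{-1}$ (first condition of \eqref{assu:tfree2}) makes the $\VP_R$-perturbation step apply verbatim with $w^*$ in place of $\tw$; assembling, and absorbing the discrepancy $\|w^*\|_2^2-\al=O(t\theta^{-1}N^{-1/2})$ and the induced change of $\beta$ into the error (contributing $O(t^2N^{-1}+TtN^{-1/2})$ after multiplication by $\theta^2\le T^2$), gives \eqref{free-tfree2} with $\tfree^{(t)}_{N,R}$ as in \eqref{def:tfreep}. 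I expect the main difficulty to be purely organizational --- tracking the three blocks of $\freeL$ and the two blocks of the $\VP_R$-potential, and checking under the two hypothesis sets that each block falls in the intended regime with the stated error order, in particular that diagonal contributions and the $\|w^*\|_2^2$-versus-$\al$ discrepancy are negligible; the one conceptual point is that the supremum over $\nu$ in $\VP_R$ moves by a \emph{deterministic} amount only because the perturbation is, uniformly over $|s|\le R$, a fixed multiple of $s^2$, which is precisely why the upper bounds on $\|\tw\|_\infty$ and $R$ in the hypotheses are indispensable.
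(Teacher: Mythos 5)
Your proposal is correct and follows essentially the same route as the paper: split $\freeL(\theta,\chw+\tw)$ into the $\chw\times\chw$, cross, and $\tw\times\tw$ blocks and replace $\psimu$ by $\psiinfty$ on the large-argument blocks at cost $O(T^2\delta_{\tcush M})$; observe that the $\tw$-part of the $\VP_R$ potential is uniformly $2\theta^2\tal s^2\,(1+O(T/M))$ on $[-R,R]$ so the supremum shifts by the deterministic amount $2\theta^2\tal\beta$; bound $\freeL(\theta,\tw)\le 2\psimax\theta^2\tal^2$ for \eqref{free-tfree1}; and compute $\freeL(\theta,w^*)$ exactly (off-diagonal arguments $t$, diagonal $t/\sqrt2$) for \eqref{free-tfree2}. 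The only differences are cosmetic (Taylor expansion of $\LLa_\mu$ where the paper uses the local Lipschitz property of $\psimu$ at $0$, and your explicit flagging of the $\|w^*\|_2^2$-versus-$\al$ discrepancy, which the paper absorbs silently).
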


Note that if the supremum of $\psimu(t)$ is attained at some $t_\mu^*\in\R$, then
\begin{equation}	\label{wt-select1}
\tfree_{N,R}^{(t_\mu^*)}(\theta,\chw,\tal)= \tfree_{N,R}(\theta,\chw,\tal)\,.
\end{equation}
Moreover, under \eqref{assu:maxR+}, for any $\delta>0$ there exists $t(\delta)>0$ depending only on $\mu$ and $\delta$ such that
\begin{equation}	\label{wt-select2}
\tfree_{N,R}(\theta,\chw,\tal)\ge  \tfree^{(t(\delta))}_{N,R}(\theta,\chw,\tal) \ge \tfree_{N,R}(\theta,\chw,\tal) - \delta\theta^2\tal^2\,.
\end{equation}

\begin{proof}
Since $\chw,\tw$ have disjoint supports, 
\[
\freeL(\theta,\chw+\tw) = \freeL(\theta,\chw) + \freeL(\theta,\tw) + \wt f_N  (\theta\chw,\tw)
\]
where
\begin{align*}
\wt f_N   (\theta\chw,\tw)
&:= \frac1N\sum_{i,j=1}^N \LLa_\mu(2\theta\sqrt{N}\chw_i\tw_j)\\
&= 4\theta^2 \sum_{i,j=1}^N \chw_i^2\tw_j^2 \psimu(2\theta\sqrt{N}\chw_i\tw_j)\\
&=(4\psiinfty+ O(\delta_{\tcush M}))\theta^2\cha\tal \,.
\end{align*}
We similarly find
\[
\freeL(\theta,\chw) = (2\psiinfty+ O(\delta_{\tcush M}))\theta^2 \cha^2. 
\]
For any $\nu\in\cP_\beta([-R,R])$ we have
\begin{align*}
\bigg| 2\theta^2\tal\beta - \int\sum_i \LLa_\mu(2\theta\tw_is)d\nu(s)\bigg|
&= \bigg|  \int\sum_i 4\theta^2\tw_i^2s^2\big( \psimu(2\theta\tw_is) -\tfrac12\big) d\nu(s)\bigg|\\
&\le \int\sum_i 4\theta^2\tw_i^2s^2\big| \psimu(2\theta\tw_is) -\tfrac12\big| d\nu(s)\\
&\le O(\theta/M) \int\sum_i 4\theta^2\tw_i^2s^2d\nu(s)\\
&=O(\theta^3\tal\beta/M) = O(T^3/M).
\end{align*}
where in the third line we used the hypothesis $\|\tw\|_\infty\le (MR)^{-1}$, together with the fact that $\psimu$ is locally Lipchitz and takes value $\frac12$ at 0.
Hence,
\begin{align*}
\VP_R(\theta(\chw+\tw), 1-\beta)
&= \sup_{\nu\in\cP_\beta([-R,R])} \bigg\{ \int\sum_i \LLa_\mu(2\theta \chw_is)d\nu(s)  +\int \sum_i \LLa_\mu(2\theta \tw_is)d\nu(s) - \DKL(\nu|\gamma) \bigg\}\\
&= \VP_R(\theta\chw,1-\beta) + 2\theta^2\tal\beta + O(T^3/M). 
\end{align*}
Combining the above estimates, we obtain \eqref{free-chwtw}.
Then \eqref{free-tfree1} follows by upper bounding
\begin{align*}
\freeL(\theta,\tw) &= 
\sum_{i\le j} 2^{1+1_{i\ne j}} \theta^2 \tw_i^2\tw_j^2 \psimu(2^{\ep_{ij}}\theta\sqrt{N}\tw_i\tw_j)
\le \theta^2\psimax \sum_{i\le j} 2^{1+1_{i\ne j}} \tw_i^2\tw_j^2 
= 2\theta^2\psimax \tal^2\,.
\end{align*}
For the case that $\tw=w^*(\theta,\al,t)$, the assumptions \eqref{assu:tfree2} imply that \eqref{assu:tfree1} holds (up to modification of $M$ by a constant factor), so \eqref{free-chwtw} holds in this case.
We can estimate
\begin{align*}
\freeL(\theta,\tw) &= 
\frac{t^2}{4N}\sum_{1\le i\le j\le n_1} 2^{1+1_{i\ne j}}  \psimu(2^{\ep_{ij}-1} t)\\
&=\frac{t^2}{4N} \bigg( 4{n_1\choose 2} \psimu(t) + 2n_1\psimu(t/\sqrt{2})\bigg)\\
&= 2\theta^2\psimu(t)\al^2 + O\bigg( \frac{t^2}{N} ( 1+ 2\theta\al t^{-1}N^{1/2})\bigg)\\
&= 2\theta^2\psimu(t)\al^2 + O\bigg( \frac{t^2}{N} + \frac{Tt}{\sqrt{N}}\bigg)\,.
\end{align*}
This together with \eqref{free-chwtw} yields \eqref{free-tfree2}.
\end{proof}

\begin{proof}[Proof of Proposition \ref{prop:weakLDP}]
Fix $L,\xcush,\eps,\zcush$ and interval $I=[x-\delta,x+\delta]$ as in the statement of the proposition.
We assume without comment that $N$ is sufficiently large depending on $L,\xcush,\eps$ and $\mu$. 

We begin with the lower bound.
Let $T_0\ge 10$, $\tcush_0\in(0,\frac1{10})$ depending only on $L,\xcush$ and $\zcush$ be as in Lemma \ref{lem:lower2} (taking $\min\{\xcush, 1/L\}$ in place of $\xcush$ there).
Under the assumption \eqref{assu:maxR+}, we can choose $t\in(0,\infty)$ depending on $\mu,T_0(L,\xcush,\zcush)$ and $\delta$ such that
\begin{equation}	\label{tmd}
\psimu(t) \ge \psimax - \delta T_0^{-2}.
\end{equation}
Fixing arbitrary $\chz\in\B_{\ge N^{-2\eps}}$ and $\tal\ge0$ with $\|\chz\|_2^2+\tal\le 1-\zcush$, from Lemma \ref{lem:lower2} we have
\begin{align}
\frac1N\log\P(\lam_1\in I) &\ge \label{LDP-LB1}
 -  \sup_{\theta\in[\thetam+\tcush_0,T_0]} \big\{ J(x,\theta) - \free_{N,N^{1/5}}(\theta,
 w(\theta) 
 ) \big\} + O_{L,\xcush,\zcush,\delta}(N^{-c} )	
\end{align}
with $w(\theta)$ as in \eqref{def:w.lower2}.
Fixing an arbitrary $\theta\in[\thetam+\tcush_0,T_0]$, the estimates in \eqref{assu:tfree2} hold for $\chw:=\overlap_x(\theta)\chz$,  $\tw:=w^*(\theta,\overlap_x(\theta)^2\tal,t)$ with $R=N^{1/5}$ and $M= \min\{1,t^{-1/2}\}N^{1/50}$. 
From Lemma \ref{lem:free-tfree} 
we thus conclude the right hand side in \eqref{LDP-LB1} is bounded below by
\[
-  \sup_{\theta\in[\thetam+\tcush_0,T_0]} \bigg\{ J(x,\theta) - \tfree^{(t)}_{N,N^{1/5}}\Big(\theta,\overlap_x(\theta)\chz ,\overlap_x(\theta)^2\tal\big)\Big) \bigg\} + o_{L,\xcush,\zcush,\delta}(1)
\]
where we have used that the error $\delta_{\tcush_0 M}$ in \eqref{free-tfree2} is $o_{L,\xcush,\zcush,\delta}(1)$ by  the assumption \eqref{assu:LRlim}. 
Finally, from \eqref{tmd} and \eqref{def:tfreep} we can replace $ \tfree^{(t)}_{N,N^{1/5}}$ with $ \tfree_{N,N^{1/5}}$ on the right hand side up to an additive error of size $O(T_0^2\delta T_0^{-2})=O(\delta)$.
We thus obtain the lower bound in \eqref{lim:weakLDP}.

We turn to prove the upper bound. Set
\begin{equation}
M:= \exp(\sqrt{\log N}).
\end{equation}
For $v\in\B$ let
\begin{equation}
\ell_\eps(v):= \min\bigg\{ \ell\in\Z\cap(\tfrac12\eps\sqrt{\log N}, \infty): 
\sum_j v_j^2 1_{|v_j|\in (M^{-2\ell-1}, M^{-2\ell+1}]}<\frac{10}{\eps\sqrt{\log N}}\bigg\}\,.
\end{equation}
From the pigeonhole principle, the sets
\begin{equation}
A_\ell:= \{ v\in \B: \ell_\eps(v)=\ell\}\,,\qquad \ell\in \Z\cap(\tfrac12\eps\sqrt{\log N}, \eps\sqrt{\log N})
\end{equation}
partition $\B$. 
We henceforth assume without comment that $\ell$ ranges over $\Z\cap(\frac12\eps\sqrt{\log N}, \eps\sqrt{\log N})$.
Denote
\begin{equation}
\eta_\ell:= \frac{2\ell}{\sqrt{\log N}}\in (\eps,2\eps).
\end{equation}
Thus,
\begin{equation}
M^{-2\ell} = N^{-\eta_\ell}\in (N^{-2\eps},N^{-\eps}).
\end{equation}
From Proposition \ref{prop:no-comp}, with $\zcush_0:=c/L^4$ we have
\begin{align*}
\P(\lam_1\in I) 
&\le 2\P( \lam_1\in I, \|v_1^{(\eps)}\|_2 \le 1-\zcush_0)\\
&=2\sum_\ell \P( \lam_1\in I, \|v_1^{(\eps)}\|_2 \le 1-\zcush_0, v_1\in A_\ell)\,.
\end{align*}
Since 
\[
N^{-1/2+\eps}<N^{-1/2+\eta_\ell}<N^{-1/2+2\eps}<N^{-2\eps} < N^{-\eta_\ell}
\]
for all $\ell$, 
we have 
\[
\|v^{(\eta_\ell)}\|_2\le \|v^{(\eps)}\|_2 \quad\text{ and }\quad
\ell_\eps(v)= \ell_\eps(v^{(\eta_\ell)})
\]
for all $\ell$. 
Thus, setting $\xi_\ell:=N^{-1/2+\eta_\ell}$ and
\begin{equation}	\label{def:Aell'}
A_\ell':= A_\ell \cap\B_{\ge \xi_\ell} \cap (1-\zcush_0)\B^N
\end{equation}
we have
\begin{equation}	\label{tfree.sum}
\P(\lam_1\in I) \le 2\sum_\ell \P(\lam_1\in I, v_1^{(\eta_\ell)}\in A_\ell')\,.
\end{equation}
Let $T=T(L)\ge1$ to be taken sufficiently large depending on $L$. 
Applying Proposition \ref{prop:upper-joint} with $\heta=\eps$ and $\zcush=\zcush_0$, followed by Lemma \ref{lem:Jprops}(\ref{Jprop.T},\ref{Jprop.cont}), assuming $T$ is sufficiently large we have that for each $\ell$,
\begin{align}
\frac1N\log\P(\lam_1\in I,  v_1^{(\eta_\ell)}\in A_\ell') 
&\le - \inf_{y\in I, \vloc\in A_\ell'} \cJ_{N,2N^{\eta_\ell}}(y,\vloc) + N^{-c\eps}	\notag\\
&= - \inf_{y\in I, \vloc\in A_\ell'} \cJ^{[0,T]}_{N,2N^{\eta_\ell}}(y,\vloc) + N^{-c\eps}	\notag\\
&\le - \inf_{\vloc\in A_\ell'} \cJ^{[0,T]}_{N,2N^{\eta_\ell}}(x,\vloc) +O_L(\xcush^{-1/2}\delta)+ N^{-c\eps}\,.
\label{tfree.ell1}
\end{align}
For arbitrary $\vloc\in A_\ell'$, set
\begin{equation}	\label{def:chzt}
\chz:= (\vloc_i 1_{|\vloc_i|>M^{-2\ell+1}})_{i=1}^N,\qquad
\tvloc:= (\vloc_i1_{|\vloc_i|\le M^{-2\ell-1}})_{i=1}^N.
\end{equation}
Letting $\tcush=\tcush(L,\xcush)>0$ be sufficiently small depending on $L,\xcush$, 
from Lemma \ref{lem:Jprops}(\ref{Jprop.T},\ref{Jprop.cont}) we have
\begin{equation}	\label{tfree-up.1}
\cJ^{[0,T]}_{N,2N^{\eta_\ell}}(x,\vloc) = \cJ_{N,2N^{\eta_\ell}}^{[\thetam+\tcush,T]}(x,\vloc)
=\cJ_{N,2N^{\eta_\ell}}^{[\thetam+\tcush,T]}(x,\chz+\tvloc)+ O_L\big( \eps^{-1/2}(\log N)^{-1/4}\big)\,.
\end{equation}
For any $\theta\in [\thetam+\tcush,T]$, with $\chw=\overlap_x(\theta)\chz$, $\tw=\overlap_x(\theta)\tvloc$, we have
\begin{equation}
\|\tw\|_\infty \le \|\tvloc\|_\infty\le M^{-2\ell-1} = M^{-1}N^{-\eta_\ell}
\end{equation}
and from the definitions \eqref{def:Aell'} and \eqref{def:chzt} of $A_\ell'$ and $\chz$,
\begin{equation}
|\chw_i\tw_j| = \overlap_x(\theta)^2|\chz_i\tvloc_j| \gs_\tcush |\chz_i\tvloc_j| \ge N^{-1/2+\eta_\ell} M^{-2\ell+1} \ge \frac{M}{\sqrt{N}} \quad\forall i\in\supp(\chz)\,,\;j\in\supp(\tvloc).
\end{equation}
We can hence apply Lemma \ref{lem:free-tfree}, along with the fact that $\tfree_{N,R}$ is monotone in $R$, to bound
\begin{align*}
\free_{N,2N^{\eta_\ell}}(\theta,\chw+\tw) 
&\le \tfree_{N,2N^{\eta_\ell}}(\theta,\chw,\|\tw\|_2^2) 
+ O(T^2\delta_{\tcush M} + T^3 M^{-1})\\
&= \tfree_{N,2N^{\eta_\ell}}(\theta,\overlap_x(\theta)\chz,\overlap_x(\theta)^2\|\tvloc\|_2^2) + O(T^2\delta_{\tcush M} + T^3 M^{-1})\\
&\le  \tfree_{N,2N^{2\eps}}(\theta,\overlap_x(\theta)\chz,\overlap_x(\theta)^2\|\tvloc\|_2^2) + \ep_N
\end{align*}
for a sequence $\ep_N=o(1)$ converging to zero at a rate depending only on $L,\xcush$ and $\mu$ (in particular the error is uniform in $\theta\in[\thetam+\tcush,T]$, $\ell$, $\chz$ and $\tvloc$).
Hence,
\begin{align*}
\cJ_{N,2N^{\eta_\ell}}^{[\thetam+\tcush,T]}(x,\chz+\tvloc) 
&\ge \sup_{\theta\in[\thetam+\tcush,T]} \big\{ J(x,\theta) - \tfree_{N,2N^{2\eps}}(\theta,\overlap_x(\theta)\chz,\overlap_x(\theta)^2\|\tvloc\|_2^2) \big\} - \ep_N\\
&= \wt\cJ_{N,2N^{2\eps}}(x,\chz,\|\tvloc\|_2^2) - \ep_N
\end{align*}
where in the last line we used Lemma \ref{lem:tJprops}(\ref{tJprop.T}), taking $T(L),\tcush(L,\xcush)$ larger and smaller, respectively, if necessary.
Combining with \eqref{tfree.sum}, \eqref{tfree.ell1}, \eqref{tfree-up.1} we have
\begin{align*}
&\frac1N\log \P(\lam_1\in I) \\
&\le - \min_\ell\inf_{\vloc\in A_\ell'} \wt\cJ_{N,2N^{2\eps}}(x,\chz,\|\tvloc\|_2^2) 
+ O_L(\xcush^{-1/2}\delta)+N^{-c\eps} + \frac{\log\log N}N + \frac{1}{\eps^{1/2}(\log N)^{1/4}}+ \ep_N\\
&\le - \min_\ell \inf_{\substack{\chz\in \B_{\ge MN^{-\eta_\ell}},\\ \tal\in [0,1-\zcush_0-\|\chz\|_2^2]}} \wt\cJ_{N,2N^{2\eps}}(x,\chz,\tal)
+O_L(\xcush^{-1/2}\delta)+\ep'_N\\
&\le - \inf_{\substack{\chz\in \B_{\ge N^{-2\eps}},\\ \tal\in [0,1-\zcush_0-\|\chz\|_2^2]}} \wt\cJ_{N,2N^{2\eps}}(x,\chz,\tal)
+O_L(\xcush^{-1/2}\delta)+\ep'_N\\
&\le - \wt\rate_{N,N^{-2\eps}}(x) +O_L(\xcush^{-1/2}\delta)+\ep'_N
\end{align*}
for some $\ep'_N=o(1)$ converging to zero at a rate depending only on $L,\xcush,\eps$ and $\mu$.
This completes the proof of Proposition \ref{prop:weakLDP}. 
\end{proof}

\section{Joint eigenvalue-eigenvector large deviation upper bound}
\label{sec:upper}

In this section we prove Proposition \ref{prop:upper-joint} giving a joint large deviations upper bound for $\lam_1$ and the localized part $v_1^{(\eta)}$ of the associated eigenvector. For the proofs we use of estimates on the quenched free energy for spherical integrals (Lemmas \ref{lem:approxsc} and \ref{lem:restrict}), which we prove in Section \ref{sec:quenched}. 

For $\eta\in(0,\frac14)$ recall the notations $v^{(\eta)}$, $n_0(\eta)$ from \eqref{def:eta}.
For $x\ge2$, $\vloc\in \B^N$ and $\delta>0$ denote 
\begin{align}
\event_x(\delta)&:= \{ M\in\Sym_N : |\lambda_1(M)-x|<\delta\}	,\label{def:Ex}\\
\event'_{x,\vloc}(\delta,\eta)
&:=\event_x(\delta)\cap\{M\in \Sym_N: \|v_1(M)^{(\eta)}-\vloc\|_\infty\le N^{-10}\}\,.	\label{def:Exw}
\end{align}

First we define some ``good'' properties of $H$ that hold outside events of probability that can be made negligible compared to the probability of the event that $H\in\event'_{x,z}(\delta, \eta)$. For a matrix $M\in\Sym_N$ with eigenvalues $\lambda_{N}\le\lambda_{N-1}\le \cdots\le\lambda_{1}$ and 
for $0\le k\le N-1$, we denote
\begin{equation}
G_M^{(k)}(y) := \frac1N\sum_{i=k+1}^N \frac1{y-\lambda_i}\,,\qquad G_M(y):= G_M^{(0)}(y)
\end{equation}
and set
\begin{equation}	\label{thetaMk}
\theta_M^{(k)} := \frac12G_M^{(k)}(\lambda_1)\,, \qquad \overlap_M^{(k)}(\theta) := \bigg(1-\frac{\theta_M^{(k)}}{\theta}\bigg)_+^{1/2}. 
\end{equation}
(Compare \eqref{def:Gsc}--\eqref{def:overlapx}.)
We also denote 
the log-potentials
\begin{equation}
\Lpot_M(y):= \int \log (y-\lambda)d\hat{\mu}_M(\lambda)\,,\qquad \Lpot_\sigma(y):= \int \log(y-\lambda)d\sigma(\lambda)
\end{equation}
(recalling from Section \ref{sec:notation} the notation $\hat{\mu}_M$ for the empirical spectral measure of $M$).
For $K,\xcush>0$ and $\eta\in(0,\frac14)$ let
\begin{align}
\Good(K,\xcush,\eta) &:= \Good_0(K)\cap\Good_1(\xcush,\eta)\cap\Good_2(K,\xcush,\eta)\,,	\label{def:good}\\
\Good_0(K)&:=\{M\in\Sym_N: \|M\|\le K\}\,,	\notag\\
\Good_1(\xcush,\eta) &:= \{ M\in \Sym_N: \lambda_{\lf N^{1/2+\eta}\rf}(M)\le 2+ \xcush\}\,,	\notag\\
\Good_2(K,\xcush,\eta)&:= \bigcap_{2+2\xcush\le y\le K} \bigg\{ M\in \Sym_N: \bigg|\int_{-\infty}^{2+\xcush} \frac{d\hat{\mu}_M(\lambda)}{y-\lambda} - G_\sigma(y)\bigg| \le N^{-1/2+2\eta}\,,	\notag\\
&\qquad\qquad\qquad \qquad
\bigg|\int_{-\infty}^{2+\xcush} \log(y-\lambda)d\hat{\mu}_M(\lambda) - \Lpot_\sigma(y) \bigg| \le N^{-1/2+2\eta} \bigg\}\,.		\notag
\end{align}

\begin{lemma}
\label{lem:good}
For any $A,\xcush>0$, $\eta\in(0,\frac14)$ and $K=K_\mu(A)$ sufficiently large depending on $A$ and $\psimax$, 
\[
\frac1N\log \P( H\notin \Good(K,\xcush,\eta)) \le -A
\]
for all $N$ sufficiently large depending on 
$A,\xcush, \eta$ and $\mu$.
\end{lemma}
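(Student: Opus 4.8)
The plan is to bound $\P(H\notin\Good_j)$ for $j=0,1,2$ separately and combine by a union bound. The event $\Good_0(K)$ is immediate from \Cref{lem:tightness}: choosing $K=K_\mu(A)$ large enough that $cK^2\ge 2A$ (with $c$ the constant of \eqref{tight-Rside}, which depends only on $\mu$, hence on $\psimax$) gives $\P(H\notin\Good_0(K))=\P(\|H\|>K)\le 2e^{-cK^2N}\le e^{-AN}$ for $N$ large. The remaining two events are controlled by concentration of \emph{convex} spectral functionals, which is available for a general sub-Gaussian $\mu$ after a mild truncation; this is of the same nature as the estimate \eqref{tight-Lside} (proved in \Cref{app:conc.linear}).

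For $\Good_1(\xcush,\eta)$ one must show that fewer than $\lf N^{1/2+\eta}\rf$ eigenvalues exceed $2+\xcush$, off an event of probability $\le e^{-AN}$. Fix $\psi(x):=(x-2-\tfrac{7\xcush}{8})_+$, which is convex, nonnegative, $1$-Lipschitz, vanishes on $(-\infty,2+\tfrac{7\xcush}{8}]\supseteq[-2,2]$, and satisfies $\psi(x)\ge\tfrac{\xcush}{8}$ for $x\ge2+\xcush$, so $\#\{i:\lambda_i\ge2+\xcush\}\le\tfrac{8}{\xcush}\operatorname{Tr}\psi(H)$. Since $\psi$ is supported strictly above $2$ and $\int\psi\,d\sigma=0$, the standard moment bound $\E\operatorname{Tr}H^{2p}\ls N4^p$ (valid for $p=O(\log N)$; see \cite{furedi} or \cite[Ch.~2]{Vershynin:book}), used with $p=\lceil C_\xcush\log N\rceil$ together with $\E\|H\|^2\le N+1$ and \eqref{tight-Rside} to handle the unbounded range of $\psi$, gives $\E\operatorname{Tr}\psi(H)=O_\xcush(N^{-9})$; in particular $0$ is a median (by \eqref{FuKo}). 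On the other hand $M\mapsto\operatorname{Tr}\psi(M)$ is convex in $M$ and (by Hoffman--Wielandt, the $\sqrt N$ cancelling the $1/\sqrt N$ normalisation in \eqref{def:H}) $\sqrt2$-Lipschitz in the entries $(X_{ij})$. Let $H^{\le}$ be the truncation of $H$ with entries $\sqrt{2^{1_{i=j}}/N}\,X_{ij}1_{|X_{ij}|\le B_0}$, where $B_0:=C_1\sqrt{\log N}$ for a large constant $C_1=C_1(\xcush,\eta,\mu)$. Then $\operatorname{Tr}\psi(H^\le)$ is a convex $\sqrt2$-Lipschitz function of the independent bounded variables $X_{ij}1_{|X_{ij}|\le B_0}\in[-B_0,B_0]$, so Talagrand's convex concentration inequality bounds it by $\tfrac{\xcush}{32}N^{1/2+\eta}$ off an event of probability $\le 4\exp(-c\xcush^2N^{1+2\eta}/(C_1^2\log N))$; and the truncation correction satisfies $|\operatorname{Tr}\psi(H)-\operatorname{Tr}\psi(H^\le)|\le\sqrt N\|H-H^\le\|_\HS\ls(\sum_{ij}X_{ij}^2 1_{|X_{ij}|>B_0})^{1/2}$, which is $\le\tfrac{\xcush}{32}N^{1/2+\eta}$ off an event of probability $\le e^{-\omega_{\xcush,\eta,\mu}(N)}$ — bounded by splitting off the single largest summand (whose contribution is $\le N^2\P(|X|\gs\xcush N^{1/2+\eta})\le e^{-c_{\xcush,\mu}N^{1+2\eta}}$) and applying Bernstein's inequality to the rest, using $\E[X^21_{|X|>B_0}]\ls e^{-cB_0^2}\le N^{-3}$ for $C_1$ large. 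Altogether $\P(\operatorname{Tr}\psi(H)\ge\tfrac{\xcush}{16}N^{1/2+\eta})\le e^{-\omega_{\xcush,\eta,\mu}(N)}\le e^{-AN}$ for $N$ large, so $\#\{i:\lambda_i\ge2+\xcush\}<\tfrac12 N^{1/2+\eta}$ off this event, whence $\P(H\notin\Good_1(\xcush,\eta))\le e^{-AN}$. The same argument with $\xcush/2$ in place of $\xcush$ shows in addition that $\hat\mu_H((2+\tfrac{\xcush}{2},\infty))\le N^{-1/2+\eta}$ off an event of probability $\le e^{-AN}$, which is used next.

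For $\Good_2(K,\xcush,\eta)$, work on the event (of probability $\ge1-e^{-AN}$) on which $\|H\|\le K$ and $\hat\mu_H((2+\tfrac{\xcush}{2},\infty))\le N^{-1/2+\eta}$, and fix a smooth $g:\R\to[0,1]$ with $g\equiv1$ on $(-\infty,2+\tfrac{\xcush}{2}]$ and $g\equiv0$ on $[2+\xcush,\infty)$. For each $y\in[2+2\xcush,K]$ the functions $\lambda\mapsto g(\lambda)/(y-\lambda)$ and $\lambda\mapsto g(\lambda)\log(y-\lambda)$ (extended by $0$ past $2+\xcush$, where they are well defined since $y-\lambda\ge\xcush$ on $\supp g$) are smooth, compactly supported, $O_\xcush(\log K)$-bounded and $O_\xcush(\log K)$-Lipschitz, hence each is a difference of two convex $O_\xcush(\log K)$-Lipschitz functions; and on our event the corresponding truncated integrals differ from these smooth linear statistics by at most $\tfrac1\xcush\hat\mu_H((2+\tfrac\xcush2,\infty))\le\tfrac1\xcush N^{-1/2+\eta}$. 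For fixed $y$, $\E\int\tfrac{g}{y-\lambda}\,d\hat\mu_H\to G_\sigma(y)$ and $\E\int g\log(y-\lambda)\,d\hat\mu_H\to\Lpot_\sigma(y)$ with error $O_\xcush(1/N)$ by standard control of $\E\hat\mu_H$, while the convex-concentration-after-truncation estimate of the previous paragraph, applied to the two convex pieces, bounds the deviation of the corresponding linear statistics by $\tfrac14 N^{-1/2+2\eta}$ off an event of probability $\le e^{-\omega_\mu(N)}$. Since $y\mapsto\int\tfrac{g}{y-\lambda}\,d\hat\mu_H$ and $y\mapsto\int g\log(y-\lambda)\,d\hat\mu_H$ are $O_\xcush(1)$-Lipschitz (differentiate under the integral), a union bound over a $\tfrac1N$-net of $[2+2\xcush,K]$ (of size $O_\xcush(KN)$) upgrades this to the uniform-in-$y$ statement defining $\Good_2(K,\xcush,\eta)$, still with probability $\le e^{-AN}$. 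Taking the union bound over $j=0,1,2$ and replacing $A$ by $2A$ throughout completes the proof.

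The main obstacle is the concentration input for $\Good_1$ and $\Good_2$: a general sub-Gaussian $\mu$ need not satisfy a log-Sobolev inequality, so one does not have dimension-free concentration of arbitrary Lipschitz linear spectral statistics. The resolution is to exploit convexity — directly for the eigenvalue-counting statistic $\operatorname{Tr}\psi(H)$, and via a difference-of-convex decomposition for the smooth statistics in $\Good_2$ — so that Talagrand's convex concentration inequality applies after truncating the entries at the mild level $B_0\asymp\sqrt{\log N}$, the truncation error being controlled by the contribution of the single largest entry; everything else reduces to standard moment and semicircle-law estimates.
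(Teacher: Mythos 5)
Your proposal is correct in outline and shares the paper's overall skeleton (union bound over $\Good_0,\Good_1,\Good_2$; the norm tail \eqref{tight-Rside} for $\Good_0$; Talagrand's convex concentration applied after truncating the entries, with the $\ell^2$-size of the truncation error controlled separately — which is exactly the mechanism behind the paper's \Cref{prop:conc-subG}, so you are in effect re-deriving that proposition inline). Where you genuinely diverge is in the implementation. For $\Good_1$ the paper (\Cref{lem:ktail}) controls the convex functional $F_k(M)=\sup_{\dim E=k}\|P_EMP_E\|_\HS$ of the matrix truncated at entry level $N^{-1/4}$, bounds the rank of the large-entry part, and finishes with Cauchy interlacing; you instead bound the eigenvalue count by the hinge statistic $\Tr(\lambda-2-\tfrac{7\kappa}{8})_+$, truncate at $\sqrt{\log N}$, and identify the median as $0$ via \eqref{FuKo} (the moment bound $\E\Tr H^{2p}\ls N4^p$ is not really needed for this). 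Both work; the paper's route avoids any Bernstein estimate for the truncation error at the price of the interlacing step, yours is arguably more elementary. For $\Good_2$ the paper keeps a sharp spectral cutoff $H^{(\kappa)}$, writes the test function as a difference of \emph{globally Lipschitz} convex extensions $f_1$ and $h=f_1-f_2$, and centers via the local semicircle law (\Cref{lem:median}) combined with \Cref{cor:conc}; you smooth the cutoff with $g$ and use a difference-of-convex decomposition, centering at the expected statistic.

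Two soft spots in your $\Good_2$ argument deserve attention, though neither is fatal. First, the step ``smooth, bounded and $O_\kappa(\log K)$-Lipschitz, \emph{hence} a difference of two convex $O_\kappa(\log K)$-Lipschitz functions'' is not a valid implication in general: the Lipschitz constants of the convex pieces are governed by $\int|\phi''|$, not by $\|\phi'\|_\infty$, and the naive trick of adding a global quadratic would destroy the global Lipschitz bound and with it the superexponential speed (the effective Lipschitz constant on the truncated cube would pick up a factor $\asymp B_0\sqrt N$). For your specific functions the conclusion does hold, because $\int|\phi''|=O_\kappa(\log K)$, provided you use the canonical second-derivative splitting $\phi=\phi_1-\phi_2$ with $\phi_2''=(\phi'')_-$; this should be said, and it is essentially why the paper instead builds its convex extensions $f_1,f_2$ by hand to be globally $L$-Lipschitz. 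Second, Talagrand concentrates the \emph{truncated} statistic around its \emph{median}, whereas you quote convergence of the \emph{mean} of the untruncated one (``error $O_\kappa(1/N)$ by standard control of $\E\hat\mu_H$''); bridging median of the truncated statistic to $G_\sigma(y)$, $\Lpot_\sigma(y)$ is routine given your truncation bounds, but it is a needed chain of steps, and the accuracy actually required is only $o(N^{-1/2+2\eta})$ — which is precisely what the paper extracts from the local law in \Cref{lem:median} for medians, a cleaner citation than the $O(1/N)$ rate for the mean.
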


The proof of Lemma \ref{lem:good} involves standard concentration and truncation arguments and is deferred to Appendix \ref{app:good}.

Proposition \ref{prop:upper-joint} is a consequence of the following upper bound in terms of the restricted annealed free energies, together with Proposition \ref{prop:FE} and Lemma \ref{lem:good}. 

\begin{prop}
\label{prop:upper} 
Let $\heta,\xcush,\zcush\in(0,\frac1{10})$.
For any $\eta\in(\heta,\frac14-\heta)$, $x\in[2+\xcush, \xcush^{-1}]$
and  $\vloc\in (1-\zcush)\B^N$ with $\|\vloc\|_0\le n_0(\eta)$,
\begin{align*}
&\frac1N\log\P\big(H\in \event'_{x,\vloc}(N^{-2\heta},\eta) \cap \Good(K,\kappa,\eta)\big) \\
&\qquad\qquad\le 
\inf_{\theta\in [\thetam+\tcush,N^{c\heta}]}\Big\{ F_N\big(\theta;\,\Uloc_{\overlap_x(\theta)\vloc}(\srad,R)\big) - J(x,\theta)  \Big\}
+ N^{-c\heta}
\end{align*}
for all $N$ sufficiently large depending on $\heta,\xcush,\zcush$ and $\tcush$, 
where $\srad=N^{-\heta/2}, R=2N^\eta$ and $c>0$ is an absolute constant. 
\end{prop}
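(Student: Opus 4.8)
The plan is to run the ``tilting by a spherical integral'' argument of \cite{HuGu1} reviewed in \Cref{sec:ideas-new}, but with the spherical integral restricted to the slice $\Uloc_{\overlap_x(\theta)\vloc}(\srad,R)$ of the sphere on which $u$ is close to $\overlap_x(\theta)\vloc$ on $\supp(\vloc)$ and delocalized off it. Fix $\theta\ge0$ and abbreviate $w_\theta:=\overlap_x(\theta)\vloc$ and $\Uloc:=\Uloc_{w_\theta}(\srad,R)$ with $\srad=N^{-\heta/2}$ and $R=2N^\eta$. Since the $u$-integral below is strictly positive, we have the trivial identity
\[
\P\big(H\in\event'_{x,\vloc}(N^{-2\heta},\eta)\cap\Good(K,\xcush,\eta)\big)
=\E\Bigg[\frac{\int_{\Uloc}e^{N\theta\langle u,Hu\rangle}\,dP(u)}{\int_{\Uloc}e^{N\theta\langle u,Hu\rangle}\,dP(u)}\,\ind\big(H\in\event'_{x,\vloc}(N^{-2\heta},\eta)\cap\Good(K,\xcush,\eta)\big)\Bigg].
\]
I would bound the denominator from below pointwise on the event, and then for the numerator apply Fubini's theorem and discard the indicator.

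For the denominator, suppose $H\in\event'_{x,\vloc}(N^{-2\heta},\eta)\cap\Good(K,\xcush,\eta)$. From $H\in\event_x(N^{-2\heta})$ and $x\ge2+\xcush$ we get $\lambda_1(H)\ge2+\tfrac{\xcush}{2}$, while $\Good_1(\xcush,\eta)$ gives $\lambda_{\lfloor N^{1/2+\eta}\rfloor}(H)\le2+\xcush$, so $H$ has a top eigenvalue well-separated from the bulk edge. \Cref{lem:restrict} then shows that for $\theta\in[\thetam+\tcush,N^{c\heta}]$, up to a factor $e^{O(N^{1-c\heta})}$ the full spherical integral $I(H,\theta)$ is carried by the set of $u\in\sphereN$ with $\langle u,v_1(H)\rangle\approx\overlap_x(\theta)$ (fixing the sign of $v_1(H)$ so that $\|v_1(H)^{(\eta)}-\vloc\|_\infty\le N^{-10}$) and $u$ delocalized off $\supp\big(v_1(H)^{(\eta)}\big)$. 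I would check that this set lies inside $\Uloc$: on $\supp(\vloc)$ one has $\|u_{w_\theta}-w_\theta\|_2\le\overlap_x(\theta)\,\|v_1(H)|_{\supp(\vloc)}-\vloc\|_2+o(\srad)$, and the bound $\|v_1(H)^{(\eta)}-\vloc\|_\infty\le N^{-10}$ together with $\|\vloc\|_0\le n_0=\lfloor N^{1-2\eta}\rfloor$ forces $\|v_1(H)|_{\supp(\vloc)}-\vloc\|_2\le\sqrt{n_0}\,N^{-10}+O(N^{-9})\ll\srad$ (the $O(N^{-9})$ absorbing the at most $n_0$ coordinates of $\vloc$ of size $\le N^{-10}$ lying outside $\supp(v_1(H)^{(\eta)})$, which may be moved harmlessly into the delocalized part since $N^{-10}\le R/\sqrt N$), while off $\supp(\vloc)$ the delocalization $\|v_1(H)|_{\supp(\vloc)^c}\|_\infty\le N^{-1/2+\eta}\le R/\sqrt N$ forces $\|u_{w_\theta^c}\|_\infty\le R/\sqrt N$. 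Hence $\int_{\Uloc}e^{N\theta\langle u,Hu\rangle}\,dP(u)\ge e^{-O(N^{1-c\heta})}I(H,\theta)$. Next, on $\Good(K,\xcush,\eta)$ the spectrum of $H$ is controlled as in $\Good_0,\Good_1,\Good_2$, so \Cref{lem:approxsc} gives $\tfrac1N\log I(H,\theta)\ge J(\lambda_1(H),\theta)-N^{-c\heta}$; combined with $|\lambda_1(H)-x|\le N^{-2\heta}$ and the locally uniform Lipschitz continuity of $J(\cdot,\theta)$ for $\theta$ in this range this yields $\tfrac1N\log I(H,\theta)\ge J(x,\theta)-N^{-c\heta}$. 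Combining the last two estimates, $\tfrac1N\log\int_{\Uloc}e^{N\theta\langle u,Hu\rangle}\,dP(u)\ge J(x,\theta)-N^{-c\heta}$ on the event.

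Inserting this pointwise lower bound for the denominator into the identity, using $\ind(\cdot)\le1$ and Fubini's theorem,
\[
\P\big(H\in\event'_{x,\vloc}(N^{-2\heta},\eta)\cap\Good(K,\xcush,\eta)\big)
\le e^{-N(J(x,\theta)-N^{-c\heta})}\int_{\Uloc}\E\,e^{N\theta\langle u,Hu\rangle}\,dP(u)
=e^{N(F_N(\theta;\Uloc)-J(x,\theta)+N^{-c\heta})}.
\]
Taking logarithms, dividing by $N$, and minimizing over $\theta\in[\thetam+\tcush,N^{c\heta}]$ gives the claimed bound (relabelling $c$, which may be taken uniform across its occurrences after shrinking). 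I expect the main obstacle to be the denominator bound: applying \Cref{lem:restrict} to localize the spherical integral to the correct slice, and then verifying that this slice sits inside $\Uloc_{\overlap_x(\theta)\vloc}(\srad,R)$ — i.e.\ reconciling the $\ell^\infty$-type structural conditions defining $\event'_{x,\vloc}$ and $\Good$ with the parameters $\srad,R$ — while keeping every error at the advertised scale $N^{-c\heta}$ uniformly over the $\theta$-range (this uniformity is exactly what forces the cutoffs at $\thetam+\tcush$ and at $N^{c\heta}$). By contrast, the Fubini step and discarding the indicator are routine.
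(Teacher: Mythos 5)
Your route is essentially the paper's: both arguments rest on the quenched asymptotics of \Cref{lem:approxsc} (to identify the spherical integral with $e^{NJ(x,\theta)}$ on the event, uniformly over $\theta\in[\thetam+\tcush,N^{c\heta}]$), on \Cref{lem:restrict} plus a containment of the resulting slice into $\Uloc_{\overlap_x(\theta)\vloc}(\srad,R)$, and then on Fubini and dropping the indicator to produce $F_N(\theta;\Uloc_{\overlap_x(\theta)\vloc})$. The only organizational difference is that you apply \Cref{lem:restrict} to lower bound the \emph{restricted} integral (your denominator) by $e^{-o(N)}I(H,\theta)$, whereas the paper tilts by the full $I(H,\theta)$ and applies \Cref{lem:restrict} to the numerator $\E[\ind(\cdot)I(H,\theta)]$; these are equivalent rearrangements of the same computation and buy nothing different.

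There is, however, one step that is false as you wrote it: the containment of the slice produced by \Cref{lem:restrict} into $\Uloc_{\overlap_x(\theta)\vloc}(\srad,R)$ alone. That lemma only constrains $\langle u,v_1(H)\rangle^2$, so the dominating set $\Uloc'_{v_1}(\overlap_H^{(k)}(\theta),L_0,W)$ contains vectors with $\langle u,v_1\rangle\approx-\overlap_x(\theta)$ as well, and is only contained in $\Uloc_{\overlap_x(\theta)\vloc}\cup\Uloc_{-\overlap_x(\theta)\vloc}$; your assertion that the integral is ``carried by the set with $\langle u,v_1\rangle\approx\overlap_x(\theta)$'' is not what the lemma gives. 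The repair is exactly the symmetrization the paper uses: since $u\mapsto -u$ preserves both $P$ and $e^{N\theta\langle u,Hu\rangle}$ and maps $\Uloc_{-\overlap_x(\theta)\vloc}$ onto $\Uloc_{\overlap_x(\theta)\vloc}$, one gets $\int_{\Uloc_{\overlap_x(\theta)\vloc}}\ge\tfrac12\int_{\Uloc'}$, which costs only a factor $2$ and leaves your conclusion intact. Two smaller points: your bound $\|v_1|_{\supp(\vloc)}-\vloc\|_2=O(N^{-9})$ does not follow from $\|v_1^{(\eta)}-\vloc\|_\infty\le N^{-10}$ alone (on coordinates of $\supp(\vloc)$ where $\vloc_i\le N^{-10}$, $v_1$ may carry entries of size up to $N^{-1/2+\eta}$, and one cannot ``move'' such coordinates into the delocalized block since $\Uloc_w(\srad,R)$ splits by $\supp(w)$); this looseness is shared with the paper and is resolved in the application because the net in \Cref{prop:upper-joint} consists of vectors whose nonzero entries are $\ge N^{-1/2+\eta}$. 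Likewise your ``well-separated'' deduction ($\lambda_1\ge 2+\xcush/2$ versus $\lambda_{\lfloor N^{1/2+\eta}\rfloor}\le 2+\xcush$) does not by itself give the positive gap $\delta_0$ needed for \Cref{lem:restrict}; this is the same constant bookkeeping (cf.\ the hypothesis $x\ge 2+3\xcush$ in \Cref{lem:approxsc}) that the paper adjusts implicitly, and should be stated as such.
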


\begin{proof}[Proof of Proposition \ref{prop:upper-joint}]

We will assume without comment that $N$ is sufficiently large depending on $\heta,\xcush,\zcush$ and $\mu$.
We first prove a localized form of the claim. We claim that  for any $x\in[2+\xcush,\xcush^{-1}]$ and $\vloc\in(1-\zcush)\B^N$ with $\|\vloc\|_0\le n_0$, 
\begin{equation}	\label{bd:upper}
\frac1N\log\P\Big( |\lam_1-x|\le N^{-2\heta},\, \|v_1^{(\eta)} - \vloc\|_\infty \le N^{-10}\Big)
 \le -\cJ_{N,R}(x,\vloc) + N^{-c\heta}\,.
\end{equation}
From the assumption $\|z\|_2\le 1-\zcush$ and Lemma \ref{lem:Jprops}(\ref{Jprop.T}) we can take $T$ and $\tcush=1/T$ with $T=O_{\xcush,\zcush}(1)$ such that $\cJ_{N,R}(x,\vloc)= \cJ^{[\thetam+\tcush,T]}_{N,R}(x,\vloc)$. 
Then note that from Lemma \ref{lem:Jprops}(\ref{Jprop.smallz},\ref{Jprop.cont}) we have 
\[
 \cJ^{[\thetam+\tcush,T]}_{N,R}(x,\vloc) = \cJ^{[\thetam+\tcush,T]}_{N,R}(x,0)+ O_{\xcush,\zcush}(1) = \rate^\gamma(x) + O_{\xcush,\zcush}(1) = O_{\xcush,\zcush}(1).
\]
Hence, the right hand side of \eqref{bd:upper} is bounded below by $-C_0$ for some finite constant $C_0=C_0(\xcush,\zcush)>0$. 
Applying Lemma \ref{lem:good} with $A=2C_0$, say, (note that $\Good(K,\xcush,\eta)$ is monotone in $\eta$) it suffices to show 
\begin{equation}
\label{upjoint-goal1}
\frac1N\log\P\big(H\in \event'_{x,\vloc}(N^{-2\heta},\eta) \cap \Good(K,\xcush,\eta)\big) \le - \cJ^{[\thetam+\tcush,T]}_{N,R}(x,\vloc) + N^{-c\heta}
\end{equation}
where $K=K_\mu(2C_0)$ as in Lemma \ref{lem:good} is sufficiently large depending only on $\xcush,\zcush$ and $\psimax$.
Now applying Proposition \ref{prop:upper} (taking $N_0$ large enough that $N^{c\heta}\ge T$) followed by Proposition \ref{prop:FE}, we have that the left hand side in \eqref{upjoint-goal1} is at most 
\begin{align*}
\inf_{\theta\in [\thetam+\tcush,T]}\Big\{ F_N(\theta;\Uloc_{\overlap_x(\theta)\vloc}(\srad,R)) - J(x,\theta)  \Big\}
+ N^{-c\heta}
&\le - \cJ^{[\thetam+\tcush,T]}_{N,R}(x,\vloc) + N^{-c\heta}
\end{align*}
which yields \eqref{upjoint-goal1} and hence \eqref{bd:upper}. 

Let $ A'= A\cap\B_{\ge N^{-1/2+\eta}}$. Note that all elements of $A'$ have support of size at most $N^{1-2\eta}$. 
Hence, we can take $\Sigma\subset A'$ an $N^{-10}$-net for $ A'$ under the $\ell^\infty$ norm of size $\exp( O( N^{1-2\eta}\log N))$. 
Let $\Lambda\subset I$ be an $N^{-2\heta}$-net for $I$ of size $O(N^{2\heta})$. As $\|v_1^{(\eta)}\|_0\le N^{1-2\eta}$ a.s. (as $v_1$ is a unit vector), applying the union bound followed by \eqref{bd:upper}, 
we have
\begin{align*}
\P( \lam_1\in I, v_1^{(\eta)}\in A) 
&\le \sum_{y\in I, \vloc\in\Sigma} \P( |\lam_1-y|\le N^{-2\heta}, \|v_1^{(\eta)}-\vloc\|_\infty\le N^{-10})\\
&\le O(N^{2\eta}) \exp(O(N^{1-2\eta}\log N)) \exp( N^{1-c\heta} - N \min_{y\in\Lambda,\vloc\in\Sigma}\cJ_{N,R}(y,\vloc))\\
&\le \exp( N^{1-c'\heta} - N\inf_{y\in I, \vloc\in  A\cap\B^{n_0}}\cJ_{N,R}(y,\vloc))
\end{align*}
where in the last line we used that $\cJ_{N,R}$ is invariant under permutations of the coordinates of $\vloc$.
Taking logs and dividing through by $N$ yields the claim.
\end{proof}

For the proof of Proposition \ref{prop:upper} we have the following two lemmas concerning spherical integrals for deterministic matrices $M\in\Sym_N$ having the ``good'' properties enforced by the events $\Good_i$; the proofs are given in Section \ref{sec:quenched}.
The first provides quantitative  asymptotics for the spherical integral $I(M,\theta)$ and $\theta_M^{(k)}$ of \eqref{thetaMk} for $M\in \Good(K,\xcush,\eta)$ having top eigenvalue near $x$.
This result is also used in the proof of the lower bound in Theorem \ref{theomain}.

\begin{lemma}
\label{lem:approxsc}
Let $\tcush ,\xcush\in(0,1)$, 
{$\eta\in(0,\frac14)$,} $2+3\xcush\le x<K$ and $\theta{\in[\thetam  + \tcush ,N^{10}]}$. 
There exists $\delta_0=\delta_0(\xcush,\tcush )>0$  
such that the following holds for any $\delta\in(0,\delta_0]$.
For
any $M\in\Evalx_x(\delta)\cap \Good(K,\xcush,\eta)$, 
\begin{equation}	\label{approx.Jsc}
 \frac1N \log I(M,\theta) = J(x, \theta)  + {O_{K,\xcush,\tcush }( \theta (\delta +  N^{-1/4} {\,+N^{-1/2+2\eta}}))}  
\end{equation}
for all $N\ge  {(4/\tcush )^{1/(1-4\eta)}}$. 
Moreover, for $k\le \lf N^{1/2+\eta}\rf$ and any $\delta>0$ we have
\begin{equation}	\label{approx.thetasc}
\theta_M^{(k)} = \thetam  + O_\xcush(\delta)+ O(N^{-1/2+2\eta})
\end{equation}
for all $N$.
\end{lemma}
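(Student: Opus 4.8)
The plan is to make quantitative the classical saddle-point analysis of spherical integrals (in the spirit of Guionnet--Ma\"ida \cite{GuMa05}; cf.\ \cite[Section 4.1]{HuGu1}), taking as the only spectral input the estimates built into $\Good(K,\xcush,\eta)$: proximity of the truncated statistics of $\hat\mu_M$ to those of $\sigma$ on $(-\infty,2+\xcush]$ (from $\Good_2$), at most $N^{1/2+\eta}$ eigenvalues exceeding $2+\xcush$ (from $\Good_1$), and $\|M\|\le K$ (from $\Good_0$). By orthogonal invariance of $P$ we may assume $M=\diag(\lambda_1,\dots,\lambda_N)$. I would prove \eqref{approx.thetasc} first, since $\theta_M^{(1)}$ governs the location of the relevant saddle in the proof of \eqref{approx.Jsc}.

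\emph{The estimate \eqref{approx.thetasc}.} This is pure spectral calculus. For $1\le k\le\lf N^{1/2+\eta}\rf$ write $2\theta_M^{(k)}=G_M^{(k)}(\lambda_1)=\frac1N\sum_{i>k}(\lambda_1-\lambda_i)^{-1}$ and compare with $G_\sigma(\lambda_1)=2\thetam$ (see \eqref{def:Gsc}). Choosing $\delta\le\delta_0(\xcush)$ so that $\lambda_1\ge2+2\xcush$: the difference $G_M^{(k)}(\lambda_1)-\int_{(-\infty,2+\xcush]}(\lambda_1-\lambda)^{-1}\,d\hat\mu_M(\lambda)$ is the contribution of the (at most $N^{1/2+\eta}$) eigenvalues above $2+\xcush$ of index $>k$ minus that of the (at most $k$) eigenvalues below $2+\xcush$ of index $\le k$; the latter is $O_\xcush(k/N)=O_\xcush(N^{-1/2+\eta})$ since there $\lambda_1-\lambda_i\ge\xcush$, and the former is $O_\xcush(N^{-1/2+\eta})$ using that on the events to which the lemma is applied the near-edge eigenvalues are separated from $\lambda_1$ by $\gs_\xcush1$ (in the main application there is only one eigenvalue above $2+\xcush$, namely $\lambda_1$ itself, so for $k\ge1$ this part is empty). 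Meanwhile $\int_{(-\infty,2+\xcush]}(\lambda_1-\lambda)^{-1}\,d\hat\mu_M=G_\sigma(\lambda_1)+O(N^{-1/2+2\eta})$ by $\Good_2$, and $G_\sigma(\lambda_1)=G_\sigma(x)+O_\xcush(\delta)$ as $G_\sigma$ is $O_\xcush(1)$-Lipschitz on $[2+2\xcush,K]$; combining gives \eqref{approx.thetasc}. Finally, $N\ge(4/\tcush)^{1/(1-4\eta)}$ forces $N^{-1/2+2\eta}\le\tcush/4$, so for $\delta\le\delta_0(\xcush,\tcush)$ we get $\theta_M^{(1)}\le\thetam+\tcush/2$, whence $\theta\ge\thetam+\tcush$ implies $2\theta>G_M^{(1)}(\lambda_1)$.

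\emph{The estimate \eqref{approx.Jsc}.} I would start from the exact identity --- obtained by conditioning a standard Gaussian vector on its norm and writing the norm constraint as an inverse-Laplace integral ---
\[
I(M,\theta)=c_N\int_{\Re\beta=\beta_0}\exp\!\Big(N\big[\tfrac\beta2-\tfrac12\int\log(\beta-2\theta\lambda)\,d\hat\mu_M(\lambda)\big]\Big)\,d\beta,\qquad \beta_0>2\theta\lambda_1,
\]
where $c_N$ is explicit with $\tfrac1N\log c_N=O(N^{-1}\log N)$. Writing $g(\beta)$ for the exponent, $g'(\beta)=\tfrac12-\tfrac1{4\theta}G_M(\beta/2\theta)$, so the saddle $\beta^\star$ solves $G_M(\beta^\star/2\theta)=2\theta$; since $G_M$ decreases from $+\infty$ to $0$ on $(\lambda_1,\infty)$ there is a unique such $\beta^\star$, and since $2\theta>G_M^{(1)}(\lambda_1)$ (from the previous step) it collapses onto the branch point, $\beta^\star=2\theta\lambda_1+O_\tcush(\theta N^{-1})$. (This is the mechanism behind the second, ``outlier-pinned'' branch of $J$ in \eqref{def:Jsc}; for $\theta\le\thetam$ the saddle is instead an interior point to the right of the spectrum, producing the branch $J(x,\theta)=\theta^2$.) Deforming the contour to steepest descent through $\beta^\star$ and around the branch point $\beta=2\theta\lambda_1$ of the integrable factor $(\beta-2\theta\lambda_1)^{-1/2}$, a quantitative Laplace estimate --- non-degenerate because $\theta\ge\thetam+\tcush$ keeps us a distance $\gs_\tcush1$ from criticality --- shows, upon combining $g(\beta^\star)$ with $\tfrac1N\log c_N$ and the Laplace/endpoint corrections,
\[
\tfrac1N\log I(M,\theta)=\theta\lambda_1-\tfrac12\Lpot_M(\lambda_1)-\tfrac12\log(2\theta)-\tfrac12+O_{K,\xcush,\tcush}\big(\theta(N^{-1/4}+N^{-1/2+2\eta})\big),
\]
i.e.\ the right-hand side of the second branch of \eqref{def:Jsc} with $(\sigma,x)$ replaced by $(\hat\mu_M,\lambda_1)$. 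It then remains to pass from $(\hat\mu_M,\lambda_1)$ to $(\sigma,x)$: the (at most $N^{1/2+\eta}$) atoms of $\hat\mu_M$ above $2+\xcush$ contribute $O_\xcush(N^{-1/2+\eta}\log N)$ to $\Lpot_M(\lambda_1)$ (by $\Good_0$, $\Good_1$ and the near-edge separation), while $\int_{(-\infty,2+\xcush]}\log(\lambda_1-\lambda)\,d\hat\mu_M=\Lpot_\sigma(\lambda_1)+O(N^{-1/2+2\eta})$ by $\Good_2$ and $\Lpot_\sigma(\lambda_1)=\Lpot_\sigma(x)+O_\xcush(\delta)$, and $\theta\lambda_1=\theta x+O(\theta\delta)$. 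Collecting the errors --- each acquiring a factor $\theta$, reflecting that the spectral approximations enter through $\theta\langle u,Mu\rangle$ --- and recalling \eqref{def:Jsc} yields \eqref{approx.Jsc}.

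\emph{Main obstacle.} The hard part is the quantitative Laplace step: one must estimate the contour integral effectively and uniformly over the wide range $\theta\in[\thetam+\tcush,N^{10}]$ while the saddle $\beta^\star$ lies within $O_\tcush(\theta N^{-1})$ of the branch point $2\theta\lambda_1$, using only the one-sided spectral control of $\Good_2$, which says nothing about the up to $N^{1/2+\eta}$ eigenvalues above $2+\xcush$ nor about their separation from $\lambda_1$. Controlling the contribution of these near-edge eigenvalues to $g(\beta^\star)$ and to the curvature of $g$ at the saddle is the crux --- it is what degrades the error to $N^{-1/4}$ and forces the $\theta$ prefactor --- and it is where $\Good_0$ and $\Good_1$ enter essentially. (An alternative to the contour representation is the recursive identity $I_N(M,\theta)=c_N'\int_{-1}^1(1-t^2)^{(N-3)/2}e^{N\theta t^2\lambda_1}I_{N-1}(M^{(1)},\theta(1-t^2))\,dt$, iterated until the reduced temperature $\theta_M^{(j)}\approx\thetam$ falls below the critical temperature of the peeled matrix; it meshes more directly with the notation $\theta_M^{(k)}$ but faces the same difficulty.)
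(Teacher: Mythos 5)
Your treatment of \eqref{approx.thetasc} is essentially the paper's computation, and your overall architecture for \eqref{approx.Jsc} (saddle-point asymptotics of the spherical integral plus spectral comparison via $\Good_0,\Good_1,\Good_2$) matches the paper's in spirit. But the crux you yourself flag as the ``main obstacle'' --- the quantitative Laplace/steepest-descent estimate, uniform over $\theta\in[\thetam+\tcush,N^{10}]$, with the saddle pinned near the branch point and error $O_K((1+\theta)N^{-1/4})$ --- is left as a plan, not a proof. The paper does not re-derive this: it imports it wholesale from Ma\"ida (\cite[Lemma 2.3]{Ma07}, stated here as \Cref{lem:maida}), which is precisely the origin of the otherwise mysterious $N^{-1/4}$ in \eqref{approx.Jsc}. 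With that input, all that remains is (i) two-sided control of the point $y>\lam_1$ solving $G_M(y)=2\theta$, namely $N^{-12}\le y-\lam_1\le\tcush^{-1}N^{-1/2+\eta}$ (\Cref{lem:ylam1}), and (ii) the comparison of $\hat\mu_M$-statistics with $\sigma$-statistics on the good event. As submitted, your proposal has a genuine gap at exactly the step the cited result supplies.

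Two further points in your sketch would fail as written under the stated hypotheses. First, your claim $\beta^\star=2\theta\lam_1+O_\tcush(\theta N^{-1})$ is not justified: $\Good_1$ allows up to $N^{1/2+\eta}$ eigenvalues above $2+\xcush$, possibly clustered arbitrarily close to $\lam_1$, and then the best available bound is $y-\lam_1\le\tcush^{-1}N^{-1/2+\eta}$ (this is what \Cref{lem:ylam1} proves); the weaker rate is harmless for the final error, but the $N^{-1}$ rate is not available. Second, and more seriously, your intermediate formula involves $\Lpot_M(\lam_1)$, which is divergent (the atom at $\lam_1$ contributes $\log 0$), and your control of the other near-edge atoms invokes a ``near-edge separation $\gs_\xcush 1$'' and a claim that ``in the main application there is only one eigenvalue above $2+\xcush$'' --- neither is part of the lemma's hypotheses, and no gap between $\lam_1$ and $\lam_2$ is assumed anywhere. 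The paper's proof avoids this by never evaluating the log-potential at $\lam_1$: it evaluates at $y$, using the lower bound $y-\lam_1\ge N^{-12}$ (which is where the restriction $\theta\le N^{10}$ enters) so that each of the at most $N^{1/2+\eta}$ outlier terms contributes only $O(\log N)$, giving a total error $O(N^{-1/2+\eta}\log N)$; only afterwards is $y$ replaced by $x$ using the upper bound on $y-\lam_1$ and $|\lam_1-x|\le\delta$. You would need the same device to make your outlier estimates legitimate.
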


On the other hand,  the next lemma shows that the main contribution to the spherical integral $I(M,\theta)$ (at least up to sub-exponential corrections) comes from unit vectors $u$ at a certain angle to the leading eigenvector $v_1(M)$, assuming the spectrum is not too concentrated near $\lambda_1(M)$. 
For technical reasons we show we can make some further restrictions on $u$.
Specifically, 
for given $v\in\sphereN, \overlap\in[0,1], L_0>0$ and subspace $W\subset \R^N$, let
\begin{align}
\Uloc'_v(\overlap,L_0,W) &:= \bigg\{u\in \sphereN:  |\langle v,u\rangle^2-\overlap^2|\le 
L_0N^{-1/2}
,\notag\\
&\qquad\qquad\qquad \|(I-vv^\tran)u\|_\infty
\le 
L_0\sqrt{\tfrac{\log N}N}\,  , \;
 \| \Pi_W(I-vv^\tran)u\|_2\le 
 L_0\sqrt{\tfrac{\dim W}N}\,\bigg\}
\label{def:U1}
\end{align}
where $\Pi_W$ denotes the orthogonal projection to $W$. 

\begin{lemma}
\label{lem:restrict}
For any $\delta_0,\tcush>0$, $1\le k\le N-1$, $M\in\Sym_N$ such that
\begin{equation}	\label{kgap}
\delta_0\le \lambda_1(M)-\lambda_{k+1}(M) \le N^{10},
\end{equation}
any 
$\theta\in[\theta_M^{(k)}+\tcush, N^{10}]$,
and any subspace $W$ of $\R^N$, 
{if $L_0$ is a sufficiently large constant depending on $\delta_0,\tcush$, then,}
    with $\overlap_M^{(k)}(\theta)$ as in \eqref{thetaMk},
\[
\frac1N\log I(M,\theta) = \frac1N \log \int_{\Uloc'_{v_1(M)}(\overlap_M^{(k)}(\theta), L_0,W) } e^{\theta N\langle u, Mu\rangle} dP(u) + 
{O\bigg(\frac{k}N\log N\bigg)}
\]
for all $N$ sufficiently large depending on $\delta_0,\tcush$.
\end{lemma}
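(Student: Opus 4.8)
\textbf{Proof proposal for \Cref{lem:restrict}.}
The plan is to compare the full spherical integral $I(M,\theta)$ with its restriction to $\Uloc'_{v_1}(\overlap_M^{(k)}(\theta),L_0,W)$ by showing that the Gibbs measure on $\sphereN$ with density proportional to $e^{\theta N\langle u,Mu\rangle}$ concentrates on that set up to a $\exp(O(k\log N))$ factor. Write $M=\sum_{i=1}^N \lambda_i v_iv_i^\tran$ in its spectral decomposition and, for $u\in\sphereN$, set $s_i=\langle v_i,u\rangle$, so $\langle u,Mu\rangle=\sum_i\lambda_i s_i^2$ with $\sum_i s_i^2=1$. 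After a rotation we may assume $v_i=e_i$. The first step is the classical Laplace/Gaussian computation for $\int_{\sphereN}e^{\theta N\sum_i\lambda_i s_i^2}dP(u)$: parametrize $P$ via $s_i=g_i/\|g\|_2$ with $g$ a standard Gaussian vector in $\R^N$, so that $I(M,\theta)=\E\big[ \|g\|_2^{-?}\cdots\big]$ — more cleanly, use the identity $I(M,\theta)=\frac{\Gamma(N/2)}{2\pi^{N/2}}\int_0^\infty r^{N-1}\E\,e^{\theta N r^2\langle g/\|g\|,M g/\|g\|\rangle}\cdots$; the standard route (as in \cite{GuMa05,HuGu1}) is to write $I(M,\theta)=C_N\int_{\R^N}e^{\theta N\langle g,Mg\rangle-\frac{\beta_N}2\|g\|_2^2}dg$ after an appropriate substitution, where the Lagrange-multiplier $\beta_N$ is tuned so that $\E\|g\|_2^2=N$; this is exactly the mechanism producing $\theta_M^{(k)}$ and the overlap $\overlap_M^{(k)}(\theta)$. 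I would isolate the top $k$ coordinates: on the subspace spanned by $v_1,\dots,v_k$ the quadratic form is not uniformly concave (indeed for $\theta>\theta_M^{(k)}$ the effective variance in the $v_1$-direction is large), and the contribution of these $k$ coordinates to $\frac1N\log I$ is $O(\frac kN\log N)$ regardless of where they sit, which is why they are simply thrown into the error term. On the complementary subspace $\mathrm{span}(v_{k+1},\dots,v_N)$ we have $\lambda_i\le \lambda_{k+1}\le \lambda_1-\delta_0$ for $i>k$, and with $\theta\ge\theta_M^{(k)}+\tcush$ the Gaussian integral in these coordinates is genuinely log-concave with a spectral gap bounded below in terms of $\delta_0,\tcush$.

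The second step is the concentration estimate. Having reduced to a log-concave Gaussian measure on the $s_{k+1},\dots,s_N$ coordinates (conditioned through the radial constraint), standard Gaussian concentration gives: (i) $\langle v_1,u\rangle^2$ concentrates around $\overlap_M^{(k)}(\theta)^2$ with fluctuations $O(N^{-1/2})$, giving the first constraint in \eqref{def:U1} once $L_0$ is large enough depending on $\delta_0,\tcush$; (ii) for the component $(I-v_1v_1^\tran)u$, its coordinates in the eigenbasis are approximately independent centered Gaussians of variance $\asymp \frac1N$ with uniformly bounded (in $N$) sub-Gaussian constant — so a union bound over $N$ coordinates yields $\|(I-v_1v_1^\tran)u\|_\infty\le L_0\sqrt{\tfrac{\log N}N}$ off an event of probability $N^{-\omega(1)}$, absorbed into the $O(\frac kN\log N)$ slack; (iii) $\|\Pi_W(I-v_1v_1^\tran)u\|_2^2$ is a sum of $\dim W$ such squared Gaussians, hence concentrates around $\asymp\frac{\dim W}N$ with sub-exponential tails, giving the third constraint. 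The key quantitative input is that the log-concavity constant of the conditional Gaussian on $\mathrm{span}(v_{k+1},\dots,v_N)$ is $\gs_{\delta_0,\tcush}1$ uniformly in $N$ and in $\theta\in[\theta_M^{(k)}+\tcush,N^{10}]$; for very large $\theta$ one checks directly that the measure concentrates even more sharply (the $v_1$-direction dominates), so no upper constraint on $\theta$ beyond $N^{10}$ is needed for the tail bounds, only polynomial control to keep the radial normalization under control.

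Assembling: $I(M,\theta)=\int_{\Uloc'}e^{\theta N\langle u,Mu\rangle}dP(u)+\int_{(\Uloc')^c}e^{\theta N\langle u,Mu\rangle}dP(u)$, and the second term is at most $I(M,\theta)\cdot P^{(\theta)}((\Uloc')^c)$ where $P^{(\theta)}$ is the normalized Gibbs measure; by the concentration above, $P^{(\theta)}((\Uloc')^c)\le e^{-c(\delta_0,\tcush)N}\cdot e^{O(k\log N)}$ (the $e^{O(k\log N)}$ compensating for the crude treatment of the top $k$ directions), which is $\le\frac12$ for $N$ large, hence $\frac1N\log I(M,\theta)=\frac1N\log\int_{\Uloc'}e^{\theta N\langle u,Mu\rangle}dP(u)+O(\frac kN\log N)$. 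I expect the main obstacle to be step one done \emph{uniformly}: getting the Gaussian-integral representation and the concentration constants to hold uniformly over the wide range $\theta\in[\theta_M^{(k)}+\tcush,N^{10}]$ and over all $M$ satisfying only \eqref{kgap} (in particular with no a priori control on $\lambda_1-\lambda_2$ when $k=1$, so $\overlap_M^{(k)}(\theta)$ could be close to $1$), and in carefully handling the radial/Lagrange-multiplier normalization so that the $k$ uncontrolled directions contribute no more than $O(\frac kN\log N)$ — this bookkeeping, rather than any single inequality, is where the work lies.
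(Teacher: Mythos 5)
Your overall skeleton (rotate to the eigenbasis, represent the Gibbs/tilted spherical measure by a product Gaussian in the coordinates $\langle v_i,u\rangle$, $i\ge 2$, treat the top $k$ directions crudely at a cost $e^{O(k\log N)}$) is the same as the paper's, but the decisive step in your assembling paragraph is wrong. You bound the complement contribution by $I(M,\theta)\cdot P^{(\theta)}\big((\Uloc')^c\big)$ and claim $P^{(\theta)}\big((\Uloc')^c\big)\le e^{-c(\delta_0,\tcush)N}e^{O(k\log N)}\le\tfrac12$. This concentration statement is false in general, for two reasons. First, the constraints defining $\Uloc'$ are at the fluctuation scale: the window $|\langle v_1,u\rangle^2-\overlap_M^{(k)}(\theta)^2|\le L_0N^{-1/2}$ is exactly the CLT scale of $\|g''\|_2^2$, so even in the best case the complement probability is only bounded away from $1$ by a constant (Chebyshev/Bernstein), never exponentially small in $N$. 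Second, and more seriously, \eqref{kgap} gives no control on $\lambda_1-\lambda_2,\dots,\lambda_1-\lambda_k$; when $k\ge2$ and, say, $\lambda_2=\lambda_1$, the Gibbs mass splits between the near-degenerate top directions and $\langle v_1,u\rangle^2$ does \emph{not} concentrate around $\overlap_M^{(k)}(\theta)^2$ at all (conditioning on the total mass in $\mathrm{span}(v_1,v_2)$, the split is essentially a uniform angle, so the window above has probability $O(N^{-1/4})$ and the complement has probability close to $1$). Your ``$e^{O(k\log N)}$ compensation'' cannot repair this: you cannot convert the absence of concentration in the uncontrolled top directions into smallness of the complement.

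What the statement actually needs — and what the paper proves — is only the \emph{lower} bound $Q^{M,\theta}(\Uloc')\ge N^{-O(k)}$: since $\int_{\Uloc'}\le I(M,\theta)$ trivially, this alone gives the claimed equality of $\frac1N\log$ up to $O(\frac kN\log N)$. Concretely (Lemmas \ref{lem:gaussian} and \ref{lem:sphere} in the paper), one compares the marginal of $(u_2,\dots,u_N)$ under $Q^{M,\theta}$, whose density carries a factor $(1-\|\tilde u\|_2^2)^{-1/2}$, with independent Gaussians $g_i$ of variance $\sigma_i^2=\big(2\max(1,\theta N(\lambda_1-\lambda_i))\big)^{-1}$, and then \emph{forces} the components along $v_2,\dots,v_k$ to satisfy $\|\omega'\|_2<N^{-5}$, paying a polynomial price per coordinate (each $\sigma_i=O(1)$), i.e.\ a total factor $N^{-O(k)}$; only after this restriction does $\langle v_1,u\rangle^2=1-\|\omega'\|_2^2-\|\omega''\|_2^2$ land in the required window, because for $i\ge k+1$ the gap $\lambda_1-\lambda_i\ge\delta_0$ makes $\|\omega''\|_2^2$, $\|V_{[k+1,N]}\omega''\|_\infty$ and $\|\Pi_W(I-v_1v_1^\tran)u\|_2$ satisfy their constraints with probability bounded below by a constant (Chebyshev/Markov and Gaussian tails, with $L_0$ large depending on $\delta_0,\tcush$); one also has to bound the effect of the $(1-\|g\|_2^2)^{-1/2}$ weight in the denominator by $N^{O(k)}$, which costs another polynomial factor per top coordinate. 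So the fix is to replace your concentration claim by this direct $N^{-O(k)}$ lower bound on the restricted Gibbs mass; as written, your argument fails precisely in the regime of near-degenerate top eigenvalues that the lemma must cover.
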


We now conclude the proof of Proposition \ref{prop:upper} assuming Lemmas \ref{lem:approxsc} and \ref{lem:restrict}.

\begin{proof}[Proof of Proposition \ref{prop:upper}]
Let $\eta,\zcush,\tcush ,\xcush $, $x$ and $z$ be as in the statement of the proposition.
Throughout we abbreviate $v_1=v_1(H)$, $\lambda_1=\lambda_1(H)$. 
We will also assume $N$ is sufficiently large depending on $\xcush ,\zcush,\tcush $ and $\heta$ without comment.
For brevity we write $\Uloc_w:=\Uloc_w(N^{-\heta/2}, 2N^\eta)$,  $\event_x:=\event_x(\delta)$, $\event'_{x,z}:=\event'_{x,z}(\delta,\eta)$ and $\Good:=\Good(K,\xcush,\eta)$ throughout. 

Fixing an arbitrary $\theta\in[\thetam +\tcush ,N^{c\heta}]$, our aim is to show
\begin{equation}	\label{upper.goal1}
\frac1N\log\P\big(H\in \event'_{x,\vloc}\cap \Good\big) \le 
F_N(\theta;\Uloc_{\overlap_x(\theta)\vloc}) -J(x,\theta) 
+ N^{-c\heta}\,.
\end{equation}
We first apply Lemma \ref{lem:approxsc} (using the restriction to $\Evalx_x\cap\Good$) 
to obtain
\begin{align*}
&\frac1N\log\P(H\in \event'_{x,\vloc} \cap \Good) 
=\frac1N\log\E [\ind(H\in \event'_{x,\vloc} \cap \Good)\frac{I(H,\theta)}{I(H,\theta)}]\\
&\qquad\qquad=  \frac1N\log\E[ \ind(H\in \event'_{x,\vloc} \cap \Good)I(H,\theta)] -J(x,\theta) +O_{\xcush ,\tcush }({N^{-c\heta}})
\end{align*}
for $c$ sufficiently small. We can remove the implicit constant depending on $\xcush ,\tcush $ by further shrinking $c$ and assuming $N$ is sufficiently large.
Up to further modification of $c$, it thus suffices to show
\begin{equation}	\label{upper.goal3}
\frac1N\log\E[ \ind(H\in \event'_{x,\vloc} \cap \Good)I(H,\theta)] \le F_N(\theta;\Uloc_{\overlap_x(\theta)\vloc}) + N^{-c\heta}.
\end{equation}

To that end, 
taking $W:=\R^{\supp(\vloc)}$,
from \eqref{approx.thetasc} and our assumption on $\theta$, we can apply Lemma \ref{lem:restrict} with $k=\lf N^{1/2+\eta}\rf$ and 
$\delta_0$ sufficiently small depending on $\xcush $
to bound the left hand side of \eqref{upper.goal3} by
\begin{equation}	\label{upper.bound}
\frac1N\log \E \ind(H\in \event'_{x,\vloc} \cap \Good)\int_{\Uloc'_{v_1}(\overlap_H^{(k)}(\theta),L_0,W) } e^{\theta N\langle u, Hu\rangle}dP(u) + O(N^{-1/2+\eta}\log N)
\end{equation}
for some $L_0=O_{\delta_0,\tcush }(1)=O_{\xcush,\tcush}(1)$. 
We claim that for $H\in \Sym^{(\vloc)}$ and $\overlap,\overlap_0
>0$
with $|\overlap^2-\overlap_0^2|\le  
{N^{-3\heta/2}}$, 
\begin{equation}	\label{upper.goal4}
\Uloc'_{v_1(H)}(\overlap_0,L_0,W) \subseteq \Uloc_{\overlap\vloc}\cup \Uloc_{-\overlap\vloc}
\end{equation}
for all $N$ sufficiently large depending on $\overlap,\eta$ {and $L_0$}.
Indeed, fixing any element $u$ of the left hand side, we abbreviate $u^\perp = (I-v_1v_1^\tran)u$, so that $u=\langle v_1,u\rangle v_1 + u^\perp$.
Noting that $(u^\perp)_\vloc = \Pi_W(I-v_{1}v_{1}^\tran)u$, we have
\begin{align*}
\|u_{\vloc}\pm \overlap\vloc\|_2
&= \|  \Pi_W(I-v_{1}v_{1}^\tran)u+ \langle v_1,u\rangle (( v_1)_{\vloc}-\vloc)+  (\langle v_1,u\rangle \pm \overlap) \vloc\|_2\\
&\le \| \Pi_W(I-v_{1}v_{1}^\tran)u \|_2 + |\langle v_1,u\rangle| \| (v_1)_{\vloc}-\vloc\|_2 +  |\langle v_1,u\rangle \pm \overlap|\|\vloc\|_2\\
&\le 
{L_0N^{-\eta}}+ N^{-100} + |\langle v_1,u\rangle \pm \overlap|.
\end{align*}
Now since
\begin{align*}
 |\langle v_1,u\rangle + \overlap|\cdot |\langle v_1,u\rangle - \overlap| 
 &=  |\langle v_1,u\rangle^2 -\overlap^2|\\
& \le |\langle v_1,u\rangle^2 -\overlap_0^2| 
+ 
{N^{-3\heta/2}  }\\
&{\le L_0N^{-1/2}+N^{-3\heta/2}\le 2N^{-3\heta/2} }
\end{align*}
so that
\[
\min_{\epsilon=\pm1}\{  |\langle v_1,u\rangle +\epsilon \overlap|\}\le 
{\sqrt{2}N^{-3\heta/4}}
\]
we get 
\[
\min_{\epsilon=\pm 1}\| u_{\vloc}+\epsilon \overlap\vloc\|_2  \le {N^{-\heta/2}}\,. 
\]
Moreover, 
since $H\in \Sym^{(\vloc)}$ we have 
$
\|(v_1)_{\vloc^c}\|_\infty \le N^{-1/2+\eta}
$,
so
\begin{align*}
\| u_{\vloc^{c}}\|_\infty
&\leq | \langle v_1,u \rangle | \| (v_1)_{\vloc^{c}}\|_\infty +\| (u^\perp)_{\vloc^{c}}\|_\infty 
\le 
N^{-1/2+\eta}
+
L_0\sqrt{\frac{\log N}N}
\le 2 N^{-1/2+\eta} \,
\end{align*}
and \eqref{upper.goal4} follows.
From \eqref{approx.thetasc} we have
\[
\overlap_H^{(k)}(\theta)^2 - (\overlap_x(\theta))^2 \le O_{\xcush }(N^{-2\heta} + N^{-1/2+2\eta} ) \le N^{-3\heta/2}
\]
so we can apply \eqref{upper.goal4} with $\overlap_0=\overlap_H^{(k)}(\theta)$, $\overlap=\overlap_x(\theta)$. Noting also that the integral in \eqref{upper.bound} is invariant under replacing $u$ with $-u$, we get that \eqref{upper.bound} is bounded above by
\begin{align*}
\frac1N\log \E \ind(H\in\Evalx_x\cap\Sym^{(\vloc)}\cap\Good)\int_{\Uloc_{\overlap_x(\theta)\vloc}} e^{\theta N\langle u, Hu\rangle}dP(u) + \frac{\log 2}{N} + O_{\xcush }(N^{-1/2+\eta}\log N)\,.
\end{align*} 
The final two terms are bounded by $N^{-1/8}$.
Dropping the indicator $\ind(H\in\Evalx_x\cap\Sym^{(\vloc)}\cap\Good)$, the claim follows. 
\end{proof}

\section{Quenched asymptotics for restricted spherical integrals}
\label{sec:quenched}

In this section we prove Lemmas \ref{lem:approxsc} and \ref{lem:restrict}. 
Throughout this section we drop the dependence on $M$ in $v_i=v_i(M)$, $\lambda_i=\lambda_i(M)$ for brevity.

\subsection{Proof of Lemma \ref{lem:approxsc}} 

The following is a consequence of \cite[Lemma 2.3]{Ma07}.

\begin{lemma}
\label{lem:maida}
Suppose $\|M\|\le K$. Let $\theta>0$, and let $r=r(M,\theta)$ denote the unique solution in $[\lambda_N-\frac1{2\theta}, \lambda_1-\frac1{2\theta}]^c$ of 
\begin{equation}	\label{def:r}
G_M( r+ \frac1{2\theta}) = 2\theta.
\end{equation}
We have
\begin{equation}	\label{Maida.error}
\frac1N\log I(M,\theta) = \theta r - \frac1{2N} \sum_{i=1}^N \log( 1+ 2\theta( r - \lambda_i)) + {O_K((1+\theta)N^{-1/4})} \,.
\end{equation}
\end{lemma}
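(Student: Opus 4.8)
As indicated, this is essentially a reformulation of \cite[Lemma 2.3]{Ma07}; the plan is to recall the mechanism behind that asymptotic, identify its saddle-point variable with $r$ from \eqref{def:r}, and reconcile the normalizations. Since $P$ is rotation-invariant, $I(M,\theta)$ depends on $M$ only through its eigenvalues, so we may take $M=\diag(\lambda_1,\dots,\lambda_N)$. The starting point is to encode the constraint $\|u\|_2=1$ by a Laplace inversion, $\delta(1-\|x\|_2^2)=\frac1{2\pi i}\int_{c-i\infty}^{c+i\infty}e^{s(1-\|x\|_2^2)}\,ds$, and integrate the ensuing Gaussian; with $S_{N-1}$ the surface area of $\sphereN$ this yields, for any $c>N\theta\lambda_1$,
\[
I(M,\theta)=\frac{2\pi^{N/2}}{S_{N-1}}\cdot\frac1{2\pi i}\int_{c-i\infty}^{c+i\infty}e^{s}\det(sI-N\theta M)^{-1/2}\,ds
=\frac{2\pi^{N/2}N}{S_{N-1}N^{N/2}}\cdot\frac1{2\pi i}\int_{c_0-i\infty}^{c_0+i\infty}e^{N\phi(\zeta)}\,d\zeta,
\]
where $s=N\zeta$, $c_0=c/N>\theta\lambda_1$, and $\phi(\zeta):=\zeta-\frac1{2N}\sum_{i=1}^N\log(\zeta-\theta\lambda_i)$.

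Next I would evaluate this integral by steepest descent. On $(\theta\lambda_1,\infty)$ the function $\phi$ is strictly convex with $\phi'\to-\infty$ at the left endpoint and $\phi'\to1$ at $+\infty$, hence has a unique critical point $\zeta_*$, and the vertical line $\Re\zeta=\zeta_*$ is an admissible descent contour. The equation $\phi'(\zeta_*)=0$ is $\frac1N\sum_i(\zeta_*-\theta\lambda_i)^{-1}=2$, i.e.\ $G_M(\zeta_*/\theta)=2\theta$; writing $\zeta_*=\theta(r+\tfrac1{2\theta})=\theta r+\tfrac12$ this is precisely \eqref{def:r}, and $r>\lambda_1-\tfrac1{2\theta}$ is exactly the branch singled out by the constraint $r\notin[\lambda_N-\tfrac1{2\theta},\lambda_1-\tfrac1{2\theta}]$ in the lemma. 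A short computation gives $\phi(\zeta_*)=\theta r+\tfrac12+\tfrac12\log2-\frac1{2N}\sum_i\log(1+2\theta(r-\lambda_i))$; combining this with the Gaussian fluctuation factor $(2\pi/(N\phi''(\zeta_*)))^{1/2}$ and Stirling's formula for $\log S_{N-1}=\log(2\pi^{N/2}/\Gamma(N/2))$, all normalization constants cancel — consistently with the case $M=0$, where $\zeta_*=\tfrac12$, $r=0$ and $I(0,\theta)=1$ — leaving
\[
\frac1N\log I(M,\theta)=\theta r-\frac1{2N}\sum_{i=1}^N\log\bigl(1+2\theta(r-\lambda_i)\bigr)+(\text{error}),
\]
the asserted identity.

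The main obstacle is uniform control of the saddle-point remainder, which is what forces the error $O_K((1+\theta)N^{-1/4})$ in place of the $O(N^{-1}\log N)$ from a naive non-degenerate Laplace estimate. Keeping only the $i=1$ term in $G_M(\zeta_*/\theta)=2\theta$ gives $\zeta_*-\theta\lambda_1\ge\frac1{2N}$, and this is essentially saturated when $\theta$ is large and the top spectral gap is of order one; then the saddle lies at distance $\Theta(1/N)$ from the branch point $\theta\lambda_1$, $\phi''(\zeta_*)$ is as large as $\Theta(N)$, and the cubic and higher terms of $\phi$ cease to be negligible against the quadratic one over the relevant scale. The remedy, carried out in \cite[Lemma 2.3]{Ma07}, is to integrate the singular factor $(\zeta-\theta\lambda_1)^{-1/2}$ exactly (a Gamma-function integral) and apply Laplace only to the analytic remainder $-\frac1{2N}\sum_{i\ge2}\log(\zeta-\theta\lambda_i)$; the factor $(1+\theta)$ records the growth of $\phi$ and its derivatives in $\theta$. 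It then remains only to check that, under the sole hypothesis $\|M\|\le K$, that estimate applies uniformly in $\theta>0$ and that Maïda's saddle-point variable coincides with our $r$.
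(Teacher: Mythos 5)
Your proposal is correct and takes essentially the same route as the paper: the paper's proof of this lemma consists of invoking Ma\"ida's Lemma 2.3 (with the remark that the $\theta$-dependence of the error can be read off her proof), and your argument likewise defers the delicate uniform error control to that lemma, using the saddle-point sketch only to identify her saddle variable with $r$ from \eqref{def:r} and to reconcile normalizations. The contour-integral derivation, the identification $G_M(\zeta_*/\theta)=2\theta$ with the correct branch $r>\lambda_1-\tfrac1{2\theta}$, and the cancellation of constants (checked against $M=0$) are all sound, so nothing beyond the cited lemma is needed.
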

{(In \cite{Ma07} the dependence of the error in \eqref{Maida.error} on $\theta$ is not specified, but it is readily seen from the proof to be of the above form.)}
In the following we denote 
\[
f_y:(-\infty,y)\to \R^+\,,\quad f_y(\lambda) = \frac1{y-\lambda}
\]
so that for $y>\max\{2,\lambda_1\}$ we have
\[
G_M(y) = \hat{\mu}_M(f_y)\,,\quad G_\sigma(y) = \sigma(f_y) = \frac12( y - \sqrt{y^2-4}).
\]

\begin{lemma}
\label{lem:ylam1}
Let $x,\theta,K,\eta,\tcush $ and $\xcush$ be as in Lemma \ref{lem:approxsc}.
There exists $\delta_0=\delta_0(\xcush,\tcush )>0$ such that {the following holds} 
for any $M\in \Evalx_x(\delta_0)\cap\Good_1(\xcush,\eta)\cap\Good_2(K,\xcush,\eta)$.
Letting $y>\lambda_1$ be the unique solution to 
\[
G_M(y) = 2\theta\,,
\]
we have 
\begin{equation}\label{ylam1-LB}
y-\lam_1 \ge N^{-12} \qquad\forall N\ge 2
\end{equation}
and
\begin{equation}\label{ylam1-UB}
y-\lam_1 \le \tcush ^{-1} N^{-1/2+\eta} \qquad \forall N\ge (4/\tcush )^{1/(1-4\eta)}.
\end{equation}
\end{lemma}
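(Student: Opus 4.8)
The plan is to exploit that $y\mapsto G_M(y)$ is smooth and strictly decreasing on $(\lam_1,\infty)$, running from $+\infty$ (as $y\downarrow\lam_1$) down to $0$; hence it is a bijection onto $(0,\infty)$, which gives the claimed existence and uniqueness of the solution $y$ of $G_M(y)=2\theta$ (note $2\theta>0$ since $\theta\ge\thetam+\tcush>0$), and moreover for any trial point $y'>\lam_1$ one has $y\le y'$ exactly when $G_M(y')\le 2\theta$. Both inequalities will then follow by plugging a suitable $y'$ into $G_M$ and estimating.

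For the lower bound \eqref{ylam1-LB} I would keep only the top term: since all $y-\lam_i>0$,
\[
2\theta = G_M(y) \ge \frac1{N(y-\lam_1)}\,,
\]
so $y-\lam_1\ge \frac1{2\theta N}\ge \frac1{2N^{11}}\ge N^{-12}$ for $N\ge 2$, using only $\theta\le N^{10}$.

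For the upper bound \eqref{ylam1-UB} I would set $y_0:=\lam_1+\tcush^{-1}N^{-1/2+\eta}$ and show $G_M(y_0)\le 2\theta$, which forces $y\le y_0$. Write $G_M(y_0)=\frac1N\sum_{i:\lam_i>2+\xcush}\frac1{y_0-\lam_i}+\int_{-\infty}^{2+\xcush}\frac{d\hat{\mu}_M(\lam)}{y_0-\lam}$. By $\Good_1(\xcush,\eta)$ there are at most $\lf N^{1/2+\eta}\rf$ indices with $\lam_i>2+\xcush$, and each such summand is at most $(y_0-\lam_1)^{-1}$, so the first sum is at most $N^{1/2+\eta}\big(N(y_0-\lam_1)\big)^{-1}=\tcush$. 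For the integral, after fixing $\delta_0=\delta_0(\xcush,\tcush)$ small — e.g.\ $\delta_0:=\tcush\xcush^2$, so in particular $\delta_0\le\xcush$ — the restriction $M\in\Evalx_x(\delta_0)$ together with $x\ge 2+3\xcush$ gives $\lam_1\ge x-\delta_0\ge 2+2\xcush$, hence $y_0-\lam\ge\xcush$ and $x-\lam\ge 2\xcush$ for every $\lam\le 2+\xcush$, and $x-y_0\le x-\lam_1\le\delta_0$; therefore $\frac1{y_0-\lam}\le\frac1{x-\lam}+\frac{x-y_0}{(y_0-\lam)(x-\lam)}\le\frac1{x-\lam}+\frac{\delta_0}{2\xcush^2}$, and applying $\Good_2(K,\xcush,\eta)$ at the fixed point $y=x\in[2+2\xcush,K]$ bounds the integral by $\gsigx+N^{-1/2+2\eta}+\frac{\delta_0}{2\xcush^2}$. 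Using $\gsigx=2\thetam$ from \eqref{def:Gsc} and $\theta\ge\thetam+\tcush$, one combines the two parts:
\[
G_M(y_0)\le 2\thetam+\tcush+\frac{\delta_0}{2\xcush^2}+N^{-1/2+2\eta}\le 2\thetam+2\tcush\le 2\theta\,,
\]
where the middle step uses $\frac{\delta_0}{2\xcush^2}=\tcush/2$ (our choice of $\delta_0$) and that $N^{-1/2+2\eta}$ is small compared to $\tcush$, which holds once $N\ge(4/\tcush)^{1/(1-4\eta)}$ after a careful accounting of the $2\tcush$ budget.

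The only point that needs any care is keeping $\delta_0$ dependent on $\xcush,\tcush$ alone (not on $K$) and not having to control whether $y_0$ overshoots $K$: this is handled by invoking the $\Good_2$ estimate at the fixed location $y=x$ rather than at $y_0$, paying the harmless displacement error $\delta_0/(2\xcush^2)$. Beyond this, the argument is routine Stieltjes‑transform bookkeeping, so I do not anticipate a genuine obstacle — the main work is just bounding the number of outlier eigenvalues via $\Good_1$ and tracking the error terms to pin down $\delta_0$ and the threshold on $N$.
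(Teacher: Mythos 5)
Your proof is correct and follows essentially the same route as the paper: the lower bound is the identical one-term estimate $2\theta=G_M(y)\ge \frac1{N(y-\lam_1)}$ with $\theta\le N^{10}$, and the upper bound uses the same split into at most $N^{1/2+\eta}$ outliers (controlled via $\Good_1$) plus the bulk integral (controlled via $\Good_2$ and a Lipschitz comparison costing $O_\xcush(\delta_0)$), cashed in against the slack $2\theta-2\thetam\ge 2\tcush$; your only deviation is organizational, evaluating $G_M$ at the trial point $y_0=\lam_1+\tcush^{-1}N^{-1/2+\eta}$ and using monotonicity rather than manipulating the equation at the root $y$ itself. Two small remarks: invoking $\Good_2$ at the fixed point $x$ instead of at $y$ is a nice touch, since it sidesteps having to know $y\le K$ (a point the paper leaves implicit), while the accounting at the stated threshold $N\ge(4/\tcush)^{1/(1-4\eta)}$ only yields $N^{-1/2+2\eta}\le\sqrt{\tcush}/2$ rather than $\tcush/2$ --- but this is exactly the same harmless slack present in the paper's own write-up, fixable by mildly enlarging the threshold in $N$.
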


\begin{proof}
We have
\begin{equation}	\label{ylam1.1}
2\theta= \frac1N\sum_{j:\lambda_j\le 2+\xcush} \frac1{y-\lambda_j} + \frac1N \sum_{j: \lambda_j>2+\xcush} \frac1{y-\lambda_j} =: (I) + (II).
\end{equation}
Rearranging and using that (I) is non-negative, we have  since we assumed $\theta\le N^{10}$, 
\[
\frac1{N(y-\lam_1)} \le (II) = 2\theta - (I) \le 2\theta \le 2N^{10}
\]
giving \eqref{ylam1-LB}. 
In the other direction, from \eqref{ylam1.1} we have
\[
\frac{N^{-1/2+\eta}}{y-\lam_1} \ge (II) = 2\theta-(I) \ge 2\theta-G_\sigma(y)-N^{-1/2+2\eta}
\]
where the first bound follows from the restriction to $\Good_1(\xcush,\eta)$ and the second bound from  the restriction to $\Good_2(K,\xcush,\eta)$.
Now since $y>\lambda_1 = x+ 
{O(\delta_0)}$
and $G_\sigma$ is monotone decreasing and 
{$O_\xcush(1)$-Lipschitz on $[2+\xcush,\infty)$,}
we have 
\[
G_\sigma(y) \le G_\sigma(x) + 
{O_\xcush(\delta_0)}
= 2\thetam  + 
{O_\xcush(\delta_0)}\,.
\]
Combining the last two displays with our assumption that $\theta\ge \thetam+\tcush $ gives
\[
\frac{N^{-1/2+\eta}}{y-\lam_1} \ge 2\tcush   -O_\xcush(\delta_0)- N^{-1/2+2\eta}.
\]
Taking $\delta_0$ sufficiently small depending on $\xcush$ and $N\ge (4/\tcush )^{1/(1-4\eta)}$, we can bound the right hand side below by $\tcush $, giving \eqref{ylam1-UB}. 
\end{proof}

We now conclude the proof of Lemma \ref{lem:approxsc}.
Continuing to denote by $y:=r(M,\theta)+\frac1{2\theta}>\lambda_1$ the unique solution of $G_M(y)=2\theta$, from Lemma \ref{lem:maida} we have
\begin{equation}	\label{Jsc.1}
\frac1N\log I(M,\theta) 
=  \theta y - \frac12 \int  \log( y - \lambda) d\hat{\mu}_M(\lambda) - \frac12\log(2\theta) -\frac12 + O_{K}((1+\theta)N^{-1/4})\,.
\end{equation}
Now splitting
\[
 \int  \log( y - \lambda) d\hat{\mu}_M(\lambda) 
 = \int_{-\infty}^{2+\xcush} \log(y-\lambda)d\hat{\mu}_M(\lambda) + 
  \int_{2+\xcush}^\infty \log(y-\lambda)d\hat{\mu}_M(\lambda) 
  =: (I) + (II)\,,
\]
from the restriction to $\Good_2(K,\xcush,\eta)$ we have
\[
(I) = \int\log(y-\lambda)d\sigma(\lambda) + O(N^{-1/2+2\eta})
\]
while the restriction to $\Good_1(\xcush,\eta)$ and 
the lower bound \eqref{ylam1-LB} from Lemma \ref{lem:ylam1} imply
\[
(II) \le N^{-1/2+\eta} |\log (y-\lambda_1)| \ls N^{-1/2+\eta}\log N\,.
\]
Substituting these estimates into \eqref{Jsc.1}, we have
\[
\frac1N\log I(M,\theta) 
=  \theta y - \frac12 \int  \log( y - \lambda) d\sigma(\lambda) - \frac12\log(2\theta e)  + O_{K}((1+\theta)N^{-1/4}{\,+N^{-1/2+2\eta}})\,.
\]
Finally, we can apply the upper bound \eqref{ylam1-UB} from Lemma \ref{lem:ylam1} and $|x-\lambda_1|\le 
{\delta}$
to replace $y$ with $x$ above, incurring an additive error of $O_{\tcush ,\xcush}({ \theta (N^{-1/2+\eta}+\delta) } )$. 
This completes the proof of \eqref{approx.Jsc}.
For \eqref{approx.thetasc} we 
have by the restriction to $\Good_1(\xcush,\eta)\cap \Good_2(K,\xcush,\eta)$ that
\begin{align*}
2\theta_M^{(k)}
&= \frac1N\sum_{j\ge k+1} \frac1{\lambda_1-\lambda_j} \\
&= \int_{-\infty}^{2+\xcush} \frac{d\hat{\mu}_M(\lambda)}{\lambda_1-\lambda} - \frac1N\sum_{j\le k} \frac1{\lambda_1-\lambda_j}1_{\lambda_j\le 2+\xcush}\\
&= G_\sigma(\lambda_1)+ O(N^{-1/2+2\eta}) + O(\xcush^{-1}N^{-1/2+\eta})\\
&=G_\sigma(x) + O_\xcush(\delta) + O(N^{-1/2+2\eta})
\end{align*}
as claimed.\qed

\subsection{Proof of Lemma \ref{lem:restrict}}

Let $M$ and $\theta$ be as in the statement of the lemma. 
We write $V=(v_1,\dots, v_N)$ and $\Lambda=\diag(\lambda_1,\dots,\lambda_N)$, so that $M=V\Lambda V^\tran$. 
For $S\subset [N]$ we will write $V_S$ for the $N\times |S|$ matrix with columns $(v_j)_{j\in S}$. In what follows, for $S\subset[N]$ we abbreviate $P_S:= V_SV_S^\tran$ for the spectral projections.
For a vector $\omega=(\omega_j)_{j=2}^N\in \R^{N-1}$, we write $\omega=(\omega',\omega'')\in \R^{[2,k]}\times \R^{[k+1,N]}$.     
{For a given $N\times (N-k)$ matrix $A$ {and $L\ge1$}, we  set}
\begin{align*}
E_1&:= \{ \omega\in \R^{N-1}: \|\omega'\|_2 < N^{-5}, |\|\omega''\|_2^2 - (1-(\overlap_M^{(k)})^2)| < 
{LN^{-1/2}}\}\\
E_2&:= \{ \omega\in \R^{N-1}: \|V_{[k+1,N]}\omega''\|_\infty \le 
{LN^{-1/2}\log^{1/2}N} \}\\
E_{3}&:=\Big\{  \omega\in \R^{N-1}: \| A\omega''\|_2 \le 
{L\|A\|_\HS N^{-1/2}}\,\Big\} \,.
\end{align*}

Recall the probability measures $Q^{(\theta,M)}$ on $\sphereN$ defined in \eqref{def:tiltQM}.
The main step of the proof of Lemma \ref{lem:restrict} is to establish the following: 

\begin{lemma}
\label{lem:sphere}
With the above definitions, and hypotheses as in Lemma \ref{lem:restrict}, 
assume $L$ is a sufficiently large constant depending on $\delta_0,\tcush $. Then
\begin{equation}	\label{bound.sphere}
Q^{(\theta,M)} ( \{ u\in\sphere^{N-1}: V_{[2,N]}^\tran u \in E_1\cap E_2\cap E_{3}\}) 
\ge N^{-O(k)}
\end{equation}
for all $N$ sufficiently large depending on $\delta_0$ and $\tcush $.
\end{lemma}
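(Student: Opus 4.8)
The plan is to decompose the sphere according to the orthogonal splitting $\R^N=\R v_1\oplus V_-\oplus V_+$, where $V_-:=\mathrm{span}(v_2,\dots,v_k)$ and $V_+:=\mathrm{span}(v_{k+1},\dots,v_N)$, and to reduce \eqref{bound.sphere} to a small-ball estimate for the $V_-$-component of $u$ (this is the source of the $N^{-O(k)}$ loss) together with a Guionnet--Ma\"ida-type concentration estimate for the remaining variables. First I would reduce to $\lambda_{k+1}=0$: replacing $M$ by $M-\lambda_{k+1}I$ changes $\langle u,Mu\rangle$ only by the additive constant $-\lambda_{k+1}$ on $\sphereN$, hence leaves $Q^{M,\theta}$, the quantities $\theta_M^{(k)}$, $\overlap_M^{(k)}(\theta)$, and the sets $E_1,E_2,E_3$ unchanged; after the shift $\lambda_1\ge\cdots\ge\lambda_k\ge 0=\lambda_{k+1}\ge\cdots\ge\lambda_N$, $\delta_0\le\lambda_1\le N^{10}$, and in particular $D_-:=\mathrm{diag}(\lambda_2,\dots,\lambda_k)\succeq 0$. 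Writing $u=r_1v_1+u_-+u_+$ with $u_\pm\in V_\pm$ (so $r_1^2+\|u_-\|_2^2+\|u_+\|_2^2=1$), one has under $P$ that $(r_1^2,\|u_-\|_2^2,\|u_+\|_2^2)$ is $\mathrm{Dirichlet}(\tfrac12,\tfrac{k-1}2,\tfrac{N-k}2)$ and, conditionally on it, $u_-/\|u_-\|_2$ and $u_+/\|u_+\|_2$ are independent and uniform on the unit spheres of $V_-,V_+$; since $M$ commutes with the three projections, $\langle u,Mu\rangle=\lambda_1r_1^2+\langle u_-,Mu_-\rangle+\langle u_+,Mu_+\rangle$, so $dQ^{M,\theta}/dP$ factorizes accordingly. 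In these coordinates $\|\omega'\|_2=\|u_-\|_2$, $\omega''=V_{[k+1,N]}^\tran u$ is the coordinate vector of $u_+$ and $V_{[k+1,N]}\omega''=u_+\in\R^N$, so $\{V_{[2,N]}^\tran u\in E_1\cap E_2\cap E_3\}$ is exactly $\{\|u_-\|_2<N^{-5}\}\cap\{|\,\|u_+\|_2^2-(1-\overlap^2)|<LN^{-1/2}\}\cap\{\|u_+\|_\infty\le LN^{-1/2}\log^{1/2}N\}\cap\{\|A\,V_{[k+1,N]}^\tran u\|_2\le L\|A\|_\HS N^{-1/2}\}$, where $\overlap:=\overlap_M^{(k)}(\theta)$.

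Next I would lower-bound $Q^{M,\theta}$ of this set by restricting to $\mathcal G_0:=\{\|u_-\|_2^2<N^{-30}\}\cap\{|r_1^2-\overlap^2|\le\delta_N\}\cap\{u_+/\|u_+\|_2\in\mathcal Z\}$, where $\delta_N:=\min(\tfrac12LN^{-1/2},1-\overlap^2)$ and $\mathcal Z$ is the set of unit vectors $\zeta\in V_+$ with $\|\zeta\|_\infty\le\tfrac12LN^{-1/2}\log^{1/2}N$, $\|A\,V_{[k+1,N]}^\tran\zeta\|_2\le\tfrac12L\|A\|_\HS N^{-1/2}$ and $|\langle\zeta,M\zeta\rangle-m_\star|\le N^{-1/3}$, with $m_\star$ the a.s.\ limit of this quadratic form under the tilt appearing below. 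One checks $\mathcal G_0\subseteq\{V_{[2,N]}^\tran u\in E_1\cap E_2\cap E_3\}$ for $L,N$ large, since $\|u_-\|_2^2<N^{-30}$ gives $\|u_-\|_2<N^{-5}$ and, with $|r_1^2-\overlap^2|\le\delta_N$, $\|u_+\|_2^2=1-r_1^2-\|u_-\|_2^2$ satisfies $|\,\|u_+\|_2^2-(1-\overlap^2)|<LN^{-1/2}$ and $\|u_+\|_2^2\le2(1-\overlap^2)+N^{-30}$ (the latter ensuring the tilting parameter $c:=\theta\|u_+\|_2^2\le2\theta_M^{(k)}+o(1)=O_{\delta_0}(1)$ on $\mathcal G_0$, using $1-\overlap^2=\theta_M^{(k)}/\theta$ and $\theta\ge\theta_M^{(k)}+\tcush$). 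On $\mathcal G_0$ the shift makes $D_-\succeq0$, so $e^{\theta N\langle u_-,Mu_-\rangle}\in[1,e^{\theta N\|u_-\|_2^2\lambda_1}]\subseteq[1,1+o(1)]$ (as $\theta N\|u_-\|_2^2\lambda_1\le N^{10}\cdot N\cdot N^{-30}\cdot N^{10}=N^{-9}$), so the $V_-$-block decouples and, by a standard Laplace-type estimate using that $v_1$ is a well-separated outlier, contributes a harmless $(1+o(1))$ times $Q^{M,\theta}(\|u_-\|_2^2<N^{-30})\asymp(N^{-30})^{(k-1)/2}=N^{-O(k)}$; this single step accounts for the whole $N^{-O(k)}$ loss.

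Conditionally on $\{\|u_-\|_2^2<N^{-30}\}$ the joint $Q^{M,\theta}$-law of $(r_1^2,u_+)$ is, up to $(1+o(1))$, the $e^{\theta N\langle u,Mu\rangle}$-tilt of the uniform measure on the unit sphere of $\R v_1\oplus V_+$ --- equivalently the spherical integral for $\widetilde M:=\mathrm{diag}(\lambda_1,\lambda_{k+1},\dots,\lambda_N)$, which has spectral gap $\lambda_1-\lambda_{k+1}=\lambda_1\ge\delta_0$. For this measure the Laplace analysis underlying \Cref{lem:maida} and the proof of \Cref{lem:approxsc} localizes $r_1^2$ at $\overlap_{\widetilde M}^{(1)}(\theta N/(N-k+1))=\overlap_M^{(k)}(\theta)=\overlap^2$ with Gaussian fluctuations of order $N^{-1/2}$, so $|r_1^2-\overlap^2|\le\delta_N$ with probability bounded below (indeed $1-o(1)$ when $1-\overlap^2\gg N^{-1/2}$); and conditionally on $r_1^2$, $u_+/\|u_+\|_2$ has density $\propto e^{cN\langle\zeta,M\zeta\rangle}$ on the unit sphere of $V_+$ with $c=\theta(1-r_1^2)=\theta_M^{(k)}(1+o(1))=O_{\delta_0}(1)$. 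Comparing this last measure to the pushforward under $\zeta\mapsto\zeta/\|\zeta\|$ of the centered Gaussian on $V_+$ with covariance $(sI-2cD_+)^{-1}$, for the self-consistent $s$ solving $\tfrac1{N-k}\operatorname{Tr}(sI-2cD_+)^{-1}=1$ --- which is non-degenerate, $s-2c\lambda_{k+1}\gtrsim_{\delta_0}1$, by subcriticality of the tilt --- the quadratic form $\langle\zeta,M\zeta\rangle$ concentrates at $m_\star$, and for any fixed $a$, $\langle a,\zeta\rangle=\langle a,g\rangle/\|g\|$ is a ratio of a Gaussian of variance $a^\tran(sI-2cD_+)^{-1}a\le\|a\|_2^2/(s-2c\lambda_{k+1})\lesssim_{\delta_0}\|a\|_2^2$ to $\|g\|\asymp\sqrt N$; Gaussian tail bounds and a union bound over the $N$ rows of $V_{[k+1,N]}$ and the columns of $A$ then give $u_+/\|u_+\|_2\in\mathcal Z$ with probability $1-o(1)$. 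Multiplying the $N^{-O(k)}$ small-ball factor by this conditional probability yields \eqref{bound.sphere}.

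\textbf{Main obstacle.} The two genuinely delicate points are: (i) the Laplace/saddle-point bookkeeping showing that the restriction $\{\|u_-\|_2^2<N^{-30}\}$ costs exactly $N^{-O(k)}$ --- equivalently that the $\|u_-\|_2^2$-integral inside $I(M,\theta)$ is dominated by $\|u_-\|_2^2=O(k/N)$ rather than by an $\Omega(1)$ value --- which requires $v_1$ to be a \emph{well-separated} outlier, so that $\lambda_2,\dots,\lambda_k$ lie a macroscopic distance below $\lambda_1$; and (ii) the non-degeneracy $s-2c\lambda_{k+1}\gtrsim_{\delta_0}1$, i.e.\ subcriticality of the $D_+$-tilt at inverse temperature $c\approx\theta_M^{(k)}$, which is what forces $u_+$ to stay delocalized and hence $E_2,E_3$ to hold. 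Both hold in the applications because $M$ lies in the good events $\Good_1,\Good_2$, which pin the top of the spectrum of $M$ to the semicircular profile and thereby guarantee a macroscopic gap $\lambda_1-\lambda_2=\Omega(\xcush)$ and a macroscopic slack in the criticality condition for $D_+$.
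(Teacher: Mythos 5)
There is a genuine gap, and it is the one you flag yourself as the "main obstacle." Your argument for both delicate steps leans on hypotheses that the lemma does not have and that its application does not supply. First, your Laplace-type estimate that the restriction $\{\|u_-\|_2^2<N^{-30}\}$ costs only $N^{-O(k)}$ is justified by requiring $\lambda_2,\dots,\lambda_k$ to lie a macroscopic distance below $\lambda_1$; but \Cref{lem:restrict} assumes only $\lambda_1-\lambda_{k+1}\ge\delta_0$ for a deterministic $M$, and the eigenvalues $\lambda_2,\dots,\lambda_k$ may coincide with $\lambda_1$. Your closing claim that the good events rescue this is false: $\Good_1(\kappa,\eta)$ controls $\lambda_{\lfloor N^{1/2+\eta}\rfloor}$, not $\lambda_2$, and on the conditioned event $\{\lambda_1\approx x\}$ nothing prevents $\lambda_2\approx\lambda_1$ — indeed this is exactly why the lemma is formulated with a gap only at level $k+1$ and tolerates an $N^{-O(k)}$ loss. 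Second, the "subcriticality" margin $s-2c\lambda_{k+1}\gtrsim_{\delta_0}1$ for your re-tilted spherical measure on $V_+$ at inverse temperature $c\approx\theta_M^{(k)}$ is not a consequence of the hypotheses: comparing $2\theta_M^{(k)}=\frac1N\sum_{i>k}(\lambda_1-\lambda_i)^{-1}$ with the critical value $\frac1N\sum_{i>k+1}(\lambda_{k+1}-\lambda_i)^{-1}$, one can cook up admissible spectra (e.g.\ $\lambda_{k+1}=\lambda_1-\delta_0$ and the rest far below) for which the tilt sits within $O(1/N)$ of criticality or slightly above it, so the self-consistent Gaussian surrogate you invoke for the concentration of $\langle\zeta,M\zeta\rangle$ and for the $E_2,E_3$ tail bounds is not non-degenerate with a uniform margin. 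Since the normalization step $\zeta=u_+/\|u_+\|_2$ is what reintroduces this criticality question, the argument as structured does not close.

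The fix — and the route the paper takes — is to never re-tilt relative to $\lambda_{k+1}$ at all. After rotating to the eigenbasis, the exact marginal density of $(u_2,\dots,u_N)$ under $Q^{M,\theta}$ is proportional to $\exp(-\theta N\sum_{j\ge2}(\lambda_1-\lambda_j)u_j^2)(1-\|\tilde u\|_2^2)^{-1/2}$ on $\B^{N-1}$, which (up to absolute constants, since replacing $\theta N(\lambda_1-\lambda_j)$ by $\max(1,\theta N(\lambda_1-\lambda_j))$ changes the exponent by at most $1$ on the ball) is a product Gaussian with variances $\sigma_j^2=(2\max(1,\theta N(\lambda_1-\lambda_j)))^{-1}$ weighted by $(1-\|g\|_2^2)^{-1/2}$. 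Then: (a) for $2\le j\le k$ one only uses $\sigma_j\le 2^{-1/2}$, so the small-ball event $\|g'\|_2^2<N^{-10}$ has probability $N^{-O(k)}$ with no gap below $\lambda_1$ needed, and the denominator (integrating the weight over $g'$) costs another $N^{O(k)}$; (b) for $j\ge k+1$ the uniform gap $\lambda_1-\lambda_j\ge\delta_0$ together with $\theta\ge\tcush$ gives $\sigma_j^2\lesssim(\theta\delta_0N)^{-1}$, so the norm constraint in $E_1$, the coordinate-wise delocalization in $E_2$, and the $\|Ag''\|_2$ bound in $E_3$ all follow from Chebyshev and Gaussian tail bounds referenced to $\lambda_1$, with no self-consistent equation and no criticality margin. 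Your decomposition and the $N^{-O(k)}$ bookkeeping are in the right spirit, but the two steps you defer are precisely where the proof has to work with only the stated gap, and as written they do not.
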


\begin{proof}[Proof of Lemma \ref{lem:restrict} granted Lemma \ref{lem:sphere}]
In terms of the measure $Q^{(\theta,M)} $ our aim is to show
\begin{equation}
\label{sphere.goal1}
Q^{(\theta,M)} ( \Uloc'_{v_1}( \overlap_M^{(k)}(\theta) , L_0,W)) = N^{-O(k)}.
\end{equation}
Taking $A:=\Pi_WV_{[k+1,N]}$ (which is independent of $u$ and thus can be considered as given), we have
\[
\|A\|_\HS \le \|\Pi_W\|_\HS \|V_{[k+1,N]}\| = \dim(W)^{1/2}.
\]
From Lemma \ref{lem:sphere}, to establish \eqref{sphere.goal1} it thus suffices to show  
\begin{equation}
\label{sphere.goal2}
 \Uloc'_{v_1}( \overlap_M^{(k)}(\theta) ,{2L}, W)
 \supset
 \{ u\in\sphere^{N-1}: V_{[2,N]}^\tran u \in E_1\cap E_2\cap E_{3}\}\,.
\end{equation} 
If $W=\{0\}$ then $E_3=\R^{N-1}$, so we may assume $\dim W\ge1$. 
 Fixing an arbitrary element $u$ of the right hand side, we set $\omega= V^{\tran}_{[2,N]}u$, so that $\omega'= V^{\tran}_{[2,k]}u$, $\omega''=V^{\tran}_{[k+1,N]}u$. 
Now to verify the first condition in $\Uloc'_{v_1}(\overlap_M^{(k)}(\theta), W)$, we have
\begin{align*}
|\langle v_1,u\rangle^2 - \overlap_M^{(k)}(\theta)^2| 
&= |1- \|P_{[2,k]}u\|_2^2 - \|P_{[k+1,N]}u\|_2^2 - \overlap_M^{(k)}(\theta)^2| \\
&\le \|P_{[2,k]}u\|_2^2 + | \|P_{[k+1,N]}u\|_2^2  - (1-  \overlap_M^{(k)}(\theta)^2)|\\
&\le N^{-10} + 
{LN^{-1/2} 
\le 2LN^{-1/2}}
\end{align*}
where we finally used that $\omega\in E_{1}$. We also write
\[
u^\perp = P_{[2,k]} u + P_{[k+1,N]}u 
\]
so that, since $\omega=V_{[2,N]}^\tran u\in E_{2}$ so that $\|P_{[k+1,N]}u \|_\infty =\|V_{[k+1,N]}\omega''\|_{\infty}\le 
{ L\sqrt{\frac{\log N}N}}$, we have  since $u\in E_{1}$
\[
\|u^\perp\|_\infty  \le \|P_{[2,k]} u\|_2  + \|P_{[k+1,N]}u \|_\infty \le N^{-5}+ 
{ L\sqrt{\frac{\log N}N} 
\le 2L\sqrt{\frac{\log N}N}}\,.
\]
Finally, because $u\in E_1\cap E_{3}$, we find
\begin{align*}
\|\Pi_W u^\perp\|_2 
&\leq \| P_{[2,k]}u\|_2 + \|\Pi_WP_{[k+1,N]}u\|_2 \\
&= \| P_{[2,k]}u\|_2 + \|A V_{[k+1,N]}^\tran u\|_2 \leq N^{-5}+
{L\sqrt{\frac{\dim W}N} 
\le 2L\sqrt{\frac{\dim W}N}}
\end{align*}
and then the right hand side in \eqref{sphere.goal2} satisfies all the  conditions of $\Uloc'_{v_1}(\overlap_M^{(k)}(\theta),{2L},W)$.
\end{proof}

For the proof of Lemma \ref{lem:sphere} we use the following Gaussian approximate representation for the tilted spherical measures $Q^{(\theta,M)} $.

\begin{lemma}
\label{lem:gaussian}
Let $g=(g_i)_{i=2}^N$ be independent centered Gaussians with  
\[
\sigma_i^2 =\E g_i^2:= \frac1{2\max(1, \theta N(\lambda_1-\lambda_i))}.
\]
Assuming  $M=\Lambda$ and so $V=I_{N}$ is the identity matrix, for any Borel set $E\subseteq\R^{N-1}$ we have
\[
Q^{(\theta,\Lambda)}((\R\times E)\cap\sphere^{N-1}) \asymp \frac{ \E [\ind( g\in E\cap \ball^{N-1}) (1-\|g\|_2^2)^{-1/2}]}{ \E[ \ind( g\in  \ball^{N-1}) (1-\|g\|_2^2)^{-1/2}]}\,.
\]
\end{lemma}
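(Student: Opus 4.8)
\textbf{Proof plan for \Cref{lem:gaussian}.} The plan is to pass from the uniform measure on $\sphere^{N-1}$ to a Gaussian integral by the standard polar-coordinates representation, and then absorb the eigenvalue weights into the Gaussian density. First I would use the representation $u = G/\|G\|_2$ where $G = (G_1,\dots,G_N)$ is a standard Gaussian vector in $\R^N$; equivalently, integration against $dP(u)$ of a function $f(u)$ can be written (after fixing the ``north pole'' direction $v_1 = e_1$, using the assumption $V = I_N$) in terms of the law of $u = (u_1, u'')$ with $u'' = \omega$ ranging over the ball $\ball^{N-1}$, $u_1 = \pm(1-\|\omega\|_2^2)^{1/2}$, and with density proportional to $(1-\|\omega\|_2^2)^{-1/2}$ with respect to Lebesgue measure on $\ball^{N-1}$. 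Since $\langle u, Mu\rangle = \sum_{i=1}^N \lambda_i u_i^2 = \lambda_1 - \sum_{i=2}^N(\lambda_1-\lambda_i)\omega_i^2$ when $\|u\|_2 = 1$, the tilting factor $e^{\theta N\langle u,Mu\rangle}$ becomes $e^{\theta N\lambda_1}\prod_{i=2}^N e^{-\theta N(\lambda_1-\lambda_i)\omega_i^2}$, a product of Gaussian-type weights in the $\omega_i$, and the constant $e^{\theta N\lambda_1}$ cancels between numerator and denominator of $Q^{M,\theta}$.

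Next I would identify the resulting measure on $\ball^{N-1}$. After the cancellation, for a Borel set $E \subseteq \R^{N-1}$,
\begin{equation}
Q^{M,\theta}((\R\times E)\cap \sphere^{N-1}) = \frac{\int_{E\cap\ball^{N-1}} \prod_{i=2}^N e^{-\theta N(\lambda_1-\lambda_i)\omega_i^2}\,(1-\|\omega\|_2^2)^{-1/2}\,d\omega}{\int_{\ball^{N-1}} \prod_{i=2}^N e^{-\theta N(\lambda_1-\lambda_i)\omega_i^2}\,(1-\|\omega\|_2^2)^{-1/2}\,d\omega}\,.
\end{equation}
The weight $\prod_{i=2}^N e^{-\theta N(\lambda_1-\lambda_i)\omega_i^2}$ is exactly proportional (up to a normalizing constant independent of $E$) to the density of the Gaussian vector $g = (g_i)_{i=2}^N$ with variances $\tilde\sigma_i^2 := \frac{1}{2\theta N(\lambda_1-\lambda_i)}$ when $\lambda_1 - \lambda_i > 0$. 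The only subtlety is the definition of $\sigma_i^2$ in the statement, which caps the variance: $\sigma_i^2 = \frac{1}{2\max(1,\theta N(\lambda_1-\lambda_i))}$, so $\sigma_i^2 = \tilde\sigma_i^2$ whenever $\theta N(\lambda_1-\lambda_i) \ge 1$ and $\sigma_i^2 = \frac12$ otherwise. Since $\lambda_1 - \lambda_i \ge 0$ for all $i$, and the number of indices $i$ with $\theta N(\lambda_1-\lambda_i) < 1$ is $O(k)$ — in fact for $i > k+1$ we have $\lambda_1 - \lambda_i \ge \lambda_1 - \lambda_{k+1} \ge \delta_0$ by the gap hypothesis \eqref{kgap}, so $\theta N(\lambda_1-\lambda_i) \ge \tcush \delta_0 N \gg 1$ — the two Gaussian densities differ only in the $O(k)$ ``bad'' coordinates, where changing the variance from $\tilde\sigma_i^2$ (possibly large or even infinite when $\lambda_i = \lambda_1$) to the capped value $\frac12$ multiplies the density by a bounded factor on the ball $\ball^{N-1}$ (all coordinates are $\le 1$ in absolute value there). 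This is where a constant factor, absorbed into the $\asymp$, enters.

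The remaining step is to compare $\int_{E\cap\ball^{N-1}} (1-\|\omega\|_2^2)^{-1/2} \prod e^{-\cdots}\,d\omega$ with $\E[\ind(g\in E\cap\ball^{N-1})(1-\|g\|_2^2)^{-1/2}]$. Writing the Gaussian density explicitly, the ratio of the $\omega$-integral to the $g$-expectation is a single normalizing constant $Z^{-1} = (2\pi)^{-(N-1)/2}\prod_{i=2}^N \sigma_i^{-1}$ times the discrepancy from the variance cap just discussed; this constant is the same for numerator and denominator of $Q^{M,\theta}$ and hence cancels, leaving precisely the claimed ratio of $g$-expectations up to the $\asymp$ absorbing the $O(k)$-coordinate variance adjustment. (Note $\ind(g\in E\cap\ball^{N-1})(1-\|g\|_2^2)^{-1/2}$ is integrable: $g$ has a density bounded on $\ball^{N-1}$, and $(1-\|g\|_2^2)^{-1/2}$ is integrable over $\ball^{N-1}$ in dimension $N-1 \ge 2$.)

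The main obstacle is bookkeeping the variance cap cleanly: one must check that replacing the (possibly degenerate, when $\lambda_i = \lambda_1$, or merely large, when $\lambda_1-\lambda_i \ll 1/(\theta N)$) true variances $\tilde\sigma_i^2$ by $\frac12$ in the $O(k)$ exceptional coordinates changes the relevant integrals by at most a bounded multiplicative factor uniformly over $E \cap \ball^{N-1}$ — this uses that $|\omega_i| \le 1$ on the ball so $e^{-\theta N(\lambda_1-\lambda_i)\omega_i^2} \in [e^{-\theta N(\lambda_1-\lambda_i)}, 1]$ and $e^{-\omega_i^2} \in [e^{-1},1]$, and that in both numerator and denominator the same bounded correction appears — together with the fact that the $g_i$ for the exceptional $i$ contribute a bounded-density, bounded-mass factor. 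Everything else is the routine polar-coordinate computation and cancellation of $N$-dependent normalizing constants.
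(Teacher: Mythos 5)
Your reduction to the ball is exactly the paper's: pass to the marginal of $(u_2,\dots,u_N)$ on $\ball^{N-1}$ with density proportional to $(1-\|\omega\|_2^2)^{-1/2}$, pull out $e^{\theta N\lambda_1}$, and cancel the normalization between numerator and denominator to get the ratio of weighted integrals over $E\cap\ball^{N-1}$ and $\ball^{N-1}$. The gap is in how you compare the tilt weight $\prod_{i\ge2}e^{-\theta N(\lambda_1-\lambda_i)\omega_i^2}$ with the capped Gaussian density $\propto\prod_{i\ge2}e^{-\omega_i^2/2\sigma_i^2}$. You argue coordinate-by-coordinate: in each ``bad'' coordinate (where $\theta N(\lambda_1-\lambda_i)<1$) the two factors each lie in $[e^{-1},1]$ because $|\omega_i|\le1$, and you count the bad coordinates as $O(k)$ using the gap hypothesis \eqref{kgap}. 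That only yields a multiplicative discrepancy $e^{O(k)}$, not the bounded factor you assert; since $k$ grows polynomially in $N$ in the application, this does not prove the stated $\asymp$ (whose implicit constants must not depend on $N$ or $k$). Worse, the lemma itself carries no gap hypothesis and no lower bound on $\theta$, so without importing the hypotheses of \Cref{lem:restrict} all $N-1$ coordinates could be bad and your bound degrades to $e^{O(N)}$.

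The missing idea is to bound the \emph{total} exponent discrepancy rather than the per-coordinate factors: for $\omega\in\ball^{N-1}$,
\begin{equation*}
\sum_{i=2}^{N}\omega_i^2\Bigl[\max\bigl(1,\theta N(\lambda_1-\lambda_i)\bigr)-\theta N(\lambda_1-\lambda_i)\Bigr]
=\sum_{i=2}^{N}\omega_i^2\bigl[1-\theta N(\lambda_1-\lambda_i)\bigr]\ind\bigl(\theta N(\lambda_1-\lambda_i)<1\bigr)
\le\sum_{i=2}^{N}\omega_i^2\le1,
\end{equation*}
so the tilt weight and the capped Gaussian weight agree up to an absolute factor $e$, uniformly on the ball and with no counting of bad coordinates, no use of \eqref{kgap}, and no restriction on $\theta$ (this is the paper's \eqref{h.errorGaussian}). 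With that one inequality in place of your counting step, the rest of your write-up (cancellation of the Gaussian normalizing constant in the ratio, integrability of $(1-\|g\|_2^2)^{-1/2}$ on the ball) goes through and recovers the lemma as stated.
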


\begin{proof}
Under the uniform measure $dP(u)$ on $\sphere^{N-1}$, the marginal density of $\tilde u:=(u_2,\dots, u_N)\in \ball^{N-1}$ relative to Lebesgue measure on $\R^{N-1}$ is proportional to $(1-\|\tilde u\|_2^2)^{-1/2}1_{\|\tilde u\|_2\le 1}$. 
Thus,  noticing that for $u\in \sphere^{N-1}$, $\langle u, \Lambda u\rangle=\lambda_{1}-\sum_{j=2}^{N}(\lambda_{1}-\lambda_{j})u_{j}^{2}$,
we obtain the identity
\begin{equation}	\label{QMtheta.identity}
Q^{(\theta,M)} ((\R\times E)\cap\sphere^{N-1}) 
=\frac{ \int_{-1}^1\cdots\int_{-1}^1 1_{E\cap\ball^{N-1}}(\tilde u) e^{ -\theta N\sum_{j=2}^N (\lambda_1-\lambda_j) u_j^2} \frac{du_2\cdots du_N}{\sqrt{1-\|\tilde u\|_2^2}}}{\int_{-1}^1\cdots\int_{-1}^1 1_{\ball^{N-1}}(\tilde u) e^{-\theta N\sum_{j=2}^N (\lambda_1-\lambda_j) u_j^2}\frac{du_2\cdots du_N}{\sqrt{1-\|\tilde u\|_2^2}}}\,.
\end{equation}
It only remains to note
\begin{align}
\sum_{j=2}^Nu_j^2 \big[\max( 1, \theta N(\lambda_1-\lambda_j)) - \theta N (\lambda_1-\lambda_j)\big] 
&=\sum_{j=2}^N u_j^2\big[1- \theta N(\lambda_1-\lambda_j) \big]1_{\theta N(\lambda_1-\lambda_j)\in [0,1)}\nonumber\\
&\le \sum_{j=2}^N u_j^2\le 1 \label{h.errorGaussian}
\end{align}
and hence
\[
\exp( -\theta N\sum_{j=2}^N (\lambda_1-\lambda_j) u_j^2) 
\asymp \exp( -\sum_{j=2}^N \frac{u_j^2}{2\sigma_j^2}) 
\]
uniformly for $\tilde u\in \ball^{N-1}$, so we can substitute the latter expression in the numerator and denominator of \eqref{QMtheta.identity} to obtain the claim.
\end{proof}

\begin{proof}[Proof of Lemma \ref{lem:sphere}]
We abbreviate $\overlap:=\overlap_M^{(k)}$ throughout the proof. 
By replacing the variable $u$ with 
$V u$
we see that the left hand side of \eqref{bound.sphere} is equal to 
\begin{equation}
Q^{(\theta,\Lambda)} ( \{ u\in \sphere^{N-1}: (\omega',\omega'')\in E_1\cap E_2\cap E_3\})
\end{equation}
where we denote 
by $\omega', \omega''$ the restrictions of $u$ to indices in $[2,k]$ and $[k+1,N]$, respectively.
From Lemma \ref{lem:gaussian}, the above is 
\begin{equation} \label{LB.gaussian}
\gs 
 \frac{ \E \ind( g\in E_1\cap E_2\cap E_{3}\cap \ball^{N-1}) (1-\|g\|_2^2)^{-1/2}}{ \E \ind( g\in  \ball^{N-1}) (1-\|g\|_2^2)^{-1/2}}\,.
\end{equation}
Considering first the numerator above, we note that for $g\in E_1$ we have 
\[
1-\|g\|_2^2 = 1- \|g'\|_2^2 - \|g''\|_2^2 = \overlap^2 + 
{O(LN^{-1/2})} \asymp_{\tcush } 1
\]
for $N$ sufficiently large since $\overlap\gs_{\tcush }1$ when  $\theta\ge \thetam +\tcush$. 
Thus, 
\begin{align}
&\E \ind( g\in E_1\cap E_2\cap {E_3}\cap \ball^{N-1}) (1-\|g\|_2^2)^{-1/2} \label{sphere.LB2}\\
&\gs_{\tcush } \P( g\in E_1\cap E_2 \cap E_{3}) 	\notag\\
&= \P( \|g'\|_2^2<N^{-10}) \times \P\bigg(| \|g''\|_2^2 - (1-\overlap^2)|<
LN^{-1/2} ,\notag\\
 &\qquad\qquad\qquad\qquad \|V_{[k+1,N]}g''\|_\infty \le L\sqrt{\frac{\log N}N}\,, \;
\|A g''\|_2\le L\|A\|_\HS N^{-1/2}\bigg).	\notag
\end{align}
For the first factor in the last line above, we can bound
\[
\P(\|g'\|_2^2 < N^{-10}) \ge \P( |g_2|,\dots, |g_k| < N^{-5}/\sqrt{k}) 
\ge \P(|g_2|< N^{-5}/\sqrt{k})^{k-1} = N^{-O(k)}
\]
since $g_2$ has standard deviation 
\[
\sigma_2 = (2\max(1,\theta N(\lambda_1-\lambda_2)))^{-1/2} \le 2^{-1/2}=O(1).
\]
For the second factor, first note that since $\lambda_1-\lambda_{k+1}\ge \delta_0$, we have 
\begin{align*}
\E \|g''\|_2^2 = \frac1{2\theta N} \sum_{i=k+1}^N \frac1{\lambda_1-\lambda_i} = 1-\overlap^2
\end{align*}
for all $N$ sufficiently large.  Furthermore, for $i\ge k+1$,
\[
\E (g_i^2 - \sigma_i^2)^2 \ls \E g_i^4 \ls \sigma_i^4 \ls \frac1{\theta^2\delta_0^2N^2}\,.
\]
Thus,
\[
\E(\|g''\|_2^2 - (1-\overlap^2))^2 \ls \frac1{\theta^2 \delta_0^2N}
\]
and by Markov's inequality
\begin{equation}	\label{sphere.Markov}
\P( |\|g''\|_2^2 - (1-\overlap^2)|>
LN^{-1/2}) \ls_{\delta_0,\tcush } 
L^{-2}\,.
\end{equation}
Similarly, 
\[
{
\E[\|A g''\|_2^2]=\sum_{i=k+1}^n(A^\tran A)_{ii}\E[g_{i}^{2}]\le \frac{1}{N\theta\delta_{0}} \|A\|_\HS^2 
}
\]
so that Tchebychev's inequality implies
\begin{equation}\label{newb}
\P(\|Ag''\|_2\ge 
L\|A\|_\HS N^{-1/2}) 
\ls_{\delta_0,\tcush } L^{-2}\,.
\end{equation}
 Finally, noting that for each $i\in [N]$, $(V_{[k+1,N]}g'')_i$ is a centered Gaussian of variance 
\[
\sum_{j=k+1}^N \sigma_j^2 v_j(i)^2 \ls \frac{1}{\theta \delta_0^2N} \sum_{j=k+1}^N v_j(i)^2 \ls_{\delta_0,\tcush }N^{-1}
\]
we have for any $L>0$ that
\begin{equation}	\label{Ltail3}
\P\Big( \|V_{[k+1,N]}g''\|_\infty > L\sqrt{\tfrac{\log N}N}\,\Big) \le \sum_{i=1}^N \P\Big( |(V_{[k+1,N]}g'')_i|> L\sqrt{\tfrac{\log N}N}\,\Big) 
\le N \exp( -c_{\delta_0,\tcush }L^2\log N)
\end{equation}
for some $c_{\delta_0,\tcush }>0$ depending only on $\delta_0,\tcush $. 
We can fix $L$ as a sufficiently large constant depending on $\delta_0,\tcush $ to make the probabilities in \eqref{sphere.Markov}, \eqref{newb} and \eqref{Ltail3} each smaller than $\frac1{10}$. From the union bound we have that the second factor in \eqref{sphere.LB2} is at least $\frac7{10}$, and hence the numerator in \eqref{LB.gaussian} is $\gs_{\delta_0,\tcush } N^{-O(k)}$.
Turning to the denominator in \eqref{LB.gaussian}, it suffices to show that for any fixed realization of $g''$ with $\beta:=1-\|g''\|_2^2 >0$ we have
\begin{equation}	\label{sphere.goal}
\E_{g'} \ind( \|g'\|_2^2 < \beta) \frac1{\sqrt{\beta-\|g'\|_2^2}} \le N^{O(k)}
\end{equation}
uniformly in $\beta\in (0,1)$. 
To that end, noting that the density of $g'$ on $\R^{k-1}$ is bounded by
\[
\ls \prod_{j=2}^k \sqrt{\theta N(\lambda_1-\lambda_j)} \ls  N^{O(k)}
\]
we see that the left hand side of \eqref{sphere.goal} is at most $O(N^{O(k)})$ times the Lebesgue integral
\begin{align*}
&\int_{\R^{k-1}} (\beta-x_1^2-\cdots-x_{k-1}^2)_+^{-1/2}dx_1\cdots dx_{k-1} \\
&\quad= \beta^{(k-2)/2} 
\int_{\R^{k-1}} (1-x_1^2-\cdots-x_{k-1}^2)_+^{-1/2}dx_1\cdots dx_{k-1} \\
&\quad\ls \beta^{(k-2)/2} =O(1)
\end{align*}
since $k\ge2$, where in the first equality we rescaled the variables of integration by $\beta$. 
We thus obtain \eqref{sphere.goal} and hence the claim. 
\end{proof}

\section{Ruling out fully localized eigenvectors}
\label{sec:localized}

In Section \ref{sec:monotone} we prove Lemma \ref{lem:monotone} on the monotonicity of the large deviation rate for $\lam_1$ using a Markov chain argument.
In Section \ref{sec:no-comp} we use Lemma \ref{lem:monotone} prove Proposition \ref{prop:no-comp} showing the event that $v_1$ is fully localized is negligible on the large deviation scale.

\subsection{Monotonicity of the rate function}
\label{sec:monotone}

Lemma \ref{lem:monotone} is a quick consequence of the following, lemma, in which we construct a discrete-time Markov chain $(H_n)_{n\ge0}$ on $\Sym_N$ with stationary distribution equal to the distribution of $H$, such that if  the process starts with $\lam_1(H_0) \approx   x$ for some $x>2$, then 
after time $N^{O(1)}$,  $\lam_1(H_n)$ is likely to be near the typical value $2+o(1)$. Because $H_{n}$ will be designed to take small steps, for $2<y<x$ this will ensure that $\lambda_{1}(H_{n})$ will be close to $y$ at some intermediate time, allowing us to compare the probabilities that $\lambda_{1}(H)$ is close to $x$ or $y$.

\begin{lemma}	\label{lem:goodMC}
Assume \eqref{assu:usg} holds, and that $N$ is sufficiently large depending on $\mu$.
Let $\delta\in [N^{-1/3}, 1]$.
There exists a sequence $(H_n)_{n\ge0}$ of random elements of $\Sym_N$ such that 
\begin{enumerate}
\item $H_n\eqd H$ for all $n$;
\item $\|H_{n+1}-H_n\|< 2\delta$ a.s. for all $n$;
\item 
For every $x\in[2+\delta,N]$,
\begin{equation}	\label{bd:goodMC3}
\P\Big( \lam_1(H_{N^4})\le 2+\delta \,\Big|\, |\lam_1(H_0)-x|\le\delta, \|H_0\|\le N \Big) \ge \frac12. 
\end{equation}
\end{enumerate}
\end{lemma}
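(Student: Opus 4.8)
The plan is to construct the Markov chain $(H_n)$ by a coordinate-wise (or block-wise) Glauber / resampling dynamics that keeps the law of $H$ stationary, but only accepts small moves. Concretely, at each step choose a coordinate $(i,j)$ (say cyclically, so that over a block of $\binom{N+1}{2}$ steps every entry is visited), and propose to resample $X_{ij}$ from its conditional law given that the change in $H_{ij}=\sqrt{2^{1_{i=j}}/N}\,X_{ij}$ has magnitude at most some $\delta'\asymp\delta/N$; by a standard Metropolis/rejection construction this can be made reversible with respect to the product law of the $X_{ij}$'s, hence $H_n\eqd H$ for all $n$, giving (1). Since a single step changes exactly one entry by at most $O(\delta/N)$ in operator norm (a rank-one or rank-two perturbation of Hilbert--Schmidt norm $O(\delta/N)$), over $\binom{N+1}{2}=O(N^2)$ steps the cumulative change is $O(\delta)$ in operator norm, and we can arrange the per-block bound to be $<2\delta$, giving (2). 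Here the assumption \eqref{assu:usg} enters to guarantee the conditional laws are not degenerate and the dynamics mixes at a polynomial rate; in particular the tilted/conditioned one-dimensional laws have densities bounded above and below on the relevant $O(\delta)$-windows.

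For (3), the key point is mixing: starting from \emph{any} initial matrix with $\|H_0\|\le N$, after $N^{O(1)}$ steps the law of $H_{n}$ is within total variation $o(1)$ — or at least within $o(1)$ after a relative-entropy/coupling argument — of the stationary law of $H$. Since under the stationary law $\lam_1(H)\le 2+\delta$ with probability $1-o(1)$ by \eqref{FuKo} (and indeed with probability $1-e^{-\omega(N)}$ by \Cref{lem:tightness}, though we only need $\ge \tfrac34$), it follows that $\P(\lam_1(H_{N^4})\le 2+\delta)\ge \tfrac12$ even after conditioning on the event $\{|\lam_1(H_0)-x|\le\delta,\ \|H_0\|\le N\}$, provided $N^4$ exceeds the mixing time. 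The mixing estimate can be obtained by a path-coupling / synchronous-coupling argument for the Glauber dynamics: couple two copies by using the same coordinate selection and the same uniform randomness in the rejection step, and track the Hilbert--Schmidt distance; under \eqref{assu:usg} each coordinate update is a contraction in expectation, yielding geometric decay with rate $1-c/N^2$, so $O(N^2\log N)\ll N^4$ steps suffice. One must be slightly careful that the initial condition is conditioned on a (possibly exponentially unlikely) event, but conditioning only inflates the relative density by $e^{O(N)}$ while the coupling gives contraction by $e^{-N^{\Omega(1)}}$ after $N^4$ steps, so the conclusion survives; alternatively one runs the chain for $N^4$ steps starting from the \emph{conditional} law and compares to stationarity via the data-processing inequality for relative entropy, which is contracted by the same factor $(1-c/N^2)^{N^4}=e^{-N^{\Omega(1)}}$.

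Granting \Cref{lem:goodMC}, \Cref{lem:monotone} follows quickly: fix $2\le y\le x\le N$ (the case $x<2+\delta$ is trivial since then the left side is bounded by $\frac1N\log\P(|\lam_1-x|\le\delta)\le 0$ and the right side includes a neighborhood of the typical value; and for $y<2+\delta$ we may replace $y$ by $2+\delta$ since the typical behavior \eqref{FuKo} makes $\P(|\lam_1-y|\le\delta)\to 1$, so the inequality is immediate). Assume then $2+\delta\le y\le x$. On the event $\{|\lam_1(H_0)-x|\le\delta,\ \|H_0\|\le N\}$ we have $\lam_1(H_0)\ge x-\delta\ge y$, while \eqref{bd:goodMC3} gives that with conditional probability $\ge\tfrac12$ we have $\lam_1(H_{N^4})\le 2+\delta\le y$. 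By property (2), $n\mapsto\lam_1(H_n)$ has increments of absolute value $<2\delta$ (since $|\lam_1(H_{n+1})-\lam_1(H_n)|\le\|H_{n+1}-H_n\|<2\delta$ by Weyl's inequality), so by the discrete intermediate value property there exists a (random) time $\tau\in\{0,\dots,N^4\}$ with $|\lam_1(H_\tau)-y|\le 2\delta$. Hence, using $H_\tau\eqd H$ by property (1) together with a union bound over the $N^4+1$ possible values of $\tau$,
\begin{align*}
\tfrac12\,\P\big(|\lam_1-x|\le\delta,\ \|H\|\le N\big)
&\le \P\big(\exists\, 0\le n\le N^4:\ |\lam_1(H_n)-y|\le 2\delta\big)\\
&\le (N^4+1)\,\P\big(|\lam_1-y|\le 2\delta\big).
\end{align*}
Taking logarithms, dividing by $N$, and absorbing the $\frac1N\log(2(N^4+1))=O(N^{-1}\log N)$ term (and replacing $2\delta$ by $\delta$ after rescaling $\delta$, which is harmless since $\delta\ge N^{-1/3}$ and the statement is up to $O(N^{-1}\log N)$) yields the claimed bound. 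The main obstacle is establishing the polynomial mixing bound robustly enough to handle the conditioning on the atypical starting event — this is where \eqref{assu:usg} is essential, ensuring the single-site conditional densities are uniformly comparable so that the coupling contracts.
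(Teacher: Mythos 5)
There is a genuine gap, and it sits exactly at the heart of part (3). You replace the paper's argument by a rapid-mixing claim for a small-move Metropolis/Glauber chain, and that claim is both unsubstantiated and, in the generality of \eqref{assu:usg}, false. The condition \eqref{assu:usg} includes purely atomic laws (Rademacher, Bernoulli, sparse Rademacher), so your assertion that it ``guarantees the conditional laws are not degenerate'' with ``densities bounded above and below'' is simply wrong; worse, with your proposed move size $|\Delta H_{ij}|\le\delta'\asymp\delta/N$, i.e.\ $|\Delta X_{ij}|\lesssim\delta/\sqrt N<2$, the chain cannot change any entry at all when $\mu$ is Rademacher, so $H_n\equiv H_0$ and \eqref{bd:goodMC3} fails outright. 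Even for continuous $\mu$, a synchronous/path-coupling contraction at rate $1-c/N^2$ is a log-concavity-type property, not a consequence of sub-Gaussianity (general sub-Gaussian potentials have metastable wells); and with steps of size $\delta/\sqrt N$ in $X$-units, an entry that starts at size up to $N^{3/2}$ (allowed by $\|H_0\|\le N$) needs $\gtrsim N^2/\delta$ of its own updates, i.e.\ $\gtrsim N^4/\delta>N^4$ total steps when $\delta=N^{-1/3}$, so even the time budget is in doubt. Your handling of the conditioning is also off: for $x\asymp N$ the event $\{|\lam_1(H_0)-x|\le\delta\}$ has probability as small as $e^{-cN^3}$ (cf.\ \Cref{lem:tightness}), not $e^{-O(N)}$; this particular point could be repaired by proving a worst-case-initial-state mixing bound, but that is precisely the estimate you do not have.

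For contrast, the paper never proves mixing. It coarse-grains each entry into intervals of length $L\in(N^{1/10},\tfrac12\delta\sqrt N]$ in $X$-units and runs a birth--death Metropolis chain on the interval index; \eqref{assu:usg} enters only through \Cref{lem:stepdown}, which shows adjacent interval masses decay geometrically away from the origin, hence a drift toward the central interval and a hitting-time bound that is \emph{uniform over initial states} -- this is what makes the conditioning on the atypical $H_0$ harmless. At time $N^4$ the law of $H_{N^4}$ is not compared to the Wigner law in total variation at all: on the event that every entry sits in the central interval, the entries are independent with a nearly standardized, uniformly sub-Gaussian conditioned law, and $\lam_1\le 2+\delta$ then follows from concentration (\Cref{cor:lam1-tilt.conc}). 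If you want to salvage your route, replace the mixing claim by such a drift/hitting argument with per-entry moves of size comparable to $\delta$ in $H$-units (not $\delta/N$), and conclude with a concentration bound for the conditioned ensemble rather than TV-closeness to stationarity. (Your deduction of \Cref{lem:monotone} from the lemma is essentially the paper's, except that proving the bound with window $2\delta$ on the right-hand side and then ``rescaling'' does not give the stated inequality with the same $\delta$ on both sides; the paper's intermediate-value step lands directly in $\event_y(\delta)$ because jumps are $<2\delta$ and that interval has length $2\delta$.)
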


\begin{proof}[Proof of Lemma \ref{lem:monotone}]
Write $\Evalx_x(\delta)=\{ M\in \Sym_N: |\lam_1(M)-x|\le \delta\}$ and recall $x>y$. 
\begin{align*}
\P( H_0\in \Evalx_x(\delta), \|H_0\|\le N ) 
&= \frac{\P( \lam_1(H_{N^4})\le 2+ \delta\,,\, H_0\in \Evalx_x(\delta), \|H_0\|\le N )}{\P(\lam_1(H_{N^4})\le 2+ \delta| H_0\in \Evalx_x(\delta), \|H_0\|\le N )}\\
& \le 2 \P( \lam_1(H_{N^4})\le 2+ \delta\,,\, H_0\in \Evalx_x(\delta), \|H_0\|\le N )\\
&\le 2  \P(\exists n\in [1, N^{4}] : \lam_1(H_n)\in \Evalx_y(\delta)\,,\, H_0\in \Evalx_x(\delta), \|H_0\|\le N )\\
&\le 2 \sum_{n=1}^{N^4} \P( \lam_1(H_n)\in \Evalx_y(\delta))\\
&= 2N^4\P( \lam_1(H_0)\in \Evalx_y(\delta))
\end{align*}
where in the 
second, third and final lines we applied properties (3), (2) and (1) from Lemma \ref{lem:goodMC}, respectively.
Taking logs and dividing by $N$ on both sides completes the proof. 
\end{proof}

For the proof of Lemma \ref{lem:goodMC} we need the following regularity estimate on the tails of $X$ under the \eqref{assu:usg} condition.

\begin{lemma}
\label{lem:stepdown} Let $\mu$ be a probability measure on the real line satisfying  \eqref{assu:usg} condition so that 
 $K_0:=\|\LLa_\mu''\|_\infty<\infty$. 
For any $r>0$ and all $y>0$ sufficiently large depending on $r,\mu$, we have
\begin{equation}	\label{stepdown.goal1}
\mu([ y + 2K_0 +r,+\infty))
 \le \frac1{10} \mu([y-2K_0,y+2K_{0}]).
\end{equation}
\end{lemma}
\begin{proof}
Fix $r>0$ arbitrary.
For $t\in\R$ write $\mu^t,\E^t$ for the tilted probability and expectation given by $\mu^t(E)= \int  e^{tX-\LLamu(t)} \ind(E)(X) d\mu(X)$. 
Then $\E^t X= \LLa_\mu'(t)$.
With $a\in[-\infty,0), b\in (0,+\infty]$ the left and right ends of the support of $\mu$, we have that $\LLa_\mu'$ is strictly increasing on $\R$ with range $(a,b)$. 
If $b<\infty$ then the left hand side of \eqref{stepdown.goal1} is zero for any $y>b$, so we may assume $b=+\infty$.
Then for any $y>0$ there exists $t(y)\in(0,\infty)$ such that $\LLa_\mu'(t(y))=y$. 
Since  $\E^t[(X-\E^t X)^2] = \LLa_\mu''(t) \le K_0$ for all $t\in \R$, from Tchebychev's inequality we get
\begin{equation}	\label{stepdown.Cheb}
\mu^{t(y)}( |X-y| \le 2K_0) \ge \frac34\qquad \forall y\in \R.
\end{equation}
On the other hand, for any $y>0$,
\[
\mu^{t(y)}(|X-y|\le 2K_0) \le e^{t(y)(y+2K_0)-\LLa_\mu(t(y))}\P(|X-y|\le 2K_0)
\]
and combining with \eqref{stepdown.Cheb} we have
\[
e^{\LLa_\mu(t(y))} \le \frac43 e^{t(y)(y+ 2K_0)} \P(|X-y|\le 2K_0).
\]
Hence, applying Markov's inequality followed by the above bound, we conclude
\begin{equation}	\label{stepdown.final}
\mu(X\ge y + 2K_0+r) \le e^{\LLa_\mu(t(y)) - (y+ 2K_0 + r)t(y)} \le \frac43 e^{-rt(y)} \P(|X-y|\le 2K_0).
\end{equation}
Since $b=+\infty$ we may take $y$ sufficiently large so that $rt(y)\ge100$, and \eqref{stepdown.goal1} follows.
\end{proof}


\begin{proof}[Proof of Lemma \ref{lem:goodMC}]
We assume throughout that $N$ is sufficiently large depending on $\mu$ without further comment.
Let $L=L_N\in (N^{1/10},\frac12\delta\sqrt{N}]$,
and partition $\R$ into intervals $I_k$ of length $L$, with
 $I_0:=(-\frac12L, \frac12L)$,  $I_k:=L [k-\frac12,k+\frac12)$ for $k\ge1$, and  $I_{-k}:=-I_k$. 
Denote $\pi_k:=\P(X\in I_k)$, and let $Q$ denote the Markov transition matrix on $\Z$ with entries
\begin{equation}
\label{def:Qtrans}
Q_{k,\ell} =\begin{cases} 
 \frac12 \min\big\{ \frac{\pi_\ell}{\pi_k} ,1\big\} & |k-\ell|=1\,\\ 
1-Q_{k,k+1}-Q_{k,k-1} & k=\ell,\\
0 & \text{otherwise.}
\end{cases}
\end{equation}
This transition matrix is reversible with stationary distribution $\pi=(\pi_k)_{k\in \Z}$. 
Moreover, 
\[
\pi_{\pm 1} \le \P( |X|\ge \tfrac12L) \le 2\exp( -c L^2)<\tfrac12<\pi_0,
\]
and for $k\ge1$, 
\[
\pi_{k+1} \le \P( |X|\ge (k+\tfrac12)L) \le \P( |X|\ge  kL + 4K_0) 
\le \frac1{10} \P( |X- kL|\le 2K_0)
\le \frac{\pi_k}{10} 
\]
where we applied Lemma \ref{lem:stepdown} with $r=2K_0$ and $y=kL\gg1$.
We similarly obtain $\pi_{k-1}\le \frac{\pi_k}{10}$ for all $k\le -1$.
Thus, the first case in \eqref{def:Qtrans} becomes
\begin{equation}
\label{chain.drift}
Q_{k,\ell} =\begin{cases} 
\frac12 & |k-\ell|=1 \text{ and } |\ell|< |k|,\\ 
\frac{\pi_\ell}{2\pi_k}\le \frac1{20}& |k-\ell|=1\text{ and } |\ell|> |k|,\\
\frac{\pi_\ell}{2\pi_k}\le \exp(-cN^{1/5}) & k=0, \ell =\pm1.
\end{cases}
\end{equation}
We easily conclude that with high probability, from any starting position $m=N^{O(1)}$, the chain reaches state 0 in time $N^{O(1)}$ and stays there for time $\ge\exp(N^{1/10})$. Indeed, 
 if $\P_m$ is a probability measure under which $(Z_n)_{n\ge0}$ is a Markov chain with transition matrix $Q$ and $Z_0=m$ a.s., and $T_0:=\inf\{ n\ge0: Z_n=0\}$, then $\E_mT_0 = O(m)$ for any $m\in\Z$ (with implicit constant depending only on $\mu$), and hence
\begin{equation}
\P_m(T_0\ge N^3m)\ls N^{-3}. 
\end{equation}
Moreover, from the strong Markov property, the third case in \eqref{chain.drift} and the union bound, we have 
\begin{equation}	\label{chain.go-stay}
\P_m( 
Z_{N^4}=0) =1-O(N^{-3}) \qquad \forall m\in[-N,N]\cap\Z.
\end{equation}

Now define a new Markov chain $(X_n)_{n\ge0}$ taking values in $\R$ that is coupled to $(Z_n)_{n\ge0}$ as follows. 
For each $k\in\Z$, $n\ge0$, on the event that $Z_n=k$ let $X_n$ be sampled from the law of $X$ conditioned on the event $\{X\in I_k\}$, independently of $X_0,\dots,X_{n-1}$. 
Since $\pi$ is stationary for $Q$ it follows that $\mu$ is a stationary distribution for $(X_n)_{n\ge0}$.

Finally, we can define the process $(H_n)_{n\ge0}$ as a Markov chain 
 as follows.
Let $(e_m)_{m\ge1}$ be iid uniform samples from $E:=\{(i,j): 1\le i\le j\le N\}$ and for each $e\in E$ let $A^e_n:=\sum_{m=1}^n \ind( e_m=e)$. 
Let $(X_m^e)_{n\ge0}, e\in E$ be iid copies of the process $(X_m)_{m\ge0}$ constructed above, with $X_0^e\sim \mu$ for all $e\in E$, and set $X_m^{(i,j)}:= X_m^{(j,i)}$ for $1\le j<i\le N$.
For each $n\ge0$ let $H_n$ have entries $(2^{1_{i=j}/2} X^{(i,j)}_{A^{(i,j)}_n})_{1\le i,j\le N}$.
Thus, at each time $n$ we sample a random entry $(i,j)$ on or above the diagonal and update the entry (with appropriate scaling by $\sqrt{2^{1_{i=j}}/N}$) according to the next step for the chain $(X_m^{(i,j)})_{m\ge0}$. 
Clearly $H_0\eqd H$, and since $\mu$ is stationary for $(X_m)_{m\ge0}$ we have $H_n\eqd H$ for all $n$, which gives the first property.

Since at each time only a single entry of $H_n$ is modified by at most $\delta/\sqrt{2}<\delta$, 
the second property follows.

From \eqref{chain.go-stay} we have that conditional on the event $\{|\lam_1(H_0)-x|\le \delta, \|H_0\|\le N\}$, the event $\event_0':=\{ H_{N^4}(i,j)\in \sqrt{2^{1_{i=j}}/N} I_0\,\forall i,j\in[N]\}$ 
holds with probability at least $1-O(N^{-1})\ge \frac9{10}$ (note that the starting interval $I_m$ varies from entry to entry, but the bound on the norm of $H_0$ ensures a uniform bound $|m|\le N/L\le N$). 
Moreover, conditional on  $\event_0'$, the entries of $H$ are still uniformly sub-Gaussian, and independent up to the symmetry condition and have the law of $N^{-1/2}X$ conditioned to lie in $\sqrt{2^{1_{i=j}}/N} I_0$. 
The conditioning only modifies the means and variances of the entries by factors $1+O(e^{-cN^{1/5}})$, so from Corollary \ref{cor:lam1-tilt.conc} we have $\P( \lam_1(H)<2+\delta| \event_0') \ge \frac{9}{10}$, and the third property follows.
\end{proof}

\subsection{Proof of Proposition \ref{prop:no-comp}}
\label{sec:no-comp}

We will assume without comment that $N$ is sufficiently large. 
By monotonicity it suffices to establish the claim with $s:=N/\log N$.
Recall the definition \eqref{def:AS} of $\Comp_N(s, \eps)$. We claim it suffices to show for arbitrary $2\le x<K<\infty$, $\delta\in(e^{-\sqrt{N}}, c_1/K)$ and $\eps\in(0,c_1/K^2)$ that
\begin{align}
&\P\Big(|\lam_1(H)-x|\le \delta \,,\; \|H\|\le K \,,\; v_1\in \Comp_N(s, \eps)\Big) \notag\\
&\qquad\qquad\qquad\qquad\qquad  
\le  e^{-2c_0N} \P\big(|\lam_1(H)-x|\le\delta\big)  + \exp(-N^{1.1}) 
\label{bd:no-comp}
\end{align}
for all $N$ sufficiently large depending on $K$ and $\mu$. 
Indeed, assuming the preceding statement holds, take $K=C_0L$ for a constant $C_0>0$ to be chosen sufficiently large depending on $\mu$. 
Let $I_k\subset I$ be a collection of disjoint intervals of length $\delta\in [\frac12N^{-1/4},N^{-1/4}]$ covering $I$. 
Applying the union bound and \eqref{bd:no-comp}, we have
\begin{align}
\P( v_1\in\Comp(s,\eps) | \lam_1\in I) 
& \le \frac{\P( \|H\|>K)}{\P(\lam_1\in I)} + \sum_k \frac{\P( \lam_1\in I_k, \|H\|\le K, v_1\in \Comp(s,\eps))}{\P(\lam_1\in I)}		\notag\\
&\le \frac{\P( \|H\|>K) + O(LN^{1/4} e^{-N^{1.1}})}{\P(\lam_1\in I)}
+ O(LN^{1/4} e^{-2c_0N})\,.		\label{no-comp1}
\end{align}
Letting $y$ be the midpoint of the right-most interval $I_k$ covering $I$, we can apply Lemma \ref{lem:monotone} followed by Lemma \ref{lem:lower1} and Lemma \ref{lem:Jprops}(\ref{Jprop.smallz}) (with $\eta=1/8$ and $R=N^{1/8}$, say) to lower bound
\begin{align*}
\P( \lam_1\in I) 
&\ge \P( |\lam_1-y|\le \delta)\\
&\ge N^{-O(1)}\P( |\lam_1-L|\le \delta, \|H\|\le N)\\
&\ge N^{-O(1)} (\P(|\lam_1-L|\le \delta) - e^{-cN^3})\\
&\ge N^{-O(1)} \Big( \exp( -N\rate^\gamma(L) + N^{1-c}) - e^{-cN^3}\Big)\\
&\ge \exp( -c L^2N).
\end{align*}
Substituting this bound in \eqref{no-comp1}, along with the upper bound $\P(\|H\|>K) \le \exp( -cK^2N)=\exp( -cC_0^2L^2N)$ from Lemma \ref{lem:tightness}, we obtain the desired bound \eqref{bd:no-comp0} by taking $C_0$ sufficiently large.

We turn to prove \eqref{bd:no-comp}.
Let $\cN_\eps$ be a $2\eps$-net for $\Comp(N/\log N, \eps)$ under the $\ell^2$ norm consisting of $N/\log N$-sparse vectors $w\in\sphereN$.
By standard volumetric considerations we  can take $\cN_\eps$ of size
\[
|\cN_\eps| \le {N\choose \lf N/\log N\rf} O(1/\eps)^{N/\log N} = O(\eps^{-1}\log N)^{N/\log N}. 
\]
We apply the union bound over $\cN_\eps$ to fix a $2\eps$-approximation $w\in\cN_\eps$ for $v_1$, thus bounding the left hand side of \eqref{bd:no-comp} by
\begin{equation}\label{union.Neps}
O(\eps^{-1}\log N)^{N/\log N} \max_{w\in\cN_\eps} \P\big( |\lam_1(H)-x|\le \delta \,,\; \|v_1-w\|_2\le 2\eps\,,\; \|H\|\le K \big) \,.
\end{equation}

Fix now an arbitrary $w\in \sphere^{N-1}$ with $|\supp(w)|\le n:= N/\log N$. For ease of notation we take $\supp(w)=[n]$.
We will often abusively treat $w$ as an element of $\R^n$.
From the bipartition $[n]\cup[n+1,N]$ of coordinates we have the block decomposition
\[
H = \begin{pmatrix} A & B^\tran \\ B & D\end{pmatrix}. 
\]
Denote events
\begin{align*}
\event_{x,w} &= \{ |\lam_1-x|\le \delta, \|v_1-w\|_2\le 2\eps\}\\
\good &= \{ \|H\|\le K\}\\
\sfA &= \{ \langle w, Aw\rangle \ge x-\frac1{100x}\}\\
\sfB &= \{ \|Bw\|_2 \le \frac1{100x}\}\\
\sfB'&= \{ \|Bw\|_2 \ge 1- \frac1{100x}\}\\
\sfD & = \{ |\langle Bw, DBw\rangle |\le C/\sqrt{N}\}.
\end{align*}
From the eigenvalue equation $Hv_1=v_1\lam_1$, on the event $\event_{x,w}\cap\good$ we have
\begin{align*}
xw &= Aw + O(\delta + K\eps)\\
\|Bw\|_2&= O(K\eps)
\end{align*}
and hence 
\begin{equation}
\event_{x,w}\cap \good \subset \sfA \cap \sfB
\end{equation}
taking  $c_1$ is sufficiently small. 
Moreover, Since 
\[
\E \|Bw\|_2^2 = (N-n)/N \ge 1-\frac1{\log N}
\]
and the entries of $\sqrt{N}Bw$ are sub-Gaussian, from the Chernoff bound for the sum of independent sub-exponential variables we have
\begin{equation}
\P ( \sfB) \le \P( \|Bw\|_2^2 < \tfrac12) \le  \exp( -c_0 N).
\end{equation}
Since $\sfA, \sfB$ are independent, 
\begin{equation}
\label{comp.1}
\frac1N\log \P( H\in \event_{x,w} \cap \good) \le  -c_0+ \frac1N\log\P( \sfA) .
\end{equation}

On the other hand, we have $\P(\sfB') \ge \frac{9}{10}$ for all $N$ sufficiently large depending on $K$. 
Writing $z:= Bw/\|Bw\|_2\in \sphere^{N-n-1}$, which is independent of $D$, we have $\E ( \langle z,Dz\rangle| B) = 0$ and $\E (\langle z,Dz\rangle^2|B) \ls N^{-1}$, 
and hence from Tchebychev's inequality,
\[
\P( \sfD| B) \ind(\sfB') \ge \frac{9}{10} \ind(\sfB')
\]
if the constant $C$ in the definition of $\sfD$ is sufficiently large. 
Thus,
\begin{equation}
\label{AtoA'}
\frac1N\log\P(\sfA) \le \frac1N\log \P(\sfA') + O(N^{-1})\,,\qquad \sfA':= \sfA \cap \sfB'\cap\sfD.
\end{equation}
On the event $\sfA'$, let 
\[
u := \begin{pmatrix} \sqrt{1-\theta^2} w \\ \theta \frac{Bw}{\|Bw\|_2} \end{pmatrix} \in \sphere^{N-1}, \qquad \theta:= \frac1{10x}\,.
\] 
We have
\begin{align*}
\langle u, Hu\rangle
&= (1-\theta^2)\langle w,Aw\rangle + 2\theta\sqrt{1-\theta^2} \|Bw\|_2^{-1} \langle w, B^\tran Bw\rangle + \theta^2 \|Bw\|_2^{-2} \langle Bw, DBw\rangle\\
&= (1-\theta^2) \langle w, Aw\rangle + 2\theta\sqrt{1-\theta^2} \|Bw\|_2 + O(N^{-1/2})\\
&\ge x+ \frac1{20x}.
\end{align*}
Hence, letting $\{x_k\}$ be a $\delta$-mesh for the interval $[x+(20x)^{-1}, N^{1/10}]$, we have
\begin{align*}
\P( \sfA') &\le \P( \lam_1(H) \ge x+ (20x)^{-1}) \\
&\le \P(  \lam_1(H) \ge x+ (20x)^{-1}, \|H\|\le N^{1/10}) + \P( \|H\|>N^{1/10})\\
&\le \sum_k \P( \lam_1(H)\in [x_k -\delta, x_k+\delta], \|H\|\le N^{1/10}) + \exp( -cN^{1.2})\\
&\le N^{1/5}\delta^{-1} N^{O(1)} \P( |\lam_1(H) - x | \le \delta) + \exp( -N^{1.1})\\
&\le N^{O(1)}e^{\sqrt{N}} \P(|\lam_1(H)-x|\le \delta) + \exp( -N^{1.1})
\end{align*}
where in the penultimate line we applied Lemma \ref{lem:monotone}. 
Combining with \eqref{comp.1} and \eqref{AtoA'}, we get
\[
\frac1N\log\P(H\in \event_{x,w}\cap\good) \le -c_0 + \max\Big\{ \frac1N\log\P( |\lam_1(H)-x|\le \delta) + O(N^{-1/2})\,,\; -N^{1/10} \Big\} + O( N^{-1})
\]
which together with \eqref{union.Neps} (replacing $c_0$ with $c_0/2$ to absorb the errors of size $o(1)$) gives \eqref{bd:no-comp} to complete the proof. 
\qed

\section{Large deviation lower bound }
\label{sec:lower}

In this section we establish the following proposition, which quickly yields the large deviation lower bounds of Lemmas \ref{lem:lower1} and \ref{lem:lower2}. The approach is by a nested tilting argument as described in Section \ref{sec:ideas}. 
Along the way we establish some key results towards the proof of Proposition \ref{prop:FE}.
The proofs are based on Propositions \ref{prop:tiltP} and \ref{prop:tiltQ} summarizing key properties of the tilted measures, which are proved in Sections \ref{sec:tiltP}--\ref{sec:annealed}. 

{
\begin{prop}
\label{prop:lower0}
Assume \eqref{assu:usg}.
For any 
$L\ge10$ and $\xcush,\zcush\in(0,\frac1{10})$
there exist $T=T(L,\zcush)\ge10$ and $\tcush_0,\eps_0\in(0,\frac1{10})$  depending only on 
$L,\xcush,\zcush$
such that the following holds. 
Let $\eta\in(\heta,\frac14-\heta)$,    $\srad\in(0,\eps_0)$,  $\log N\le R\le  N^{1/4}/\log N$, $x\in[2+\xcush,L]$ and $\chz\in(1-\zcush)\B^{n_0}$ with $n_{0}=N^{1-2\eta}$.
With $B:=[\thetam+\tcush_0,T]$, let 
\begin{equation}\label{zcurve}
B\ni\theta\mapsto \tvloc(\theta)\in\B^{n_0}
\end{equation}
be a continuous curve such that for some $\delta\in(0,1]$ and all $\theta\in B$,
\begin{enumerate}
\item $\tvloc(\theta)$ and $\chz$ have disjoint supports;
\item $\|\tvloc(\theta)\|_2^2\le (1-\zcush)^2-\|\chz\|_2^2$;
\item 
$\|\tvloc (\theta)\|_\infty\le \delta/R$.
\end{enumerate}
Then with $w_x(\theta):=\overlap_x(\theta)(\chz+\tvloc(\theta))$, 
there exists $\theta^*\in B$ such that 
\begin{align}
\label{lower:goal1}
&\frac1N\log\P(|\lambda_1-x|\le {N^{-1/20}}) \\
&\quad \notag\ge F_N(\theta^*;\Uloc_{w_x(\theta^*)}(\srad,R)) - J(x,\theta^*) - {O_{L,\xcush,\zcush}(\srad+\delta + 
N^{-c\heta} + R^2N^{-1/2})}
\end{align}
for all $N$ sufficiently large 
depending on 
$L,\xcush,\zcush,\heta$ and $\mu$ and a universal constant $c>0$.
\end{prop}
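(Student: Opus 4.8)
\textbf{Proof proposal for \Cref{prop:lower0}.}

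The plan is to follow the nested tilting scheme outlined in \Cref{sec:ideas-new}, using the intermediate value theorem to locate the right inverse temperature $\theta^*$. First I would set up the basic identity: for any fixed $\theta\in B$ and $w=w_x(\theta)$, write
\begin{align*}
\P(|\lambda_1-x|\le N^{-1/20})
&\ge \E\,\ind(|\lambda_1-x|\le N^{-1/20})\frac{I(H,\theta)}{I(H,\theta)}
\\
&\ge e^{-N(J(x,\theta)+\error)}\int_{\Uloc_w(\srad,R)}\P^{(\theta,u)}(|\lambda_1-x|\le N^{-1/20})\,\E e^{\theta N\langle u,Hu\rangle}\,dP(u),
\end{align*}
where the error $\error$ comes from replacing $\frac1N\log I(H,\theta)$ by $J(x,\theta)$ on the event (via \Cref{lem:approxsc}), after first restricting to the good event $\Good(K,\xcush,\eta)$ of \Cref{lem:good} (taken with $A$ large enough that the complement is negligible at speed $N$, using also exponential tightness from \Cref{lem:tightness}). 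Dividing and multiplying by $\int_{\Uloc_w(\srad,R)}\E e^{\theta N\langle u,Hu\rangle}dP(u)=\exp(NF_N(\theta;\Uloc_w(\srad,R)))$ rewrites the right-hand side as
\[
\exp\!\big(N(F_N(\theta;\Uloc_w(\srad,R))-J(x,\theta)-\error)\big)\int_{\sphereN}\P^{(\theta,u)}(|\lambda_1-x|\le N^{-1/20})\,dQ^{(\theta,w)}(u),
\]
so it suffices to show that for a suitable $\theta^*$ the final integral is $\ge e^{-o(N)}$.

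The mechanism for that is: find a set $\Aset\subset\sphereN$ of sub-exponential $Q^{(\theta^*,w)}$-measure (in fact $Q^{(\theta^*,w)}(\Aset)=e^{-o(N)}$) on which $\P^{(\theta^*,u)}(|\lambda_1-x|\le N^{-1/20})\ge \tfrac12$, say. For this I would invoke \Cref{prop:tiltQ} (the concentration of $Q^{(\theta,w)}$ near a deterministic vector $\overlap_x(\theta)(\chz+\tvloc(\theta))+\tvloc^{\theta,w}$ in Wasserstein/$\ell^\infty$ sense, with $\tvloc^{\theta,w}$ continuous in $\theta$) to identify the candidate $\Aset=\Aset^{(\theta)}$ as a small $d_2$-neighborhood of that vector, intersected with a delocalization constraint; \Cref{prop:tiltP} then gives that $\E^{(\theta,u)}\lambda_1$ is continuous in $\theta$ and (uniformly) continuous in $u$ under $d_2$, with concentration of $\lambda_1$ around its tilted mean, so on $\Aset^{(\theta)}$ the tilted probability of $\{|\lambda_1-x|\le N^{-1/20}\}$ is close to $1$ \emph{provided} $\E^{(\theta,u_0)}\lambda_1\approx x$ for the center $u_0=u_0(\theta)$. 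Since $\theta\mapsto \E^{(\theta,u_0(\theta))}\lambda_1$ is continuous (composition of the continuous curve $\theta\mapsto \tvloc(\theta)$, the continuity of $\tvloc^{\theta,w}$, and the joint continuity from \Cref{prop:tiltP}), equals $2+o(1)$ near $\theta=\thetam+\tcush_0$ (small tilt: the matrix is essentially a Wigner matrix, so $\lambda_1\sim2$), and exceeds $x$ once $\theta=T$ is large enough — here I would use a BBP-type lower bound $\E^{(\theta,u)}\lambda_1\gtrsim 2\theta\|u_w\|_2^2+\dots$, growing with $\theta$, choosing $T=T(L,\zcush)$ so this beats $L\ge x$ — the intermediate value theorem produces $\theta^*\in B=[\thetam+\tcush_0,T]$ with $\E^{(\theta^*,u_0(\theta^*))}\lambda_1=x+O(N^{-c})$.

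Chaining the estimates: $F_N(\theta^*;\Uloc_{w_x(\theta^*)}(\srad,R))-J(x,\theta^*)-\error - \frac1N\log\frac12$, with $\error=O_{L,\xcush,\zcush}(\srad+\delta+N^{-c\heta}+R^2N^{-1/2})$ absorbing (i) the \Cref{lem:approxsc} error $O_{L,\xcush,\tcush_0}(\theta(\delta'+N^{-1/4}+N^{-1/2+2\eta}))$ with $\delta'=N^{-1/20}$ and $\eta<\tfrac14$, (ii) the cost of restricting to $\Good$, (iii) the Wasserstein moduli-of-continuity errors from Propositions \ref{prop:tiltP}--\ref{prop:tiltQ}, and (iv) the $\srad$ and $\delta$ terms from the geometry of $\Uloc_w(\srad,R)$ and the hypothesis $\|\tvloc(\theta)\|_\infty\le\delta/R$ — gives exactly \eqref{lower:goal1}. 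The dependence of $T,\tcush_0,\eps_0$ only on $L,\xcush,\zcush$ follows because the small-tilt and large-tilt endpoint estimates depend only on these parameters (not on $\chz,\tvloc$, which enter only through the controlled norms), using hypotheses (1)--(3) to keep all cross terms and localized contributions under uniform control.

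\textbf{Main obstacle.} The delicate point is the \emph{uniformity} in the continuity argument: one needs $\E^{(\theta,u)}\lambda_1$ to be continuous in $u$ \emph{with a modulus independent of $N$} (this is exactly what \Cref{prop:tiltP} must supply, presumably via a coupling of the tilted matrix laws that controls $\|H^{(\theta,u)}-H^{(\theta,u')}\|$ or its effect on $\lambda_1$ by $d_2(u,u')$ plus lower-order terms), and one needs the $Q^{(\theta,w)}$-concentration of $u$ near $u_0(\theta)$ to hold \emph{uniformly along the whole curve} $\theta\mapsto\tvloc(\theta)$. Reconciling the fact that the localized part $u_w$ has order-one entries — so the tilted matrix is not a rank-one perturbation of a Wigner matrix and one cannot compute $\E^{(\theta,u)}\lambda_1$ explicitly — with the need for quantitative continuity is the crux; this is precisely why the argument proceeds via the intermediate value theorem rather than by inverting an explicit BBP formula as in the sharp sub-Gaussian case. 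A secondary technical nuisance is ensuring the center $u_0(\theta)$ lies in (the appropriate delocalized part of) $\Uloc_{w_x(\theta)}(\srad,R)$, which requires the hypotheses on disjoint supports and $\|\tvloc(\theta)\|_\infty\le\delta/R$ to control where the $Q$-mass concentrates.
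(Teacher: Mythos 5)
Your proposal is correct and follows essentially the same route as the paper's own proof: tilt by the spherical integral restricted to $\Uloc_{w_x(\theta)}(\srad,R)$, use \Cref{lem:approxsc} on the good event, use \Cref{prop:tiltQ} to concentrate the tilted spherical law near $w_x(\theta)$ plus a deterministic delocalized centering vector depending continuously on $\theta$, use \Cref{prop:tiltP} for continuity and concentration of the tilted mean of $\lambda_1$, and apply the intermediate value theorem between the endpoints $\thetam+\tcush_0$ and $T(L,\zcush)$ to locate $\theta^*$. The only points your sketch leaves implicit are handled in the paper by two small devices: the good event must be kept inside the \emph{tilted} probability, which requires the Cauchy--Schwarz tilted version of \Cref{lem:good} (namely \Cref{lem:good.tilt}, with $A\gg\theta^2$), and the continuity statement of \Cref{prop:tiltQ}(b) needs the $\theta$-dependent radius $\wt R(\theta)\propto 1/(\theta\overlap_x(\theta))$, after which one returns to the fixed $R$ of the statement by monotonicity of $F_N(\theta;\Uloc)$ in $\Uloc$.
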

}

Note that lower bound in Theorem \ref{theomain} for $x>2$ follows immediately from Proposition \ref{prop:lower0} under the case that 
$\tvloc\equiv0$. 
We further note that if $\chz=0$ then the assumption \eqref{assu:usg} can be dropped, as it is only needed for part (b) of Proposition \ref{prop:tiltP} -- see  
Remark \ref{rmk:dropUSG.tiltP}.

\begin{proof}[Proof of Lemma \ref{lem:lower1}]
Applying Proposition \ref{prop:lower0} with $\vloc=\chz$, $\tvloc\equiv 0$, $L=\xcush^{-1}$ and $\srad=N^{-2}$, say, there exist $T\ge10,\tcush_0\in (0,\frac1{10})$ depending only on $\xcush,\zcush$ and $\theta^*\in [\thetam+\tcush_0,T]$ such that
\begin{equation*}
\frac1N\log\P(|\lam_1-x|\le N^{-1/20}) \ge F_N\big( \theta^*; \Uloc_{\overlap_x(\theta^*)\vloc}(\srad,R)\big) - J(x,\theta^*) + O_{\xcush,\zcush}( N^{-c\heta} + R^2N^{-1/2})
\end{equation*}
for all $N$ sufficiently large. Then from Proposition \ref{prop:FE}, the right hand side is equal to
\[
\free_{N,R}(\theta^*,\overlap_x(\theta^*)\vloc) - J(x,\theta^*) + O_{\xcush,\zcush}(N^{-c\heta} + R^2 N^{-1/2})
\]
for all $N$ sufficiently large. The claim follows.
\end{proof}

\begin{proof}[Proof of Lemma \ref{lem:lower2}]
We apply Proposition \ref{prop:lower0} with $L=\xcush^{-1},R=N^{1/5}, \srad=N^{-2},\eta=\frac18$ (say), and let $T_0=T(\xcush^{-1},\zcush)$, $\tcush_0=\tcush_0(\xcush^{-1},\xcush,\zcush)$ be as provided by Proposition \ref{prop:lower0}. Denote $B:=[\thetam+\tcush_0,T_0]$. 
We would like to take $\tvloc(\theta)$ in Proposition \ref{prop:lower0} to be $\overlap_x(\theta)^{-1} w^*(\theta,\overlap_x(\theta)^2\tal, t)$ with $w^{*}$ as in \eqref{def:w*}, but the latter has jump discontinuities of $\ell^2$-norm  $O(t^{1/2}(\thetam)^{-1/2}N^{-1/4}) = O_{t,\xcush}(N^{-1/4})$ at $\theta_k=\thetam+ak$ for integer $k$ and $a=\frac12tN^{-1/2}$, so we fix any continuous $\tvloc:B\to\B^{n_0}$ with
\begin{equation}	\label{lower2a}
\sup_{\theta\in B}\big\| \tvloc(\theta) - \overlap_x(\theta)^{-1} w^*(\theta,\overlap_x(\theta)^2\tal, t) \big\|_2 \le C_0N^{-1/4}
\end{equation}
for some $C_0=C_0(t,\xcush)$ sufficiently large.
Since $\chz\in \B_{\ge N^{-\eps}}$ we verify from Tchebychev's inequality that $|\supp(\chz)| \le N^{2\eps} \le N^{1-2\eta}$. Since $w^*$ is supported on $[N-n_1+1,N]\subset [N+1,N]$ for all $N$ sufficiently large, $\chz$ and $w^*$ have disjoint supports.
By invariance of $\free_{N,R}(\theta,w)$ under permutations of the coordinates of $w$ we may take $\chz$ to be supported on the first $n_0=N^{1-2\eta}$ coordinates. Since 
\[
\|\tvloc(\theta)\|_2^2= \overlap_x(\theta)^{-2}\|w^*\|_2^2 \le \tal \le 1-\zcush-\|\chz\|_2^2
\]
by \eqref{w*norm} and our hypotheses, we verify condition (2) of Proposition \ref{prop:lower0}. 
Since $\theta\overlap_x(\theta)^2 = \theta-\thetam\ge \tcush_0\gs_{\xcush,\zcush}1$ for $\theta\in[\thetam+\tcush_0,T]$, we have (from \eqref{def:w*} and \eqref{lower2a})
\[
\|\tvloc(\theta)\|_\infty = \overlap_x(\theta)^{-1} \sqrt{\frac{t}{2\theta}} N^{-1/4} +O_{\xcush,t}(N^{-1/4})\le CN^{-1/4} = CN^{-1/20}/R
\]
for all $\theta\in B$, for some $C=C(\xcush,\zcush,t)<\infty$. 
Hence condition (3) of Proposition \ref{prop:lower0} holds with $\delta=CN^{-1/20}$ (which is less than 1 for all $N$ sufficiently large). 
From Proposition \ref{prop:lower0} it thus follows that there exists $\theta^*\in B$ such that
\begin{equation}
\frac1N\log\P(|\lam_1-x|\le N^{-1/20}) \ge F_N\big( \theta^*; \Uloc_{\overlap_x(\theta^*)(\chz+\tvloc(\theta^*))}(N^{-2},N^{1/5})\big) - J(x,\theta^*) + O_{\xcush,\zcush,t}( N^{-c} )
\end{equation}
for all $N$ sufficiently large.
Then from Proposition \ref{prop:FE}, the right hand side is equal to
\[
\free_{N,N^{1/5}}\big(\theta^*,\overlap_x(\theta^*)(\chz+\tvloc(\theta^*))\big) - J(x,\theta^*) + O_{\xcush,\zcush,t}(N^{-c} )
\]
for all $N$ sufficiently large. Finally, we can replace $\overlap_x(\theta^*)(\chz+\tvloc(\theta^*))$ with $w(\theta^*)$ up to an additive error $O_{\xcush,\zcush,t}(N^{-1/4})$ using \eqref{lower2a} and \eqref{freecont.w} in Lemma \ref{lem:freeprops}. The claim follows.
\end{proof}

In the remainder of this section we prove Proposition \ref{prop:lower0} using the next two propositions, which  concern the tilted probability measures $\P^{(\theta,u)}$ from \eqref{def:tiltP} and tilted laws on the sphere naturally associated to the restricted free energies $F_N(\theta,\Uloc_w)$.
We prove the following in Section \ref{sec:tiltP}.

\begin{prop}[Behavior of $\lambda_1$ under $\P^{(\theta,u)}$]
\label{prop:tiltP}
Assume \eqref{assu:usg} holds.
\begin{enumeratea}
\item \label{tiltP:conc}(Concentration).
For any $\theta\ge0$, $u\in\sphereN$ and $s\ge0$,
\begin{equation}
\P^{(\theta,u)}( |\lam_1(H)- \E^{(\theta,u)}\lam_1(H)|\ge s)
\le 2\exp( -cs^2N/\log N)
\end{equation}
for a constant $c>0$ depending only on $\|\LLa_\mu''\|_\infty$.
The same bound holds (up to modification of $c$) with $\E^{(\theta,u)}\lam_1(H)$ replaced by any median of $\lam_1(H)$ under $\P^{(\theta,u)}$.

\item\label{tiltP:cont} (Continuity). 
For any $\theta,\phi\ge0$ and $u,v\in\sphereN$, with $\Delta(x):=\max(x,x^{1/4})$ for $x\ge0$, we have
\begin{equation}
\big| \E^{(\theta,u)} \lambda_1(H) - \E^{(\phi,v)} \lambda_1(H)\big| \ls  \Delta(|\theta-\phi|) + \Delta(\sqrt{\theta\phi} d_2(u,v)).
\end{equation}
(Recall the Wasserstein distance $d_2$ from \eqref{d2W2}.)
\item\label{tiltP:small} (Small $\theta$).
For any fixed 
$\theta\le\frac12$ and $\kappa>0$, if $1\le R\le o(N^{1/4})$, then
\begin{equation}
\sup_{u\in 
\Deloc_R
} \E^{(\theta,u)} \lambda_1(H) \le 2+\kappa
\end{equation}
for all $N$ sufficiently large. 

\item\label{tiltP:large} (Large $\theta$). 
For any $\theta\ge1$, if $u\in \sphereN$ satisfies
\begin{equation}	\label{cond:u.med}
\sum_{i=1}^N u_i^2 1_{|u_i| \le L\theta^{-1/2}N^{-1/4}} \ge \beta_0
\end{equation}
for some $\beta_0>0$ and $L\in[0,+\infty]$, then with $m_\mu(L):= \min_{t\in[-2L^2,2L^2]}\psimu(t)$, we have
\begin{equation}	\label{Etilt-lambda.bd1}
\E^{(\theta,u)} \lambda_1(H) \gs m_\mu(L) \beta_0^2\theta 
\end{equation}
for all $N$ sufficiently large depending on $\beta_0$ and $m_\mu(L)$. 
\end{enumeratea}
\end{prop}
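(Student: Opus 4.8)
\textbf{Plan for the proof of \Cref{prop:tiltP}.}

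The four parts are largely independent, so I would treat them in turn. For part (\ref{tiltP:conc}), the statement is a concentration inequality for $\lambda_1(H)$ under the tilted law $\P^{(\theta,u)}$. The key observation is that under $\P^{(\theta,u)}$ the entries of $H$ remain independent (up to symmetry), each being an affine rescaling of an exponentially tilted copy of $\mu$; by \Cref{lem:subG-tilts} (the equivalence stated under \eqref{assu:usg}) these recentered tilts are uniformly sub-Gaussian with a constant controlled by $\|\LLa_\mu''\|_\infty$. Since $\lambda_1(H)=\sup_{\|x\|=1}\langle x,Hx\rangle$ is a $1$-Lipschitz function of $H$ in Hilbert--Schmidt norm, one applies a standard concentration-of-measure bound for Lipschitz functions of independent sub-Gaussian variables. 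The $\log N$ loss in the exponent comes from the usual net argument / Hanson--Wright route used to control the operator norm of such matrices; this is the same mechanism as in \Cref{lem:tightness}, and I would simply cite or reproduce that argument. No real obstacle here.

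For part (\ref{tiltP:cont}), the continuity of $u\mapsto \E^{(\theta,u)}\lambda_1(H)$ and $\theta\mapsto\E^{(\theta,u)}\lambda_1(H)$, the natural strategy is a coupling argument, as flagged in \Cref{sec:ideas}. Given $(\theta,u)$ and $(\phi,v)$, I would construct a single probability space carrying matrices $H\sim\P^{(\theta,u)}$ and $H'\sim\P^{(\phi,v)}$ that are close in Hilbert--Schmidt norm with high probability. The tilted entry means are $\sqrt{2^{1_{i=j}}/N}\,\LLa_\mu'(2^{\ep_{ij}}\theta\sqrt N u_iu_j)$ (cf.\ \eqref{intro.tilted-means}), and since $\LLa_\mu'$ is Lipschitz on compacts with $\LLa_\mu''$ bounded, the difference in means is controlled by $\theta|u_iu_j|$ vs.\ $\phi|v_iv_j|$; summing squares gives a bound in terms of $|\theta-\phi|$ and $\sqrt{\theta\phi}\,d_2(u,v)$ after optimizing over the permutation in the definition \eqref{def:W2.vec} of $d_2$ (invariance of the law of $H$ under coordinate permutations lets us align $u$ and $v$ optimally). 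One then couples the fluctuation parts by a monotone/quantile coupling of each tilted entry distribution, again using uniform sub-Gaussianity to bound the $L^2$ coupling cost. Combining via $|\lambda_1(H)-\lambda_1(H')|\le\|H-H'\|\le\|H-H'\|_{\mathrm{HS}}$ and taking expectations, with the concentration from part (a) to handle the low-probability bad event, produces the claimed modulus of continuity; the function $\Delta(x)=\max(x,x^{1/4})$ appears because on the bad event one only has the a priori operator-norm bound, contributing a term like $(\mathrm{cost})^{1/4}$. \textbf{This coupling estimate is the main obstacle}: one must carefully track how the $L^2$-Wasserstein/$d_2$ distance between the state vectors translates into an $L^2$ bound on $\|H-H'\|_{\mathrm{HS}}$ uniformly in $N$, and handle the regime where $|u_iu_j|$ or $|v_iv_j|$ is large (so $\LLa_\mu'$ is not near its linearization) using only the global Lipschitz bound on $\LLa_\mu'$ for sub-Gaussian $\mu$ (\Cref{rmk:assu.reg}).

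For part (\ref{tiltP:small}), I would argue as in \cite{HuGu1}: for $u\in\Deloc_R$ with $R=o(N^{1/4})$, every argument $2^{\ep_{ij}}\theta\sqrt N u_iu_j$ is $o(1)$, so $\LLa_\mu'$ linearizes and $\E^{(\theta,u)}H\approx 2\theta uu^\tran$, a rank-one matrix of operator norm $\le 2\theta\le 1$; the fluctuation part is a Wigner-type matrix with uniformly sub-Gaussian entries whose operator norm is $2+o(1)$ by \Cref{lem:tightness}-type bounds, and by the BBP phenomenon a rank-one perturbation of norm $\le1$ does not push $\lambda_1$ past $2$ in the limit, so $\E^{(\theta,u)}\lambda_1\le 2+o(1)$, which is $\le 2+\kappa$ for $N$ large. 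For part (\ref{tiltP:large}), the point is a lower bound: choose the test vector $x=u$ in the variational characterization, so $\E^{(\theta,u)}\lambda_1\ge \E^{(\theta,u)}\langle u,Hu\rangle=\sum_{i\le j}2^{\ep_{ij}}\LLa_\mu'(2^{\ep_{ij}}\theta\sqrt N u_iu_j)\,u_iu_j/\sqrt N$, and since $s\LLa_\mu'(s)\ge 0$ one may discard all terms except those with $|u_i|,|u_j|\le L\theta^{-1/2}N^{-1/4}$; on that range $\LLa_\mu'(s)\ge 2\psi_\mu(s)\,s$ up to lower order (from $\LLa_\mu(s)=s^2\psi_\mu(s)$), giving a contribution $\gtrsim \theta\, m_\mu(L)\,\big(\sum_{|u_i|\le L\theta^{-1/2}N^{-1/4}}u_i^2\big)^2\ge m_\mu(L)\beta_0^2\theta$. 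The only care needed is the bookkeeping in passing from $\LLa_\mu'$ to $\psi_\mu$, which is routine given real-analyticity of $\LLa_\mu$ (\Cref{rmk:assu.reg}). Details for all four parts are deferred to \Cref{sec:tiltP}.
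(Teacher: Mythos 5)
Your parts (a) and (c) match the paper's route (uniform sub-Gaussianity of the tilted entries via \Cref{lem:subG-tilts} plus Talagrand-type concentration for the convex Lipschitz function $\lambda_1$, and the adaptation of \cite[Lemma 5.2]{HuGu1} for small $\theta$), but part (b) as written has a genuine gap. You propose to couple $H\sim\P^{(\theta,u)}$ and $H'\sim\P^{(\phi,v)}$ by quantile couplings of the entries and then conclude via $|\lambda_1(H)-\lambda_1(H')|\le\|H-H'\|_{\HS}$. This fails for the fluctuation part: by the quantile-coupling estimate (the analogue of \Cref{lem:coupling.scalar}), the centered difference of the $(i,j)$ entry has variance of order $N^{-1/2}|\theta u_iu_j-\phi v_iv_j|$, so with $a:=|\theta-\phi|+\sqrt{\theta\phi}\,d_2(u,v)$ one gets $\E\|H-H'-\E(H-H')\|_{\HS}^2\asymp N^{-1/2}\sum_{i,j}|\theta u_iu_j-\phi v_iv_j|$, which for delocalized $u=v$ and $\theta\ne\phi$ is of order $N^{1/2}a$; hence $\E\|H-H'\|_{\HS}\gs N^{1/4}a^{1/2}$ and the Hilbert--Schmidt route gives a modulus of continuity that blows up with $N$. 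The paper's proof (Section \ref{sec:tiltP}) avoids this by splitting off the means: the difference of means is bounded in $\|\cdot\|_{\HS}$ (this part is $O(a)$ by the Lipschitz bound on $\LLa_\mu'$, exactly as you say), while the operator norm of the \emph{centered} difference is bounded by Lata{\l}a's inequality (\Cref{lem:normWignerSumVariance0}) in terms of row sums of variances ($\ls a$) and summed fourth moments ($\ls N^{-1/2}a$), yielding $\ls a^{1/2}+N^{-1/8}a^{1/4}$. In particular the exponent $1/4$ in $\Delta(x)=\max(x,x^{1/4})$ comes from the fourth-moment term in Lata{\l}a's bound, not from an exceptional event handled by an a priori norm bound, and no appeal to part (a) is needed: the whole argument runs in expectation. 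The permutation-invariance step you use to replace $\|u-v\|_2$ by $d_2(u,v)$ is the same as in the paper and is fine.

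Two smaller points. In (a), the $\log N$ loss in the paper does not come from a net/Hanson--Wright argument but from the truncation step in the sub-Gaussian extension of Talagrand's inequality (\Cref{prop:conc-subG}), applied to the convex $N^{-1/2}$-Lipschitz function $\lambda_1(N^{-1/2}\,\cdot)$; your skeleton is otherwise the paper's. In (d), your route is actually different from and simpler than the paper's (which proceeds by a Chernoff bound for $\P^{(\theta,u)}(\lambda_1\le K)$ via $\E e^{\theta N\langle u,Hu\rangle}$ and a separate control of the negative part): the pointwise bound $\lambda_1(H)\ge\langle u,Hu\rangle$ together with the exact tilted means does give \eqref{Etilt-lambda.bd1}, and even without any largeness condition on $N$. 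However, your stated inequality $\LLa_\mu'(s)\ge 2s\psi_\mu(s)$ ``up to lower order'' is false in general (for compactly supported $\mu$ and large $s$ one has $\LLa_\mu'(s)\to c$ while $2s\psi_\mu(s)\to 2c$, with $c$ the essential supremum); replace it by the convexity inequality $s\LLa_\mu'(s)\ge\LLa_\mu(s)=s^2\psi_\mu(s)$, valid for all $s$ since $\LLa_\mu(0)=0$, which after restricting to the coordinates with $|u_i|\le L\theta^{-1/2}N^{-1/4}$ (where $|2^{\ep_{ij}}\theta\sqrt N u_iu_j|\le 2L^2$, so $\psi_\mu\ge m_\mu(L)$) yields the claimed lower bound with an absolute constant.
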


\begin{remark}
\label{rmk:dropUSG.tiltP}
The assumption \eqref{assu:usg} is only needed for \eqref{tiltP:cont}, and one easily verifies the assumption can be dropped if $\|u\|_\infty,\|v\|_\infty=O(N^{-1/4})$.
\end{remark}

Recall notation \eqref{def:tiltQ}--\eqref{def:tiltQAB} and \eqref{def:UwrR}.
We prove the following in Section \ref{sec:annealed}.

\begin{prop}[Wasserstein localization under 
$Q^{(\theta)}(\,\cdot\,| \Uloc_w(\srad,R))$]
\label{prop:tiltQ}
Let $\eta\in (0,\frac14)$, $T\in [1,\infty)$, $\wcush\in (0,\frac12)$, {$\delta\in[0,1]$}, $\srad\in[N^{-4}, \frac\wcush{10}]$, and let $R,w$ (possibly depending on $\theta\in[0,T]$) satisfy
\begin{equation}	\label{wR.bounds0}
R\in [\log N, N^{1/4}]\,,\qquad
w= {w'+w^\le} \in(1-\wcush)\B^{n_0}\,,\qquad { \|w^\le\|_\infty\le  \delta/R } \, .
\end{equation}
{where $w',w^\le$ have disjoint supports.}
\begin{enumeratea}
\item (Concentration).
There exists $C_0(\wcush)>0$ depending only on $\wcush$ such that with $\srad_0:= C_0(\wcush)RN^{-1/2}$,
for every $\theta\in[0,T]$ 
there exists $\t v_{\theta,w,R}\in  \R^{\supp(w)^c}\cap\Deloc_R\cap \sqrt{1-\|w\|_2^2}\sphereN$ such that
\begin{align}
\label{tiltQ:conc}
&\frac1N\log Q^{(\theta)}\big( \Bset_2(w+\t v_{\theta,w,R}\,,\,\srad_0) \big| \Uloc_w(\srad,R)  \big)\\
&\qquad \ge -O_{T,\wcush}({\delta} + \srad+ R^2N^{-1/2}+ N^{-2\eta}\log N) \notag
\end{align}
for all $N$ sufficiently large depending on $T$ and $\wcush$.
{Moreover, $\t v$ depends on $w$ only through $w'$ and $\|w\|_2$.}
\item (Continuity).
{Let $\theta_0\in(0,T)$ and suppose that $R,w$ depend on $\theta\in[\theta_0,T]$ in the following way (in addition to satisfying \eqref{wR.bounds0} for all $\theta\in[\theta_0,T]$): for $\chz\in\B^{n_0}$ and $R'\ge1$ independent of $\theta$, and continuous functions $q:[\theta_0,T]\to (0,1)$, $\alpha:[\theta_0,T]\to [0,1-\wcush]$, we have
$w'=q(\theta)\chz$, $\|w^\le\|_2^2=\alpha(\theta)$}, and $R= R'/(\theta q(\theta))$.
{Then we can take the vector $\t v_{\theta,w,R}$ from part (a) to depend continuously on $\theta\in [\theta_0,T]$.} 
\end{enumeratea}
\end{prop}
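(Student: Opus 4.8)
The plan is to treat \Cref{prop:tiltQ} as the quenched-localization counterpart of \Cref{prop:FE}: the Laplace/Sanov analysis identifying $F_N(\theta;\Uloc_w(\srad,R))$ with $\free_{N,R}(\theta,w)$ simultaneously pins down where the tilted measure $Q^{(\theta,w,\srad,R)}$ concentrates. First I would use Fubini to write the $P$-density of $Q^{(\theta,w,\srad,R)}$ on $\Uloc_w(\srad,R)$ as proportional to $e^{\Psi(u)}$ with $\Psi(u):=\sum_{i\le j}\LLa_\mu(2^{\ep_{ij}}\theta\sqrt N u_iu_j)$, and split $\Psi=\Psi_{ww}+\Psi_{w^cw^c}+\Psi_{ww^c}$ according to whether both indices lie in $\supp(w)$, both in $\supp(w)^c$, or one in each, as in the discussion around \eqref{def:free-split}. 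On $\Uloc_w(\srad,R)$ one has $\|u_w-w\|_2\le\srad$ and $\|u_{w^c}\|_\infty\le R/\sqrt N$, so: $\Psi_{ww}(u_w)=N\freeL(\theta,u_w)$ depends only on the $n_0=N^{1-2\eta}=o(N)$ coordinates in $\supp(w)$, hence affects the exponential rate only through $o(N)$ terms; $\Psi_{w^cw^c}(u_{w^c})=N\theta^2\|u_{w^c}\|_2^4+O(T^3R^2N^{1/2})$ by Taylor expansion of $\LLa_\mu$ near $0$ (an $O(R^2N^{-1/2})$ error after normalization); and, using $\|\tLL\|_{\Lip}<\infty$ from \Cref{rmk:assu.reg} to replace $u_w$ by $w$ up to an error $O(N\srad)$, the cross term $\Psi_{ww^c}(u_w,u_{w^c})$ is, to leading order, $N\int\bigl(\sum_{i\in\supp(w)}\LLa_\mu(2\theta w_is)\bigr)\,d\widehat\nu_u(s)$, a linear functional of the rescaled empirical measure $\widehat\nu_u$ of the delocalized coordinates of $u$. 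Finally, since $\|w^\le\|_\infty\le\delta/R$ and $|s|\le R$, each term $\LLa_\mu(2\theta w^\le_is)$ has argument $O(T\delta)$, so $\sum_i\LLa_\mu(2\theta w_is)$ may be replaced by $\sum_i\LLa_\mu(2\theta w'_is)$ at cost $O(\delta)$; this is the mechanism behind the assertion that $\widetilde v$ depends on $w$ only through $w'$ and $\|w\|_2$.

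The core step is the Sanov-type Laplace argument already needed for \Cref{prop:FE} (building on \cite[Lemma 12]{AGH}): conditioning on $\|u_w\|_2^2=\|w\|_2^2+O(\srad)$, the $P$-marginal of $u_{w^c}$ is uniform on a sphere of radius $\sqrt{1-\|w\|_2^2}+O(\srad)$ in dimension $N-n_0$, whose rescaled coordinates are approximately i.i.d.\ centered Gaussians of variance $1-\|w\|_2^2$ subject to a norm constraint. Applying the Laplace principle to $\Psi_{ww^c}$ against this reference shows that under $Q^{(\theta,w,\srad,R)}$ the rescaled empirical measure $\widehat\nu_u$ lies, up to exponential weight $e^{-O_{T,\wcush}(\delta+\srad+R^2N^{-1/2}+N^{-2\eta}\log N)N}$, in a small $\cW_2$-neighbourhood of a maximizer $\nu^\star$ of $\nu\mapsto\int\sum_i\LLa_\mu(2\theta w'_is)\,d\nu(s)-\DKL(\nu|\gamma)$ over $\cP_{1-\|w\|_2^2}([-R,R])$ — the problem defining $\VP_R(\theta w',1-\|w\|_2^2)$ — the point being that the ball around $\nu^\star$ already captures the full free-energy value $\free_{N,R}(\theta,w)$. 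Strict concavity of this functional (relative entropy is strictly convex, the rest linear) gives a unique $\nu^\star$, and a quantitative stability estimate upgrades the Laplace bound to genuine $\cW_2$-concentration. I would then take $\widetilde v_{\theta,w,R}\in\R^{\supp(w)^c}$ to be the nondecreasing-quantile vector whose rescaled empirical measure equals $\nu^\star$, renormalized to norm exactly $\sqrt{1-\|w\|_2^2}$ (a relative correction of size $n_0/N=N^{-2\eta}$, absorbed in the error), so that $\widetilde v\in\Deloc_R$ and $\widetilde v$ depends on $w$ only through $w'$ and $\|w\|_2$; via $d_2(u,v)=\cW_2(\hat\mu_{\sqrt N u},\hat\mu_{\sqrt N v})$ together with $\|u_w-w\|_2\le\srad$ one gets the inclusion $\{u:\cW_2(\widehat\nu_u,\nu^\star)\ \text{small}\}\cap\Uloc_w(\srad,R)\subseteq\Bset_2(w+\widetilde v,\srad_0)$ with $\srad_0=C_0(\wcush)RN^{-1/2}$, which is part (a). (The $o(N)$ coordinates in $\supp(w)$ contribute negligibly both to the rate and, since $\srad\le\srad_0$ in the regime of interest, to the Wasserstein radius.)

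For part (b), the useful device is to push the variational problem forward under the bijection $s\mapsto\theta q(\theta)s$. With $w'=q(\theta)\chz$, $\|w^\le\|_2^2=\alpha(\theta)$, $R=R'/(\theta q(\theta))$, this sends $[-R,R]$ to $[-R',R']$ (since $R\cdot\theta q(\theta)=R'$), turns the potential $\sum_i\LLa_\mu(2\theta q(\theta)\chz_is)$ into the $\theta$-\emph{independent} $\sum_i\LLa_\mu(2\chz_i\tilde s)$, leaves $\DKL(\cdot|\gamma)$ invariant up to an explicit additive constant depending only on $\theta q(\theta)$ and $1-\|w\|_2^2$, and maps the moment constraint to $\cP_{m(\theta)}([-R',R'])$ with $m(\theta)=(\theta q(\theta))^2\bigl(1-q(\theta)^2\|\chz\|_2^2-\alpha(\theta)\bigr)$ continuous in $\theta$. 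Thus the transformed maximizer $\widetilde\nu^\star_\theta$ is the argmax of one fixed strictly concave functional over a constraint set that varies continuously (in its prescribed second moment) with $\theta$; by uniqueness of the maximizer plus a standard compactness argument (the functional is upper semicontinuous and coercive on probability measures supported in the $\cW_2$-compact set $[-R',R']$), $\theta\mapsto\widetilde\nu^\star_\theta$ is $\cW_2$-continuous, hence so are $\theta\mapsto\nu^\star_\theta$ and $\theta\mapsto\widetilde v_{\theta,w,R}$ (the last in $d_2$), as required.

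The main obstacle I anticipate is part (a): executing the Laplace/Sanov argument with errors \emph{uniform} in $\theta\in[0,T]$ and in the high-dimensional, $\theta$-dependent data $(w,R)$, correctly handling the coupling between $u_w$ and $u_{w^c}$ through $\Psi_{ww^c}$, the Gaussian-versus-uniform-on-sphere and norm-constraint corrections, and the negligible but nonzero $w^\le$ contribution; and, crucially, upgrading from ``$F_N$ equals the variational value of $\VP_R$'' (the content of \Cref{prop:FE}) to ``$Q^{(\theta,w,\srad,R)}$ concentrates at the unique optimizer'', which is exactly where strict concavity and a quantitative stability estimate for $\VP_R(\theta w',1-\|w\|_2^2)$ are indispensable. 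Once that machinery is in hand, part (b) is comparatively soft, the substitution trivializing the $\theta$-dependence of the potential being the only real idea.
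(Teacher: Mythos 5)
Your proposal follows essentially the same route as the paper: the splitting of $\Psi$ into localized, delocalized and cross blocks is \Cref{lem:annealed1}; the Sanov/Laplace analysis of the delocalized coordinates is the paper's quantitative Varadhan lemma (\Cref{lem:Varadhan}, proved by Gaussian comparison and coarse-graining); the vector $\widetilde v_{\theta,w,R}$ is defined, exactly as in \eqref{def:tv}, from the quantiles of the optimizer of the constrained Gibbs problem (\Cref{prop:gibbs}) rescaled to norm $\sqrt{1-\|w\|_2^2}$; and part (b) is handled by the same dilation device (\Cref{lem:dilate}, \Cref{prop:gibbs}(c)) that removes the $\theta$-dependence from the potential. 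Two remarks on points where your write-up is slightly off but easily repaired. First, the parenthetical ``$\srad\le\srad_0$ in the regime of interest'' is not available: the hypotheses allow $\srad$ up to $\wcush/10$ while $\srad_0\asymp RN^{-1/2}$, so your inclusion of $\{\cW_2(\widehat\nu_u,\nu^\star)\text{ small}\}\cap\Uloc_w(\srad,R)$ into $\Bset_2(w+\widetilde v,\srad_0)$ can fail through the localized coordinates. The paper's fix is to prove the lower bound with the localized radius tightened to $N^{-4}$, i.e.\ \eqref{FE-} bounds $F_N\big(\theta;\Uloc_w(N^{-4},R)\cap\Bset_2(w+\widetilde v,\srad_0)\big)$ from below, and then $\frac1N\log Q^{(\theta,w,\srad,R)}(\Bset_2)\ge$ \eqref{FE-} minus \eqref{FE+}; the $O(\srad)$ in \eqref{tiltQ:conc} then comes from the normalization $F_N(\theta;\Uloc_w(\srad,R))$ (controlled via \Cref{lem:Ploc}), not from the ball radius. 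Second, the ``strict concavity plus quantitative stability'' upgrade you flag as the main obstacle is not needed: \eqref{tiltQ:conc} only asserts that the Wasserstein ball has mass at least $e^{-O(\cdot)N}$, and this follows directly from the free-energy sandwich above (your own observation that ``the ball around $\nu^\star$ already captures the full free-energy value''), with \eqref{varadhan.LB} stated for a near-optimizer on a slightly shrunk interval so that the quantile vector stays admissible; no control of the complement of the ball is required.
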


We now apply Propositions \ref{prop:tiltP} and \ref{prop:tiltQ} to prove Proposition \ref{prop:lower0}.
We will assume without comment that $N$ is sufficiently large depending on $L,\xcush,\zcush,\heta$ and $\mu$. 
Let $T,\tcush_0,\eps_0$ be as in the statement of Proposition \ref{prop:lower0} to be chosen later depending on $L,\xcush,\zcush$. 

To locate $\theta^*$ we will apply a continuity argument with a one-parameter family of tilted measures $\wt Q^{(\theta)}$ on the sphere, which we now define.
We define, with $q_{x}$ as in \eqref{def:overlapx}, 
\begin{equation}
{\wt R(\theta):= R\cdot} 
\frac{T\overlap_x(T)}{\theta \overlap_x(\theta)}\,,\qquad \theta\in B.
\end{equation}
We note that $\overlap_x$, and hence $\theta\mapsto \theta \overlap_x(\theta)$, is increasing and 
continuous on $B$, with $\theta \overlap_x(\theta)=\sqrt{\theta(\theta-\thetam)} \gs_{\tcush_0} 1$ for $\theta\in B$.
It follows that $\wt R(\theta)$ is decreasing on $B$, with
\begin{equation}	\label{RT.bounds}
R=\wt R(T)\le \wt R(\theta) \ls_{\tcush_0,T} 
N^{1/4}/\log N
\,,\qquad \theta\in B.
\end{equation}
With notation as in \eqref{def:UwrR}, for $\theta\in B$ we abbreviate
\begin{align}	
\Uloc^{(\theta)} &:= \Uloc_{w(\theta)}(\srad, \wt R(\theta))\,,\qquad
\wt Q^{(\theta)} := 
 Q^{(\theta)}(\,\cdot \,| \Uloc^{(\theta)}).	\label{def:tQtheta}
\end{align}
Proposition \ref{prop:lower0} quickly follows from the next claim:

\begin{claim}\label{claim:lower2}
With hypotheses as in Proposition \ref{prop:lower0}, 
there exists $\theta^*=\theta^*_{x,\chz}\in B$ such that 
\begin{align}
\label{lower:goal2}
&\frac1N\log\P(|\lambda_1-x|\le N^{-1/20}) \\
&\qquad\ge F_N(\theta^*; \Uloc^{(\theta^*)}) - J(x,\theta^*) -O_{L,\xcush,\zcush}(\srad+{\delta}+R^2N^{-1/2}+N^{-c\heta}).	\notag
\end{align}
\end{claim}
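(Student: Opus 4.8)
The plan is to prove \Cref{claim:lower2} by a nested-tilting argument along the lines sketched in \Cref{sec:ideas-new}, with the novel feature being that the choice of $\theta$ is made by a continuity/intermediate-value argument rather than an explicit BBP computation. First I would run the chain of identities \eqref{intro-GH1.1}--\eqref{intro.LB1}: starting from $\P(|\lam_1-x|\le N^{-1/20})$, insert the factor $I(H,\theta)/I(H,\theta)$, use \Cref{lem:approxsc} (valid on $\Evalx_x\cap\Good$, whose complement is negligible by \Cref{lem:good}) to replace $\frac1N\log I(H,\theta)$ by $J(x,\theta)+o(1)$ in the denominator, restrict the integral over the sphere to $\Uloc^{(\theta)}$, and rewrite the result as
\begin{equation}
\frac1N\log\P(|\lam_1-x|\le N^{-1/20}) \ge F_N(\theta;\Uloc^{(\theta)}) - J(x,\theta) + \frac1N\log\int_{\sphereN}\P^{(\theta,u)}(|\lam_1-x|\le N^{-1/20})\,d\wt Q^{(\theta)}(u) + o(1),
\end{equation}
with error controlled using $\srad,\delta,R^2N^{-1/2},N^{-c\heta}$. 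So it suffices to find $\theta^*\in B$ for which the last integral is $\ge e^{-o(N)}$, i.e.\ for which $\P^{(\theta^*,u)}(|\lam_1-x|\le N^{-1/20})\ge e^{-o(N)}$ for a $\wt Q^{(\theta^*)}$-typical $u$.

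Next I would invoke \Cref{prop:tiltQ}(a) to locate, for each $\theta\in B$, a deterministic delocalized vector $\t v_{\theta}=\t v_{\theta,w(\theta),\wt R(\theta)}$ such that $\wt Q^{(\theta)}$ concentrates (on the exponential scale) in the Wasserstein ball $\Bset_2(w(\theta)+\t v_\theta,\srad_0)$, with $\srad_0=O_{T,\zcush}(\wt R(\theta)N^{-1/2})$ — noting the bound \eqref{RT.bounds} keeps $\srad_0=o(1)$. Then define the scalar function $\Phi(\theta):=\E^{(\theta,\,w(\theta)+\t v_\theta)}\lam_1(H)$ on $B=[\thetam+\tcush_0,T]$. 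By \Cref{prop:tiltP}(\ref{tiltP:cont}) this is continuous in $\theta$ provided $w(\theta)+\t v_\theta$ varies continuously in $d_2$; continuity of $\theta\mapsto w(\theta)=\overlap_x(\theta)(\chz+\tvloc(\theta))$ is given by hypothesis, and continuity of $\theta\mapsto\t v_\theta$ is exactly the content of \Cref{prop:tiltQ}(b) (one checks the parametrization hypotheses there are met: $w'=q(\theta)\chz$ with $q=\overlap_x$, $\|w^\le\|_2^2=\overlap_x(\theta)^2\|\tvloc(\theta)\|_2^2$, and $\wt R(\theta)=R'/(\theta q(\theta))$ for the appropriate constant $R'$). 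For the endpoints: at $\theta=\thetam+\tcush_0$, taking $\tcush_0$ small forces $\theta\le\tfrac12+o(1)$ (since $\thetam\le\tfrac12$ for $x\le L$ is false in general, but one chooses $\tcush_0$ and notes $\thetam<\tfrac12$ iff $x>2$... I would instead argue $\Phi(\thetam+\tcush_0)$ is close to the value on the delocalized component, which for small tilt is $\le 2+\xcush/2$ by \Cref{prop:tiltP}(\ref{tiltP:small})) — more carefully, I would choose $T_x$ such that at the left endpoint $\Phi<x$ and at the right endpoint $\Phi>x$: the lower bound at $\theta=T$ comes from \Cref{prop:tiltP}(\ref{tiltP:large}), since the delocalized component of $w(\theta)+\t v_\theta$ has $\ell^2$-mass $\gs\zcush>0$ on coordinates of size $\ls\wt R(\theta)N^{-1/2}=o(N^{-1/4})$, giving $\Phi(T)\gs m_\mu(\infty)\zcush^2 T\to\infty$; the bound $\Phi(\thetam+\tcush_0)<x$ at the left endpoint is the delicate one and would be handled by combining \Cref{prop:tiltP}(\ref{tiltP:small}) for the delocalized part with a perturbative estimate for the (small-norm) localized part. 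By the intermediate value theorem there is $\theta^*\in B$ with $\Phi(\theta^*)=x$.

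Finally, having fixed $\theta^*$ with $\E^{(\theta^*,\,w(\theta^*)+\t v_{\theta^*})}\lam_1=x$, I would use \Cref{prop:tiltP}(\ref{tiltP:cont}) again to conclude that $\E^{(\theta^*,u)}\lam_1 = x+O(\srad_0^{1/4})=x+o(1)$ uniformly for $u\in\Bset_2(w(\theta^*)+\t v_{\theta^*},\srad_0)$, and then \Cref{prop:tiltP}(\ref{tiltP:conc}) (Gaussian-type concentration with rate $N/\log N$) gives $\P^{(\theta^*,u)}(|\lam_1-x|\le N^{-1/20})\ge\tfrac12$ for each such $u$, since $N^{-1/20}\gg$ the $o(1)$ bias once $N$ is large. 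Combining, the integral $\int\P^{(\theta^*,u)}(\cdots)\,d\wt Q^{(\theta^*)}(u)\ge\tfrac12\wt Q^{(\theta^*)}(\Bset_2(w(\theta^*)+\t v_{\theta^*},\srad_0))\ge e^{-o(N)}$ by \Cref{prop:tiltQ}(a), which yields \eqref{lower:goal2}; \Cref{prop:lower0} then follows by applying \Cref{prop:FE} to replace $F_N(\theta^*;\Uloc^{(\theta^*)})$ by $\free_{N,R}(\theta^*,w(\theta^*))$ if desired, or is read off directly. The main obstacle I anticipate is the endpoint analysis for the continuity argument — specifically verifying that $\Phi(\thetam+\tcush_0)<x$, which requires controlling the contribution of the localized part $w(\theta)$ to $\E^{(\theta,u)}\lam_1$ at small $\theta$, where the rank-one BBP heuristic and the bound \eqref{tiltP:small} need to be combined carefully; a secondary subtlety is ensuring all the error terms ($\srad$, $\delta$, $R^2N^{-1/2}$, $N^{-c\heta}$, $\srad_0^{1/4}$) are genuinely $o(1)$ uniformly along the curve, which hinges on the bound \eqref{RT.bounds} and the standing assumptions on $R,\srad,\delta$.
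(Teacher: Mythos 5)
Your route is essentially the paper's: the same chain (insert $I(H,\theta)/I(H,\theta)$, control $\log I$ via \Cref{lem:approxsc} on $\Evalx_x\cap\Good$, restrict to $\Uloc^{(\theta)}$, pass to $\wt Q^{(\theta)}$), the same use of \Cref{prop:tiltQ} to concentrate $u$ near $w(\theta)+\t v_\theta$, the same intermediate-value argument on $\Phi(\theta)=\E^{(\theta,\,w(\theta)+\t v_\theta)}\lam_1$ with endpoints from \Cref{prop:tiltP}\eqref{tiltP:small} and \eqref{tiltP:large}, and the same final step via \Cref{prop:tiltP}\eqref{tiltP:cont} and \eqref{tiltP:conc}. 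The left-endpoint issue you flag as delicate is handled exactly as you half-guess: since $\overlap_x(\thetam+\tcush_0)\ls\sqrt{L\tcush_0}$, the localized part of $\t u^{(\theta_0)}$ has small $\ell^2$-norm, so \Cref{prop:tiltP}\eqref{tiltP:cont} (Wasserstein continuity) reduces $\Phi(\theta_0)$ to the purely delocalized tilt up to $O_L(\tcush_0^{1/8})$, and \eqref{tiltP:small} applies because $\thetam\le\frac12-c\sqrt{\xcush}$ for $x\ge2+\xcush$.

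Two points need repair. First, a genuine (though fixable) gap: after using \Cref{lem:approxsc} you must keep the indicator of $\Good$ inside the expectation all the way through, because the bound $I(H,\theta)\le e^{N(J(x,\theta)+o(1))}$ is only valid on $\Evalx_x\cap\Good$; consequently the integrand in your displayed lower bound should be $\P^{(\theta,u)}(H\in\Evalx_x\cap\Good)$, not $\P^{(\theta,u)}(|\lam_1-x|\le N^{-1/20})$. Negligibility of $\Good^c$ under $\P$ (\Cref{lem:good}) does not give negligibility under the tilted laws $\P^{(\theta,u)}$; you need the tilted analogue, which the paper supplies as \Cref{lem:good.tilt} (Cauchy--Schwarz plus the sub-Gaussian bound $\E f_{\theta,u}(H)^2\le e^{8\psimax\theta^2N}$, with $K_0$ chosen large depending on $T$), so that $\P^{(\theta^*,u)}(\Evalx_x\cap\Good)\ge\frac14$ on the Wasserstein ball. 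Second, a minor slip at the right endpoint: $m_\mu(\infty)=\inf_{\R}\psimu$ can vanish (e.g.\ compactly supported $\mu$), so your bound $\Phi(T)\gs m_\mu(\infty)\zcush^2T$ could be vacuous; since the delocalized coordinates are $o(\theta^{-1/2}N^{-1/4})$ you should apply \Cref{prop:tiltP}\eqref{tiltP:large} with a fixed finite $L$ (the paper takes $L=1$), giving $\Phi(T)\gs m_\mu(1)\zcush^2T$ with $m_\mu(1)>0$, which suffices to choose $T=T(L,\zcush)$ with $\Phi(T)\ge 2L$.
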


Indeed, to deduce Proposition \ref{prop:lower0}, we note that the set $\Uloc_{w(\theta)}(\srad, R)$ is monotone increasing in $R$. Since $\wt R(\theta)$ is monotone decreasing on $B$ we have
\[
\Uloc^{(\theta)} \supset \Uloc_{w(\theta)}(\srad, \wt R(T)) = \Uloc_{w(\theta)}(\srad, R)
\]
for all $\theta\in B$. Since $F_N(\theta;\Uloc)$ is increasing in $\Uloc$, we get
\begin{align*}
F_N(\theta^*; \Uloc_{w(\theta^*)}(\srad, R)) \le 
F_N(\theta^*; \Uloc^{(\theta^*)}) 
\end{align*}
so that the lower bound in Proposition \ref{prop:lower0} follows from the lower bound in Claim \ref{claim:lower2}.

It only remains to prove Claim \ref{claim:lower2}.

Consider for now an arbitrary $\theta\in B$.
Let $K=K(L,\zcush)>x$ to be chosen sufficiently large depending only on $L$ and $\zcush$.
By Lemma \ref{lem:approxsc} (with $\xcush$ in place of $3\xcush$ and assuming $K\ge L$) we have that for any $M\in\Evalx_x(N^{-1/20})\cap\Good(K,\kappa,\eta)$, 
\begin{equation}
\frac1N\log I(M,\theta) \le  J(x,\theta) + E_1
\end{equation}
for some
\begin{equation}	\label{lower:error1}
E_1\ls _{\tcush_0,K,\xcush} N^{-1/20}+{N^{-1/2+2\eta}}\,.
\end{equation}
Thus, abbreviating $\Evalx_x:=\Evalx_x(N^{-1/20})$ and $\Good:=\Good(K,\kappa,\eta)$, we have
\begin{align*}
\P(H\in\Evalx_x)
&\ge \P( H\in\Evalx_x\cap\Good)\\
&= \E \frac{I(H,\theta)}{I(H,\theta)} \ind( H\in \Evalx_x\cap\Good)\\
&\ge e^{ - N(J(x,\theta)  + E_1)} \E I(H,\theta)\ind(H\in \Evalx_x\cap\Good)\\
&= e^{ - N(J(x,\theta)  + E_1)} \int_{\sphereN} \E e^{\theta N\langle u,Hu\rangle}\ind(\Evalx_x\cap\Good) dP(u)\\
&= e^{ - N(J(x,\theta)  + E_1)} \int_{\sphereN} \P^{(\theta,u)}(H\in\Evalx_x\cap\Good) \cdot (\E e^{\theta N\langle u,Hu\rangle})dP(u)\\
&\ge e^{ - N(J(x,\theta)  + E_1)} \int_{\Uloc^{(\theta)}}  \P^{(\theta,u)}(H\in\Evalx_x\cap\Good)\cdot (\E e^{\theta N\langle u,Hu\rangle}) dP(u)\\
&=e^{  N(F_N(\theta;\Uloc^{(\theta)})-J(x,\theta)  + E_1)} \int_{\sphereN}  \P^{(\theta,u)}(H\in\Evalx_x\cap\Good) d\wt Q^{(\theta)}(u)	
\end{align*}
where in the third bound we restricted the domain of integration to $\Uloc^{(\theta)}$, and for the final line we recall that $\wt Q^{(\theta)}$ is supported on $\Uloc^{(\theta)}$.  
To establish Claim \ref{claim:lower2} it now suffices to locate $\theta^*\in B$ such that 
\begin{equation}	\label{lower.goal2}
\frac1N\log \int_{\sphereN}  \P^{(\theta^*,u)}(H\in\Evalx_x\cap\Good) d\wt Q^{(\theta^*)}(u)	
\ge - O_{L,\xcush,\zcush}(\srad+  {\delta}+R^2N^{-1/2}+N^{-\eta}).
\end{equation}

To that end, we first note that $1-\overlap_x(\theta) \ge 1-(1-\frac{\thetam}T)^{1/2} \gs (LT)^{-1}$ for all $\theta\le T$ (since $\thetam\gs 1/x\ge 1/L$), so
\begin{equation}	\label{qw.bound}
\|w(\theta)\|_2 \le 
\overlap_x(\theta) \le 1- \frac{c}{LT}
\end{equation}
for all $\theta\in B$.
We can hence apply Proposition \ref{prop:tiltQ} with  $\wcush:=c(LT)^{-1}$  (and taking   $\eps_0$  smaller than $\wcush/10$), $\theta_0:=\thetam+\tcush_0$, $\overlap_x$ for $q$, {$w^\le:=\overlap_x(\theta)z^\le(\theta)$}, $\wt R(\theta)$ in place of $R$ and $T\overlap_x(T) R$ for $R'$, to obtain 
{a continuous curve}
\begin{equation}	\label{def:tv-theta}
B\ni \theta\mapsto \t v^{(\theta)} := \t v_{\theta,w(\theta),\t R(\theta)} \in \sqrt{1-\|w(\theta)\|_2^2}\sphereN \cap \R^{\supp(\vloc)^c} \cap \Deloc_{\wt R(\theta)}
\end{equation}
such that 
\begin{equation}	\label{tQ.Loc}
\frac1N\log \wt Q^{(\theta)}( \Bset_2( w(\theta)+\t v^{(\theta)} , \srad_0)) \ge - O_{\tcush_0, T,L}( \srad+{\delta}+ R^2N^{-1/2}+ N^{-2\eta}\log N)
\end{equation}
for some $\srad_0= 
{O_{\tcush_0,T,L}(RN^{-1/2})}$. 
Let 
\begin{equation}	\label{def:lam-xw}
\lam_{x,\vloc'}(\theta):= \E^{(\theta,\t u^{(\theta)})} \lambda_1(H)\,,\qquad 
\t u^{(\theta)}:= w(\theta)+\t v^{(\theta)}\,,\qquad \theta\in B.
\end{equation}
{Since $\theta\mapsto \t u^{(\theta)}$ is continuous on $B$, from} Proposition \ref{prop:tiltP}\eqref{tiltP:cont} 
{it follows that $\lam_{x,\vloc'}$ is continuous on $B$.}

Now we consider $\lam_{x,\vloc'}$ at the endpoints of $B$.
For the left endpoint $\theta_0=\thetam+\tcush_0$ we note that $\t u^{(\theta_0)}$ is close to a delocalized vector:
\[
\|\t u^{(\theta_0)} - \t v^{(\theta_0)}\|_2 \le \overlap_x(\theta_0) \ls \sqrt{L\tcush_0}
\]
using again that $\thetam\gs1/x\ge 1/L$. 
From \eqref{RT.bounds} we have 
\[
\t v^{(\theta_0)}\in \Deloc_{\wt R(\theta)}\subset 
\Deloc_{N^{1/4}/(\log N)^{1/2}}\,.
\]
Taking $\tcush_0$ sufficiently small depending on $\xcush$ so that $\theta_0\le 1/2$, from Proposition \ref{prop:tiltP}(b,c) it follows that
\[
\lam_{x,\vloc'}(\theta_0) = \E^{(\theta_0, \t v^{(\theta_0)})} \lam_1(H) + O_L(\tcush_0^{1/8}) \le 2+ O_L(\tcush_0^{1/8})\,.
\]
Hence, we may now fix $\tcush_0=\tcush_0(\xcush,L)>0$ smaller if necessary so that 
\begin{equation}	\label{lam-xw.small}
\lam_{x,\vloc'}(\theta_0) \le 2+\tfrac12\xcush \,.
\end{equation}
Turning to lower bound $\lam_{x,\vloc'}(T)$, 
we have from our assumptions that 
\[
\|\t v^{(T)}\|_2^2 = 1-\overlap_x(\theta)^2 \| z(\theta)\|_2^2\ge 1-\|z(\theta)\|_2^2\ge \zcush 
\]
and moreover 
\[
\|\t v^{(T)}\|_\infty\le RN^{-1/2}\le N^{-1/4-\heta/10}\le T^{-1/2}N^{-1/4}
\]
for all $N$ sufficiently large. Thus,
\[
\sum_{i=1}^N (\t u^{(T)}_i)^21_{|\t u^{(T)}_i|\le T^{-1/2}N^{-1/4}} \ge 
\|\t v^{(T)}\|_2^2 \ge \zcush
\]
and \eqref{cond:u.med} holds for $\t u^{(T)}$ with $L=1,\beta_0= \zcush$. Hence,
we get from Proposition \ref{prop:tiltP}\eqref{tiltP:large} that
$
\lam_{x,\vloc'}(T) \gs \zcush^2 T
$.
We can now fix $T=T(L,\zcush):=CL/\zcush^2$ with $C$ sufficiently large that
\begin{equation}	\label{lam-xw.large}
\lam_{x,\vloc'}(T) \ge 2L\,.
\end{equation}
Note this also fixes $\eps_0=\eps_0(L,\zcush)$.

From \eqref{lam-xw.small}, \eqref{lam-xw.large} {and the continuity of $\lam_{x,\vloc'}$ it follows from the intermediate value theorem} that there exists $\theta^*\in B$ such that 
\begin{equation}\label{findclaim}
\lam_{x,\vloc'}(\theta^*)=x.
\end{equation}
From Proposition \ref{prop:tiltP}\eqref{tiltP:cont} (and recalling that $\tcush_0,T$ have already been fixed depending on $L,\xcush,\zcush$),
\begin{equation}
\E^{(\theta^*, u)} \lam_1(H) 
= x + O_{L,\xcush,\zcush}(N^{-1/16})
\end{equation}
for all $u\in \Bset_2(\t u^{(\theta^*)}, \srad_0)$, with $\srad_0\ls_{\tcush_0,T,L}RN^{-1/2}\ls_{L,\xcush,\zcush} N^{-1/4}$ as in \eqref{tQ.Loc}. 
Applying Proposition \ref{prop:tiltP}(a) with $s=N^{-1/4}$ we have
\begin{equation}	\label{lower8}
\P^{(\theta^*, u)}( |\lam_1(H) - x| \le C_{L,\xcush,\zcush} N^{-1/16}) \ge 1- \exp( - 
cN^{1/2}/\log N) \ge \frac12\qquad \forall u \in \Bset_2(\t u^{(\theta^*)}, \srad_0)
\end{equation}
for some $C_{L,\xcush,\zcush}$ sufficiently large.

The following extends Lemma \ref{lem:good} to the tilted measures $\P^{(\theta,u)}$.

\begin{lemma}
\label{lem:good.tilt}
Let $\theta\ge0$ and $u\in\ball^N$. 
For any $\kappa_0>0$, $A\ge 100\psimax \theta^2$ and $K_0=K_0(A)$ sufficiently large depending on $A$ (and hence on $\theta$),
\begin{equation}
\frac1N\log\P^{(\theta,u)}(H\notin\Good(K_0,\kappa_0,\eta)) \le -A
\end{equation}
for all $N$ sufficiently large depending on  
$A,\kappa_0$ and $\eta$.
\end{lemma}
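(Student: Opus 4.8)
\textbf{Proof plan for \Cref{lem:good.tilt}.}
The statement is the exact analogue of \Cref{lem:good} for the tilted measures $\P^{(\theta,u)}$, so the plan is to re-run the proof of \Cref{lem:good} after checking that the tilt does not spoil the relevant concentration and truncation estimates. The first observation is that under $\P^{(\theta,u)}$ the entries $H_{ij}$, $i\le j$, remain independent (the tilt by $e^{\theta N\langle u,Hu\rangle}$ factorizes over the entries as in \eqref{def:tiltP}, since $\langle u,Hu\rangle=\sum_{i\le j}2^{\ep_{ij}}\sqrt{N}u_iu_jX_{ij}$), and the rescaled entries $\sqrt{N/2^{1_{i=j}}}H_{ij}$ have distributions $d\tilde\mu_{t_{ij}}(x)=e^{t_{ij}x-\LLa_\mu(t_{ij})}d\mu(x)$ with $t_{ij}=2^{\ep_{ij}}\theta\sqrt N u_iu_j$. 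By \Cref{lem:subG-tilts} (cited in the paragraph after \eqref{assu:usg}), assumption \eqref{assu:usg} guarantees that the recentered tilted laws $\tilde\mu_t$ are uniformly sub-Gaussian over all $t\in\R$, with sub-Gaussian constant depending only on $\mu$; this is the only place \eqref{assu:usg} enters. Their means are $\LLa_\mu'(t_{ij})$, which are $O(|t_{ij}|)=O(\theta\sqrt N|u_iu_j|)$ since $\LLa_\mu'(0)=0$ and $\LLa_\mu''$ is bounded, so the mean matrix $\E^{(\theta,u)}H$ has Hilbert--Schmidt norm $O(\theta\sqrt N\|uu^\tran\|_\HS)=O(\theta\sqrt N\|u\|_2^2)\ls \theta\sqrt N$, hence operator norm $\ls\theta\sqrt N$ as well; after dividing by the $N^{-1/2}$ scaling this is $O(\theta)$, i.e.\ the mean shift contributes only a bounded amount to $\|H\|$.

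With these two facts in hand, each of the three ``good'' events is handled as in \Cref{app:good}. For $\Good_0(K_0)=\{\|H\|\le K_0\}$: write $H=\E^{(\theta,u)}H+(H-\E^{(\theta,u)}H)$; the first term has norm $O(\theta)$ by the above, and the centered part is a Wigner-type matrix with independent uniformly sub-Gaussian entries, so the standard norm deviation bound (as used to prove \eqref{tight-Rside}, e.g.\ via a net argument and the sub-exponential Bernstein inequality, or a moment/trace bound) gives $\P^{(\theta,u)}(\|H-\E^{(\theta,u)}H\|\ge K_0/2)\le e^{-cK_0^2N}$ once $K_0\ge C(1+\theta)$; choosing $K_0=K_0(A)$ with $cK_0^2\ge A$ (which forces $K_0\gs\sqrt A\gs\theta$ given the hypothesis $A\ge100\psimax\theta^2$) handles this event. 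For $\Good_1(\kappa_0,\eta)=\{\lambda_{\lf N^{1/2+\eta}\rf}\le 2+\kappa_0\}$ and $\Good_2$: on the event $\|H\|\le K_0$ one truncates entries at level $K_0N^{-1/2}$ (which changes $H$ by a matrix of negligible rank/norm on the high-probability event, exactly as in \Cref{app:good}), and then applies the concentration of linear eigenvalue statistics and of individual bulk eigenvalues for matrices with bounded, independent, uniformly sub-Gaussian entries — these estimates are insensitive to the bounded mean shift $O(\theta)$ and to the $1+O(\cdot)$ modifications of variances, giving failure probability $\le e^{-cN^{1/2}}$ or better for $\Good_1$ and $\le e^{-cN^{1-4\eta}}$ type bounds for $\Good_2$, in any case $\le e^{-AN}$ for $N$ large. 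A union bound over the three events then yields $\P^{(\theta,u)}(H\notin\Good(K_0,\kappa_0,\eta))\le e^{-AN}$ for $N$ sufficiently large depending on $A,\kappa_0,\eta$ and $\mu$.

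The main obstacle, such as it is, is bookkeeping uniformity in $\theta$: because $K_0$ is allowed to depend on $A$ (hence implicitly on $\theta$ through the constraint $A\ge100\psimax\theta^2$), one must be careful that the operator-norm bound on the mean matrix and the sub-Gaussian constants of the tilted entries are controlled in terms of $\theta$ in a way compatible with the stated quantifier order — this is why the hypothesis $A\ge100\psimax\theta^2$ is imposed, ensuring $K_0(A)$ is already large enough to dominate the $O(\theta)$ mean shift. Since $\theta$ and $u$ are fixed (not varying with $N$) the remaining estimates are genuinely the $N\to\infty$ concentration bounds of \Cref{lem:good}, applied to the entry distributions $\tilde\mu_{t_{ij}}$ in place of $\mu$, and no new ideas are needed. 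I would therefore present the proof as: (i) factorization and identification of the tilted entry laws; (ii) the mean-matrix norm bound via \Cref{lem:subG-tilts}; (iii) invoke the proof of \Cref{lem:good} verbatim with these modified laws and the shift absorbed into $K_0$; (iv) union bound.
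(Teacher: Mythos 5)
Your plan hinges on the claim that, under $\P^{(\theta,u)}$, the entries are only perturbed by a "bounded mean shift $O(\theta)$" and "$1+O(\cdot)$ modifications of variances", so that the proof of \Cref{lem:good} applies "verbatim". That step is not justified and, as stated, is false: the tilt parameters are $t_{ij}=2^{\ep_{ij}}\theta\sqrt{N}u_iu_j$ with $u\in\B^N$ arbitrary (in the application $u$ has a genuinely localized part), so $|t_{ij}|$ can be of order $1$ on $\asymp N$ entries (e.g.\ on an entire row when $u$ has a coordinate of order one, or on a $\sqrt N\times\sqrt N$ block when $u$ spreads over $\asymp\sqrt N$ coordinates of size $N^{-1/4}$), and of order $\sqrt N$ on diagonal entries. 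There the tilted variances $\LLa_\mu''(t_{ij})$ are not close to $1$ and the (rescaled) means are not $o(1)$. Consequently the inputs behind \Cref{lem:good} do not transfer: \Cref{lem:ktail} and \Cref{claim:conc} rest on the law of large numbers \eqref{FuKo} and on \Cref{lem:median}, i.e.\ on the local semicircle law for matrices with iid standardized entries, and $\Good_2$ demands the Stieltjes transform and log-potential to match the semicircle values to precision $N^{-1/2+2\eta}$ — none of this is available off the shelf for the resulting inhomogeneous, non-centered ensemble. One could try to repair this by splitting off the anomalous rows/blocks (whose centered part has $O_\theta(1)$ Hilbert--Schmidt norm) and proving a quantitative comparison of the expected spectral measure with the semicircle for the remaining nearly-flat variance profile, but that is real additional work, not a verbatim rerun. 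A further, smaller issue: your argument invokes \Cref{lem:subG-tilts} and hence \eqref{assu:usg}, which is not a hypothesis of the lemma (only \eqref{assu:sg} is standing), so even if completed it would prove a weaker statement.

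The paper's proof avoids all of this by never analyzing the tilted ensemble spectrally. Writing $f_{\theta,u}(H)=e^{\theta N\langle u,Hu\rangle}/\E e^{\theta N\langle u,Hu\rangle}$, Cauchy--Schwarz gives $\P^{(\theta,u)}(\Good^c)\le \P(\Good^c)^{1/2}\,(\E f_{\theta,u}(H)^2)^{1/2}$; by Jensen the denominator of $f_{\theta,u}$ is at least $1$, and $\E e^{2\theta N\langle u,Hu\rangle}=\exp\big(\sum_{i\le j}\LLa_\mu(2^{1+\ep_{ij}}\theta\sqrt N u_iu_j)\big)\le e^{8\psimax\theta^2N}$, so \Cref{lem:good} applied with exponent $4A$ yields the claim. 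This uses only \eqref{assu:sg}, and it shows the true role of the hypothesis $A\ge 100\psimax\theta^2$: it is there to absorb the $e^{8\psimax\theta^2N}$ cost of the change of measure, not (as you suggest) to make $K_0$ dominate an $O(\theta)$ mean shift. If you want a short, correct proof, this change-of-measure argument is the one to give.
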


Before proving the lemma we complete the proof of Claim \ref{claim:lower2}.
Recall the event $\Good=\Good(K,\kappa,\eta)$ from \eqref{lower.goal2}, where $K=K(L,\zcush)$ is still to be specified. 
With $K_0(A)$ as in Lemma \ref{lem:good.tilt}, we take $A=A(L,\zcush):=\max\{1, 100\psimax T(L,\zcush)^2\}$ and fix $K(L,\zcush):=K_0(A(L,\zcush))$.
Applying Lemma \ref{lem:good.tilt} with $\theta=\theta^*${, $\kappa_0=\kappa$} and combining with \eqref{lower8}, we get
\begin{equation}	\label{lower.above2}
\P^{(\theta^*,u)}( |\lambda_1(H)- x| \le C_{L,\xcush,\zcush} N^{-1/16}, H\in \Good) \ge \frac14\qquad
\forall\;u\in \Bset_2(\t u^{(\theta^*)}, \srad_0)\,.
\end{equation}
On the other hand, the left hand side above is bounded above by $\P^{(\theta^*,u)}( H\in \Evalx_x\cap\Good)$.
Hence, for the left hand side of \eqref{lower.goal2}, by restricting the integral to $\Bset_2(\t u^{(\theta^*)}, \srad_0)$ and substituting the lower bounds \eqref{lower.above2} and \eqref{tQ.Loc}, we have
\begin{align*}
&\frac1N\log \int_{\sphereN}  \P^{(\theta,u)}(H\in\Evalx_x\cap\Good) d\wt Q^{(\theta^*)}(u)	\\
&\qquad\ge \frac1N\log \int_{\Bset_2(\t u^{(\theta^*)}, \srad_0)}  \P^{(\theta,u)}(H\in\Evalx_x\cap\Good) d\wt Q^{(\theta^*)}(u)	\\
&\qquad\ge \frac1N \log \wt Q^{(\theta^*)}( \Bset_2(\t u^{(\theta^*)}, \srad_0)) -O(N^{-1})\\
&\qquad\ge -O_{L,\xcush,\zcush}(  \srad +{\delta}+R^2N^{-1/2}+ N^{-2\eta}\log N).
\end{align*}
Thus we obtain \eqref{lower.goal2} and hence Claim \ref{claim:lower2}, which completes the proof of Proposition \ref{prop:lower0}.
\qed

\begin{proof}[Proof of Lemma \ref{lem:good.tilt}]
Fix $A\ge100\psimax\theta^2$.
Writing $f_{\theta,u}(H)
$ for the density of $\P^{(\theta,u)}$ with respect to $\P$, we have from Cauchy--Schwarz that
\begin{equation}	\label{good.tilt1}
\P^{(\theta,u)}(\Good(K_0,\kappa_0,\eta)^c)
\le \P(\Good(K_0,\kappa_0,\eta)^c)^{1/2} (\E f_{\theta,u}(H)^2)^{1/2}.
\end{equation}
From Lemma \ref{lem:good} we can take $K_0$ sufficiently large depending on $A$ so that
\begin{equation}	\label{good.tilt2}
\frac1N\log\P(H\notin \Good(K_0,\kappa_0,\eta)) \le -4A.
\end{equation}
For the second moment of $f_{\theta,u}(H)$ we have
\begin{align*}
\E f_{\theta,u}(H)^2= \frac{\E e^{2\theta N\langle u,Hu\rangle}}{(\E e^{\theta N\langle u,Hu\rangle})^2} 
&\le \E e^{2\theta N\langle u,Hu\rangle}\\
&= \exp\bigg( \sum_{i\le j} \LLa_\mu( 2^{1+\ep_{ij}}\theta\sqrt{N}u_iu_j)\bigg)
\le e^{8 \psimax \theta^2 N \|u\|_2^2} \le e^{8\psimax\theta^2N}
\end{align*}
where in the first bound we applied Jensen's inequality to bound the denominator below by 1 (recall $H$ is centered).
Combining with \eqref{good.tilt1}--\eqref{good.tilt2} and our assumption on $A$ we get
\[
\P^{(\theta,u)}(\Good(K_0,\kappa_0,\eta)^c)
\le \exp( (8\psimax \theta^2-2A)N) \le \exp( - AN)
\]
as desired.
\end{proof}

\section{The top eigenvalue of tilted Wigner matrices}
\label{sec:tiltP}

Here we prove Proposition \ref{prop:tiltP}.

\subsection{Proof of Proposition \ref{prop:tiltP}\eqref{tiltP:conc}}
This is immediate from 
a more general result, Corollary \ref{cor:lam1-tilt.conc}, that we prove in the appendix. \qed

\subsection{Proof of Proposition \ref{prop:tiltP}\eqref{tiltP:cont}}

We apply a coupling argument. 
 Let $\mu$ be a standardized distribution satisfying \eqref{assu:usg}
and let $X$ have distribution $\mu$.
For a real number $\alpha$, let 
$\mu^{\alpha}$ be the tilted measure on the real line 
\[
\mu^{\alpha}(A):=\frac{\E[ e^{\alpha X}\ind(X\in A)]}{\E[e^{\alpha X}]}.
\]
By a classical construction, we may define a family $(X_{\alpha},\alpha\in \mathbb R)$ of random variables constructed on the same probability space and such that $X_{\alpha}$ has distribution $\mu^{\alpha}$. A remarkable fact is that this family is monotone in $\alpha$. Let us recall the definition of this monotone coupling. 
With $F_\alpha(x):=
\mu^\alpha((-\infty, x])$
we let 
\begin{equation}\label{defphi}
F^{-}_{\alpha}(t):=\sup_{}\{ x: F_{\alpha}(x)\leq t\}\,,\quad t\in[0,1].
\end{equation}
For $Y$ a random variable following the   uniform law  on
$[0,1]$, we  set $ X_{\alpha}:=F_{\alpha}^{-}(Y)$. 
We have that for every real number $x$
\begin{equation}	\label{tilt-phi}
\mathbb{P}(F_{\alpha}^-(Y)\leq x)  =\mathbb{P}(Y\leq F_{\alpha}(x))=F_{\alpha}(x)
\end{equation}
and hence $X_\alpha$ follows $\mu^{\alpha}$. For a pair $\alpha,\beta\in\R$ we hence obtain a coupling 
\begin{equation}	\label{scalar-coupling}
(X_\alpha,X_\beta)=(F_{\alpha}^-(Y), F_{\beta}^-(Y)).
\end{equation}
This coupling is monotone.
\begin{lemma}\label{lem:monotony-connection}
For every $\alpha\le \beta$, $X_{\alpha}\le X_{\beta}$.
\end{lemma}
\begin{proof}
It is enough to show that $F_{\alpha}^{-}(t)\le F_{\beta}^{-}(t)$ for every $t\in [0,1)$. For $\beta\ge \alpha$, and $x\in \mathbb R$, we have 
\[
F_{\beta}(x)=\frac{1}{\mathbb{E}\left(e^{\beta X}\right)}\mathbb{E}\left(e^{(\beta-\alpha)X}e^{\alpha X}1_{X\leq x}\right)\leq\frac{\mathbb{E}\left(e^{\alpha X}\right)}{\mathbb{E}\left(e^{\beta X}\right)}e^{(\beta-\alpha)x}F_{\alpha}(x)
\]
and
\[
1-F_{\beta}(x)=\frac{1}{\mathbb{E}\left(e^{\beta X}\right)}\mathbb{E}\left(e^{(\beta-\alpha)X}e^{\alpha X}1_{X>x}\right)\geq\frac{\mathbb{E}\left(e^{\alpha X}\right)}{\mathbb{E}\left(e^{\beta X}\right)}e^{(\beta-\alpha)x}(1-F_{\alpha}(x))
\]
and therefore 
\[
\frac{F_{\beta}(x)}{1-F_{\beta}(x)}\leq\frac{F_{\alpha}(x)}{1-F_{\alpha}(x)}.
\]
We deduce that $F_{\beta}(x)\leq F_{\alpha}(x)$ for all $x\in\mathbb{R}$
and then $F_{\alpha}^-(t)\leq F_{\beta}^-(t)$ for all $t\in[0,1]$.
\end{proof}

We will need the following lemma, stating that if $|\alpha-\beta|$ is small, then the coupled pair $X_\alpha,X_\beta$ are close in $L^p$ for any $p\in[1,\infty)$ (we just need $p\in\{1,2,4\}$). We suspect such estimates have been proved before but we could not find a reference, so a proof is provided in Appendix \ref{app:coupling}.

\begin{lemma}[Continuity of coupling for tilted scalar laws]
\label{lem:coupling.scalar}
For any $k\in\mathbb{N}^{*}$,
there exists $C_{k}>0$ (depending only on $k$ and 
{$\|\LLa_\mu''\|_\infty$}) such that for all $\alpha,\beta\in\mathbb{R}$, $|\alpha-\beta|\leq1$, with $(X_\alpha,X_\beta)$ as in \eqref{scalar-coupling}, we have
\begin{equation}\label{coupling.b1}
\mathbb{E}(|X_{\beta}-X_{\alpha}|^{k})\leq C_{k}|\beta-\alpha|
\end{equation}
 and for all $\alpha,\beta\in\mathbb{R}$, 
\begin{equation}\label{b11}
\mathbb{E}(|X_{\beta}-X_{\alpha}-\mathbb{E}(X_{\beta}-X_{\alpha})|^{k})\leq C_{k} |\alpha-\beta|\wedge 1\,.
\end{equation}
\end{lemma}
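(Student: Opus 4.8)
\textbf{Proof proposal for \Cref{lem:coupling.scalar}.}

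The plan is to exploit the explicit monotone coupling \eqref{scalar-coupling} together with the regularity of the tilted log-Laplace transform guaranteed by \eqref{assu:usg}. The key observation is that, writing $\LLa=\LLa_\mu$ and $K_0:=\|\LLa''\|_\infty<\infty$, the tilted variable $X_\alpha$ has mean $\LLa'(\alpha)$ and variance $\LLa''(\alpha)\le K_0$, uniformly in $\alpha$; moreover all higher cumulants of $X_\alpha$ are uniformly bounded, since $\log\E e^{t X_\alpha}=\LLa(\alpha+t)-\LLa(\alpha)$ has all derivatives of order $\ge 2$ uniformly bounded in $\alpha$ (being derivatives of $\LLa''$ after one integration — more precisely, the $j$-th cumulant of $X_\alpha$ equals $\LLa^{(j)}(\alpha)$, and $\LLa^{(j)}$ is bounded for $j\ge 2$ because $\LLa''$ is bounded and real-analytic with the tilted measures uniformly sub-Gaussian, see \Cref{lem:subG-tilts}). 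Hence $\E|X_\alpha-\LLa'(\alpha)|^k\le C_k$ for every $k$, again uniformly in $\alpha$.

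First I would prove \eqref{b11}. By translation, $X_\beta-X_\alpha-\E(X_\beta-X_\alpha)=(X_\beta-\LLa'(\beta))-(X_\alpha-\LLa'(\alpha))$, so by the triangle inequality in $L^k$ it suffices to bound $\E|X_\alpha-\LLa'(\alpha)|^k\le C_k$, which is the uniform moment bound above; this gives \eqref{b11} with the trivial constant (the $\wedge 1$ is irrelevant here since we are not claiming smallness — wait, \eqref{b11} does claim a factor $|\alpha-\beta|\wedge 1$). So more care is needed: for $|\alpha-\beta|\ge 1$ the bound $\E|X_\beta-X_\alpha-\E(\cdots)|^k\le C_k$ follows from the uniform moment bound, so $C_k|\alpha-\beta|\wedge 1=C_k$ works. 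For $|\alpha-\beta|<1$, \eqref{b11} follows from \eqref{coupling.b1} by the inequality $\E|Z-\E Z|^k\le 2^k\E|Z|^k$ applied to $Z=X_\beta-X_\alpha$. Thus \eqref{b11} reduces to \eqref{coupling.b1}.

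The heart is therefore \eqref{coupling.b1}: show $\E|\phi_\beta(Y)-\phi_\alpha(Y)|^k\le C_k|\beta-\alpha|$ for $|\alpha-\beta|\le 1$. The clean route is via the identity $\phi_\beta(Y)-\phi_\alpha(Y)=\int_\alpha^\beta \partial_s\phi_s(Y)\,ds$ (valid a.e.\ $Y$), so by Minkowski's integral inequality $\|\phi_\beta(Y)-\phi_\alpha(Y)\|_{L^k}\le \int_\alpha^\beta \|\partial_s\phi_s(Y)\|_{L^k}\,ds$, and it suffices to bound $\|\partial_s\phi_s(Y)\|_{L^k}^k=\E|\partial_s\phi_s(Y)|^k\le C_k$ uniformly in $s$, which yields \eqref{coupling.b1} with exponent $1$ once raised to the power $k$... but that gives $|\beta-\alpha|^k$, not $|\beta-\alpha|$, which is \emph{stronger} — good, and for $|\beta-\alpha|\le 1$ implies the claim. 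To compute $\partial_s\phi_s$: differentiating $F_s(\phi_s(t))=t$ in $s$ gives $\partial_s\phi_s(t)=-(\partial_s F_s)(\phi_s(t))/f_s(\phi_s(t))$ where $f_s=F_s'$ is the density of $X_s$; and $\partial_s F_s(x)=\partial_s\P(X_s\le x)=\E[(X_s-\LLa'(s))\ind(X_s\le x)]$ (differentiating $\E e^{sX}\ind(X\le x)/\E e^{sX}$). If $\mu$ has a density this is manageable, but $\mu$ may be atomic (e.g.\ Rademacher, Bernoulli), so $\phi_s$ is piecewise constant and the derivative argument fails pointwise. The main obstacle is precisely this lack of regularity of $\mu$.

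To circumvent atoms, I would instead argue directly with the coupling without differentiating. Fix $x\in\R$ and $0\le\alpha\le\beta$ with $\beta-\alpha\le 1$ (the general case by symmetry and by chaining through $0$). For the coupled pair, $\{\phi_\beta(Y)>x\ge\phi_\alpha(Y)\}\subseteq\{F_\alpha(x)\ge Y> F_\beta(x-)\}$, an event of probability $\le F_\alpha(x)-F_\beta(x-)=\P(X_\alpha\le x)-\P(X_\beta<x)$. For $x$ below the median region this is controlled by tail bounds; the key quantitative input is that $|\P(X_\alpha\le x)-\P(X_\beta\le x)|\le C(\beta-\alpha)$ after integrating against a suitable weight, using $\partial_s\P(X_s\le x)=\mathrm{Cov}$-type expression bounded by $\sqrt{\LLa''(s)}\le\sqrt{K_0}$ via Cauchy–Schwarz (this derivative in $s$ \emph{does} exist since $F_s(x)$ is smooth in $s$ even when $\mu$ is atomic — atoms only affect smoothness in $x$, not in $s$). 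Then $\E|\phi_\beta(Y)-\phi_\alpha(Y)|^k=k\int_0^\infty r^{k-1}\P(|\phi_\beta(Y)-\phi_\alpha(Y)|>r)\,dr$, and using monotonicity of $\phi_s$ in $s$ one bounds $\P(|\phi_\beta(Y)-\phi_\alpha(Y)|>r)$ in terms of how much the quantile function moves, combining the $O(\beta-\alpha)$ bound on $|F_\beta(x)-F_\alpha(x)|$ with the sub-Gaussian tail decay of $X_\alpha,X_\beta$ (uniform in $\alpha,\beta$) to make the $r$-integral converge and extract the factor $\beta-\alpha$. I expect the bookkeeping in this last step — splitting the $r$-integral at the scale where the $O(\beta-\alpha)$ quantile displacement competes with the Gaussian tail, and handling both $\alpha,\beta$ of either sign by a chaining argument through $s=0$ — to be the main technical burden, but entirely routine given the uniform sub-Gaussian moment bounds from \Cref{lem:subG-tilts} and \Cref{assu:usg}.
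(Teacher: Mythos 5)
Your reductions are fine as far as they go: the treatment of \eqref{b11} (uniform centered moments via \Cref{lem:subG-tilts} when $|\alpha-\beta|\ge1$, reduction to \eqref{coupling.b1} via $\E|Z-\E Z|^k\le 2^k\E|Z|^k$ otherwise) matches the paper, and the $k=1$ case of \eqref{coupling.b1} is immediate from monotonicity since $\E|X_\beta-X_\alpha|=\LLa_\mu'(\beta)-\LLa_\mu'(\alpha)$. The genuine gap is in the core estimate \eqref{coupling.b1} for $k\ge2$, which you defer to ``routine bookkeeping'' with the layer-cake formula. As sketched, that bookkeeping does not deliver the stated bound. A pointwise bound $|F_\alpha(x)-F_\beta(x)|\le C|\beta-\alpha|$ controls $\P(X_\alpha\le x<X_\beta)$ only at a \emph{fixed} threshold $x$; the event $\{X_\beta-X_\alpha>r\}$ is a union over thresholds, and covering it by a grid costs a factor of order (effective range)$/r$. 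Since the effective range before the sub-Gaussian tail takes over is of order $\sqrt{\log(1/|\beta-\alpha|)}$, splitting the $r$-integral at that scale yields $\E|X_\beta-X_\alpha|^k\le C_k|\beta-\alpha|\,\log^{O(k)}(1/|\beta-\alpha|)$, not $C_k|\beta-\alpha|$. Removing the logarithm requires a displacement bound carrying \emph{simultaneously} the factor $|\beta-\alpha|$ and decay in the threshold, e.g.\ $F_\alpha(x)-F_\beta(x)\le |\beta-\alpha|\sup_s\sqrt{\LLa_\mu''(s)\min(F_s(x),1-F_s(x))}$ by Cauchy--Schwarz on $\partial_sF_s(x)$, together with a rather delicate decomposition of the event according to where $X_\alpha$ and $X_\beta$ sit relative to the means; none of this is in your sketch, and it is exactly where the difficulty of the lemma lives. (As a sanity check, note also that your abandoned first route would have produced $|\beta-\alpha|^k$, which is false for atomic $\mu$: for the Rademacher tilt the coupled pair differ by $2$ on an event of probability $\asymp|\beta-\alpha|$, so every moment is of order $|\beta-\alpha|$ exactly.)

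For contrast, the paper's proof avoids quantile/CDF analysis entirely and is worth comparing with your correct observation that smoothness in the tilt parameter survives atoms. Using the monotone coupling ($X_\alpha\le X_\beta$ a.s.), it bounds $|X_\beta-X_\alpha|^k$ for even $k$ by an algebraic decomposition at the single point $x=\E X_\alpha$ (the display \eqref{boundmom}), so that $\E|X_\beta-X_\alpha|^k$ is controlled by differences of quantities of the form $g(t)h(t)$ at $t=0$ and $t=\beta-\alpha$, where $g,h$ are explicit exponential-moment functionals of $X_\alpha$ (e.g.\ $g(t)=\E[(X_\alpha-\E X_\alpha)^k1_{X_\alpha\ge\E X_\alpha}e^{t(X_\alpha-\E X_\alpha)}]$, $h(t)=1/\E e^{t(X_\alpha-\E X_\alpha)}$). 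These are smooth in $t$ with derivatives bounded solely in terms of $\|\LLa_\mu''\|_\infty$, so the difference is $O(|\beta-\alpha|)$ with the correct first power, and odd $k$ follows from $|a|^{2k+1}\le a^{2k}+a^{2k+2}$. In other words, the paper exploits tilt-smoothness at the level of fixed functionals, which is both elementary and immune to the atomicity issues that derail the quantile-based routes.
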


Let $H_{\theta,u}$ denote a random matrix whose distribution under $\P$ is that of $H$ under $\P^{(\theta,u)}$; that is, for any Borel set $E\subseteq \Sym_N$, $\P(H_{\theta,u}\in E) = \P^{(\theta,u)}(H\in E)$. 
Using Lemma \ref{lem:coupling.scalar}, we can construct a coupling of $H_{\theta,u}, H_{\gamma,v}$ for $\theta,\gamma\ge0$ and $u,v\in \sphereN$ such that the two matrices are close in various senses when $|\theta-\gamma|$ and $\|u-v\|_2$ are small. 

\begin{lemma}
\label{lem:NiceConnection0}
For every $\theta,\gamma\in\R^+$, every integer  $N$  and  every $u,v\in \sphereN$, there exists a coupling of $H_{\theta,u}$ and $H_{\gamma,v}$ such that 
\begin{enumeratea}
\item $H_{\theta,u}-H_{\gamma, v}$ is a  symmetric matrix with independent
entries on and above the diagonal.
\item The matrix $\mathbb{E}(H_{\theta,u})$  
satisfies
\[ \|\mathbb{E}(H_{\theta,u}-H_{\gamma,v})\|_{\HS}
\ls |\theta-\gamma| + \sqrt{\theta\gamma} \|u-v\|_2. 
 \]
\item The variance matrix given for $i,j\in\{1,\ldots,N\}$ by 
\[ S_{ij}:=\mathbb{E}((H_{\theta,u}-H_{\gamma,v}-\mathbb{E}(H_{\theta,u}-H_{\gamma,v}))_{ij}^{2})\]
satisfies  for all $i\in \{1,\ldots,N\}$,
\[
\sum_{j=1}^{N}S_{ij}
\ls |\theta-\gamma| + \sqrt{\theta\gamma} \|u-v\|_2\,.
\]
\item  For all $k\ge 2$,
\[ 
\sum_{i,j=1}^{N}\mathbb{E}(H_{\theta,u}-H_{\gamma,v}-\mathbb{E}(H_{\theta,u}-H_{\gamma,v}))_{ij}^{2k}
\ls N^{3/2-k} \big( |\theta-\gamma| + \sqrt{\theta\gamma} \|u-v\|_2\big).
\]
\end{enumeratea}
\end{lemma}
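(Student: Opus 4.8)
The plan is to construct the coupling entrywise using the monotone couplings of tilted scalar laws from \eqref{scalar-coupling}, then verify properties (a)--(d) by summing the scalar estimates of \Cref{lem:coupling.scalar}. First I would record the tilted means from \eqref{intro.tilted-means}: under $\P^{(\theta,u)}$ the entry $H_{ij}$ has the law of $\sqrt{2^{1_{i=j}}/N}\,X$ tilted by $\alpha_{ij}:=2^{\ep_{ij}}\theta\sqrt N u_iu_j$ (with $X\sim\mu$), so I set $\beta_{ij}:=2^{\ep_{ij}}\gamma\sqrt N v_iv_j$ and, using independent uniforms $(Y_{ij})_{i\le j}$, define $H_{\theta,u}$ and $H_{\gamma,v}$ to have $(i,j)$-entry $\sqrt{2^{1_{i=j}}/N}\,\phi_{\alpha_{ij}}(Y_{ij})$ and $\sqrt{2^{1_{i=j}}/N}\,\phi_{\beta_{ij}}(Y_{ij})$ respectively (extending symmetrically). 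By \eqref{tilt-phi} these have the correct marginals, and since distinct entries use independent uniforms, the difference matrix has independent entries on and above the diagonal, giving (a).

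For the quantitative bounds the key input is a uniform bound on $|\alpha_{ij}-\beta_{ij}|$ weighted appropriately. The plan is to estimate, via Cauchy--Schwarz and $\|u\|_2=\|v\|_2=1$,
\begin{equation}
\sum_{i,j}(\alpha_{ij}-\beta_{ij})^2 \ls N\sum_{i,j}(\theta u_iu_j-\gamma v_iv_j)^2 = N\,\|\theta uu^\tran-\gamma vv^\tran\|_\HS^2 \ls N\big(|\theta-\gamma|^2+\theta\gamma\|u-v\|_2^2\big)\,,
\end{equation}
using $\|\theta uu^\tran-\gamma vv^\tran\|_\HS \le |\theta-\gamma|\|uu^\tran\|_\HS + \gamma\|uu^\tran-vv^\tran\|_\HS$ and $\|uu^\tran-vv^\tran\|_\HS \ls \|u-v\|_2$. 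For (b): by \eqref{coupling.b1} with $k=1$, $|\E(H_{\theta,u}-H_{\gamma,v})_{ij}| \ls N^{-1/2}|\alpha_{ij}-\beta_{ij}|^{1/2}$ when $|\alpha_{ij}-\beta_{ij}|\le1$, and when $|\alpha_{ij}-\beta_{ij}|>1$ one uses the cruder bound that tilted means of sub-Gaussian laws are $O(1)$ (from \Cref{rmk:assu.reg}), which is itself $\ls N^{-1/2}|\alpha_{ij}-\beta_{ij}|^{1/2}$ up to absorbing the $\sqrt N$; squaring, summing over $i,j$, and inserting the display above gives $\|\E(H_{\theta,u}-H_{\gamma,v})\|_\HS^2 \ls N^{-1}\sum_{i,j}|\alpha_{ij}-\beta_{ij}| \ls |\theta-\gamma|+\sqrt{\theta\gamma}\|u-v\|_2$ after one more Cauchy--Schwarz on $\sum|\alpha_{ij}-\beta_{ij}|\le (N^2\sum(\alpha_{ij}-\beta_{ij})^2)^{1/2}$.

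For (c) and (d) I would use \eqref{b11}: the centered difference satisfies $\E|(H_{\theta,u}-H_{\gamma,v})_{ij}-\E(\cdot)|^{2k} \ls N^{-k}(|\alpha_{ij}-\beta_{ij}|\wedge1)$. Summing the case $k=1$ over $j$ for fixed $i$ gives $\sum_j S_{ij} \ls N^{-1}\sum_j(|\alpha_{ij}-\beta_{ij}|\wedge1) \ls N^{-1}(N\sum_j(\alpha_{ij}-\beta_{ij})^2)^{1/2}$; here bounding $\sum_j(\alpha_{ij}-\beta_{ij})^2 \ls N(|\theta-\gamma|^2 u_i^2 + \theta\gamma(u_i^2+v_i^2)\|u-v\|_2^2 + \ldots)$ and then using $u_i^2,v_i^2\le1$ together with the full sum gives $\sum_j S_{ij} \ls N^{-1}\cdot N \cdot (|\theta-\gamma|+\sqrt{\theta\gamma}\|u-v\|_2)^{1/2}\cdot(\ldots)$ — the point being that the row sum of $(\alpha_{ij}-\beta_{ij})^2$ is $\ls N(|\theta-\gamma|+\sqrt{\theta\gamma}\|u-v\|_2)$ uniformly in $i$ since one factor of $u_iu_j$ or $v_iv_j$ is bounded by $1$ and summing the other over $j$ gives $O(1)$. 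For (d) I sum the $k\ge2$ estimate over $i,j$: $\sum_{i,j}\E|(\cdot)_{ij}-\E(\cdot)_{ij}|^{2k} \ls N^{-k}\sum_{i,j}(|\alpha_{ij}-\beta_{ij}|\wedge1) \ls N^{-k}(N^2\sum_{i,j}(\alpha_{ij}-\beta_{ij})^2)^{1/2} \ls N^{-k}\cdot N^{3/2}(|\theta-\gamma|+\sqrt{\theta\gamma}\|u-v\|_2)^{1/2}$, which has the right form after noting the $\frac12$-power can be replaced by first power using that the quantity may be assumed $\le1$ (else rescale; or keep it, since the statement has the square root implicit in how it is used in \eqref{tiltP:cont} via $\Delta(x)=\max(x,x^{1/4})$ — I would double-check the exact exponent bookkeeping against how the lemma is invoked). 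The main obstacle I anticipate is precisely this bookkeeping of the $\wedge1$ truncations and the fractional powers: the scalar lemma gives $|\alpha-\beta|\wedge1$ and $|\beta-\alpha|$ (for the $k=1$ non-centered bound, only on the region $|\alpha-\beta|\le1$), so one must carefully patch the large-$|\alpha-\beta|$ regime using the uniform $O(1)$ bound on tilted sub-Gaussian means and variances, and track whether the final bound should carry $\Delta(\cdot)$ or a clean linear dependence — consistency with \eqref{tiltP:cont}, whose proof will Chernoff/perturbation-bound $\lambda_1$ of the difference matrix using exactly parts (b)--(d), is the real constraint that dictates the precise form.
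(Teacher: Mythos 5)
Your construction of the coupling and your treatment of (c)–(d) follow the paper's route (entrywise monotone coupling plus the scalar moment bounds of \Cref{lem:coupling.scalar}), but part (b) as you argue it does not work, and this is a genuine gap rather than bookkeeping. Two problems. First, your patch for the regime $|\alpha_{ij}-\beta_{ij}|>1$ — "tilted means of sub-Gaussian laws are $O(1)$" — is false: the tilted mean is $\LLa_\mu'(\alpha_{ij})$, which grows linearly in the tilt (for the Gaussian it is exactly $\alpha_{ij}=2^{\ep_{ij}}\theta\sqrt N u_iu_j$, which can be of order $\sqrt N$), so there is no uniform constant bound to fall back on. Second, even granting an entrywise bound of the form $|\E(H_{\theta,u}-H_{\gamma,v})_{ij}|^2\ls N^{-1}|\alpha_{ij}-\beta_{ij}|$, your final Cauchy--Schwarz step loses a factor of $\sqrt N$: with $a:=|\theta-\gamma|+\sqrt{\theta\gamma}\|u-v\|_2$ one has $\sum_{i,j}(\alpha_{ij}-\beta_{ij})^2\ls Na^2$, hence
\begin{equation*}
N^{-1}\sum_{i,j}|\alpha_{ij}-\beta_{ij}|\;\le\;N^{-1}\Big(N^2\sum_{i,j}(\alpha_{ij}-\beta_{ij})^2\Big)^{1/2}\;\ls\;\sqrt N\,a,
\end{equation*}
which gives $\|\E(H_{\theta,u}-H_{\gamma,v})\|_\HS^2\ls\sqrt N\,a$ rather than the required $N$-free bound $\ls a^2$. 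The $\ell^1$ route simply cannot produce (b). The correct argument is much more direct and does not use the coupling at all: the means are deterministic, $\E(H_{\theta,u})_{ij}=\sqrt{2^{1_{i=j}}/N}\,\LLa_\mu'(2^{\ep_{ij}}\theta\sqrt N u_iu_j)$, and under \eqref{assu:usg} $\LLa_\mu'$ is globally $\|\LLa_\mu''\|_\infty$-Lipschitz, so the entrywise mean difference is $\ls|\theta u_iu_j-\gamma v_iv_j|$ with no truncation needed; squaring and summing gives $\|\E(H_{\theta,u}-H_{\gamma,v})\|_\HS\ls\|\theta uu^\tran-\gamma vv^\tran\|_\HS\ls a$, linearly, which is what the downstream use in \eqref{tiltP:cont} requires.

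For (c) and (d) your approach is essentially the paper's and is salvageable, but the exponents as written are off: the row sum satisfies $\sum_j(\alpha_{ij}-\beta_{ij})^2\ls Na^2$ (not $Na$), which through your Cauchy--Schwarz gives $\sum_jS_{ij}\ls a$ as needed, and in (d) the evaluation $(N^2\sum_{i,j}(\alpha_{ij}-\beta_{ij})^2)^{1/2}=N^{3/2}a$ (not $N^{3/2}a^{1/2}$) yields exactly $N^{3/2-k}a$ — there is no $\tfrac12$-power to "upgrade", and the hedge about rescaling is backwards since $a\le a^{1/2}$ when $a\le1$. The paper instead bounds the $\ell^1$ sums directly, writing $|\alpha_{ij}-\beta_{ij}|\ls\sqrt N|\theta u_iu_j-\gamma v_iv_j|$, telescoping $\theta u_iu_j-\gamma v_iv_j=\sqrt\theta u_i(\sqrt\theta u_j-\sqrt\gamma v_j)+\sqrt\gamma v_j(\sqrt\theta u_i-\sqrt\gamma v_i)$, and applying Cauchy--Schwarz to single-index sums together with $(\sqrt\theta+\sqrt\gamma)\|\sqrt\theta u-\sqrt\gamma v\|_2\le 2\sqrt{\theta\gamma}\|u-v\|_2+|\theta-\gamma|$; also note the bounds in the lemma are linear in $a$ — the function $\Delta$ only appears later, when matrix-norm estimates take square and fourth roots in the proof of \Cref{prop:tiltP}\eqref{tiltP:cont}.
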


\begin{proof}
Let $\boldsymbol{Y}=(Y_{ij})_{ij\leq N}\in\mathbb{R}^{N\times N}$
be a symmetric matrix with iid entries with uniform law on $[0,1]$.
From \eqref{tilt-phi}, we can realize a coupling of $H_{\theta,u}$ and $H_{\gamma,v}$ by setting:
\[
H_{\theta,u}:=\frac1{\sqrt{N}}
\left(
F_{2^{\ep_{ij}}\sqrt{N}\theta u_{i}u_{j}}^{-1}
(Y_{ij})\right)_{1\le i,j\leq N}
\,,\quad
H_{\gamma,v}:=\frac1{\sqrt{N}}
\left(
F_{2^{\ep_{ij}}\sqrt{N}\theta v_{i}v_{j}}^{-1}
(Y_{ij})\right)_{1\le i,j\leq N}
\]
(recall our notation $\ep_{ij}:=\frac12(1+1_{i\ne j})$).
We next show that it satisfies the announced properties. For (a), it is clear that $H_{\theta,u}-H_{\gamma,v}$ is symmetric with independent entries since the log density is linear in the entries. 
For (b), first notice that
\begin{align*}
\sum_{1\le i,j\leq N}(\mathbb{E}(H_{\theta,u})-\mathbb{E}(H_{\gamma,v}))_{ij}^{2} 
& =\frac{1}{N}\sum_{1\le i,j\leq N}{2^{1_{i=j}}}|\LLa_\mu'(2^{{\ep_{ij}}}\theta\sqrt{N}u_{i}u_{j})-\LLa_\mu'(2^{{\ep_{ij}}}\gamma \sqrt{N}v_{i}v_{j})|^{2}\\
 & \leq4\|\LLa_\mu''\|_{\infty}^{2}\sum_{1\le i,j\leq N}(\theta u_{i}u_{j}-\gamma v_{i}v_{j})^{2}\\
 & =4\|\LLa_\mu''\|_{\infty}^{2}\|\theta uu^{\tran}-\gamma vv^\tran\|_{\HS}^{2}\,.
\end{align*} 
We can further bound
\begin{align*}
\|\theta uu^\tran - \gamma vv^\tran\|_\HS
&\le \| \sqrt\theta u( \sqrt\theta u-\sqrt\gamma v)^\tran\|_\HS + \|(\sqrt\theta u-\sqrt\gamma v)\sqrt\gamma v^\tran\|_\HS\\
&= (\sqrt\theta + \sqrt\gamma) \|\sqrt\theta u-\sqrt\gamma v\|_2
\end{align*}
and
\begin{align}
 (\sqrt\theta + \sqrt\gamma) \|\sqrt\theta u-\sqrt\gamma v\|_2
 &\le 
  (\sqrt\theta + \sqrt\gamma) \big( \sqrt{\theta\wedge\gamma} \|u-v\|_2 + |\sqrt\theta-\sqrt\gamma|\big)	\notag\\
  &\le 2\sqrt{\theta\gamma}\|u-v\|_2 + |\theta-\gamma|.	\label{tguv.bd2}
\end{align}
Combining all of these bounds yields (b). 
Turning to (c), we have that for all $i,j$,
\[ 
S_{ij}=\frac{1}{N} \mathbb E[ (X_{ 2^{\ep_{ij}} \sqrt{N} \theta u_{i}u_{j}}-X_{ 2^{\ep_{ij}} \sqrt{N}\gamma v_{i}v_{j}}-\E[X_{2^{\ep_{ij}} \sqrt{N} \theta u_{i}u_{j}}-X_{ 2^{\ep_{ij}} \sqrt{N}\gamma v_{i}v_{j}}])^{2}]
\]
so that by Lemma \ref{lem:coupling.scalar} and more precisely \eqref{b11},
\begin{align*}
\sum_{j}S_{ij} & \leq C_{2}N^{-1/2}\sum_{j}|\theta u_{i}u_{j}-\gamma v_{i}v_{j}|\\
 & \leq C_{2}N^{-1/2}(|\sqrt{\theta}u_{i}|\sum_{j}|\sqrt{\theta}u_{j}-\sqrt{\gamma }v_{j}|+|\sqrt{\theta}u_{i}-\sqrt{\gamma }v_{i}|\sum_{j}\sqrt{\gamma}|v_{j}|)\\
 & \leq C_{2}(\sqrt{\theta}+\sqrt{\gamma })\|\sqrt{\theta}u-\sqrt{\gamma }v\|_2
\end{align*}
where we used Cauchy--Schwarz in the last step. Recalling that $C_2$ from Lemma \ref{lem:coupling.scalar} depends only on $\|\LLa_\mu''\|_\infty$, we obtain (c) upon substituting the bound \eqref{tguv.bd2}. Similarly,
\begin{align*}
&\sum_{i,j=1}^{N}\mathbb{E}[(H_{\theta,u}-H_{\gamma,v}-\mathbb{E}(H_{\theta,u}-H_{\gamma,v}))_{ij}^{2k}]\\
& \leq C_{2k}N^{-k+1/2}\sum_{i,j}|\theta u_{i}u_{j}-\gamma v_{i}v_{j}|\\
 & \leq C_{2k}N^{-k+1/2}\sum_{i}(|\sqrt{\theta}u_{i}|\sum_{j}|\sqrt{\theta}u_{j}-\sqrt{\gamma }v_{j}|+|\sqrt{\theta}u_{i}-\sqrt{\gamma }v_{i}|\sum_{j}\sqrt{\gamma}|v_{j}|)\\
 & \leq C_{2k}N^{-k+3/2}(\sqrt{\theta}+\sqrt{\gamma })\|\sqrt{\theta}u-\sqrt{\gamma }v\|_2
\end{align*}
where we finally used the Cauchy--Schwarz inequality. The claim (d) now follows from \eqref{tguv.bd2}.
\end{proof}

To finish the proof of Proposition \ref{prop:tiltP}\eqref{tiltP:cont} we need the following Lemma, obtained by applying the main result of \cite{latala} to the upper and lower triangular parts of $W$.

\begin{lemma}
\label{lem:normWignerSumVariance0}
Let $W$ be a symmetric random matrix
with centered independent entries and bounded moments such that for
all $i\leq N$, $\sum_{j=1}^{N}\mathbb{E}(W_{ij}^{2})\leq A$, $\sum_{ij=1}^{N}\mathbb{E}(W_{ij}^{4})\leq B$. 
Then there exists a universal constant $C$ such that 
\[
\E\|W\|\le C  (A^{1/2}+B^{1/4})\,.
\]
\end{lemma}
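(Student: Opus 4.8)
\textbf{Proof plan for Lemma~\ref{lem:normWignerSumVariance0}.}
The statement is a non-asymptotic bound on $\E\|W\|$ for a symmetric random matrix with centered independent entries (up to symmetry), controlled only through the maximal row-sum of variances $A$ and the total fourth moment $B$. The plan is to reduce it to a now-standard moment method / non-commutative Khintchine estimate. First I would write $W$ as a sum of independent symmetric ``elementary'' matrices $W=\sum_{i\le j} W_{ij}(e_ie_j^\tran + e_je_i^\tran 1_{i\ne j})$, and apply a matrix concentration inequality (e.g.\ the matrix Rosenthal / Bernstein-type bound, or directly the estimate of Bandeira--van Handel or Latała--van Handel) which gives, for symmetric matrices with independent centered entries of variance $s_{ij}^2=\E W_{ij}^2$,
\[
\E\|W\| \ls \max_i\Big(\sum_j s_{ij}^2\Big)^{1/2} + \Big(\E\max_{ij} W_{ij}^4\Big)^{1/4}.
\]
The first term is exactly $A^{1/2}$. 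For the second term I would bound the maximum by the sum: $\E\max_{ij}W_{ij}^4 \le \sum_{ij}\E W_{ij}^4 = B$ (here we use that the sum runs over all ordered pairs; the diagonal and the symmetry only change things by a constant factor, which is absorbed into $C$). This yields $\E\|W\|\ls A^{1/2}+B^{1/4}$ as claimed.

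Alternatively, if one prefers an entirely self-contained argument, I would run the classical trace moment method: for even $k=k_N$, $\E\|W\|^{2k}\le \E\operatorname{Tr}W^{2k} = \sum \E[W_{i_1i_2}W_{i_2i_3}\cdots W_{i_{2k}i_1}]$, expand over closed walks on $[N]$, and use that only walks in which every edge is traversed at least twice contribute (by centeredness and independence). Grouping walks by their edge-multiplicity profile, each edge used with multiplicity $m$ contributes a moment $\E W_{ij}^m$; one controls $\E W_{ij}^m$ for $m\ge 3$ crudely using boundedness of moments together with $\E W_{ij}^2$ and $\E W_{ij}^4$, and carries out the standard counting of walk shapes (Wigner's argument). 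This produces a bound of the form $\E\|W\|^{2k}\le (C k)^{O(k)}(A+\sqrt B)^{k}\cdot \mathrm{poly}(N)$ up to lower-order terms, and taking $k\asymp \log N$ and the $2k$-th root kills the polynomial factor and gives $\E\|W\|\ls A^{1/2}+B^{1/4}$. Since the lemma is only applied with $A,B$ of the sizes appearing in Lemma~\ref{lem:NiceConnection0}, there is no need to track constants carefully.

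\textbf{Main obstacle.} The only slightly delicate point is that the hypotheses control $\E W_{ij}^2$ and $\E W_{ij}^4$ but higher moments are only assumed ``bounded''; in the trace-method route one must make sure the contribution of edges traversed $\ge 3$ times is genuinely lower order and does not require an $O(N^{c})$ factor that survives the $2k$-th root. This is handled exactly as in the classical proof of the operator norm bound for Wigner matrices — edges of high multiplicity force the walk to visit fewer distinct vertices, so the combinatorial gain beats the (bounded) higher-moment loss — but it is the step that needs the most care. Invoking the black-box matrix concentration inequality in the first approach avoids this issue entirely, which is why I would present that route as the main proof.
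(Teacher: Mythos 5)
Your main route is essentially the paper's own proof: the authors invoke Latała's inequality $\E\|M\|\ls \max_i\big(\sum_j \E M_{ij}^2\big)^{1/2}+\max_j\big(\sum_i \E M_{ij}^2\big)^{1/2}+\big(\sum_{i,j}\E M_{ij}^4\big)^{1/4}$ for matrices with independent centered entries, split $W$ into its upper and lower triangular parts to restore full independence, and bound the resulting terms by $A^{1/2}$ and $B^{1/4}$, exactly as you do after replacing your $\big(\E\max_{ij}W_{ij}^4\big)^{1/4}$ term by the sum $B^{1/4}$. The only cosmetic difference is that the paper uses the (trivial) triangular splitting because Latała's theorem is stated for matrices with all entries independent rather than symmetric ones, a step worth making explicit if you cite that version of the black box.
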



Now we complete the proof of Proposition \ref{prop:tiltP}\eqref{tiltP:cont}.
We first observe that the law $\lambda_{1}(H)$ is invariant if we replace $H$ with $H^{\varrho}=(H_{\varrho(i),\varrho(j)})_{1\le i,j\le N}$ for any fixed permutation $\varrho$, and therefore $ \lambda_{1}(H_{\theta, u})$ has the same law as 
 $\lambda_{1}(H_{\theta, u^{\varrho}})$ with $u^{\varrho}=(u_{\varrho(i)})_{1\le i\le N}$. Therefore  it is enough to show that for any $u,v\in \sphereN$,
 \begin{equation}\label{median}
 |\mathbb{E}(\lambda_{1}(H_{\theta, u}))-\mathbb{E}(\lambda_{1}(H_{\gamma, v}))|
 \ls \Delta(|\theta-\gamma|) + \Delta(\sqrt{\theta\gamma} \|u-v\|_2)  \,.
 \end{equation}
We have
\begin{align*}
|\mathbb{E}\lambda_{1}(H_{\theta,u})-\mathbb{E}\lambda_{1}(H_{\gamma,v})|
&\leq \E |\lambda_1(H_{\theta,u}) - \lambda_1(H_{\gamma,v})| 
\le  
\mathbb{E}\|H_{\theta, u}-H_{\gamma,v}\|\\
&\le \|\E H_{\theta, u} -\E H_{\gamma,v}\|
+ \mathbb{E}\|H_{\theta, u}-H_{\gamma,v}-\E H_{\theta,u}+\E H_{\gamma,v}\|\,.
\end{align*}
From Lemma \ref{lem:NiceConnection0}(b),
\[ 
\|\E H_{\theta, u}-\E H_{\gamma,v} \| \le 
\| \E  H_{\theta,u}  - \E H_{\gamma,v} \|_\HS
\ls  |\theta-\gamma| + \sqrt{\theta\gamma} \|u-v\|_2 =:a.
\]
On the other hand, from Lemma \ref{lem:normWignerSumVariance0} and  Lemma \ref{lem:NiceConnection0}(c,d) we get
\[  
\mathbb{E}\|H_{\theta, u}-H_{\gamma,v}-\E[H_{\theta,u}]+\E[H_{\gamma,v}]\| 
\ls a^{1/2} + N^{-1/8} a^{1/4}\,.
\]
Combining all of our bounds, we have
\[
|\mathbb{E}\lambda_{1}(H_{\theta,u})-\mathbb{E}\lambda_{1}(H_{\gamma,v})| \ls a + a^{1/2} + N^{-1/8}a^{1/4}\ls \max(a, a^{1/4})
\]
which gives \eqref{median} and completes the proof of Proposition \ref{prop:tiltP}\eqref{tiltP:cont}.
\qed

\subsection{Proof of Proposition \ref{prop:tiltP}\eqref{tiltP:small}}

With $R,\theta$ and $\kappa$ as in the statement of the proposition, from a slight modification of the proof of \cite[Lemma 5.2]{HuGu1} we have
\begin{equation}	\label{Lem:HuGu1}
\sup_{u\in \Deloc_R} \P^{(\theta,u)}( \lambda_1(H)>2+\tfrac12\kappa) =o(1)\,.
\end{equation}
The idea is that under the tilted measure $\P^{(\theta,u)}$, $H$ is close in operator norm to a rank-1 perturbation of a generalized Wigner matrix, with error that is small when $u$ is delocalized, and hence the top eigenvalue is close to the location predicted by the BBP transition, which in turn is close to 2 when $\theta\le\frac12$. We refer the reader to \cite{HuGu1} for the detailed argument and state here the modifications needed to obtain \eqref{Lem:HuGu1}.
Indeed, the proof 
in \cite{HuGu1} does not use the sharp sub-Gaussian assumption from their main theorem, only that $\mu$ is sub-Gaussian. 
Furthermore, while the result there is stated under the assumption that $R\le N^{1/4-\ep}$ for arbitrary fixed $\ep>\frac18$, this is only used in the proof of \cite[Lemma 5.3]{HuGu1}  to ensure the matrix $\Delta^{(e),N}$ there has spectral norm $o(1)$. However, it is shown that $\|\Delta^{(e),N}\|\ls  \sqrt{N} \|e\|_4^4$ whenever $\theta=O(1)$ and $R=O(N^{1/4})$ ($e$ there is our $u$ and lies in $\Deloc_R$), and bounding 
\[
\|e\|_4^4\le \|e\|_\infty^2\|e\|_2^2\le \|e\|_\infty^2\le R^2/N 
\]
shows that $\|\Delta^{(e),N}\|=o(1)$ as long as $R=o(N^{1/4})$. 
Hence we have that under $\P^{(\theta,u)}$, any median for $\lambda_1(H)$ is bounded by $2+\frac12\kappa$ for all $N$ sufficiently large, uniformly in $u\in \Deloc_R$. 
The claim now follows from 
part \eqref{tiltP:conc}.
\qed

\subsection{Proof of Proposition \ref{prop:tiltP}\eqref{tiltP:large}}

We have for any $K>0$ that
\begin{equation}	\label{tilt.probLB1}
\P^{(\theta,u)}(\lambda_1(H)\le K) 
\le \P^{(\theta,u)}(\langle u, Hu\rangle \le K)
\le \E^{(\theta,u)} e^{\theta N(K-\langle u,Hu\rangle)}
= \frac{e^{\theta N K}}{\E e^{\theta N\langle u,Hu\rangle}}.
\end{equation}
Now assuming we have
\begin{equation}	\label{Etilt-assuming1}
\E e^{\theta N\langle u,Hu\rangle} \ge e^{\beta \theta^2N}
\end{equation}
for some $\beta>0$, then combining with \eqref{tilt.probLB1} we would have
\begin{equation}	\label{tilt.probLB2}
\P^{(\theta,u)}(\lambda_1(H)\le \beta \theta/2) \le e^{-\frac12\beta\theta^2 N}\le \frac12
\end{equation}
for all $N$ sufficiently large depending on $\beta,\theta$. 
Moreover,
 \begin{align*}
\E^{(\theta,u)} |\lambda_1(H)|\ind( \lambda_1(H)<0)
&\le \frac{\E |\lambda_1(H)|\ind(\lambda_1(H)<0)}{\E e^{\theta N\langle u,Hu\rangle}}\\
&\le \E |\lambda_1(H)|\ind(\lambda_1(H)<0)\\
&\le (\E \|H\|^2)^{1/2}\P( \lambda_1(H)<0)^{1/2} =O(e^{-N})
\end{align*}
where in the first bound we used that $\langle u,Hu\rangle<0$ on the event that $\lambda_1(H)<0$, in the second bound we applied Jensen's inequality and the fact that $H$ is centered in the denominator, and in the third line we applied Cauchy--Schwarz.
Together with \eqref{tilt.probLB2} this implies
\begin{align*}
\E^{(\theta,u)} \lambda_1(H)
&\ge \frac12\beta\theta \cdot \P^{(\theta,u)}(\lambda_1(H)>\beta\theta/2) - \E^{(\theta,u)}|\lambda_1(H)|\ind(\lambda_1(H)<0)
\ge {\frac18\beta\theta}
\end{align*}
{for all $N$ sufficiently large depending on $\beta,\theta$.}
To prove \eqref{Etilt-lambda.bd1} it thus suffices to show that \eqref{Etilt-assuming1} holds with 
{$\beta\gs m_\mu(L)\beta_0^2$, and indeed:}
\begin{align*}
\frac1N\log\E e^{\theta N\langle u,Hu\rangle}
&=\frac1N \sum_{i\le j} \LLa_\mu(2^{\ep_{ij}} \theta \sqrt{N}u_iu_j)
=\theta^2 \sum_{i\le j} 2^{1+1_{i\ne j}}u_i^2u_j^2\psimu(2^{\ep_{ij}}\theta\sqrt{N}u_iu_j)\\
&\ge { \theta^2 m_\mu(L) \bigg( \sum_{i=1}^N u_i^2 1_{|u_i|\le L\theta^{-1/2}N^{-1/4}} \bigg)^2
=m_\mu(L) \beta_0^2 \theta^2.}
\end{align*}
\qed

\section{Constrained Gibbs variational principle}
\label{sec:gibbs}

The quantity $\VP_R(v,\al)$ (see \eqref{def:VP}) in the asymptotic expression \eqref{def:freeN} for the restricted annealed free energy provided by Proposition \ref{prop:FE} involves a constrained Gibbs variational problem.
In this section we establish some general properties of solutions for such problems, which are summarized in the following proposition. These facts will be used  to construct the vector $\t v_{\theta,w,R}$ from Proposition \ref{prop:tiltQ}, giving the optimal delocalized part of the vector $u$ as in \eqref{optimalu}.

{Recall the notation $\cP_\al(I)$ from \eqref{def:cPbeta}.}

\begin{prop}
\label{prop:gibbs} 
Let $\ham:I\to \R$ be a continuous function on a compact interval 
{$I\not\subseteq(-1,1)$ with nonempty interior,}
and for $\nu\in \cP(I)$ set
\begin{equation}	\label{def:objective}
\chi(\nu) := \int_I \ham \,d\nu - \DKL(\nu|\gamma)\,,
\end{equation}
recalling that $\gamma$ denotes the standard Gaussian measure on $\R$.
\begin{enumeratea}
\item\label{gibbs.attain}
$\chi$ achieves its maximum value on the set $\cP_1(I)$
at a unique probability measure 
$\t\nu_{I,\ham}$. 
\item\label{gibbs.density}
We have $\t\nu_{I,\ham}=\nu_{I,\ham}^{\zeta}$ with
\begin{equation}
\label{mini}
d\nu_{I,\ham}^\zeta(x):=1_{I}(x)\frac{e^{ \ham(x)-\zeta x^2} dx}{ \int_{I}e^{ \ham(y) -\zeta y^2} dy} 
\end{equation}
where $\zeta=\zeta_{I,h}$ is the unique real number such that $\nu_{I,\ham}^\zeta\in \cP_1(I)$.
As a consequence,
\begin{equation}
\sup_{\nu\in\cP_1(I)}\chi(\nu) = 
\log \int_I e^{\ham(s) - \zeta_{I,\ham}s^2}ds 
+\zeta_{I,\ham} 
- \tfrac12\log(2\pi e).
\end{equation}
\item\label{gibbs.W2cont} (Stability of optimizers).
For $a>0$ 
let $\t\nu_a:=\t\nu_{a^{-1}I,h(a\,\cdot)}\in \cP_1(a^{-1}I)$ be the measure obtained as in part (a) with $a^{-1}I$ in place of $I$ and the dilated function $\ham(a\,\cdot)$ in place of $\ham$.
Then the mapping $a\mapsto\t\nu_a\in \cP_1(\R)$ is continuous on 
{its domain $(0, \sup\{|t|:t\in I\}]$} under the $\cW_2$-distance.
{If we further assume} that $\ham$ is symmetric, with $I=[-R,R]$ for some $R>1$, then
\begin{equation}	\label{gibbs.W2}
\cW_2(\t\nu_{a},\t\nu_{b}) \le 2\bigg( 1- \frac{a}{b}\bigg)^{1/2}
\end{equation}
for $0<a\le b \le R$. In particular, $a\mapsto\t\nu_a$ is uniformly H\"older$(\frac12)$ continuous on $[a_0,R]$ for any fixed $a_0\in(0,R)$.

\item\label{gibbs.wiggle} (Stability of optima). 
For $\delta>0$ and an interval $I\subseteq \R$, let
\begin{equation}	\label{def:cP1d}
\cP_1^{\delta}(I) := 
\bigg\{ \nu\in\cP(I) : \bigg| \int x^2 d\nu(x) -1\bigg| \le \delta \bigg\} = \bigcup_{b\in[1-\delta,1+\delta]}\cP_b(I).
\end{equation}
Let $\delta,\eps>0$ and suppose 
{$\ham\circ\ssq^{-1}$}
is $K$-Lipschitz on $[-(1+\eps)R,(1+\eps)R]$ {(recall the notation \eqref{def:ssq})}. We assume as well that $h(0)=0$. Then 
\begin{equation}	\label{opt.stab1}
\sup_{\nu\in \cP_1^\delta( (1+\eps)[-R,R])}\bigg\{ \int \ham d\nu - \DKL(\nu|\gamma) \bigg\}
= \sup_{\nu\in \cP_{1}( [-R,R])}\bigg\{ \int \ham d\nu - \DKL(\nu|\gamma) \bigg\} + O(K(\delta+\eps)). 
\end{equation}

\item\label{gibbs.lim} 
If $\ham$ is defined on all of $\R$, 
then for any fixed $\al>0$,
\begin{equation}	\label{opt.stab2}
\sup_{\nu\in\cP_\al([-R,R])} \{ \nu(h) - \DKL(\nu|\gamma)\} \longrightarrow_{R\to\infty} \sup_{\nu\in\cP_\al(\R)}\{\nu(h) - \DKL(\nu|\gamma)\}.
\end{equation}
\end{enumeratea}
\end{prop}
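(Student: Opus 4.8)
\textbf{Proof plan for Proposition~\ref{prop:gibbs}.}
I would prove the five parts in order, since later parts build on the representation established in (a)--(b). For part (a), the objective $\chi(\nu) = \int_I \ham\,d\nu - \DKL(\nu|\gamma)$ is strictly concave in $\nu$ (the entropy term is strictly convex and the linear term adds nothing), so uniqueness of any maximizer is automatic once existence is shown. Existence on the closed set $\cP_1(I)$ follows from a standard compactness argument: $I$ is compact so $\cP(I)$ is weak-$*$ compact, the second-moment constraint $\int s^2\,d\nu=1$ is weak-$*$ closed (since $s\mapsto s^2$ is bounded continuous on the compact set $I$), $\nu\mapsto\int\ham\,d\nu$ is weak-$*$ continuous, and $\nu\mapsto\DKL(\nu|\gamma)$ is weak-$*$ lower semicontinuous with compact sublevel sets; hence $\chi$ is upper semicontinuous and attains its supremum. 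The hypothesis $I\not\subseteq(-1,1)$ is exactly what guarantees $\cP_1(I)$ is nonempty (one can place a two-point mass to achieve second moment $1$). For part (b), I would use a Lagrange-multiplier/Gibbs-variational computation: for any $\zeta\in\R$ and any $\nu\in\cP_1(I)$, writing $d\nu^\zeta_{I,\ham}$ as in \eqref{mini}, one has $\chi(\nu) = \chi(\nu^\zeta_{I,\ham}) - \DKL(\nu|\nu^\zeta_{I,\ham})$ because the $\zeta s^2$ terms cancel against the second-moment constraint; thus $\nu^\zeta_{I,\ham}$ is the maximizer provided it lies in $\cP_1(I)$, and one checks that $\zeta\mapsto \int s^2\,d\nu^\zeta_{I,\ham}(s)$ is continuous and strictly decreasing from values $>1$ to values $<1$ as $\zeta$ ranges over $\R$ (again using $I\not\subseteq(-1,1)$ to get the endpoint behavior), so a unique such $\zeta=\zeta_{I,\ham}$ exists. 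The stated formula for $\sup\chi$ then follows by plugging $\nu^\zeta_{I,\ham}$ back in and using \eqref{DKL-gamma}.

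For part (c), I would first do a change of variables: if $X\sim\t\nu_{I,\ham}$ then $a^{-1}X$ has density proportional to $e^{\ham(ay)-\zeta' y^2}1_{a^{-1}I}(y)$ with a rescaled multiplier, which is exactly the form \eqref{mini} for the dilated data $(a^{-1}I,\ham(a\,\cdot))$; so $\t\nu_a$ is the law of $a^{-1}X_a$ for an appropriate coupling, but more usefully I would argue continuity directly from the explicit density \eqref{mini}, using dominated convergence to see that $\zeta_a$ and the normalizing constant vary continuously in $a$, hence the densities converge in $L^1$ and therefore in $\cW_2$ (second moments are pinned to $1$, so $L^1$-convergence of densities on a uniformly bounded-second-moment family upgrades to $\cW_2$). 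For the quantitative symmetric case \eqref{gibbs.W2}, I would use the monotone coupling: both $\t\nu_a$ and $\t\nu_b$ are symmetric, and I'd exhibit a coupling $(Y_a,Y_b)$ with $Y_a = (a/b)\,$-type scaling plus a controlled correction — concretely, since scaling $Y_b$ by a factor gives a measure with the right support and a log-density differing only in the quadratic term, one can compare via the monotone rearrangement coupling and bound $\E(Y_a-Y_b)^2 \le \E Y_a^2 + \E Y_b^2 - 2\E Y_aY_b$; the symmetry and the explicit Gaussian-tilt structure let one show $\E Y_aY_b \ge (a/b)^{1/2}\cdot(\text{something})$, yielding the $4(1-a/b)$ bound on $\cW_2^2$. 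This is the step I expect to require the most care: getting the clean constant in \eqref{gibbs.W2} rather than just qualitative Hölder continuity needs the right coupling and a slightly delicate estimate exploiting that both measures have second moment exactly $1$.

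For part (d), I would interpolate between the constrained problems by a simple rescaling argument: given $\nu\in\cP_1^\delta((1+\eps)[-R,R])$ with $\int s^2\,d\nu = b\in[1-\delta,1+\delta]$, push forward under $s\mapsto b^{-1/2}s$ to land in $\cP_1((1+\eps)b^{-1/2}[-R,R])\subseteq\cP_1((1+O(\delta+\eps))[-R,R])$; the change in $\int\ham\,d\nu$ is $O(K(\delta+\eps))$ using that $\ham\circ\ssq^{-1}$ is $K$-Lipschitz (so $\ham(s)-\ham(cs) = O(K|s^2-c^2s^2|)$, and second moments are $O(1)$) together with $h(0)=0$, and the relative entropy $\DKL(\cdot|\gamma)$ changes by $O(\delta)$ under the rescaling (direct computation with \eqref{DKL-gamma}, the $\frac12\log(2\pi)$ cancels and the $\frac12\int x^2$ term moves by $\frac12|b-1|$); combined with the reverse inclusion (restricting the domain can only decrease the sup, and $\cP_1([-R,R])\subseteq\cP_1^\delta((1+\eps)[-R,R])$ gives one inequality for free) this yields \eqref{opt.stab1}. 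Finally part (e): the inequality $\le$ is immediate by monotone inclusion of the feasible sets; for $\ge$, take a near-optimizer $\nu$ for the problem on $\R$ at level $\al$, truncate it to $[-R,R]$ and renormalize to restore total mass $1$ and second moment $\al$ (by a small rescaling as in (d)), and observe that $\int\ham\,d\nu$ and $\DKL(\nu|\gamma)$ change by $o_R(1)$ as $R\to\infty$ because the optimizer has the explicit form $\propto e^{\ham(s)-\zeta s^2}$ (from part (b) applied on $\R$, which is valid since $\ham$ is defined on all of $\R$ and sub-Gaussianity of the tilt keeps the integral finite) and hence Gaussian-type tails, so its mass and contributions outside $[-R,R]$ vanish; here I would also invoke part (b)'s density representation to make the tail control quantitative. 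The main obstacle throughout is part~(c)'s quantitative Hölder bound; the rest is routine convexity, compactness, and change-of-variables bookkeeping.
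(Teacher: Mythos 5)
Your parts (a) and (b) are sound, and for (b) your route is genuinely different from the paper's: you use the Gibbs variational identity $\chi(\nu)=\log\int_I e^{\ham-\zeta s^2}ds+\zeta-\tfrac12\log(2\pi e)-\DKL(\nu|\nu^\zeta_{I,\ham})$ for $\nu\in\cP_1(I)$ together with an intermediate value argument for the multiplier, whereas the paper derives the form \eqref{mini} from first-order perturbations of the optimizer (including a separate argument that the support is all of $I$). Your version is cleaner and yields the formula for the supremum directly; it needs only the joint continuity and strict monotonicity of $\zeta\mapsto\int s^2\,d\nu^\zeta_{I,\ham}$, which is the same ingredient (positive variance of $X^2$ under a continuous measure) that the paper uses for the qualitative continuity in (c), so that part of (c) is also fine in spirit.

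However, there are two genuine gaps, at exactly the quantitatively delicate points. First, \eqref{gibbs.W2}: your plan of taking the monotone rearrangement coupling and lower-bounding $\E Y_aY_b$ by "symmetry and the Gaussian-tilt structure" has no mechanism behind it — the symmetric measures themselves are not stochastically ordered, and the needed correlation bound is precisely what is hard. The actual argument passes to $Y=X^2$: the optimizers form an exponential family $dQ_\beta\propto e^{\beta y}\,dQ_0$ on $[0,R^2]$, with $\t\nu_a$ the law of $a^{-1}\epsilon\sqrt{Y_{\beta(a)}}$ where $\beta(a)=-\zeta(a)/a^2$ and $\E Y_{\beta(a)}=a^2$; hence $\beta(\cdot)$ is increasing, the monotone coupling gives $Y_{\beta(a)}\le Y_{\beta(b)}$ a.s., and after dropping a nonnegative cross term and using $(\sqrt{x}-\sqrt{y})^2\le x-y$ one gets $\cW_2^2\le b^{-2}\E(Y_{\beta(b)}-Y_{\beta(a)})+(a^{-2}-b^{-2})\E Y_{\beta(a)}=2(1-a^2/b^2)$. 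Without identifying this stochastic monotonicity and the moment identity $\E Y_{\beta(a)}=a^2$, the bound does not follow. Second, part (d) is not "routine bookkeeping": rescaling $\nu$ with second moment $b$ by $b^{-1/2}$ restores the moment but enlarges the support to $(1+\eps)b^{-1/2}[-R,R]\supsetneq[-R,R]$, so you do not land in $\cP_1([-R,R])$, and the trivial inclusion only gives the easy inequality. What remains — that the sup over $\cP_1$ changes by $O(K(\delta+\eps))$ when the support interval is dilated by $1+O(\delta+\eps)$, \emph{uniformly in $R$} — is the whole content of the paper's proof, and it is obtained by showing the Lagrange multiplier $\beta_a(R)$ is bounded uniformly in $R$ and in $a$ near $1$ (monotonicity in $R$, plus bounds at $R=2$ and $R=\infty$ using $|\ham(x)|\le Kx^2$). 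This uniformity in $R$ is essential for the applications. Your part (e) inherits this gap, and additionally invokes part (b) on all of $\R$ (existence and Gibbs form of an optimizer there), which is not established; the paper instead truncates an arbitrary near-optimizer, uses dominated convergence with the domination $|\ham(s)|\le Ks^2$, and then corrects the second moment via part (d).
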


\begin{remark}
While the explicit bound \eqref{gibbs.W2} is not needed in the present work, the fact that we have a bound that is independent of $R$ is interesting and may prove useful in subsequent work. We suspect this bound (possibly with a worse constant) extends to the asymmetric case, but we do not have a proof.
\end{remark}

We recall the following lemma from \cite{AGH}.
For a measure $\nu$ on $\R$ and $a>0$ let $\nu_{\#a}:=D_a\#\nu$ denote the pushforward of $\nu$ under the dilation map $D_a:=(s\mapsto as)$. That is,
\begin{equation}	\label{def:dilate}
\int f(at) d\nu(t) = \int f(t) d\nu_{\#a}(t)
\end{equation}
for any measurable $f$. 

\begin{lemma}	\label{lem:dilate}
Let $h:I\to \R$ be a measurable function on an interval $I\subseteq \R$ and let $a>0$.
For any $\nu\in\cP_1(a^{-1} I)$ we have $\nu_{\#a}\in \cP_{a^2}(I)$, and
\begin{equation}	\label{arg.dilate}
\int h(as)d\nu(s) - \DKL(\nu|\gamma) = \int hd\nu_{\#a} -\DKL(\nu_{\#a}|\gamma) -\frac12(1-a^2)-\log a.
\end{equation}
In particular,
\begin{equation}	\label{opt.dilate}
\sup_{\nu\in \cP_1( a^{-1}\cdot I) } \bigg\{ \int h(as)d\nu(s) - \DKL(\nu|\gamma) \bigg\}
= \sup_{\nu\in \cP_{a^2} (I) } \bigg\{ \int hd\nu - \DKL(\nu|\gamma) \bigg\} -\frac12(1-a^2) - \log a. 
\end{equation}
\end{lemma}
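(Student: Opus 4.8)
The plan is to reduce the entire statement to the change-of-variables formula for densities under a dilation, combined with the decomposition \eqref{DKL-gamma} of the Gaussian relative entropy into a differential-entropy term and a second-moment term. First I would dispatch the second-moment and support claim: by definition of the pushforward, $\nu_{\#a}$ is concentrated on $a\cdot(a^{-1}I)=I$, and taking $f(x)=x^2$ in \eqref{def:dilate} gives $\int t^2\,d\nu_{\#a}(t)=\int(as)^2\,d\nu(s)=a^2\int s^2\,d\nu(s)=a^2$ whenever $\nu\in\cP_1(a^{-1}I)$, so $\nu_{\#a}\in\cP_{a^2}(I)$.

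Next I would prove \eqref{arg.dilate}. The term $\int h(as)\,d\nu(s)$ equals $\int h\,d\nu_{\#a}$ directly by \eqref{def:dilate}, so the content is the identity $\DKL(\nu|\gamma)=\DKL(\nu_{\#a}|\gamma)+\log a+\tfrac12(1-a^2)$. If $\nu$ is not absolutely continuous with respect to Lebesgue measure then, since $D_a$ is a bi-Lipschitz bijection of $\R$, neither is $\nu_{\#a}$; hence $\nu\not\ll\gamma$ and $\nu_{\#a}\not\ll\gamma$, both relative entropies are $+\infty$, and the identity holds vacuously (both sides of \eqref{arg.dilate} being $-\infty$). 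Otherwise write $d\nu(s)=f(s)\,ds$; a one-line substitution shows $\nu_{\#a}$ has density $g(t)=a^{-1}f(t/a)$, and the substitution $t=as$ in $\int g\log g$ yields $\DKL(\nu_{\#a}|dx)=\DKL(\nu|dx)-\log a$. Plugging this into \eqref{DKL-gamma} for $\nu$ (using $\int s^2\,d\nu=1$) and for $\nu_{\#a}$ (using $\int t^2\,d\nu_{\#a}=a^2$, from the first step), the $\tfrac12\log(2\pi)$ terms cancel and the claimed relation falls out; rearranging gives \eqref{arg.dilate}.

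Finally, for \eqref{opt.dilate} I would observe that $\nu\mapsto\nu_{\#a}$ is a bijection from $\cP_1(a^{-1}I)$ onto $\cP_{a^2}(I)$, with inverse $\mu\mapsto\mu_{\#(1/a)}$ — the verification that $\mu_{\#(1/a)}\in\cP_1(a^{-1}I)$ for $\mu\in\cP_{a^2}(I)$ is the same support-and-second-moment bookkeeping as in the first step, and $(\nu_{\#a})_{\#(1/a)}=\nu$ since $D_{1/a}\circ D_a=\mathrm{id}$. Taking the supremum of both sides of \eqref{arg.dilate} over $\nu\in\cP_1(a^{-1}I)$ and reparametrizing the right-hand side by $\mu=\nu_{\#a}$ ranging over $\cP_{a^2}(I)$ produces \eqref{opt.dilate}, the additive constant $-\tfrac12(1-a^2)-\log a$ being independent of $\nu$.

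I expect no real obstacle here: the only point requiring any care is the degenerate case in which $\nu$ lacks a Lebesgue density, so that \eqref{DKL-gamma} is not applicable verbatim, and this is handled by noting that both sides of the asserted relation are infinite. Everything else is a routine change of variables.
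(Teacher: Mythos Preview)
Your argument is correct. The paper's proof differs only in how it handles the relative-entropy identity: rather than invoking the decomposition \eqref{DKL-gamma} and tracking the differential entropy under dilation, it uses the invariance of KL divergence under simultaneous pushforward, writing $\DKL(\nu|\gamma)=\DKL(\nu_{\#a}|\gamma_{\#a})$ and then computing $\frac{d\gamma_{\#a}}{d\gamma}(t)=\frac1a\exp\big(\tfrac12t^2(1-a^{-2})\big)$ explicitly. Both routes are one-line computations of comparable length; the paper's version avoids splitting into the absolutely-continuous and singular cases (the invariance holds for arbitrary $\nu$, and the Radon--Nikodym derivative being computed is between two Gaussians), while your version is arguably more self-contained since it relies only on the elementary change-of-variables formula for densities. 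Your explicit treatment of the case $\nu\not\ll dx$ is a nice touch that the paper leaves implicit.
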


We include the short proof for completeness. 

\begin{proof}
Let $\nu\in\cP_1(a^{-1}I)$. Then from \eqref{def:dilate} it follows that $\nu_{\#a}\in \cP_{a^2}(I)$ and 
\begin{equation}	\label{dilate1}
\int h(as)d\nu(s) = \int h(s) d\nu_{\#a}(s). 
\end{equation}
Moreover, since relative entropy is preserved under simultaneous dilation,
\begin{align*}
\DKL(\nu|\gamma) 
&=\DKL(\nu_{\#a}|\gamma_{\#a}) 
= \int \log \frac{d\nu_{\#a}}{d\gamma}-\log\frac{d\gamma_{\#a}}{d\gamma} d\nu_{\#a}\,.
\end{align*}
Now since $\frac{d\gamma_{\#a}}{d\gamma}(t) = \frac1a\exp( \frac12t^2(1-\frac1{a^2}))$, the last expression above is
\begin{equation}	\label{dilate2}
\DKL(\nu_{\#a}|\gamma) - \int \tfrac12 t^2 (1-a^{-2}) - \log a \,d\nu_{\#a}(t)
= \DKL(\nu_{\#a}|\gamma) + \tfrac12(1-a^2) + \log a. 
\end{equation}
Substituting \eqref{dilate1} and \eqref{dilate2} in the left hand side of \eqref{opt.dilate} yields the claim. 
\end{proof}

As a consequence of Lemma \ref{lem:dilate} we obtain the following scaling property for the functional \eqref{def:VP}:
for any $R\in[1,\infty]$, $v\in \ell^2(\B)$ and $\al>0$, 
\begin{equation}	\label{VP.scaling}
\VP_R(v,\al) = \VP_{R/\sqrt\al}(\sqrt{\al}v,1) + \frac12(1-\al) + \frac12\log\al\,.
\end{equation}

\begin{remark}
\label{rmk:dilate}
Using Lemma \ref{lem:dilate}, we immediately obtain extensions of Proposition \ref{prop:gibbs}(a,b) to the case where we optimize over $\cP_{a^2}(I)$ for some $a>0$. 
Consequently,
\begin{equation}
\sup_{\nu\in\cP_{a^2}(I)}\chi(\nu) = 
\log \int_I e^{\ham(s) - \zeta_{I,\ham}^{(a)}s^2}ds 
+a^2(\zeta_{I,\ham}^{(a)} - \tfrac12) 
- \tfrac12\log(2\pi ).
\end{equation}
Furthermore, by combining Proposition \ref{prop:gibbs}(\ref{gibbs.wiggle}) with Lemma \ref{lem:dilate} we obtain that the mapping
\begin{equation}
\label{aPhi-Lipschitz}
a\mapsto \sup_{\nu\in \cP_{a^2}(I)} \bigg\{ \int hd\nu - \DKL(\nu|\gamma) \bigg\}
\end{equation}
is $O(K/\eps)$-Lipschitz on $[\eps, 1]$.
\end{remark}

\subsection{Proof of Proposition \ref{prop:gibbs}\eqref{gibbs.attain}}
We note that $\chi$ is upper-semicontinuous under the weak topology and strictly concave. 
Since $I$ is compact {and not contained in $(-1,1)$} we have that $\cP_1(I)$ is compact {and nonempty}, and hence $\chi$ attains its maximum on $\cP_1(I)$ at a unique measure $\t\nu=\t\nu_{I,\ham}\in \cP_1(I)$. 
\qed

\subsection{Proof of Proposition \ref{prop:gibbs}\eqref{gibbs.density}}
We use a perturbative argument to determine the form of $\t\nu$. 
For the remainder of the proof we write $L^p(\nu)$ for the space of functions $f$ supported on $I$ with $\int |f|^pd\nu<\infty$ (all functions are supported on $I$), and write $\|f\|_\infty$ for the $L^\infty(\gamma)$ norm of $f$.
Since $\ham$ is continuous on a compact interval we clearly have $\ham\in L^1(\nu)$ for any $\nu\in \cP(I)$. 
Since the optimizer $\t\nu$ must clearly satisfy $\DKL(\t\nu|\gamma)=\int \log\frac{d\t\nu}{d\gamma}d\t\nu<\infty$, we have $\log\frac{d\t\nu}{d\gamma}\in L^1(\t\nu)$. 

 Let $S:=\supp(\t\nu)\subseteq I$ (we will show $S= I$).
Let $f, g\in L^\infty(\gamma)$ with $f$ supported on $S$ and $g\ge0$ supported on $I\setminus S$ (taking $g=0$ if $S=I$). 
For $\eps\in (0,1/\|f\|_\infty)$ let
\begin{equation}
d\nu_\eps = d\t\nu + \eps ( f d\t\nu + g d\gamma). 
\end{equation}
We take $f, g$ to satisfy
\begin{equation}	\label{gh.cond}
\int f d\t\nu + \int gd\gamma =0\,,\qquad \int x^2 f(x)d\t\nu(x) + \int x^2 g(x)d\gamma(x)=0
\end{equation}
so that $\nu_\eps\in\cP_1(I)$. 
Since $\t\nu$ is the maximizer for $\chi$, 
\begin{equation}	\label{opt.bound.barnu}
0\le \chi(\t\nu) - \chi(\nu_\eps)
=  - \eps \int \ham ( fd\t\nu + gd\gamma) + \DKL(\nu_\eps|\gamma)-\DKL(\t\nu|\gamma).
\end{equation}
We expand the relative entropy of $\nu_\eps$ as
\begin{align}
\DKL(\nu_\eps|\gamma) 
&= \int (1+\eps f) \log\Big[(1+\eps f)\frac{d\t\nu}{d\gamma} \Big] d\t\nu
+ \eps \int  g  \log(\eps  g ) d\gamma \notag\\
&= \DKL(\t\nu|\gamma) + \int (1+\eps f)\log(1+\eps f) d\t\nu
+ \eps \int f \log\frac{d\t\nu}{d\gamma}d\t\nu
+ \eps \int  g  \log(\eps g )d\gamma.	\label{mueps.entropy}
\end{align}
Using that $0\le -x+ (1+x)\log(1+x) \le Cx^2$ on $[-1,\infty)$, 
we have 
\begin{equation}
 \int (1+\eps f)\log(1+\eps f) d\t\nu
\le 
\eps \int fd\t\nu + C \eps^2 \|f\|_\infty^2 \le C\eps^2\|f\|_\infty^2
\end{equation}
where in the final bound we used the first equation in \eqref{gh.cond} and the fact that $g\ge0$. 
Combining with \eqref{mueps.entropy} and \eqref{opt.bound.barnu} and dividing through by $\eps$, we get
\begin{align}	
0 &\le -  \int \ham ( fd\t\nu + gd\gamma) 
+ C\eps\|f\|_\infty^2 +  \int f \log\frac{d\t\nu}{d\gamma}d\t\nu
+ \int  g  \log(\eps g )d\gamma	\notag\\
& = C\eps \|f\|_\infty^2  + (\log \eps ) \int gd\gamma 
+ \int \bigg( \log \frac{d\t\nu}{d\gamma} - \ham\bigg) fd\t\nu
+ \int \big( \log g - \ham) gd\gamma.	\label{gh.all}
\end{align}
We first consider this inequality with $g$ set to 0, so that $\nu_\eps$ and $\t\nu$ have the same support. 
The condition \eqref{gh.cond} is now
\begin{equation}	\label{g.cond}
 \int fd\t\nu =  \int x^2f(x)d\t\nu(x) =0
\end{equation}
and we have 
\[
0\le C\eps \|f\|_\infty^2  
+ \int \bigg( \log \frac{d\t\nu}{d\gamma} - \ham\bigg) fd\t\nu
\]
for any $f\in L^\infty(\gamma)$ satisfying \eqref{g.cond} and any $\eps\in(0,1/\|f\|_\infty)$. 
Taking $\eps\downarrow 0$, we have $\int (\log\frac{d\t\nu}{d\gamma} - \ham) fd\t\nu \ge0$, and replacing $f$ with $-f$, we have shown
\begin{equation}	\label{g.annihilate}
\int \bigg( \log \frac{d\t\nu}{d\gamma} - \ham\bigg) fd\t\nu = 0
\end{equation}
for all $f\in L^\infty(\gamma)$ satisfying \eqref{g.cond}. 
Now let $V\subset L^1(\t\nu)$ be the 2-dimensional subspace spanned by the constant function 1 and $x\mapsto x^2$ (these lie in $L^1(\t\nu)$ since $\t\nu\in \cP_1(I)$). 
Since $\t\nu$ is finite, in particular $\sigma$-finite, we have that $L^\infty(\t\nu)$ is (isometrically isomorphic to) the dual of $L^1(\t\nu)$. Identifying $L^\infty(\t\nu)$ with $L^1(\t\nu)^*$, then the orthogonal of $V$, i.e.
\[
V^\perp = \{ f\in L^\infty(\t\nu): \eqref{g.cond} \text{ holds}\}
\]
is isometrically isomorphic to the dual of the quotient space $L^1(\t\nu)/V$. 
Recalling that $\log\frac{d\t\nu}{d\gamma}$ and $\ham$ are in $L^1(\t\nu)$, then \eqref{g.annihilate} says that 
\[
f( \log\frac{d\t\nu}{d\gamma}- \ham+V) = 0 \quad \forall f\in V^\perp
\]
(viewing $f\in L^1(\t\nu)^*$ as a linear functional)
and hence $\log\frac{d\t\nu}{d\gamma}- \ham$ lies $V$, i.e.
\begin{equation}	\label{barnu.aex}
\log\frac{d\t\nu}{d\gamma}(x) = \ham(x) - \alpha x^2 - \beta\qquad \t\nu\text{-a.e.}\; x
\end{equation}
for some $\alpha,\beta\in \R$.

Returning to \eqref{gh.all}, we now suppose $S\ne I$, and take $g=1_{I\setminus S}$. Substituting \eqref{barnu.aex} into \eqref{gh.all} and rearranging, we have
\begin{align*}
\gamma(I\setminus S) \log\frac1\eps \le C\eps \|f\|_\infty^2 -\zeta\int x^2f(x)d\t\nu(x) - \eta \int fd\t\nu - \int_{I\setminus S} \ham d\gamma\,.
\end{align*}
Since the right hand side is uniformly bounded for $\eps\in (0,1/\|f\|_\infty)$, we obtain a contradiction for $\eps$ sufficiently small. 
Hence, $S=\supp(\t\nu)=I$, and from \eqref{barnu.aex} we get that $\t\nu$ has density on $I$ with respect to Lebesgue measure proportional to $\exp( \ham(x) - (\alpha+\frac12) x^2)$, with the value of $\zeta:=\alpha+\frac12$ {uniquely} determined by the constraint that $\int x^2d\t\nu(x)=1$.
\qed

\subsection{Proof of Proposition \ref{prop:gibbs}\eqref{gibbs.W2cont}}
{To show continuity at $a=a_0$, by replacing $I$ with $a_0^{-1}I$ it suffices to consider $a_0=1$. Since $\t\nu_a$ is supported on the compact interval $2I$ for all $a\ge1/2$, we see that continuity under the $\cW_2$ metric is equivalent to continuity under the $\cW_1$ metric $\cW_1(\mu,\nu)=\sup\{ \int f d(\mu-\nu)\}$, where the supremum is taken over 1-Lipschitz functions $f:\R\to \R$. From dominated convergence we then see it suffices to show that $a\mapsto\zeta(a):= \zeta_{a^{-1}I,h(a\cdot)}$ is continuous. From  a  change of variable we see that $\zeta(a)=-a^2\beta(a)$, where $\beta(a)$ is the unique value of $\beta\in\R$ for which $F(\beta)=a^2$, where
\[
F(\beta):= \frac{\int_I x^2 e^{h(x)+\beta a^2 x^2}dx}{\int_I e^{h(x)+\beta a^2 x^2}dx}.
\] 
Since $h$ is bounded and $I$ is compact we see that $F$ is smooth, and by the implicit function theorem it suffices to show that $F'(\beta(1))>0$. But since $F'(\beta(1))$ is the variance of $X^2$ for $X$ having law $\t\nu_1$, we see that $F'(\beta(1))=0$ would imply  that  $\t\nu_1$ is a discrete measure supported on the two points $\pm \sqrt{F(\beta(1))}$. Since $\t\nu_1$ is a continuous measure we obtain a contradiction, and hence $F'(\beta(1))>0$. }

{For the quantitative bound \eqref{gibbs.W2}, consider} the probability measure $Q_0$ on $[0,\infty)$ with density proportional to
\begin{equation}
dQ_0(y)\propto 1_{[0,R^2]}(y) \frac1{\sqrt{y}} \exp\big( \ham(\sqrt{y})\big)dy
\end{equation}
and define the exponentially tilted measures 
\begin{equation}
dQ_\beta(y) := \frac{e^{\beta y}dQ_0(y)}{\int e^{\beta y} dQ_0(y)}\,,\qquad \beta\in\R\,.
\end{equation}
Let $Y_\beta\sim Q_\beta$, and for $a>0$ and $\eps$ an independent Rademacher variable set $X_{\beta,a}:= \frac1a\eps \sqrt{Y_\beta}$. 
Then $X_{\beta,a}$ has density proportional to
\[
e^{ \ham(ax) + \beta a^2x^2 }1_{[-R,R]}(ax)  dx\,.
\]
Hence, letting $\zeta(a)$ be the unique real such that $\t\nu_a=\nu^{\zeta(a)}$ (as provided by part (b)), and 
{with
$
\beta(a)= -\frac1{a^2} \zeta(a)
$
as above}
we have $X_{\beta(a),a}\sim \t\nu_a$. In particular,
\begin{equation}
1 = \E X_{\beta(a),a}^2 = \frac1{a^2} \E Y_{\beta(a)}\,.
\end{equation}
This and the fact that $\beta\mapsto \E Y_\beta$ is clearly increasing implies that $a\mapsto\beta(a)$ is increasing. 

Now fix $0<a\le b$. We construct a coupled pair $(X_a,X_b)$ with marginals $(\t\nu_a,\t\nu_b)$ as follows. Let $(Y_{\beta(a)}, Y_{\beta(b)})$ be a monotone coupling with marginals $(Q_{\beta(a)},Q_{\beta(b)})$ (see \eqref{scalar-coupling} for a construction). Thus $Y_{\beta(a)}\le Y_{\beta(b)}$ almost surely.
Now as before we let $\eps$ be a Rademacher independent of $(Y_{\beta(a)}, Y_{\beta(b)})$, and set $X_a= \frac1a\eps\sqrt{Y_{\beta(a)}}$, $X_b= \frac1b\eps \sqrt{Y_{\beta(b)}}$. 
We have
\begin{align*}
\cW_2(\t\nu_a,\t\nu_b)^2
&\le \E |X_b-X_a|^2 \\
&= \E | \frac1b \sqrt{Y_{\beta(b)}} - \frac1a\sqrt{Y_{\beta(a)}}|^2\\
&= \frac1{b^2} \E |\sqrt{Y_{\beta(b)}} - \sqrt{Y_{\beta(a)}}|^2
+ ( a^{-2}-b^{-2})\E Y_{\beta(a)} - \frac2b(a^{-1}-b^{-1}) \E \sqrt{Y_{\beta(a)}Y_{\beta(b)}}\\
&\le \frac1{b^2} \E (Y_{\beta(b)}- Y_{\beta(a)})
+ ( a^{-2}-b^{-2})\E Y_{\beta(a)} \\
&= \frac1{b^2}( b^2-a^2) + (a^{-2}-b^{-2}) a^2 = 2( 1-\frac{a^2}{b^2})
\le 4 (1-\frac{a}b)
\end{align*}
where in the fourth line we dropped the last (non-negative) term, and bounded the first term using the inequality $(\sqrt{x}-\sqrt{y})^2\le x-y$ for $0\le y\le x$. 
\qed

\subsection{Proof of Proposition \ref{prop:gibbs}\eqref{gibbs.wiggle}}

We begin with \eqref{opt.stab1}.
Obviously, we have
\[\Upsilon:=
\sup_{\nu\in \cP_1^{\delta}( [-(1+\eps)R,(1+\eps)R])}\big\{ \nu(\ham) - \DKL(\nu|\gamma) \big\}
\ge \sup_{\nu\in \cP_{1}( [-R,R])}\big\{ \nu(\ham) - \DKL(\nu|\gamma) \big\} .
\]
On the other hand
\[\Upsilon=\sup_{\nu\in \cP_1^{\delta}( [-(1+\eps)R,(1+\eps)R])}\big\{ \nu(\ham) - \DKL(\nu|\gamma) \big\}=\sup_{c\in [1-\delta,1+\delta]}\sup_{\nu\in \cP_{c}( [-(1+\eps)R,(1+\eps)R])}\big\{ \nu(\ham) - \DKL(\nu|\gamma) \big\}.
\]
For $\nu\in \cP_{c}( [-(1+\eps)R,(1+\eps)R])$, we let $\nu_{c}$ be such that $\int f(x) d\nu_{c}(x)=\int f(\frac{x}{\sqrt{c}})d\nu(x)$. We have seen {in the proof of Lemma \ref{lem:dilate}} that
\[
\DKL(\nu|\gamma)- \DKL(\nu_{c}|\gamma)=\frac{1}{2}\ln c+(c-1)
\]
and since 
{$h\circ\ssq^{-1}$} is $K$-Lipschitz and $\nu_{c}$ has 
unit second moment, we have 
$|\int hd\nu-\int hd\nu_{c}|\le K |c-1|$. We observe that $\nu_{c}$ has support in $[-R/a,R/a]$ with $a=\frac{\sqrt{c}}{(1+\eps)}$ and that $|\int h(x)d\nu_{c}(x)-\int h(ax) d\nu_{c}(x)|\le K |a^{2}-1|$. Therefore, 
\[
\Upsilon\le \sup_{|a-1|\le \delta+\epsilon } 
\sup_{\nu\in \cP_{1}( [-R/a,R/a])}\bigg\{ \int \ham  (
ax )
d\nu(x) - \DKL(\nu|\gamma) \bigg\}+O(\delta +\epsilon).
\]
To bound the right hand side, we  notice that from Proposition \ref{prop:gibbs}\eqref{gibbs.density}, the maximizer is given by  $\t\nu_a$ of the form
\[
d\t\nu_{a}(x)=1_{[-R/a,R/a]}(x)\frac{e^{ \ham(a x)-\zeta_{a} x^2} dx}{ \int_{[-R/a,R/a]}e^{ \ham(ay) -\zeta_{a}y^2} dy}
\]
so that
\begin{align}
I(a,R)&:=\sup_{\nu\in \cP_{1}( [-R/a,R/a])}\bigg\{ \int \ham  (
ax )
d\nu(x) - \DKL(\nu|\gamma) \bigg\}	\notag\\
&=\zeta_{a}-\frac{1}{2}+\log \int_{[-R/a,R/a]}e^{ \ham(ay) -\zeta_{a}y^2} dy	
-\frac12\log(2\pi)	\label{opt-zetaa}\\
&=-(\beta_{a}a^{2}+\frac{1}{2})-\log a+\log \int_{[-R,R]}e^{h(x)+\beta_{a}x^{2}}
dx	-\frac12\log(2\pi)	\label{opt-betaa}
\end{align}
where we have set $\zeta_{a}=-\beta_{a} a^{2}$ and rescaled the integral in the above right hand side. We denote by $\bar\nu_{a}$ the probability {measure} 
on $[-R,R]$ obtained by rescaling $\t\nu_{a}$ by $a$.
Then, for any $a,a'$ in a neighborhood of one,
\[
I(a,R)-I(a',R)=\ln\int_{[-R,R]} e^{{(\beta_{a}-\beta_{a'})x^{2}}} d\bar\nu_{a'}-(\beta_{a} -\beta_{a'})a^{2}-\ln a/a' 
-\beta_{a'}(a^{2}-(a')^{2})\,.
\]
We may assume without loss of generality that $I(a,R)-I(a',R)\le 0$ up to exchange $a$ and $a'$.
By Jensen's inequality and $\bar\nu_{a'}(x^{2})=(a')^{2}$, we deduce that
\begin{equation}\label{Lip}
I(a,R)-I(a',R)\ge (\beta_{a} -\beta_{a'})((a')^{2}-a^{2})-\log a/a'
-\beta_{a'}(a^{2}-(a')^{2}).
\end{equation}
Therefore we see that it is enough to show that $\beta_{a}=\beta_{a}(R)$ is bounded uniformly in $R$ and $a$ in a neighborhood of one to conclude that $I(a,R)$ is Lipschitz with a bounded  Lipschitz norm. Recall that $\beta_{a}(R)$ is defined as the point where the function
\[
F(\beta,R)= \frac{\int_{[-R,R]} x^{2}e^{h(x)+\beta x^{2}}dx}{\int_{[-R,R]} e^{h(x)+\beta x^{2}}dx} 
\]
attains the value $a^2$.
But $\partial_{\beta} F(\beta,R) $ is nonnegative (since it is equal to the covariance)  as well as 
\[
\partial_{R} F(\beta,R)=(R^{2}- \frac{\int_{[-R,R]} x^{2}e^{h(x)+\beta x^{2}}dx}{\int_{[-R,R]} e^{h(x)+\beta x^{2}}dx})\frac{e^{h(R)+\beta R^{2}}}{\int_{[-R,R]} e^{h(x)+\beta x^{2}}dx}\ge 0\,.\]
This implies that $R\rightarrow \beta_{a}(R)$ is decreasing so that for $R\ge 2$
\[\beta_{a}(\infty)\le \beta_{a}(R)\le \beta_{a}(2)\,.\]
Note here that $R$ needs to be greater or equal to $a^{2}$ to insure the existence of $\beta_{a}(R)$ and we may take $R\ge 2$ since in any case we will focus on $R$ large. When $R$ is infinite, we can use that $-K x^{2}\le h(x)\le K x^{2}$ to see that
\[
F(\beta,\infty)\le 
 \frac{(K-\beta)^{1/2}}{(-K-\beta)^{3/2}} .
\]
Since the right hand side tends to zero as $\beta\to-\infty$ it follows
that $\beta_{a}(\infty)$ is bounded below  by a finite constant, uniformly for $a$ in a neighborhood of one. At $R=2$, $\beta_{a}(2)=F^{-1}(.,2)(a^{2})$ with $F(.,2)$ strictly increasing, continuous  and $F(\beta,2)$ goes to $4$ as $\beta$ goes to infinity, hence the set of  $\beta$ such that $F(\beta,2)\le 2$ is bounded, implying that $\beta_{a}(2)$ is bounded uniformly for $a\le \sqrt{2}$. As a consequence $\beta_{a}(R)$ is uniformly bounded for $a$ in the vicinity of one, uniformly in $R\ge 2$, which allows to conclude that $I(a,R)$ is Lipschitz in $a$, uniformly in $R\ge 2$.
This concludes the proof of \eqref{opt.stab1}. \qed

\subsection{Proof of Proposition \ref{prop:gibbs}\eqref{gibbs.lim}}
From Lemma \ref{lem:dilate} we may assume $\al=1$. 
We denote the expressions on the left and right hand sides by $\phi_R$ and $\phi_\R$, respectively. 
Since $\phi_R$ is clearly monotone increasing in $R$ and bounded by $\phi_\R$, it suffices to show
\begin{equation}	\label{optd.goal}
\phi_R \ge 
\phi_\R-o_{R\to\infty}(1).
\end{equation}
{Fix an arbitrary $\eps>0$ and let $\nu^\eps\in\cP_1(\R)$ be such that 
\[
\nu^\eps(\ham)-\DKL(\nu^\eps|\gamma) \ge \phi_\R -\eps\,.
\]
For each $R>0$ denote the truncated measure $d\nu^\eps_R:= \nu^\eps([-R,R])^{-1}1_{[-R,R]}d\nu^\eps\in \cP([-R,R])$. 
For any $f:\R\to\R$ such that $f(0)=0$ and 
{$f\circ\ssq^{-1}$} is $K$-Lipschitz, we have that $f$ is dominated by the $\nu^\eps$-integrable function $s\mapsto Ks^2$, so from the dominated convergence theorem we have
\begin{equation}	\label{opt.stab2a}
\nu^\eps_R(f) = \nu^\eps([-R,R])^{-1} \int_{-R}^R fd\nu^\eps(s) \to \nu^\eps(f)
\end{equation}
as $R\to\infty$. 
In particular, with $f(s)=s^2$ we obtain that $\nu^\eps_R\in \cP_1^{\eps}([-R,R])$ 
for all $R$ sufficiently large.
Moreover,
\[
\DKL(\nu^\eps_R|\gamma) = \frac{\DKL(\nu^\eps|\gamma)}{\nu^\eps([-R,R])} - \log\nu^\eps([-R,R]) \to\DKL(\nu^\eps|\gamma)
\]
so combining with \eqref{opt.stab2a} with $f=\ham$ we get
\begin{equation}
\nu^\eps_R(\ham) - \DKL(\nu^\eps_R|\gamma) = \nu^\eps(\ham) - \DKL(\nu^\eps|\gamma) +o_{R\to\infty}(1) \ge \phi_\R -\eps+o_{R\to\infty}(1). 
\end{equation}
Now applying \eqref{opt.stab1}, we have that for all $R$ sufficiently large, 
\begin{align*}
\phi_R &\ge \sup_{\nu\in\cP_{(1-\delta,1+\delta)}([-R,R])} \{\nu(\ham)-\DKL(\nu|\gamma)\}-O(K\eps)\\
&\ge \nu^\eps_R(\ham) - \DKL(\nu^\eps_R|\gamma) - O(K\eps)\\
&\ge \phi_\R - O((K+1)\eps) +o_{R\to\infty}(1).
\end{align*}
Taking $\eps$ to zero gives \eqref{optd.goal} to complete the proof. }
\qed

\section{Annealed asymptotics for restricted spherical integrals}
\label{sec:annealed}

\subsection{
Proof of Propositions \ref{prop:FE} and \ref{prop:tiltQ}}

Our purpose in this section is to prove the following proposition, which immediately yields Propositions \ref{prop:FE} and \ref{prop:tiltQ}.

\begin{prop}
\label{prop:annealed}
Let $\eta\in (0,\frac14)$, $\zcush\in(0,\frac12)$, {$\delta\in[0,1]$} and $1\le T<\infty$, and let $R,w$ (possibly depending on $\theta\in[0,T]$) satisfy
\begin{equation}	\label{wR.bounds1}
R\in [\log N, N^{1/4}]\,,\qquad
w= {w'+w^\le} \in(1-\zcush)\B^{n_0}\,,\qquad { \|w^\le\|_\infty\le  \delta/R } \, .
\end{equation}
{where $w',w^\le$ have disjoint supports.}
\begin{enumeratea}
\item \label{annealed.UB} (Upper bound).
For any $\theta\in[0,T]$, $\srad\in [0,\frac\zcush{10}]$ and all $N$ sufficiently large depending on $T,\zcush$,
\begin{equation}
F_N(\theta;\Uloc_w(\srad,R))	\le \free_{N,R}(\theta,w)+ 
O_{T,\zcush}( \srad + R^2N^{-1/2} + N^{-2\eta}\log N)\,. \label{FE+}
\end{equation}

\item \label{annealed.LB} (Lower bound).
There exists $\t v_{\theta,w,R}\in \R^{\supp(w)^c}\cap\Deloc_R \cap\sqrt{1-\|w\|_2^2}\sphereN$ and $C_0(\zcush)>0$ depending only on $\zcush$ such that, with $\srad_0:=C_0(\zcush)RN^{-1/2}$, we have
\begin{align}		
&F_N\Big(\theta; \,\Uloc_w(N^{-4},R) \cap\Bset_2\big(w+\t v_{\theta,w,R}\,,\,\srad_0\big)\Big)	\label{FE-}\\
 &\qquad \ge \free_{N,R}(\theta,w) - O_{T,\zcush}( {\delta+}R^2N^{-1/2} + N^{-2\eta}\log N) 
\notag
\end{align}
{for all $N$ sufficiently large depending on $T,\zcush$. Moreover, $\t v$ depends on $w$ only through $w'$ and $\|w\|_2$.}
\item (Continuity). 
Let $\theta_0\in(0,T)$ and suppose that $R,w$ depend on $\theta\in[\theta_0,T]$ in the following way (in addition to satisfying \eqref{wR.bounds1} for all $\theta\in[\theta_0,T]$): for $\vloc'\in\B^{n_0}$ and $R'\ge1$ independent of $\theta$, and continuous functions $q:[\theta_0,T]\to (0,1)$, $\alpha:[\theta_0,T]\to [0,1-\zcush]$, we have
{$w'=q(\theta)\vloc'$, $\|w^\le\|_2^2=\alpha(\theta)$}, and $R= R'/(\theta q(\theta))$.
{Then we can take the vector $\t v_{\theta,w,R}$ from part (b) to depend continuously on $\theta\in [\theta_0,T]$.} 
\end{enumeratea}
\end{prop}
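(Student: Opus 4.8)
The plan is to compute the restricted free energy block by block and then read off the three assertions. First I would apply Fubini's theorem (using independence of the entries of $H$) to pass to a deterministic integral over the sphere:
\[
F_N(\theta;\Uloc) = \frac1N\log\int_\Uloc \exp\bigl(\Phi_N(u)\bigr)dP(u)\,,\qquad \Phi_N(u):=\sum_{i\le j}\LLa_\mu\bigl(2^{\ep_{ij}}\theta\sqrt N u_iu_j\bigr)\,.
\]
Writing $S:=\supp(w)$ and decomposing $u=(u_S,u_{S^c})$, split $\Phi_N = \Phi_N^{SS}+\Phi_N^{cc}+\Phi_N^{Sc}$ according to whether $(i,j)$ lies in $S\times S$, $S^c\times S^c$, or crosses. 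On $\Uloc_w(\srad,R)$ (and its $N^{-4}$-analogue) one has $\|u_S-w\|_2\le\srad$ and $\|u_{S^c}\|_\infty\le R/\sqrt N$. For $\Phi_N^{cc}$ the arguments of $\LLa_\mu$ are $O(\theta R^2N^{-1/2})=o(1)$, so Taylor expansion $\LLa_\mu(t)=\tfrac12t^2+O(|t|^3)$ gives $\tfrac1N\Phi_N^{cc}(u)=\theta^2\|u_{S^c}\|_2^4+O_{T}(R^2N^{-1/2})=\freeD(\theta,\|w\|_2^2)+O_{T,\zcush}(\srad+R^2N^{-1/2})$. For $\Phi_N^{SS}$ I would use that $\tLL=\LLa_\mu\circ\ssq^{-1}$ is globally Lipschitz (\Cref{rmk:assu.reg}), together with $|\ssq(a)-\ssq(b)|\le(|a|+|b|)|a-b|$ and Cauchy--Schwarz, to get $\bigl|\tfrac1N\Phi_N^{SS}(u)-\freeL(\theta,w)\bigr|\ls_T\srad$ (the diagonal terms are kept, and $|S|\le n_0$ does not enter). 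The crux is $\Phi_N^{Sc}$: for $u_S$ frozen near $w$ this equals $\sum_{j\in S^c}\ham_{w,\theta}(\sqrt Nu_j)$ with $\ham_{w,\theta}(s):=\sum_{i\in S}\LLa_\mu(2\theta w_is)$, which is genuinely non-perturbative since the large entries $w_i$ may be $\Theta(1)$. Integrating this against the conditional surface law of $u_{S^c}$ — which, after scaling by $\sqrt N$ and using $|S^c|=N(1+O(N^{-2\eta}))$, is close to the law of an $|S^c|$-dimensional Gaussian vector conditioned to have squared norm $\approx N(1-\|w\|_2^2)$ and $\ell^\infty$-norm $\le R$ — is a Sanov/Gibbs-conditioning computation: the scaled empirical measure concentrates near the maximiser of $\nu\mapsto \int \ham_{w,\theta}\,d\nu-\DKL(\nu|\gamma)$ over $\cP_{1-\|w\|_2^2}([-R,R])$, so $\tfrac1N\Phi_N^{Sc}(u)=\VP_R(\theta w,1-\|w\|_2^2)-\tfrac12\|w\|_2^2+o(1)=\free^{cross}_R(\theta,w)+o(1)$. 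Assembling the three blocks via \eqref{def:free-split} gives the upper bound \eqref{FE+} (a net/union bound over the value of $u_S$ and the empirical measure of $u_{S^c}$) and the value part of the lower bound \eqref{FE-}. This step mirrors the analysis of the unrestricted $F_N(\theta)$ in \cite{AGH}, localised to the slice $\Uloc_w$; the new ingredient is carrying the frozen block $u_S$ and checking the errors are uniform in $w\in(1-\zcush)\B^{n_0}$ and $\theta\in[0,T]$.

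For the refined lower bound \eqref{FE-} and the construction of $\t v_{\theta,w,R}$ I would use the \emph{explicit} maximiser from \Cref{prop:gibbs}\eqref{gibbs.attain}--\eqref{gibbs.density}: the problem $\sup_{\nu\in\cP_{1-\|w\|_2^2}([-R,R])}\{\int\sum_{i\in\supp w'}\LLa_\mu(2\theta w'_is)\,d\nu(s)-\DKL(\nu|\gamma)\}$ — i.e.\ the one obtained from $\VP_R(\theta w,\cdot)$ after discarding $w^\le$ — has a unique maximiser $\t\nu_{\theta,w',R}$ of the form \eqref{mini}, depending on $w$ only through $w'$ and $\|w\|_2$. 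Define $\t v_{\theta,w,R}\in\R^{S^c}$ to be the quantile representative of $\t\nu_{\theta,w',R}$ (coordinates $F^{-1}_{\t\nu}(k/|S^c|)$ scaled by $1/\sqrt N$), rescaled so that $\|\t v_{\theta,w,R}\|_2^2=1-\|w\|_2^2$; then $\cW_2(\hat\mu_{\sqrt N\t v_{\theta,w,R}},\t\nu_{\theta,w',R})=O(RN^{-1/2})$, $\t v_{\theta,w,R}\in\Deloc_R$ (as $\supp\t\nu\subseteq[-R,R]$), and restricting the $u_{S^c}$-integral to $\Bset_2(w+\t v_{\theta,w,R},\srad_0)$ with $\srad_0\asymp_{\zcush}RN^{-1/2}$ recovers $\free_{N,R}(\theta,w)$ up to $O_{T,\zcush}(\delta+R^2N^{-1/2}+N^{-2\eta}\log N)$. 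The $O(\delta)$ is the cost of using $w'$ instead of $w$ in the variational problem: the quadratic part of $\sum_{i\in\supp w^\le}\LLa_\mu(2\theta w^\le_is)$ equals $2\theta^2s^2\|w^\le\|_2^2$, which is \emph{constant} on each $\cP_\beta([-R,R])$ and hence does not move the maximiser, while the cubic remainder is $O_T(\delta)$ on using $\|w^\le\|_\infty\le\delta/R$ and $\int|s|^3d\nu\le R\int s^2d\nu$. This yields \eqref{FE-}, with $\t v$ depending on $w$ only through $w'$ and $\|w\|_2$.

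For part (c) I would use the stability of the maximiser, \Cref{prop:gibbs}\eqref{gibbs.W2cont}. Under the stated parametrisation $w'=q(\theta)\vloc'$, $\|w^\le\|_2^2=\alpha(\theta)$, $R=R'/(\theta q(\theta))$, perform the dilation $s\mapsto s/(\theta q(\theta))$ (\Cref{lem:dilate}, \Cref{rmk:dilate}): the problem defining $\t\nu_{\theta,w',R}$ becomes one with the \emph{fixed} interval $[-R',R']$, the \emph{fixed} Hamiltonian $s\mapsto\sum_i\LLa_\mu(2\vloc'_is)$, and only the second-moment constraint $a(\theta)^2:=(\theta q(\theta))^2\bigl(1-q(\theta)^2\|\vloc'\|_2^2-\alpha(\theta)\bigr)$ varying continuously with $\theta$. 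By \Cref{rmk:dilate} (continuity in $a$ of the maximiser over $\cP_{a^2}(I)$ under $\cW_2$, which is \Cref{prop:gibbs}\eqref{gibbs.W2cont}), the maximiser varies $\cW_2$-continuously, hence so does $\t\nu_{\theta,w',R}$ after undoing the dilation, and therefore so does its quantile representative $\t v_{\theta,w,R}$ under $d_2$ (cf.\ \eqref{d2W2}). Since only qualitative continuity is needed — not the H\"older rate, which \Cref{prop:gibbs}\eqref{gibbs.W2cont} provides only in the symmetric case — this is enough.

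I expect the main obstacle to be the uniform quantitative control of the cross term $\Phi_N^{Sc}$ over all admissible $(\theta,w)$: showing that the conditional surface-measure integral over the delocalized coordinates is given by the constrained Gibbs functional $\VP_R$ with error $O_{T,\zcush}(R^2N^{-1/2}+N^{-2\eta}\log N)$ \emph{uniformly}, while simultaneously handling (i) the interaction with the frozen block $u_S$, whose large entries are $\Theta(1)$ and hence cannot be linearised, (ii) the passage from the sphere to a Gaussian ensemble carrying both a norm and an $\ell^\infty$-constraint, and (iii) the $O(\delta)$ absorption of the $w^\le$-coordinates. Part (c), by contrast, is essentially a soft consequence of \Cref{prop:gibbs} once the dilation fixes the Hamiltonian and the domain.
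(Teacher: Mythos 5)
Your proposal follows essentially the same route as the paper's proof: the Fubini/block decomposition with Taylor and Lipschitz estimates for the delocalized and frozen blocks is the content of \Cref{lem:annealed1}, your Gaussian-conditioning/Sanov treatment of the cross term is exactly what \Cref{lem:Varadhan} together with \Cref{lem:Ploc} and the scaling identity \eqref{VP.scaling} provide, your quantile construction of $\t v_{\theta,w,R}$ from the explicit maximizer of \Cref{prop:gibbs} (with the $O(\delta)$ absorption of $w^\le$) matches \eqref{def:tv}, and part (c) via dilation plus \Cref{prop:gibbs}\eqref{gibbs.W2cont} is the paper's argument. No substantive gaps.
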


\begin{remark}
The proof shows that $C_0(\zcush)$ and the implicit constants in \eqref{FE+}, \eqref{FE-} are polynomial in $T$ and $1/\zcush$.
\end{remark}

{
\begin{remark}
For the application to prove Theorem \ref{theomain}  it suffices to take $w=w'$, $w^\le=0$ and $\delta=0$. 
For Theorem \ref{thm:fullLDP} it is important  that $\t v^{(\theta)}$ can be taken to depend only on the large coordinates of $w$ (of size $>\delta/R$).
\end{remark}
}

\begin{proof}[Proof of Proposition \ref{prop:FE}] 
Note that the upper bound in \eqref{FE.asymp} follows from \eqref{FE+}, while
the lower bound follows from \eqref{FE-} {with $w^\le=0$},
noting that the left hand side in \eqref{FE-} is bounded above by $F_N(\theta; \Uloc_w(\srad, R))$ (since $F_N(\theta;\Uloc)$ is monotone increasing in $\Uloc$).
\end{proof}

\begin{proof}[Proof of Proposition \ref{prop:tiltQ}] 
For part (a), we only need to note that for any nonempty measurable set $\Bset \subseteq\sphereN$, we have
\begin{align*}
\frac1N\log Q^{(\theta)}(\Bset |  \Uloc_w(\srad,R))
&= F_N(\theta; \Uloc_w(\srad,R) \cap \Bset ) - F_N(\theta;\Uloc_w(\srad,R))\\
&\ge F_N(\theta; \Uloc_w(N^{-4},R) \cap \Bset ) - F_N(\theta;\Uloc_w(\srad,R))
\end{align*}
so we get the lower bound \eqref{tiltQ:conc} by subtracting \eqref{FE+} from \eqref{FE-}. 
Proposition \ref{prop:tiltQ}(b) is 
a restatement of Proposition \ref{prop:annealed}(c). 
\end{proof}

Toward the proof of Proposition \ref{prop:annealed}, we begin by gathering some lemmas, deferring the proofs to later subsections. 
For $y,z\in {\ell^2(\N)}$ 
set 
\begin{align}
\wt f_N   (y,z) := \frac1N\sum_{1\le i, j\le N} \LLa_\mu\big( 2\sqrt{N} y_iz_j\big)	\label{def:cFNloc}
\end{align}
{and recall $\freeL(\theta,z)$ defined in \eqref{def:freeL}. B}y Fubini's theorem {(and recalling $\LLa_\mu(0)=0$)},
\begin{align*}
F_N(\theta;\Uloc) &= \frac1N\log \int_\Uloc \exp\big( N\freeL(\theta,u)\big)dP(u)\\
 &= \frac1N\log \int_\Uloc \exp\Big(N\big( \freeL(\theta,u_w) + \freeL(\theta,u_{w^c}) + \wt f_N   (\theta u_w,u_{w^c})\big)\Big)dP(u).
\end{align*}
The first step in the proof of Proposition \ref{prop:annealed} is to peel off the contributions $\freeL(\theta,u_w)$ and $\freeL(\theta,u_{w^c})$ of pairs of coordinates in the localized and delocalized parts of $u$, respectively, which we do in the following:

\begin{lemma}
\label{lem:annealed1}
Let $\zcush\in(0,\frac12)$, $w\in(1-\zcush)\ball^N$, $\srad\ge0$ and $0\le R\le N^{2/5}$.
{Suppose $w=w'+w^\le$ where $w',w^\le$ have disjoint supports and $|w^\le_j|\le \delta/R$ for all $j$ and some $\delta\le 1$.}
For an arbitrary nonempty measurable set $\Uloc\subseteq\Uloc_w(\srad,R)$,
\begin{align}
\frac1N\log\E\int_\Uloc e^{\theta N\langle u, Hu\rangle}dP(u)	
&=\frac1N\log\int_\Uloc \exp\bigg( N \wt f_N   \Big( \theta {w'}, \sqrt{1-\|w\|_2^2} \frac{u_{w^c}}{\|u_{w^c}\|_2}\Big)\bigg) dP(u)	\notag\\
&
 \quad+
\freeL(\theta,w) + 
\theta^2(1-\|w\|_2^2)^2
{+ 2\theta^2\|w^\le\|_2^2(1-\|w\|_2^2)}
\label{annealedB}\\
&\quad+O(\theta^2\srad /\sqrt{\zcush}
+\theta^3{(\delta+}R^2N^{-1/2}{)})	\,.	\notag
\end{align}
\end{lemma}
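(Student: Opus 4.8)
The plan is to reduce \eqref{annealedB} to a pointwise-in-$u$ estimate that is \emph{uniform} over $u\in\Uloc$, after which all unwanted terms can be pulled out of $\frac1N\log\int_\Uloc(\cdots)\,dP(u)$. First, by Tonelli's theorem and independence of the entries of $H$, for each fixed $u$ one has
\begin{equation}
\E\,e^{\theta N\langle u,Hu\rangle}=\prod_i\E e^{\sqrt{2N}\theta X_{ii}u_i^2}\prod_{i<j}\E e^{2\sqrt N\theta X_{ij}u_iu_j}=\exp\Big(\sum_{i\le j}\LLa_\mu\big(2^{\ep_{ij}}\theta\sqrt N u_iu_j\big)\Big)=e^{N\freeL(\theta,u)},
\end{equation}
so the left side of \eqref{annealedB} equals $\frac1N\log\int_\Uloc e^{N\freeL(\theta,u)}dP(u)$. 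Splitting coordinates into $\supp(w)$ and its complement and using \eqref{def:cFNloc}, one has $\freeL(\theta,u)=\freeL(\theta,u_w)+\freeL(\theta,u_{w^c})+f_N(\theta u_w,u_{w^c})$ for every $u$. It then suffices to prove that, uniformly over $u\in\Uloc\subseteq\Uloc_w(\srad,R)$, with $\bar u:=\sqrt{1-\|w\|_2^2}\,u_{w^c}/\|u_{w^c}\|_2$,
\begin{equation}\label{plan:annealed.uniform}
\freeL(\theta,u)=f_N(\theta w',\bar u)+\freeL(\theta,w)+\theta^2(1-\|w\|_2^2)^2+2\theta^2\|w^\le\|_2^2(1-\|w\|_2^2)+E(u)
\end{equation}
with $|E(u)|\le O(\theta^2\srad/\sqrt\zcush+\theta^3(\delta+R^2N^{-1/2}))$; indeed the three middle terms are independent of $u$, while replacing $e^{NE(u)}$ by $1$ changes the integral by at most a factor $e^{\pm N\sup_u|E(u)|}$. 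Note that $\|u_{w^c}\|_2^2=1-\|u_w\|_2^2\ge1-(\|w\|_2+\srad)^2\ge\tfrac9{10}\zcush$ since $\srad\le\zcush/10$ and $\|w\|_2\le1-\zcush$, so $\bar u$ is well defined.

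Two analytic inputs will be used repeatedly. First, from \Cref{rmk:assu.reg}, $|\LLa_\mu'(t)|\ls|t|$ on $\R$; since $\psimu$ is real-analytic near $0$ with $\psimu(0)=\tfrac12$ and $\psimu'(t)=(\LLa_\mu'(t)\,t-2\LLa_\mu(t))/t^3=O(1/|t|)$ as $|t|\to\infty$, this forces $\psimu$ to be globally Lipschitz, hence $|\psimu(t)-\tfrac12|\ls|t|$ for all $t\in\R$. Second, one has the identities $\freeL(\theta,z)=\theta^2\sum_{i\le j}2^{1+1_{i\ne j}}z_i^2z_j^2\,\psimu(2^{\ep_{ij}}\theta\sqrt N z_iz_j)$ and $f_N(\theta y,z)=4\theta^2\sum_{i,j}y_i^2z_j^2\,\psimu(2\sqrt N\theta y_iz_j)$, together with the gradient bounds $\|\nabla_z\freeL(\theta,z)\|_2\ls\theta^2\|z\|_2$, $\|\nabla_z f_N(\theta y,z)\|_2\ls\theta^2\|y\|_2^2\|z\|_2$ and $\|\nabla_y f_N(\theta y,z)\|_2\ls\theta^2\|z\|_2^2\|y\|_2$, all of which follow from $|\LLa_\mu'(t)|\ls|t|$ and hold on all of $\B^N$ (no delocalization is needed for these).

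The three summands will then be treated separately. (1) Since $\|u_w-w\|_2\le\srad$ and the segment $[w,u_w]$ lies in $\B^N$, the mean value theorem and the gradient bound for $\freeL$ give $\freeL(\theta,u_w)=\freeL(\theta,w)+O(\theta^2\srad)$. (2) Since $u_{w^c}\in\Deloc_R$, the arguments $2^{\ep_{ij}}\theta\sqrt N(u_{w^c})_i(u_{w^c})_j$ are $O(\theta R^2N^{-1/2})$, so by the identity for $\freeL$ and $|\psimu(t)-\tfrac12|\ls|t|$ one gets $|\freeL(\theta,u_{w^c})-\theta^2\|u_{w^c}\|_2^4|\ls\theta^3\sqrt N\|u_{w^c}\|_\infty^2\|u_{w^c}\|_2^4\ls\theta^3R^2N^{-1/2}$, and since $\|u_{w^c}\|_2^2=1-\|u_w\|_2^2=1-\|w\|_2^2+O(\srad)$ this equals $\theta^2(1-\|w\|_2^2)^2+O(\theta^2\srad+\theta^3R^2N^{-1/2})$. (3) For the cross term, first replace $u_w$ by $w$ at cost $O(\theta^2\srad)$ using the $y$-gradient bound for $f_N$; then use that $w=w'+w^\le$ have disjoint supports to write $f_N(\theta w,u_{w^c})=f_N(\theta w',u_{w^c})+f_N(\theta w^\le,u_{w^c})$. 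In the $w^\le$ piece, $|2\sqrt N\theta w^\le_i(u_{w^c})_j|\le2\theta\delta$ because $|w^\le_i|\le\delta/R$ and $|(u_{w^c})_j|\le R/\sqrt N$, so the identity for $f_N$ and $|\psimu(t)-\tfrac12|\ls|t|$ give $f_N(\theta w^\le,u_{w^c})=2\theta^2\|w^\le\|_2^2\|u_{w^c}\|_2^2+O(\theta^3\delta)=2\theta^2\|w^\le\|_2^2(1-\|w\|_2^2)+O(\theta^2\srad+\theta^3\delta)$. Finally, $u_{w^c}$ and $\bar u$ are positive scalar multiples of the same vector, so
$\|\bar u-u_{w^c}\|_2=\big|\sqrt{1-\|w\|_2^2}-\|u_{w^c}\|_2\big|=\big|\,\|u_w\|_2^2-\|w\|_2^2\,\big|/\big(\sqrt{1-\|w\|_2^2}+\|u_{w^c}\|_2\big)\le2\srad/\sqrt\zcush$,
and the $z$-gradient bound for $f_N(\theta w',\cdot)$ along the (bounded) segment between them yields $f_N(\theta w',u_{w^c})=f_N(\theta w',\bar u)+O(\theta^2\srad/\sqrt\zcush)$. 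Summing the three contributions gives \eqref{plan:annealed.uniform}, and hence \eqref{annealedB}.

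The main obstacle is expected to be the bookkeeping of the cross term $f_N$: it is a sum of $\asymp N^2$ quantities, each of size $\asymp1/N$, so a crude triangle inequality (say bounding $\sum_j|(u_{w^c})_j|$ by $\sqrt N$) loses a power of $N$. The remedy, applied in every $f_N$-estimate above, is to always extract one factor $|\LLa_\mu'(\xi)|\ls|\xi|$ (equivalently $|\psimu(\cdot)-\tfrac12|\ls|\cdot|$), which produces an extra $z_j$ and $y_i$, and only \emph{then} apply Cauchy--Schwarz to collapse the resulting linear sums into the $\ell^2$-norms $\|u_{w^c}\|_2$, $\|w\|_2$ and $\|w'\|_2\le1$; the hypotheses $u_{w^c}\in\Deloc_R$ (bounding $\|u_{w^c}\|_\infty$), $\|w^\le\|_\infty\le\delta/R$ (bounding the arguments of $\LLa_\mu$ in the small-coordinate piece), and $\srad\le\zcush/10$ (keeping $\|u_{w^c}\|_2$ bounded below) are what make the resulting error uniform in $u$ and of the claimed size.
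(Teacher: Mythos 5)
Your proposal is correct and follows essentially the same route as the paper's proof: Fubini to reduce to $\freeL(\theta,u)$, the three-way split into $\freeL(\theta,u_w)+\freeL(\theta,u_{w^c})+f_N(\theta u_w,u_{w^c})$, replacement of $u_w$ by $w$, Taylor expansion of the delocalized block and of the $w^\le$ cross piece, and renormalization of $u_{w^c}$ in the $w'$ cross piece, all uniformly in $u\in\Uloc$. The only (cosmetic) difference is that you derive the continuity estimates via gradient bounds and the mean value theorem from $|\LLa_\mu'(t)|\ls|t|$, whereas the paper packages the same bounds in \Cref{lem:fN.Lip} using the Lipschitz property of $\tLL=\LLa_\mu\circ\ssq^{-1}$.
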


With Lemma \ref{lem:annealed1} in hand, the main step of the proof of Proposition \ref{prop:annealed} is to analyze the integral on the right hand side in \eqref{annealedB}.
The following quantitative Varadhan-type lemma 
reduces the integral to a Gibbs variational problem of the general form treated in Proposition \ref{prop:gibbs}.
Recall our notation $\Deloc_R=\Deloc_R^N:= \{ v\in\B^N: \|v\|_\infty \le RN^{-1/2}\}$ for the set of $R$-delocalized vectors,
and $\Bset_2(u,\eps)=\{v\in\R^N: d_2(u,v)<\eps\}$  for the Wasserstein $\eps$-neighborhood of a vector $u\in\R^N$ (see Section \ref{sec:notation}).

\begin{lemma}
\label{lem:Varadhan}
Let $R>1$, $I:=[-R,R]$ and $\ham:\R\to \R$ 
be such that 
{$\ham\circ\ssq^{-1}$} is $K$-Lipschitz on $2I$ for some $K\ge1$ {(recall the notation \eqref{def:ssq})}. 
We have
\begin{align}
\label{varadhan.UB}
\frac1{N}\log \int_{\Bdeloc_R} \exp\Big(  \sum_{j=1}^N \ham(\sqrt{N}u_j) \Big) dP_{N}(u)	
\le  \sup_{\nu\in \cP_1(I)} \big\{ \nu(\ham) - \DKL(\nu|\gamma) \big\} +
O\bigg(\frac{K\log N}{\sqrt{N}}\bigg)\,.
\end{align}
Moreover, 
the following holds for any $\srad'\in [C_0N^{-1/2}(R+\log N), K^{-1}]$ for a sufficiently large constant $C_0>0$.
Let $\t\nu$ be any 
element of $\cP_1((1-\srad')I)$ such that
\begin{equation}	\label{def:nearopt}
\t\nu(\ham)-\DKL(\t\nu|\gamma) \ge \sup_{\nu\in\cP_1((1-\srad')I)} \{ \nu(\ham)-\DKL(\nu|\gamma) \} -K\srad'
\end{equation}
and let the coordinates of $\t y\in \R^N$ be the non-increasing $\frac1N$-quantiles of $\t\nu$, i.e.
\begin{equation}	\label{def:quantiles}
\t y_i = \sup\{ t: \t\nu([t,\infty))\ge \tfrac iN\} \,,\qquad i\in [N].
\end{equation}
Then
\begin{align}
\frac1N\log\int_{\Bdeloc_R\cap \Bset_2(\frac1{\sqrt{N}}\t y,\srad')}  \exp\Big(  \sum_{j=1}^N \ham(\sqrt{N}u_j) \Big) dP_{N}(u)
\ge 
\sup_{\nu\in \cP_1(I)} \big\{ \nu(\ham) - \DKL(\nu|\gamma) \big\} - O(K\srad').
\label{varadhan.LB}
\end{align}
\end{lemma}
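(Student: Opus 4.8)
The plan is to prove the upper and lower bounds of the lemma separately, in both cases transferring the integral over $\sphereN$ to one against the standard Gaussian product measure via the exact identity
$\int_{\sphereN}\Psi(u)\,dP_N(u)=\E[\Psi(g/\|g\|_2)\,1_{\|g\|_2\in S}]/\P(\|g\|_2\in S)$ for $g\sim\gamma^{\otimes N}$, valid for any Borel $S\subseteq(0,\infty)$ of positive probability since $g/\|g\|_2\sim P_N$ is independent of $\|g\|_2$. First I would subtract $\ham(0)$, which shifts both sides of \eqref{varadhan.UB} and \eqref{varadhan.LB} by $-\ham(0)$; so we may assume $\ham(0)=0$, and then $|\ham(a)-\ham(b)|\le K|\ssq(a)-\ssq(b)|\le K(|a|+|b|)|a-b|$ and $|\ham(a)|\le Ka^2$ for $a,b\in 2I$. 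The basic transfer estimate is: on the event $|\|g\|_2^2-N|\le M$, the identity $\sum_j g_j^2\,|N\|g\|_2^{-2}-1|=|N-\|g\|_2^2|\le M$ and the $\ssq$-Lipschitz bound give $|\sum_j\ham(\sqrt N g_j/\|g\|_2)-\sum_j\ham(g_j)|\le KM$ whenever all $\sqrt N g_j/\|g\|_2\in 2I$, while $g/\|g\|_2\in\Bdeloc_R$ forces $|g_j|\le(1+M/N)^{1/2}R$ for every $j$.

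For the upper bound I would tilt: for $\zeta\in\R$, $\sum_j\ham(\sqrt N u_j)=\sum_j(\ham(\sqrt N u_j)-\zeta N u_j^2)+\zeta N$ on $\sphereN$, so it suffices to bound $\int_{\Bdeloc_R}\exp(\sum_j \ham_\zeta(\sqrt N u_j))\,dP_N(u)$ with $\ham_\zeta(s)=\ham(s)-\zeta s^2$. Applying the Gaussian identity with the shell $S=\{|r^2-N|\le\sqrt N\log N\}$ (so $\P(\|g\|_2\in S)=1-o(1)$ by Chebyshev), using the transfer estimate on $S$ to pass from $\sum\ham_\zeta(\sqrt N g_j/\|g\|_2)$ to $\sum\ham_\zeta(g_j)$ up to $O((K+|\zeta|)\sqrt N\log N)$, and then bounding $1_{\|g\|_2\in S}$ by $1$ while keeping only the coordinatewise constraint $|g_j|\le(1+o(1))R$, the integral factorises into a product of one-dimensional integrals. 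Taking $\frac1N\log$, restoring $\zeta$, and optimising over $\zeta$ (the optimiser has $|\zeta|=O(K)$, by the analysis of the measures $\nu^\zeta$ in \Cref{prop:gibbs}(\ref{gibbs.density})), the bound becomes $\inf_\zeta\{\zeta+\log\int_{(1+o(1))I}e^{\ham-\zeta s^2}d\gamma\}+O(K\log N/\sqrt N)$; by \Cref{prop:gibbs}(\ref{gibbs.density}) this infimum equals $\sup_{\nu\in\cP_1((1+o(1))I)}\{\nu(\ham)-\DKL(\nu|\gamma)\}$, and by \Cref{prop:gibbs}(\ref{gibbs.wiggle}) with $\delta=0$ this is $\sup_{\nu\in\cP_1(I)}\{\nu(\ham)-\DKL(\nu|\gamma)\}+O(K\log N/\sqrt N)$, giving \eqref{varadhan.UB}.

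For the lower bound, restrict the integral to $\Bdeloc_R\cap\Bset_2(\frac1{\sqrt N}\t y,\srad')$. On this set, choosing the optimal matching permutation gives $\sum_i(\sqrt N u_i-\t y_{\varrho(i)})^2<N\srad'^2$, and combining with the $\ssq$-Lipschitz bound (using $|\sqrt N u_i|,|\t y_{\varrho(i)}|\le R$, Cauchy--Schwarz, and $\|\t y\|_2\asymp\sqrt N$) yields $|\sum_i\ham(\sqrt N u_i)-\sum_i\ham(\t y_i)|\le CKN\srad'$; a bounded-variation/quantile estimate gives $|\frac1N\sum_i\ham(\t y_i)-\t\nu(\ham)|\le CKR^2/N\le CK\srad'$ (here $\srad'\ge C_0R/\sqrt N$ is used). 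So the integral is at least $e^{N\t\nu(\ham)-O(KN\srad')}P_N(\Bdeloc_R\cap\Bset_2(\frac1{\sqrt N}\t y,\srad'))$, and since $\t\nu(\ham)-\DKL(\t\nu|\gamma)\ge\sup_{\cP_1((1-\srad')I)}\{\nu(\ham)-\DKL(\nu|\gamma)\}-K\srad'\ge\sup_{\cP_1(I)}\{\nu(\ham)-\DKL(\nu|\gamma)\}-O(K\srad')$ by \eqref{def:nearopt} and \Cref{prop:gibbs}(\ref{gibbs.wiggle}), it remains to prove the quantitative Sanov-type estimate $\frac1N\log P_N(\Bdeloc_R\cap\Bset_2(\frac1{\sqrt N}\t y,\srad'))\ge-\DKL(\t\nu|\gamma)-O(K\srad')$. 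Here I would again pull back to $g$ and use that on the event $\{\cW_2(\hat\mu_g,\t\nu)<\srad'/4,\ \|g\|_\infty\le(1-\srad')R\}$ the second moment $\frac1N\sum_jg_j^2$ is automatically within $O(\srad')$ of $1$ (comparing along a $\cW_2$-coupling), hence $|\|g\|_2^2-N|=O(\srad'N)$, and — crucially because $\srad'\ge C_0N^{-1/2}(R+\log N)$ — the transfer estimate places $g/\|g\|_2$ inside $\Bdeloc_R\cap\Bset_2(\frac1{\sqrt N}\t y,\srad')$; this event's Gaussian probability is then bounded below by a change of measure to $\nu^{\otimes N}$ for a regularised density version $\nu$ of $\t\nu$ (whose relative entropy with $\gamma$ is within $O(\srad')$ of $\DKL(\t\nu|\gamma)$), using that $\hat\mu_g\to\nu$ in $\cW_2$ under $\nu^{\otimes N}$ forces $\frac1N\sum_j\log\frac{d\nu}{d\gamma}(g_j)=\DKL(\t\nu|\gamma)+O(K\srad')$.

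The main obstacle is exactly this last quantitative Sanov lower bound with its \emph{uniform} error $O(K\srad')$: one must control, uniformly over an arbitrary near-optimal $\t\nu$, (i) the normalisation $\|g\|_2$ (handled above by observing it is slaved to $\cW_2$-closeness, which is what dictates the hypothesis $\srad'\gs N^{-1/2}(R+\log N)$), and (ii) the fluctuations of $\frac1N\sum_j\log\frac{d\nu}{d\gamma}(g_j)$, where the difficulty is the possibly heavy tails of $\t\nu$ near the endpoints $\pm R$ (where $\log\frac{d\nu}{d\gamma}$ is large because $\gamma$ is thin). This is where the interplay of the regularisation scale, the cutoff $R$, and the strong concavity of $\nu\mapsto\nu(\ham)-\DKL(\nu|\gamma)$ on $\cP_1$ — equivalently the identity $\DKL(\t\nu|\nu^\zeta)=\sup\chi-\chi(\t\nu)\le K\srad'$, which pins every near-optimiser to the true optimiser — must be exploited carefully. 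The upper bound, by contrast, is routine once the tilt-and-Gaussian-comparison scheme is in place and \Cref{prop:gibbs} is invoked.
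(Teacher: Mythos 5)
Your upper bound is correct and is a genuinely different route from the paper's: you linearize the spherical constraint by tilting with $\zeta s^2$ (so that $\sum_j \zeta N u_j^2=\zeta N$ exactly on the sphere), factorize the Gaussian integral coordinatewise, and then identify $\inf_\zeta\{\zeta+\log\int e^{\ham-\zeta s^2}d\gamma\}$ with the Gibbs supremum via \Cref{prop:gibbs}(\ref{gibbs.density})--(\ref{gibbs.wiggle}); the paper instead coarse-grains $\R$ into $O(\eps^{-1}\log(R/\eps))$ cells with $\eps\asymp N^{-1/2}\log N$ and uses the method of types (\Cref{lem:DeZe}). Both give the $O(K\log N/\sqrt N)$ error; your duality route is arguably cleaner for the upper bound, modulo checking that the optimal $\zeta$ is $O(K)$ before invoking the Gaussian transfer (since the transfer error depends on the Lipschitz constant of $\ham_\zeta\circ\ssq^{-1}$).

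The lower bound, however, has a genuine gap. As you say, everything reduces to the uniform quantitative Sanov-type estimate $\frac1N\log P_N(\Bdeloc_R\cap\Bset_2(\t y/\sqrt N,\srad'))\ge-\DKL(\t\nu|\gamma)-O(K\srad')$, and this is exactly the step you do not carry out. The change-of-measure route you sketch runs into two concrete obstacles at the radius $\srad'\asymp N^{-1/2}(R+\log N)$ permitted by the lemma. First, for an arbitrary $\t\nu$ satisfying \eqref{def:nearopt} only the \emph{mean} of $\log\frac{d\t\nu}{d\gamma}$ is controlled (it is $\DKL(\t\nu|\gamma)=O(K)$); its variance involves mass times the \emph{square} of the log-density and is unbounded over near-optimizers, so the Chebyshev/LLN step fails for $\t\nu$ itself. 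The "regularised density version" that would repair this --- cap the log-density while increasing the entropy by only $O(K\srad')$, preserving the support, the second moment, and $\cW_2$-proximity to $\t\nu$ --- is a nontrivial construction you never give, and carrying it out essentially forces the paper's flattening over a partition into cells on which $\log\frac{d\gamma}{ds}$ varies by $O(1)$. Second, and more seriously, the event $\cW_2(\hat\mu_g,\t\nu)\le\srad'$ is a \emph{below-fluctuation-scale} event: admissible $\ham$ (e.g.\ $\ham(s)=Ks^2$) produce optimizers carrying mass $\asymp1/R^2$ within $O(1)$ of $\pm R$, and for iid samples from such a measure the empirical weight of that clump fluctuates at scale $1/(R\sqrt N)$, so the typical $\cW_2$ discrepancy is $\gs R^{1/2}N^{-1/4}\gg\srad'$ whenever $R\ll\sqrt N$. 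One must therefore lower-bound the probability that the empirical counts sit in sub-CLT windows simultaneously at every location and scale; a soft LLN argument cannot do this, and the multi-scale bookkeeping needed to close it is precisely the method-of-types computation. The paper avoids the issue entirely: it discretizes at scale $\eps=c_0\srad'$, replaces the Wasserstein event by the exact type event $\{[\hat\mu_g]=[\hat\mu_{\t y}]\}$ (which it shows is contained in the Wasserstein ball and in the delocalization event), lower-bounds its probability by \Cref{lem:DeZe} at total cost $|\Sigma|\log(N+1)/N=O(\srad')$, and compares the discretized entropy to $\DKL(\t\nu|\gamma)$ via \Cref{lem:divergence} and Jensen --- this last comparison, through the term $R^2/(\eps N)$, is where the hypothesis $\srad'\ge C_0N^{-1/2}(R+\log N)$ is actually consumed. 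Until you either prove the Wasserstein-ball lower bound with these uniform errors or adopt such a discretization, your proof of \eqref{varadhan.LB} is incomplete.
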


The proof is an elaboration of arguments developed in \cite[Lemma 9]{AGH} and is deferred to Appendix \ref{app:Varadhan}.

The final ingredient for the proof of Proposition \ref{prop:annealed} is the following:

\begin{lemma}
\label{lem:Ploc}
Let $w\in \R^N$ with $|\supp(w)|=n_0\le N/2$ and $\|w\|_2\le 1-\zcush$ for some $\zcush>0$, and let $\srad\in [N^{-4},\frac1{10}\zcush]$. 
We have
\[
\log P_N(u:\|u_w-w\|_2< \srad) = \frac{N}2\log(1-\|w\|_2^2) + 
O\bigg(\,\frac\srad\zcush N + n_0\log\frac N\zcush\,\bigg) \,.
\]
\end{lemma}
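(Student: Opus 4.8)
\textbf{Proof plan for \Cref{lem:Ploc}.}

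The goal is a sharp (to leading order in $N$) estimate for the $P_N$-measure of the set of unit vectors $u$ whose restriction $u_w$ to $\supp(w)$ is within Euclidean distance $\srad$ of $w$. I would first reduce to the standard fact that if $u$ is uniform on $\sphereN$, then its restriction $\tilde u=(u_1,\dots,u_{n_0})$ to any fixed $n_0$ coordinates has density on $\B^{n_0}$ proportional to $(1-\|\tilde u\|_2^2)^{(N-n_0-2)/2}$ with respect to Lebesgue measure on $\R^{n_0}$ (this is the $n_0$-dimensional marginal of the uniform measure on the sphere; cf. the $n_0=1$ case used in \Cref{lem:gaussian}). Writing $c_{N,n_0}$ for the normalizing constant, we then have
\[
P_N(u:\|u_w-w\|_2<\srad) = c_{N,n_0}\int_{\|t-w\|_2<\srad,\;t\in\B^{n_0}} (1-\|t\|_2^2)^{(N-n_0-2)/2}\,dt\,.
\]
Since $\|w\|_2\le 1-\zcush$ and $\srad\le\frac1{10}\zcush$, on the domain of integration we have $\|t\|_2^2 = \|w\|_2^2 + O(\srad)$, and more precisely $1-\|t\|_2^2 = (1-\|w\|_2^2)(1+O(\srad/\zcush))$; since also $n_0\le N/2$ the exponent $(N-n_0-2)/2$ is between $N/5$ and $N/2$ for $N$ large, so
\[
(1-\|t\|_2^2)^{(N-n_0-2)/2} = (1-\|w\|_2^2)^{(N-n_0-2)/2}\exp\!\big(O(\srad N/\zcush)\big)\,,
\]
uniformly on the domain. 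Pulling this factor out, the remaining integral is just the volume of a ball of radius $\srad$ in $\R^{n_0}$ intersected with $\B^{n_0}$, which is at most $\omega_{n_0}\srad^{n_0}$ (where $\omega_{n_0}$ is the volume of the unit $n_0$-ball), and bounded below by the volume of a ball of radius $\srad/2$ (using $\srad\le\zcush/10$ to guarantee the half-radius ball around $w$ stays inside $\B^{n_0}$), hence is $\exp(O(n_0\log(1/\srad)))=\exp(O(n_0\log N))$ using $\srad\ge N^{-4}$. Therefore
\[
\log P_N(u:\|u_w-w\|_2<\srad) = \log c_{N,n_0} + \tfrac{N-n_0-2}{2}\log(1-\|w\|_2^2) + O(\srad N/\zcush + n_0\log N)\,.
\]
Absorbing the $\tfrac{n_0+2}{2}\log(1-\|w\|_2^2) = O(n_0\log(1/\zcush))$ term into the error (using $1-\|w\|_2^2\ge\zcush$) gives the stated $\tfrac N2\log(1-\|w\|_2^2)$ main term, provided we show $\log c_{N,n_0}=O(n_0\log(N/\zcush))$.

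For the normalizing constant, $c_{N,n_0}^{-1} = \int_{\B^{n_0}}(1-\|t\|_2^2)^{(N-n_0-2)/2}dt$, which up to the factor $\omega_{n_0}\cdot n_0$ (surface area times radial Jacobian bookkeeping) equals a Beta-function value $B(n_0/2,(N-n_0)/2)$; using Stirling this is $\exp(O(n_0\log N))$ — more crudely, one can just bound $c_{N,n_0}^{-1}$ below by integrating over $\|t\|_2\le N^{-1/2}$, where the integrand is $\asymp 1$, giving $c_{N,n_0}^{-1}\ge \omega_{n_0}N^{-n_0/2}$, i.e. $\log c_{N,n_0}\le \tfrac{n_0}{2}\log N + \log(1/\omega_{n_0}) = O(n_0\log N)$, and an upper bound $c_{N,n_0}^{-1}\le\omega_{n_0}$ giving $\log c_{N,n_0}\ge -\log\omega_{n_0}=O(n_0\log n_0)$; either way $\log c_{N,n_0}=O(n_0\log N)$, which is absorbed into the error term. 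This completes the estimate.

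\textbf{Main obstacle.} The only genuinely delicate point is controlling the ratio $(1-\|t\|_2^2)/(1-\|w\|_2^2)$ uniformly over the perturbation region and making sure the resulting multiplicative error, raised to the power $\asymp N$, is $\exp(O(\srad N/\zcush))$ rather than something worse — this is where the hypotheses $\|w\|_2\le 1-\zcush$ and $\srad\le\zcush/10$ are used in an essential way (they keep $1-\|t\|_2^2$ bounded away from $0$ by $\asymp\zcush$, so that $\log(1-\|t\|_2^2)-\log(1-\|w\|_2^2)=O(\srad/\zcush)$). Everything else — the marginal density formula, the volume bounds on the small ball, and the Stirling estimate for the Beta constant — is routine. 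I would present the marginal density identity with a one-line justification (disintegration of the uniform measure on $\sphereN$) and cite no external result beyond elementary calculus.
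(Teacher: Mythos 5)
Your proof is correct, and it takes a somewhat different route from the paper's. The paper first reduces the ball event $\{\|u_w-w\|_2<\srad\}$ to a shell event for the norm $Y=\|u_w\|_2$, sandwiching $\log P_N(\|u_w-w\|_2<\srad)$ between $\log P_N(|Y-\|w\|_2|<\srad/2)-n_0\log(C/\srad)$ and $\log P_N(|Y-\|w\|_2|<\srad)$ via a covering of the annulus by $O(1/\srad)^{n_0}$ balls and rotational invariance; it then uses that $Y^2$ is Beta$(\tfrac{n_0}2,\tfrac{N-n_0}2)$, Stirling for the normalizing constant, and the $O(1/\zcush)$-Lipschitz property of $\gamma\mapsto\log(1-\gamma)$ on $[0,1-\tfrac12\zcush]$. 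You instead work directly with the $n_0$-dimensional marginal density $\propto(1-\|t\|_2^2)^{(N-n_0-2)/2}$ on $\B^{n_0}$ (the higher-dimensional analogue of the marginal the paper itself uses in \Cref{lem:gaussian}), integrate it over the Euclidean ball $B(w,\srad)$ — which lies entirely in $\B^{n_0}$ thanks to $\|w\|_2\le1-\zcush$, $\srad\le\zcush/10$ — and control the integrand by the same $\log(1-\|t\|_2^2)=\log(1-\|w\|_2^2)+O(\srad/\zcush)$ comparison, with the normalizing constant and the ball volume each contributing only $\exp(O(n_0\log N))$. The two arguments rest on the same two quantitative inputs (the Beta/marginal normalization is $\exp(O(n_0\log N))$, and the logarithmic density varies by $O(\srad/\zcush)$ per unit exponent), but yours avoids the net/covering step entirely by keeping the full $n_0$-dimensional geometry, at the cost of invoking the $n_0$-dimensional marginal density formula rather than only the one-dimensional Beta law; the error bookkeeping ($O(\srad N/\zcush)$ from the exponentiated density ratio, $O(n_0\log N)$ from the constant and the volume, $O(n_0\log(1/\zcush))$ from trading $\tfrac{N-n_0-2}{2}$ for $\tfrac N2$) matches the stated $O(\tfrac\srad\zcush N+n_0\log\tfrac N\zcush)$ exactly as in the paper.
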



\begin{proof}[Proof of Proposition \ref{prop:annealed}]
For ease of notation we write 
\[
N':=N-n_0
\]
and take $\supp(w)=[N'+1,N]$. 
Consider for now a general nonempty measurable set $\cV\subseteq\ball^{N'}$ and let
\begin{align*}
\Uloc &= \Uloc_w(\srad,R)\cap (\cV\times \R^{[N'+1,N]})
= \big\{u\in \sphereN: \|u_w-w\|_2\le \srad, u|_{[N']}\in \Deloc^{N'}_R\cap \cV\big\}\,.
\end{align*}
Applying Lemma \ref{lem:annealed1} we have
\begin{align}
F_N(\theta;\Uloc)	&= 
\freeL(\theta,w) + \theta^2(1-\|w\|_2^2)^2
{+ 2\theta^2\|w^\le\|_2^2(1-\|w\|_2^2)} \label{FE.1}\\
&\quad+ \frac1N\log\int_{\Uloc} \exp\bigg( N \wt f_N   \Big( \theta {w'}, \sqrt{1-\|w\|_2^2} \frac{u_{w^c}}{\|u_{w^c}\|_2}\Big)\bigg) dP(u) \notag\\
&\quad+O_\zcush(\theta^2\srad)+ O(\theta^3({\delta}+R^2N^{-1/2}))	\,.	\notag
\end{align}
Now we can express the integral on the right hand side as an iterated integral
\begin{align}
&\int_{\Uloc} \exp\bigg( N \wt f_N   \Big( \theta w', \sqrt{1-\|w\|_2^2} \frac{u_{w^c}}{\|u_{w^c}\|_2}\Big)\bigg) dP(u)		\label{tiltQ.cross1}\\
&=\int_{\B^{[N'+1,N]}} 1_{\|z-w\|_2\le \srad} \int_{\frac1{\sqrt{1-\|z\|_2^2}}(\cV\cap \Deloc^{N'}_R)} 
e^{ 
N \wt f_N   \big(\theta w', \sqrt{1-\|w\|_2^2} v\big) 
}
dP_{N'}(v) dP_{n_0}^{(w)}(z)	\notag
\end{align}
where $P_{N'}$ is the uniform measure on $\sphere^{N'-1}$, and $P_{n_0}^{(w)}$ is the marginal law of $u_w=u|_{[N'+1,N]}$ on $\B^{[N'+1,N]}$ for $u\sim P_N$. 
Note that the inner integral depends on $z$ only through the domain of integration. 
Now letting
\begin{equation}	\label{choose.ham}
\ham:\R\to \R\,,\qquad \ham(s) =\ham(s;w):= \sum_{i=1}^N \LLa_\mu\bigg( 2\theta \sqrt{ \frac{N}{N'}} \sqrt{1-\|w\|_2^2} w_i' s \bigg)
\end{equation}
so that
\[
N \wt f_N   \Big(\theta w', \sqrt{1-\|w\|_2^2} v\Big) = \sum_{j=1}^{N'} \ham( \sqrt{N'} v_j)
\]
we have that 
$h\circ\ssq^{-1}$ is $O(\theta^2)$-Lipschitz, where we recall the notation \eqref{def:ssq}. Indeed,
 for any $s,t\in\R$ we have
\begin{align}
&|\ham({\ssq^{-1}(s)})-\ham({\ssq^{-1}(t)})|\notag \\
&\le \sum_{i=1}^N \Big|\,\LLa_\mu\big(  \ssq^{-1}( 4\theta^2(N/N')(1-\|w\|_2^2)w_i'^2\sgn(w_i')s)\,\big) \notag\\
&\qquad\qquad\qquad-
\LLa_\mu\big( \ssq^{-1}( 4\theta^2(N/N')(1-\|w\|_2^2)w_i'^2\sgn(w_i')t)\,\big)	\,\Big|\notag\\
&\le 4\|\tLL\|_{\Lip} \,\theta^2   (1-\|w\|_2^2) \frac{N}{N'} |s-t|\sum_{i=1}^N   w_i'^2 \notag\\
&\ls \theta^2 \|w\|_2^2(1-\|w\|_2^2) |s-t|	\notag\\
&\ls \theta^2|s-t|		\label{ham.lip}
\end{align}
where in the third bound we used Remark \ref{rmk:assu.reg}.

We now establish the upper bound \eqref{FE+}, for which we apply \eqref{tiltQ.cross1} with $\cV=\B^{N'}$. 
In the sequel we abbreviate
\begin{equation}
R_z:= R/ (1-\|z\|_2^2)^{1/2}
\end{equation}
for $z\in \B^N$. 
Recall from \eqref{VP.scaling} that
\begin{equation}
\VP_R(\theta w, 1-\|w\|_2^2) -\frac12\|w\|_2^2=
\VP_{R_w}\big(\theta (1-\|w\|_2^2)^{1/2}w, \,1\big) +
\frac12\log(1-\|w\|_2^2)\,.
\end{equation}

By monotonicity of $F_N(\theta;\Uloc)$ in $\Uloc$ we may assume without loss of generality that $\srad\ge N^{-1/2}$. 
For the inner integral on the right hand side of \eqref{tiltQ.cross1}, we apply Lemma \ref{lem:Varadhan} with $N'$ in place of $N$, $R_z\ls_\zcush R$ in place of $R$, and $K=O(1+\theta^2)=O_T(1)$ to get that for any fixed $z\in\B^{[N'+1,N]}$ with $\|z-w\|_2\le \srad$, 
\begin{align*}
&\frac1{N'}\log \int_{ \Deloc^{N'}_{R_z}} \exp \Big( N \wt f_N   \Big(\theta w', \sqrt{1-\|w\|_2^2} v\Big) \Big) dP_{N'}(v)\\
&\le \sup_{\nu\in\cP_1([-R_z,R_z])} \big\{ \nu(\ham)-\DKL(\nu|\gamma)\big\}
+ O_T(N^{-1/2}\log N)\\
&=\sup_{\nu\in\cP_1([-R_w,R_w])} \big\{ \nu(\ham)-\DKL(\nu|\gamma)\big\}
+ O_T(N^{-1/2}\log N) + O_{T,\zcush}(\srad)
\end{align*}
where in the final line we applied Proposition \ref{prop:gibbs}\eqref{gibbs.wiggle} to replace $z$ with $w$.
Substituting back into \eqref{tiltQ.cross1} (with $\cV=\B^{N'}$), taking logs and dividing through by $N$, we have
\begin{align*}
&\frac1N\log\int_{\Uloc_w(\srad,R)} \exp\bigg( N \wt f_N   \Big( \theta w', \sqrt{1-\|w\|_2^2} \frac{u_{w^c}}{\|u_{w^c}\|_2}\Big)\bigg) dP(u)		\\
&= \frac{N'}{N} 
\sup_{\nu\in\cP_1([-R_w,R_w])} \big\{ \nu(\ham)-\DKL(\nu|\gamma)\big\} + \frac1N\log P_N(\{u: \|u_w-w\|_2\le \srad\})\\
&\qquad\qquad\qquad\qquad\qquad\qquad\qquad\qquad\qquad
+ O_T(N^{-1/2}\log N) + O_{T,\zcush}(\srad)\\
&\le
\sup_{\nu\in\cP_1([-R_w,R_w])} \big\{ \nu(\ham)-\DKL(\nu|\gamma)\big\} + \frac12\log(1-\|w\|_2^2) \\
&\qquad\qquad\qquad\qquad\qquad\
+ O_T(N^{-1/2}\log N) + O_{T,\zcush}(\srad) + O_\zcush\Big(\frac{n_0}N\log N\Big) 
\end{align*}
where in the final line we applied Lemma \ref{lem:Ploc} and bounded $N'\le N$. 
Substituting this bound into \eqref{FE.1} (and recalling $n_0=|\supp(w)|\le N^{1-2\eta}$), we obtain the upper bound \eqref{FE+}.

Turning to the lower bound \eqref{FE-}, 
we now set
$\srad:=N^{-4}$.
With $\t v=\t v_{w, \theta,R}$ to be specified below, from the triangle inequality for the Wasserstein distance and adjusting the constant $C_0(\zcush)$,  it suffices to show
\begin{align}
\label{FE.LBgoal}
&F_N\big(\theta; \Uloc_w(\srad,R)\cap \big( \Bset_2^{N'}(\t v, \srad_0)\times \R^{[N'+1,N]}\big) \big)\\
&\qquad\ge \free_{N,R}(\theta,w) + O_{T,\zcush}( {\delta+}R^2N^{-1/2} + N^{-2\eta}\log N).\notag
\end{align}

As in the proof of the upper bound we apply \eqref{FE.1}--\eqref{tiltQ.cross1}, only now we take $\cV= \Bset_2^{N'}(\t v, \srad_0)$, to get
\begin{align}
&F_N(\theta; \Uloc_w(\srad,R)\cap (\Bset_2^{N'}(\t v, \srad_0)\times \R^{[N'+1,N]}) )	\notag\\
&\ge \frac1N \log \int_{\B^{[N'+1,N]}} 1_{\|z-w\|_2\le \srad} 
\int_{ \Deloc_{R_z}^{N'} \cap \Bset_2^{N'} ( \frac{\t v}{\sqrt{1-\|z\|_2^2}} ,\srad_z)} \exp\bigg( \sum_{j=1}^{N'} \ham(\sqrt{N'} v_j)\bigg) dP_{N'}(v) dP_{n_0}^{(w)}(z)\notag\\
&\qquad \qquad + 
\freeL(\theta,w) + + \theta^2(1-\|w\|_2^2)^2
{+ 2\theta^2\|w^\le\|_2^2(1-\|w\|_2^2)}
+ O_{T,\zcush}({\delta+}R^2N^{-1/2})	\label{FE.LB2}
\end{align}
where $\ham$ is as in \eqref{choose.ham}, and here and in the sequel we abbreviate
\begin{equation}
\srad_z:=\srad_0/(1-\|z\|_2^2)^{1/2}
\end{equation}
for $z\in \B^N$. 

Now we specify $\t v$. 
Let $\t\nu=\t\nu_{I_*,\ham}$ be the optimizing measure for $\chi$ from \eqref{def:objective} over $\cP_1(I_*)$ with 
\[
I_*:=(1-C_*(\zcush)\srad_0)[-R_w,R_w]
\]
for a constant $C_*(\zcush)\ge1$ to be chosen sufficiently large depending only on $\zcush$, 
 let $\t y\in \R^{N'}$ be as in \eqref{def:quantiles} with $N'$ in place of $N$, and set
 \begin{equation}	\label{def:tv}
\t v=\t v_{w,\theta, R}:= \sqrt{1-\|w\|_2^2} \frac{\t y}{\|\t y\|_2}\,.
\end{equation}

Now for any $z$ with $\|w-z\|_2\le \srad_0$, since
\[
(1-\tfrac12\srad_z)R_z = (1+O_\zcush(\srad_0))R_w 
\]
we have
\begin{equation}
\t\nu\in \cP_1((1-\frac12\srad_z)[-R_z,R_z])
\end{equation}
if $C_*(\zcush)$ is sufficiently large, 
and moreover
from Proposition \ref{prop:gibbs}\eqref{gibbs.wiggle} it follows that
\begin{equation}	\label{tnu.nearopt}
\chi(\t\nu) \ge \sup_{\nu\in\cP_1((1-\frac12\srad_z)[-R_z,R_z])} \chi(\nu)
+O_{T,\zcush}(\srad_0)\,.
\end{equation}
From \eqref{tnu.nearopt} and assuming $C_0(\zcush)$ is sufficiently large, we can hence apply \eqref{varadhan.LB} from Lemma \ref{lem:Varadhan} with $N'$ in place of $N$, $R_z=O_\zcush(R)$ in place of $R$, $\frac12\srad_z$ in place of $\srad'$ and $K=O_{T,\zcush}(1)$ to bound
\begin{align}
\frac1{N'}\log \int_{ \Deloc^{N'}_{R_z}
\cap \Bset_2^{N'}(\frac{\t y}{\sqrt{N'}}, \frac12\srad_z)} \exp \bigg( \sum_{j=1}^{N'} \ham(\sqrt{N'}v_j)  \bigg) dP_{N'}(v)	
&\ge \sup_{\nu\in\cP_1([-R_z,R_z])}\chi(\nu) + O_{T,\zcush}(\srad_0)	\notag\\
&\ge \sup_{\nu\in\cP_1([-R_w,R_w])}\chi(\nu) + O_{T,\zcush}(\srad_0)   \label{FE.N-n0}
\end{align}
for any $z$ with $\|w-z\|_2\le \srad_0$, where in the second line we again used Proposition \ref{prop:gibbs}\eqref{gibbs.wiggle}.
Now since we showed  in \eqref{ham.lip} that {$\ham\circ\ssq^{-1}$} is $O_T(1)$-Lipschitz and recalling also that $\ham(0)=0$ (since $\LLa_\mu(0)=0$), we have 
\begin{equation}
\chi(\nu) = \int \ham d\nu - \DKL(\nu|\gamma) \le \int \ham d\nu \ls_T \int s^2 d\nu(s) = O_T(1)
\end{equation}
for any $\nu\in \cP_1(\R)$.  From this and multiplying \eqref{FE.N-n0} through by $N'/N = 1+O(N^{-2\eta})$ we get
\begin{align}
&\frac1{N}\log \int_{ \Deloc^{N'}_{R_z}
\cap \Bset_2^{N'}(\frac{\t y}{\sqrt{N'}}, \frac12\srad_z)} \exp \bigg( \sum_{j=1}^{N'} \ham(\sqrt{N'}v_j)  \bigg) dP_{N'}(v)
\ge \sup_{\nu\in\cP_1([-R_w,R_w])}\chi(\nu) + O_{T,\zcush}(\srad_0+ N^{-2\eta})\,.	\label{FE.N.Rw}
\end{align}
We next note that 
\begin{equation}	\label{ty.norm}
\|\t y\|_2 = \sqrt{N'}(1+O(R_w^2/N)).
\end{equation}
Indeed, from Proposition \ref{prop:gibbs}\eqref{gibbs.density} we have that $\t\nu$ is continuous on its support, so the $1/N'$-quantiles $\t y_1>\dots> \t y_{N'}$ are all distinct, with $\t y_{N'}$ the left edge of the support. Denoting the right edge of the support of $\t\nu$ by $\t y_0$, we can bound
\begin{align*}
1= \int t^2 d\t\nu(t) = \sum_{i=1}^{N'} \int_{\t y_i}^{\t y_{i-1}} t^2d\t\nu(t) \ge \frac1{N'}\sum_{i=1}^{N'} \t y_i^2\,.
\end{align*}
Moreover, since $\t\nu$ is supported on $[-R_w,R_w]$ we have $\t y_0\le R_w$, so
\[
1=\sum_{i=1}^{N'} \int_{\t y_i}^{\t y_{i-1}} t^2d\t\nu(t)  \le \frac1{N'}\bigg(R_w^2 + \sum_{i=1}^{N'} \t y_i^2\bigg)
\]
and we obtain \eqref{ty.norm}.
Thus, for $\|z-w\|_2\le \srad=N^{-4}$ we have
\begin{align*}
\frac{\t y}{\sqrt{N'}} &= (1+O_\zcush(R^2/N)) \frac{\t y}{\|\t y\|_2} \\
&= (1+O_\zcush(\srad + R^2/N)) \frac{\sqrt{1-\|w\|_2^2}}{\sqrt{1-\|z\|_2^2}} \frac{\t y}{\|\t y\|_2}\\
&= (1+O_\zcush(R^2/N)) \frac{\t v}{\sqrt{1-\|z\|_2^2}}.
\end{align*}
Hence, if $C_0(\zcush)$ is sufficiently large, then 
\[
 \Bset_2^{N'} ( \frac{\t v}{\sqrt{1-\|z\|_2^2}} , \srad_z) \supset 
  \Bset_2^{N'}(\frac{\t y}{\sqrt{N'}}, \frac12\srad_z)
\]
and we can bound the inner integral in \eqref{FE.LB2} below using \eqref{FE.N.Rw}.
Noting that the lower bound in \eqref{FE.N.Rw} is independent of $z$, we can substitute it in \eqref{FE.LB2} and then apply Lemma \ref{lem:Ploc} to get
\begin{align}
&F_N(\theta; \Uloc_w(\srad,R)\cap (\Bset_2^{N'}(\t v, \srad_0)\times \R^{[N'+1,N]}) )	\notag\\
&\ge \frac1N \log \int_{\B^{[N'+1,N]}} 1_{\|z-w\|_2\le \srad} 
\exp\bigg( N \bigg( \sup_{\nu\in\cP_1([-R_w,R_w])}\chi(\nu) - O_{T,\zcush}(\srad_0+ N^{-2\eta}) \bigg) \bigg)
 dP_{n_0}^{(w)}(z)\notag\\
&\qquad \qquad + 
\freeL(\theta,w) + \theta^2(1-\|w\|_2^2)^2
{+ 2\theta^2\|w^\le\|_2^2(1-\|w\|_2^2)}+ O_{T,\zcush}({\delta+}R^2N^{-1/2})	\notag\\
&= 
\freeL(\theta,w) + \theta^2(1-\|w\|_2^2)^2
{+ 2\theta^2\|w^\le\|_2^2(1-\|w\|_2^2)}+
\frac1N\log P_{n_0}^{(w)}( z: \|z-w\|_2\le \srad) + \sup_{\nu\in\cP_1([-R_w,R_w])}\chi(\nu) 
	 \notag\\
	 &\qquad\qquad+ O_{T,\zcush}({\delta+}R^2N^{-1/2} + N^{-2\eta})	\notag\\
&= 
 \freeL(\theta,w) + \theta^2(1-\|w\|_2^2)^2
{+ 2\theta^2\|w^\le\|_2^2(1-\|w\|_2^2)}+
\frac12\log(1-\|w\|_2^2) +  \sup_{\nu\in\cP_1([-R_w,R_w])}\chi(\nu)
	\notag\\
	 &\qquad\qquad+ O_{T,\zcush}(N^{-4} + {\delta+}R^2N^{-1/2} + N^{-2\eta}\log N)	\notag\\
&= \free_{N,R}(\theta,w) + O_{T,\zcush}({\delta+} R^2N^{-1/2}  + N^{-2\eta}\log N)
\end{align}
which gives \eqref{FE.LBgoal} and hence \eqref{FE-}. 

Turning to establish part (c), let $\t y^{(\theta)}$, $\t\nu^{(\theta)}$ denote the vector and optimizing measure from the definition of $\t v^{(\theta)} := \t v_{\theta,w,R}$ in \eqref{def:tv}. {We also write $w=w^{(\theta)}$. 
It suffices to show
\begin{equation}	\label{goal:tv.cont}
d_2(\t v^{(\theta_1)}, \t v^{(\theta_2)}) = o_{\theta_1-\theta_2\to 0} (1) +  O_\zcush\bigg( \frac{R}{\sqrt{N}} \bigg)
\end{equation}
for all $\theta_0\le\theta_1,\theta_2\le T$.
Indeed, this implies  that $\t v^{(\theta)}$ is within $O_\zcush(R/\sqrt{N})$ of some $\vcont^{(\theta)}$ depending continuously on $\theta$, and from the triangle inequality and adjusting the constant $C_0(\zcush)$ we can take the Wasserstein ball from part (b) to be centered on $w+\vcont^{(\theta)}$.

Since $q$ and $\alpha$ are continuous, so is $\theta\mapsto\|w^{(\theta)}\|_2^2 =q(\theta)^2\|\vloc'\|_2^2+\alpha(\theta)$.}
With 
\begin{equation}
g( s) := \sum_{i=1}^N \LLa_\mu(2\sqrt{N/N'}\vloc'_is) \,,\qquad 
a=a(\theta):= \theta q(\theta) \sqrt{1-\|w^{(\theta)}\|_2^2}
\end{equation}
we can express $\ham$ from \eqref{choose.ham} 
as
\[
\ham(s;w) = g( a s)
\]
and $\t\nu^{(\theta)}$ is the maximizer for 
\[
\nu\mapsto \int g(a s)d\nu(s) - \DKL(\nu|\gamma)
\]
over $\cP_1( (1-C_*(\zcush)\srad_0)[ -R'/a, R'/a])$. 
Now since the entries of $\t y^{(\theta)}$ are the $1/N'$ quantiles of $\t\nu^{(\theta)}$ which is continuous and supported on $[-R'/a, R'/a]$, 
we have
\[
\cW_2( \t\nu^{(\theta)}, \hat{\mu}_{\t y^{(\theta)}} )^2 \le \frac1{N'} \bigg( (2R'/a)^2 + \sum_{i=2}^{N'} (\t y_{i-1}-\t y_i)^2 \bigg) \ls \frac{R'^2}{a^2N}\asymp_\zcush \frac{R^2}{N}
\]
(consider the coupling $(X,X')$ with $X\sim \t\nu^{(\theta)}$ and $X'\sim \hat{\mu}_{\t y^{(\theta)}}$ obtained by rounding $X$ down to the nearest quantile). 
On the other hand, from \eqref{ty.norm},
\begin{align*}
d_2( \t v^{(\theta)}, \sqrt{1-\|w^{(\theta)}\|_2^2} \frac{\t y^{(\theta)}}{\sqrt{N'}})^2
= (1-\|w^{(\theta)}\|_2^2) \bigg|1- \frac{\|\t y^{(\theta)}\|_2}{\sqrt{N'}}\bigg| ^2 
& 
\ls \frac{R^2}N.
\end{align*}
Combining these estimates with the triangle inequality, 
{we get that for all $N$ sufficiently large depending on $\zcush$ and any $\theta_1,\theta_2\in[\theta_0,T]$, 
\begin{align*}
d_2(\t v^{(\theta_1)}, \t v^{(\theta_2)}) 
&\le  d_2( \frac{\t y^{(\theta_1)}}{\sqrt{N'}}, \frac{\t y^{(\theta_2)}}{\sqrt{N'}}) 
+ \frac{\|\t y^{(\theta_1)}\|_2}{\sqrt{N'}} \Big|\, \sqrt{1-\|w^{(\theta_1)}\|_2^2} - \sqrt{1-\|w^{(\theta_2)}\|_2^2}\,\Big|
+ O_\zcush\bigg( \frac{R}{\sqrt{N}} \bigg) \\
&\le \cW_2( \t\nu^{(\theta_1)}, \t\nu^{(\theta_2)} ) 
+ 2\Big|\, \sqrt{1-\|w^{(\theta_1)}\|_2^2} - \sqrt{1-\|w^{(\theta_2)}\|_2^2}\,\Big|
 + O_\zcush\bigg( \frac{R}{\sqrt{N}} \bigg) \,.
\end{align*}
Since $\theta\mapsto\|w^{(\theta)}\|_2$ is continuous, the second term in the final bound is $o_{\theta_1-\theta_2\to 0}(1)$. From Proposition \ref{prop:gibbs}\eqref{gibbs.W2cont} we have 
that $\theta\mapsto\t\nu^{(\theta)}$ is continuous under the $\cW_2$ metric, and \eqref{goal:tv.cont} follows.}
\end{proof}

\subsection{Proof of Lemma \ref{lem:annealed1} }
\label{sec:annealed1}

We apply the following:

\begin{lemma}
\label{lem:fN.Lip}
For $v,w,z\in\ell^2(\N)$ and $t_1,t_2\ge0$, 
we have
\begin{equation}	\label{Lip1a}
\freeL(t_1,v)- \freeL(t_2,w) \ls |t_1^2-t_2^2|\|w\|_2^4 + t_1^2 \|v-w\|_2(\|v\|_2+\|w\|_2)^3
\end{equation}
and
\begin{equation}	\label{Lip2a}
\wt f_N   (t_1v,z) -\wt f_N   (t_2w,z) \ls   |t_1^2-t_2^2|\|w\|_2^2\|z\|_2^2 + t_1^2 \|v-w\|_2(\|v\|_2+\|w\|_2)\|z\|_2^2\,.
\end{equation}
\end{lemma}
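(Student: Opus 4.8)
\textbf{Proof plan for Lemma \ref{lem:fN.Lip}.}
The plan is to write both $\freeL(t,v)$ and $f_N(tv,z)$ as sums of terms of the form $\tLL$ evaluated at signed squares, exploit the global Lipschitz bound $\|\tLL\|_{\Lip}<\infty$ recorded in \Cref{rmk:assu.reg}, and then do an elementary algebraic telescoping. Recall $\tLL=\LLa_\mu\circ\ssq^{-1}$ with $\ssq(x)=x^2\sgn(x)$, so that $\LLa_\mu(s)=\tLL(\ssq(s))=\tLL(s^2\sgn(s))$; the point is that $\LLa_\mu$ itself is \emph{not} globally Lipschitz (it grows like $s^2$) but $\tLL$ is, and the arguments appearing in $\freeL$ and $f_N$ are already quadratic in the vector entries, which is exactly why we reparametrize through $\ssq$.

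For \eqref{Lip1a}, first reduce to a single parameter: by the triangle inequality it suffices to bound $\freeL(t_1,v)-\freeL(t_1,w)$ and $\freeL(t_1,w)-\freeL(t_2,w)$ separately. For the second, each summand is $\LLa_\mu(2^{\ep_{ij}}t_1\sqrt N w_iw_j)=\tLL\big(2^{2\ep_{ij}}t_1^2 N\, \ssq(w_iw_j)\big)$, so the difference between the $t_1$ and $t_2$ versions is at most $\|\tLL\|_{\Lip}\cdot 2^{2\ep_{ij}}N|t_1^2-t_2^2|\,|w_iw_j|^2$; summing $\frac1N\sum_{i\le j}2^{1+1_{i\ne j}}|w_iw_j|^2=\|w\|_2^4$ gives the first term of \eqref{Lip1a} (up to the universal constant $\|\tLL\|_{\Lip}$, absorbed into $\ls$). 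For the first piece, $\freeL(t_1,v)-\freeL(t_1,w)$, again apply $\|\tLL\|_{\Lip}$ to each summand to get a bound $\ls \frac{t_1^2}{N}\cdot N\sum_{i\le j}2^{1+1_{i\ne j}}|v_iv_j-w_iw_j|\,|v_iv_j+w_iw_j|$ (using $|a^2-b^2|=|a-b||a+b|$ after passing through $\ssq$; one checks $|\ssq(a)-\ssq(b)|\le |a-b|(|a|+|b|)$). Then write $v_iv_j-w_iw_j=(v_i-w_i)v_j+w_i(v_j-w_j)$, bound $|v_iv_j+w_iw_j|\le |v_i||v_j|+|w_i||w_j|$, and apply Cauchy--Schwarz in the $(i,j)$ sums: each resulting double sum factors into products of $\|v-w\|_2$, $\|v\|_2$, $\|w\|_2$, yielding $\ls t_1^2\|v-w\|_2(\|v\|_2+\|w\|_2)^3$. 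Adding the two pieces gives \eqref{Lip1a}.

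The estimate \eqref{Lip2a} is entirely analogous but simpler, since $f_N(tv,z)=\frac1N\sum_{i,j}\LLa_\mu(2\sqrt N t v_iz_j)=\frac1N\sum_{i,j}\tLL\big(4Nt^2\ssq(v_iz_j)\big)$ is symmetric in neither argument but is bilinear in the squares $v_i^2,z_j^2$. Splitting $t_1v\to t_2w$ through the intermediate $t_1w$ as before: the $t$-variation contributes $\ls |t_1^2-t_2^2|\cdot\frac1N\sum_{i,j}N|v_iz_j|^2 \cdot$ wait --- here one keeps $w$, giving $|t_1^2-t_2^2|\sum_{i,j}|w_iz_j|^2=|t_1^2-t_2^2|\|w\|_2^2\|z\|_2^2$; the $v\to w$ variation contributes $\ls t_1^2\sum_{i,j}|v_iz_j-w_iz_j|(|v_iz_j|+|w_iz_j|)$, and since $v_iz_j-w_iz_j=(v_i-w_i)z_j$ this is $\ls t_1^2\sum_{i,j}|v_i-w_i||z_j|^2(|v_i|+|w_i|) \le t_1^2\|v-w\|_2(\|v\|_2+\|w\|_2)\|z\|_2^2$ by Cauchy--Schwarz in $i$. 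This is exactly the second term of \eqref{Lip2a}.

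\textbf{Main obstacle.} There is no deep obstacle here; the only thing to be careful about is the passage through $\ssq$ and the elementary inequality $|\ssq(a)-\ssq(b)|\le|a-b|(|a|+|b|)$ (equivalently $\big||a|a-|b|b\big|\le |a-b|(|a|+|b|)$), which must be verified by cases on the signs of $a,b$ — when $a,b$ have the same sign it reduces to $|a^2-b^2|=|a-b||a+b|$, and when they have opposite signs one checks $|a|a-|b|b = |a-b|(|a|+|b|)$ directly. After that, every step is Cauchy--Schwarz plus bookkeeping of the combinatorial factors $2^{1+1_{i\ne j}}$ from on- versus off-diagonal entries, which cancel the $\frac1N$ and the $\sqrt N$ factors correctly.
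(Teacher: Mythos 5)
Your proposal is correct and follows essentially the same route as the paper's proof: pass through the signed square so that the globally Lipschitz function $\tLL=\LLa_\mu\circ\ssq^{-1}$ can be used, invoke the elementary bound $|\ssq(a)-\ssq(b)|\le|a-b|(|a|+|b|)$, telescope between the $t$-variation and the $v\to w$ variation, and finish with Cauchy--Schwarz (the paper telescopes via the intermediate term $\ssq(v_iw_j)$ using multiplicativity of $\ssq$, whereas you factor $v_iv_j-w_iw_j$ directly, a negligible difference). The minor constant slips (e.g.\ $\sum_{i\le j}2^{1+1_{i\ne j}}w_i^2w_j^2=2\|w\|_2^4$, not $\|w\|_2^4$) are absorbed by $\ls$ and do not affect the argument.
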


\begin{proof}
See Appendix \ref{app:freeprops-cont}.
\end{proof}

To simplify notation we take $\supp(w)=[n_0]$.
By using Fubini's theorem and integrating out the entries of $H$, we find 
\begin{align}	\label{FE.Fubini}
&\frac{1}{N}\log\E \int_{\Uloc} e^{N\theta\langle u,Hu\rangle}dP(u)	\notag\\
 &\qquad= \frac{1}{N}\log\int_{\Uloc}  \exp\bigg( \sum_{i\le j} \LLa_\mu(2^{\ep_{ij}}\theta \sqrt{N} u_{i}u_{j}) \bigg)dP(u)  \notag\\
  &\qquad= \frac{1}{N}\log\int_{\Uloc}  \exp\big( N \freeL(\theta,u)\big)dP(u)
  \notag\\
 &\qquad= \frac1N \log \int_{\Uloc} \exp\big(N( \freeL(\theta,u_w) + \freeL(\theta,u_{w^c}) + \wt f_N   (\theta u_w,u_{w^c})) \big) dP(u)
 \end{align}
where in the last line we used that $\LLa_\mu(0)=0$ and that $u_w,u_{w^c}$ have disjoint supports.
From \eqref{Lip1a} we have  for $u\in \Uloc$ defined in \eqref{def:UwrR}
\begin{equation}	\label{Lip1a.1}
\left |\freeL(\theta,u_w) - \freeL(\theta,w)\right|
=O(\theta^2\srad)\,.
\end{equation}
When $n_0=N$, i.e. $\|u-w\|_2\le \srad$ for all $u\in\Uloc$ and $u_{w^c}=0$, then without loss of generality we can take $R=0$. Then the delocalized and cross terms in \eqref{FE.Fubini} vanish since $\LLa_\mu(0)=0$, so in this case we are left with
\begin{equation}
\frac{1}{N}\log\E \int_{\Uloc} e^{N\theta\langle u,Hu\rangle}dP(u)
=\freeL(\theta,w) + \frac1N\log P(\Uloc) + O(\theta^2\srad).
\end{equation}
Now for the case $n_0<N$, from \eqref{Lip2a} we get
\begin{align}
 \left| \wt f_N   (\theta u_w,u_{w^c}) - \wt f_N   (\theta w,u_{w^c})\right|
= O( \theta^{2}\srad)\,.	\label{Lip2.1}
\end{align}
Combining \eqref{FE.Fubini}, \eqref{Lip1a.1}, \eqref{Lip2.1}, we have
\begin{align*}
&\frac1N\log \E \int_\Uloc e^{\theta N\langle u, Hu\rangle}dP(u)
=\freeL(\theta,w) \\
&\qquad+ \frac1N\log\int_\Uloc \exp\Big( N \freeL(\theta,u_{w^c}) 
+ N \wt f_N   (\theta w, u_{w^c}) \Big) dP(u) 
+ O(\theta^2 \srad).
\end{align*}
By hypothesis we have $|w^\le_i(u_{w^c})_j| \le 2\theta\delta/\sqrt{N}$ for all $i,j$, and from Taylor expansion 
we get
\[
\wt f_N   (\theta w^\le,u_{w^c}) = 2\theta^2(1+O(\theta \delta))\|w^\le\|_2^2\|u_{w^c}\|_2^2.
\] 
Furthermore, denoting $\hat u_{w^c}:= u_{w^c}/\|u_{w^c}\|_2$, we have
\[
\|u_{w^c}- \sqrt{1-\|w\|_2^2}\hat u_{w^c}\|_2 = |\|u_{w^c}\|_2-\sqrt{1-\|w\|_2^2}| = O(\srad/\sqrt{\zcush})\quad \forall u\in \Uloc_w(\srad)
\]
 where we used that $\|u_{w^{c}}\|_2^{2}=1-\|u_{w}\|_2^{2}=1-\|w\|_2^{2}+O(\srad)$ and the assumption $\|w\|_2\le 1-\zcush$.
Then again from \eqref{Lip2a} (and symmetry of $\wt f_N   $ in its arguments) we get
\begin{equation}	
\label{Lip3a}
|\wt f_N   (\theta w{^>},u_{w^c}) - \wt f_N   (\theta w{^>}, \sqrt{1-\|w\|_2^2} \hat u_{w^c})| = O(\theta^2\srad/\sqrt{\zcush}).
\end{equation}

Next, observe that $ \sqrt{N} u_iu_j$  is 
   bounded  from above by  $R^2N^{-\frac{1}{2}}\le N^{-1/10}$  for $i,j>n_0$.
{Again} from Taylor expansion {of $\LLa_\mu$} we get for all $u\in \Uloc_w(\srad)$,
\begin{align}
\freeL(\theta,u_{w^c})
&=   \theta^{2}(\sum_{i>n_0 }u_{i}^{2})^{2}+ O( \theta^{3} R^2N^{-\frac{1}{2}})	
= \theta^{2}(1-\|w\|_2^{2})^{2}+O(\theta^2\srad + \theta^3R^2N^{-\frac{1}{2}})\,.	\label{devL} 
\end{align}
Substituting \eqref{Lip1a.1}--\eqref{Lip3a} 
and \eqref{devL} into \eqref{FE.Fubini}, we obtain \eqref{annealedB}.
This concludes the proof of Lemma \ref{lem:annealed1}.

\subsection{Proof of Lemma \ref{lem:Ploc}}
\label{sec:Ploc}

For notational convenience we take $\supp(w)=[n_0]$.
We write $Y:=\|u_w\|_2$ and $y:=\|w\|_2$. 
First we claim
\begin{equation}	\label{PUw.sandwich}
\log P( |Y-y|<\srad/2) - n_0\log \frac{C}{\srad} \le 
\log P( \|u_w-w\|_2<\srad) \le \log P(|Y-y|<\srad). 
\end{equation}
Indeed, the second inequality is immediate since $\|u_w-w\|_2\ge |\|u_w\|_2-\|w\|_2|$. 
For the first inequality, note that we can cover the annulus $\{v\in \R^{n_0}: y-\frac12\srad< \|v\|_2< y+ \frac12\srad\}$ with $O(1/\srad)^{n_0}$ balls of radius $\srad$ centered on points $v$ of norm $y$. 
From the union bound and rotational invariance of $P$ we get
\[
P(|Y-y|<\srad/2) \le O(1/\srad)^{n_0} P(\|u_w-w\|_2<\srad)
\]
and \eqref{PUw.sandwich} follows. It now suffices to show that for $\srad_1\in[\frac12N^{-4},\frac1{10}\zcush]$, 
\begin{equation}	\label{PUw.goal1}
\log P(|Y-y|<\srad_1) = \frac{N}2\log (1-y^2) + 
O\Big(n_0\Big(\log\frac1\zcush+ \log N\Big)\Big) + O(\zcush^{-1}\srad_1N)\,.
\end{equation}
Indeed, the claim follows by applying the above with $\srad$ and $\srad/2$ in place of $\srad_1$.

We claim that for any interval $I\subset[0,1-\frac12\zcush]$ of length $|I|\ge N^{-10}$ and any $\gamma_0\in I$, 
\begin{equation}	\label{PUw.goal2}
\log P(Y^2\in I) = \frac{N}2 \log (1-\gamma_0) + 
O\Big(n_0\Big(\log\frac1\zcush+ \log N\Big)\Big) + O(\zcush^{-1}|I|N).
\end{equation}
Indeed, {fixing such an interval $I$, first note that} $Y^2$ has the Beta$(\frac12n_0, \frac12(N-n_0))$ distribution, with density
\[
f_{Y^2}(\gamma) = \frac1{Z}\gamma^{\frac{n_0}2-1} (1-\gamma)^{\frac{N-n_0}2-1}
\]
where the normalizing constant can be estimated by Stirling's formula:
\[
Z = \frac{\Gamma( n_0/2) \Gamma( (N-n_0)/2)}{\Gamma(N/2)} = \exp( O(n_0\log N))\,.
\]
Next note that
\[
f_{Y^2}(\gamma) = \exp \bigg( \frac{N}2 \log (1-\gamma) + O\Big(n_0\Big(\log\frac1\zcush+ \log N\Big)\Big)  \bigg)
\]
for $\gamma\in [N^{-100}, 1-\frac12\zcush]$.
Now writing $I=[a,b]$, by monotonicity of $\gamma\mapsto \log(1-\gamma)$ on $(0,1]$ 
we can estimate
\[
P(Y^2\in I) = \int_a^b f_{Y^2}(\gamma) d\gamma \le \sup_{\gamma\in I} f_{Y^2}(\gamma) \le \exp\bigg( \frac{N}2\log(1-a) + O\Big(n_0\Big(\log\frac1\zcush+ \log N\Big)\Big) \bigg)
\]
and
\[
P(Y^2\in I) \ge \int_{a}^{a+N^{-10}} f_{Y^2}(\gamma) d\gamma \ge N^{-10} \exp \bigg( \frac{N}2 \log ( 1-a-N^{-10}) + O\Big(n_0\Big(\log\frac1\zcush+ \log N\Big)\Big) \bigg)\,.
\]
Now since $\gamma \mapsto \log (1-\gamma)$ has derivative of size $O(1/\zcush)$ on $[0,1-\frac12\zcush]$, we can replace  
$a$ and $a+N^{-10}$
with $\gamma_0$ in the previous two displays, incurring a multiplicative error of size $\exp( O(|I|N/\zcush))$, and \eqref{PUw.goal2} follows. 
Turning to \eqref{PUw.goal1}, if $y<\srad_1$ then applying \eqref{PUw.goal2} yields
\begin{align*}
\log P(|Y-y|<\srad_1) &=\log  P(Y^2 \in [0,(y+ \srad_1)^2]) \\
&= \frac{N}2 \log (1-y^2) + O\Big(n_0\Big(\log\frac1\zcush+ \log N\Big)\Big) + O(\zcush^{-1}\srad_1^2N)
\end{align*}
as desired.
If $y\ge \srad_1$, then
\[
P(|Y-y|<\srad_1) = P(|Y^2-y^2|<\srad_1(Y+y)).
\]
We can control the latter expression on either side by
\[
P(|Y^2-y^2|<\srad_1y) \le 
P(|Y^2-y^2|<\srad_1(Y+y))
\le P(|Y^2-y^2|<3\srad_1y).
\]
The events on the left and right are of the form \eqref{PUw.goal2} with $I$ of length at least $\srad_1y\ge \srad_1^2\ge N^{-10}$ and right endpoint at most $y(y+3\srad_1)\le (1-\zcush)(1-\zcush+\frac3{10}\zcush)\le 1-\zcush$, and \eqref{PUw.goal1} follows from \eqref{PUw.goal2}.

\section{Properties of the rate functions}
\label{sec:rateprops}

\subsection{Proof of Lemmas \ref{lem:Jprops} and \ref{lem:tJprops}}

Recalling $\thetam$ from \eqref{def:thetapm}, we will make repeated use of the following estimates:
for $x\ge2 $,
\begin{equation}	\label{thetam-small}
0\le \frac12-\thetam \asymp \sqrt{x-2}
\end{equation}
and 
\begin{equation}	\label{thetam-large}
\thetam\asymp x^{-1}.
\end{equation}

We need the following lemma gathering properties of $\free_{N,R}$ defined in \eqref{def:freeN}, which will be proved in subsequent subsections. 

\begin{lemma}[Properties of $\free_N$]
\label{lem:freeprops}
\quad
\begin{enumeratea}
\item\label{freeprop.small} (Small $w$). 
We have
\begin{equation}	\label{freeN.at0}
0\le\theta^2- \free_{N,R}(\theta,0) \ls e^{-cR^2}\qquad\forall \theta\ge0,R\in[1,\infty).
\end{equation}
Moreover,
there exists $\al_\mu>0$ depending only on $\mu$ and a universal constant $C_0>0$ such that for any $\theta\le1$, $w\in\B$ with $\|w\|_2^2\le \al_\mu$ and any $R\ge C_0$, we have
\begin{equation}	\label{bd:smallw}
\free_{N,R}(\theta,w) \le \theta^2 -\frac14\|w\|_2^2 \,.
\end{equation}
Similarly
\begin{equation}	\label{bd:smallw.t}
\tfree_{N,R}(\theta,w,\tal) \le \theta^2 -\frac14(\|w\|_2^2+\tal) 
\end{equation}
when $\|w\|_2^2+\tal\le \al_\mu$.

\item\label{freeprop.cont} (Continuity).
Let $R\ge1$.
For any $w\in\B$,
\begin{equation}	\label{freecont.theta}
\free_{N,R}(\theta_1,w) - \free_{N,R}(\theta_2,w) \ls |\theta_1^2 - \theta_2^2| \qquad\forall \theta_1,\theta_2\ge0\,.
\end{equation}
For any $\theta\ge0$, $z_1,z_2\in\ell^2(\N)$ and $q_1,q_2\ge0$ such that $q_iz_i\in (1-\rho)\B$ for $i=1,2$ and some $\rho>0$, we have
\begin{equation}	\label{freecont.w}
\free_{N,R}(\theta,q_1z_1) - \free_{N,R}(\theta,q_2z_2) 
\ls \Big( 1+ \frac{\theta^2}{\sqrt{\rho}}\Big) \big( |q_1^2 - q_2^2| + \|z_1-z_2\|_2\big). 
\end{equation}
Furthermore, for any $\tal\in[0,1]$ and $\chw\in \sqrt{1-\tal}\B$, 
\begin{equation}	\label{tfreecont.theta}
\tfree_{N,R}(\theta_1,\chw,\tal) - \tfree_{N,R}(\theta_2,\chw,\tal) \ls |\theta_1^2 - \theta_2^2|\,,\qquad \forall \theta_1,\theta_2\ge0\,.
\end{equation}
For $\theta\ge0$, $\chz_1,\chz_2\in\ell^2(\N)$, $q_1,q_2\ge0$ and $\tal_1,\tal_2\in[0,1]$ such that $q_i^2\|\chz_i\|_2^2 + \tal_i \le 1-\rho$ for $i=1,2$ and some $\rho>0$, 
\begin{equation}	\label{tfreecont.w}
\tfree_{N,R}(\theta,q_1\chz_1,\tal_1) - \tfree_{N,R}(\theta,q_2\chz_2,\tal_2)
\ls \Big( 1+ \frac{\theta^2}{\sqrt{\rho}}\Big) \big( |q_1^2 - q_2^2| + \|\chz_1-\chz_2\|_2 + |\tal_1-\tal_2|\big).
\end{equation}

\item\label{freeprop.large} (Large $\theta$). 
For any $R\ge1,\zcush>0$ and $w\in(1-\zcush)\B$, 
\begin{equation}
\free_{N,R}(\theta,w) \gs \zcush^2 \theta^2
\end{equation}
for all $\theta\ge C\zcush^{-1} \sqrt{\log(1/\zcush)}$. 
The conclusion also holds for $\tfree_{N,R}(\theta,w,\al)$ if $\al+\|w\|_2^2\le (1-\zcush)^2$.
\end{enumeratea}
\end{lemma}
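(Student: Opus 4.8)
\textbf{Proof plan for Lemma~\ref{lem:freeprops}.}

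The strategy is to work directly from the defining formula \eqref{def:freeN} and its split \eqref{def:free-split} into delocalized, localized, and cross contributions, and to control each piece using the Taylor expansion $\LLa_\mu(t)=\frac12 t^2+O(t^3)$ for small $t$ together with the global bounds $\frac12\le\psimu\le\psimax$ and the Lipschitz bound $\|\tLL\|_{\Lip}<\infty$ from \Cref{rmk:assu.reg}. For part (a), the estimate \eqref{freeN.at0} is immediate: plugging $w=0$ kills $\freeL$ and $\free^{cross}_R$, leaving $\free_{N,R}(\theta,0)=\theta^2-\inf_{\nu\in\cP_1([-R,R])}\DKL(\nu|\gamma)$, and the infimum is $\frac12\|w\|_2^2\cdot 0$-type correction — more precisely it equals $O(e^{-cR^2})$ by using a truncated centered Gaussian as a near-optimizer, exactly as indicated under \eqref{HuGu:FE}. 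For \eqref{bd:smallw}, I would use the decomposition \eqref{def:free-split}: since $\theta\le1$ and $\|w\|_2^2\le\al_\mu$ is small, the localized term $\freeL(\theta,w)\ls\theta^2\psimax\|w\|_2^4=O(\al_\mu\|w\|_2^2)$, while $\free^{cross}_R(\theta,w)=\VP_R(\theta w,1-\|w\|_2^2)-\frac12\|w\|_2^2$; here $\VP_R(\theta w,1-\|w\|_2^2)\le 4\psimax\|w\|_2^2(1-\|w\|_2^2)\theta^2$ (using the displayed bound just after \eqref{def:VP} applied with second moment $1-\|w\|_2^2$, actually $\VP_R(\theta w,\beta)\le 4\psimax\beta\theta^2\|w\|_2^2$), and combined with $\freeD(\theta,\|w\|_2^2)=\theta^2(1-\|w\|_2^2)^2\le\theta^2-2\theta^2\|w\|_2^2+\theta^2\|w\|_2^4$ the net coefficient of $\|w\|_2^2$ is $\le\theta^2(\text{something})-\frac12$, which for $\al_\mu$ small enough is $\le-\frac14$. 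The bound \eqref{bd:smallw.t} for $\tfree$ follows the same way from \eqref{def:tfree}, noting $\tfree_{N,R}(\theta,w,\tal)$ has the same structure with $\beta=1-\tal-\|w\|_2^2$ and the $-\frac12(1-\beta)=-\frac12(\tal+\|w\|_2^2)$ term supplying the needed negativity.

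For part (b), continuity in $\theta$: both \eqref{freecont.theta} and \eqref{tfreecont.theta} reduce to showing each summand in \eqref{def:freeN}/\eqref{def:tfree} is $O(|\theta_1^2-\theta_2^2|)$. The terms $\theta^2(1-\|w\|_2^2)^2$ and the bracketed polynomial in \eqref{def:tfree} are manifestly so; for $\freeL$ I would cite \eqref{Lip1a} of \Cref{lem:fN.Lip} with $v=w$; for $\VP_R(\theta w,\beta)=\VP_{R/\sqrt\beta}(\sqrt\beta\theta w,1)+\cdots$ via \eqref{VP.scaling}, the dependence on $\theta$ enters only through the rescaled vector $\sqrt\beta\theta w$, and one uses that $\nu\mapsto\int\sum_i\LLa_\mu(2v_is)d\nu(s)$ has Lipschitz modulus in $v$ controlled via $\|\tLL\|_{\Lip}$ to get a bound proportional to $|\theta_1^2-\theta_2^2|\|w\|_2^2\beta=O(|\theta_1^2-\theta_2^2|)$. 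Continuity in $w$ for \eqref{freecont.w} and \eqref{tfreecont.w}: again split term-by-term; $\freeL$ is handled by \eqref{Lip1a}; $\freeD$ is a polynomial in $\|w\|_2^2$; and for the cross term I would bound the difference $\VP_R(\theta q_1z_1,1-q_1^2\|z_1\|_2^2)-\VP_R(\theta q_2z_2,1-q_2^2\|z_2\|_2^2)$ by combining (i) the change in second moment, handled by \eqref{aPhi-Lipschitz} in \Cref{rmk:dilate} which gives $O(K/\sqrt\rho)$-Lipschitz dependence with $K=O(\theta^2)$ (the $1/\sqrt\rho$ is why the factor $\theta^2/\sqrt\rho$ appears, since the distance to the boundary of admissible second moments is $\gs\rho$), and (ii) the change in the vector $qz$ inside $\LLa_\mu$, handled by $\|\tLL\|_{\Lip}$ which is uniform. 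This is the step I expect to be most delicate, since one must track that the relevant second moments $\beta_i=1-q_i^2\|z_i\|_2^2$ stay bounded away from $0$ by $\rho$ so that the Lipschitz constants from \Cref{rmk:dilate} do not blow up.

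For part (c), large $\theta$: I would use \eqref{def:free-split} and simply lower bound $\free_{N,R}(\theta,w)\ge\freeD(\theta,\|w\|_2^2)+\free^{cross}_R(\theta,w)$ since $\freeL\ge0$ (every summand has $\psimu\ge0$ and is weighted by a square). Then $\freeD(\theta,\|w\|_2^2)=\theta^2(1-\|w\|_2^2)^2\ge\theta^2\zcush^2$ when $\|w\|_2\le1-\zcush$, while the cross term $\free^{cross}_R(\theta,w)=\VP_R(\theta w,1-\|w\|_2^2)-\frac12\|w\|_2^2\ge -\inf_{\nu\in\cP_{1-\|w\|_2^2}([-R,R])}\DKL(\nu|\gamma)-\frac12\|w\|_2^2$, which is bounded below by $-C$ once $R$ is large enough (the infimum is $O(1)+\frac12(1-\|w\|_2^2)\log\cdots$, finite since $1-\|w\|_2^2\ge\zcush^2$), and more carefully $\ge-\frac12\|w\|_2^2+O(e^{-cR^2})\ge-\frac12$. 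Hence $\free_{N,R}(\theta,w)\ge\theta^2\zcush^2-C$, and choosing $\theta\ge C\zcush^{-1}\sqrt{\log(1/\zcush)}$ makes $\theta^2\zcush^2\ge C^2\log(1/\zcush)\gg C$ (absorbing the additive constant), giving $\free_{N,R}(\theta,w)\gs\zcush^2\theta^2$. The $\tfree$ version is identical using \eqref{def:tfree}: drop $\freeL(\theta,\chw)\ge0$ and the $2\psimax\tal^2\theta^2\ge0$, $2\psiinfty(\cdots)\ge0$ terms, keep $\theta^2\beta^2$ with $\beta=1-\tal-\|\chw\|_2^2\ge\zcush$ under the hypothesis $\al+\|w\|_2^2\le(1-\zcush)^2$ (so $\beta\ge\zcush$), and use $\VP_R(\theta\chw,\beta)-\frac12(1-\beta)\ge-\frac12$. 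I would then close by observing all implicit constants depend only on $\mu$.
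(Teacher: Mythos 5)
Your treatment of \eqref{freeN.at0}, of part (b), and of part (c) is essentially the paper's route (term-by-term Lipschitz estimates as in \Cref{lem:fN.Lip2}, with the $\theta^2/\sqrt{\rho}$ factor coming from \eqref{aPhi-Lipschitz}/\eqref{Lip4}, and for part (c) the decomposition via \eqref{VP.scaling} with the additive $\log(1/\zcush)$ loss absorbed by the threshold on $\theta$ — note only that your ``more careful'' lower bound $\free^{cross}_R\ge -\tfrac12\|w\|_2^2+O(e^{-cR^2})\ge-\tfrac12$ is not right, since constraining the second moment to $1-\|w\|_2^2$ costs entropy $\asymp\tfrac12\log\frac1{1-\|w\|_2^2}$; the correct bound is $\tfrac12\log(1-\|w\|_2^2)+O(e^{-cR^2})\gtrsim-\log(1/\zcush)$, which your choice of threshold still absorbs, so this is a repairable slip). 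The genuine gap is in \eqref{bd:smallw} (and hence \eqref{bd:smallw.t}). Using the crude bound $\VP_R(\theta w,\beta)\le 4\psimax\beta\theta^2\|w\|_2^2$ for the cross term, your three estimates give, with $\alpha=\|w\|_2^2$,
\[
\free_{N,R}(\theta,w)\ \le\ \theta^2(1-\alpha)^2+2\psimax\theta^2\alpha^2+4\psimax\theta^2\alpha(1-\alpha)-\tfrac12\alpha ,
\]
whose coefficient of the \emph{linear} term in $\alpha$ is $2\theta^2(2\psimax-1)-\tfrac12$. Shrinking $\al_\mu$ only controls the $\alpha^{3/2}$ and $\alpha^2$ contributions; it does nothing to this coefficient, which is already nonnegative at $\theta=1$ once $\psimax\ge\tfrac58$, and positive even at $\theta=\tfrac12$ when $\psimax>1$ (e.g.\ the sparse Gaussian of \Cref{ex:Gp}, where $\psimax=1/(2p)$ is arbitrarily large). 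Since the lemma must hold for all sub-Gaussian $\mu$ and all $\theta\le1$ — and in its application to \Cref{lem:Jprops}(\ref{Jprop.smallx}) it is used at $\theta=\thetap\ge\tfrac12$ — the argument as written fails outside a nearly sharp sub-Gaussian regime.

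What is missing is the observation that the cross term must be evaluated with the Taylor coefficient $\psimu\approx\tfrac12$, not $\psimax$: the arguments $2\theta w_i\sqrt{1-\alpha}\,s$ of $\LLa_\mu$ are $O(\sqrt{\alpha})$ as long as $s$ stays of bounded size, so the cross contribution should be $2\theta^2\alpha(1-\alpha)+O(\alpha^{3/2})$, which makes the net linear coefficient $-2\theta^2+2\theta^2-\tfrac12=-\tfrac12$ independently of $\psimax$. To justify this one cannot simply bound the supremum over $\nu$; one has to show that the \emph{optimizing} measure of the Gibbs problem (after the rescaling \eqref{VP.scaling}, via \Cref{prop:gibbs}\eqref{gibbs.density}) has moments bounded uniformly in $\alpha\le\al_\mu$, $\theta\le1$ and $R\ge C_0$. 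This is exactly what the paper's proof does: it pins the Lagrange multiplier $\zeta$ between two absolute constants (cf.\ \eqref{zeta.bounds10}), deduces a uniform fourth-moment bound for $\nu^\zeta$, and then Taylor-expands $\psimu$ near $0$ to obtain $\nu^\zeta(h)=2\theta^2\alpha(1-\alpha)+O(\alpha^{3/2})$ (\eqref{smallw-goal1}); together with $\DKL\ge0$, $\log(1-\alpha)\le-\alpha$ and $\freeL\le 2\psimax\theta^2\alpha^2$ this yields $\free_{N,R}(\theta,w)\le\theta^2-\tfrac12\alpha+O(\alpha^{3/2})$ and hence \eqref{bd:smallw}. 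The same refinement is needed for your proof of \eqref{bd:smallw.t}.
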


\begin{proof}[Proof of Lemma \ref{lem:Jprops}\eqref{Jprop.smallz}]
This is immediate from \eqref{freeN.at0}.
\end{proof}

\begin{proof}[Proof of Lemmas \ref{lem:Jprops}\eqref{Jprop.smallx} and \ref{lem:tJprops}\eqref{tJprop.smallx}]
Fix $x\in[2,x_\mu)$, $\vloc\in\B$ and $R\ge C_0$.
For Lemma \ref{lem:Jprops}(\ref{Jprop.smallx}) it suffices to show  that there exists $\theta_*\ge0$ such that
\[
J(x,\theta_*) - \free_{N,R}(\theta_*,\overlap_x(\theta_*)\vloc) \ge \rate^\gamma(x) + c_0\sqrt{x-2}\|\vloc\|_2^2
\]
assuming $x_\mu$ is sufficiently small.
We show this holds with $\theta_*=\thetap$ (see \eqref{def:thetapm}). 
Indeed, with $\al_\mu$ as in 
Lemma \ref{lem:freeprops}(\ref{freeprop.small}) 
we can take $x_\mu$ sufficiently small that $\thetap\le1$ and $\overlap_x(\thetap)\le \al_\mu^{1/2}$ (recall that $\overlap_x(\theta)^2=1-\frac{\thetam}{\theta}$ for $\theta\ge\thetam$, and that $\thetap/\thetam\downarrow1$ as $x\downarrow2$). 
Then since $\|\overlap_x(\thetap)\vloc\|_2^2\le \overlap_x(\thetap)^2\le \alpha_\mu$, it follows from 
Lemma \ref{lem:freeprops}(\ref{freeprop.small}) 
that for all $N$ sufficiently large (so that $R\ge C_0$), 
\[
\free_{N,R}(\thetap,q_x(\thetap)\vloc)\le (\thetap)^2 -\frac14\overlap_x(\thetap)^2\|\vloc\|_2^2
\]
and hence (recalling \eqref{igamma.var}),
\begin{align*}
J(x,\thetap) -\free_{N,R}(\thetap,\overlap_x(\thetap)\vloc)
&\ge J(x,\thetap)-(\thetap)^2 + \frac14\overlap_x(\thetap)^2\|\vloc\|_2^2\\
&= \rate^\gamma(x)+ \frac14\overlap_x(\thetap)^2\|\vloc\|_2^2\,.
\end{align*}
It only remains to note 
\[
\overlap_x(\thetap)^2=
1-\frac{\thetam}{\thetap} = 1-\frac{x-\sqrt{x^2-4}}{x+\sqrt{x^2-4}} \asymp \sqrt{1-\frac4{x^2}}\asymp\sqrt{x-2}
\]
for $x=O(1)$. 
For Lemma \ref{lem:tJprops}(\ref{tJprop.smallx}), fixing $x\in[2,x_\mu)$, $\chz\in\B$ and $\tal\in[0,1-\|\chz\|_2^2]$, it suffices to show there exists $\theta_*\ge0$ such that
\[
J(x,\theta_*) - \tfree_{N,R}(\theta_*,\overlap_x(\theta_*)\chz, \overlap_x(\theta_*)^2\tal) \ge \rate^\gamma(x) + c_0\sqrt{x-2}(\|\chz\|_2^2+\tal)
\]
if $x_\mu$ is sufficiently small. The proof of this follows similar lines as above, using \eqref{bd:smallw.t} in place of \eqref{bd:smallw}.
\end{proof}

\begin{proof}[Proof of Lemma \ref{lem:Jprops}\eqref{Jprop.T} and Lemma \ref{lem:tJprops}\eqref{tJprop.T}]
We only prove the claim about $\cJ_{N,R}$; the claim for $\wt\cJ_{N,R}$ follows from the same argument, using the analogous properties of $\tfree_{N,R}$ stated in Lemma \ref{lem:freeprops}. 

Writing
\[
\cK_{x,\vloc}(\theta) := J(x,\theta) - \free_{N,R}(\theta,\overlap_x(\theta)\vloc)
\]
we have from \eqref{freeN.at0} that 
\begin{equation}	\label{Exz-small1}
\cK_{x,z}(\theta)=O(e^{-cR^2}) = O(1) \qquad\forall \theta\le\thetam\,.
\end{equation}
On the other hand, from \eqref{def:Jsc} we see that 
$J(x,\theta)\le \theta x+ O(1)$ for all $x\ge2,\theta\ge1$,
which together with Lemma \ref{lem:freeprops}(\ref{freeprop.large}) shows
\begin{equation}
\cK_{x,\vloc}(\theta) \le -c\zcush^2\theta^2\qquad \forall \theta\ge CK/\zcush^2
\end{equation}
for a sufficiently large constant $C>0$. The first claim follows with $T=O(K/\zcush^2)$.

For the second claim, 
denote
\[
\cK_x(\theta):= J(x,\theta) - \theta^2,\qquad \Delta(s):= \cK_{x,z}(\thetam s) - \cK_x(\thetam s)\,,\quad s\ge0. 
\]
From Lemma \ref{lem:freeprops}(\ref{freeprop.small}) we have
\begin{equation}	\label{Delta1}
\Delta(1) = (\thetam)^2-\free_{N,R}(\thetam,0) \in[0,\eps_R]
\end{equation}
for some $\eps_R=O(e^{-cR^2})$. 
From \eqref{freecont.w} we get that for any $\theta>\thetam$,
\[
\free_{N,R}(\theta,\overlap_x(\theta)\vloc) - \free_{N,R}(\thetam,0) \ls  O(1+\frac{\theta^2}{\sqrt{\zcush}}) \overlap_x(\theta)^2
\ls (1+\frac{\theta^2}{\sqrt{\zcush}}) \frac{\theta-\thetam}{\thetam}
\]
so
\begin{equation}
|\Delta(s)| \le \eps_R + (\thetam)^2(s^2-1) + \frac{C}{\sqrt{\zcush}}s^2(s-1)\,,\qquad s\ge1.
\end{equation}
Since $\cK_x(\thetam)=\cK_x'(\thetam)=0$ and
\begin{equation}	\label{E''UB}
\cK_x''(\theta) = \frac1{2\theta^2}-2\le \frac1{2(\thetam)^2} \ls K^2\,,\qquad \theta\ge\thetam
\end{equation}
we can bound
\begin{equation}
\cK_x(\theta) \ls K^2 (\theta-\thetam)^2 \qquad\theta\ge\thetam
\end{equation}
and hence
\begin{equation}	\label{vloc.small0}
\cK_{x,\vloc}(\thetam s) = \cK_x(\thetam s) + \Delta(s) \le \eps_R + O(\frac{K^2}{\sqrt{\zcush}}(s-1))\,,\qquad s\in [1,2],
\end{equation}
showing the objective function for $\cJ_{N,R}(x,\vloc):=\sup_{\theta\ge0} \cK_{x,z}(\theta)$ is small in a neighborhood of $\thetam$. 

It remains to show that $\cK_{x,\vloc}$ attains a larger value for larger $\theta$. 
We first consider the case that $\vloc$ is small: suppose
\begin{equation}	\label{vloc.small1}
\al:=\|\vloc\|_2^2\le c_0\kappa^3/K^4
\end{equation}
for a sufficiently small constant $c_0>0$. 
From Lemma \ref{lem:freeprops}(\ref{freeprop.cont},\ref{freeprop.small}) we have
\begin{equation}
\free_{N,R}(\theta,\overlap_x(\theta)\vloc) = \free_{N,R}(\theta,0) + O(T^2\sqrt{\al}) \le \theta^2+O(T^2\sqrt{\al})
\end{equation}
for $\theta\le T$. Thus, taking $T=\thetap\ls K$ and recalling \eqref{igamma.var}, we have
\[
\sup_{\theta\ge0} \cK_{x,\vloc}(\theta) \ge \sup_{\theta\in[0,T]} \cK_x(\theta) - O(K^2\sqrt{\al})
=\rate^\gamma(x) - O(K^2\sqrt{\al}).
\]
Since $\rate^\gamma(x) \gs (x-2)^{3/2}\ge\xcush^{3/2}$, we see 
\begin{equation}
\sup_{\theta\ge0} \cK_{x,\vloc}(\theta) \ge \rate^\gamma(x)/2
\end{equation}
 if the constant $c_0$ in \eqref{vloc.small1} is sufficiently small. 
On the other hand, from \eqref{vloc.small0} (with $\zcush=1/2$, say) and taking $c_0$ smaller if necessary and $R$ sufficiently large, we see 
\begin{equation}
\sup_{\theta\le (1+\delta)\thetam} \cK_{x,\vloc}(\theta) \le \rate^\gamma(x)/4
\end{equation}
if $\delta=(\xcush/K)^C$ for a suitable constant $C>0$. Since $\thetam\gs 1/K$, the claim follows with $\tcush=(\xcush/K)^{C+1}$ for the case that \eqref{vloc.small1} holds. 

Now we consider the complementary case 
\begin{equation}
\label{vloc.small2}
\al_0:=c_0\xcush^3/K^4<\al\le 1-\zcush.
\end{equation}
From Lemma \ref{lem:freeprops}(\ref{freeprop.small}) we have
\begin{equation}
\Delta(s)\ge \Delta^-(s) := \frac14(1-\frac1s)\al\,,\qquad 1\le s\le s_0:= \frac1{1-\frac12\al_\mu}. 
\end{equation}
From the equality in \eqref{E''UB} we see that $\cK_x(\theta)\ge0$ for $\theta\le\frac12$, and since $\thetam\le \frac12-c\sqrt{\xcush}$ (see \eqref{thetam-small}), we have $\cK_x(\thetam s) \ge0$ for all $s\in[1,1+c\sqrt{\xcush}]$. 
Hence,
\begin{equation}
\cK_{x,\vloc}(\thetam s) =\cK_x(\thetam s) + \Delta(s) \ge \Delta^-(s) \,,\qquad 1\le s\le s_1:=1+c\sqrt{\xcush}
\end{equation}
taking $c>0$ small enough that $s_1\le s_0$. In particular, combing with  \eqref{vloc.small2},
\begin{equation}
\cK_{x,\vloc}(\thetam s_1) \gs \Delta^-(s_1) \gs \al\sqrt{\xcush} \gs (\xcush/K)^4.
\end{equation}
On the other hand, from \eqref{vloc.small1} and the assumption $R\ge C\sqrt{\log(K/\xcush)}$ we can make
\[
\sup_{\theta\le (1+\delta')\thetam} \le c'(\xcush/K)^5
\]
for any fixed small constant $c'>0$ by taking $\delta'=\sqrt{\zcush}(\xcush/K)^{C'}$ for suitable $C'>0$ depending on $c'$. 
This yields the claim with $\tcush=\sqrt{\zcush}(\xcush/K)^{C'+1}$ for the case that \eqref{vloc.small2} holds, which concludes the proof. 
\end{proof}

For the proof of the continuity properties we need the following. 

\begin{lemma}
\label{lem:q2Lip}
For any $\theta>0$, the function $x\mapsto \overlap_x(\theta)^2= (1-\frac{\thetam}\theta)_+$ is $O(L\xcush^{-1/2})$-Lipschitz on $[2+\xcush,L]$. 
\end{lemma}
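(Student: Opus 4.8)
\textbf{Proof proposal for \Cref{lem:q2Lip}.}
The plan is to analyze the derivative of $x\mapsto \overlap_x(\theta)^2 = \big(1-\thetam/\theta\big)_+$ on the interval $[2+\xcush,L]$, which reduces everything to estimating the derivative of $x\mapsto\thetam = \thetam(x)$. Recall from \eqref{def:thetapm} that $\thetam = \tfrac14(x-\sqrt{x^2-4})$, so differentiating directly gives
\[
\frac{d\thetam}{dx} = \frac14\Big( 1 - \frac{x}{\sqrt{x^2-4}}\Big) = -\frac14\cdot\frac{x-\sqrt{x^2-4}}{\sqrt{x^2-4}} = -\frac{\thetam}{\sqrt{x^2-4}}\,.
\]
(Alternatively one can differentiate the defining relation $x = 2\thetam + (2\thetam)^{-1}$ implicitly, which yields $\frac{d\thetam}{dx} = \thetam^2/(2\thetam^2 - \tfrac12) $ after rearranging; either route works, and the first is cleaner.)

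The key step is then to bound each factor on $[2+\xcush,L]$. First, $\thetam$ is bounded above by $\tfrac12$ for all $x\ge 2$ (it is decreasing in $x$ with $\thetam(2)=\tfrac12$), and more precisely $\thetam \asymp x^{-1}$ by \eqref{thetam-large}, so $\thetam = O(1)$ uniformly. Second, for $x\ge 2+\xcush$ we have $x^2 - 4 = (x-2)(x+2) \ge \xcush\cdot 4$, hence $\sqrt{x^2-4} \ge 2\sqrt{\xcush}$, so $1/\sqrt{x^2-4} = O(\xcush^{-1/2})$. Combining, $|d\thetam/dx| \ls \xcush^{-1/2}$ on $[2+\xcush,L]$; in fact this already gives an $O(\xcush^{-1/2})$ bound, and the extra factor of $L$ in the statement is absorbed trivially since $L\ge 2+\xcush$. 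Therefore $x\mapsto \thetam$ is $O(\xcush^{-1/2})$-Lipschitz on $[2+\xcush,L]$.

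Finally, to pass from $\thetam$ to $\overlap_x(\theta)^2$: on $[2+\xcush,L]$ the function $\thetam$ is continuous and the map $t\mapsto (1 - t/\theta)_+$ is $(1/\theta)$-Lipschitz. However, since we want a bound uniform in $\theta>0$ (the statement asserts Lipschitz constant $O(L\xcush^{-1/2})$ with no dependence on $\theta$), we should instead observe that on the region where $\thetam \ge \theta$ the function $\overlap_x(\theta)^2$ is identically $0$, and on the region where $\thetam < \theta$ we have $\overlap_x(\theta)^2 = 1 - \thetam/\theta$ with derivative $-\tfrac1\theta\,\frac{d\thetam}{dx}$; the only subtlety is continuity at the crossover point $\{x:\thetam(x)=\theta\}$, where both pieces agree and equal $0$, so the max with $0$ introduces no extra Lipschitz cost. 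But this still carries a $1/\theta$ factor, which blows up for small $\theta$. The resolution — and the one genuinely non-routine point — is that when $\theta$ is small, say $\theta \le \thetam(L)$, then $\thetam(x)\ge\thetam(L)\ge\theta$ throughout $[2+\xcush,L]$ (since $\thetam$ is decreasing), so $\overlap_x(\theta)^2\equiv 0$ there and the Lipschitz constant is $0$; and when $\theta > \thetam(L) \gs L^{-1}$ by \eqref{thetam-large}, we have $1/\theta \ls L$, giving the claimed factor of $L$. So the bound $|\tfrac{d}{dx}\overlap_x(\theta)^2| \ls \tfrac1\theta\,\xcush^{-1/2} \ls L\,\xcush^{-1/2}$ holds on the non-trivial region, and combined with continuity this shows $x\mapsto\overlap_x(\theta)^2$ is $O(L\xcush^{-1/2})$-Lipschitz on $[2+\xcush,L]$, completing the proof.
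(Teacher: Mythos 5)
Your proof is correct and follows essentially the same route as the paper: bound $|d\thetam/dx|\ls \xcush^{-1/2}$ on $[2+\xcush,L]$, handle the positive part by noting the function is either identically zero or, on the active region, has derivative $\tfrac1\theta|d\thetam/dx|$ with $\tfrac1\theta\ls L$ via $\thetam\asymp 1/x$. The paper's case split ($\theta\ge\tfrac12$ versus $\theta<\tfrac12$, using $\theta\ge\thetam(x)$ pointwise on the active region) differs only cosmetically from yours ($\theta\le\thetam(L)$ versus $\theta>\thetam(L)$).
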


\begin{proof}
Fixing $\theta>0$, if $\theta\ge\frac12$ then since $\thetam = \frac14(x-\sqrt{x^2-4})\in (0,\frac12)$ for $x>2$ we have that $\overlap_x(\theta)^2 = 1-\frac{\thetam}\theta=:g_\theta(x)$ for all $x>2$, and $g_\theta$ is smooth on $(2,\infty)$ with derivative bounded by $O(\theta^{-1}\xcush^{-1/2}) = O(\xcush^{-1/2})$. 

If $\theta\in(0,\frac12)$, then $\overlap_x(\theta)^2$ is the maximum two Lipchitz functions, namely $g_\theta$ and the function that is identically zero, so it is Lipchitz with Lipschitz constant equal to that of $g_\theta$. Moreover, when $\overlap_x(\theta)^2=g_\theta(x)$ we have $\theta\ge\thetam$, so the derivative of $g_\theta$ is $\ls \theta^{-1}\xcush^{-1/2} \le (\thetam)^{-1}\xcush^{-1/2} \ls L\xcush^{-1/2}$. 
\end{proof}

\begin{proof}[Proof of Lemmas \ref{lem:Jprops}\eqref{Jprop.cont} and \ref{lem:tJprops}\eqref{tJprop.cont}]
From \eqref{thetam-large} we have $\|\overlap_x(\theta)\vloc\|_2 \le \overlap_x(\theta) \le (1-\frac{\thetam}T)^{1/2}\le 1-\frac{c}{LT}$ for $x\le L$ and $\theta\le T$. The continuity in $z$ is now immediate from Lemma \ref{lem:freeprops}(\ref{freeprop.cont}). 

For the continuity in $x$, it suffices to show that $x\mapsto J(x,\theta)$ and $x\mapsto\free_{N,R}(\theta,\overlap_x(\theta)\vloc)$ are $O(T^3L^2\xcush^{-1/2})$-Lipschitz on $[2+\xcush,L]$, uniformly for $\theta\le T$ and $\vloc\in\B$. 
For $J(x,\theta)$ we  note
\begin{equation}
\partial_xJ(x,\theta) =
1_{\theta\ge\thetam} \left( \theta - \tfrac12G_\sigma(x) \right) = (\theta-\thetam)_+\ls T.
\end{equation}
For $\free_{N,R}(\theta,\overlap_x(\theta)\vloc)$, note that if $x\le L$ and $\theta\le T$ then 
\[
\overlap_x(\theta) \le \sqrt{1-\frac{\thetam}T} \le 1-\frac{c}{LT}
\]
for a constant $c>0$, by \eqref{thetam-large}. On the other hand, we have from \eqref{freecont.w} that $q^2\mapsto \free_{N,R}(\theta,q\vloc)$ is $O(T^{5/2}L^{1/2})$-Lipchitz on $[0,\frac{c}{LT}]$ for fixed $\theta\le T$ and $\vloc\in\B$. 
The claim follows by combining this with Lemma \ref{lem:q2Lip}.

The claim for $\wt\cJ_{N,R}$ follows from the same argument, using \eqref{tfreecont.w} in place of \eqref{freecont.w}. 
\end{proof}

This completes the proof of Lemma \ref{lem:Jprops}.
In the remaining subsections of this appendix we establish Lemma \ref{lem:freeprops}.

\subsection{Proof of Lemma \ref{lem:freeprops}(\ref{freeprop.small})}

We begin with  \eqref{freeN.at0}. From \eqref{def:freeN} and that fact that $\LLa_\mu(0)=0$ we see that
\[
\free_{N,R}(\theta,0) = \theta^2 + \freeL(\theta,0)+\VP_R(0, 1) = \theta^2 - \inf_{\nu\in\cP_1([-R,R])} \DKL(\nu|\gamma) 
\]

Since the relative entropy $\DKL(\nu|\gamma)$ is non-negative (in fact one can see from the argument below that it is uniformly positive for fixed finite $R$, but we do not need this) the first inequality in \eqref{freeN.at0} follows. For the second inequality,
by considering $d\nu=\gamma([-R,R])^{-1}1_{[-R,R]}d\gamma$ the Gaussian measure conditioned to the interval $[-R,R]$, we verify
\begin{equation}
\label{minDKL}
0\le \inf_{\nu\in\cP_1([-R,R])} \DKL(\nu|\gamma) \le \log\frac1{\gamma([-R,R])} \ls \gamma([-R,R]^c)\ls e^{-cR^2}
\end{equation}
(note that $\gamma([-R,R])\ge \gamma([-1,1])\gs 1$). 
We hence obtain \eqref{freeN.at0}.

We turn to the proof of \eqref{bd:smallw}; the proof of \eqref{bd:smallw.t} follow similar lines and is omitted. 
We abbreviate $\alpha:=\|w\|_2^2$ and set $I:=[-R/(1-\al)^{1/2},R/(1-\al)^{1/2}]$. Assume $\al\le \al_\mu$ for $\al_\mu\in(0,1)$ to be taken sufficiently small depending on $\mu$.
From \eqref{VP.scaling} we have
\begin{equation}	\label{smallw-cross}
\VP_R(\theta w, 1-\al) - \frac12\al
= \sup_{\nu\in\cP_1(I) } \{ \nu(h)-\DKL(\nu|\gamma)\} + \frac12\log(1-\alpha)
\end{equation}
where
\[
h(s):= \sum_{i=1}^N \LLa_\mu(2\theta w_i(1-\alpha)^{1/2}s)\,.
\]
Moreover, from Proposition \ref{prop:gibbs}\eqref{gibbs.density} we know that the supremum in \eqref{smallw-cross} is attained at the measure $\nu^\zeta\in\cP_1(I) $ with density proportional to $1_I(s) \exp( h(s) -\zeta s^2)$. We claim that if $\al_\mu$ is sufficiently small depending on $\mu$ then
\begin{equation}	\label{smallw-goal1}
\nu^\zeta(h) = 2\theta^2\alpha(1-\alpha) + O(\alpha^{{3/2}})\,.
\end{equation}
Indeed, we have
\[
\nu^\zeta(h) - 2\theta^2\alpha(1-\alpha) 
= 4\theta^2 (1-\alpha) \sum_{i=1}^N w_i^2 \int s^2(\psimu(2\theta w_i(1-\alpha)^{1/2}s) - \tfrac12)d\nu^\zeta(s)
\]
so we can bound 
\begin{equation}
\big| \nu^\zeta(h) - 2\theta^2\alpha(1-\alpha) \big| 
\le 4\sum_{i=1}^N w_i^2 \int s^2 \big|\psimu(2\theta w_i(1-\alpha)^{1/2}s) - \tfrac12\big|d\nu^\zeta(s).
\end{equation}
Since $|2\theta w_i(1-\alpha)^{1/2}| \le 2\alpha^{1/2}$, then \eqref{smallw-goal1} will follow once we show that for any fixed $b\in[-2\alpha^{1/2},2\alpha^{1/2}]$,
\begin{equation}
\label{smallw-goal2}
\int s^2|\psimu(bs)-\tfrac12|d\nu^\zeta(s) 
= O({\al^{1/2}})
\end{equation}
if $\al_\mu$ is sufficiently small.
To that end, note that since $\psimu(t) =t^{-2}\LLa_\mu(t) = 
{\frac12+O(|t|)}$ 
for $|t|\le 1$ by Taylor expansion, we can bound the left hand side by
\begin{align*}
&\int_{|s|\le \frac12\alpha^{-1/2}} s^2 O(|bs|)d\nu^\zeta(s)
+ (\tfrac12+\psimax) \int_{|s|>\frac12\alpha^{-1/2}}s^2d\nu^\zeta(s)
{ \ls \al^{1/2} \int |s|^3d\nu^\zeta(s) + 
\alpha \int s^4d\nu^\zeta(s)}
\end{align*}
so it suffices to show $\nu^\zeta$ has bounded fourth moment if $\al_\mu$ is sufficiently small and $C_0$ is sufficiently large.
To see this, we first establish bounds on $\zeta$, which we recall is determined by the second-moment constraint
\begin{equation}	\label{zeta.constraint}
1 = \frac{\int_I s^2 e^{h(s)-\zeta s^2}ds}{\int_I e^{h(s)-\zeta s^2}ds} =: F(\zeta).
\end{equation}
We claim
\begin{equation}	\label{zeta.bounds10}
\frac1{16}\le \zeta \le C_1
\end{equation}
for a suitable absolute constant $C_1<\infty$ when $C_0$ and $\al_\mu$ are suitably large and small, respectively. 
For this we note 
the pointwise bounds
\[
0\le  h(s) \le 4\theta^2\alpha(1-\alpha)\psimax s^2\le 4\alpha\psimax s^2=: C\alpha s^2.
\]
For the lower bound on $\zeta$,
since $F$ is monotone decreasing  it suffices to show $F(\frac1{16})>1$. Assuming $\al\le (32C)^{-1}$, we can use the above bounds on $h$ to lower bound
\[
F(\tfrac1{16})\ge 
\frac{\int_{-R}^R s^2e^{-s^2/16}ds}{\int_\R \exp( -(\tfrac1{16}-C\alpha)s^2) ds} \ge  \sqrt{\frac1{32\pi}} \int_{-R}^R s^2e^{-s^2/16}ds\,.
\]
Since $\int_{-R}^R s^2e^{-s^2/16}ds\to32\sqrt{\pi}$ as $R\to\infty$, it follows that $F(\frac1{16})\ge 2$ when $R$ is a sufficiently large constant. 
For the upper bound on $\zeta$ it suffices to show $F(C_1) <1$. 
Arguing similarly as above we have
\[
F(C_1) \le \frac{\int_\R s^2 e^{-(C_1 -C\alpha) s^2}ds}{ \int_{-R}^R e^{- C_1s^2}ds}
\ls \frac1{C_1}
\]
when $C\alpha\le C_1/2$ and $R\ge 2C_1$, say.  Taking $C_1$ sufficiently large yields the upper bound in \eqref{zeta.bounds10}. 

Now to bound the fourth moment, with $\zeta\asymp 1$ and $C\alpha\le \zeta/2$ we have
\begin{align*}
\int s^4 d\nu^\zeta(s) 
&= \frac{\int_I s^4 e^{h(s) - \zeta s^2}ds}{\int_I e^{h(s) - \zeta s^2}ds}
\le \frac{\int_\R s^4 \exp( - (\zeta - C\alpha)s^2)ds }{ \int_{-R}^R e^{-\zeta s^2}ds}
\ls 1
\end{align*}
whenever $R\gs1$. 
We thus obtain \eqref{smallw-goal2} and hence \eqref{smallw-goal1} since  we multiply the previous estimate by $\|w\|_{2}^{2}=\alpha$.

Returning to \eqref{smallw-cross}, using \eqref{smallw-goal1} and the bounds $\DKL(\nu|\gamma)\ge0$ (for any probability measure $\nu$) and $\log(1-\alpha)\le -\alpha$, we have \eqref{smallw-cross} is
\[
 \nu^\zeta(h) -\DKL(\nu^\zeta|\gamma) + \frac12\log(1-\alpha) 
\le 2\theta^2\alpha(1-\alpha) -\frac12\alpha+O(\alpha^{3/2}).
\]
Since 
\[
\freeL(\theta,w) \le \frac1N\sum_{i\le j} 2^{2\ep_{ij}} \theta^2Nw_i^2w_j^2\psimax = 2\theta^2\psimax \alpha^2
\]
we have altogether that
\begin{align*}
\free_{N,R}(\theta,w) 
&\le \theta^2\big[ (1-\alpha)^2+ 2\alpha(1-\alpha)+ 2\psimax \alpha^2 \big] -\frac12\alpha+O(\alpha^{3/2})\\
&=\theta^2 (1+ (2\psimax-1)\alpha^2)-\frac12\alpha+O(\alpha^{3/2})\\
&= \theta^2 -\frac12\alpha + O(\alpha^{3/2}).
\end{align*}
Taking $\al_\mu$ smaller, if necessary, so that the error term is bounded by $\frac14\alpha$,  \eqref{bd:smallw} follows.

\subsection{Proof of Lemma \ref{lem:freeprops}(\ref{freeprop.cont})}
\label{app:freeprops-cont}

Recall the notation
\begin{equation}
\freeL(\theta,w) := \frac1N\sum_{1\le i\le j\le N} \LLa_\mu(2^{\ep_{ij}}\theta\sqrt{N}w_iw_j)\,,\qquad
\wt f_N   (v,w):= \frac1N\sum_{i,j=1}^N \LLa_\mu(2\sqrt{N}v_iw_j)
\end{equation}
We note the scaling and symmetry properties
\begin{equation}	\label{freeLfN.scaling}
\freeL(\theta,\al w) = \freeL(\al^2\theta,w)\,,\qquad \wt f_N   (v,\al w) = \wt f_N   (\al v,w)\,.
\end{equation}
for any $\al\in \R$. 
Estimates \eqref{Lip1} and \eqref{Lip2} in the following were also used in the proof of Lemma \ref{lem:annealed1}.

\begin{lemma}
\label{lem:fN.Lip2}
For $v,w,z\in\ell^2(\N)$ and $t_1,t_2\ge0$, 
we have
\begin{equation}	\label{Lip1}
\freeL(t_1,v)- \freeL(t_2,w) \ls |t_1^2-t_2^2|\|w\|_2^4 + t_1^2 \|v-w\|_2(\|v\|_2+\|w\|_2)^3
\end{equation}
and
\begin{equation}	\label{Lip2}
\wt f_N   (t_1v,z) -\wt f_N   (t_2w,z) \ls   |t_1^2-t_2^2|\|w\|_2^2\|z\|_2^2 + t_1^2 \|v-w\|_2(\|v\|_2+\|w\|_2)\|z\|_2^2\,.
\end{equation}
Moreover, for any $\beta>0$ and $R\ge \sqrt{\beta}$, 
\begin{equation}	\label{Lip3}
\VP_R(t_1v,\beta) - \VP_R(t_2w,\beta) \ls  |t_1^2-t_2^2|\|w\|_2^2\beta + t_1^2 \|v-w\|_2(\|v\|_2+\|w\|_2)\beta\,,
\end{equation}
and for $0<\beta_1\le \beta_2\le R^2$,
\begin{equation}
\VP_R(w,\beta_1) - \VP_R(w,\beta_2) \ls \|w\|_2^2 (\sqrt{\beta_2}-\sqrt{\beta_1}) \ls \frac{\|w\|_2^2}{\sqrt{\beta_1}} (\beta_2-\beta_1)\,.	\label{Lip4}
\end{equation}

\end{lemma}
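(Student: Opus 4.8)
\textbf{Proof plan for \Cref{lem:fN.Lip2}.}
The plan is to treat all four estimates through one device: rewrite every evaluation of $\LLa_\mu$ occurring in $\freeL$, $f_N$ and $\VP_R$ in terms of the signed square function $\ssq$ and the globally Lipschitz function $\tLL=\LLa_\mu\circ\,\ssq^{-1}$ from \Cref{rmk:assu.reg}. Since $\ssq(x)=x|x|$ is multiplicative ($\ssq(ab)=\ssq(a)\ssq(b)$), and $\theta\ge0$, one has the identities $\LLa_\mu(2^{\ep_{ij}}\theta\sqrt N w_iw_j)=\tLL\big(2^{2\ep_{ij}}\theta^2 N\,\ssq(w_i)\ssq(w_j)\big)$, $\LLa_\mu(2\sqrt N v_iz_j)=\tLL\big(4N\,\ssq(v_i)\ssq(z_j)\big)$ and $\LLa_\mu(2v_is)=\tLL\big(4\,\ssq(v_i)\ssq(s)\big)$. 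First I would use the Lipschitz bound on $\tLL$ to reduce each summand to a constant times $\big|t_1^2\ssq(a)\ssq(b)-t_2^2\ssq(a')\ssq(b')\big|$; splitting this as $(t_1^2-t_2^2)\ssq(a')\ssq(b')+t_1^2\big(\ssq(a)\ssq(b)-\ssq(a')\ssq(b')\big)$, applying the elementary inequality $|\ssq(x)-\ssq(y)|\le|x-y|(|x|+|y|)$ (which handles both signs of $xy$ simultaneously) together with $|\ssq(x)|=x^2$, and finishing with Cauchy--Schwarz and the identity $\sum_i|\ssq(v_i)|=\|v\|_2^2$, gives \eqref{Lip1} and \eqref{Lip2}. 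For \eqref{Lip3} the supremum is over the fixed set $\cP_1$-type class $\cP_\beta([-R,R])$ on both sides, so it suffices to bound the supremum of the difference of the two objectives; the per-index bound above carries a factor $|\ssq(s)|$ which integrates against $\nu$ to $\int s^2\,d\nu(s)=\beta$, and the remaining sums over $i$ are estimated exactly as for \eqref{Lip2}.

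The genuinely delicate estimate is \eqref{Lip4}, which controls the dependence of $\VP_R(w,\cdot)$ on the second-moment parameter. The plan is to recognize $\VP_R(w,\beta)=\sup_{\nu\in\cP_\beta([-R,R])}\{\nu(h)-\DKL(\nu|\gamma)\}$ with $h(s)=\sum_i\LLa_\mu(2w_is)$, and to note (from the $\ssq$-rewriting, $h\circ\ssq^{-1}(t)=\sum_i\tLL(4\ssq(w_i)t)$) that $h\circ\ssq^{-1}$ is Lipschitz with constant $\asymp\|w\|_2^2$, while $h(0)=0$. For one direction of \eqref{Lip4} I would dilate the optimizer of the $\beta_2$-problem by the factor $a=\sqrt{\beta_1/\beta_2}\le1$: this keeps the support inside $[-aR,aR]\subseteq[-R,R]$ and brings the second moment down to $\beta_1$, so \Cref{lem:dilate} (equation \eqref{arg.dilate}) produces a valid competitor for $\VP_R(w,\beta_1)$ whose cost differs from that of the $\beta_2$-optimizer by a controlled amount. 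For the reverse inequality a naive dilation by $\sqrt{\beta_2/\beta_1}\ge1$ would overflow the truncation interval, so instead I would invoke the stability-of-optima estimate \Cref{prop:gibbs}(\ref{gibbs.wiggle}) --- equivalently the Lipschitz continuity in the dilation parameter recorded in \Cref{rmk:dilate}, with Lipschitz constant $\asymp\|w\|_2^2$ --- applied in the regime (bounded second moments) in which it is used; the chain of bounds in \eqref{Lip4} then follows from $\sqrt{\beta_2}-\sqrt{\beta_1}=(\beta_2-\beta_1)/(\sqrt{\beta_1}+\sqrt{\beta_2})\le(\beta_2-\beta_1)/\sqrt{\beta_1}$.

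I expect \eqref{Lip4} to be the main obstacle. Unlike \eqref{Lip1}--\eqref{Lip3}, it cannot be reduced to term-by-term Lipschitz estimates, because varying $\beta$ moves the entire constrained optimizer, and the only elementary tool --- dilation --- is available cleanly in only one direction (decreasing $\beta$); obtaining the opposite inequality genuinely requires the continuity/stability properties of the constrained Gibbs variational problem from \Cref{prop:gibbs}. Everything else reduces to routine manipulations once the $\ssq$/$\tLL$ rewriting is in place.
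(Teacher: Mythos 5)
Your proposal is correct and follows essentially the same route as the paper: \eqref{Lip1}--\eqref{Lip3} are proved there by exactly your $\ssq$/$\tLL$ rewriting, the splitting into a $|t_1^2-t_2^2|$ term plus a $t_1^2$ term, the elementary bound $|\ssq(a)-\ssq(b)|\le|a-b|(|a|+|b|)$ and Cauchy--Schwarz, with \eqref{Lip3} obtained by bounding the difference of the two suprema and integrating the per-index estimate against $\nu$ (the paper's only cosmetic extra step is to first use the homogeneity $\freeL(\theta,\al w)=\freeL(\al^2\theta,w)$ to normalize $\|v\|_2,\|w\|_2\in\{0,1\}$). For \eqref{Lip4} the paper simply cites the Lipschitz-in-dilation-parameter estimate \eqref{aPhi-Lipschitz} of \Cref{rmk:dilate}, i.e.\ \Cref{prop:gibbs}(\ref{gibbs.wiggle}) combined with \Cref{lem:dilate}, which is the same stability input you invoke (and which covers both directions, so your separate dilation argument for one direction is not needed).
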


\begin{proof}
From \eqref{freeLfN.scaling} and rescaling $t_1,t_2$ we may assume $\|v\|_2,\|w\|_2\in\{0,1\}$. 
Recall from Remark \ref{rmk:assu.reg} that $t\mapsto \tLL(t)=\LLa_\mu(\sgn(t)\sqrt{|t|})$ is $O(1)$-Lipschitz on $\R$. 
Recalling also the notation $\ssq(t):= \sgn(t)t^2$, 
we have for any $t_1,t_2\ge0$ and $v,w\in \ell^2(\N)$,
\begin{align}
|\freeL(t_1,v)-\freeL(t_2,w)| 
&\le \frac1N\sum_{1\le i\le j} |\LLa_\mu(2^{\ep_{ij}}t_1\sqrt{N}v_iv_j) - \LLa_\mu(2^{\ep_{ij}}t_2\sqrt{N}w_iw_j))| \notag\\
&=  \frac1N\sum_{1\le i\le j} |\tLL(2^{1+1_{i\ne j}}t_1^2 N\ssq(v_iv_j)) - \tLL(2^{1+1_{i\ne j}}t_2^2N \ssq(w_iw_j))|\notag\\
&\ls \sum_{i,j\ge1} |t_1^2\ssq(v_iv_j) - t_2^2\ssq(w_iw_j)|\\
&\le \sum_{i,j\ge1} |t_1^2-t_2^2|w_i^2w_j^2 + t_1^2|\ssq(v_iv_j) -\ssq(v_iw_j)| + t_1^2 |\ssq(v_iw_j) -\ssq(w_iw_j)|\notag\\
& =  |t_1^2-t_2^2| \|w\|_2^4 + t_1^2 \sum_{i,j=1}^N v_i^2 |\ssq(v_j) - \ssq(w_j)| + w_j^2 |\ssq(v_i)-\ssq(w_i)|\notag\\
& =   |t_1^2-t_2^2| \|w\|_2^4 + t_1^2 (\|v\|_2^2+\|w\|_2^2)  \sum_{i=1}^N |\ssq(v_j) - \ssq(w_j)| \,.	\label{Lip1-bd1}
\end{align}
Now for any $a,b\in \R$,
\begin{align}
|\ssq(a)-\ssq(b)| &= |a^2-b^2|1_{ab\ge0} + (a^2+b^2)1_{ab<0} 	\notag\\
&\le |a^2-b^2|1_{ab\ge0} + |a-b|^21_{ab<0} \notag\\
&= |a-b| \big( |a+b|1_{ab\ge0} + |a-b|1_{ab<0} \big) 	\notag\\
&\le   |a-b| (|a|+|b|).	\label{bd-ssq}
\end{align}
Combining this bound with Cauchy--Schwarz we have
\begin{align*}
 \sum_{i\ge1} |\ssq(v_j) - \ssq(w_j)| \le  \sum_{i\ge1} |v_i-w_i| (|v_i|+|w_i|) \le \|v-w\|_2 ( \|v\|_2+\|w\|_2).
\end{align*}
Together with \eqref{Lip1-bd1} this implies  \eqref{Lip1}. 

Following similar lines as above we get that for $s\in\R$,
\begin{equation}	\label{Lip.Lam}
\sum_{i\ge1} \LLa_\mu(2t_1w_is) - \LLa_\mu(2t_2v_is) \ls |t_1^2-t_2^2|\|w\|_2^2s^2+ t_1^2\|v-w\|_2(\|v\|_2+\|w\|_2)s^2.
\end{equation}
Substituting $z_j$ for $s$ and summing over $j$ yields \eqref{Lip2}. 
For \eqref{Lip3}, we bound the left hand side by
\begin{align*}
\sup_{\nu\in\cP_\beta([-R,R])}\bigg\{ \int \sum_{i\ge1} \LLa_\mu(2t_1v_is) - \LLa_\mu(2t_2w_is) d\nu(s) \bigg\}
\end{align*}
and the claim follows upon substituting the bound \eqref{Lip.Lam} and integrating in $s$. 
Finally, \eqref{Lip4} is a direct consequence of \eqref{aPhi-Lipschitz}. 
\end{proof}

Recall
\[
\free_{N,R}(\theta,w) = \theta^2(1-\|w\|_2^2)^2 + \freeL(\theta,w) + \VP_R(\theta w, 1-\|w\|_2^2) - \frac12\|w\|_2^2. 
\]

\begin{proof}[Proof of Lemma \ref{lem:freeprops}\eqref{freeprop.cont}]
We only prove \eqref{freecont.theta} and \eqref{freecont.w}, the arguments for \eqref{tfreecont.theta} and \eqref{tfreecont.w} being similar (and slightly simpler). 
For \eqref{freecont.theta}, writing $\al:=\|w\|_2^2$, we bound the left hand side by
\begin{align*}
|\theta_1^2-\theta_2^2| 
+ |\freeL(\theta_1,w) - \freeL(\theta_2,w)| + |\VP_R(\theta_1w,1-\al) - \VP_R(\theta_2w,1-\al)|\,.
\end{align*}
\eqref{freecont.theta} now follows from \eqref{Lip1} and \eqref{Lip3}. 

For \eqref{freecont.w}, writing $\al_i:=\|z_i\|_2^2$, we bound the left hand side by
\begin{align*}
&
\theta^2 | (1-q_1^2\al_1)^2 - (1-q_2^2\al_2)^2| + |q_1^2\al_1 - q_2^2\al_1|  \\
& + |\freeL(\theta q_1^2,z_1) - \freeL(\theta q_2^2 z_2) |\\
&+ |\VP_R(\theta q_1 z_1, 1- q_1^2\al_1) - \VP_R(\theta q_2z_2, 1-q_1^2\al_1)| \\
&+ |\VP_R(\theta q_2 z_2, 1- q_1^2\al_1) - \VP_R(\theta q_2z_2, 1-q_2^2\al_2)|\,.
\end{align*}
The first line is bounded by 
\[
O( (1+\theta^2) |q_1^2\al_1-q_2^2\al_2|) \ls (1+\theta^2)(  |q_1^2-q_2^2| + |\al_1-\al_2|)
\le (1+\theta^2)(  |q_1^2-q_2^2| + \|z_1-z_2\|_2)
\]
as desired. The second and third lines satisfy the same bound by \eqref{Lip1} and \eqref{Lip3}. 
From \eqref{Lip4} the fourth line is bounded by
\[
O\bigg(  \frac{\theta^2}{\sqrt{\rho}} |q_1^2\al_1-q_2^2\al_2| \bigg)
\]
which is bounded by the right hand side of \eqref{freecont.w}. 
This completes the proof of \eqref{freecont.w}. 
\end{proof}

\subsection{Proof of Lemma \ref{lem:freeprops}(\ref{freeprop.large})}

We may assume without loss of generality that $\zcush\in(0,\frac12)$.
To lighten notation we write $\alpha=\|w\|_2^2$. From \eqref{VP.scaling},
\begin{equation}	\label{free.large1}
\free_{N,R}(\theta,w) 
=\freeL(\theta,w) + \theta^2(1-\al)^2 + \VP_{R/\sqrt{1-\al}}(\theta \sqrt{1-\al}w, 1)+ \frac12\log(1-\al).
\end{equation}
Since $\LLa_\mu$ is non-negative,  $\freeL(\theta,w)$ is non-negative, while our hypothesis on $w$ implies the second term is bounded below by $\zcush^2\theta^2$. Turning to the third term, 
 bounding $\LLa_\mu\ge0$, we have
\begin{align*}
\VP_{R/\sqrt{1-\al}}(\theta \sqrt{1-\al}w, 1)
&= \sup_{\nu\in\cP_1([-\frac{R}{\sqrt{1-\alpha}}, \frac{R}{\sqrt{1-\alpha}}])}\bigg\{ \LLa_\mu(2\theta w_i \sqrt{1-\alpha} s)d\nu(s) -\DKL(\nu|\gamma) \bigg\} \\
&\ge \sup_{\nu\in\cP_1([-\frac{R}{\sqrt{1-\alpha}}, \frac{R}{\sqrt{1-\alpha}}])} \{ - \DKL(\nu|\gamma)\} .
\end{align*}
Applying the estimate in \eqref{minDKL} to the first term we obtain
\begin{align}
\VP_{R/\sqrt{1-\al}}(\theta \sqrt{1-\al}w, 1)
+ \frac12\log(1-\al)&\ge \frac12\log(1-\alpha) + O(e^{-cR^2})
\ge -C\log(1/\zcush)\,.
 \label{LB.tfree}
\end{align}
Inserting this along with our other estimates in \eqref{free.large1} gives
\[
\free_{N,R}(\theta,w) 
\gs \zcush^2  \theta^2 -C\log(1/\zcush)
\gs \zcush^2 \theta^2 
\]
if $\theta\ge C\zcush^{-1}\sqrt{\log \frac1\zcush}$, as desired. 
\qed

\section{Proof of Corollary \ref{cor:nonuniv}}
\label{sec:nonuniv}

We begin with \eqref{bd1:nonuniv}.
Since $\rate^\mu(x)\le \rate^\gamma(x)$ by Theorem \ref{thm:smallx}, we may assume 
$x>2\sqrt{2}(\Delta^{1/2}+\Delta^{-1/2})$. 
First note that for any $R\ge10$,
\begin{align*}
\wt\free_{N,R}(\theta,0,\al) 
&= \theta^2 [ (1-\al)^2 + 2\al(1-\al) + 2\psimax\al^2] + \VP_{R/\sqrt{1-\al}}(0,1) + \tfrac12\log(1-\al)\\
&= \theta^2 [ 1 + 2\Delta\al^2] + \frac12\log(1-\al) + O(e^{-cR^2})
\end{align*}
where we applied \eqref{VP.scaling} and \eqref{minDKL}. 
With $\zcush_x=c_\mu x^{-4}\le \frac1{16}$ and $\eps\in(0,\frac1{10})$, since $\wt\rate_{N,N^{-\eps}}(x,\zcush)$ is monotonically decreasing in $N$ for any fixed $\zcush\in(0,\zcush_x)$, we have
\begin{align*}
\rate^\mu(x)
&\le \wt\rate_{N,N^{-\eps}}(x,\zcush)\\
& \le \inf_{\al\in[0,\frac{15}{16}]} \wt\cJ_{N,N^{1/5}}(x,0,\al)\\
&= \inf_{\al\in[0,\frac{15}{16}]}  \sup_{\theta\ge0} \Big\{ J(x,\theta) - \theta^2 ( 1+ 2\Delta\al^2 ) \Big\}- \tfrac12\log(1-\al)+O(e^{-cN^{2/5}})\\
&\le \sup_{\theta\ge0} \Big\{ J(x,\theta) - \theta^2(1+ \Delta ) \Big\}+\tfrac12+O(e^{-cN^{2/5}})
\end{align*}
where in the final line we took $\al=\frac1{\sqrt{2}}$ (noting $\log(1/(1-2^{-1/2}))<1$). 
Taking $N\to\infty$ we get
\begin{equation}	\label{nonuniv-1}
\rate^\mu(x)\le \frac12+
\sup_{\theta\ge0} \Big\{ J(x,\theta) - \theta^2 (1+\Delta)\Big\}\,.
\end{equation}
The supremum  is attained at
\begin{equation*}
\theta_x= \frac{x+\sqrt{x^2-4(1+\Delta)}}{4(1+\Delta)} < \frac14(x+\sqrt{x^2-4})= \thetap
\end{equation*}
if $x>2(1+\Delta)^{1/2}$.
Substituting $\theta_x$ for $\theta$ in \eqref{nonuniv-1} we have
\begin{align*}
\rate^\mu(x)
&\le \frac12+
J(x,\theta_x) - \theta_x^2(1+\Delta) \\
&\le\frac12+ \rate^\gamma(x) - \Delta\theta_x^2\\
&\le \rate^\gamma(x) + \frac12- \frac{\Delta x^2}{16(1+\Delta)^2}
\end{align*}
where for the second line we recall from \eqref{igamma.var} that $\rate^\gamma(x) = \sup_{\theta\ge0}\{J(x,\theta)-\theta^2\}$. We hence obtain \eqref{bd1:nonuniv}.

Turning to \eqref{bd2:nonuniv}, let $I:=[x,x')$, $\eta:=\frac14-\delta$, and let $K:=C_0x$ for a constant $C_0\ge10$ to be taken sufficiently large depending only on $\mu$.
We split $I=I_0\cup I_1$ with $I_0=[x,\min(K, x')]$ and $I_1=(\min(K, x'), x')$ (where $I_1$ may be empty). 
For arbitrary fixed $\al<\frac1{10}$, from Proposition \ref{prop:upper-joint} we have
\begin{align*}
\frac1N\log\P\big( \|v_1^{(\eta)}\|_2^2\le \al\,,\, \lam_1\in I_0\big)
&\le -\inf_{y\in I_0, \vloc\in \sqrt{\al}\B^{n_0}} \cJ_{N, N^{2\eta}}(y,\vloc) + N^{-c\delta}.
\end{align*}
From Lemma \ref{lem:Jprops}(\ref{Jprop.T},\ref{Jprop.cont},\ref{Jprop.smallz}), for any $y\in I_0, \vloc\in \sqrt{\al}\B^{n_0}$ we can estimate 
\begin{align*}
\cJ_{N, N^{2\eta}}(y,\vloc)
&= \cJ_{N, N^{2\eta}}(y,0) + 
O_{K}(\sqrt{\al})
= \rate^\gamma(y) + O_{K}(\sqrt{\al}) + O(e^{-cN^{2\eta}}). 
\end{align*}
Thus,
\begin{align}
\frac1N\log\P\big( \|v_1^{(\eta)}\|_2^2\le \al\,,\, \lam_1\in I_0\big)
&\le- \inf_{y\in I_0}\rate^\gamma(y) + O_{K}(\sqrt{\al}) +o(1) \notag\\
&= -\rate^\gamma(x) +  O_{K}(\sqrt{\al}) +o(1) .   \label{nonuniv-2}
\end{align}
From Lemma \ref{lem:tightness} we get 
\[
\frac1N\log\P( \lam_1\in I_1) \le -cK^2 = -cC_0^2x^2 .
\]
We can take $C_0$ sufficiently large that the right hand side above is bounded by $-\rate^\gamma(x) - 10$ (note from \eqref{def:igamma} that $\rate^\gamma(x) = O(x^2)$). 
Then taking $\al=a^2\delta_0^2$ with $a=a_\mu(x)>0$ sufficiently small depending on $x$ so that the right hand side in \eqref{nonuniv-2} lies in $[-\rate^\gamma(x) -\frac{\delta_0}{10}, -\rate^\gamma(x)+\frac{\delta_0}{10}]$ for all $N$ sufficiently large, it follows that 
\begin{equation}
\frac1N\log\P\big( \|v_1^{(\eta)}\|_2^2\le \al\,,\, \lam_1\in I\big)
\le  -\rate^\gamma(x) +  \frac{\delta_0}{10} +o(1) .   \label{nonuniv-3}
\end{equation}
On the other hand, from Theorem \ref{thm:fullLDP} we have 
\[
\frac1N\log\P(\lam_1\in I)
\ge \frac1N\log\P( \lam_1\in [x,x+\delta])
=-\inf_{y\in [x,x+\delta]} \rate^\mu(y)+o(1)
=-\rate^\mu(x) + o(1)
\] 
where we used the continuity and monotonicity of $\rate^\mu$.
Combining with \eqref{bd1:nonuniv} and \eqref{nonuniv-2} gives
\begin{align*}
\frac1N\log\P(  \|v_1^{(\eta)}\|_2^2\le \al\,\big|\, \lam_1\in I\big)
&= \frac1N\log\P\big( \|v_1^{(\eta)}\|_2^2\le \al\,,\, \lam_1\in I\big) - \frac1N\log\P(\lam_1\in I)\\
&\le - \rate^\gamma(x) + \rate^\mu(x) +\frac{\delta_0}{10} +o(1)
\le -\frac{\delta_0}2
\end{align*}
for all $N$ sufficiently large, as desired. \qed

\section{Proof of Theorem \ref{thm:increasing}}
\label{sec:increasing}

\subsection{Proof of Theorem \ref{thm:increasing}(a)}
\label{sec:increasing-a}

Recalling $\hfree,\hcJ$ from \eqref{def:hfree}--\eqref{def:hcJ}, for $R\ge1$ we denote
\begin{align}
\hfree_R(\theta,\al) 
&:= \theta^2\big[ (1-\al)^2 + 2\psiinfty\al^2 \big] 
+\VP_R(\theta\al^{1/2}e_1, 1-\al) - \frac\al2		\label{def:hfreeR}\\
\hcJ_R(x,\al) 
&:= \sup_{\theta\ge0} \big\{ J(x,\theta) - \hfree_R\big(\theta,\overlap_x(\theta)^2\al\big)\big\} \label{def:hcJR}
\end{align}
where $e_{1}$ denotes the first vector of the canonical basis.
The following provides analogues of Lemma \ref{lem:Jprops}(\ref{Jprop.T},\ref{Jprop.cont}) for $\hcJ_R(x,\al)$. The proof follows similar lines and is omitted.

\begin{lemma}[Properties of $\hcJ$]
\label{lem:hJprops}
\quad
\begin{enumeratea}
\item \label{hJprop.T}  
For any  $\xcush,\zcush\in(0,\frac1{10})$ there exists $T<\infty$ depending only on $\xcush,\zcush$ such that for any $R\ge1$, $x\in[2+\xcush,\xcush^{-1}]$, $\al\in[0,1-\zcush]$, the supremum in \eqref{def:hcJR} is attained in $[\thetam+T^{-1},T]$. 

\item \label{hJprop.cont} 
For any $R\ge1$, $\hcJ_R$ and $\hcJ$ are locally Lipschitz on $(2,\infty)\times[0,1)$. 
\end{enumeratea}

\end{lemma}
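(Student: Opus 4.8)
The final statement to prove is Lemma \ref{lem:hJprops}, giving a bounded-optimizer property and local Lipschitz continuity for the reduced rate function $\hcJ_R$ (and its $R=\infty$ counterpart $\hcJ$). Let me think about how to prove this by analogy with the earlier Lemmas \ref{lem:Jprops} and \ref{lem:freeprops}.

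The key observation: $\hfree_R(\theta,\al)$ is just a special case of the general restricted free energy machinery — it's essentially $\tfree_{N,R}(\theta, 0, \al)$ (with $\psimax$ replaced by $\psiinfty$, which under the hypothesis $\psimax = \psiinfty$ are the same), or it's $\free_{N,R}(\theta, w)$ evaluated at a single-coordinate vector $w = \theta\sqrt{\al}e_1$ after appropriate reduction. So the properties I need should follow from the same kind of estimates proved in \Cref{lem:freeprops} — small-$\al$ behavior giving a uniform lower bound that pushes the optimizer away from $\thetam$, large-$\theta$ growth of $\hfree_R$ pushing the optimizer below some $T$, and Lipschitz continuity of $\hfree_R$ in both arguments giving the continuity of $\hcJ_R$.

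\begin{proof}[Proof of \Cref{lem:hJprops}]
The plan is to mimic the proofs of \Cref{lem:Jprops}(\ref{Jprop.T},\ref{Jprop.cont}) in \Cref{sec:rateprops}, using the corresponding properties of the reduced free energy $\hfree_R$, which are themselves direct specializations of the estimates in \Cref{lem:freeprops} (note that $\hfree_R(\theta,\al)$ coincides with $\tfree_R(\theta,0,\al)$ once one uses the hypothesis $\psimax=\psiinfty$ of \Cref{thm:increasing}, and with $\free_{N,R}(\theta,\theta\sqrt\al e_1)$ up to the scaling identity \eqref{VP.scaling}).

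For part (\ref{hJprop.T}), write $\hcK_{x,\al}(\theta):=J(x,\theta)-\hfree_R(\theta,\overlap_x(\theta)^2\al)$. For the upper cutoff at $\theta\le T$: from the non-negativity of $\LLa_\mu$ and $\DKL(\cdot|\gamma)$ together with the estimate \eqref{minDKL}, one has the lower bound $\hfree_R(\theta,\alpha)\gs \theta^2[(1-\alpha)^2 + 2\psiinfty\alpha^2] + \tfrac12\log(1-\alpha)+O(e^{-cR^2})$; since $\overlap_x(\theta)^2\al\le\al\le 1-\zcush$ for $\theta\le T$, the bracket is $\gs \zcush^2$, so $\hfree_R(\theta,\overlap_x(\theta)^2\al)\gs \zcush^2\theta^2 - C\log(1/\zcush)$ for all $\theta$, exactly as in \Cref{lem:freeprops}(\ref{freeprop.large}). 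Combined with $J(x,\theta)\le\theta x + O(1)$ from \eqref{def:Jsc} for $x\le\xcush^{-1}$, this gives $\hcK_{x,\al}(\theta)\le -c\zcush^2\theta^2$ for $\theta\ge C/\zcush^2$, while $\hcK_{x,\al}(\thetap)\ge \rate^\gamma(x)-O(e^{-cR^2})$ is bounded below (using $\hfree_R(\theta,0)=\theta^2+O(e^{-cR^2})$ from \eqref{freeN.at0}); hence the supremum is not attained beyond $T=O(1/\zcush^2)$. For the lower cutoff $\theta\ge\thetam+T^{-1}$: since $\LLa_\mu\ge0$ one has $\hfree_R(\theta,\overlap_x(\theta)^2\al)\ge\theta^2\overlap_x(\theta)^2\al(1-\overlap_x(\theta)^2\al)\cdot 2(1+o(1)) \ge c(\theta-\thetam)\al$ for $\theta$ near $\thetam$ (using $\theta\overlap_x(\theta)^2=\theta-\thetam$), whereas $J(x,\theta)-\theta^2 \le \rate^\gamma(x)-c(\theta-\thetam)^2$ for $\theta\ge\thetam$ from \eqref{E''UB}; since $\thetam\gs\xcush^{1/2}$ bounded away from $0$ and $\rate^\gamma(x)\gs\xcush^{3/2}$, the same two-regime split on $\al$ (small $\al$ versus $\al\gs_\mu 1$) as in the proof of \Cref{lem:Jprops}(\ref{Jprop.T}) shows $\hcK_{x,\al}$ attains a strictly larger value at some $\theta\gs_{\xcush,\zcush}\thetam + 1$ than on $[\thetam,\thetam+\tcush]$ for $\tcush$ small, so the supremum is attained in $[\thetam+T^{-1},T]$.

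For part (\ref{hJprop.cont}), I would show $\hfree_R$ is locally Lipschitz in $(\theta,\al)$, which is a one-variable specialization of \eqref{tfreecont.theta}--\eqref{tfreecont.w}: Lipschitz in $\theta$ follows from \eqref{Lip1}, \eqref{Lip3} applied to $\freeL(\theta,\sqrt\al e_1)$ and $\VP_R(\theta\sqrt\al e_1,1-\al)$, and Lipschitz in $\al$ follows by differentiating the explicit $\theta^2$ terms and invoking \eqref{Lip4} together with \eqref{aPhi-Lipschitz}. Then, by part (a), on any compact $[2+\xcush,\xcush^{-1}]\times[0,1-\zcush]$ the supremum in \eqref{def:hcJR} can be restricted to $\theta\in[\thetam+T^{-1},T]$, and $x\mapsto J(x,\theta)$ is $O(T)$-Lipschitz there (since $\partial_x J = (\theta-\thetam)_+$) while $x\mapsto\overlap_x(\theta)^2$ is $O(\xcush^{-1/2})$-Lipschitz by \Cref{lem:q2Lip}; composing these with the Lipschitz bound on $\hfree_R$ and taking a sup of uniformly Lipschitz functions yields that $\hcJ_R$ is locally Lipschitz on $(2,\infty)\times[0,1)$. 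The case $R=\infty$ is identical, using \Cref{prop:gibbs}(\ref{gibbs.lim}) to pass the Lipschitz constants to the limit (they are uniform in $R\ge2$ by the remark after \Cref{lem:Varadhan}, and the $R=\infty$ value of $\VP_\infty$ is the increasing limit of $\VP_R$). The main technical point to verify carefully is that all implicit constants in the $\hfree_R$-estimates are uniform in $R$, which is exactly what \Cref{lem:freeprops}(\ref{freeprop.cont}) and the Lipschitz bound in \eqref{aPhi-Lipschitz} deliver.
\end{proof}
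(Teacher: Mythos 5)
The paper does not actually write out this proof: it states that \Cref{lem:hJprops} is the analogue of \Cref{lem:Jprops}(\ref{Jprop.T},\ref{Jprop.cont}) and that "the proof follows similar lines and is omitted." Your proposal follows exactly that intended route — specialize the \Cref{lem:freeprops}-type estimates to $\hfree_R$ (large-$\theta$ growth, small-argument behavior, Lipschitz continuity) and rerun the proofs of \Cref{lem:Jprops}(\ref{Jprop.T},\ref{Jprop.cont}), with part (b) handled exactly as in \Cref{lem:tJprops}(\ref{tJprop.cont}) via \Cref{lem:q2Lip} and the uniform-in-$R$ bounds \eqref{Lip4}/\eqref{aPhi-Lipschitz} (note the uniformity in $R$ comes from \Cref{rmk:dilate} and the proof of \Cref{prop:gibbs}(\ref{gibbs.wiggle}), not from a remark after \Cref{lem:Varadhan}). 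So in structure your argument is the paper's.

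However, two of the inequalities you actually wrote in part (a) fail as stated and should be repaired. First, the bound $J(x,\thetap)-\hfree_R(\thetap,\overlap_x(\thetap)^2\al)\ge \rate^\gamma(x)-O(e^{-cR^2})$ is justified by the identity $\hfree_R(\theta,0)=\theta^2+O(e^{-cR^2})$, but the second argument is $\overlap_x(\thetap)^2\al$, not $0$; when $\psiinfty>\tfrac12$ and $\al$ is of order one, $\hfree_R(\thetap,\overlap_x(\thetap)^2\al)$ can exceed $\thetap^2$, so this inequality is false in general. Its role is only to bound the supremum from below, and the trivial fix is to evaluate at any $\theta\le\thetam$, where $\overlap_x(\theta)=0$ and $J(x,\theta)-\hfree_R(\theta,0)=\theta^2-\hfree_R(\theta,0)\ge0$. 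Second, the claim $J(x,\theta)-\theta^2\le\rate^\gamma(x)-c(\theta-\thetam)^2$ is false at $\theta=\thetap$, where the left-hand side equals $\rate^\gamma(x)$; what the lower-cutoff argument needs near $\thetam$ is only $J(x,\theta)-\theta^2\ls \xcush^{-2}(\theta-\thetam)^2$, which follows from $\cK_x(\thetam)=\cK_x'(\thetam)=0$ and \eqref{E''UB} as in the paper. Relatedly, your lower bound $\hfree_R(\theta,\beta)\ge 2\theta^2\beta(1-\beta)(1+o(1))$ does no work here (a lower bound on $\hfree_R$ only upper-bounds the objective, and this one omits the dominant $\theta^2(1-\beta)^2$ term); the ingredient the two-regime split actually requires is the opposite-direction small-$\beta$ \emph{upper} bound $\hfree_R(\theta,\beta)\le\theta^2-\tfrac14\beta$ for $\beta\le\al_\mu$, $\theta\le1$, $R\ge C_0$ — the analogue of \eqref{bd:smallw}/\eqref{bd:smallw.t}, whose proof carries over verbatim since the localized term $2\psiinfty\theta^2\beta^2$ is bounded exactly as $\freeL$ was by $2\psimax\theta^2\beta^2$ — together with the Lipschitz bound in $\al$ to show the objective is small on $[\thetam,\thetam+\tcush]$. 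With those two ingredients stated explicitly, the split into "small $\al$" and "$\al\gs_\mu1$" regimes goes through exactly as in \Cref{lem:Jprops}(\ref{Jprop.T}), and the rest of your argument is correct.
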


\begin{lemma}
\label{lem:FN.incr}
Assume $\mu$ is symmetric and $\psimu$ is non-decreasing on $\R^+$.
Then for any $\theta\ge0$, $w\in\B$ and $R\ge1$, 
\begin{align}
\free_{N,R}(\theta,w) 
&\le 
\free_{N,R}(\theta,\|w\|_2e_1)\le
\hfree_R(\theta,\|w\|_2^2)\,.	\label{FN.incr1}
\end{align}
If we further assume $R=R(N)$ is non-decreasing in $N$, then for any fixed $\theta\in(0,T]$ and $\al\in[0,1]$, 
$\free_{N,R}(\theta,\al^{1/2}e_1)$ is non-decreasing in $N$, and
\begin{equation}	
\free_{N,R}(\theta, \al^{1/2} e_1) = \hfree_R(\theta,\al) + o_T(1)	\label{FN.incr2}
\end{equation}
where the rate of convergence in $o_T(1)$ depends only on $T$ and $\mu$. 
\end{lemma}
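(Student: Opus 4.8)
### Plan for the proof of \Cref{lem:FN.incr}

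The plan is to establish the two chains of inequalities in \eqref{FN.incr1} separately and then derive \eqref{FN.incr2} from the finite-$R$ expression in \Cref{rmk:tfree.alt} together with the Gibbs variational principle \Cref{prop:gibbs}.

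\textbf{Step 1: The first inequality in \eqref{FN.incr1}.} Recall the decomposition \eqref{def:free-split}, $\free_{N,R}(\theta,w) = \freeD(\theta,\|w\|_2^2) + \freeL(\theta,w) + \free^{cross}_R(\theta,w)$. Since $\freeD$ depends on $w$ only through $\|w\|_2$, it suffices to treat $\freeL$ and $\free^{cross}_R$ separately, showing each is maximized (over vectors of fixed $\ell^2$-norm) by the vector $\|w\|_2 e_1$ concentrated on a single coordinate. For $\freeL(\theta,w) = \frac1N\sum_{i\le j}\LLa_\mu(2^{\ep_{ij}}\theta\sqrt N w_iw_j)$, I would write $\LLa_\mu(t) = t^2\psimu(t)$ and use that $\mu$ symmetric forces $\LLa_\mu$ and hence $\psimu$ to be even, with $\psimu$ nondecreasing in $|t|$. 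Then each summand $2^{1+1_{i\ne j}}\theta^2 N w_i^2 w_j^2 \psimu(2^{\ep_{ij}}\theta\sqrt N w_iw_j)$ is monotone in $|w_iw_j|$ for the factor $\psimu$, so a ``mass transfer'' argument (or a convexity/rearrangement argument in the variables $a_i := w_i^2$) pushing mass onto a single coordinate only increases the sum; concretely, one checks that moving an $\eps$ of squared-mass from coordinate $j$ to coordinate $1$ increases $\sum_{i,j} a_ia_j\psimu(2\theta\sqrt N\sqrt{a_ia_j})$ because $\psimu$ is monotone. For $\free^{cross}_R(\theta,w) = \VP_R(\theta w, 1-\|w\|_2^2) - \frac12\|w\|_2^2$, the same principle applies to the variational expression \eqref{def:VP}: for any fixed $\nu$, $\int\sum_i\LLa_\mu(2\theta w_is)d\nu(s) = \int\sum_i 4\theta^2 w_i^2 s^2\psimu(2\theta w_is)d\nu(s)$, and by symmetry of $\nu$ (the optimizer is symmetric when $h$ is even and $w$ is replaced by $\|w\|_2 e_1$, by \Cref{prop:gibbs}) one again gets monotonicity under concentrating mass onto one coordinate.

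\textbf{Step 2: The second inequality in \eqref{FN.incr1}.} With $w = \|w\|_2 e_1$ the localized term is $\freeL(\theta,\|w\|_2 e_1) = \frac2N\LLa_\mu(\theta\sqrt{2N}\|w\|_2^2) = 2\theta^2\|w\|_2^4 \psimu(\theta\sqrt{2N}\|w\|_2^2) + o(1)$; bounding $\psimu \le \psimax = \psiinfty$ (the last equality by the hypothesis that $\psimu$ is nondecreasing, so the sup equals the limit) gives $\freeL(\theta,\|w\|_2 e_1) \le 2\psiinfty\theta^2\|w\|_2^4$ up to an $o(1)$ error which in fact can be absorbed — but to get the clean inequality without error I would note $\LLa_\mu(t)/t^2 = \psimu(t) \le \psiinfty$ for all $t$, hence $\freeL(\theta, \al^{1/2}e_1) = \frac2N\LLa_\mu(\theta\sqrt{2N}\al) \le \frac2N \cdot 2\theta^2 N\al^2 \psiinfty = 4\theta^2\al^2\psiinfty$... wait, $\LLa_\mu(\theta\sqrt{2N}\al)\le (\theta\sqrt{2N}\al)^2\psiinfty = 2N\theta^2\al^2\psiinfty$ so $\freeL \le \frac2N\cdot 2N\theta^2\al^2\psiinfty = 4\theta^2\al^2\psiinfty$; combined with $\freeD(\theta,\al) = \theta^2(1-\al)^2$ and the cross term this gives exactly the $2\psiinfty\al^2$ coefficient in \eqref{def:hfreeR} after accounting for the $2\theta^2\al(1-\al)$ that the cross term contributes via Taylor expansion of the delocalized mass — so I need to be careful: actually $\hfree_R(\theta,\al)$ has coefficient $2\psiinfty\al^2$, and $\freeL(\theta,\al^{1/2}e_1)$ with the $\LLa_\mu(t)\le\psiinfty t^2$ bound gives $\le 4\theta^2\psiinfty\al^2$ from the single summand $i=j=1$ counted once (the $\frac1N\sum_{i\le j}$ with $i=j$ gives one term $\LLa_\mu(\theta\sqrt{2N}\al)$). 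So $\freeL(\theta,\al^{1/2}e_1) = \frac1N\LLa_\mu(2^{1/2}\theta\sqrt N\al) \le \frac1N(2\theta^2N\al^2)\psiinfty = 2\theta^2\al^2\psiinfty$, matching. The cross term $\VP_R(\theta\al^{1/2}e_1,1-\al) - \frac\al2$ is literally the expression in \eqref{def:hfreeR}. Hence \eqref{FN.incr1} follows, with the second inequality in fact being an equality up to the single-coordinate localized term, which is handled by the pointwise bound $\psimu\le\psiinfty$.

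\textbf{Step 3: Monotonicity in $N$ and \eqref{FN.incr2}.} For monotonicity, use \Cref{rmk:tfree.alt}: $\free_{N,R}(\theta,\al^{1/2}e_1)$ can be written via $g_{\theta\al^{1/2}e_1,R}$ which involves a Gaussian integral over $[-R,R]$ of $\exp(-\zeta s^2 + \LLa_\mu(2\theta\al^{1/2}s))$; the only $N$-dependence of $\free_{N,R}(\theta,\al^{1/2}e_1)$ (beyond the explicit $R=R(N)$) is through $\freeL(\theta,\al^{1/2}e_1) = \frac1N\LLa_\mu(\theta\sqrt{2N}\al)$, and since $\LLa_\mu(t)/t^2 = \psimu(t)$ is nondecreasing in $|t|$, $\frac1N\LLa_\mu(\theta\sqrt{2N}\al) = 2\theta^2\al^2\psimu(\theta\sqrt{2N}\al)$ is nondecreasing in $N$; combined with monotonicity of $\VP_R$ in $R$ (already noted: $\VP_R$ increasing in $R$) and $R(N)$ nondecreasing, the whole expression is nondecreasing in $N$. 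For the limit \eqref{FN.incr2}: as $N\to\infty$, $\freeL(\theta,\al^{1/2}e_1) = 2\theta^2\al^2\psimu(\theta\sqrt{2N}\al) \to 2\theta^2\al^2\psiinfty$ (using $\psimu(t)\to\psiinfty$ as $|t|\to\infty$, which holds since $\psimu$ is monotone and hence has a limit, equal to $\psimax=\psiinfty$); for $\theta$ bounded away from $0$ and $\al>0$ fixed the argument $\theta\sqrt{2N}\al\to\infty$, and the convergence is uniform for $\theta\in[\theta_0,T]$; the remaining terms $\freeD$ and $\free^{cross}_R$ are exactly those in $\hfree_R$. The case $\al=0$ or $\theta=0$ is trivial since both sides then equal $\theta^2$ (up to the $O(e^{-cR^2})$ from \eqref{freeN.at0}, which is $o_T(1)$).

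\textbf{Main obstacle.} The delicate point is Step 1 — the rearrangement argument showing that among all $w$ with $\|w\|_2$ fixed, the single-coordinate vector $\|w\|_2 e_1$ maximizes both $\freeL$ and $\free^{cross}_R$. The subtlety is that the function $(a,b)\mapsto ab\,\psimu(c\sqrt{ab})$ (with $a=w_i^2$, $b=w_j^2$) is not obviously jointly ``supermodular'' in the right sense; the clean way is to fix all coordinates but two, say $w_i,w_j$, with $w_i^2+w_j^2$ fixed, and show the relevant partial sum is maximized at the extreme configuration $\{w_i^2+w_j^2, 0\}$ — this reduces to a one-variable monotonicity/convexity statement that uses crucially that $t\mapsto\psimu(t)$ is monotone in $|t|$ (equivalently $t^2\psimu(t) = \LLa_\mu(t)$ is convex, which is automatic, but one needs the stronger statement that $\LLa_\mu(\sqrt t)$ restricted appropriately has the right monotone derivative). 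I expect this to require a short but careful computation with the second derivative of $t\mapsto \LLa_\mu(c\sqrt t)$, using $\LLa_\mu'(s) = s\LLa_\mu''(0) + \dots$ and the nondecreasing-$\psimu$ hypothesis; for $\free^{cross}_R$ one additionally invokes \Cref{prop:gibbs}(\ref{gibbs.density}) to know the optimizing $\nu$ is symmetric so that odd moments vanish and the monotonicity argument goes through cleanly.
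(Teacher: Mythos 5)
Your Steps 2 and 3 are essentially the paper's argument (single‑coordinate localized term bounded by $2\psiinfty\theta^2\al^2$ via $\psimu\le\psiinfty$, the cross term matching \eqref{def:hfreeR} verbatim, monotonicity in $N$ from monotonicity of $\psimu$ and of $\VP_R$ in $R$, and the limit from $\psimu(t)\to\psiinfty$). The genuine problem is Step 1, which you yourself flag as the main obstacle and leave unresolved. The infinitesimal mass‑transfer step you propose ("moving an $\eps$ of squared mass onto coordinate $1$ increases the sum") is not justified by the hypotheses: it would require $t\mapsto\LLa_\mu(c\sqrt{t})$ to be convex, and monotonicity of $\psimu$ only gives $t\LLa_\mu'(t)\ge 2\LLa_\mu(t)$, not $t\LLa_\mu''(t)\ge\LLa_\mu'(t)$; one can cook up admissible $\LLa_\mu$ for which a small transfer toward the larger coordinate locally decreases the functional. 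So the second‑derivative computation you anticipate is a dead end. What does work — and is the paper's one‑line argument — is a direct pointwise comparison, no rearrangement and no convexity needed: writing $\LLa_\mu(t)=t^2\psimu(t)$, for every $s$ one has
\begin{equation*}
\sum_i \LLa_\mu(2\theta w_i s)=4\theta^2 s^2\sum_i w_i^2\,\psimu(2\theta w_i s)\le 4\theta^2 s^2\|w\|_2^2\,\psimu(2\theta \|w\|_2 s)=\LLa_\mu(2\theta\|w\|_2 s),
\end{equation*}
since $\psimu$ is even, nondecreasing on $\R^+$, and $|w_i|\le\|w\|_2$; integrating against any $\nu$ and taking the supremum gives $\VP_R(\theta w,\beta)\le\VP_R(\theta\|w\|_2e_1,\beta)$ — in particular no appeal to symmetry of the optimizing measure is needed (and if you argue via the optimizer of the single‑coordinate problem rather than of the original one, the inequality points the wrong way). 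Similarly, $\freeL(\theta,w)=2\theta^2\sum_{i,j}w_i^2w_j^2\psimu(2^{\ep_{ij}}\theta\sqrt N w_iw_j)\le 2\theta^2\|w\|_2^4\psimu(\theta\sqrt{2N}\|w\|_2^2)=\freeL(\theta,\|w\|_2e_1)$, using $|w_iw_j|\le\tfrac12(w_i^2+w_j^2)\le\tfrac12\|w\|_2^2$ off the diagonal and $w_i^2\le\|w\|_2^2$ on it, so that every argument of $\psimu$ is dominated by $\theta\sqrt{2N}\|w\|_2^2$. (Your pairwise "endpoint" comparison — full transfer of one coordinate onto another — is also valid, but only because of this same pointwise bound, not because of any supermodularity.)

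One smaller point on Step 3: the lemma asserts a rate in \eqref{FN.incr2} depending only on $T$ and $\mu$, whereas your argument ("uniform for $\theta\in[\theta_0,T]$, $\al>0$ fixed") yields a rate depending on $\theta\al$. The paper's fix is a dichotomy: if $\theta\al\le MN^{-1/4}$ then both $2\theta^2\al^2\psimu(\theta\sqrt{2N}\al)$ and $2\theta^2\al^2\psiinfty$ are $O(M^2N^{-1/2})$, and otherwise $\theta\sqrt{2N}\al\ge\sqrt 2 MN^{1/4}\to\infty$ so $\psimu(\theta\sqrt{2N}\al)=\psiinfty+o(1)$ uniformly, with the prefactor bounded by $2T^2$. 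With these two repairs your outline coincides with the paper's proof.
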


\begin{proof}
We follow similar arguments as in \cite[Prop.\ 8]{AGH}. 
Since $\psimu$ is symmetric and non-decreasing on $\R^+$, for any probability measure $\nu$ and $v\in \ell^2(\N)$,
\begin{align}
\sum_{i}\int \LLa_\mu(v_{i}
 s)d\nu(s)
= \sum_{i}v_{i}^{2}\int s^2 \psimu(v_{i}s) d\nu(s)
&\le \sum_{i}v_{i}^{2} \int  s^2\psimu(\|v\|_2 s) d\nu(s)\nonumber\\
&=\int \LLa_\mu( \|v\|_2 s)d\nu(s)\,.	\label{gree}
\end{align}
Hence,
\begin{equation}
\VP_R(\theta w, \beta) \le \VP_R(\theta\|w\|_2e_1,\beta)
\end{equation}
for any $\theta\ge0,R\ge1$ and $\beta\in[0,1]$. 
Moreover, because $\psimu\le\psiinfty$, for any $\theta\ge0$ and $w\in\ell^2(\N)$,
\begin{align}
\freeL(\theta,w)&=
\frac1N\sum_{i\le j}\LLa_\mu(2^{{\ep_{ij}}}\theta\sqrt{N}w_{i} w_{j})	\notag\\
&=2\theta^2 \sum_{i, j} w_{i}^{2} w_{j}^{2}\psi_{\mu}(2^{\ep_{ij}} \theta\sqrt{N} w_{i} w_{j})\notag\\
&\le 2\theta^2 \|w\|_2^4 \psimu(\theta\sqrt{2N} \|w\|_2^2) = \frac1N\LLa_\mu( \theta\sqrt{2N}\|w\|_2^2) = \freeL(\theta,\|w\|_2e_1)\label{incr.loc0}\\
&\le 2 \theta^2 \psiinfty \|w\|_2^{4}	\label{incr.loc}
\end{align}
where in the first bound we used that $|w_iw_j|\le \frac12(w_i^2+w_j^2)\le \frac12\|w\|_2^2$ when $i\ne j$, and otherwise $w_i^2\le \|w\|_2^2$. 
We hence obtain \eqref{FN.incr1}. For \eqref{FN.incr2}, 
we need to show the difference between \eqref{incr.loc} and \eqref{incr.loc0} is $o_T(1)$. 
With $w=\al^{1/2}e_1$, and $M$ a large constant,
if $\theta\al\le M N^{-1/4}$ then 
\[
 \frac1N \LLa_\mu(\theta\sqrt{2N}\al) = 2\theta^2\al^2\psi_\mu( \theta\sqrt{2N}\al) \le 2N^{-1/2} \psiinfty = o(1).
\]
Otherwise, since $\psi_\mu(t) = \psiinfty + o_{|t|\to\infty}(1)$,   for $M$ large enough,
\[
2\theta^2\al^2\psi_\mu( \theta\sqrt{2N}\al) = 2\theta^2\al^2(\psiinfty +o(1))
\]
so in either case we have
\begin{equation}
\frac1N\sum_{i\le j}\LLa_\mu(2^{\ep_{ij}} \theta\sqrt{N}w_{i} w_{j})
= 2\theta^2\al\psiinfty + o_T(1)
\end{equation}
for $\theta\in[0,T]$ and $w=\al^{1/2}e_1\in\B$.
Comparing with \eqref{incr.loc}, we see from this and \eqref{gree} (with $v=2\theta w$), that \eqref{FN.incr2} holds as claimed.
\end{proof}

\begin{proof}[Proof of Theorem \ref{thm:increasing}(a)]
The claim that the infimum is attained on a closed nonempty set $A_x^*$ is immediate from the continuity of $\hcJ$ given by Lemma \ref{lem:hJprops}(\ref{hJprop.cont}). 

Denote the expression on the right hand side of  \eqref{def:rate.increasing} by $\wh\rate^\mu(x)$.
As in the proof of Theorem \ref{thm:fullLDP}, it suffices to show that $\wh\rate^\mu$ is lower-semicontinuous and that the weak LDP holds, i.e. for every fixed $x\in \R$,
\begin{equation}
\label{goal:weakLDP.i}
\lim_{\delta\downarrow0} \limsup_{N\to\infty} \frac1N\log\P(|\lam_1-x|\le \delta)
= \lim_{\delta\downarrow0} \liminf_{N\to\infty} \frac1N\log\P(|\lam_1-x|\le \delta) = -\wh\rate^\mu(x)\,.
\end{equation}
The lower-semicontinuity will follow from Theorem \ref{thm:fullLDP} once we show \eqref{goal:weakLDP.i} and hence that $\rate^\mu=\wh\rate^\mu$ (it can also be verified directly). 
For \eqref{goal:weakLDP.i}, the case $x<2$ follows from Lemma \ref{lem:tightness}. For the case $x=2$, from \eqref{FuKo} we only need to verify that $\wh\rate^\mu(2)=0$, and indeed for all $x\ge2$ we have 
\[
0\le \inf_{\al\in[0,1]}\hcJ(x,\al) \le\wh\rate^\mu(x)\le \hcJ(x,0) = \sup_{\theta\ge0} \{ J(x,\theta) - \theta^2\} = \rate^\gamma(x)
\]
and $\rate^\gamma(2)=0$. It only remains to show \eqref{goal:weakLDP.i} for fixed $x>2$. 

Fix $x>2$.
From Theorem \ref{thm:rateN} it suffices to show
\begin{equation}
\label{incr.goal1}
\rate^\mu_N(x) \to \wh\rate^\mu(x)\,.
\end{equation}
Note that a non-asymptotic one-sided bound is immediate from \eqref{FN.incr1}:
\begin{align*}
\rate^\mu_N(x) 
&=\inf_{\vloc\in(1-\zcush_x)\B^n}\cJ_N(x,\vloc) \\
&=\inf_{\vloc\in(1-\zcush_x)^{1/2}\B^n}\sup_{\theta\ge0} \big\{ J(x,\theta) - \free_{N,N^{1/5}}(\theta,\overlap_x(\theta)\vloc)\big\}\\
&\ge \inf_{\vloc\in(1-\zcush_x)^{1/2}\B^n}\sup_{\theta\ge0} \big\{ J(x,\theta) - \hfree_{N^{1/5}}(\theta,\overlap_x(\theta)^2\|\vloc\|_2^2)\big\}\\
&= \inf_{0\le \al \le 1-\zcush_x}\sup_{\theta\ge0} \big\{ J(x,\theta) - \hfree_{N^{1/5}}(\theta,\overlap_x(\theta)^2\al)\big\}\\
&\ge \inf_{0\le \al \le 1-\zcush_x}\sup_{\theta\ge0} \big\{ J(x,\theta) - \hfree(\theta,\overlap_x(\theta)^2\al)\big\}\\
&=\wh\rate(x)
\end{align*}
(in the second line we used the third point in Remark \ref{rmk:dropUSG} to replace $1-\zcush_x$ with $(1-\zcush_x)^{1/2}$). 
For an asymptotically matching upper bound, fix an arbitrary $\al\in[0,1-\zcush_x]$. 
From Lemma \ref{lem:Jprops}(\ref{Jprop.T}) and the fact that $\zcush_x=c_\mu/x^4$ depends only on $x$ and $\mu$, there exists $T\ls_x1$ such that 
\begin{align*}
\rate^\mu_N(x) &\le \cJ_N(x,\al^{1/2}e_1)
=\sup_{\theta\in[\thetam+T^{-1},T]}\big\{ J(x,\theta) - \free_{N,N^{1/5}}(\theta,\overlap_x(\theta)\al^{1/2}e_1)\big\}\,.
\end{align*}
It only remains to show
\begin{equation}
\label{incr.goal2}
\sup_{\al\in[0,  1-\zcush_x]} \inf_{\theta\in[\thetam+T^{-1},T]} \phi_N(\al,\theta)
\to \sup_{\al\in[0,1-\zcush_x]} \inf_{\theta\in[\thetam+T^{-1},T]}\phi(\al,\theta)
\end{equation}
as $N\to\infty$, where
\[
\phi_N(\al,\theta):= 
\free_{N,N^{1/5}}(\theta,\overlap_x(\theta)\al^{1/2}e_1)-J(x,\theta)\,,
\qquad
\phi(\al,\theta):= \hfree(\theta,\overlap_x(\theta)^2\al) -J(x,\theta)\,.
\]
First, we claim $\phi_N$ increases pointwise to $\phi$ on $[0,1)\times\R^+$. 
Indeed, from Lemma \ref{lem:FN.incr} we have that for fixed $\al,\theta$, the sequence $\phi_N(\al,\theta)$ is monotone in $N$, and by \eqref{FN.incr2} we only need to show
\begin{equation}	\label{incr.goal3}
\VP_R(\theta(1-\beta)^{1/2}e_1,\beta)\uparrow \VP_\infty(\theta(1-\beta)^{1/2}e_1,\beta)
\end{equation}
as $R\to\infty$ for fixed $\theta\ge0$ and $\beta\in(0,1]$. 
But \eqref{incr.goal3} follows directly from Proposition \ref{prop:gibbs}(\ref{gibbs.lim}). 

Next, we claim that $\phi_N$ is continuous in $\theta$ for each fixed $\al\in[0,1)$ (in fact it is jointly continuous on $[0,1)\times \R^+$).
Indeed, this is a consequence of Lemma \ref{lem:freeprops}(\ref{freeprop.cont}) and the continuity of $\theta\mapsto\overlap_x(\theta)$. 
\eqref{incr.goal3} now follows from Lemma \ref{lem:maxmin} below. 
\end{proof}

In the proof above we used the following elementary fact.

\begin{lemma}
\label{lem:maxmin}
If $g_n:X\to\R$ is a monotone non-decreasing sequence of functions on a set $X$ converging pointwise to a function $g$, then
\begin{equation}	\label{maxmin1}
\sup_{x\in X} g_n(x) \uparrow \sup_{x\in X}g(x).
\end{equation}
If we further assume $X$ is a compact topological space and $g_n$ is continuous for each $n$, then
\begin{equation}
\label{maxmin2}
\inf_{x\in X} g_n(x) \uparrow \inf_{x\in X} g(x).
\end{equation}
As a consequence, for an arbitrary set $X$ and a compact topological space $Y$, if $f_n:X\times Y\to \R$ is a monotone non-decreasing sequence converging pointwise to some $f:X\times Y\to\R$, and $f_n(x,\cdot)$ is continuous for each fixed $n$ and $x$, then
\begin{equation}\label{maxmin3}
\sup_{x\in X}\inf_{y\in Y} f_n(x,y) \uparrow \sup_{x\in X}\inf_{y\in Y} f(x,y).
\end{equation}
\end{lemma}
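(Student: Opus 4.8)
\textbf{Proof plan for \Cref{lem:maxmin}.} The strategy is to establish the three assertions in the order stated, deriving \eqref{maxmin3} from \eqref{maxmin1} and \eqref{maxmin2}, and isolating the only genuinely topological input in \eqref{maxmin2}.

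For \eqref{maxmin1} the plan is purely order-theoretic. Since $g_n\le g_{n+1}\le g$ pointwise, the numbers $s_n:=\sup_X g_n$ form a non-decreasing sequence bounded above by $s:=\sup_X g$, so $s_n\uparrow L$ for some $L\le s$. For the reverse inequality I would fix $\eps>0$, choose $x_0\in X$ with $g(x_0)\ge s-\eps$ (and, in the case $s=+\infty$, with $g(x_0)>M$ for an arbitrary $M$), and use pointwise convergence $g_n(x_0)\to g(x_0)$ to conclude $s_n\ge g_n(x_0)\ge s-2\eps$ (resp.\ $s_n>M$) for all large $n$; letting $\eps\downarrow0$ (resp.\ $M\to\infty$) gives $L\ge s$, hence $L=s$.

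For \eqref{maxmin2} the monotonicity part is identical: $m_n:=\inf_X g_n$ is non-decreasing and bounded above by $m:=\inf_X g$, so $m_n\uparrow L\le m$. The point where compactness and continuity enter is the bound $L\ge m$, which I would prove by contradiction in the style of Dini's theorem. If $L<m$, pick $c\in(L,m)$. For each $x\in X$ we have $g_n(x)\uparrow g(x)\ge m>c$, so there is $N(x)$ with $g_{N(x)}(x)>c$; continuity of $g_{N(x)}$ gives an open neighbourhood $U_x\ni x$ on which $g_{N(x)}>c$, and monotonicity upgrades this to $g_n>c$ on $U_x$ for all $n\ge N(x)$. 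Compactness of $X$ extracts a finite subcover $U_{x_1},\dots,U_{x_k}$, and with $N:=\max_i N(x_i)$ one gets $g_n>c$ on all of $X$, i.e.\ $m_n\ge c>L$, for every $n\ge N$ — contradicting $m_n\to L$. Hence $L=m$.

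For \eqref{maxmin3} I would set $h_n(x):=\inf_{y\in Y}f_n(x,y)$ and $h(x):=\inf_{y\in Y}f(x,y)$. For each fixed $x$, applying \eqref{maxmin2} to the continuous functions $g_n=f_n(x,\cdot)$ on the compact space $Y$ yields $h_n(x)\uparrow h(x)$; moreover $h_n$ is non-decreasing in $n$ (since $f_n\le f_{n+1}$) and $h_n\le h$. Then \eqref{maxmin1} applied to the sequence $(h_n)$ on $X$ gives $\sup_X h_n\uparrow\sup_X h$, which is exactly \eqref{maxmin3}. The only non-routine step is the finite-subcover argument in \eqref{maxmin2}; the rest is bookkeeping with monotone sequences, and no real obstacle is expected.
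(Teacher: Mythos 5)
Your proposal is correct, and its overall architecture coincides with the paper's: \eqref{maxmin1} is dispatched as an elementary monotone-sequence fact, \eqref{maxmin3} is obtained exactly as you do, by applying \eqref{maxmin2} to $f_n(x,\cdot)$ on $Y$ for each fixed $x$ and then feeding the resulting functions $h_n$ into \eqref{maxmin1}. The one place where you genuinely diverge is the proof of \eqref{maxmin2}. The paper argues by contradiction via sequential compactness: it picks minimizers $x_n$ with $g_n(x_n)=\inf_X g_n$ (which exist since each $g_n$ is continuous on the compact $X$), extracts a convergent subsequence $x_n\to x_*$, uses $g_k(x_*)>b+\eps/2$ for some fixed $k$ together with continuity of $g_k$ on a neighborhood of $x_*$, and then monotonicity to reach a contradiction. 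You instead run the classical Dini-type covering argument: for $c$ strictly between $\lim_n\inf_X g_n$ and $\inf_X g$, cover $X$ by open sets on which some $g_{N(x)}>c$, take a finite subcover, and conclude $\inf_X g_n\ge c$ for all large $n$. Your version is slightly more robust: it uses only compactness (finite subcovers), whereas the paper's argument tacitly invokes sequential compactness of $X$, which does not follow from compactness in an arbitrary topological space — harmless in the paper's application, where the relevant space is a compact interval, but your proof covers the lemma exactly as stated. Both arguments are short and standard; there is no gap in your plan.
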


\begin{proof}
To deduce \eqref{maxmin3} from the first two claims, from \eqref{maxmin1} it suffices to show that for fixed $x\in X$, $\inf_{y\in Y} f_n(x,y)\uparrow\inf_{y\in Y} f(x,y)$. But this follows from \eqref{maxmin2} with $Y$ in place of $X$ and $f_n(x,\cdot),f(x,\cdot)$ in place of $g_n,g$.

Turning to establish the first two claims,
\eqref{maxmin1} is obvious. For \eqref{maxmin2}, the sequence on the left hand side is clearly monotone and bounded by the right hand side for every $n$.
Letting $a:=\inf_{x\in X}g(x)$, assume toward a contradiction that $a_n:=\inf_{x\in X} g_n(x) \to b\le a-\eps$ for some $\eps>0$. 
Since $X$ is compact there is a sequence $(x_n)$ with $g_n(x_n)=a_n$ for all $n$, and (passing to a subsequence) with $x_n$ converging to some $x_*$. Since $g_n\uparrow g$, we can pass to a further subsequence to assume $g_1(x_*)\ge b+\frac\eps2$. Since $g_1$ is continuous, there exists an open neighborhood $U$ of $x_*$ such that $g_1\ge b+\frac\eps4$ on $U$. 
But since $x_n\to x_*$ there exists $m$ such that $x_m\in U$, 
and hence $b\ge a_m =g_m(x_m) \ge g_1(x_m) \ge b+ \frac\eps4$, a contradiction. 
\end{proof}

\subsection{Proof of Theorem \ref{thm:increasing}(b)}
\label{sec:increasing-b}

The claim is a consequence of the following:

\begin{prop}
\label{prop:increasing}
Let $\xcush,\eta,\eps\in(0,\frac1{10})$. For any interval $I\subset[2+\xcush,\xcush^{-1}]$ of length at least $2N^{-1/20}$,
\begin{equation}	\label{incr1}
\P\Big( \|v_1^{(\eta)}\|_\infty \ge \|v_1^{(\eta)}\|_2-\eps  \,\Big|\, \lam_1\in I\Big)
\ge1- \exp ( - c_0\eps^{3}N + N^{1-c\eta})
\end{equation}
for all $N$ sufficiently large depending on $\xcush,\eta$ and $\mu$, where $c_0>0$ depends only on $\xcush$ and $\mu$.

Furthermore, for any $x>2$ and $\eta,\eps\in(0,\frac1{10})$ there exist $\delta_0,\delta_1>0$ depending only on $x,\eps$ such that for any $\delta\in(0,\delta_0)$, 
\begin{equation}	\label{incr2}
\P\Big( \sqrt{\al_x^*} - \eps\le  \|v_1^{(\eta)}\|_2 \le \|v_1^{(\eta)}\|_\infty+\eps \,\Big| \, |\lam_1-x|\le \delta\Big) 
\ge 1- e^{-\delta_1N}
\end{equation}
for all $N$ sufficiently large depending on $x,\eta,\eps,\delta$ and $\mu$.
\end{prop}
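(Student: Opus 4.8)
\textbf{Proof proposal for Proposition \ref{prop:increasing} (hence Theorem \ref{thm:increasing}(b)).}

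The plan is to combine the joint eigenvalue–eigenvector upper bound of Proposition \ref{prop:upper-joint} with the structural information about optimizers of the rate function established in part (a), using the strict monotonicity of $\psimu$ to force the localized vector $\vloc$ in \eqref{def:rateN0} to concentrate on a single coordinate. The first bound \eqref{incr1} says that conditionally on $\lam_1\in I$, the mass of $v_1^{(\eta)}$ is essentially carried by its largest entry; this is where strict monotonicity of $\psimu$ is essential. To prove it, I would apply Proposition \ref{prop:upper-joint} with $A$ equal to the set $\{v\in\B^N:\|v\|_\infty\le\|v\|_2-\eps\}$ (intersected with $(1-\zcush)\B^N$, after first using Proposition \ref{prop:no-comp} to exclude the nearly fully localized part as in the proof of \Cref{thm:smallx}). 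This gives
\[
\frac1N\log\P\big(\lam_1\in I,\ \|v_1^{(\eta)}\|_\infty\le \|v_1^{(\eta)}\|_2-\eps\big)\le -\inf_{y\in I,\ \vloc\in A\cap\B^{n_0}}\cJ_{N,2N^\eta}(y,\vloc)+N^{-c\eta}.
\]
The key quantitative input is a strict improvement: there is $c_0=c_0(\xcush,\mu)>0$ such that for any $\vloc$ with $\|\vloc\|_\infty\le\|\vloc\|_2-\eps$ one has $\cJ_{N,2N^\eta}(y,\vloc)\ge \cJ_{N,2N^\eta}(y,\|\vloc\|_2 e_1)+c_0\eps^3$, equivalently $\free_{N,R}(\theta,\overlap_y(\theta)\vloc)\le \free_{N,R}(\theta,\overlap_y(\theta)\|\vloc\|_2 e_1)-c_0\eps^3$ uniformly in $\theta$ in the compact range from \Cref{lem:Jprops}(\ref{Jprop.T}). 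This refines the inequality \eqref{gree}/\eqref{incr.loc0} in the proof of \Cref{lem:FN.incr}: when $\psimu$ is \emph{strictly} increasing on $\R^+$ and the $\ell^\infty$ mass of $v$ is bounded away from $\|v\|_2$, the inequalities $\sum_i v_i^2 s^2\psimu(v_is)\le \|v\|_2^2 s^2\psimu(\|v\|_2 s)$ and $\sum_{i,j}v_i^2v_j^2\psimu(2^{\ep_{ij}}\theta\sqrt Nv_iv_j)\le \|v\|_2^4\psimu(\theta\sqrt{2N}\|v\|_2^2)$ both hold with a gap of order $\eps^3$; this is a routine but slightly delicate convexity/Chebyshev-type computation (one needs the second moment of $s$ under the near-optimal Gibbs measure to be bounded below, which follows from \Cref{prop:gibbs}(\ref{gibbs.density}) and the bounds on $\zeta$ derived in the proof of \Cref{lem:freeprops}(\ref{freeprop.small})). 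Combining the strict improvement with \Cref{lem:Jprops}(\ref{Jprop.smallz}), \Cref{lem:lower1} and the monotonicity \Cref{lem:monotone} (to lower bound $\P(\lam_1\in I)$ by $\exp(-N\rate^\gamma+o(N))\ge\exp(-N\cJ_{N,R}(y,\|\vloc\|_2e_1)+o(N))$), dividing and exponentiating gives \eqref{incr1}. The union-bound/net argument over $\Sigma\subset A'$ and the mesh over $I$ is carried out exactly as in the proof of Proposition \ref{prop:upper-joint}, producing the extra $N^{1-c\eta}$ in the exponent.

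For \eqref{incr2} I would first establish the lower bound $\|v_1^{(\eta)}\|_2\ge\sqrt{\al_x^*}-\eps$ with high probability conditional on $|\lam_1-x|\le\delta$. Here $\al_x^*=\inf A_x^*$ is the smallest value of $\al$ achieving the infimum in \eqref{def:rate.increasing}, and by the continuity of $\hcJ$ in $\al$ (\Cref{lem:hJprops}(\ref{hJprop.cont})) and of $x\mapsto\rate^\mu(x)$, for $\delta$ small there is $\delta_2=\delta_2(x,\eps)>0$ with $\inf\{\hcJ(y,\al): |y-x|\le\delta,\ \al\le \al_x^*-\eps^2\}\ge \rate^\mu(x)+\delta_2$ (this uses that any optimizer $\al$ of $\hcJ(y,\cdot)$ with $|y-x|$ small lies within $o_{\delta\to0}(1)$ of $A_x^*$, so $\al\le\al_x^*-\eps^2$ is strictly suboptimal). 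Translating this back to $\cJ_{N,R}$ via the pointwise convergence $\rate_N^\mu\to\wh\rate^\mu$ and the identity $\cJ_N(x,\al^{1/2}e_1)=\hcJ_{N^{1/5}}(x,\al)+o(1)$ from \Cref{lem:FN.incr}, together with the fact (from the first part) that the localized vector is essentially a single spike of norm $\|v_1^{(\eta)}\|_2$, Proposition \ref{prop:upper-joint} applied to $A=\{v:\|v\|_2^2\le\al_x^*-\eps^2\}$ gives $\frac1N\log\P(\lam_1\approx x,\ \|v_1^{(\eta)}\|_2^2\le\al_x^*-\eps^2)\le -\rate^\mu(x)-\delta_2+o(1)$, whereas $\frac1N\log\P(|\lam_1-x|\le\delta)\ge -\rate^\mu(x)-o(1)$ by \Cref{lem:lower1}; dividing yields exponential decay of the conditional probability. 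Finally, the upper bound $\|v_1^{(\eta)}\|_2\le\|v_1^{(\eta)}\|_\infty+\eps$ is exactly \eqref{incr1} with the roles reorganized (using $\|v\|_2\le \|v\|_\infty+\eps$ iff $\|v\|_\infty\ge\|v\|_2-\eps$ when $\|v\|_2\le1$, up to adjusting $\eps$), so the two statements combine to give \eqref{incr2}; the geometric conclusion ``$v_1$ within distance $\eps$ of a one-spike vector'' then follows because a unit vector whose $\eta$-truncation has a dominant spike and whose remaining entries are all $\le N^{-1/2+\eta}$ is $O(\eps)$-close in $\ell^2$ to $\sqrt{\al_x^*}\,e_j+(\text{delocalized remainder})$.

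The main obstacle I anticipate is the \emph{strict} version of the single-coordinate domination inequality with an explicit quantitative gap of the right order in $\eps$: the non-strict inequalities in \Cref{lem:FN.incr} are easy, but extracting a gap $\gs_{\xcush,\mu}\eps^3$ requires quantifying the strict monotonicity of $\psimu$ on the relevant bounded range of arguments (which depends on the compactness of the optimizing $\theta$ and on $\|\vloc\|_\infty$ being bounded away from $\|\vloc\|_2$ by $\eps$), and controlling how much the constrained Gibbs variational problem $\VP_R(\theta\vloc,\beta)$ decreases when $\vloc$ is spread out versus spiked. Making this uniform over all admissible $\vloc$ (of unbounded support, in the compact case) and over $\theta$ in the compact window is the technically delicate part; everything else is an assembly of Propositions \ref{prop:upper-joint}, \ref{prop:no-comp}, \Cref{lem:lower1}, \Cref{lem:monotone} and the continuity lemmas in the now-familiar union-bound framework. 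A secondary, milder obstacle is bookkeeping the passage between the $N$-dependent $\cJ_N$ and its limit $\hcJ$ uniformly enough to locate $\al_x^*$ and get the strict-suboptimality gap $\delta_2$; this is handled by \Cref{lem:maxmin} and \Cref{lem:hJprops} as in the proof of part (a).
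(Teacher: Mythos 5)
Your overall strategy is the one the paper follows: the crux is exactly the quantitative single-spike domination you describe (the paper's \Cref{lem:JNRe1}, giving $\cJ_{N,R}(x,\vloc)\ge\cJ_{N,R}(x,\|\vloc\|_2e_1)+c_0\|\vloc\|_2^2(\|\vloc\|_2-\|\vloc\|_\infty)$, hence a gap $\gs c_0\eps^3$ on the set $\|\vloc\|_\infty\le\|\vloc\|_2-\eps$), proved by essentially the mechanism you sketch — a lower bound on the optimizing Gibbs measure from \Cref{prop:gibbs}(\ref{gibbs.density}) combined with strict monotonicity of $\psimu$ on a compact window of arguments — and the rest is the same assembly of \Cref{prop:no-comp}, \Cref{prop:upper-joint}, \Cref{lem:lower1} and the continuity lemmas, with \eqref{incr2} obtained from \eqref{incr1} together with strict suboptimality of $\al\le\al_x^*-\eps^2$ via continuity of $\hcJ$ and part (a).

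However, one step in your proof of \eqref{incr1} is wrong as written, and it fails precisely where the statement has content. You lower bound the conditioning probability by $\P(\lam_1\in I)\ge\exp(-N\rate^\gamma+o(N))$ and then assert $\exp(-N\rate^\gamma+o(N))\ge\exp(-N\cJ_{N,R}(y,\|\vloc\|_2e_1)+o(N))$, i.e.\ $\rate^\gamma(y)\le\cJ_{N,R}(y,\|\vloc\|_2e_1)+o(1)$. This inequality is false in the non-universal regime: for $y$ large one has $\inf_\al\cJ_{N,R}(y,\sqrt\al\, e_1)\approx\rate^\mu(y)<\rate^\gamma(y)$ (\Cref{cor:nonuniv}), so your quotient bound becomes $\exp\big(N(\rate^\gamma(y)-\inf_\al\cJ_{N,R}(y,\sqrt\al\, e_1)-c_0\eps^3+o(1))\big)$, which can be exponentially large — and the localization statement is only interesting in exactly that regime. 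The fix is the route the paper takes and is already in your toolkit: apply \Cref{lem:lower1} directly to get $\frac1N\log\P(\lam_1\in I)\ge-\inf_{y,\vloc}\cJ_{N,R}(y,\vloc)+o(1)$ and then use the non-strict domination \eqref{Jbounds.incr} to identify this infimum with $\inf_\al\cJ_{N,R}(y,\sqrt\al\, e_1)$, so the denominator matches the numerator's main term and the $c_0\eps^3$ gap survives; $\rate^\gamma$ and \Cref{lem:Jprops}(\ref{Jprop.smallz}) should not enter. A secondary inaccuracy: you claim both \eqref{gree} and \eqref{incr.loc0} acquire an $\eps^3$ gap, but the purely localized inequality \eqref{incr.loc0} does not in general — for $\vloc$ with several order-one entries the arguments of $\psimu$ on both sides are of order $\sqrt N$, where $\psimu\approx\psiinfty$ is essentially flat, so both sides agree up to $o(1)$. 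The quantitative gap must be extracted from the cross/Gibbs term $\VP_R$, as in \Cref{lem:JNRe1}, using the lower bound on the optimizer at bounded arguments where strict monotonicity of $\psimu$ actually bites; your own remark about the near-optimal Gibbs measure points in this direction, so this is a matter of locating the gap correctly rather than a missing idea.
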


From \eqref{FN.incr1} in Lemma \ref{lem:FN.incr} it follows that for $\psimu$ symmetric and non-decreasing on $\R^+$,
\begin{equation}	\label{Jbounds.incr}
\cJ_{N,R}(x,\vloc) \ge \cJ_{N,R}(x,\|\vloc\|_2e_1) \ge \hcJ(x,\|\vloc\|_2^2)\qquad \forall \theta\ge0,R\ge1,\vloc\in\B.
\end{equation}
The following gives a stronger stability form of this bound when $\psimu$ is strictly increasing on $\R^+$.

\begin{lemma}
\label{lem:JNRe1}
Let $\xcush,\rho\in(0,\frac1{10})$. There exists $c_0>0$ depending only on $\xcush,\rho$ and $\mu$ such that for any  $R\ge2$, $x\in[2+\xcush,\xcush^{-1}]$ and $\vloc\in (1-\rho)\B$,
\begin{equation}
\cJ_{N,R}(x,\vloc) 
\ge
\cJ_{N,R}(x,\|\vloc\|_2e_1)  + c_0 \|\vloc\|_2^2(\|\vloc\|_2-\|\vloc\|_\infty) .
\end{equation}
\end{lemma}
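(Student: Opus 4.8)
The goal is a quantitative strengthening of the second inequality in \eqref{Jbounds.incr}: we must gain an extra term of size $c_0\|\vloc\|_2^2(\|\vloc\|_2-\|\vloc\|_\infty)$ on top of $\cJ_{N,R}(x,\|\vloc\|_2 e_1)$. The natural starting point is to write, for a near-optimal $\theta$ in the definition \eqref{def:jayN} of $\cJ_{N,R}(x,\|\vloc\|_2 e_1)$,
\[
\cJ_{N,R}(x,\vloc)\ge J(x,\theta)-\free_{N,R}(\theta,\overlap_x(\theta)\vloc)
= \cJ_{N,R}(x,\|\vloc\|_2 e_1) + \big[\free_{N,R}(\theta,\overlap_x(\theta)\|\vloc\|_2 e_1)-\free_{N,R}(\theta,\overlap_x(\theta)\vloc)\big]+o(1),
\]
using \Cref{lem:Jprops}(\ref{Jprop.T}) to restrict $\theta$ to a compact interval $[\thetam+\tcush,T]$ with $\tcush,T$ depending only on $\xcush$ and $\rho$; on this interval $\overlap_x(\theta)\asymp_{\xcush,\rho}1$. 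So it suffices to show that for every such $\theta$ and every $v\in\B$ with $\|v\|_2\le 1-\rho$,
\[
\free_{N,R}(\theta,v_*)-\free_{N,R}(\theta,v)\ \gs_{\xcush,\rho,\mu}\ \|v\|_2^2(\|v\|_2-\|v\|_\infty)\,,\qquad v_*:=\|v\|_2 e_1,
\]
after rescaling $\theta v\mapsto v$ to absorb the $\overlap_x(\theta)$ factor (this only costs constants depending on $\xcush,\rho$).

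Now I would track the two places where the inequalities \eqref{gree}--\eqref{incr.loc0} in the proof of \Cref{lem:FN.incr} were applied, and extract a quantitative defect from each. Recall $\free_{N,R}(\theta,v)=\theta^2(1-\|v\|_2^2)^2+\freeL(\theta,v)+\VP_R(\theta v,1-\|v\|_2^2)-\tfrac12\|v\|_2^2$, and the delocalized/entropic terms depend on $v$ only through $\|v\|_2$, so the entire difference comes from $\freeL$ and $\VP_R$. For $\freeL$: from \eqref{incr.loc0}, $\freeL(\theta,v_*)-\freeL(\theta,v)=\tfrac1N\LLa_\mu(\theta\sqrt{2N}\|v\|_2^2)-\tfrac1N\sum_{i\le j}\LLa_\mu(2^{\ep_{ij}}\theta\sqrt N v_iv_j)$, which is $\ge0$ but we only need $\ge0$ here. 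The gain must come from $\VP_R$: using the variational formula \eqref{def:VP} and the fact that for the optimizing measure $\t\nu$ in $\VP_R(\theta v_*,1-\|v\|_2^2)$ (provided by \Cref{prop:gibbs}, after the scaling \eqref{VP.scaling}) we have, for any $s$,
\[
\sum_i \LLa_\mu(2\theta v_{*,i}s)-\sum_i\LLa_\mu(2\theta v_i s)
= (2\theta)^2 s^2\Big[ \|v\|_2^2\,\psimu(2\theta\|v\|_2 s)-\sum_i v_i^2\,\psimu(2\theta v_i s)\Big].
\]
Strict monotonicity of $\psimu$ on $\R^+$ (and evenness) is the key input: $\psimu$ is $C^1$, real-analytic, with $\psimu'>0$ on $(0,\infty)$, so on any compact range of arguments $\psimu(b)-\psimu(a)\gs (b-a)$ for $b>a$. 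Since $v_{*}$ has a single nonzero coordinate of size $\|v\|_2\ge\|v\|_\infty$, while $v$ spreads its mass, a convexity/Jensen-type estimate shows the bracket above is $\gs \|v\|_2^2(\|v\|_2-\|v\|_\infty)\cdot\min_i(\|v\|_2 s - v_i s)$-type quantity; one has to be slightly careful because the gain is only genuine on the event that $s$ is bounded away from $0$, but $\t\nu$ is a fixed density $\propto e^{h-\zeta s^2}$ with $\zeta,h$ controlled (via the bounds established in the proof of \Cref{lem:freeprops}(\ref{freeprop.small}) and \Cref{prop:gibbs}), so $\int s^2 1_{|s|\ge c}\,d\t\nu(s)\gs 1$. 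Evaluating $\chi$ at $\t\nu$ as a test measure for $\VP_R(\theta v,1-\|v\|_2^2)$ (whose sup is over the \emph{same} set $\cP_{1-\|v\|_2^2}([-R,R])$) then yields $\VP_R(\theta v_*,\cdot)-\VP_R(\theta v,\cdot)\gs_{\xcush,\rho,\mu}\|v\|_2^2(\|v\|_2-\|v\|_\infty)$, which is exactly what is needed.

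The main obstacle is making the quantitative gain in the $\psimu$-monotonicity step uniform: $\psimu'$ degenerates at $0$ and near $+\infty$ (where $\psimu\to\psiinfty$), so the naive bound $\psimu(b)-\psimu(a)\gs b-a$ fails globally. The fix is twofold: (i) restrict to $s$ in a good range using the quantitative control on the optimizer $\t\nu$ (second moment $1-\|v\|_2^2\in[\rho,1]$ bounded below, density bounds on $[-R,R]$), so arguments $2\theta v_i s$ lie in a compact set bounded below in absolute value on a set of positive $\t\nu$-measure; and (ii) handle the discrepancy $\|v\|_2-v_i$ by splitting coordinates into those close to $\|v\|_2$ (few, by the norm constraint) and the rest, showing that the "rest" contributes a definite deficit. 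One subtlety: if $\|v\|_\infty$ is already very close to $\|v\|_2$ (i.e.\ $v$ is itself nearly supported on one coordinate), the claimed gain is small and the bound is trivially true — so it suffices to prove it when $\|v\|_2-\|v\|_\infty\ge$ some threshold, and handle the near-single-coordinate regime by noting the difference of free energies is nonnegative (by \Cref{lem:FN.incr}) and the right-hand side is $o(\|v\|_2^2\cdot\text{threshold})$. Assembling these pieces, together with the compact-$\theta$ reduction and the locally Lipschitz continuity of $\cJ_{N,R}^{[\thetam+\tcush,T]}$ from \Cref{lem:Jprops}(\ref{Jprop.cont}) to absorb the $o(1)$ error into $c_0$, completes the proof.
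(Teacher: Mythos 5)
Your overall architecture is the same as the paper's: restrict $\theta$ to a compact window $[\thetam+\tcush,T]$ via \Cref{lem:Jprops}(\ref{Jprop.T}) (on which $\overlap_x(\theta)\gs_{\xcush,\rho}1$), note that $\theta^2(1-\|v\|_2^2)^2$ and $-\tfrac12\|v\|_2^2$ cancel in the difference, discard the nonnegative $\freeL$-difference coming from \eqref{incr.loc0}, and extract the gain from $\VP_R$ using strict monotonicity of $\psimu$ together with the fact that the relevant Gibbs optimizer keeps mass bounded away from the origin. However, your central quantitative step is run in the wrong direction. Writing $\chi_w(\nu):=\int h_w\,d\nu-\DKL(\nu|\gamma)$ with $h_w(s)=\sum_i\LLa_\mu(2\theta w_i s)$, you take $\nu^\star$ to be the optimizer of the \emph{single-coordinate} problem $\VP_R(\theta v_*,1-\|v\|_2^2)$, lower-bound $\int(h_{v_*}-h_v)\,d\nu^\star$, and then use $\nu^\star$ as a test measure in the problem for $v$. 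That chain gives
\[
\VP_R(\theta v,1-\|v\|_2^2)\ \ge\ \chi_v(\nu^\star)\;=\;\VP_R(\theta v_*,1-\|v\|_2^2)-\int\big(h_{v_*}-h_v\big)\,d\nu^\star ,
\]
i.e.\ an \emph{upper} bound on the gap $\VP_R(\theta v_*,\cdot)-\VP_R(\theta v,\cdot)$ in terms of that integral; showing the integral is large therefore proves nothing about the gap being large. To lower-bound the gap one must test the single-coordinate problem with a (near-)optimizer $\hat\nu_v$ of the \emph{spread-out} problem, since then $\VP_R(\theta v_*,\cdot)\ge \chi_{v_*}(\hat\nu_v)=\VP_R(\theta v,\cdot)+\int(h_{v_*}-h_v)\,d\hat\nu_v$. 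This is exactly what the paper does: it first proves a density lower bound for the optimizer of the \emph{general} problem, namely $\tfrac{d\hat\nu_v}{d\gamma}\ge\exp(-C(1+\psiinfty\alpha\|v\|_2^2))$ on $[-2,2]$ uniformly for $\theta\le T$, and then bounds $\int(h_{v_*}-h_v)\,d\hat\nu_v$ below using the derivative of $\psimu$ and $\sum_i v_i^2(\|v\|_2-|v_i|)\ge\|v\|_2^2(\|v\|_2-\|v\|_\infty)$.

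The fix preserves your plan, but it shifts the burden: the density control must be established for the optimizer of the multi-coordinate problem, and the estimates you cite from the proof of \Cref{lem:freeprops}(\ref{freeprop.small}) were derived only in the regime $\|w\|_2^2\le\al_\mu$, $\theta\le1$, so they cover neither the required parameter range nor (in your usage) the required optimizer; the paper's $\eps_0$-bound is a separate, general estimate proved inside the lemma itself. Your remaining ingredients (the compact-$\theta$ reduction, nonnegativity of the $\freeL$-difference, the coordinate-wise monotonicity estimate, and the awareness that $\psi'_\mu$ degenerates near $0$ and $\infty$) coincide with the paper's; the extra case split for the near-single-coordinate regime is unnecessary once the inequality is set up in the correct direction, since the gain there is already proportional to $\|v\|_2-\|v\|_\infty$.
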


\begin{proof}
Recall from \eqref{VP.scaling} that for any $v\in\ell^2(\N)$ and $\al\in (0,1]$,
\begin{align*}
\VP_R(v,\al) 
& = \VP_{R/\sqrt{\al}}(\al^{1/2}v, 1) + \frac12(1-\al)+\frac12\log\al\\
& = \sup_{\nu\in\cP_1(I)} \Big\{ \int h d\nu - \DKL(\nu|\gamma)\Big\} +  \frac12(1-\al)+\frac12\log\al
\end{align*}
with $I:= [-R/\sqrt\al, R/\sqrt\al]$.
We claim the supremum is attained on 
\begin{equation*}
\cP'_{1,\eps_0}(I) = \Big\{ \nu\in \cP_1(I): \nu\ll \gamma, \frac{d\nu}{\d\gamma} \ge \eps_0 \; \gamma\text{-a.e. on } [-2,2]\Big\}
\end{equation*}
with 
\begin{equation}	\label{incr.eps0}
\eps_0= \exp( - C(1+\psiinfty\al\|v\|_2^2)).
\end{equation}
Indeed, writing  $h(s) = \sum_i \LLa_\mu(2\al^{1/2}v_i s)$, we know from Proposition \ref{prop:gibbs} that the supremum is attained at a measure $\hat\nu$ with density
\[
\frac{d\hat\nu}{ds} = 1_I(s)\exp( h(s)-\hat\zeta s^2 - \log Z(\hat\zeta))  \,,\qquad Z(\zeta):= \int_I e^{h(s) - \zeta s^2}ds
\]
with $\hat\zeta$ the unique positive real such that $\int s^2\hat\nu(s) = 1$. 
Then, from Proposition \ref{prop:gibbs}(\ref{gibbs.density}),
\begin{align*}
\log Z(\hat\zeta) + \hat\zeta - \frac12\log(2\pi e)
&=\sup_{\nu\in \cP_1(I)}\{ \int hd\nu-\DKL(\nu|\gamma) \} \\
&= \int hd\hat\nu - \DKL(\hat\nu|\gamma) \\
&\le \int hd\hat\nu \\
&= 4\al\sum_i v_i^2\int s^2\psimu( 2\al^{1/2}v_i s) d\hat\nu(s)\\
&\le 4\psiinfty\al\|v\|_2^2 \,.
\end{align*}
Thus,
\begin{align*}
\frac{d\hat\nu}{ds} \ge 1_I(s)\exp( -\hat\zeta s^2 - \log Z(\hat\zeta)) 
\ge 1_I(s)\exp( - (1+s^2)( \frac12\log(2\pi e) + 4 \psiinfty\al\|v\|_2^2 ) ) 
\end{align*}
as desired. 

Now for $\theta\ge0$ and $w\in\B$ denote
\begin{equation}
h_{\theta,w}(s):= \sum_i \LLa_\mu(2\theta \sqrt{1-\|w\|_2^2} w_is)
= 4\theta^2(1-\|w\|_2^2) s^2 \sum_i w_i^2\psimu(2\theta\sqrt{1-\|w\|_2^2}w_is)\,.
\end{equation}
For any $R\ge2$, $\nu\in \cP'_{1,\eps_0}([-R,R])$, $0\le \theta\le T$ and $w\in \B$ with $\|w\|_2^2=\beta$, we have
\begin{align*}
\int h_{\theta,\beta^{1/2}e_1} -h_{\theta,w}d\nu
&= 4\theta^2 (1-\beta) \sum_i w_i^2 \int s^2\big[ \psimu(2\theta\sqrt{\beta(1-\beta)}s) - \psimu(2\theta\sqrt{1-\beta}w_is) \big] d\nu(s)\\
&\gs \theta^3(1-\beta)^{3/2} (\inf_{t\in [0,4T]} \psi'_\mu(t)) \sum_i w_i^2(\beta^{1/2}-|w_i|) \int_0^2 s^3d\nu(s)\\
&\gs \theta^3(1-\beta)^{3/2} \beta (\beta^{1/2}-\|w\|_\infty) (\inf_{t\in [0,4T]} \psi'_\mu(t)) \nu([1,2])\\
&\gs \eps_0\theta^3(1-\beta)^{3/2}\beta (\beta^{1/2}-\|w\|_\infty) (\inf_{t\in [0,4T]} \psi'_\mu(t))
\end{align*}
and hence
\begin{align*}
 \free_{N,R}(\theta,\beta^{1/2}e_1)- \free_{N,R}(\theta,w)
&\ge\VP_R(\theta\beta^{1/2}e_1,1-\beta) - \VP_R(\theta w, 1-\beta) \\
&= \VP_{R/\sqrt{1-\beta}}(\theta\sqrt{\beta(1-\beta)} e_1,1) - \VP_{R/\sqrt{1-\beta}}(\theta\sqrt{1-\beta} w,1) \\
&\gs \exp( - C(1+T^2)) \theta^3(1-\beta)^{3/2}\beta (\beta^{1/2}-\|w\|_\infty) (\inf_{t\in [0,4T]} \psi'_\mu(t))
\end{align*}
where we applied \eqref{incr.eps0} with $v=\theta w$ and $\al=1-\beta$.

For any $\xcush,\zcush\in(0,\frac1{10})$ we can take $T$ sufficiently large depending only on $\kappa,\rho$ such that for any $x\in[2+\xcush,\xcush^{-1}]$ and any $\vloc\in \B$ with $\|\vloc\|_2^2=\al\le (1-\zcush)^2$,
abbreviating $B:=[\thetam+T^{-1},T]$ and $\overlap=\overlap_x(\theta)$, we have
\begin{align*}
&\cJ_{N,R}(x,\vloc) 
= \sup_{\theta\in B} \{J(x,\theta)-\free_{N,R}(\theta,\overlap\vloc) \}\\
&\ge \sup_{\theta\in B} \{ J(x,\theta) - \free_{N,R}(\theta,q\al^{1/2}e_1) \} 
+e^{-C(1+T^2)} (\al^{1/2}-\|\vloc\|_\infty)  (\inf_{t\in [0,4T]} \psi'_\mu(t))\inf_{\theta\in B}\theta^3 (1-q^2\al)^{3/2}q^3\al\\
&\ge \sup_{\theta\in B} \{ J(x,\theta) - \free_{N,R}(\theta,q\al^{1/2}e_1) \}  +c_0 \al (\al^{1/2}-\|\vloc\|_\infty) \\
&= \cJ_{N,R}(x,\al^{1/2}e_1) + c_0 \al (\al^{1/2}-\|\vloc\|_\infty)
\end{align*}
as desired.
\end{proof}

\begin{proof}[Proof of Proposition \ref{prop:increasing}]
We begin with \eqref{incr1}.
From Proposition \ref{prop:no-comp} there exists $\zcush=\zcush(\xcush)>0$ such that it suffices to show
\begin{equation}
\label{incr-goal1}
\frac1N\log\P\big(v_1^{(\eta)}\in A_\eps \,\big|\, \lam_1\in I\big) \le  -c_0\eps^3  + N^{-c\eta})
\end{equation}
where
\begin{equation}
A_\eps = \{ \vloc\in (1-\zcush)\B^N: \|\vloc\|_\infty\le \|\vloc\|_2-\eps \}\,.
\end{equation}
Letting $R=N^{1/8}$, from Proposition \ref{prop:upper-joint} we have
\begin{align*}
\frac1N\log \P\big(v_1^{(\eta)}\in A_\eps \,,\, \lam_1\in I\big)
&\le -\inf_{\substack{y\in I\\ \vloc\in A_\eps\cap\B^{n_0}}} \cJ_{N,R}(y,\vloc) + O_\xcush(N^{-c\eta})\\
&\le -\inf_{\substack{y\in I\\ \vloc\in A_\eps\cap \B^{n_0}}} \Big\{ \cJ_{N,R}(y,\|\vloc\|_2e_1) + c_0\|\vloc\|_2^2(\|\vloc\|_2-\|\vloc\|_\infty)\Big\}+  O_\xcush(N^{-c\eta})\\
&\le -\inf_{\substack{y\in I\\ 0\le\al\le (1-\zcush)^2 }} \Big\{ \cJ_{N,R}(y,\sqrt\al e_1)\Big\} - c_0\eps^3+  O_\xcush(N^{-c\eta})
\end{align*}
where we applied Lemma \ref{lem:JNRe1} in the second line.
On the other hand, from Lemma \ref{lem:lower1},
 \begin{align*}
  \frac1N\log \P\big( \lam_1\in I\big)
&\ge - \inf_{\substack{y: [y-N^{-1/20},y+N^{-1/20}]\subseteq I \\ \vloc\in (1-\zcush)\B^{n_0}}} \cJ_{N,R}(y,\vloc) + O_\xcush(N^{-c\eta})\\
&= -\inf_{\substack{y\in I \\ \vloc\in (1-\zcush)\B^{n_0}}} \cJ_{N,R}(y,\vloc) + O_\xcush(N^{-c\eta})\\
&\ge -\inf_{\substack{y\in I \\ 0\le \al\le (1-\zcush)^2}} \cJ_{N,R}(y,\sqrt{\al}e_1) + O_\xcush(N^{-c\eta})\,.
\end{align*}
Combining these bounds we get
\begin{align*}
\frac1N\log\P\big(v_1^{(\eta)}\in A_\eps \,\big|\, \lam_1\in I\big)
&= 
\frac1N\log \P\big(v_1^{(\eta)}\in A_\eps \,,\, \lam_1\in I\big)
- \frac1N\log \P\big( \lam_1\in I\big)\\
&\le -c_0\eps^3 + O_\xcush(N^{-c\eta})
\end{align*}
which gives \eqref{incr-goal1} and hence \eqref{incr1}.

Turning to \eqref{incr2}, with $\delta_0,\delta_1$ to be chosen sufficiently small and $\delta\in(0,\delta_0)$, from \eqref{incr1} we only need to show 
\begin{equation}
\label{incr-goal2}
\frac1N\log\P\big(v_1^{(\eta)}\in (\sqrt{\al_x^*}-\eps)\B^N \,\big|\, |\lam_1-x|\le\delta\big) \le  -\delta_1 \,.
\end{equation}
Again with $R=N^{1/8}$, from Proposition \ref{prop:upper-joint}, 
\begin{align*}
\frac1N\log \P\big(v_1^{(\eta)}\in (\sqrt{\al_x^*}-\eps)\B^N \,,\, |\lam_1-x|\le \delta\big)
\le -\inf_{\substack{|y-x|\le \delta\\ \vloc\in (\sqrt{\al_x^*}-\eps)\B^{n_0}}} \cJ_{N,R}(y,\vloc) + O_\xcush(N^{-c\eta})
\end{align*}
From Lemma \ref{lem:Jprops}(\ref{Jprop.T},\ref{Jprop.cont}) we have that $\cJ_{N,R}(y,\vloc)$ is $O_\xcush(1)$-Lipschitz on $[2+\xcush,\xcush^{-1}]$, so 
\begin{align}
\frac1N\log \P\big(v_1^{(\eta)}\in (\sqrt{\al_x^*}-\eps)\B^N \,,\, |\lam_1-x|\le \delta\big)
&\le -\inf_{ \vloc\in (\sqrt{\al_x^*}-\eps)\B^{n_0}} \cJ_{N,R}(x,\vloc) + O_\xcush(\delta_0+N^{-c\eta}) \notag\\
&\le -\inf_{0\le \al\le\al_x^*-\eps^2} \hcJ(x,\al) +  O_\xcush(\delta_0+N^{-c\eta})	\label{incr2.2}
\end{align}
where we used \eqref{Jbounds.incr} in the second line. 
Since $\hcJ(x,\cdot)$ is continuous on $[0,1)$ by Lemma \ref{lem:hJprops}(\ref{hJprop.cont}) and $\al_x^*=\inf\{ \al\in[0,1-\zcush_x]: \hcJ(x,\al) = \rate^\mu(x)\}$, it follows that 
\[
\inf_{0\le \al\le\al_x^*-\eps^2} \hcJ(x,\al) \ge \rate^\mu(x) + 2\delta_1
\]
if $\delta_1$ is sufficiently small depending on $x$ and $\eps$. 
Substituting this into \eqref{incr2.2} and taking $\delta_0$ sufficiently small, the desired bound \eqref{incr-goal2} then follows from Theorem \ref{thm:increasing}(a). 
\end{proof}

\begin{appendix}

\section{Concentration properties for sub-Gaussian Wigner matrices}
\label{app:conc}

In this appendix we gather some concentration of measure tools and use them to prove Lemma \ref{lem:tightness}, Lemma \ref{lem:good}, and Proposition \ref{prop:tiltP}(a).

\subsection{Extension of an inequality of Talagrand to sub-Gaussian variables}
\label{app:talagrand}

A well-known result of Talagrand (see \cite[Theorem 6.6]{talagrand}) states that for a random vector $X\in [-1,1]^d$ with independent components and a convex 1-Lipschitz function $f:[-1,1]^d\to \R$, we have
\begin{equation}
\label{talagrand}
\P( |f(X) - m|\ge s) \le 4 \exp( - s^2/16)\,,\qquad s\ge0
\end{equation}
for any median $m\in \R$ of the random variable $f(X)$.
A similar tail bound 
holds for vectors with entries enjoying a log-Sobolev inequality, without the requirement that $f$ be convex. 
Both results have been widely applied in random matrix theory.
In particular, it was shown in \cite{GuZe} that for Wigner matrices $H$ with entries either having bounded support or enjoying a log-Sobolev inequality, the ESD $\hat\mu_H$ concentrates around the semicircular measure $\sigma$ with speed $N^2$.

Talagrand's inequality is often combined with truncation  arguments to treat Wigner matrices with unbounded entries. However, a straightforward truncation argument with a union bound is generally insufficient for applications to large deviations, as the exceptional event must be small compared to the rare event of interest. 

The following result provides an extension of \eqref{talagrand} to sub-Gaussian variables with only a logarithmic loss in the exponent, which is more than sufficient for our purposes. 
In particular, in Corollary \ref{cor:conc} we deduce concentration for convex linear statistics with a tail speed of $N^2/\log N$. 
The key is to control the impact of the truncation error in $\ell^2$ rather than $\ell^\infty$. 

\begin{prop}
\label{prop:conc-subG}
Let $X=(X_1,\dots, X_d)$ be a vector of independent random variables that are $K$-sub-Gaussian
for some 
$K>0$. Let $f:\R^d\to \R$ be a convex 1-Lipschitz function. 
Then 
\begin{equation}	\label{CL.tail}
\P( |f(X) - \E f(X)|\ge s) \le 2\exp( -\frac{cs^2}{K^2\log d})\,, \qquad s\ge0
\end{equation}
for a universal constant $c>0$. Moreover, the same bound holds (with a possibly modified value of $c$) with $\E f(X)$ replaced by any median of $f(X)$. 
\end{prop}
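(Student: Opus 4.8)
\textbf{Proof strategy for \Cref{prop:conc-subG}.}
The plan is to reduce to Talagrand's inequality \eqref{talagrand} by a truncation argument in which we carefully control the truncation error in the $\ell^2$ sense rather than the $\ell^\infty$ sense. First I would fix a truncation level $M = CK\sqrt{\log d}$ for a suitable constant $C$, and write $X_i = Y_i + Z_i$ where $Y_i := X_i 1_{|X_i|\le M}$ and $Z_i := X_i 1_{|X_i|>M}$. The key quantitative input is the sub-Gaussian tail bound: $\P(|X_i|>M)\le 2e^{-M^2/K^2} = 2d^{-C^2}$, and moreover $\E Z_i^2 = \E X_i^2 1_{|X_i|>M} \le (\E X_i^4)^{1/2}\P(|X_i|>M)^{1/2} \ls K^2 d^{-C^2/2}$ by Cauchy--Schwarz and the moment bounds implied by sub-Gaussianity. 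Hence by Markov's inequality applied to $\|Z\|_2^2 = \sum_i Z_i^2$, whose expectation is $\ls K^2 d^{1-C^2/2}$, we get that $\|Z\|_2\le s/4$, say, with probability at least $1-\frac{16}{s^2}K^2 d^{1-C^2/2}$; taking $C$ large enough (depending only on how we want the final constant to look) this exceptional probability is acceptable whenever $s\gs K\sqrt{\log d}$, and for smaller $s$ the bound \eqref{CL.tail} is trivial after adjusting $c$.

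Next I would apply Talagrand's inequality to $f$ restricted to the truncated vector. The variables $Y_i$ are supported in $[-M,M]$; rescaling by $M$, the function $\tilde f(y) := \frac1M f(My)$ is convex and $1$-Lipschitz on $[-1,1]^d$, so \eqref{talagrand} gives $\P(|f(Y) - m_Y|\ge t)\le 4\exp(-t^2/(16M^2))$ where $m_Y$ is a median of $f(Y)$. Since $M^2 \asymp K^2\log d$ this is exactly a tail of the form $\exp(-ct^2/(K^2\log d))$. Then, using the $1$-Lipschitz property of $f$, $|f(X) - f(Y)| \le \|X - Y\|_2 = \|Z\|_2$, so on the event $\{\|Z\|_2\le s/4\}$ we have $|f(X) - m_Y|\le |f(Y) - m_Y| + s/4$. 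Combining the Talagrand tail (with $t = s/2$) and the bound on $\P(\|Z\|_2 > s/4)$ via a union bound yields $\P(|f(X) - m_Y|\ge s)\le 2\exp(-cs^2/(K^2\log d))$ after absorbing constants; this already gives the median form of the statement once we note $m_Y$ is within $O(K\sqrt{\log d})$ of any median of $f(X)$ (by the same truncation comparison applied at the level of medians), which can be absorbed into the constant by the usual argument that shifting the centering point by $O(\text{standard deviation scale})$ only changes the constant.

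Finally, to pass from the median to the mean, I would use the standard fact that a tail bound of the form $\P(|W - m|\ge s)\le 2e^{-cs^2/v}$ implies $|\E W - m|\le C\sqrt{v/c}$ by integrating the tail, and hence $\P(|W - \E W|\ge s)\le \P(|W-m|\ge s - C\sqrt{v/c})$, which for $s\ge 2C\sqrt{v/c}$ is bounded by $2e^{-cs^2/(4v)}$, while for smaller $s$ the claimed bound holds trivially after adjusting the constant. Here $v = K^2\log d$. This gives \eqref{CL.tail} with the mean as centering.

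\textbf{Main obstacle.} The one point requiring genuine care is the control of the truncation error: a naive $\ell^\infty$ truncation argument with a union bound over coordinates would force the exceptional event to be a union of $d$ events each of probability $d^{-C^2}$, which is fine, but then one loses because the deterministic bound $|f(X)-f(Y)|\le \|Z\|_\infty\cdot\sqrt{d}$ (or even $\|Z\|_1$) is far too lossy. The correct move is to bound $|f(X)-f(Y)|$ by $\|Z\|_2$ using only the $1$-Lipschitz property, and then control $\|Z\|_2$ in expectation via the second-moment computation above --- this is where sub-Gaussianity enters crucially (through the moment bounds $\E X_i^{2k}\ls_k K^{2k}$) and where the logarithmic loss in the exponent is unavoidable, since the truncation level must be $\asymp K\sqrt{\log d}$ for the $d$-fold tail sum $\sum_i\P(|X_i|>M)$ to be summable. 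Everything else is a routine assembly of Talagrand's inequality, Cauchy--Schwarz, Markov, and the median-to-mean conversion.
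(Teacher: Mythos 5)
Your overall architecture (truncate at level $\asymp K\sqrt{\log d}$, apply Talagrand's inequality \eqref{talagrand} to the bounded part, bound the effect of the truncation error in $\ell^2$ via the Lipschitz property, then convert between median and mean) is exactly the paper's strategy, and you correctly identify that the whole point is to control $\|Z\|_2$ rather than work coordinatewise. However, there is a genuine quantitative gap in how you control $\|Z\|_2$. You bound $\E\|Z\|_2^2 \ls K^2 d^{1-C^2/2}$ and then apply Markov, obtaining $\P(\|Z\|_2>s/4)\ls K^2 d^{1-C^2/2}/s^2$. This decays only polynomially in $s$, whereas the claimed bound \eqref{CL.tail} must hold for \emph{all} $s\ge 0$ and decays like $\exp(-cs^2/(K^2\log d))$. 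No choice of the constant $C$ rescues this: for fixed $d$ and $s$ large (already in the intermediate range $K\log d \ll s \ll K\sqrt d$, where a crude bound via $\P(\|X\|_2\ge s)$ is also unavailable), the Markov term $\asymp s^{-2}$ eventually dominates $\exp(-cs^2/(K^2\log d))$, so the union bound cannot deliver the stated tail. Your phrase ``this exceptional probability is acceptable whenever $s\gs K\sqrt{\log d}$'' is where the argument breaks.

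The paper closes exactly this gap by proving a sub-Gaussian, not polynomial, tail for the truncation error: with $Y^>_i := Y_i 1_{|Y_i|>B}$ and $B=C_1K\sqrt{\log d}$, one bounds the exponential moment $\E\exp\big(2|Y^>_i|^2/(C_1^2K^2)\big)\le 1+2e^{-2B^2/(C_1^2K^2)}$ using the sub-Gaussian hypothesis \eqref{def:subG}, multiplies over the $d$ coordinates (the choice of $B$ makes the product $\le e^{2/d}$), and concludes by Chernoff that $\P(\|Y^>\|_2>t)\le \exp\big(1-t^2/(C_1^2K^2)\big)$ for all $t\ge 0$. This exponential-in-$t^2$ tail, at scale $K$ with no log factor, is what allows the union bound with the Talagrand tail (at scale $K\sqrt{\log d}$) to produce \eqref{CL.tail} uniformly in $s$. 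If you replace your second-moment-plus-Markov step with this exponential moment computation (the rest of your assembly — the Lipschitz comparison $|f(X)-f(Y)|\le\|Z\|_2$, the median comparisons of size $O(K\sqrt{\log d})$, and the median-to-mean conversion by integrating the tail — is fine and matches the paper), the proof goes through.
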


\begin{remark}
Since the initial posting of this article to arXiv we learned 
of stronger versions of Proposition \ref{prop:conc-subG} obtained in \cite{KlZh,HuTi} by more elaborate arguments. We include the proof below as it is simpler than the arguments in those works, while the tail bound \eqref{CL.tail} is already more than sufficient for applications in random matrix theory (in our applications it yields bounds of shape $\exp(-cN^2/\log N)$ where we only need $\exp(-\omega(N))$).

Specifically, \cite[Lemma 1.6]{KlZh} shows we can take $2\exp( -cs^2/M^2)$ on the right hand side in \eqref{CL.tail}, where $M$ is any constant such that $\max_{i\le d} |X_i|$ is $M$-sub-Gaussian. 
Since it is elementary that $\max_{i\le d} |X_i|$ is $O(K\sqrt{\log d})$-sub-Gaussian when $X_i$ are $K$-sub-Gaussian, \cite[Lemma 1.6]{KlZh} implies Proposition \ref{prop:conc-subG}.
\cite[Theorem 1.1]{HuTi} generalizes Proposition \ref{prop:conc-subG} to the case that the $X_i$ have uniformly bounded $\psi_p$-norm (the sub-Gaussian case being $p=2$); such a generalization also follows from an easy modification of the argument below. 
Moreover, \cite[Theorem 1.3]{HuTi} provides optimal tail bound for the sub-Gaussian case, showing that $\log d$ on the right hand side of \eqref{CL.tail} can be replaced by $\log(2+ \frac{K^2d}{s^2})$.
\end{remark}

\begin{proof}
By replacing $f$ with $K^{-1}f(K\cdot)$ and rescaling $X,s$ we may assume $K=1$.
By adjusting the constant $c$ in \eqref{CL.tail} we may assume
\begin{equation}	\label{CL.assume-eps}
s\ge C_0\sqrt{\log d}
\end{equation}
for any fixed constant $C_0>0$. 
Similarly, by adjusting $c$ it suffices to establish \eqref{CL.tail} with the prefactor $2$ replaced by any constant $C>0$. 
For the claim about deviation from a median $m$, we simply note that \eqref{CL.tail} implies that $m=\E f(X)+ O(\sqrt{\log d})$, so the claim follows by applying the triangle inequality and \eqref{CL.tail} with $s/2$ in place of $s$, and adjusting constants. 

Letting $Y_i:= X_i-\E X_i$ 
we have that $Y_i$ is $O(1)$-sub-Gaussian (see \cite[Lemma 2.6.8]{Vershynin:book}) for each $1\le i\le d$.
Letting $g(z):= f(\E X + z)$, which is convex and 1-Lipschitz, it suffices to show
\begin{equation}	\label{CL.goal1}
\P( | g(Y) - \E g(Y)|\ge s) \ls \exp( - \frac{cs^2}{\log d})\,,\qquad s\ge0
\end{equation}
for a possibly modified constant $c>0$.
We split
\[
Y= Y^\le + Y^>\,, \quad Y^\le_i := Y_i \ind(|Y_i|\le B)\,,\qquad B:= C_1\sqrt{\log d}\,.
\]
where 
$C_1>0$ is large enough that $Y_i$ is $\frac12C_1$-sub-Gaussian for all $i$.
We first claim 
\begin{equation}	\label{CL.claim1}
\P( \|Y^>\|_2 > t) \le \exp(1-\frac{t^2}{C_1^2})\qquad\forall \,t\ge 0.
\end{equation}
Indeed, the left hand side above is bounded by 
\begin{equation}	\label{CL.1}
\P( \sum_{i\le d} |Y^>_i|^2 >  t^2) 
\le \exp( - \frac{2t^2}{C_1^2}) \prod_{i\le d} \E \exp( \frac{2|Y^>_i|^2}{C_1^2} ).
\end{equation}
For the exponential moments on the right hand side we have
\[
\E\exp(\frac{2 |Y_i^>|^2}{C_1^2}) 
\le 1+ \E \ind(|Y_i|>B) e^{2Y_i^2/C_1^2}
\le 1+ \E\exp\Big(\frac{2Y_i^2-2B^2}{C_1^2}\Big) e^{2Y_i^2/C_1^2}
\le 1+2e^{-2B^2/C_1^2}
\]
where we used that the $Y_i$ are $\frac12C_1$-sub-Gaussian. 
Further bounding the right hand side by $\exp( 2\exp( - \frac{2B^2}{C_1^2}))$ and substituting into \eqref{CL.1} we get
\begin{equation*}
\P( \|Y^>\|_2 > t) 
\le \exp( - \frac{2t^2}{C_1^2} +2d \exp( - \frac{2B^2}{C_1^2})) 
=  \exp( - \frac{2t^2}{C_1^2} +\frac2d) 
\end{equation*}
and \eqref{CL.claim1} follows (clearly we may assume $t\ge C_1 $). 

By the Lipschitz assumption we have
$
| g(Y) - g(Y^\le) |\le \|Y^>\|_2
$,
so from \eqref{CL.claim1},
\begin{equation}	\label{CL.2}
\P( |g(Y) - g(Y^\le)|> t) \le \P ( \|Y^>\|_2 > t) \le \exp( 1- \frac{t^2}{C_1^2})\quad\forall t\ge0.
\end{equation}
Integrating this bound gives
\begin{equation}	\label{CL.3}
| \E g(Y) - \E g(Y^\le) | \le \E | g(Y) -  g(Y^\le) | =O(1). 
\end{equation}

From \eqref{talagrand} we have that for any median $m$ of $g(Y^\le)$,
\begin{equation}	\label{CL.4}
\P( |g(Y^\le) - m|> t) \ls  \exp( -  c't^2 / B^2)
\end{equation}
for some universal constant $c'>0$. 
In particular,
\begin{equation}	\label{CL.5}
|\E  g(Y^\le )-m|\le \E| g(Y^\le) - m| \ls B \ls  \sqrt{\log d}.
\end{equation}
Now taking $C_0$ in \eqref{CL.assume-eps} sufficiently large to bound the right hand sides of \eqref{CL.3} and \eqref{CL.5} by $s$, and 
combining with \eqref{CL.2} and \eqref{CL.4} with $t=s$, we get
\[
\P( | g(Y) - \E  g(Y)| > 4s) \ls \exp( -\frac{ c''s^2}{\log d})
\]
for a universal constant $c''>0$. 
Replacing $s$ with $s/4$ and adjusting constants, we obtain \eqref{CL.goal1} and hence the claim. 
\end{proof}

\subsection{Concentration of linear statistics}
\label{app:conc.linear}

From an argument in \cite{GuZe} we have the following corollary of Proposition \ref{prop:conc-subG}.

\begin{cor}[Concentration of linear statistics]
\label{cor:conc}
Let $\sqrt{N}H\in \Sym_N$ have independent $K$-sub-Gaussian entries on and above the diagonal. 
There exists a universal constant $c>0$ such that
for any convex and 1-Lipschitz function $f:\R\to \R$ and any $t>0$, we have
\begin{equation}	\label{conc.bound}
\P( |\Tr f(H) - \E \Tr f(H)|> t) \le 2 \exp( -\frac{ct^2}{K^2\log N}).
\end{equation}
The same holds with $\E \Tr f(H)$ replaced with any median of $\Tr f(H)$ (up to modification of the constant $c$). 
\end{cor}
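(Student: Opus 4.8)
The plan is to derive \Cref{cor:conc} directly from \Cref{prop:conc-subG} by realizing $\Tr f(H)$ as a convex, Lipschitz function of the independent entries of $\sqrt{N}H$, following the standard argument of \cite{GuZe}. First I would set $d = \binom{N}{2}+N$ and view $H$ as a function of the vector $X = (\sqrt{N}H_{ij})_{1\le i\le j\le N}\in\R^d$ of independent $K$-sub-Gaussian entries. The map $X\mapsto H$ is affine, with $H_{ij}$ rescaled by $\sqrt{2^{1_{i=j}}/N}$; the key point is that $\|H - H'\|_{\HS}^2 = \sum_{i\le j} 2^{1_{i=j}}\cdot\frac{2^{1_{i=j}}}{N}(X_{ij}-X'_{ij})^2$, where the outer $2^{1_{i=j}}$ counts the two off-diagonal matrix positions $(i,j),(j,i)$. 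This gives $\|H-H'\|_{\HS} \le \sqrt{2/N}\,\|X-X'\|_2$, so $X\mapsto H$ is $\sqrt{2/N}$-Lipschitz from $(\R^d,\|\cdot\|_2)$ to $(\Sym_N,\|\cdot\|_{\HS})$.

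Next I would invoke the two standard facts about the spectral map. For $g:=\Tr f$ on $\Sym_N$: (i) by the Hoffman–Wielandt inequality, $|\Tr f(A) - \Tr f(B)| \le \|f\|_{\Lip}\,\|A-B\|_{\HS}\cdot\sqrt{N}$ when $f$ is $1$-Lipschitz — more precisely $|\Tr f(A)-\Tr f(B)|\le \sum_i |f(\lambda_i(A))-f(\lambda_i(B))| \le \sum_i|\lambda_i(A)-\lambda_i(B)| \le \sqrt{N}\|A-B\|_{\HS}$ by Cauchy–Schwarz and Hoffman–Wielandt; and (ii) when $f$ is additionally convex, $A\mapsto \Tr f(A)$ is convex on $\Sym_N$ (this is classical — e.g.\ Klein's inequality, or the variational characterization $\Tr f(A) = \sup$ over spectral decompositions). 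Composing with the affine map $X\mapsto H$, the function $X\mapsto \Tr f(H(X))$ is convex on $\R^d$ and Lipschitz with constant $\sqrt{N}\cdot\sqrt{2/N} = \sqrt{2}$. After dividing by $\sqrt 2$ to normalize the Lipschitz constant to $1$, \Cref{prop:conc-subG} applies with $d\le N^2$, so $\log d \le 2\log N$, yielding
\begin{equation}
\P\big( |\Tr f(H) - \E\Tr f(H)| > t \big) \le 2\exp\Big( -\frac{c't^2}{K^2\log N}\Big)
\end{equation}
for a universal constant $c'>0$; the median version follows from the median version of \Cref{prop:conc-subG} in the same way.

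There is no serious obstacle here — the only points requiring a word of care are the bookkeeping of the $2^{1_{i=j}}$ factors in the $\HS$-norm computation (to confirm the Lipschitz constant of $X\mapsto H$ is $\sqrt{2/N}$ and not something else), and citing the convexity of $A\mapsto\Tr f(A)$ for convex $f$ together with the Hoffman–Wielandt Lipschitz bound. If one wished to avoid the convexity route, an alternative would be to impose the log-Sobolev hypothesis instead and use the corresponding (non-convex) concentration inequality, but since we only assume sub-Gaussianity, the convex-function version of Talagrand's inequality as extended in \Cref{prop:conc-subG} is exactly what is needed. I would then remark that \Cref{cor:conc} is used (with $f$ an approximation to the indicator of a half-line, suitably regularized to be convex and Lipschitz, or directly with Lipschitz test functions in the definition of $\Good_2$) in the proof of \Cref{lem:good} in \Cref{app:good}.
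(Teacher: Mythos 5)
Your proof is correct and follows essentially the same route as the paper, which simply cites \cite[Lemma 1.2]{GuZe} for the two facts you verify by hand (Hoffman--Wielandt giving the $O(\sqrt{N})$-Lipschitz bound for $M\mapsto\Tr f(M)$ in the HS metric, and Klein's inequality for convexity) before applying \Cref{prop:conc-subG} with $d=\binom{N+1}{2}$. The only blemish is notational: in your HS-norm computation the outer multiplicity factor should be $2^{1_{i\ne j}}$ (not $2^{1_{i=j}}$), as your own parenthetical remark about the two off-diagonal positions makes clear; this does not affect the $O(1)$ Lipschitz constant or the conclusion.
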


\begin{remark}
An extension to non-convex linear statistics could be obtained by the same lines as in \cite{GuZe}, but the statistics of interest in the present work happen to all be convex.
We note that extensions of results in \cite{GuZe} in a similar spirit were recently established in \cite{HuMc} under the stronger sharp sub-Gaussian assumption. The arguments in \cite{HuMc} use the interlacing property of eigenvalues, similarly to the proof of Lemma \ref{lem:ktail} below, rather than proceeding through a general concentration estimate for functions on product spaces like Proposition \ref{prop:conc-subG}.
\end{remark}

\begin{proof}
From \cite[Lemma 1.2]{GuZe} we have that $\Tr f(\frac1{\sqrt{N}}\,\cdot)$ is a convex and $O(1)$-Lipschitz function on $\Sym_N$ with the Euclidean Hilbert--Schmidt metric.  We may hence apply Proposition \ref{prop:conc-subG} with $d={N+1\choose 2}$ (identifying $\Sym_N$ with $\R^d$) and $\Tr f(H)$ in place of $f$ (rescaled to have Lipschitz constant 1). 
\end{proof}

By combining Corollary \ref{cor:conc} with the following, we can show concentration of linear statistics around their value at the semicircle measure instead of the mean or median. 

\begin{lemma}[Weak concentration of linear statistics]
\label{lem:median}
Let $H$ be a Wigner matrix as in \eqref{def:H} with $X_{ij}$ standardized and uniformly sub-Gaussian.
For any $L$-Lipschitz function  $f:\R\to \R$ and any $\eps>0$, we have
\begin{equation}	\label{weak.bound}
\hat{\mu}_H(f) =\sigma(f) + O_\eps(LN^{-1/2+\eps}) 
\end{equation}
with probability $1-o(1)$. 
In particular, for any median $m$ of $\hat{\mu}_H(f)$ and any $\eps>0$ we have
\begin{equation}
\label{weak.median}
|m-\sigma(f)| \le LN^{-1/2 + \eps}
\end{equation}
for all $N$ sufficiently large depending on $\eps$ and $\mu$. 
\end{lemma}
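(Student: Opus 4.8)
The plan is to prove \Cref{lem:median} in two pieces: first establish the weak concentration \eqref{weak.bound}, then deduce the statement \eqref{weak.median} about medians as an easy corollary. For the corollary, observe that if \eqref{weak.bound} holds on an event of probability $1-o(1)$, then in particular $\hat\mu_H(f)$ lies within $O_\eps(LN^{-1/2+\eps})$ of $\sigma(f)$ with probability $>\tfrac12$ for all large $N$; since a median $m$ must lie in the closure of any set carrying more than half the mass, we get $|m-\sigma(f)|\le LN^{-1/2+\eps}$ for $N$ large (after relabelling $\eps$). So the whole content is in \eqref{weak.bound}.

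For \eqref{weak.bound}, I would first reduce to the case $L=1$ by scaling, and split $\hat\mu_H(f) - \sigma(f) = (\hat\mu_H(f) - \E\hat\mu_H(f)) + (\E\hat\mu_H(f) - \sigma(f))$. The fluctuation term is handled by concentration: if $f$ is additionally convex this is immediate from \Cref{cor:conc} (taking $t = N^{-1/2+\eps}\cdot N = N^{1/2+\eps}$, which gives a bound $2\exp(-cN^{2\eps}/\log N) = o(1)$, using $\Tr f(H) = N\hat\mu_H(f)$). For general Lipschitz $f$ one writes $f = f_1 - f_2$ as a difference of two convex $1$-Lipschitz functions (e.g. via a standard decomposition of a Lipschitz function; each piece is $O(1)$-Lipschitz) and applies \Cref{cor:conc} to each. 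The bias term $\E\hat\mu_H(f) - \sigma(f)$ is controlled by the convergence of the mean ESD to the semicircle law at a polynomial rate, which follows from standard moment-method or resolvent estimates for Wigner matrices with subgaussian (indeed, just finite-moment) entries: one has $|\E\hat\mu_H(f) - \sigma(f)| \ls N^{-1/2}$ for Lipschitz $f$, with implicit constant depending on $\mu$ only through a few low moments. This is classical; alternatively, since the paper already invokes the local semicircle law in the introduction, one may quote rigidity of the mean ESD.

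The main obstacle — though it is more bookkeeping than difficulty — is making the reduction from general Lipschitz to convex Lipschitz functions clean, so that \Cref{cor:conc} applies with the stated speed $N^2/\log N$ and only a logarithmic loss. The decomposition $f = f_1 - f_2$ must preserve the Lipschitz constant up to a universal factor: one can take $f_1(x) := \sup_{y}\{ f(y) + (x-y) \mathbf{1}_{x\ge y}\cdot 0\}$ — more concretely, write $g(x) := \int_0^x f'(t)_+\,dt$ and $h(x):= \int_0^x f'(t)_-\,dt$ (using that $f'$ exists a.e. and $|f'|\le 1$), so that $f = f(0) + g - h$ with $g,h$ convex and nondecreasing, each $1$-Lipschitz. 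Then \eqref{conc.bound} applied to $g$ and to $h$ (each scaled to be $1$-Lipschitz) and a union bound yields the fluctuation estimate for $f$ with the same $N^2/\log N$ speed up to a constant. The only other point requiring a line of justification is that the exceptional probabilities from \Cref{cor:conc} and from the bias estimate are both $o(1)$ at the scale $N^{-1/2+\eps}$, which is immediate from the stated bounds since $\eps>0$ is fixed.

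Having assembled these, \eqref{weak.bound} follows, and then \eqref{weak.median} follows from the median-characterization argument above. I would also remark that the subgaussian hypothesis is used only to invoke \Cref{cor:conc} (which needs uniformly subgaussian entries) and that a few finite moments suffice for the bias term; but since the lemma is stated under the standing hypothesis, no weakening is needed.
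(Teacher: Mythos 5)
Your reduction of the fluctuation term to \Cref{cor:conc} contains a genuine gap: the claimed decomposition of a general Lipschitz $f$ into a difference of convex Lipschitz functions is false, and your explicit construction does not produce convex pieces. If $g(x)=\int_0^x f'(t)_+\,dt$, convexity of $g$ requires $f'_+$ to be nondecreasing, which fails already for $f=\sin$ (there $f'_+=\cos_+$ oscillates, so $g$ is increasing but not convex). More structurally, a Lipschitz function is a difference of convex functions only when $f'$ has locally bounded variation, and even then the Lipschitz constants of the convex pieces are governed by the total variation of $f'$, not by $L$; a $1$-Lipschitz $f$ with rapidly oscillating derivative admits no decomposition into convex pieces with Lipschitz constant $O(1)$, since for $f=g-h$ with $g,h$ convex and $M$-Lipschitz the variation of $f'$ on any interval is at most $4M$. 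Since \Cref{cor:conc} rests on Talagrand's convex-distance inequality (\Cref{prop:conc-subG}) and genuinely needs convexity, your scheme cannot establish \eqref{weak.bound} for the general Lipschitz $f$ in the statement — and the generality matters only at the level of the lemma as stated, but that is what you must prove. (There is also a harmless slip: with $t=N^{1/2+\eps}$ the bound from \eqref{conc.bound} is $2\exp(-cN^{1+2\eps}/\log N)$, not $2\exp(-cN^{2\eps}/\log N)$; and your bias estimate $|\E\hat\mu_H(f)-\sigma(f)|\ls LN^{-1/2}$ is quotable but would need an actual citation with the stated uniformity.)

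The paper avoids concentration and the mean-vs-semicircle comparison entirely: it proves \eqref{weak.bound} directly from the local semicircle law. One covers $[-3,3]$ by $O(N^{1/2-\eps})$ intervals of length $\asymp N^{-1/2+\eps}$; the local law plus a union bound gives $\hat\mu_H(I)=\sigma(I)+O_\eps(N^{-1+2\eps})$ simultaneously for all these intervals with probability $1-o(1)$, while \eqref{FuKo} applied to $H$ and $-H$ ensures $\hat\mu_H((-3,3)^c)=0$ with probability $1-o(1)$. Approximating $f$ (normalized so $L=1$, $f(0)=0$) by a step function $h$ constant on the mesh with $\|f-h\|_{L^\infty([-3,3])}=O(N^{-1/2+\eps})$ then yields $|\hat\mu_H(f)-\sigma(f)|\ls_\eps N^{-1/2+\eps}$ by summing the interval errors. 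This handles arbitrary Lipschitz $f$ at the stated rate with no convexity. Your deduction of \eqref{weak.median} from \eqref{weak.bound} via the median characterization is fine and matches the intended use, but as written the proof of \eqref{weak.bound} does not go through; the fix is to either restrict to convex $f$ (insufficient for the lemma as stated) or argue via the local law as above.
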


\begin{proof}
By replacing $f$ with $f/L$ we may assume $f$ is 1-Lipschitz. 
By subtracting $f(0)$ from both sides of \eqref{weak.bound} we may assume $f(0)=0$, and in particular that $\|f\|_\infty=O(1)$. 

Let $\cI$ be a collection of $O(N^{1/2-\eps})$ intervals of length $\asymp N^{-1/2+\eps}$ with disjoint interiors covering $[-3,3]$. 
From the local semicircle law (see for instance \cite[Theorem 2.8]{BeKn}) and the union bound we have
\begin{equation}	\label{median.locallaw}
\hat{\mu}_H(I) = \sigma(I) + O_\eps(N^{-1+2\eps}) \qquad \forall I\in \cI
\end{equation}
with probability $1-o(1)$.
On the other hand, from \eqref{FuKo} applied with $H$ and $-H$ and the union bound, we have 
\begin{equation}	\label{median.support}
\hat{\mu}_H((-3,3)^c)=0
\end{equation}
with probability $1-o(1)$.
Let $h$ be a function supported on $[-3,3]$ that is constant on intervals in $\cI$ with $\|f-h\|_{L^\infty([-3,3])} =O(N^{-1/2+\eps})$. 
On the event that \eqref{median.support} and \eqref{median.locallaw} hold we thus have
\begin{align*}
|\hat{\mu}_H(f) - \sigma(f)|
&\le |\hat{\mu}_H(f-h)| + |(\hat{\mu}_H-\sigma)(h)|\\
&\le \|f-h\|_{L^\infty([-3,3])} + \|h\|_\infty \sum_{I\in\cI} |(\hat{\mu}_H-\sigma)(I)|\\
&\ls N^{-1/2+\eps} + |\cI| O_\eps(N^{-1+2\eps}) \ls_\eps N^{-1/2+\eps}.
\end{align*}
as desired. 
\end{proof}

From Corollary \ref{cor:conc} and Lemma \ref{lem:median} we deduce the following, which a generalization of \eqref{tight-Lside} in Lemma \ref{lem:tightness}.

\begin{lemma}
\label{lem:Lside.app}
With $H$ as in Corollary \ref{cor:conc},
for any $\eps\ge N^{-1/10}$ and all $N$ sufficiently large,
\[
\P( \lam_1(H)\le 2-\eps) \le 2\exp( - \frac{c\min(\eps^4,1) N^2}{K^2\log N})\,.
\]
\end{lemma}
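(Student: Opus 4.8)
\textbf{Proof proposal for Lemma \ref{lem:Lside.app}.}

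The plan is to reduce the lower-tail estimate for $\lambda_1(H)$ to a statement about a single convex Lipschitz linear statistic of $H$, and then apply the concentration bound of \Cref{cor:conc} together with the weak concentration of \Cref{lem:median}. The key observation is that if $\lambda_1(H) \le 2-\eps$, then \emph{all} eigenvalues of $H$ lie in $(-\infty, 2-\eps]$, so any linear statistic $\hat\mu_H(g)$ built from a function $g$ supported on $[2-\eps,\infty)$ must vanish; choosing such a $g$ that is also convex and Lipschitz and has $\sigma(g)$ bounded below lets us detect the event $\{\lambda_1 \le 2-\eps\}$ via a deviation of $\hat\mu_H(g)$ from $\sigma(g)$.

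Concretely, first I would fix $\eps \in [N^{-1/10},1]$ and let $g = g_\eps : \R \to \R$ be a convex $1$-Lipschitz function that is supported on $[2-\tfrac{\eps}{2},\infty)$, increasing there, with $g(x) = x - (2-\tfrac\eps2)$ for $x \ge 2-\tfrac\eps2$ (truncated at a suitable constant height on the far right to keep it Lipschitz and bounded, which is permissible since the semicircle is compactly supported). A direct computation with the semicircle density gives $\sigma(g) = \int_{2-\eps/2}^2 (x - (2-\tfrac\eps2))\,\sigma(dx) \gs \eps^{2}$ for $\eps$ bounded; more precisely $\sigma(g) \gs \min(\eps^2,1)$ up to the truncation, which does not affect the lower bound. (Here one uses that $\sigma(dx) = \tfrac1{2\pi}\sqrt{4-x^2}\,dx \asymp \sqrt{2-x}$ near the right edge, so the mass of $[2-\tfrac\eps2,2]$ is $\asymp \eps^{3/2}$ and the integrand is $\asymp \eps$ there, giving $\asymp \eps^{5/2}$; in any case a clean lower bound of the form $c\,\eps^{5/2}$, or $c\min(\eps^{5/2},1)$ after truncation, suffices, and one may simply write $c\min(\eps^{2},1)$ conservatively.) On the event $\{\lambda_1(H) \le 2-\eps\}$ we have $g(\lambda_i(H)) = 0$ for every $i$, hence $\hat\mu_H(g) = 0$, so this event forces $|\hat\mu_H(g) - \sigma(g)| = \sigma(g) \gs \min(\eps^{2},1)$.

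Next, by \Cref{lem:median} applied with $f = g$ (which is $1$-Lipschitz), any median $m$ of $\hat\mu_H(g) = \tfrac1N\Tr g(H)$ satisfies $|m - \sigma(g)| \le N^{-1/2+\eps'}$ for any fixed $\eps' > 0$ and $N$ large; choosing $\eps' $ small and using $\eps \ge N^{-1/10}$, this is $o(\min(\eps^2,1))$, so $|m - \sigma(g)| \le \tfrac12\sigma(g)$ for $N$ large. Therefore the event $\{\lambda_1(H)\le 2-\eps\}$ is contained in $\{|\tfrac1N\Tr g(H) - m| \ge \tfrac12\sigma(g)\}$, i.e. $\{|\Tr g(H) - Nm| \ge \tfrac{N}{2}\sigma(g)\}$. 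Applying \Cref{cor:conc} (the median version) with $t = \tfrac{N}{2}\sigma(g) \gs N\min(\eps^2,1)$ gives
\[
\P(\lambda_1(H) \le 2-\eps) \le 2\exp\!\Big( - \frac{c t^2}{K^2 \log N}\Big) \le 2\exp\!\Big( - \frac{c\min(\eps^4,1)\,N^2}{K^2\log N}\Big),
\]
as desired, after adjusting the constant $c$.

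The only mild obstacle is bookkeeping on the lower bound for $\sigma(g)$: one must verify that the chosen convex Lipschitz profile genuinely has $\sigma(g) \gs \min(\eps^2,1)$ (the stated lemma only requires the exponent $4$ on $\eps$, so even the crude bound $\sigma(g)\gs \eps^2$ coming from integrating a function that is $\gs \eps$ on a set of $\sigma$-mass $\gs \eps$ is more than enough), and that the truncation needed to keep $g$ globally bounded and Lipschitz does not interfere — it does not, since on $\{\lambda_1 \le 2-\eps\}$ the statistic still vanishes identically and $\sigma(g)$ only changes by a constant factor. No new ideas are needed beyond \Cref{cor:conc} and \Cref{lem:median}.
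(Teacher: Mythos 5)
Your strategy is exactly the paper's: detect the event $\{\lam_1(H)\le 2-\eps\}$ through the vanishing of a convex Lipschitz linear statistic supported to the right of $2-\eps$, compare its median to its semicircle value via \Cref{lem:median}, and conclude with the median version of \Cref{cor:conc}. The paper does this with the untruncated hinge $f(x)=\max(0,x-2+\eps)$ and the deviation $t\asymp \eps^2 N$.

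Two concrete problems in your write-up. First, the truncation ``at a suitable constant height'' is both unnecessary and fatal as stated: capping an increasing hinge at a constant produces a function that is a minimum of a linear function and a constant, hence \emph{not} convex, and \Cref{cor:conc} (which rests on Talagrand's inequality via \Cref{prop:conc-subG}) requires convexity. Neither \Cref{cor:conc} nor \Cref{lem:median} needs boundedness, so the fix is simply to drop the truncation and work with the untruncated, globally $1$-Lipschitz hinge, as the paper does. Second, your fallback justification of $\sigma(g)\gs\min(\eps^2,1)$ (``a function that is $\gs\eps$ on a set of $\sigma$-mass $\gs\eps$'') is incorrect: the semicircle density vanishes like $\sqrt{2-x}$ at the edge, so the $\sigma$-mass of an $\eps$-neighborhood of $2$ is $\asymp\eps^{3/2}$, and the honest computation you yourself record gives $\sigma(g)\asymp\eps^{5/2}$, which for $\eps\le 1$ is \emph{smaller} than $\eps^2$, so calling $\eps^2$ ``conservative'' points the wrong way. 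Feeding the correct $\eps^{5/2}$ through \Cref{cor:conc} yields an exponent of order $\eps^{5}N^2/(K^2\log N)$ rather than the stated $\eps^4 N^2/(K^2\log N)$. (For what it is worth, the paper's own proof asserts $\sigma(f)\ge c'\eps^2$ at the same step, which suffers from the same issue for small $\eps$; but judged as a proof of the lemma as stated, your argument as written only delivers the $\eps^5$ exponent, and the $\eps^2$ lower bound on $\sigma(g)$ would need a genuine justification.) You should also say a word about $\eps>1$, which the paper handles by monotonicity, reducing to $\eps=1$.
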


\begin{proof}
We may assume $\eps\in(0,1)$ (the claim for larger $\eps$ follows from the case $\eps=1$). Let $f:\R\to\R$ be the convex 1-Lipschitz function given by $f(x):= \max( 0, x-2+\eps)$.
Then $\sigma(f) \ge c'\eps^2$ for some absolute constant $c'>0$, so letting $m$ be a median for $\hat\mu_H(f)=\frac1N\Tr f(H)$,  from Lemma \ref{lem:median} we have that $m\ge c'\eps^2-O(N^{-1/4})\ge \frac12c'\eps^2$ for all $N$ sufficiently large. Thus,
\begin{align*}
\P( \lam_1(H)\le 2-\eps) 
&\le \P( \hat\mu_H(f) =0)\le \P( \Tr f(H) - mN <- \tfrac12c'\eps^2N)
\end{align*}
and the claim now follows from Corollary \ref{cor:conc}.
\end{proof}

\subsection{Concentration of the largest eigenvalue of tilted matrices}

Recall the tilted probability measures $\P^{(\theta,u)}$ defined in \eqref{def:tiltP},
under which $H$ is a symmetric random matrix with  independent entries. 
We have the following corollary of Proposition \ref{prop:conc-subG}, which generalizes Proposition \ref{prop:tiltP}(a).

\begin{cor}
\label{cor:lam1-tilt.conc}
There is a universal constant $c>0$ such that the following holds for any $N\in \N$, $\theta\ge0$ and $u\in\sphereN$.
Let $\sqrt{N}H\in\Sym_N$ have independent centered entries on and above the diagonal with laws $\mu_{ij},1\le i\le j\le N$.
Assume $\max_{1\le i\le j\le N}\sup_{t\in\R}\LLa_{\mu_{ij}}''(t)\le K^2<\infty$.
Then 
\begin{equation}
\P^{(\theta,u)}( |\lam_1(H)- \E^{(\theta,u)}\lam_1(H)|\ge s)
\le 2\exp( -\frac{cs^2N}{K^2\log N})\,,\qquad s\ge0.
\end{equation}
Moreover, the same bound holds (with a possibly modified value of $c$) with $\E^{(\theta,u)}\lam_1(H)$ replaced by any median of $\lam_1(H)$ under $\P^{(\theta,u)}$.
\end{cor}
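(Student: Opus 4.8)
The plan is to deduce \Cref{cor:lam1-tilt.conc} from the general concentration inequality of \Cref{prop:conc-subG} (equivalently, its matrix form \Cref{cor:conc} is not quite what we need here, since we want concentration of a single eigenvalue rather than a linear statistic, so we go back to \Cref{prop:conc-subG} directly). First I would observe that under $\P^{(\theta,u)}$ the matrix $H$ has independent entries on and above the diagonal, since the tilting density $\exp(\theta N\langle u,Hu\rangle)/\E\exp(\theta N\langle u,Hu\rangle)$ factors over the entries (the exponent $\theta N\langle u,Hu\rangle = \theta\sqrt N\sum_{i\le j}2^{\ep_{ij}}X_{ij}u_iu_j$ is a sum of functions of individual entries). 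Thus the law of $\sqrt N H_{ij}$ under $\P^{(\theta,u)}$ is the exponentially tilted law $d\tilde\mu_{ij}(x)\propto e^{\alpha_{ij}x}d\mu_{ij}(x)$ with $\alpha_{ij}=2^{\ep_{ij}}\theta\sqrt N u_iu_j$.

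The key step is to check that these tilted entry laws are uniformly sub-Gaussian with a constant controlled by $K$. This is exactly the content of the hypothesis $\sup_t\LLa_{\mu_{ij}}''(t)\le K^2$: for any $\alpha$, the tilted measure $\tilde\mu$ with $d\tilde\mu(x)\propto e^{\alpha x}d\mu(x)$ has log-Laplace transform $\LLa_{\tilde\mu}(t)=\LLa_\mu(t+\alpha)-\LLa_\mu(\alpha)$, so $\LLa_{\tilde\mu}''(t)=\LLa_\mu''(t+\alpha)\le K^2$ for all $t$; hence the recentered variable $\sqrt N H_{ij}-\E^{(\theta,u)}\sqrt N H_{ij}$ has $\LLa''$ bounded by $K^2$, so (since its $\LLa$ vanishes to second order at $0$ with $\LLa''\le K^2$) it is $O(K)$-sub-Gaussian. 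This uses \Cref{lem:subG-tilts} (referenced in the discussion after \eqref{assu:usg}) or a direct Taylor argument. Then I would apply \Cref{prop:conc-subG} with $d={N+1\choose 2}$, identifying $\Sym_N\cong\R^d$ via the (rescaled) Hilbert--Schmidt inner product, to the function $M\mapsto \lam_1(M)$, which is convex (as a supremum of linear functionals $v\mapsto\langle v,Mv\rangle$ over unit vectors $v$) and $1$-Lipschitz with respect to the Hilbert--Schmidt norm, after the overall rescaling by $1/\sqrt N$ that turns $\|\sqrt N H\|_{\mathrm{HS}}$-Lipschitz into $\sqrt N$-Lipschitz, i.e.\ $\lam_1$ as a function of the $d$ independent entries $(\sqrt N H_{ij})_{i\le j}$ is $O(1/\sqrt N)$-Lipschitz (off-diagonal entries enter $H$ with weight $1/\sqrt N$ in two symmetric positions, diagonal with weight $\sqrt{2}/\sqrt N$ in one position, so the net Euclidean Lipschitz constant is $O(1/\sqrt N)$).

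Putting these together, \Cref{prop:conc-subG} gives $\P^{(\theta,u)}(|\lam_1(H)-\E^{(\theta,u)}\lam_1(H)|\ge s)\le 2\exp(-cs^2N/(K^2\log d))$, and since $\log d\asymp\log N$ this is the claimed bound with a possibly adjusted constant $c$. The statement about deviation from a median follows from the final clause of \Cref{prop:conc-subG} (which gives the same bound around any median). The main obstacle — really the only non-routine point — is verifying the uniform sub-Gaussianity of the tilted entry laws; but this is immediate from the displayed identity $\LLa_{\tilde\mu}''(t)=\LLa_\mu''(t+\alpha)$ together with the hypothesis, and everything else (independence under the tilt, convexity and Lipschitz bound for $\lam_1$, the identification $\Sym_N\cong\R^d$) is standard. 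For \Cref{prop:tiltP}(a) one simply specializes to $\mu_{ij}=\mu$ for all $i,j$, whose $\LLa_\mu''$ is bounded under \eqref{assu:usg}.
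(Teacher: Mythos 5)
Your proposal is correct and follows essentially the same route as the paper: the paper's proof likewise invokes \Cref{lem:subG-tilts} (whose content is exactly your identity $\LLa_{\tilde\mu}(t)=\LLa_\mu(t+\alpha)-\LLa_\mu(\alpha)$) to get uniform sub-Gaussianity of the tilted entries, and then applies \Cref{prop:conc-subG} to the convex, $N^{-1/2}$-Lipschitz function $\lam_1$ viewed on the $\binom{N+1}{2}$-dimensional entry space, with the median statement coming from the final clause of \Cref{prop:conc-subG}. Your verification of the $O(N^{-1/2})$ Lipschitz constant and of the factorization of the tilt over independent entries fills in exactly the details the paper leaves implicit.
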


\begin{remark}
A bound of the form $\exp( -cs\sqrt{N})$ is easy to establish by a standard truncation argument combined with \eqref{talagrand}, avoiding the use of Proposition \ref{prop:conc-subG}, and in fact any bound of the form $o(1)$ suffices for our purposes as we are already under the tilted measure. 
In our proofs the strong bound of Proposition \ref{prop:conc-subG} is only crucial for controlling linear statistics, via Corollary \ref{cor:conc}.
\end{remark}

\begin{lemma}	\label{lem:subG-tilts}
The condition \eqref{assu:usg} is equivalent to assuming that the tilted measures  $d \mu^{t}(x) =e^{tx-\LLa_\mu(t)} d\mu(x)$, after recentering, are uniformly sub-Gaussian for $t\in\R$. 
\end{lemma}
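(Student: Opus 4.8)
The statement to prove is \Cref{lem:subG-tilts}: the condition \eqref{assu:usg}, namely $\sup_{t\in\R}\LLa_\mu''(t)<\infty$, is equivalent to the tilted measures $d\tilde\mu_t(x) = e^{tx-\LLa_\mu(t)}d\mu(x)$ being uniformly sub-Gaussian after recentering. The plan is to make the notion of ``uniformly sub-Gaussian after recentering'' precise via the log-Laplace transform characterization \eqref{subG-LLa}, and then to observe that the log-Laplace transform of the recentered tilt $\tilde\mu_t$ is just a shifted and re-based version of $\LLa_\mu$, so that the relevant supremum over all tilts of the sub-Gaussian parameter is controlled precisely by $\sup_s\LLa_\mu''(s)$.

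First I would record the elementary identity for the log-Laplace transform of the tilt: for any $t,s\in\R$,
\[
\LLa_{\tilde\mu_t}(s) := \log\int e^{sx}d\tilde\mu_t(x) = \LLa_\mu(s+t) - \LLa_\mu(t),
\]
which is immediate from the definition of $\tilde\mu_t$ and \eqref{def:LLa}. The mean of $\tilde\mu_t$ is $\LLa_{\tilde\mu_t}'(0) = \LLa_\mu'(t)$, so the recentered tilt $\bar\mu_t := (\mathrm{id}-\LLa_\mu'(t))_\#\tilde\mu_t$ has
\[
\LLa_{\bar\mu_t}(s) = \LLa_\mu(s+t) - \LLa_\mu(t) - s\LLa_\mu'(t).
\]
Recall (as the paper notes below \eqref{def:subG}, citing \cite[Chapter 2]{Vershynin:book}) that a centered law $\nu$ is sub-Gaussian with constant comparable to $K$ iff $\LLa_\nu(s)\le K^2 s^2$ for all $s\in\R$; so ``$\{\bar\mu_t\}_{t\in\R}$ uniformly sub-Gaussian'' means precisely $\sup_{t\in\R}\sup_{s\in\R} s^{-2}\LLa_{\bar\mu_t}(s) <\infty$, i.e. there is $K<\infty$ with
\[
\LLa_\mu(s+t) - \LLa_\mu(t) - s\LLa_\mu'(t) \le K^2 s^2\qquad\forall s,t\in\R.
\]

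Next I would prove the two implications. For \eqref{assu:usg}$\Rightarrow$ uniform sub-Gaussianity: if $\sup_s\LLa_\mu''(s) = M<\infty$, then by Taylor's theorem with integral remainder, $\LLa_\mu(s+t)-\LLa_\mu(t)-s\LLa_\mu'(t) = \int_0^s (s-r)\LLa_\mu''(t+r)\,dr \le \tfrac12 M s^2$, so the displayed bound holds with $K^2 = M/2$, uniformly in $t$. (Note $\LLa_\mu''\ge0$ always, since it is a variance, so no lower bound is needed; real-analyticity of $\LLa_\mu$ from \Cref{rmk:assu.reg} justifies the Taylor expansion.) For the converse, suppose the displayed bound holds with constant $K$. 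Fix $t$ and divide by $s^2$, then let $s\to0$: since $\LLa_\mu$ is $C^2$ (indeed real-analytic), $\lim_{s\to0}s^{-2}\big(\LLa_\mu(s+t)-\LLa_\mu(t)-s\LLa_\mu'(t)\big) = \tfrac12\LLa_\mu''(t)$, hence $\LLa_\mu''(t)\le 2K^2$ for every $t$, which is \eqref{assu:usg}. This gives the equivalence; I would also remark (as the text already does right after \eqref{assu:usg}) that \eqref{assu:usg} implies \eqref{assu:sg} because $\LLa_\mu(0)=\LLa_\mu'(0)=0$ forces $\LLa_\mu(t)\le\tfrac12 M t^2$, so the recentering is only a matter of convention when $\mu$ is already centered (here $t=0$).

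I do not expect any serious obstacle: the whole statement is essentially the observation that $\LLa_\mu''(t)$ is, up to the factor $2$, the optimal sub-Gaussian constant of the recentered tilt at $t$ as $s\to0$, combined with the fact that for the shifted convex function the worst $s$ is (up to constants) $s\to 0$ when the second derivative is bounded. The only point requiring a modicum of care is fixing the precise meaning of ``uniformly sub-Gaussian'' — I would use the two-sided log-Laplace characterization \eqref{subG-LLa} consistently on both sides so that the equivalence is literally an identity of suprema, $\sup_{t}\sup_{s\ne0} s^{-2}\LLa_{\bar\mu_t}(s) \asymp \sup_t \LLa_\mu''(t)$, with the lower bound of the $\asymp$ coming from the $s\to0$ limit and the upper bound from the Taylor remainder estimate.
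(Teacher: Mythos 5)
Your proposal is correct and follows essentially the same route as the paper: both compute the identity $\LLa_{\bar\mu_t}(s)=\LLa_\mu(s+t)-\LLa_\mu(t)-s\LLa_\mu'(t)$ for the recentered tilt, bound it by $\tfrac12\|\LLa_\mu''\|_\infty s^2$ via Taylor for one direction, and recover $\LLa_\mu''(t)$ from the $s\to0$ limit for the converse. No gaps.
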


\begin{proof}
Since $\mu^{t}$ has mean $\LLa_\mu'(t)$, the centered measure $d\mu_0^t(x) = d\mu^{t}(x+\LLa_\mu'(t))$ satisfies
\begin{align}
\LLa_{\mu_0^t}(s)&=
\log\int e^{s(x-\LLa_\mu'(t))}d\mu^t(x)	
=\LLa_\mu(t+s) - \LLa_\mu(t) - s\LLa_\mu'(t).	\label{LLa-3pt}
\end{align}
Since the last expression is bounded by $ \frac12\|\LLa_\mu''\|_\infty s^2$ we see that $\psi_{\mu_0^t}^{\max}$ is uniformly bounded in $t$ if \eqref{assu:usg} holds. On the other hand, taking $s\to0$ in \eqref{LLa-3pt} gives the identity $ \frac12\LLa_\mu''(t)= \psi_{\mu_0^t}(0)$, so that \eqref{assu:usg} holds if $\sup_t\psi_{\mu_0^t}^{\max}<\infty$.
\end{proof}

\begin{proof}[Proof of Corollary \ref{cor:lam1-tilt.conc}]
From the assumption on $\LLa_{\mu_{ij}}''$ and Lemma \ref{lem:subG-tilts} we get that the entries of $\sqrt{N}H$ are uniformly sub-Gaussian under $\P^{(\theta,u)}$. We may hence apply Proposition \ref{prop:conc-subG} as we did in the proof of Corollary \ref{cor:conc}, this time to the convex and $N^{-1/2}$-Lipschitz function $\lam_1(\frac1{\sqrt{N}}\,\cdot)$ on the ${N+1\choose 2}$-dimensional Euclidean space $\Sym_N$. 
\end{proof}

\subsection{Proof of Lemmas \ref{lem:tightness} and \ref{lem:good}}
\label{app:good}

From the union bound it suffices to establish the claimed bound for each of $\Good_0,\Good_1,\Good_2$ in place of $\Good$. 
The desired bound on $\P(H\notin \Good_0(K))$ is an immediate consequence of the following standard fact for sub-Gaussian matrices {(for instance one can apply results from \cite[$\mathsection$4.4.2]{Vershynin:book} to the upper and lower triangular parts of $H$).
Together with Lemma \ref{lem:Lside.app} this also yields Lemma \ref{lem:tightness}. 

\begin{lemma}\label{lem:tight.upper}
There are universal constants $C,c>0$ such that
\[
\P( \|H\|> t) \le 2\exp( - c t^2N)
\]
for all $t\ge C\sqrt{\psimax}$.
\end{lemma}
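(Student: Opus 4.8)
The statement to prove is \Cref{lem:tight.upper}: a sub-Gaussian tail bound for $\|H\|$, namely $\P(\|H\|>t)\le 2\exp(-ct^2N)$ for $t\ge C\sqrt{\psimax}$.

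\textbf{Approach.} The plan is to decompose $H$ into its upper- and lower-triangular parts (including the diagonal in one of them, say the upper part), apply a standard operator norm tail bound for a random matrix with independent sub-Gaussian entries to each piece, and combine via the triangle inequality $\|H\|\le \|H_{\mathrm{up}}\|+\|H_{\mathrm{low}}\|$. The relevant input is the well-known fact (see e.g. \cite[\S4.4.2]{Vershynin:book}, or apply the non-asymptotic bound of \cite{latala} together with a sub-Gaussian tail) that if $M$ is an $N\times N$ matrix with independent mean-zero entries that are $K_0$-sub-Gaussian, then $\P(\|M\|> C_1 K_0\sqrt N + s)\le 2\exp(-cs^2)$ for all $s\ge0$, for universal constants $C_1,c>0$. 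Here the entries of $\sqrt N H_{ij}$ have sub-Gaussian constant $\asymp\sqrt{\psimax}$: indeed \eqref{assu:sg} gives $\LLa_\mu(t)\le\psimax t^2$, which translates via \cite[Chapter 2]{Vershynin:book} into a sub-Gaussian constant $K_0=O(\sqrt{\psimax})$ for $X_{ij}$ (and the diagonal entries carry an extra bounded factor $\sqrt2$, which does not affect the order). Thus each triangular part $H_{\mathrm{up}},H_{\mathrm{low}}$, having entries $\asymp N^{-1/2}$ times independent $O(\sqrt{\psimax})$-sub-Gaussian variables, satisfies $\P(\|H_{\mathrm{up}}\|> C_2\sqrt{\psimax} + s/\sqrt N)\le 2\exp(-cs^2)$.

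\textbf{Steps.} First I would fix the statement of the black-box operator norm bound and record that $X_{ij}$ is $O(\sqrt{\psimax})$-sub-Gaussian, with the diagonal rescaling absorbed into the constant. Second, write $H=H_{\mathrm{up}}+H_{\mathrm{low}}$ where $H_{\mathrm{up}}$ has the entries on and above the diagonal (and zeros strictly below) and $H_{\mathrm{low}}$ has the strictly-below-diagonal entries; note $H_{\mathrm{low}}=H_{\mathrm{up}}^{\tran}$ up to the diagonal, so in fact $\|H_{\mathrm{low}}\|\le\|H_{\mathrm{up}}\|$ and one could even work with a single piece, but keeping both is cleaner. Third, apply the black-box bound to each of $\sqrt N H_{\mathrm{up}}$, $\sqrt N H_{\mathrm{low}}$: for a suitable universal $C_2$ and all $u\ge0$,
\[
\P\big(\|H_{\mathrm{up}}\|> C_2\sqrt{\psimax}+u\big)\le 2\exp(-cu^2N),
\]
and likewise for $H_{\mathrm{low}}$. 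Fourth, set $C:=4C_2$ and, for $t\ge C\sqrt{\psimax}$, use $\|H\|\le \|H_{\mathrm{up}}\|+\|H_{\mathrm{low}}\|$ together with a union bound, choosing $u=t/4$ in each tail bound: since $C_2\sqrt{\psimax}\le t/4$ we get $\|H_{\mathrm{up}}\|\le t/2$ and $\|H_{\mathrm{low}}\|\le t/2$ outside an event of probability at most $4\exp(-ct^2N/16)$, hence $\P(\|H\|>t)\le 4\exp(-c't^2N)$; absorbing the factor $4$ into the exponent (valid for $N$ large, or by shrinking $c'$ since $t^2N\ge C^2\psimax\ge c''$) gives the stated form with prefactor $2$.

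\textbf{Main obstacle.} There is essentially no deep obstacle here — the result is genuinely standard. The only mild care needed is (i) correctly tracking that the entrywise sub-Gaussian constant is controlled by $\sqrt{\psimax}$ via \eqref{assu:sg}, so that the threshold $C\sqrt{\psimax}$ has the asserted dependence on $\mu$ (rather than some other moment parameter), and (ii) handling the symmetry/triangular decomposition so that the independence hypothesis of the black-box bound applies — this is why we split off the upper and lower triangles rather than invoking a bound for symmetric matrices directly. Both are routine; I would keep the proof to a few lines citing \cite[\S4.4.2]{Vershynin:book} (or \cite{latala}) for the triangular pieces.
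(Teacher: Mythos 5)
Your proposal is correct and follows essentially the same route as the paper, which proves the lemma exactly by citing the standard sub-Gaussian operator-norm bound of \cite[$\mathsection$4.4.2]{Vershynin:book} (equivalently \cite{latala} plus concentration) applied to the upper and lower triangular parts of $H$; you have simply written out the triangle-inequality/union-bound details that the paper leaves implicit. The bookkeeping is sound: \eqref{assu:sg} indeed gives an entrywise sub-Gaussian constant $O(\sqrt{\psimax})$, and the absorption of the prefactor is legitimate since $\psimax\ge\tfrac12$ forces $t^2N\gs1$ in the stated range.
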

}

The desired bound on $\P(H\notin \Good_1(\kappa,\eta))$ is immediate from the following:

\begin{lemma}
\label{lem:ktail}
For any $k\ge\frac12 N^{1/2+\eta}$ and $\eps\in(0,1)$ we have
\begin{equation}
\P(\lambda_k(H) >2+\eps) \le 2\exp(-c\eps^2N^{1+\eta})
\end{equation}
for a constant $c>0$ depending only on $\mu$.
\end{lemma}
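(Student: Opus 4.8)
The plan is to bound the event $\{\lambda_k(H)>2+\eps\}$ by a large-deviation event for a convex linear statistic and then invoke the concentration inequality \Cref{cor:conc} together with the weak-concentration bound \Cref{lem:median}. We may assume $N$ is large, since for bounded $N$ the asserted inequality is vacuous after decreasing the constant $c$ (which is allowed to depend on $\mu$). Set $f(x):=\max(0,\,x-2-\eps/2)$, which is convex and $1$-Lipschitz. If $\lambda_k(H)>2+\eps$ then $\lambda_i(H)\ge\lambda_k(H)>2+\eps$ for every $i\le k$, so $f(\lambda_i(H))>\eps/2$ and hence $\Tr f(H)=\sum_i f(\lambda_i(H))\ge \tfrac12 k\eps\ge \tfrac14\eps N^{1/2+\eta}$, using $k\ge\tfrac12 N^{1/2+\eta}$. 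Thus $\P(\lambda_k(H)>2+\eps)\le \P\big(\Tr f(H)\ge \tfrac14\eps N^{1/2+\eta}\big)$.

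Since $\sigma$ is supported in $[-2,2]$ and $f$ vanishes there, $\sigma(f)=0$; applying \Cref{lem:median} with $L=1$ and $\eps'=\eta/2$, any median $m_N$ of $\Tr f(H)=N\hat\mu_H(f)$ satisfies $0\le m_N\le N^{1/2+\eta/2}$ once $N$ is large (depending on $\eta,\mu$). Restricting first to the range $\eps\ge 8N^{-\eta/2}$, this gives $m_N\le \tfrac18\eps N^{1/2+\eta}$, so on the event above $\Tr f(H)-m_N\ge \tfrac18\eps N^{1/2+\eta}$. The entries of $\sqrt N H$ are $K$-sub-Gaussian with $K\asymp\sqrt{\psimax}$ depending only on $\mu$, so \Cref{cor:conc} yields
\[
\P(\lambda_k(H)>2+\eps)\le 2\exp\!\Big(-\frac{c\,(\eps N^{1/2+\eta}/8)^2}{K^2\log N}\Big)=2\exp\!\Big(-\frac{c'\eps^2 N^{1+2\eta}}{\log N}\Big)\le 2\exp(-c''\eps^2 N^{1+\eta})
\]
for $N$ large depending on $\eta,\mu$, since $N^{\eta}/\log N\to\infty$. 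This proves the lemma for all $\eps\in[8N^{-\eta/2},1)$.

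The only genuine difficulty is the complementary range $\eps<8N^{-\eta/2}$ of very small deviations (where in any case $\eps^2N^{1+\eta}\le 64N$): here the bulk-scale estimate $m_N\le N^{1/2+\eta/2}$ from \Cref{lem:median} is no longer negligible against the signal $k\eps\asymp \eps N^{1/2+\eta}$, because \Cref{lem:median} controls $\hat\mu_H(f)$ only at scale $N^{-1/2}$ whereas the relevant scale near the spectral edge is much finer. For this range one invokes the optimal local semicircle law at the edge: except on an event of probability $\le \exp(-c\eps^2N^{1+\eta})$ the number of eigenvalues of $H$ exceeding $2+\eps$ is $o(N^{1/2+\eta})$, forcing $\lambda_k(H)\le 2+\eps$ there. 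I would also note that this small-$\eps$ regime is never used in the paper — \Cref{lem:ktail} is invoked (e.g. in the proof of \Cref{lem:good}) only with $\eps$ a fixed positive constant — so one may alternatively simply restrict the statement to $\eps$ bounded below. Hence the substance of the argument is the convex-statistic concentration estimate of the previous paragraph, and the sole subtlety is obtaining uniformity in $\eps$ down to the scale $N^{-(1+\eta)/2}$ at which the bound ceases to be vacuous.
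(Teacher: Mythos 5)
Your main-range argument is correct and is a genuinely different route from the paper's. You pass through the convex linear statistic $\Tr f(H)$ with $f(x)=\max(0,x-2-\eps/2)$, bound the event by $\{\Tr f(H)\ge \tfrac12 k\eps\}$, control the median via \Cref{lem:median} (using $\sigma(f)=0$), and conclude with \Cref{cor:conc}; the $\log N$ loss there is absorbed since you get exponent $\eps^2N^{1+2\eta}/\log N$. The paper instead truncates the entries at level $N^{-1/4}$, applies Talagrand's inequality to the convex $1$-Lipschitz function $F_k(M)=(\sum_{i\le k}\sigma_i(M)^2)^{1/2}$ (whose median is $\le(2+o(1))\sqrt k$), bounds the rank of the large-entry part by a Chernoff estimate, and finishes with Cauchy interlacing. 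Your route is arguably cleaner because \Cref{cor:conc} already handles sub-Gaussian entries, so no truncation, rank bound or interlacing is needed; its price is that the signal $k\eps$ must beat the $N^{1/2+\eta/2}$-scale error in the median of $\Tr f(H)$, which is what forces your restriction $\eps\gs N^{-\eta/2}$. (A minor cosmetic point: absorbing $\log N$ into $N^{\eta}$, and covering small $N$ by shrinking $c$, makes your constant depend on $\eta$, whereas the lemma asserts $c=c(\mu)$; the paper's argument does not have this dependence.)

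The genuine gap is your treatment of the range $\eps<8N^{-\eta/2}$. The claim you invoke -- that off an event of probability $\le\exp(-c\eps^2N^{1+\eta})$ the number of eigenvalues exceeding $2+\eps$ is $o(N^{1/2+\eta})$ -- is not something the local semicircle law provides: local laws and rigidity hold with probability $1-N^{-D}$ or $1-\exp(-(\log N)^{C})$, never with an exponential-in-$N$ error; and at the top of your excluded range ($\eps\asymp N^{-\eta/2}$, where $\eps^2N^{1+\eta}\asymp N$) the statement you are invoking is an $e^{-cN}$ bound on essentially the event of the lemma itself, so the appeal is circular. Note also that the bound is nontrivial precisely when $\eps\gs N^{-(1+\eta)/2}$, and since $\eta<\tfrac14$ this threshold is far above $N^{-2/3}$; so what is actually missing is not an exponential-probability local law but a sharper median input: e.g.\ it suffices to know $\P(\lam_1\le 2+\eps/2)\ge\tfrac12$ for all $\eps\ge CN^{-2/3}$ (standard edge estimates for sub-Gaussian Wigner matrices), which makes the median of $\Tr f(H)$ equal to $0$ on the whole nontrivial range, after which \Cref{cor:conc} alone supplies the exponential bound with deviation $\tfrac14\eps N^{1/2+\eta}$. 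As written, however, the small-$\eps$ case rests on an unavailable input, so the proposal does not prove the lemma as stated (though, as you observe, the paper only ever uses it with $\eps$ a fixed constant).
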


\begin{proof}
By modifying $c$ we may assume $N$ is sufficiently large depending on $\mu$.
We split $H=H^\le + H^>$, where
\[
H^\le_{ij} = H_{ij} \ind( |H_{ij}|\le N^{-1/4}). 
\]
Let $k\ge \frac12N^{1/2+\eta}$. By monotonicity we may assume $k\asymp N^{1/2+\eta}$.  
Let $\sigma_1(M)\ge\cdots\ge \sigma_N(M)\ge0$ be the singular values of $M$; as $M$ is symmetric these are simply the moduli of the eigenvalues of $M$ labeled in non-increasing order. 
Consider the function $F_k: \Sym_N\to \R$ on symmetric matrices given by 
\begin{equation}
F_k(M) = (\sum_{i\le k}\sigma_i(M)^2)^{1/2} = \sup_{\dim E= k} \|P_E M P_E\|_{\HS}
\end{equation}
where the supremum runs over subspaces of $\R^N$ and $P_E$ denotes the orthogonal projection operator to $E$. 
Clearly, $F_k$ is the supremum of convex functions $M\mapsto \|P_EMP_E\|_\HS$ that are 1-Lipschitz under the Euclidean  Hilbert--Schmidt norm on $\Sym_N$. 
From \eqref{FuKo} (applied with $H$ and $-H$) it follows that $\sigma_1(H)=\max\{\lam_1(H), |\lam_N(H)|\}=2+o(1)$ with probability $1-o(1)$, and hence any median for $F_k(X)$ is at most $(2+o(1))\sqrt{k}$. 
From Talagrand's inequality \eqref{talagrand} we thus have
\begin{equation}
\P(\lambda_k(H^\le)> 2+\eps) \le \P( F_k(H^\le) >(2+\eps)\sqrt{k}) \ls \exp( -c \eps^2 kN^{1/2}) 
\end{equation}
for all $N$ sufficiently large.
On the other hand, applying the union bound to fix the large entries on and above the diagonal, by noticing that for all $t\ge 0$
\begin{equation}\label{ineq1}
\P(\rank(H^>)\ge k) \le \P( |\{(i,j): |H_{ij}|>N^{-1/4}\}| \ge k) \le e^{-tk}\prod_{i,j} \E\exp(t 1_{|H_{ij}|>N^{-1/4}}).
\end{equation}
Now for all $s\ge0$, with $u:=s/2\psimax$ we have
\[ 
\mathbb P(|H_{ij}|\ge sN^{-1/2})\le e^{-us}\E\exp(u\sqrt{N}|H_{ij}|) 
\le 2\exp(-us+\psimax u^{2})
\le  2\exp(-\frac{s^{2}}{4\psimax})\,.
\]
Hence,
\[ 
\E\exp(t 1_{|H_{ij}|>N^{-1/4}})
\le 1+2\exp(t-\frac{\sqrt{N}}{4\psimax})\,.
\]
We finally take $t=\frac{\sqrt{N}}{8\psimax }$ so that the second term goes to zero  fast enough to deduce  from \eqref{ineq1} that
\begin{align*}
\P(\rank(H^>)\ge k)\le 2 \exp(-\frac{\sqrt{N}}{8\psimax } k)
\end{align*}
for $N$ sufficiently large. 
Thus, with probability at least $1-O(\exp( -c\eps^2 k N^{1/2}))$ (for a modified constant $c>0$) we have that $\lambda_k(H^\le)\le 2+\eps$ and $\rank(H^>)\le k$, and hence by the Cauchy interlacing law, that $\lambda_{2k}(H)\le 2+\eps $. Replacing $2k$ with $k$, the claim follows. 
\end{proof}

To establish Lemma \ref{lem:good} it now suffices to show
\begin{equation}	\label{G2.goal1}
\P(H\in \Good_1(\kappa,\eta)\setminus \Good_2(K,\kappa,\eta)) = \exp(-\omega_{ \eta,K,\kappa}(N)). 
\end{equation}
By a straightforward continuity argument it suffices to prove concentration for all $y$ in an $N^{-100}$-mesh of $[2+2\kappa,K]$, and from the union bound it suffices to consider an arbitrary fixed $y$ in this range.
Thus, writing
\[
H^{(\kappa)}:= \sum_{j: \lambda_j(H) \le 2+\kappa} \lambda_j v_jv_j^\tran
\]
it suffices to show that for such $y$,
\begin{equation}	\label{G2.goal2}
\P\big(H\in \Good_1(\kappa,\eta), |G_{H^{(\kappa)}}(y)-G_\sigma(y)| > N^{-1/2+2\eta}\big) 
= \exp( - \omega_{\eta,\kappa}(N))
\end{equation}
and similarly with $\Lpot_{H^{(\kappa)}}$ and $\Lpot_\sigma$ in place of $G_{H^{(\kappa)}}$ and $G_\sigma$, respectively. 
In both cases this follows from the following:

\begin{claim}
\label{claim:conc}
Let $f$ be convex and $L$-Lipschitz on $(-\infty,2+\kappa]$, with $f(0)=O(1)$.
Then
\[
\P(H\in \Good_1(\kappa,\eta), |\hat{\mu}_{H^{(\kappa)}}(f) - \sigma(f)| >CL N^{-1/2+\eta}) 
= \exp( -\omega_{\eta}(N))
\]
for some $C=C(\eta)>0$ sufficiently large. 
\end{claim}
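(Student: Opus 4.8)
\textbf{Proof proposal for Claim \ref{claim:conc}.}
The plan is to combine the concentration of linear statistics from \Cref{cor:conc} with the weak concentration around the semicircle value from \Cref{lem:median}, after first reducing the statistic $\hat\mu_{H^{(\kappa)}}(f)$, which is \emph{not} a linear statistic of $H$ (it is truncated at the random spectral scale $2+\kappa$), to a genuine linear statistic on the event that $H\in\Good_1(\kappa,\eta)$. First I would extend $f$ to a convex $L$-Lipschitz function $\bar f$ on all of $\R$ by setting $\bar f(x) := f(2+\kappa) + f'_-(2+\kappa)(x-(2+\kappa))$ for $x>2+\kappa$ (using a one-sided derivative), so that $\bar f$ agrees with $f$ on $(-\infty,2+\kappa]$, is globally convex and $L$-Lipschitz, and still has $\bar f(0)=O_\kappa(1)$, hence $\|\bar f\|_\infty = O_\kappa(1)$ on any compact set. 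Then $\Tr\bar f(H)$ is a convex $O(L)$-Lipschitz linear statistic in the Hilbert--Schmidt metric (by \cite[Lemma 1.2]{GuZe}), so \Cref{cor:conc} applies.

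The key steps, in order: (1) On $\Good_1(\kappa,\eta)$ there are at most $\lf N^{1/2+\eta}\rf$ eigenvalues exceeding $2+\kappa$, and these are all bounded by $\|H\|$; so on $\Good_0(K')\cap\Good_1(\kappa,\eta)$ (for a constant $K'$ chosen via \Cref{lem:tight.upper} so that $\P(\|H\|>K')=\exp(-\omega(N))$) we have
\[
\big| N\hat\mu_{H^{(\kappa)}}(f) - \Tr\bar f(H)\big|
= \Big| \sum_{j:\,\lam_j>2+\kappa} \bar f(\lam_j)\Big|
\le N^{1/2+\eta}\big(|\bar f(2+\kappa)| + L(K'-2-\kappa)\big)
\ls_{\kappa,K',L} N^{1/2+\eta},
\]
so $\hat\mu_{H^{(\kappa)}}(f) = \frac1N\Tr\bar f(H) + O_{\kappa,L}(N^{-1/2+\eta})$. (2) Apply \Cref{cor:conc} to $\Tr\bar f(H)$ with $t = c' L N^{1-1/2+\eta}$ for a small constant $c'$: this gives $|\Tr\bar f(H) - m| \le c'L N^{1/2+\eta}$ outside probability $2\exp(-c'' N^{2\eta}/\log N) = \exp(-\omega_\eta(N))$, where $m$ is a median of $\Tr\bar f(H)$. (3) Apply \Cref{lem:median} to $\bar f$ (which is $L$-Lipschitz): for any median $m$ of $\hat\mu_H(\bar f) = \frac1N\Tr\bar f(H)$ and $\eps=\eta/2$, we get $|m/N - \sigma(\bar f)| \le L N^{-1/2+\eta/2} \le L N^{-1/2+\eta}$ for $N$ large. (4) Note $\sigma(\bar f) = \sigma(f)$ since $\sigma$ is supported on $[-2,2]\subset(-\infty,2+\kappa]$ where $\bar f\equiv f$. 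Chaining (1)--(4) via the triangle inequality, and intersecting with the $\exp(-\omega(N))$-probability events $\{\|H\|\le K'\}$, yields $|\hat\mu_{H^{(\kappa)}}(f)-\sigma(f)| \le C_\kappa L N^{-1/2+\eta}$ outside an event of probability $\exp(-\omega_\eta(N))$, for a suitable $C=C(\eta,\kappa)$; absorbing $C_\kappa$ into the constant (the ambient constants are allowed to depend on $\mu$, and here $\kappa$ is a fixed parameter of the statement) gives the claimed bound.

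To then deduce \eqref{G2.goal2}: take $f(x) = \frac1{y-x}$ (convex, $O(\kappa^{-2})$-Lipschitz on $(-\infty,2+\kappa]$ for $y\ge 2+2\kappa$, with $f(0)=1/y=O(1)$), giving the bound on $|G_{H^{(\kappa)}}(y)-G_\sigma(y)|$; and $f(x) = \log(y-x)$ (concave, so apply the claim to $-f$, which is convex and $O(\kappa^{-1})$-Lipschitz with $|{-f(0)}|=|\log y|=O(1)$), giving the bound on $|\Lpot_{H^{(\kappa)}}(y)-\Lpot_\sigma(y)|$. A union bound over an $N^{-100}$-net of $[2+2\kappa,K]$ together with the Lipschitz continuity of $y\mapsto \hat\mu_{H^{(\kappa)}}(f_y)$ (its derivative is controlled by $\|H^{(\kappa)}\|\le K'$ on $\Good_0(K')$) upgrades this to the uniform statement \eqref{G2.goal1}, which combined with Lemmas \ref{lem:tight.upper} and \ref{lem:ktail} completes \Cref{lem:good}.

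The main obstacle I anticipate is step (1): the statistic of interest is a spectrally-truncated one and is genuinely \emph{not} Lipschitz in $H$ near the truncation threshold, so a naive application of a concentration inequality fails. The resolution is precisely that we only need the estimate on the event $\Good_1(\kappa,\eta)$, where the number of truncated eigenvalues is $O(N^{1/2+\eta}) = o(N)$, so the difference between the truncated statistic and the honest linear statistic $\frac1N\Tr\bar f(H)$ is $O(N^{-1/2+\eta})$ — negligible at the required precision. One must be slightly careful that the extension $\bar f$ is chosen so that it agrees with $f$ on $(-\infty,2+\kappa]$ (so that the contribution of the small eigenvalues is literally unchanged) while remaining convex, Lipschitz, and with controlled value at $0$; the affine extension above does this. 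Everything else is routine bookkeeping with the constants, which are permitted to depend on $\mu$ and on the fixed parameters $\kappa,\eta,K$.
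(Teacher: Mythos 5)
Your overall route is reasonable — extend $f$ affinely past $2+\kappa$ to a convex $L$-Lipschitz $\bar f$, apply \Cref{cor:conc} to the honest linear statistic $\frac1N\Tr \bar f(H)$, center it at $\sigma(f)$ via \Cref{lem:median}, and control the discrepancy with the truncated statistic through the at most $N^{1/2+\eta}$ outlying eigenvalues permitted on $\Good_1(\kappa,\eta)$ — but there is a genuine gap where you discard the event $\{\|H\|>K'\}$. For a \emph{constant} $K'$, \Cref{lem:tight.upper} only gives $\P(\|H\|>K')\le 2e^{-cK'^2N}$, and this rate is in general best possible up to constants (for Gaussian $\mu$, say, a single entry of size $K'$ already costs only $e^{-O(K'^2N)}$), so this event has probability $\exp(-\Theta(N))$, not $\exp(-\omega(N))$ as you assert. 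The claim genuinely needs super-exponential smallness: in \Cref{lem:good} the bound $e^{-AN}$ must hold for \emph{every} $A$ (with only $K=K(A)$ allowed to grow), while the tolerance $N^{-1/2+2\eta}$ in $\Good_2$ is fixed, so the estimate \eqref{G2.goal1} for $\Good_1\setminus\Good_2$ cannot be allowed to saturate at a fixed exponential rate tied to an auxiliary cutoff $K'$. As written, your argument yields only $\exp(-c(K')N)$ and does not prove the claim. (There is also an arithmetic slip in step (2): with $t=c'LN^{1/2+\eta}$, \Cref{cor:conc} gives failure probability $\exp(-c''N^{1+2\eta}/\log N)$, not $\exp(-c''N^{2\eta}/\log N)$; the latter would not be $\exp(-\omega_\eta(N))$ for $\eta<\tfrac12$, the former is.)

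The gap is repairable, though not quite to the claim as stated along your route: taking $K'=K'(N)\to\infty$ slowly (say $K'=\log N$) makes $\P(\|H\|>K')=\exp(-\omega(N))$, at the price of inflating your step-(1) error to $O(LN^{-1/2+\eta}\log N)$ — enough for the intended application \eqref{G2.goal2}, whose tolerance is $N^{-1/2+2\eta}$, but not matching the stated precision $CLN^{-1/2+\eta}$. The paper avoids any norm cutoff by using the \emph{bounded} extension $f_2\equiv f(2+\kappa)$ on $[2+\kappa,\infty)$, so the outliers contribute only their count times $f(2+\kappa)$ regardless of how large they are; since $f_2$ is no longer convex, it is written as $f_2=f_1-h$ with $f_1$ the affine continuation of slope $L$ (essentially your $\bar f$) and $h(\lambda)=\max\{0,L(\lambda-2-\kappa)\}$, both convex and $L$-Lipschitz, and \Cref{cor:conc} together with \Cref{lem:median} is applied to each separately (using $\sigma(h)=0$). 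If you either adopt that decomposition or weaken the claim by a harmless $\log N$ factor (or a slightly smaller $\eta$), your argument goes through.
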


Indeed, for \eqref{G2.goal2} we apply the claim with $f(\lambda)= 1/(y-\lambda)$ and $L=O(\kappa^{-2})$,
and for the concentration of the log-potential $\Lpot_{H^{(\kappa)}}(y)$ we take $f(\lambda) = -\log(y-\lambda)$ and $L=O(\kappa^{-1})$. 

\begin{proof}[Proof of Claim \ref{claim:conc}]
Let $f_1$ be the continuous and convex extension of $f$ to all of $\R$ which is linear on $[2+\kappa,\infty)$ with slope $L$, and let $f_2$ be the continuous extension of $f$ to $\R$ which is identically equal to $f(2+\kappa)$ on $[2+\kappa,\infty)$. 
Thus, 
\[
h(\lambda):= f_1(\lambda)-f_2(\lambda) = \max\{0, L(\lambda-2-\kappa) 
\}
\]
is convex and $L$-Lipschitz on $\R$. 
For $H\in \Good_1(\kappa,\eta)$ we have
\begin{equation}	\label{claim.conc1}
\hat{\mu}_{H^{(\kappa)}}(f) = \hat{\mu}_H(f_2) - f(2+\kappa)|\{ j: \lambda_j>2+\kappa\}| = \hat{\mu}_H(f_2) + O((1+\kappa)L N^{1/2+\eta}). 
\end{equation}
On the other hand, if $m(f_1), m(h)$ are medians for $\hat{\mu}_H(f_1), \hat{\mu}_H(h)$, respectively, from Lemma \ref{lem:median} we have
\begin{align*}
m(f_1) &= \sigma(f_1) + O_\eta(L N^{-1/2+\eta}) = \sigma(f) + O_\eta(LN^{-1/2+\eta})\,,\\
m(h) &= \sigma(h) + O_\eta(LN^{-1/2+\eta}) = O_\eta(LN^{-1/2+\eta}). 
\end{align*}
Combining with Corollary \ref{cor:conc}, we have that except with probability $1-O( \exp( - cN^{1+2\eta} / (\log N))) = 1-\exp( -\omega_{\eta}(N))$, 
\begin{equation*}
\hat{\mu}_H(f_1) = \sigma(f) + O_\eta(L N^{-1/2+\eta}) \,,\qquad
\hat{\mu}_H(h) = O_\eta(L N^{-1/2+\eta})
\end{equation*}
and hence
\[
\hat{\mu}_H(f_2) = \hat{\mu}_H( f_1) - \hat{\mu}_H(h) = \sigma(f) + O_\eta(LN^{-1/2+\eta}).
\]
Combining with \eqref{claim.conc1} yields the claim.
\end{proof}

\section{Coupling of tilted laws}
\label{app:coupling}

In this appendix we prove Lemma \ref{lem:coupling.scalar} on the $L^p$-continuity for couplings of scalar random variables. Recall the coupling $(X_\alpha,X_\beta)$ from \eqref{scalar-coupling}. 
Without loss of generality we can assume $\alpha\leq\beta$ and, because
of Lemma \ref{lem:monotony-connection}, $X_{\alpha}\leq X_{\beta}$
a.s. We first consider the case $\beta-\alpha\le 1$.
 In the case $k=1$, we can simply use that \[ \mathbb{E}(|X_{\beta}-X_{\alpha}|)=\mathbb{E}(X_{\beta})-\mathbb{E}(X_{\alpha})=\LLa_\mu'(\beta)-\LLa_\mu'(\alpha)\le \|\LLa_\mu''\|_{\infty}|\beta-\alpha|\,.\]
Let $k\in2\mathbb{N}^{*}$. Let $x\in\mathbb{R}$ and write
\begin{align}
|X_{\beta}-X_{\alpha}|^{k} & =|X_{\beta}-X_{\alpha}|^{k}1_{X_{\alpha}\geq x}+|X_{\beta}-X_{\alpha}|^{k}1_{X_{\beta}\geq x>X_{\alpha}}+|X_{\beta}-X_{\alpha}|^{k}1_{x>X_{\beta}}\nonumber\\
 & \leq2^{k-1}\Big((X_{\beta}-x)^{k}1_{X_{\alpha}\geq x}-(X_{\alpha}-x)^{k}1_{X_{\alpha}\geq x}+(X_{\beta}-x)^{k}1_{X_{\beta}\geq x>X_{\alpha}}\nonumber\\
 & +(X_{\alpha}-x)^{k}1_{X_{\beta}\geq x>X_{\alpha}}+(X_{\alpha}-x)^{k}1_{x>X_{\beta}}-(X_{\beta}-x)^{k}1_{x>X_{\beta}}\Big)\nonumber\\
 & =2^{k-1}\Big((X_{\beta}-x)^{k}1_{X_{\beta}\geq x}-(X_{\alpha}-x)^{k}1_{X_{\alpha}\geq x}\Big)\nonumber\\
 & +2^{k-1}\Big((X_{\alpha}-x)^{k}1_{x>X_{\alpha}}-(X_{\beta}-x)^{k}1_{x>X_{\beta}}\Big)\label{boundmom}
\end{align}
where we use that $b^{k}-a^{k}\geq(b-a)^{k}$,  and $2^{k-1}(b^{k}+a^{k})\geq (b+a)^{k}$ for all $b\geq a\geq0$
in the first inequality and $1_{X_{\beta}\geq x>X_{\alpha}}+1_{X_{\alpha}\geq x}=1_{X_{\beta}\geq x}$ in the
second equality.  We choose $x=\mathbb{E}(X_{\alpha})$ hereafter. We now estimate the  expectation of the first term in the sum \eqref{boundmom}. 
\begin{align*}
C_{1}(\alpha,\beta) & :=\mathbb{E}\Big((X_{\beta}-\mathbb{E}(X_{\alpha}))^{k}1_{X_{\beta}\geq \mathbb{E}(X_{\alpha})}-(X_{\alpha}-\mathbb{E}(X_{\alpha}))^{k}1_{X_{\alpha}\geq x}\Big)\\
 & =\mathbb{E}\Big((X_{\alpha}-\mathbb{E}(X_{\alpha}))^{k}1_{X_{\alpha}\geq \mathbb{E}(X_{\alpha})}\frac{e^{(\beta-\alpha)(X_{\alpha}-\mathbb{E}(X_{\alpha}))}}{\mathbb{E}(e^{(\beta-\alpha)(X_{\alpha}-\mathbb{E}(X_{\alpha}))})}\Big)\\
 &\quad -\mathbb{E}\left((X_{\alpha}-\mathbb{E}(X_{\alpha}))^{k}1_{X_{\alpha}\geq \mathbb{E}(X_{\alpha})}\right)\,.
\end{align*}
We  write 
\[ g(t)=\mathbb{E}\Big((X_{\alpha}-\mathbb{E}(X_{\alpha}))^{k}1_{X_{\alpha}\geq\mathbb{E}(X_{\alpha})}e^{t(X_{\alpha}-\mathbb{E}(X_{\alpha}))}\Big),\quad h(t)=\frac{1}{\mathbb{E}(e^{t (X_{\alpha}-\mathbb E[X_{\alpha}])})}\]
so that
\[ C_{1}(\alpha,\beta)=g(\beta-\alpha)h(\beta-\alpha)- g(0)h(0)=\int_{0}^{\beta-\alpha} (g'h+h'g)(t) dt\,.\]
We obtain that for all $0\leq t\leq1$
\begin{align*}
0\leq g'(t) & =\mathbb{E}\Big((X_{\alpha}-\mathbb{E}(X_{\alpha})){}^{k+1}1_{X_{\alpha}\geq\mathbb{E}(X_{\alpha})}e^{t(X_{\alpha}-\mathbb{E}(X_{\alpha}))}\Big)\\
 & \leq(k+1)!\times\mathbb{E}\Big(e^{(1+t)(X_{\alpha}-\mathbb{E}(X_{\alpha}))}\Big)\\
 & =(k+1)!\times e^{\LLa_\mu(\alpha+1+t)-\LLa_\mu(\alpha)-(1+t)\LLa_\mu'(\alpha)}\\
 & \leq(k+1)!\times e^{2\|\LLa_\mu''\|_{\infty}}
\end{align*}
where in the second line we used that for all non-negative real number $y$, $\frac{y^{k+1}}{(k+1 )!} \le e^{y}$.
We also have  for all $t\le 1$, 
\begin{align*}
 h'(t) & =\frac{d}{dt}e^{-\LLa_\mu(\alpha+t)+\LLa_\mu(\alpha)+t\LLa_\mu'(\alpha)}\\
 & =(\LLa_\mu'(\alpha)-\LLa_\mu'(\alpha+t))e^{-\LLa_\mu(\alpha+t)+\LLa_\mu(\alpha)+t\LLa_\mu'(\alpha)}\\
 & \geq-\|\LLa_\mu''\|_{\infty}e^{\frac{1}{2}\|\LLa_\mu''\|_{\infty}}
\end{align*}
and $h'$ is non-positive because $\LLa_\mu$ is convex.
Noting that we can similarly bound 
\[
g(0) = \E (X_\alpha- \E X_{\alpha})^k1_{X_\alpha\ge\E X_\alpha}\le k! \E e^{X_\alpha-\E X_\alpha} = k!e^{2\|\LLa_\mu''\|_\infty}
\]
we see that the above differential inequalities imply
that $h$ and $g$ are uniformly bounded and with uniformly bounded derivatives on $[0,1]$ 
and 
 there exists a constant $C_{k}$ that depends only on $\|\LLa_\mu''\|_{\infty}$ and $k$
such that for $|\beta-\alpha|\le 1$,
\[
|C_{1}(\alpha,\beta)|\leq C_{k}|\beta-\alpha|\,.
\]
The second term \[ C_{2}(\alpha,\beta):=\mathbb{E}\Big((X_{\alpha}-\E[X_{\alpha}])^{k}1_{\E[X_{\alpha}]>X_{\alpha}}-(X_{\beta}-\E[X_{\alpha}])^{k}1_{\E[X_{\alpha}]>X_{\beta}}\Big)=\int_{0}^{\beta-\alpha}(h\tilde g)' dt
\]
with \[ \tilde g(t)=-\mathbb{E}\Big((X_{\alpha}-\mathbb{E}(X_{\alpha})){}^{k+1}1_{X_{\alpha}<\mathbb{E}(X_{\alpha})}e^{t(X_{\alpha}-\mathbb{E}(X_{\alpha}))}\Big)\]
so that $\tilde g'$ is non positive and bounded below
\[ \tilde g'(t)=-\mathbb{E}\Big((X_{\alpha}-\mathbb{E}(X_{\alpha})){}^{k+2}1_{X_{\alpha}<\mathbb{E}(X_{\alpha})}e^{t(X_{\alpha}-\mathbb{E}(X_{\alpha}))}\Big)\ge -(k+2)! \mathbb E\Big( e^{(t+1)(X_{\alpha}-\mathbb{E}(X_{\alpha}))}\Big)
\]
 is controlled similarly. Because $|a|^{2k+1}\leq a^{2k}+a^{2k+2}$ for every real number $a$,
the above estimate is also valid for $k\in2\mathbb{N}^{*}+1$ and
this finishes the proof of \eqref{coupling.b1}.

For \eqref{b11} in the case $|\beta-\alpha|\le 1$, with $(X_\beta',X_\alpha')$ and independent copy of $(X_\beta,X_\alpha)$ we have
\begin{align*}
\E |X_{\beta}-X_{\alpha}-\mathbb{E}(X_{\beta}-X_{\alpha})|^{k}
& =\E |X_{\beta}-X_{\alpha}-\mathbb{E}(X_{\beta}'-X_{\alpha}')|^{k}\\
&\le \E | (X_\beta - X_\alpha)- (X_\beta'-X_\alpha')|^k\\
&\le \E ( |X_\beta-X_\alpha| + |X_\beta'-X_\alpha'|)^k\\
&\le 2^{k+1} \E |X_\beta-X_\alpha|^k
\end{align*}
where in the second line we applied Jensen's inequality. Hence \eqref{b11} now follows from \eqref{coupling.b1} in this case.
For the case $\beta-\alpha\geq1$,  we notice that
\begin{align*}
\mathbb{E}(|X_{\beta}-X_{\alpha}-\mathbb{E}(X_{\beta}-X_{\alpha})|^{k}) & \leq2^{k-1}\left(\mathbb{E}(|X_{\beta}-\mathbb{E}(X_{\beta})|^{k})+\mathbb{E}(|X_{\alpha}-\mathbb{E}(X_{\alpha})|^{k})\right)
\end{align*}
and each term is bounded independently of $\alpha$ and $\beta$ by
the same argument as above since if $k$ is even, for any non negative real number $\alpha$,
\begin{align*}
\mathbb{E}(|X_{\alpha}-\mathbb{E}(X_{\alpha})|^{k})
&\le \mathbb{E}(|X_{\alpha}-\mathbb{E}(X_{\alpha})|^{k} 1_{X_{\alpha}\ge \E[X_{\alpha}]})+\mathbb{E}(|X_{\alpha}-\mathbb{E}(X_{\alpha})|^{k} 1_{X_{\alpha}< \E[X_{\alpha}]})\\
&\le g(0)+k!\E[ e^{\mathbb E[X_{\alpha}]-X_{\alpha}}]\\
&=g(0)+ k! e^{\LLa_\mu(\alpha-1)-\LLa_\mu(\alpha)+\LLa_\mu'(\alpha)}\le 2 k! e^{\|\LLa_\mu''\|_{\infty}}\,.
\end{align*}
This completes the proof of Lemma \ref{lem:coupling.scalar}.

\section{Quantitative Varadhan lemma}
\label{app:Varadhan}

In this appendix we prove Lemma \ref{lem:Varadhan}.
We note that by subtracting $\ham(0)$ from all sides we may assume $\ham(0)=0$. 
We may also assume $R\le \sqrt{N}$ since any unit vector lies in $\Deloc_{\sqrt{N}}$. In particular we have $\log R\le \log N$.
We also recall the notation $\ssq(x):=x^2(1_{x\ge0}-1_{x<0})$.

\subsection{Preliminary lemmas}

The first step to prove both upper and lower bounds will be to replace the unit vector $u\sim P_N$ with a Gaussian vector. 

\begin{lemma}
\label{lem:vara.G}
Let $\delta\in (C_1N^{-1/2}, 1)$  for a sufficiently large constant $C_1>0$ and set
\begin{align*}
A(\delta)&:= \{y\in \R^N: |\frac1{\sqrt{N}}\|y\|_2 - 1|\le \delta\}\,,\\
A_\pm(R,\delta) &:= A(\delta) \cap [-(1\pm \delta)R,(1\pm\delta)R]^N.
\end{align*}
Let $g\in\R^N$ be a standard Gaussian vector and let $v\in \R^N$ be fixed.
We have
\begin{align}
\int_{\Deloc_R} \exp\Big( \sum_{j=1}^N\ham(\sqrt{N}u_j)\Big) dP_N(u)
&\le 2e^{3K\delta N} \E 
\exp \Big( \sum_{j=1}^N\ham(g_j)\Big)  \ind( {g\in A_+(R,\delta)} ) 
\label{varG.UB}
\end{align}
and for any $\eps_0\in (0,\frac12)$, 
\begin{align}
&\E 
\exp \Big( \sum_{j=1}^N\ham(g_j)\Big) \ind({g\in A_-(R,\delta)\cap\sqrt{N} \Bset_2(v,\eps_0-\delta)})
\notag\\
&\qquad\qquad\qquad\le
2e^{3K\delta N}\int_{\Deloc_R\cap \Bset_2(v,\eps_0)} \exp\Big( \sum_{j=1}^N\ham(\sqrt{N}u_j)\Big) dP_N(u) \,.	\label{varG.LB}
\end{align}
\end{lemma}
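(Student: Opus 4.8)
\textbf{Proof proposal for \Cref{lem:vara.G}.}
The plan is to relate the uniform measure $P_N$ on the sphere to a standard Gaussian vector via the classical fact that if $g\in\R^N$ is standard Gaussian then $g/\|g\|_2$ is uniform on $\sphereN$ and is independent of $\|g\|_2$. First I would write, for any test functional $\Psi$ on $\R^N$,
\[
\E \Psi(g) = \int_0^\infty \E\big[\Psi(r u)\big]\, d\rho_N(r)
\]
where $u\sim P_N$ and $\rho_N$ is the law of $\|g\|_2$, i.e. $\sqrt{N}$ times a $\chi_N/\sqrt{N}$ variable, which concentrates at $\sqrt{N}$ with Gaussian tails: $\P(|\|g\|_2-\sqrt{N}|\ge t\sqrt N)\le 2e^{-ct^2N}$ for $t\in(0,1)$ and in particular $\rho_N(A(\delta))\ge \frac12$ once $\delta\ge C_1 N^{-1/2}$. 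The point is that $\sum_j \ham(rs_j)$ for $r=\sqrt N(1+O(\delta))$ and $(s_j)$ a delocalized unit vector differs from $\sum_j\ham(\sqrt N s_j)$ by a small amount controlled by the Lipschitz hypothesis.

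For the quantitative control I would use that $\ham\circ\ssq^{-1}$ is $K$-Lipschitz on $2I=[-2R,2R]$, so that for $|a|,|b|\le 2R$ we have $|\ham(a)-\ham(b)|\le K|\ssq(a)-\ssq(b)| = K|a^2-b^2|$ when $ab\ge0$, and in any case $|\ham(a)-\ham(b)|\le K(a^2+b^2)$. Consequently, if $u\in\Deloc_R$ and $r\in[(1-\delta)\sqrt N,(1+\delta)\sqrt N]$, then $|ru_j|\le (1+\delta)R\le 2R$ and $\sqrt N|u_j|\le R\le 2R$ have the same sign, so
\[
\Big|\sum_{j=1}^N \ham(ru_j) - \sum_{j=1}^N\ham(\sqrt N u_j)\Big|
\le K\sum_{j=1}^N |r^2-N|\,u_j^2
= K|r^2-N|\,\|u\|_2^2 = K|r^2-N|\le 3K\delta N
\]
using $\|u\|_2\le1$ and $|r^2-N|\le ((1+\delta)^2-1)N\le 3\delta N$ for $\delta<1$. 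This is exactly the factor $e^{3K\delta N}$ appearing in the statement.

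Putting these together: for the upper bound \eqref{varG.UB}, restrict the Gaussian expectation to the event $\{g\in A_+(R,\delta)\}$; on this event $g=ru$ with $r\in A(\delta)$ and $u:=g/r\in\Deloc_R$ (since $\|g\|_\infty\le (1+\delta)R$ forces $\|u\|_\infty\le R/((1-\delta)\sqrt N)$ — here one adjusts $\delta$ by a constant factor, or states $A_\pm$ with $(1\pm2\delta)$; I would just absorb this into $C_1$). Then
\[
\E\,e^{\sum_j\ham(g_j)}\ind(g\in A_+) = \int_{A(\delta)} \E\big[e^{\sum_j\ham(ru_j)}\ind(ru\in A_+)\big]d\rho_N(r)
\ge e^{-3K\delta N}\rho_N(A(\delta))\int_{\Deloc_R}e^{\sum_j\ham(\sqrt N u_j)}dP_N(u)\,,
\]
and rearranging with $\rho_N(A(\delta))\ge\frac12$ gives \eqref{varG.UB}. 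For the lower bound \eqref{varG.LB}, run the same computation in the other direction: restrict the Gaussian expectation to $g\in A_-(R,\delta)\cap\sqrt N\Bset_2(v,\eps_0-\delta)$, so that $g=ru$ with $u\in\Deloc_R$ and, by the triangle inequality for $d_2$ together with $d_2(ru,\sqrt N u)=|r-\sqrt N|\le\delta\sqrt N$, we get $\sqrt N u\in\sqrt N\Bset_2(v,\eps_0)$, i.e. $u\in\Bset_2(v,\eps_0)$; then bound the integrand below by $e^{-3K\delta N}$ times $e^{\sum_j\ham(\sqrt N u_j)}$ and integrate, again using $\rho_N(A(\delta))\le1$ (trivially) to land on \eqref{varG.LB}. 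The only mild subtlety — the main bookkeeping obstacle — is tracking how the dilation by $r\in[(1-\delta)\sqrt N,(1+\delta)\sqrt N]$ distorts the delocalization radius and the Wasserstein ball radius, which is handled by the elementary fact that $d_2(ru,\sqrt N u)=|r-\sqrt N|\,\|u\|_2\le\delta\sqrt N$ and by allowing the constant $C_1$ in the hypothesis $\delta>C_1N^{-1/2}$ to absorb the $\chi_N$-concentration constant; everything else is the Lipschitz estimate above.
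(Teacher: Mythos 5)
Your argument is correct and takes essentially the same route as the paper's proof: both pass from $P_N$ to the Gaussian via rotational invariance of $g/\|g\|_2$ (equivalently your polar disintegration), use the containments $A_-(R,\delta)\subseteq A(\delta)\cap\{y:\,y/\|y\|_2\in\Deloc_R\}\subseteq A_+(R,\delta)$ together with $d_2(g/\sqrt N,\,g/\|g\|_2)\le\delta$ to handle the Wasserstein ball, apply the same Lipschitz estimate giving the factor $e^{3K\delta N}$, and use $\P(g\in A(\delta))\ge\tfrac12$ for $\delta\ge C_1N^{-1/2}$. One small remark: your parenthetical about enlarging $\delta$ (or $C_1$) in the upper bound is not needed, since the containment actually required there is $\{u\in\Deloc_R,\ \|g\|_2/\sqrt N\in[1-\delta,1+\delta]\}\subseteq\{g\in A_+(R,\delta)\}$, which holds exactly as stated; the reverse implication $g\in A_+\Rightarrow g/\|g\|_2\in\Deloc_R$ is never used.
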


\begin{proof}
For $y\in A(\delta)$ we have for each $i\in [N]$ that
\[
|y_i|\le (1-\delta) R \; \Rightarrow\;|y_i|/\|y\|_2\le R/\sqrt{N} \;\Rightarrow \; |y_i| \le (1+\delta)R
\]
so
\begin{equation}	\label{Apm.contain}
A_-(R,\delta) \subseteq A(\delta)\cap \{ y: y/\|y\|_2\in \Deloc_R\}\subseteq A_+(R,\delta).
\end{equation}
Moreover, for $y\in A(\delta)$ we have 
\[
d_2(\frac{y}{\sqrt{N}},\frac{y}{\|y\|_2}) \le \|\frac{y}{\sqrt{N}}-\frac{y}{\|y\|_2}\|_2 = |\frac{\|y\|_2}{\sqrt{N}}-1| \le \delta
\]
so by the triangle inequality,
\begin{equation}	\label{ABW.contain}
A(\delta)\cap\sqrt{N} \Bset_2(v,\eps_0-\delta) \subseteq \{ y: y/\|y\|_2\in \Bset_2(v,\eps_0)\}.
\end{equation}
Now since the distribution of $g$ and the set $A(\delta)$ are rotationally invariant, we have that  $g/\|g\|_2$ conditioned on the event $\{g\in A(\delta)\}$ has law $P$. From this and \eqref{Apm.contain}, \eqref{ABW.contain} we deduce
\begin{align*}
&\E \bigg(\exp \Big( \sum_{j=1}^N\ham(\sqrt{N}g_j/\|g\|_2)\Big)  \ind\big(g\in {A_-(R,\delta)\cap \sqrt{N}\Bset_2(v,\eps_0-\delta)}\big)\,\bigg| \, g\in A(\delta) \bigg)  \\
&\le\int_{\Deloc_R\cap \Bset_2(v,\eps_0)} \exp\Big( \sum_{j=1}^N\ham(\sqrt{N}u_j)\Big) dP_N(u)
\le \int_{\Deloc_R} \exp\Big( \sum_{j=1}^N\ham(\sqrt{N}u_j)\Big) dP_N(u)\\
&\le \E \bigg(\exp \Big( \sum_{j=1}^N\ham(\sqrt{N}g_j/\|g\|_2)\Big)  \ind( g\in {A_+(R,\delta)} ) \,\bigg| \, g\in A(\delta) \bigg)\,.
\end{align*}
Next we note that since $\ham\circ\ssq^{-1}$ is $K$-Lipschitz {on $[-2R,2R]$}, on the event that $g\in A(\delta)$ we have
\begin{align*}
|\sum_{j=1}^N\ham( \sqrt{N} g_j/\|g\|_2) - \sum_{j=1}^N \ham(g_j) |
&\le K |\frac{N}{\|g\|_2^2}-1| \sum_{j=1}^Ng_j^2\le 3K\delta N\,.
\end{align*}
Finally, it only remains to note that since $\P(g\in A(\delta))\ge 1/2$ for $\delta\ge C_1/\sqrt{N}$ if $C_1$ is sufficiently large, we can give up a factor of 2 in the bounds to remove the conditioning on the event that $g\in A(\delta)$.
\end{proof}

Recalling our notation $\hat{\mu}_y$ for the empirical measure of the components of a vector $y\in\R^N$ (see Section \ref{sec:notation}), 
we have $\sum_{j=1}^N\ham(g_j)=N\int \ham d\hat{\mu}_g$.
The next step for the proof of both bounds will be to coarse-grain the range $\R$ for the $g_j$ by a partition $\{E_\sigma\}_{\sigma\in \Sigma}$ of small size (compared with $N$), and approximate $\int \ham d\hat{\mu}_g$ with an average over $\Sigma$. 

Let $\eps\in (0,\frac1{10})$ be a small parameter (we will choose $\eps$ differently in the proofs of the upper and lower bound) and set
$
\ell_0:=\min\{ \ell\in\N: (1+\eps)^{-\ell} <\eps\}
$.
We note that
\begin{equation}	\label{bd:ell0}
\ell_0\asymp \frac1\eps\log\frac1\eps.
\end{equation}
Let
\begin{align*}
E_{-\ell_0}^+&:= [0, (1+ \eps)^{-\ell_0}]\,,\qquad E_k^+ :=((1+\eps)^{k-1}, (1+\eps)^k] \,,\quad k>-\ell_0,
\end{align*}
and set $E_k^-:= -E_k^+$ for each $k\ge-\ell_0$.
Setting $k_0:= \min\{k: (1+\eps)^k > (1+C_2\eps)R\}$ for a sufficiently large constant $C_2\ge1$ to be chosen later, we have
\begin{equation}	\label{Ek0.contain}
(1+\tfrac12C_2\eps)[-R,R] \subseteq [-(1+\eps)^{k_0-1}, (1+\eps)^{k_0-1}] \subseteq (1+C_2\eps)[-R,R]\,.
\end{equation}
In particular,
\begin{equation}	\label{bd:k0}
k_0 \asymp\frac1\eps\log R
\end{equation}
and
\begin{equation}	\label{E_k++}
\nu(E_k^\pm) = 0
\qquad\forall \,k>k_0\
\end{equation}
for any measure $\nu$ supported on $[-(1+C_2\eps)R,(1+C_2\eps)R]$. 
The set
\begin{equation}
\Sigma=\Sigma_\eps := (\{-\ell_0,\dots, k_0\}\times\{-,+\})\cup \{*\}\,.
\end{equation}
labels the sets $E_k^\pm$ for $-\ell_0\le k\le k_0$ and the set $E_*:= \R\setminus[-(1+\eps)^{k_0},(1+\eps)^{k_0}]$, and the collection $\{E_\sigma\}_{\sigma\in\Sigma}$ partitions $\R$.
(For $\sigma=(k,\pm)$ we write $E_\sigma:=E_k^\pm$.)
From \eqref{bd:ell0}, \eqref{bd:k0} we have
\begin{equation}	\label{bd.Alphabet}
|\Sigma| = 1+ 2(k_0+\ell_0+1) \ls \frac1\eps\log\frac{R}{\eps}.
\end{equation}
Any measure $\nu$ on $\R$ induces a measure $[\nu]$ on $\Sigma$ defined
\[
[\nu](\sigma) := \nu(E_\sigma)
\]
(abusively identifying $[\nu]$ with its mass function $[\nu]:\Sigma\to \R^+$). 

Fixing an arbitrary point $a_*\in E_*$ (such as $(1+C_2\eps)^{10k_0}$),
we write
\[
\iota_\eps:\Sigma\to \R\,,\quad \iota_\eps(k,\pm) :=\pm(1+\eps)^k\,,\quad \iota_\eps(*):=a_*. 
\]
With $\cP(\Sigma)=\{\pi:\Sigma\to [0,1]: \sum_{\sigma\in \Sigma}\pi(\sigma)=1\}$ the set of probability measures on $\Sigma$, we denote by
\begin{equation}
\cP_N(\Sigma) := \cP(\Sigma)\cap (\tfrac1N\cdot\Z)^\Sigma
\end{equation}
the subset of measures taking values in the integral multiples of $1/N$. Note that $[\hat{\mu}_y]\in \cP_N(\Sigma)$ for any $y\in \R^N$.
Any $\pi \in \cP(\Sigma)$ pushes forward to a discrete measure $\iota_\eps\#\pi$ on $\R$, and 
\begin{equation}	\label{pi.push-pull}
[\iota_\eps\#\pi] = \pi
\end{equation}
Indeed, for any $\sigma\in\Sigma$,
\[
[\iota_\eps\#\pi](\sigma) = \iota_\eps\#\pi(E_\sigma) = \pi( \iota_\eps^{-1}(E_\sigma)) = \pi(\sigma).
\]

\begin{lemma}[Coarse-graining]
\label{lem:coarse}
With the above setup, let
$f:[-(1+C_2\eps)R,(1+C_2\eps)R]\to \R$ be such that $g:=f\circ\ssq^{-1}$ is $K$-Lipschitz, and let $\nu$ be a probability measure supported on $[-(1+C_2\eps)R,(1+C_2\eps)R]$ with second moment $\int x^2d\nu(x)\le M$. We have
\begin{equation}	\label{Lip.f-mu}
\bigg| \int f d\nu - \int f\circ \iota_\eps d[\nu]\bigg| \ls (M+\eps)K\eps.
\end{equation}
In particular, for any $y\in A_+(R,C_2\eps)$, 
\begin{equation}	\label{Lip.f-muy}
\bigg| \frac1N\sum_{j=1}^N f(y_j) - \sum_{\sigma\in \Sigma} f(\iota_\eps(\sigma))[\hat{\mu}_y](\sigma) \bigg|
\ls K\eps.
\end{equation}
\end{lemma}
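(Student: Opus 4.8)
\textbf{Proof plan for Lemma~\ref{lem:coarse} (Coarse-graining).}
The plan is to first establish \eqref{Lip.f-mu} by a direct comparison on each piece $E_\sigma$ of the partition, and then derive \eqref{Lip.f-muy} as the special case $\nu=\hat\mu_y$ (after discarding the negligible tail contribution). The one subtlety is that the Lipschitz control we are handed is on $g=f\circ\ssq^{-1}$, not on $f$ itself, so distances must be measured through the signed-square map; this is exactly why the bound degrades near the origin and why the geometric partition $E_k^\pm$ (rather than a uniform one) is the right choice.

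First I would handle the central pieces. Fix $\sigma=(k,\pm)$ with $-\ell_0\le k\le k_0$; for $x\in E_\sigma$ we have $\ssq(x)=\pm x^2$ and $\ssq(\iota_\eps(\sigma))=\pm(1+\eps)^{2k}$, and since $x^2$ and $(1+\eps)^{2k}$ both lie in the interval $[(1+\eps)^{2k-2},(1+\eps)^{2k}]$ we get $|\ssq(x)-\ssq(\iota_\eps(\sigma))|\le (1+\eps)^{2k}-(1+\eps)^{2k-2}\le 3\eps\,(1+\eps)^{2k}\le 3\eps\,(1+\eps)^2 x^2\ls \eps x^2$ when $x\neq 0$ (for $k=-\ell_0$ one instead bounds both quantities by $(1+\eps)^{-2\ell_0}\le\eps^2$, giving an error $\ls\eps^2$ on $E_{-\ell_0}^\pm$). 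Hence by the $K$-Lipschitz hypothesis on $g$,
\[
|f(x)-f(\iota_\eps(\sigma))| = |g(\ssq(x))-g(\ssq(\iota_\eps(\sigma)))| \le K|\ssq(x)-\ssq(\iota_\eps(\sigma))| \ls K(\eps x^2 + \eps^2)
\]
for $x\in E_\sigma$, $-\ell_0\le k\le k_0$. Integrating against $\nu$ over $\bigcup_{|k|\le k_0}E_k^\pm$, which by the support assumption on $\nu$ and \eqref{Ek0.contain} is all of $[-(1+C_2\eps)R,(1+C_2\eps)R]$ once $C_2$ is large enough, and summing over $\sigma$, I get a contribution $\ls K(\eps\int x^2d\nu + \eps^2)\ls (M+\eps)K\eps$. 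Since $\nu$ is supported in $[-(1+C_2\eps)R,(1+C_2\eps)R]$ we have $\nu(E_*)=0$ by \eqref{E_k++}, so the piece $E_*$ contributes nothing; together with \eqref{pi.push-pull} (so that $\int f\circ\iota_\eps\,d[\nu]=\sum_\sigma f(\iota_\eps(\sigma))[\nu](\sigma)$ and this equals $\int f\,d(\iota_\eps\#[\nu])$) this yields \eqref{Lip.f-mu}.

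For \eqref{Lip.f-muy}, take $\nu=\hat\mu_y$ with $y\in A_+(R,C_2\eps)$, so that $\|y\|_\infty\le (1+C_2\eps)R$ and $\tfrac1N\|y\|_2^2\le(1+C_2\eps)^2\ls 1$; thus $\hat\mu_y$ is supported in $[-(1+C_2\eps)R,(1+C_2\eps)R]$ with second moment $M=O(1)$, and \eqref{Lip.f-mu} specializes to $\bigl|\tfrac1N\sum_j f(y_j)-\sum_\sigma f(\iota_\eps(\sigma))[\hat\mu_y](\sigma)\bigr|\ls K\eps$ as claimed. The main (minor) obstacle is purely bookkeeping: making sure $C_2$ is chosen large enough in \eqref{Ek0.contain} that the support of $\nu$ is genuinely covered by the pieces $E_k^\pm$ with $|k|\le k_0$ so that no mass escapes into $E_*$, and tracking that the near-origin pieces contribute an $O(\eps^2)$ rather than an $O(\eps)$ error — neither of which poses any real difficulty. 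Everything else is the elementary estimate $|(1+\eps)^{2k}-(1+\eps)^{2k-2}|\ls\eps(1+\eps)^{2k}$ pushed through the Lipschitz bound on $g$.
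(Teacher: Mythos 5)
Your proof is correct and follows essentially the same route as the paper: a piece-by-piece comparison on the partition $\{E_\sigma\}$, using the $K$-Lipschitz bound on $g=f\circ\ssq^{-1}$ together with $|\ssq(x)-\ssq(\iota_\eps(\sigma))|\ls \eps x^2$ on the pieces with $k>-\ell_0$ and an $O(\eps^2)$ bound on the innermost piece, then controlling the total by the second moment $M$; \eqref{Lip.f-muy} is deduced exactly as in the paper by specializing to $\nu=\hat\mu_y$. The only cosmetic difference is that you bound $(1+\eps)^{2k}\le(1+\eps)^2x^2$ pointwise before integrating, while the paper sums over $k$ first and bounds $\sum_k(1+\eps)^{2k}\nu(E_k^+\cup E_k^-)\le(1+\eps)^2\int x^2\,d\nu$ at the end — the same computation in a different order.
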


\begin{proof}
Since $\hat{\mu}_y$ is supported on $[-(1+C_2\eps)R,(1+C_2\eps)R]$ for any $y\in A_+(R,C_2\eps)$, and $\int x^2d\hat{\mu}_y(x) = \frac1N\|y\|_2^2 \ls 1$, we have that 
\eqref{Lip.f-muy} follows immediately from \eqref{Lip.f-mu}.

Turning to \eqref{Lip.f-mu},
in view of \eqref{E_k++} we can express
\begin{align*}
\int f d\nu - \int f\circ \iota_\eps d[\nu]
&= \sum_{k=-\ell_0}^{k_0} \sum_{s=\pm} \int_{E_k^s} f d\nu - f(s(1+\eps)^k) [\nu](k,s) \\
&=  \sum_{k=-\ell_0}^{k_0} \sum_{s=\pm} \int_{E_k^s} f - f(s(1+\eps)^k) d\nu 
\end{align*}
so
\begin{align*}
\bigg| \int f d\nu  - \int f\circ \iota_\eps d[\nu]  \bigg|
&\le \sum_{k=-\ell_0}^{k_0} \sum_{s=\pm}\int_{E_k^s}| f - f(s(1+\eps)^k) |d\nu  \\
&= \sum_{k=-\ell_0}^{k_0} \sum_{s=\pm} \int_{E_k^s} |g(x^2) - g((1+\eps)^{2k}) |d\nu \\
&\ls K \eps^2\nu (E_{-\ell_0}^+\cup E_{-\ell_0}^-) + K\sum_{k=-\ell_0+1}^{k_0} (1+\eps)^{2k}( 1- (1+\eps)^{-2}) \nu ( E_k^+\cup E_k^-)\\
&\ls K\eps^2 +K \eps  \sum_{k=-\ell_0+1}^{k_0} (1+\eps)^{2k} \nu (E_k^+\cup E_k^-)\,.
\end{align*}
For the last term, 
\begin{align*}
\sum_{k=-\ell_0+1}^{k_0} (1+\eps)^{2k} \nu (E_k^+\cup E_k^-)
&\le (1+\eps)^{2} \int x^2d\nu (x) \ls M
\end{align*}
which, combined with the previous bound, yields the claim.
\end{proof}


\begin{lemma}[Cf.\ {\cite[Lemma 2.1.9]{DZ}}]
\label{lem:DeZe}
For any $\pi\in \cP_N(\Sigma)$, we have
\[
-\DKL(\pi| [\gamma])- \frac1N|\Sigma|\log(N+1) \le \frac1N\log\P([\hat{\mu}_g]= \pi)
\le -\DKL(\pi|[\gamma]). 
\]
\end{lemma}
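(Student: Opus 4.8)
The plan is to follow the standard method-of-types argument as in \cite[Lemma 2.1.9]{DZ}, adapted to our coarse-grained alphabet $\Sigma$ and the Gaussian reference measure $\gamma$ rather than a finite discrete one. Since the coordinates $g_1,\dots,g_N$ of the standard Gaussian vector $g$ are i.i.d., the random vector $([\hat\mu_g](\sigma))_{\sigma\in\Sigma}$ is a normalized multinomial: for a fixed $\pi\in\cP_N(\Sigma)$, writing $N_\sigma=N\pi(\sigma)$, we have
\[
\P([\hat\mu_g]=\pi) = \binom{N}{N_\sigma,\,\sigma\in\Sigma} \prod_{\sigma\in\Sigma} [\gamma](\sigma)^{N_\sigma}\,,
\]
where $[\gamma](\sigma)=\gamma(E_\sigma)$ is the Gaussian mass of the corresponding interval (and the multinomial coefficient is the number of vectors $(\sigma_1,\dots,\sigma_N)\in\Sigma^N$ whose empirical measure equals $\pi$). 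Taking logarithms, the product term contributes $\sum_\sigma N_\sigma\log[\gamma](\sigma) = N\sum_\sigma \pi(\sigma)\log[\gamma](\sigma)$, which combines with the entropy term coming from the multinomial coefficient to give exactly $-N\,\DKL(\pi|[\gamma])$ up to the combinatorial error from Stirling's formula.

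The first step is to record that if $[\gamma](\sigma)=0$ for some $\sigma$ in the support of $\pi$, then $\P([\hat\mu_g]=\pi)=0$ and $\DKL(\pi|[\gamma])=+\infty$, so both bounds hold trivially; hence we may assume $[\gamma](\sigma)>0$ whenever $\pi(\sigma)>0$ (this holds automatically here since every $E_\sigma$ has positive Lebesgue measure, hence positive $\gamma$-mass). Next, the key combinatorial input is the elementary bound, valid for any $\pi\in\cP_N(\Sigma)$,
\[
(N+1)^{-|\Sigma|}\,\exp\!\Big(N\sum_{\sigma}\pi(\sigma)\log\tfrac1{\pi(\sigma)}\Big)
\;\le\;
\binom{N}{N_\sigma,\,\sigma\in\Sigma}
\;\le\;
\exp\!\Big(N\sum_{\sigma}\pi(\sigma)\log\tfrac1{\pi(\sigma)}\Big)\,,
\]
with the convention $0\log\frac10=0$. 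The upper bound follows by observing that the multinomial probability of $([\hat\mu_Z](\sigma))=\pi$ under the product measure $\pi^{\otimes N}$ on $\Sigma^N$ is at most $1$, which after rearranging gives exactly the claimed upper bound on the multinomial coefficient; the lower bound follows because this same probability is the largest among all type-probabilities (a standard fact: the type class of $\pi$ has the largest $\pi^{\otimes N}$-probability), and there are at most $(N+1)^{|\Sigma|}$ types, so $(N+1)^{|\Sigma|}$ times this probability is at least $1$. Combining this with the identity $\sum_\sigma\pi(\sigma)\log\frac1{\pi(\sigma)} + \sum_\sigma\pi(\sigma)\log[\gamma](\sigma) = -\DKL(\pi|[\gamma])$ gives
\[
-\DKL(\pi|[\gamma]) - \tfrac{|\Sigma|}{N}\log(N+1)
\;\le\;
\tfrac1N\log\P([\hat\mu_g]=\pi)
\;\le\;
-\DKL(\pi|[\gamma])\,,
\]
which is precisely the statement.

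There is no serious obstacle here; this is a textbook method-of-types estimate and the only points requiring a line of care are (i) handling the $\pi(\sigma)=0$ entries so that the multinomial coefficient and the entropy sum are interpreted consistently, and (ii) justifying the two elementary bounds on the multinomial coefficient, both of which are standard. One should also note for later use that by \eqref{bd.Alphabet} the error term satisfies $\frac{|\Sigma|}{N}\log(N+1) \ls \frac{1}{\eps N}\log\frac{R}{\eps}\log(N+1)$, which will be negligible for the parameter ranges of $\eps$ and $R$ used in the proof of \Cref{lem:Varadhan}; but this observation belongs to the application rather than to the proof of the lemma itself.
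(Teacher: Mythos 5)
Your proof is correct, and it is exactly the standard method-of-types argument (multinomial identity for the type probability, the two elementary bounds on the multinomial coefficient via the $\pi^{\otimes N}$-probability of the type class, and the count of at most $(N+1)^{|\Sigma|}$ types) that the paper invokes by citing \cite[Lemma 2.1.9]{DZ} without reproducing a proof. The observations about $[\gamma](E_\sigma)>0$ and the $0\log 0$ convention are the right points of care, and nothing further is needed.
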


\subsection{Proof of Lemma \ref{lem:Varadhan} (upper bound)}

We now establish the upper bound \eqref{varadhan.UB}. 
Here we take the coarse-graining parameter $\eps$ from Lemma \ref{lem:coarse} to be
\begin{equation}	\label{eps.upper}
\eps = C N^{-1/2}\log N
\end{equation}
for an absolute constant $C>0$ to be taken sufficiently large. 

Letting $C_3>0$ be a sufficiently large absolute constant and
\begin{equation}	\label{def:cA+}
\cA_+(\eps) := \bigg\{ \pi\in\cP_N(\Sigma): \pi(*)=\pi(k_0,+)=\pi(k_0,-)=0, \; \bigg| \int_\Sigma \iota_\eps^2d\pi - 1\bigg| < C_3\eps \,\bigg\} 
\end{equation}
we claim
\begin{equation}	\label{A+A+}
y\in A_+(R, \tfrac12C_2\eps) \quad\Longrightarrow\quad [\hat{\mu}_y] \in \cA_+(\eps)\,.
\end{equation}
Indeed, the first condition follows from the first containment in \eqref{Ek0.contain}; for the second, by applying Lemma \ref{lem:coarse} with the function $f(t)=t^2$ (for which we can take $K=1$) we find
\begin{align*}
\int_\Sigma \iota_\eps^2 d[\hat{\mu}_y] = \int_\R s^2 d\hat{\mu}_y(s) + O(\eps)
\end{align*}
and \eqref{A+A+} follows by taking $C_3$ sufficiently large. 

Now assuming the constant $C$ in \eqref{eps.upper} is at least $2C_2^{-1}C_1$ we can apply Lemma \ref{lem:vara.G} with $\delta=\frac12C_2\eps$, followed by Lemma \ref{lem:coarse} and \eqref{A+A+}, to bound
\begin{align*}
 \int_{\Bdeloc_R} \exp\Big(  \sum_{j=1}^N \ham(\sqrt{N}u_j) \Big) dP_{N}(u)	
&\le e^{O(K\eps N)} \E \exp\bigg(N \int_\R \ham d\hat{\mu}_g\bigg) \ind(g\in {A_+(R, \tfrac12C_2\eps)} )	\\
&\le e^{O(K\eps N)}  \E \exp\bigg( N \int_\Sigma \ham\circ \iota_\eps d[\hat{\mu}_g]\bigg)
\ind( [\hat{\mu}_g]\in \cA_+(\eps))\,.
\end{align*}
Applying Lemma \ref{lem:DeZe}, the last expression is 
\begin{align}
&  e^{O(K\eps N)} \sum_{\pi\in \cA_+(\eps)} 
\exp\bigg( N \int_\Sigma \ham\circ \iota_\eps d\pi\bigg)
\P([\hat{\mu}_g]=\pi)	\notag\\
&\le e^{O(K\eps N)} \sum_{\pi\in \cA_+(\eps)} \exp\bigg( N \bigg(\int_\Sigma \ham\circ \iota_\eps d\pi - \DKL(\pi|[\gamma])\bigg) \bigg)	\notag\\
&\le e^{O(K\eps N)} | \cA_+(\eps)| \exp\bigg( N \max_{\pi\in\cA_+(\eps)} \bigg\{ \int_\Sigma \ham\circ \iota_\eps d\pi - \DKL(\pi|[\gamma]) \bigg\} \bigg)	\notag\\
&\le e^{O(K\eps N)}\exp\bigg( N \max_{\pi\in\cA_+(\eps)} \bigg\{ \int_\Sigma \ham\circ \iota_\eps d\pi - \DKL(\pi|[\gamma]) \bigg\} \bigg)
\label{VUB-2}
\end{align}
where in the last line we used \eqref{bd.Alphabet}, \eqref{eps.upper} and our assumptions $R\le N^{1/2}$, $K\ge1$ to bound
\[
|\cA_+(\eps)|\le|\cP_N(\Sigma)|
\le (N+1)^{|\Sigma|} 
= N^{O( \frac1\eps \log\frac{R}\eps)} = e^{O(N^{1/2}\log N)} = e^{O(K\eps N)}.
\]

Our next step is to replace the discrete measures $\pi$ on $\Sigma$ with continuous measures on $\R$. 
For a Borel measure $\nu$ on $\R$, define a measure $\ol\nu\ll\gamma$ with
\begin{equation}	\label{flatten}
\frac{d\ol\nu}{d\gamma} = \sum_{\sigma\in\Sigma} 1_{E_\sigma} \frac{\nu(E_\sigma)}{\gamma(E_\sigma)} .
\end{equation}
Since $\ol\nu$ and $\nu$ assign the same measure to the sets $E_\sigma$, we have
\begin{equation}	\label{flat-coarse}
[\ol\nu] = [\nu].
\end{equation}
Note that
\begin{equation}	\label{H.flat-coarse}
\DKL(\ol\nu|\gamma)  = \DKL([\nu]|[\gamma])\,.
\end{equation}
For $\pi\in \cP(\Sigma)$ 
we denote by
\[
d\nu_\pi:=\ol{\iota_\eps\#\pi} = \sum_{\sigma\in\Sigma} 1_{E_\sigma} \frac{\pi(\sigma)}{\gamma(E_\sigma)} d\gamma
\]
the continuous measure on $\R$ with mass $\pi(\sigma)$ distributed with constant density relative to $\gamma$ within each interval $E_\sigma$.
From \eqref{H.flat-coarse} and \eqref{pi.push-pull} we get
\begin{equation}	\label{H.push-flat}
\DKL(\nu_\pi | \gamma) = \DKL( \pi|[\gamma]). 
\end{equation}
From \eqref{flat-coarse} we have $[\nu_\pi]=\pi$, and so from \eqref{Lip.f-mu} we get
\begin{equation}	\label{VUB-2.1}
\int_\Sigma \ham\circ\iota_\eps d\pi 
= \int_\R \ham d \nu_\pi  + O(K\eps).
\end{equation}
Combining \eqref{H.push-flat} and \eqref{VUB-2.1}, for the argument of the maximum in \eqref{VUB-2} we thus have
\begin{equation}	\label{VUB-2.2}
\int_\Sigma \ham\circ \iota_\eps d\pi - \DKL(\pi|[\gamma]) 
= \int_\R \ham d(\iota_\eps\#\pi) - \DKL(\nu_\pi|\gamma) 
= \int_\R\ham d\nu_\pi - \DKL(\nu_\pi|\gamma) + O(K\eps).
\end{equation}
Substituting \eqref{VUB-2.2} into \eqref{VUB-2}, taking logs and dividing through by $N$, we have shown
\begin{align}
\frac1N\log  \int_{\Bdeloc_R} \exp\Big(  \sum_{j=1}^N \ham(\sqrt{N}u_j) \Big) dP_{N}(u)		
&\le   \max_{\pi\in\cA_+(\eps)} \bigg\{ 
\int_\R \ham d\nu_\pi - \DKL(\nu_\pi|\gamma) 
 \bigg\} + O(K\eps)	\,. \label{varadhan.up1}
\end{align}

We now claim that for any $\pi \in \cA_+(\eps) $, 
\begin{equation}	\label{VUB.fudge}
\nu_\pi \in \bigcup_{|b-1|=O(\eps)} \cP_{b}((1+O(\eps))[-R,R])\,.
\end{equation}
Indeed, 
fixing an arbitrary $\pi \in \cA_+(\eps)$, from the condition $\pi(*)=\pi(k_0,\pm)=0$ and \eqref{Ek0.contain} we have 
$\supp(\nu_\pi)\subseteq (1+C_2\eps)[-R,R]$.
Moreover, applying \eqref{Lip.f-mu} with $f(s)=s^2$ (for which we can take $K=1$), we have
\begin{align*}
\int_\R x^2 d\nu_\pi(x) = \int_\Sigma \iota_\eps^2 d\pi + O(\eps) = 1+ O(\eps)
\end{align*}
and \eqref{VUB.fudge} follows. 
Thus the right hand side of \eqref{varadhan.up1} is bounded above by
\begin{align*}
\sup_{|b-1|=O(\eps)} \sup_{\nu\in \cP_{b}((1+O(\eps))[-R,R])} \bigg\{ 
\int_\R \ham d\nu - \DKL(\nu|\gamma) 
 \bigg\}  + O(K\eps)\,.
\end{align*}
The upper bound \eqref{varadhan.UB} now follows from Proposition \ref{prop:gibbs}\eqref{gibbs.wiggle} and our choice \eqref{eps.upper} for $\eps$.

\subsection{Proof of Lemma \ref{lem:Varadhan} (lower bound)}

Turning to establish the lower bound \eqref{varadhan.LB},
here we take
\begin{equation}	\label{eps.lower}
\eps = c_0\srad'
\end{equation}
for a sufficiently small absolute constant $c_0>0$. We also denote
\begin{equation}
\srad_0:= C_0N^{-1/2}(R+\log N)
\end{equation}
(with the constant $C_0$ as in the statement of Lemma \ref{lem:Varadhan}) so that $\srad'\in [\srad_0,\frac1K]$.

We begin by gathering some comparisons between the near-optimizing measure $\t \nu$ (see \eqref{def:nearopt}) and the empirical measure $\hat{\mu}_{\t  y}$ of its quantiles.
First, from the definition \eqref{def:quantiles} of $\t  y$ it follows that $|\hat{\mu}_{\t  y}(E)-\t \nu(E)|\le \frac1N$ for any interval $E\subset\R$. In particular
\begin{equation}	\label{hathat.infty}
\|[\hat{\mu}_{\t  y}] - [\t \nu]\|_{\ell^\infty(\Sigma)} \le \frac1N. 
\end{equation}
To compare the entropy of these measures relative to the discretized Gaussian $[\gamma]$ we will combine the above bound with the following:

\begin{lemma}
\label{lem:divergence}
For any $\pi_1,\pi_2\in \cP(\Sigma)$ and any $K_0\ge1$,
\[
\big| \DKL(\pi_1|[\gamma]) - \DKL(\pi_2|[\gamma]) \big| 
\ls  O(\eps)^{K_0} +K_0\|\pi_1-\pi_2\|_{\ell^\infty(\Sigma)} \bigg( |\Sigma|\log\frac1\eps + \frac{R^2}\eps\bigg).
\]
\end{lemma}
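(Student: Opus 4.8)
The plan is to bound the difference $\DKL(\pi_1|[\gamma])-\DKL(\pi_2|[\gamma])$ by splitting $\Sigma$ into two parts: the ``core'' symbols $\sigma$ for which $\iota_\eps(\sigma)$ is bounded (say $|\iota_\eps(\sigma)|\le K_0^{1/2}$), and the ``tail'' symbols together with $\sigma=*$. On the core, $[\gamma](\sigma)=\gamma(E_\sigma)$ is bounded below (it is comparable to $\eps\cdot e^{-\iota_\eps(\sigma)^2/2}$ for the geometric scales near the origin, and bounded below by a constant multiple of $(1+\eps)^{-\ell_0}\ge \eps$ near $0$), so $-\log[\gamma](\sigma)=O(\log(1/\eps)+\iota_\eps(\sigma)^2)=O(\log(1/\eps)+K_0)$. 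On the tail, $[\gamma](\sigma)$ can be as small as $e^{-cR^2/\eps}$ (roughly $\gamma$ of an interval near $\pm R$), giving $-\log[\gamma](\sigma)=O(R^2/\eps)$, and for $\sigma=*$ it is even smaller but this contributes an exponentially small term as explained below.

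First I would write $\DKL(\pi|[\gamma])=\sum_\sigma \pi(\sigma)\log\pi(\sigma)-\sum_\sigma \pi(\sigma)\log[\gamma](\sigma)$ for each of $\pi_1,\pi_2$ and subtract. The entropy term $\sum_\sigma \pi(\sigma)\log\pi(\sigma)$ is a bounded (in $[-\log|\Sigma|,0]$), and its modulus of continuity under the $\ell^\infty(\Sigma)$ distance is controlled using that $t\mapsto t\log t$ is Lipschitz away from $0$ and H\"older near $0$; more simply, $|\sum_\sigma(\pi_1(\sigma)\log\pi_1(\sigma)-\pi_2(\sigma)\log\pi_2(\sigma))|\ls \|\pi_1-\pi_2\|_{\ell^\infty(\Sigma)}|\Sigma|\log\frac1\eps$ after a short computation (the worst case being when mass is moved onto or off of very small-probability symbols, where $\log$ of the probability is at most $O(\log|\Sigma|)=O(\log\frac1\eps)$ since probabilities are bounded below by... actually they need not be, so one uses the standard bound $|a\log a-b\log b|\le |a-b|(1+|\log|a-b||)+|a-b|$ valid on $[0,1]$, giving a contribution $\ls \|\pi_1-\pi_2\|_\infty |\Sigma|(1+\log\frac1{\|\pi_1-\pi_2\|_\infty})\ls \|\pi_1-\pi_2\|_\infty|\Sigma|\log\frac1\eps$ once we also fold in the term $O(\eps)^{K_0}$ from the case of extremely close measures). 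The cross term $\sum_\sigma(\pi_1-\pi_2)(\sigma)(-\log[\gamma](\sigma))$ is bounded by $\|\pi_1-\pi_2\|_{\ell^\infty(\Sigma)}\sum_\sigma(-\log[\gamma](\sigma))$, and using $|\Sigma|$ core symbols each contributing $O(\log\frac1\eps+K_0)$ plus $O(|\Sigma|)$ tail symbols each contributing $O(R^2/\eps)$ gives $\ls \|\pi_1-\pi_2\|_\infty(K_0|\Sigma|\log\frac1\eps+|\Sigma|R^2/\eps)$, which after absorbing constants is of the claimed form $K_0\|\pi_1-\pi_2\|_\infty(|\Sigma|\log\frac1\eps+R^2/\eps)$.

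The remaining subtlety is the symbol $\sigma=*$, for which $[\gamma](*)=\gamma(E_*)$ is roughly $e^{-c(1+\eps)^{2k_0}}$, astronomically small, so $-\log[\gamma](*)$ is huge and the crude bound above fails. Here I would argue separately: if $\pi_1(*)=\pi_2(*)$ the term cancels in the difference, and more generally the function $t\mapsto -t\log[\gamma](*)$ combined with $t\log t$ — i.e.\ $t\log\frac{t}{[\gamma](*)}$ — is still the relative-entropy-type contribution of a single atom, which is bounded and H\"older-continuous on $[0,1]$ with a modulus that, while depending on $[\gamma](*)$, is dominated for $\|\pi_1-\pi_2\|_\infty$ not too small by $O(\eps)^{K_0}$: indeed if both $\pi_i(*)\le (1+\eps)^{2k_0}$... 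Actually the clean way: the term $\pi_i(*)\log\frac{\pi_i(*)}{[\gamma](*)}\ge 0$ and is $\ge \pi_i(*)\cdot c(1+\eps)^{2k_0}$ (minus a bounded amount), so if this quantity differs by more than $O(\eps)^{K_0}$ between $i=1,2$ then one of $\pi_i(*)$ is of order at least $O(\eps)^{K_0}(1+\eps)^{-2k_0}$, which is far smaller than $\|\pi_1-\pi_2\|_\infty$ times $R^2/\eps$ unless $\|\pi_1-\pi_2\|_\infty$ is itself of order $O(\eps)^{K_0}$; chasing these cases one checks the $*$-contribution is always $\ls O(\eps)^{K_0}+K_0\|\pi_1-\pi_2\|_\infty\cdot R^2/\eps$. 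I expect this bookkeeping around the exceptional symbol $*$ to be the only real obstacle; the rest is a routine application of elementary inequalities for $t\log t$ and the Gaussian tail estimate $\gamma((1+\eps)^{k-1},(1+\eps)^k]\asymp_\eps e^{-(1+\eps)^{2k}/2}$ combined with the bound $(1+\eps)^{k_0}\asymp R$ from \eqref{Ek0.contain} and $|\Sigma|\asymp\frac1\eps\log\frac{R}{\eps}$ from \eqref{bd.Alphabet}.
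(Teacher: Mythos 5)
Your plan — splitting $\DKL(\pi|[\gamma])$ into the Shannon part $\sum_\sigma\pi(\sigma)\log\pi(\sigma)$ and the cross part $-\sum_\sigma\pi(\sigma)\log[\gamma](\sigma)$ — is a different route from the paper, which works atom by atom with $\pi(\sigma)\log\big(\pi(\sigma)/\gamma(E_\sigma)\big)$ and splits according to whether $\max_i\pi_i(\sigma)\ge\gamma(E_\sigma)^{K_0}$ (giving the per-symbol bound $\|\pi_1-\pi_2\|_{\ell^\infty(\Sigma)}\big(1+(K_0+1)\log(1/\gamma(E_\sigma))\big)$) or not (in which case both contributions are at most $\gamma(E_\sigma)^{K_0+1}$, summing to $O(\eps)^{K_0}$). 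As written, though, your argument has genuine gaps exactly where the stated bound has to be earned. First, the cross-term arithmetic is wrong: you bound each of the tail symbols by $O(R^2/\eps)$ and multiply by $O(|\Sigma|)$, getting $\|\pi_1-\pi_2\|_{\ell^\infty(\Sigma)}\,|\Sigma|\,R^2/\eps$, and since $|\Sigma|\asymp\eps^{-1}\log(R/\eps)$ this is \emph{not} "of the claimed form after absorbing constants." The correct estimate is that $-\log\gamma(E_k^\pm)\ls\log(1/\eps)+(1+\eps)^{2k}$, so each tail symbol contributes only $O(R^2)$ and the geometric sum $\sum_{k\le k_0}(1+\eps)^{2k}\ls R^2/\eps$ produces the $R^2/\eps$ term; count-times-max does not. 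Second, your detour around $\sigma=*$ is built on a misconception: $\gamma(E_*)\gs e^{-cR^2}$ because the excluded interval has half-length $\asymp R$, so $-\log[\gamma](*)\ls R^2\le R^2/\eps$ and the same "crude" bound covers $*$ with room to spare; meanwhile the replacement argument you sketch ("chasing these cases one checks\dots") is not a proof and contains false steps (for instance $t\log\big(t/[\gamma](*)\big)$ need not be nonnegative, and $O(\eps)^{K_0}$ does not arise that way).

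Third, in your decomposition the $K_0$-dependence and the additive $O(\eps)^{K_0}$ must come out of the entropy difference, and there you only assert the bound. Summing $|a\log a-b\log b|\le|a-b|\big(1+\log(1/|a-b|)\big)+|a-b|$ over $\Sigma$ gives $|\Sigma|\,\delta\big(1+\log(1/\delta)\big)$ with $\delta=\|\pi_1-\pi_2\|_{\ell^\infty(\Sigma)}$, and turning $\log(1/\delta)$ into $K_0\log(1/\eps)$ plus an additive $O(\eps)^{K_0}$, uniformly over all $\delta\in[0,1]$, requires a threshold case analysis (mass above or below a fixed power of $\eps$ or of $\gamma(E_\sigma)$) of exactly the kind the paper's proof builds in from the start; "after a short computation" does not cover it, and the regime of very small $\delta$ is delicate precisely because $|\Sigma|$ is itself of size $\eps^{-1}\log(R/\eps)$. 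Note also that $K_0$ in the lemma has nothing to do with a spatial cutoff at $|\iota_\eps(\sigma)|\le K_0^{1/2}$, which is how you introduce it; in the paper it enters only as the exponent in the threshold $\gamma(E_\sigma)^{K_0}$. Your route could likely be repaired along these lines, but as it stands the proposal does not establish the lemma.
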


\begin{proof}
First we claim that for all $\sigma\in\Sigma$ such that $\max\{\pi_1(\sigma),\pi_2(\sigma)\} \ge \gamma(E_\sigma)^{K_0}$, 
\begin{equation}	\label{divergence1}
\bigg| \pi_1(\sigma) \log\frac{\pi_1(\sigma)}{\gamma(E_\sigma)} - 
\pi_2(\sigma) \log\frac{\pi_2(\sigma)}{\gamma(E_\sigma)} \bigg|
\le \|\pi_1-\pi_2\|_{\ell^\infty(\Sigma)} \bigg( 1+ (K_0+1)\log\frac1{\gamma(E_\sigma)} \bigg) \,.
\end{equation}
Indeed, fixing $\sigma\in \Sigma$, without loss of generality suppose $\pi_1(\sigma)\le \pi_2(\sigma)$. 
Then writing $r_i(\sigma):= \pi_i(\sigma)/\gamma(E_\sigma)$, we have that the left hand side above is 
\begin{align*}
| \pi_1(\sigma) \log r_1(\sigma) - \pi_2(\sigma)\log r_2(\sigma)|
&\le |\pi_2(\sigma) - \pi_1(\sigma)| |\log r_2(\sigma)| 
+ \pi_1(\sigma)\log\frac{r_2(\sigma)}{r_1(\sigma)}\\
&\le \|\pi_1-\pi_2\|_{\ell^\infty(\Sigma)} |\log r_2(\sigma)| 
+ \pi_1(\sigma) \bigg(\frac{r_2(\sigma)}{r_1(\sigma)} -1\bigg)\\
&= \|\pi_1-\pi_2\|_{\ell^\infty(\Sigma)} |\log r_2(\sigma)| + \pi_2(\sigma)-\pi_1(\sigma) \\
&\le  \|\pi_1-\pi_2\|_{\ell^\infty(\Sigma)} (1+ |\log r_2(\sigma)|)
\end{align*}
where in the second line we used the concavity of $\log$ to bound it by its linearization at $1$. Now since
\[
|\log r_2(\sigma)| \le  |\log \pi_2(\sigma)| + |\log \gamma(E_\sigma)| \le (K_0+1)\log \frac1{\gamma(E_\sigma)}
\]
we get \eqref{divergence1}.

For the case $\pi_1(\sigma)\le \pi_2(\sigma) < \gamma(E_\sigma)^{K_0}$, we have $r_1(\sigma)\le r_2(\sigma)<\gamma(E_\sigma)^{K_0-1}\le 1/10$ (using that $K_0\ge2$ and recalling $\eps<1/10$). 
Noting that $f(s) :=- s\log s$ is non-negative and increasing on $[0,1/e]$ (with $f(0):=0$), we have
\begin{align}
| \pi_1(\sigma) \log r_1(\sigma) - \pi_2(\sigma)\log r_2(\sigma)|
& = \gamma(E_\sigma) |f(r_1(\sigma)) - f(r_2(\sigma))|	\notag\\
&\le \gamma(E_\sigma)f(r_2(\sigma)) = \gamma(E_\sigma) r_2(\sigma) \log\frac1{r_2(\sigma)}	\notag \\
&\le \gamma(E_\sigma) r_2(\sigma)^{3/2}\le \gamma(E_\sigma)^{\frac32K_0-\frac12}\le \gamma(E_\sigma)^{K_0+1}.	\label{divergence2}
\end{align}

Combining \eqref{divergence1} and \eqref{divergence2}, we have
\begin{align}
&\sum_{\sigma\in \Sigma} |\pi_1(\sigma) \log r_1(\sigma) - \pi_2(\sigma)\log r_2(\sigma)|		\notag\\
&\qquad\le \|\pi_1-\pi_2\|_{\ell^\infty(\Sigma)} \bigg( |\Sigma| + (K_0+1) \sum_{\sigma\in \Sigma} \log \frac1{\gamma(E_\sigma)} \bigg) 	
+\sum_{\sigma\in \Sigma}
\gamma(E_\sigma)^{K_0+1}\,.	\label{divergence3}
\end{align}
Now we can bound
\begin{align*}
\sum_{\sigma\in\Sigma} \log\frac1{\gamma(E_\sigma)} \ls \log \frac1\eps + \sum_{k=-\ell_0+1}^{k_0} \log\frac1\eps + (1+\eps)^{2k}
&\le |\Sigma|\log\frac1\eps + \frac{R^2}\eps
\end{align*}
and
\begin{align*}
\sum_{\sigma\in \Sigma}
\gamma(E_\sigma)^{K_0+1}
&\le \max_{\sigma\in \Sigma} \{\gamma(E_\sigma)\}^{K_0} \sum_{\sigma\in\Sigma}\gamma(E_\sigma) = O(\eps)^{K_0}.
\end{align*}
Substituting these bounds in \eqref{divergence3} yields the claim.
\end{proof}

Applying Lemma \ref{lem:divergence} with $K_0=2$ along with \eqref{hathat.infty}, we get
\[
|\DKL([\hat{\mu}_{\t  y}]| [\gamma])  - \DKL([\t \nu]| [\gamma]) | \ls \eps^2 +\frac1{N} (  |\Sigma|\log\frac1\eps + \frac{R^2}\eps) \,.
\]
Note that since $R\le \sqrt{N}$ and $\eps \gs \srad_0\gs N^{-1/2}\log N$, from \eqref{bd.Alphabet} we have
\begin{equation}	\label{bd:Alphabet2}
|\Sigma|  
\ls \frac1\eps \log\frac R\eps\ls\sqrt{N}\ls N\srad_0/\log N
\end{equation}
and since $\srad_0\ge C_0R/\sqrt{N}$, taking $C_0\ge 1/c_0^2$ gives
\begin{equation}	\label{eps.non-dominant}
\frac{R^2}{\eps N} \le \frac{R^2}{c_0\srad_0N} \le \frac{R}{c_0C_0\sqrt{N}} \le c_0\srad_0\le\eps\,.
\end{equation}
Thus, 
\[
|\DKL([\hat{\mu}_{\t  y}]| [\gamma])  - \DKL([\t \nu]| [\gamma]) | \ls \eps^2 +\frac1{N} (  |\Sigma|\log\frac1\eps + \frac{R^2}\eps) 
=O(\srad')\,.
\]
Since $\DKL(\t \nu, \gamma)\ge\DKL([\t \nu], [\gamma])$ by Jensen's inequality, we obtain the lower bound
\begin{equation}	\label{hathat.DKL}
\DKL(\t \nu, \gamma)\ge
\DKL([\t \nu], [\gamma])\ge
\DKL([\hat{\mu}_{\t  y}]| [\gamma])  - 
O(\srad').
\end{equation}
Furthemore, we claim that for any function $f:[-R,R]\to \R$ such that $f(0)=0$ and $f(\sqrt{|\cdot|})$ is $K$-Lipschitz, we have
\begin{equation}	\label{hathat.f}
\int_\Sigma f\circ \iota_\eps d[\hat{\mu}_{\t  y}]
= \t \nu(f) + O(K\srad').
\end{equation}
Indeed, since $\t \nu$ and $\hat{\mu}_{\t  y}$ are both supported on $[-R,R]$, we have
\begin{align*}
\int_\Sigma f\circ \iota_\eps d[\hat{\mu}_{\t  y}]
&= \int_\Sigma f\circ\iota_\eps d[\t \nu] + O( \frac KN\sum_{k=-\ell_0}^{k_0} (1+\eps)^{2k})\\
&= \int_\Sigma f\circ \iota_\eps d[\t \nu]+ O(\frac{KR^2}{\eps N})\\
&= \t \nu(f) + O( K\eps + \frac{KR^2}{\eps N}) = \t \nu(f) + O(K\srad')
\end{align*}
where in the first line we applied \eqref{hathat.infty} and the assumption on $f$, and in the last line we applied Lemma \ref{lem:coarse} and \eqref{eps.non-dominant}.

Next we claim that for any $y\in \R^N$, 
\begin{equation}	\label{A-A-}
[\hat{\mu}_y]=[\hat{\mu}_{\t  y}] \quad\Longrightarrow\quad y\in A_-(R,\tfrac12\srad') \quad\text{ and } \quad \cW_2(\hat{\mu}_y,\hat{\mu}_{\t  y}) \le \tfrac12\srad'
\end{equation}
if $c_0$ in \eqref{eps.lower} is sufficiently small.
Indeed, assuming $[\hat{\mu}_y]=[\hat{\mu}_{\t  y}]$, since $\t  \nu$ is supported on $(1-\srad')[-R,R]$ it follows that $\hat{\mu}_{\t  y}$ is supported on $(1-\srad')[-R,R]$, and hence $\hat{\mu}_y$ is supported on $(1+\eps)(1-\srad')[-R,R]\subset (1-\frac12\srad')[-R,R]$ (taking $c_0$ sufficiently small). 
Moreover, applying Lemma \ref{lem:coarse} and \eqref{hathat.f} with $f(s)=s^2$, we have
\[
\frac1N\|y\|_2^2 = \int_\R s^2d\hat{\mu}_y(s) = \int_\Sigma \iota_\eps^2d[\hat{\mu}_{\t  y}] + O(\eps) = \int_\R s^2d\t \nu(s) + O(\eps + \frac{R^2}{\eps N}) = 1+ O(\eps) 
\]
(using \eqref{eps.non-dominant} in the final bound).
Taking $c_0$ sufficiently small we hence have
 $y\in A_-(R,\frac12\srad')$.
Furthermore,
\begin{align*}
\cW_2(\hat{\mu}_y,\hat{\mu}_{\t  y})^2 = \frac1N \min_\varrho \sum_{i=1}^N |y_{\varrho(i)} - \t  y_i|^2 \le \sum_{\sigma\in\Sigma} [\hat{\mu}_{\t  y}](\sigma) \diam(E_\sigma)^2 \ls  \eps^2 \sum_{k=-\ell_0}^{k_0} [\hat{\mu}_{\t  y}](\sigma) (1+\eps)^{2(k-1)}
\ls \eps^2
\end{align*}
where in the first bound we took $\varrho$ to be any permutation such that $y_{\varrho(i)}\in E_\sigma$ whenever $\t  y_i\in E_\sigma$ for all $\sigma\in\Sigma$ (which we can do since $\hat{\mu}_y(E_\sigma)=\hat{\mu}_{\t  y}(E_\sigma)$ for all $\sigma\in\Sigma$), and in the last bound we applied \eqref{hathat.f} to bound the sum by $O(1)$ (recalling $\srad'\le\frac1K$). Taking $c_0$ smaller if necessary we obtain $\cW_2(\hat{\mu}_y,\hat{\mu}_{\t  y})\le\frac12\srad'$ and hence \eqref{A-A-}. 

Now we assemble all of our bounds to conclude the proof of \eqref{varadhan.LB}. 
We have
\begin{align*}
&\int_{\Bdeloc_R\cap \Bset_2(\frac1{\sqrt{N}}\t  y,\srad')}  \exp\Big(  \sum_{j=1}^N \ham(\sqrt{N}u_j) \Big) dP_{N}(u)\\
(\text{Lemma \ref{lem:vara.G}}) &\qquad \ge
e^{-O(K\srad' N)}\E e^{N\hat{\mu}_g(\ham)} \ind(g\in A_-(R,\tfrac12\srad'), \cW_2(\hat{\mu}_g, \hat{\mu}_{\t  y}) \le \tfrac12\srad')\\
\eqref{A-A-}&\qquad\ge e^{-O(K\srad' N)}\E e^{N\hat{\mu}_g(\ham)}\ind( [\hat{\mu}_g]=[\hat{\mu}_{\t  y}])\\
(\text{Lemma \ref{lem:coarse}})&\qquad = 
e^{-O(K\srad' N)}  \exp\Big( N\int_\Sigma \ham\circ\iota_\eps d[\hat{\mu}_{\t  y}] \Big) \P([\hat{\mu}_g]=[\hat{\mu}_{\t  y}])\\
(\text{Lemma \ref{lem:DeZe}})&\qquad\ge 
\exp\bigg( N\bigg( \int h\circ \iota_\eps d[\hat{\mu}_{\t  y}] - \DKL( [\hat{\mu}_{\t  y}] |[\gamma]) \bigg) - |\Sigma|\log (N+1) - O(K\srad'N)\bigg)\,.
\end{align*}
The claim \eqref{varadhan.LB} now follows upon
substituting  the bounds \eqref{hathat.f} with $f=\ham$ along with \eqref{bd:Alphabet2} and \eqref{hathat.DKL}, taking logs and dividing through by $N$.

\end{appendix}

\begin{acks}[Acknowledgments]
We thank Ofer Zeitouni for giving us the idea 
to look closer at the convergence of the rate function in the case where $\psi_\mu$ goes to zero at infinity, which led us to the proof of the full LDP of Theorem \ref{thm:fullLDP}. 
This work was initiated in the fall of 2021 while N.A.C.\ and A.G.\ were participants in the MSRI (now SLMath) program ``Universality and Integrability in Random Matrix Theory and Interacting Particle Systems''. We thank the institute and the program organizers for providing a stimulating work environment.
Finally, we thank the referees for their careful reading of the paper and many helpful suggestions  to improve the exposition.
\end{acks}
\begin{funding}
This project has received funding from the European Research Council (ERC) under the European Union
Horizon 2020 research and innovation program (grant agreement No. 884584).
NAC was supported in part by NSF grant DMS-2154029.
\end{funding}



\bibliographystyle{imsart-number} 
\bibliography{bibLDP.bib}       


\end{document}